\newcommand{\xinfty}{\infty_{\cX}} 
\newcommand{\lbspin}{\mathbf{L}^{\vee}} 
\newcommand{\lblog}{\mathbf{N}} 
\newcommand{\uspin}{\mathcal{L}_R} 
\newcommand{\tlog}{\mathcal{L}_S} 
\newcommand{\hodge}{\mathscr{H}} 
\newcommand{\uomega}{\mathcal{L}_\omega} 
\newcommand{\rd}{R^{\bullet}} 
\newcommand{\tW}{\widetilde{W}} 
\newcommand{\tomega}{\widetilde{\omega}} 
\newcommand{\crit}{\operatorname{Crit}} 
\newcommand{\cI}{\mathcal{I}_{\mu}} 
\newcommand{\ocI}{\overline{\mathcal{I}}_{\mu}} 
\DeclareMathOperator{\ev}{ev} 
\newcommand{\Hom}{\operatorname{Hom}} 
\newcommand{\BG}{\mathbf{BG}}
\newcommand{\BC}{\mathbf{BC}^*_\omega}
\newcommand{\LR}{\mathcal L_\omega}
\newcommand{\bE}{\mathbf{E}}
\newcommand{\bK}{\mathbf{K}}
\newcommand{\ul}{\underline}
\newcommand{\etwist}{a}
\newcommand{\ttwist}{\tilde{r}}
\newcommand{\ddata}{{\varsigma}} 
\newcommand{\cH}{\mathcal{H}} 
\newcommand{\ocM}{\overline{\mathcal{M}}} 
\newcommand{\ocP}{\overline{\mathcal{P}}} 
\newcommand{\ocN}{\overline{\mathcal{N}}} 
\newcommand{\cM}{\mathcal M} 
\newcommand{\cN}{\mathcal N} 
\newcommand{\cP}{\mathcal P} 
\newcommand{\cK}{\mathcal K} 
\newcommand{\lcm}{\operatorname{lcm}} 
\newcommand{\NN}{\mathbb{N}} 
\newcommand{\QQ}{\mathbb{Q}} 
\newcommand{\ZZ}{\mathbb{Z}} 
\newcommand{\CC}{\mathbb C}
\renewcommand{\AA}{\mathbb A}
\newcommand{\bk}{\mathbf{k}}
\newcommand{\fP}{\mathfrak{P}} 
\newcommand{\punt}{\infty} 
\newcommand{\ainfty}{\infty_{\cA}} 
\newcommand{\spint}{\mathfrak{X}} 
\newcommand{\ogamma}{\overline{\gamma}} 
\newcommand{\fM}{\mathfrak{M}} 
\newcommand{\fH}{\mathfrak{H}} 
\newcommand{\fMtw}{\mathfrak{M}^{\mathrm{tw}}} 
\newcommand{\fX}{\mathfrak{X}} 
\newcommand{\UM}{\fM^{\curlywedge}} 
\newcommand{\Um}{\fM^{{\curlyvee}}} 
\newcommand{\UMm}{\fM^{{\diamondsuit}}} 
\newcommand{\ud}{\widehat{\diamond}}
\newcommand{\AMm}{\fM^{{\ud}}} 
\newcommand{\AM}{\fM^{{\shortuparrow}}} 
\newcommand{\TAM}{\fM^{{\tilde{\shortuparrow}}}} 
\newcommand{\RMm}{\SH^{{\diamondsuit}}} 
\newcommand{\RAMm}{\SH^{{\ud}}} 
\newcommand{\RA}{\SH^{{\shortuparrow}}} 
\newcommand{\RTA}{\SH^{{\tilde{\shortuparrow}}}} 
\newcommand{\obA}{\overline{\mathbf{A}}} 
\newcommand{\obD}{\overline{\mathbf{D}}} 
\newcommand{\ocK}{\overline{\mathcal{K}}} 
\newcommand{\ali}{\operatorname{align}} 
\newcommand{\fC}{\mathfrak{C}} 
\newcommand{\zero}{\mathbf{0}}
\newcommand{\uC}{\underline{\mathcal{C}}} 
\newcommand{\pC}{\mathcal{C}^{\circ}} 
\newcommand{\uS}{\underline{S}} 
\newcommand{\cL}{\mathcal L}
\newcommand{\Sp}{\mathcal{L}_{\spint}} 
\newcommand{\Log}{\operatorname{Log}} 
\newcommand{\cA}{\mathcal{A}} 
\newcommand{\cX}{\mathcal{X}} 
\newcommand{\cC}{\mathcal{C}} 
\newcommand{\cZ}{\mathcal{Z}}
\newcommand\scrM{\mathscr{M}} 
\newcommand{\oM}{\overline{\mathscr M}} 
\newcommand{\SH}{\mathscr{R}}
\newcommand{\UH}{\mathscr{R}^{\curlywedge}} 
\newcommand{\TT}{\mathbb T} 
\newcommand{\EE}{\mathbb{E}} 
\newcommand{\LL}{\mathbb L} 
\newcommand{\KK}{\mathbb K} 
\newcommand{\FF}{\mathbb{F}} 
\newcommand{\bF}{\mathbf F} 
\newcommand{\Fm}{\mathbf{F}_{\mathbf 1}} 
\newcommand{\bs}{\mathbf{s}} 
\newcommand{\fb}{\mathfrak{b}} 
\newcommand{\fp}{\mathfrak{p}} 
\newcommand{\ft}{\mathfrak{t}} 
\newcommand{\fr}{\mathfrak{r}} 
\newcommand{\pun}{\mathrm{p}}
\newcommand{\Bl}{\operatorname{Bl}} 
\DeclareMathOperator{\Tw}{Tw} 
\newcommand{\cO}{\mathcal O} 
\newcommand{\cT}{\mathcal T} 
\newcommand{\PP}{\mathbb{P}}
\newcommand{\A}{\mathbb{A}}  
\newcommand{\GG}{\mathbb{G}}
\newcommand{\Div}{\operatorname{Div}} 
\newcommand{\vir}{\mathrm{vir}}
\newcommand{\red}{\mathrm{red}}
\newcommand{\fine}{\mathrm{fine}}
\newcommand{\rig}{\mathrm{rig}}
\newcommand{\stab}{\mathrm{stab}}
\newcommand{\bw}{\mathbf w}
\DeclareMathOperator{\Aut}{Aut}
\DeclareMathOperator{\spec}{Spec}
\DeclareMathOperator{\diff}{d}
\DeclareMathOperator{\rk}{rk}
\DeclareMathOperator{\age}{age}
\DeclareMathOperator{\vb}{Vb}
\DeclareMathOperator{\DF}{DF}
\newcommand{\poleq}{\preccurlyeq} 
\newtheorem{proposition}{Proposition}[section]
\newtheorem{corollary}[proposition]{Corollary}
\newtheorem{lemma}[proposition]{Lemma}
\newtheorem{conjecture}[proposition]{Conjecture}
\newtheorem{theorem}[proposition]{Theorem}
\newtheorem{question}[proposition]{Question}
\theoremstyle{definition}
\newtheorem{assumption}[proposition]{Assumption}
\newtheorem{definition}[proposition]{Definition}
\newtheorem{notation}[proposition]{Notation}
\newtheorem{example}[proposition]{Example}
\theoremstyle{remark}
\newtheorem{remark}[proposition]{Remark}
\numberwithin{equation}{section}
\newcommand{\ChFelix}[1]{{\color{blue} #1}}
\begin{document}

\title{Punctured logarithmic R-maps}
\author{Qile Chen \and Felix Janda \and Yongbin Ruan}

\address[Q. Chen]{Department of Mathematics\\
Boston College\\
Chestnut Hill, MA 02467\\
U.S.A.}
\email{qile.chen@bc.edu}

\address[F. Janda]{Department of Mathematics\\
University of Notre Dame\\
Notre Dame, IN 46556\\
U.S.A.}
\email{fjanda@nd.edu}

\address[Y. Ruan]{Institute for Advanced Study in Mathematics\\
Zhejiang University\\
Hangzhou, China}
\email{ruanyb@zju.edu.cn}

\date{\today}

\begin{abstract}
  In this paper, we develop the theory of punctured R-maps as a crucial ingredient
towards the foundations
  of logarithmic gauged linear sigma models (log GLSM). 
  A punctured R-map is a punctured map in the sense of Abramovich--Chen--Gross--Siebert which is further twisted by the sheaf of differentials on the domain curve. 
  They admit two different but closely related perfect obstruction theories --- the {\em canonical theory} and the {\em reduced theory}.
  While the canonical theory leads to generalized double ramification (DR) cycles with targets and spin structures, {\em without employing expansions}, the reduced theory naturally describes boundary contributions in log GLSM. 
  
  The unexpanded nature and the flexibility of punctured maps allow us
  to establish a sequence of useful formulas in both canonical and
  reduced theories, including:
  \begin{enumerate} 
  \item A product formula computing disconnected invariants in terms of a product of connected invariants,
  \item Fundamental class axioms, string equations and divisor equations analogous to those in Gromov-Witten theory.
  \end{enumerate}
  Even in the case of the canonical theory, the product formula in (1)
  was {\em unexpected} from the point of view using expansions and
  rubber maps, and is of independent interest.

  As an important application of these formulas in log GLSM, we study
  a class of invariants in the reduced theory, which we coin
  \emph{effective invariants}.
  They are at the heart of recent advances in Gromov--Witten theory,
  and will be shown to give rise to explicit correction terms to the
  quantum Lefschetz principle in higher genus Gromov-Witten theory for
  arbitrary smooth complete intersections in a forthcoming paper.

  In the famous case of quintic $3$-folds, we show that all effective
  invariants are determined by $\lfloor\frac{2g-2}{5}\rfloor + 1$ many
  basic effective invariants, in an explicit way via the above
  mentioned formulas. 
  This matches the number of free parameters of the famous BCOV
  B-model theory in physics.
  Similar results apply to other Calabi--Yau threefold complete
  intersections.
  This, together with the joint works of the last two authors and
  S. Guo on the genus two mirror theorem and the higher genus mirror
  conjectures for quintic threefolds, shows that log GLSM is an
  effective tool for proving BCOV-type conjectures in higher genus
  Gromov--Witten theory.

  Further applications of punctured R-maps include an LG/CY
  correspondence for effective invariants, and a relation to the locus
  of holomorphic differentials.
  The results of this paper form the foundation for major progress
  toward the higher genus LG/CY correspondence and other important
  conjectures in the subject.

\end{abstract}

\keywords{punctured R-maps, gauged linear sigma model, effective cycles}

\subjclass[2020]{14N35, 14D23}

\maketitle

\setcounter{tocdepth}{1}
\tableofcontents

\section{Introduction}

\subsection{Overview}

\emph{Gromov--Witten theory}, which virtually counts holomorphic curves
in a given projective variety with prescribed incidence conditions,
plays a central role in \emph{mirror symmetry}.
Despite several decades of fruitful developments, effective
calculation of Gromov--Witten invariants generally remains a central
but difficult problem in this area.

\subsubsection{Logarithmic gauged linear sigma models}

This is the third in a series of papers \cite{CJRS18P, CJR21} to build
the theory of \emph{the logarithmic Gauged Linear Sigma Model} (log
GLSM) and its applications, including computing Gromov--Witten
invariants of smooth complete intersections.
The mathematical theory of Gauged Linear Sigma Models (GLSM) has been
developed and studied intensively by many researchers in the past
decade, see for instance \cite{FJR18, CFGKS18P, FaKi20P, TiXu18P}.

Among others, GLSM was proved to (up to signs) recover Gromov-Witten
invariants of smooth complete intersections \cite{ChLi12, KiOh22,
  ChLi20, Pi20P, CJW21}.
However, two important problems remained unsolved --- (1) computing
GLSM invariants and (2) a (higher genus) quantum Lefschetz principle
relating the invariants with those of its ambient spaces.
The current paper is a crucial step of our project toward a solution
to both questions via a logarithmic compactification.%
\footnote{Some other recent approaches to the problem of extending
  quantum Lefschetz to higher genus are the theory of mixed spin
  $p$-fields \cite{CGL18Pb, CGL21, CGLL21, CLLL19, CLLL22}, the
  work of Fan--Lee \cite{FaLe19}, which are both of recursive nature.
  Another potential approach using resolutions is outlined in
  \cite{HuNi20P, LeLiOh22P}.}
Indeed, a major obstacle is the
non-properness of the relevant moduli stacks involved in defining GLSM
invariants.
In \cite{CJR21} we overcame the non-properness problem by first
providing a logarithmic compactification of the moduli stack using
stable log maps of \cite{AbCh14, Ch14, GrSi13}, and then constructing
a reduced perfect obstruction theory that recovers the usual GLSM
virtual cycles, and in particular the Gromov--Witten theory of
complete intersections in \emph{arbitrary} smooth, projective
varieties.
Our approach is not restricted to Gromov--Witten theory, and also
applies to other enumerative theories, such as FJRW theory.

\subsubsection{Punctured R-maps}

This paper develops a Gromov--Witten type theory of {\em punctured
  R-maps} using the punctured logarithmic maps of
Abramovich--Chen--Gross--Siebert \cite{ACGS20P}.
This new theory, as shown in the forthcoming paper \cite{CJR23P},
naturally describes the boundary contributions of log GLSM.

Punctured maps are analogs of the rubber maps that appear in relative
Gromov--Witten theory but defined \emph{without the expansions} of
\cite{GrVa05, Li02}.
The \emph{R-map} structure is a further twist of punctured maps using
(roots of) differentials on the domain curves.
Thus, punctured R-maps as studied in this paper are a generalization
of rubber maps by allowing both targets and twists by spin structures.

Fixing appropriate numerical data, stable punctured R-maps form a
proper moduli stack admitting a \emph{canonical perfect obstruction
  theory}, which leads to a version of double ramification cycles in
our general setting.%
\footnote{We refer to \cite{JPPZ17, JPPZ20, Gu16P, MaWi20, Ho21, BHPSS20P}
  for some of the developments around the double ramification cycle
  and its generalizations.}
However, the contribution of punctured R-maps to log GLSM is given by
a different but closely related \emph{reduced perfect obstruction
  theory}, which is constructed in this paper.

A \emph{major} part of this paper is to establish a sequence of
powerful axioms for punctured R-maps on the virtual cycle level which
will serve as a foundation for effective high genus computations. 
They include a logarithmic root comparison formula, a product formula
that computes invariants with disconnected domains via connected
invariants, and analogs of the fundamental class axiom, string and
divisor equations from Gromov--Witten theory in the setting of
punctured R-maps.
The unexpanded nature and the flexibility of punctured maps
\cite{ACGS20P} provide the key to these explicit formulas.
The following applications illustrate the versatility of the axioms
established in this paper.

\subsubsection{Applications to quantum Lefschetz in arbitrary genus}

The genus zero quantum Lefschetz principle states that Gromov--Witten
invariants of a convex complete intersection $Z$ agree with the
corresponding twisted Gromov--Witten invariant of the ambient space $X$.
This is of essential for many computations in genus zero since twisted
Gromov--Witten invariants of toric varieties are accessible to
localization techniques, and in particular, this is the geometric
input to the famous proof of the genus zero mirror conjecture for
quintic threefolds \cite{Gi96, LLY97}.

The naive generalization of the genus zero quantum Lefschetz principle
to higher genus fails, in the sense that higher genus twisted
Gromov--Witten invariants of $X$ in general do not agree with the
invariants of $Z$.
This is one of the main motivations for this project.
In the next paper \cite{CJR23P}, we will establish explicit structural
formulas computing GLSM invariants by studying the equivariant and
tropical geometry of log GLSM.
In particular, these formulas describe the difference between the
higher genus invariants of $Z$ and the corresponding twisted
Gromov--Witten invariants of $X$ in terms of an explicit correction
which is a graph sum of products of twisted Gromov--Witten invariants
of $X$ and a new ingredient, which we coin \emph{effective invariants}
(of the pair $(X, Z)$).
A major aspect of this work is to construct and study this new
ingredient as an integral over the reduced virtual cycle of punctured
R-maps.

In particular, in \S\ref{sec:effective}, we use axioms of punctured
R-maps to prove powerful vanishing and boundedness results for
effective invariants for ample complete intersections $Z \subset X$.
In this case, all genus $0$ effective invariants vanish, and this
means that there is no correction to the genus zero quantum Lefschetz
principle.
In \S\ref{ss:g=1-calculation}, we perform an explicit computation of
all genus one effective invariants, and thus log GLSM leads to a new
approach to computing genus one Gromov--Witten invariants of $Z$,
different from the other approaches we are aware of \cite{Zi08,
  KiLh18, CGLZ20}.
For each fixed genus $g \ge 2$, we prove that there is a finite number
of non-vanishing effective invariants.
If one is able to compute this finite number of effective invariants
for $g \le g_0$, one can compute the \emph{all} (ambient cohomology)
genus $g_0$ Gromov--Witten invariants of $Z$ in an efficient fashion.
All in all, log GLSM provides a unified approach to computation of
genus zero and higher genus Gromov--Witten invariants.

The efficiency of our approach is further supported by a growing list of
important applications --- higher genus mirror symmetry of quintic
threefolds \cite{GJR17P, GJR18P}, as well as the forthcoming works on
higher genus mirror symmetry for other complete intersections in
projective space, the all-genus LG/CY correspondence for quintic
threefolds \cite{GJR23P}, and to Gromov--Witten classes \cite{CJR23P}.

\subsubsection{Applications to strata of differentials}

In a different direction, the moduli space of punctured R-maps may be
applied to the study of strata of $k$-differentials, which
parameterize projective curves together with a $k$-differential (up to
scaling) with prescribed zeros and poles.
Strata of differentials play a central role in the study of the
geometry and dynamics of flat surfaces \cite{Ch17}.
There has been recent interest in compactifications \cite{BCGGM19P} of
the strata due to their applications to their intersection theory and
to computing geometrical and dynamical quantities such as the
Masur--Veech volumes, Siegel--Veech constants or Lyapunov exponents
\cite{CMSZ20}.
In the work \cite{MaWi20}, a new modular compactification equipped
with a canonical virtual fundamental cycle has been constructed, and
in \cite{CGHMS22P}, it was related to other compactifications.
The moduli of punctured R-maps gives a new modular compactification of
strata of $k$-differentials with both a canonical virtual
fundamental class, and, in the case of abelian differentials
(holomorphic $1$-differentials), a reduced virtual fundamental class.
It would be interesting to explore relations between the moduli of
punctured R-maps and other modular compactifications such as the
moduli of multi-scale differentials of \cite{BCGGM19P}, and the moduli
of logarithmic differentials of \cite{MaWi20}.

The work \cite{FaPa18} started a fuitful interplay between strata of
strictly meromorphic differentials and generalized double ramification
cycles.
In particular, \cite[Conjecture~A]{FaPa18} gives a Pixton-style
(conjectural) formula for the closure of the strata of strictly
meromorphic $1$-differentials in $\oM_{g, n}$, and this conjecture has
been further generalized to $k$-differentials in \cite{Sc18}.
The first proofs of these conjectures have been given in
\cite{BHPSS20P, HoSc21}.
As we will show in \cite{CJPPRSSZ22P}, the canonical virtual cycle of
punctured R-maps recovers the weighted fundamental class of twisted
$k$-canonical divisors of \cite{FaPa18}, and combined with the
structural formulae of log GLSM this yields a new proof of the
conjectures of \cite{FaPa18, Sc18}.

The case of abelian differentials is of major interest.
These strata have higher dimension than the strata of strictly
meromorphic differentials.
They are the subject of a different conjecture,
\cite[Conjecture~A]{PPZ19}, which connects the closure of strata of
abelian differentials to Witten's $r$-spin class.
The development of log GLSM was initiated by a conjectural proof
strategy for this conjecture, and the first foundational work
\cite{CJRS18P} constructs Witten's class as the reduced virtual class
of a moduli of stable log R-maps.
In \cite{CJPPRSSZ22P}, we will show that the reduced virtual cycle of
punctured R-maps recovers the closure of strata of abelian
differentials, and by employing the structural formulae of log GLSM,
complete the proof of \cite[Conjecture~A]{PPZ19}.

\subsection{The canonical theory}

\subsubsection{Punctured R-maps in general}

Just as for logarithmic R-maps, the targets of punctured R-maps are
defined over the classifying stack $\BC := \mathbf{BG}_m$, which has a
universal line bundle $\uomega$.
It is convenient to view $\BC$ as a log stack with the trivial log
structure.
Let $\punt \to \BC$ be a proper, DM-type morphism of log stacks.
A {\em punctured R-map} over a log scheme $S$ with target
$\punt \to \BC$ is a (2-)commutative diagram
\begin{equation}\label{eq:intro-punctured-R-map}
\xymatrix{
 && \punt \ar[d] \\
\pC \ar[rru]^{f} \ar[rr]_{\omega^{\log}_{\cC/S}}&& \BC
}
\end{equation}
where $f\colon \pC \to \punt$ is a punctured map over $S$ in the sense
of \cite{ACGS20P} (see \S \ref{ss:p-curve}), and the bottom arrow is defined by the pull-back $\uomega|_{\pC} = \omega^{\log}_{\cC/S}$, where $\omega^{\log}_{\cC/S}$ is the sheaf of relative log differentials of the log curves $\cC \to S$, or more precisely the dualizing sheaf of the underlying domain curve $\uC \to \uS$ twisted by markings.  {\em Pull-backs} of punctured R-maps are defined as the pull-back of the
corresponding punctured maps \cite[Definition 2.13]{ACGS20P}.

For a punctured R-map in \eqref{eq:intro-punctured-R-map}, the underlying family $\uC \to \uS$ obtained by removing log structures from the source $\pC \to S$ is a family of twisted pre-stable curves. Thus, the underlying $\ul{f} \colon \uC \to \ul{\punt}$ is a family of usual pre-stable maps.

\begin{remark}
  In GLSM, targets carry an additional $\CC_\omega^* := \CC^*$ action
  related to the {\em R-charge} in the physics literature.
  This {\em R-charge} structure translates to the commutative triangle
  \eqref{eq:intro-punctured-R-map}.
  The notion of punctured R-maps is a generalization of log R-maps in
  the sense of \cite[Definition~1.1]{CJR21} with images contained in the divisor
 $\infty$ of a log GLSM target $\fP$.
  This is explained further in Section~\ref{sec:logGLSM-targets}.
\end{remark}

\begin{remark}
  Punctured maps are variants of log maps which allow negative contact
  orders at marked points.
  They are developed in \cite{ACGS20P} to study boundary contributions
  in the log Gromov--Witten theory.
  In our case, punctured R-maps will naturally describe boundary
  contributions in log GLSM.
\end{remark}

\subsubsection{Targets from root constructions}\label{intro:target}

While the canonical and reduced theory of punctured R-maps are
constructed in a more general setting in \S\ref{s:P-map},
\S\ref{sec:red-theory} and \S\ref{sec:reduction-to-connected-inv}, we
will focus here on a more concrete set of targets covering a large
class of interesting applications Consider the following data:
\begin{enumerate}
 \item A smooth, proper DM-stack ${\xinfty}$ with projective coarse moduli.
 \item Two line bundles $\lbspin$ and $\lblog$ on $\xinfty$.
 \item Positive integers $r$, $d$ and $\ell$.
\end{enumerate}
We then form a Cartesian diagram of underlying stacks
\begin{equation}\label{eq:underlying-P-target}
\xymatrix{
\ul{\punt} \ar[rr] \ar[d]_{\uspin} && \xinfty\times \BC \ar[d]^{\lbspin\boxtimes \uomega} \\
\BG_m \ar[rr]_{\mbox{$r$-th power}} && \BG_m
 }
\end{equation}
where the two vertical arrows are induced by the line bundle $\lbspin\boxtimes \uomega$ and its universal $r$-th root $\uspin$, called the {\em  spin structure}.

Consider the log stack $\punt = (\ul{\punt}, \cM_{\punt})$ with the rank one Deligne--Faltings log structure $\cM_{\punt}$ given by the zero section of the line bundle $\cO(\punt) := \lblog\otimes \uspin^{-d}$, see \eqref{eq:generic-rank1-DF}.

Alternatively, consider the log stack
\begin{equation}\label{eq:A}
\cA = ([\A^1\big/\CC^*], \cM_{\cA})
\end{equation}
with the divisorial log structure $\cM_{\cA}$ given by the  origin of $[\A^1\big/\CC^*]$. Let $\ainfty  = (\BG_m, \cM_{\ainfty}) \subset \cA$ be the strict closed substack given by the unique closed point of $\cA$.  Then the  log structure $\cM_{\punt}$ is defined via the strict morphism
\begin{equation}\label{eq:l=1-log}
\xymatrix
{
\punt \ar[rrr]^{\cO(\punt) := \lblog\otimes \uspin^{-d}} &&& \ainfty,
}
\end{equation}
as in \eqref{eq:generic-rank1-DF}. We then form the Cartesian diagram
\begin{equation}\label{eq:g1-t-root}
\xymatrix{
\punt^{1/\ell} \ar[rr] \ar[d]_{\cO(\punt^{1/\ell})} && \punt \ar[d]^{\cO(\infty)} \\
\ainfty^{1/\ell} \ar[rr]_{\text{$\ell$-th power}} && \ainfty
}
\end{equation}
where $\ainfty^{1/\ell} \cong \ainfty$ with the superscript keeping
track of the roots, see \eqref{eq:ell-root-target}.
In particular, we have the universal $\ell$-th root
$\cO(\punt^{1/\ell})^{\ell} \cong \cO(\punt)$.
Our target is
\begin{equation}\label{eq:punt}
\punt^{1/\ell} \to \BC.
\end{equation}
\begin{remark}\label{rem:intro-ell=1}
  As in \cite[Remark~1.7]{CJR21}, different choices of input data can
  lead to the same target $\punt$.
  In fact, it will be shown in Proposition~\ref{prop:one-step-target}
  that for any choice of input data (1), (2), (3), the input data may
  be modified in such a way that $\ell = 1$ without changing the
  target $\infty^{1/\ell}$.
  On the other hand, the flexibility of different choices of $\ell$ is
  crucial for relating different theories such as for the LG/CY
  correspondence, as explained below in \S\ref{sss:intro-change-ell}.
\end{remark}
Because of this remark, unless further specified, we will assume
$\ell = 1$, so that $\punt = \punt^{1/\ell}$.

%

\begin{remark}\label{rem:relation-rubber-maps} 
  Consider the case $r = 1$, $d = 0$, $\ell = 1$, $\lbspin = \cO$, and
  that $\infty_\cX$ is a smooth projective variety.
  Then, punctured R-maps to the corresponding target $\infty$ are an
  \emph{unexpanded} analog of the rubber stable maps of
  \cite[Section~2.4]{GrVa05} to the $\PP^1$-bundle
  $\PP_{\infty_\cX}(\lblog \oplus \cO)$.
\end{remark}
\begin{remark}
  Consider the case $r = 1$, $d = k$, $\ell = 1$ and $\infty_\cX$ is a
  geometric point.
  Then $\lbspin$ and $\lblog$ are automatically trivial.  
  In this case, punctured R-maps are an analog of the $k$-canonical
  divisors and multi-scale differentials studied in \cite{FaPa18,
    Sc18, BCGGM19P}, but without expansions/level structures.
  An explicit example in the $g=1$ case is studied in
  Section~\ref{ss:g=1-calculation}.
\end{remark}

\subsubsection{The stability}
For a target $\punt \to \BC$ as in \S \ref{intro:target}, a punctured R-map $f \colon \pC \to \punt$ over $S$ is {\em stable} if
\begin{enumerate}
 \item The underlying morphism $\ul{f} \colon \uC \to \ul{\punt}$ of $f$ is representable,
 \item The coarse moduli of the composition $\uC \to \ul{\punt} \to \xinfty$ is stable in the usual sense.
\end{enumerate}
This stability condition is entirely determined by the underlying map
$\ul{f}$.

\subsubsection{The discrete data}
We will observe in \eqref{eq:restrict-to-marking} that
the restriction $\ul{f}|_{p_i}$ to the $i$-th marking $p_i \subset \uC$ factors through the projection \[
\ul{\punt_{\bk}} := \ul{\punt}\times_{\BC}\spec \bk  \to \ul{\punt}
\]
where $\spec \bk \to \BC$ is the  universal torsor. This defines the {\em $i$th evaluation morphism}
\begin{equation}\label{eq:intro-evaluation}
\ev_i \colon \ul{S} \to  \ocI\punt_\bk
\end{equation}
with $\ocI\punt_\bk$ the rigidified cyclotomic inertia stack of $\punt_\bk$, see \cite[Section~3]{AGV08}.

A punctured R-map $f \colon \pC \to \punt$ over $S$ is said to have discrete data
\begin{equation}\label{eq:intro-P-data}
(g, \beta, \ddata = \{(\ogamma_i, c_i)\}_{i=1}^n)
\end{equation}
if $\uC \to \ul{S}$ is a genus $g$ twisted curve with $n$ marked points, the composition
$\uC \to \ul{\punt} \to \xinfty$ has curve class
$\beta \in H_2(\xinfty)$, the $i$th evaluation $\ev_i $ factors
through a connected component (or \emph{sector})
$\ogamma_i \subset \ocI\punt_\bk$ and the $i$th marking has contact
order $c_i \in \ZZ$ for all $i$.

\subsubsection{The canonical virtual cycle}
Denote by $\SH_{g,\ddata}(\punt,\beta)$ the category of stable
punctured R-maps to $\punt \to \BC$ with the discrete data
\eqref{eq:intro-P-data}.
We prove in \S \ref{s:P-map}:

\begin{theorem}[Theorem \ref{thm:PR-moduli} and \S \ref{ss:canonical-theory}]\label{thm:intro-canonical-theory}
$\SH_{g,\ddata}(\punt,\beta)$ is a proper log Deligne--Mumford stack equipped with a {\em canonical perfect obstruction theory}, hence a {\em canonical virtual cycle} $[\SH_{g,\ddata}(\punt,\beta)]^{\vir}$.
\end{theorem}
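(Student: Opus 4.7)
My strategy has three main parts: the construction of the stack, the verification of properness, and the construction of the canonical perfect obstruction theory.

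For the construction, I would build $\SH_{g,\ddata}(\punt,\beta)$ by decomposing the moduli problem into two pieces. A punctured R-map is a punctured log map $f\colon \pC \to \punt$ together with a distinguished $2$-isomorphism between the composition $\pC \to \punt \to \BC$ and the classifying morphism of $\omega^{\log}_{\cC/S}$. The first piece --- the moduli of punctured log maps to $\punt$ with fixed discrete data $\ddata$ --- is a log algebraic stack locally of finite type by Abramovich--Chen--Gross--Siebert \cite{ACGS20P}. The R-map condition can then be imposed as a $2$-fiber product over the stack of punctured log curves equipped with two maps to $\BC$ (namely $f\circ(-)$ and the classifying morphism of $\omega^{\log}$), intersected with the diagonal. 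Since the stability hypotheses force $\ul f$ to be representable and its coarse composition to $\xinfty$ to be Kontsevich stable, the resulting stack is of finite type with finite automorphism groups, hence Deligne--Mumford; fineness and saturatedness of the log structure are inherited from the ACGS construction.

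For properness, I would apply the valuative criterion over a DVR $R$ with fraction field $K$. Boundedness is immediate: Kontsevich stability on $\xinfty$ bounds the underlying maps, the spin data of $r$-th roots is a torsion condition with finitely many isomorphism classes once the underlying target is fixed, and the $\ell$-th root and DF log structures contribute only discrete choices. For existence and uniqueness of the limit, I would first extend the underlying punctured log map using the valuative criterion for punctured maps of \cite{ACGS20P} (possibly after a ramified base change of $R$), and then extend the R-map identification $f^*\uomega \cong \omega^{\log}_{\cC/S}$. The key point is that both line bundles extend uniquely to the family over $R$, and the isomorphism over $K$ then extends uniquely because any two extensions differ by an element of $\Gm(R)$ that specializes to the identity over $K$.

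For the canonical perfect obstruction theory, I would use the standard construction relative to the stack of punctured curves mapping to $\BC$. Let $\pi\colon \cC \to \SH_{g,\ddata}(\punt,\beta)$ be the universal curve and $f\colon \pC \to \punt$ the universal punctured R-map. The target $\punt \to \BC$ is log smooth, being built from $\xinfty \times \BC$ by root constructions (in the spin direction) and by the rank-one Deligne--Faltings log structure, all relative to $\BC$. Hence the relative log tangent bundle $\TT_{\punt/\BC}$ is a vector bundle on $\punt$, and the complex $(R\pi_* f^* \TT_{\punt/\BC})^\vee$ is perfect of amplitude $[-1,0]$, giving the desired perfect obstruction theory relative to the base; absolute perfection follows from log smoothness of the stack of punctured log curves over $\BC$.

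The main obstacle will be the properness step, specifically the compatibility of the R-map twist $f^*\uomega \cong \omega^{\log}_{\cC/S}$ under degeneration. At a node of the central fiber where contact orders are prescribed by $\ddata$, the log differential $\omega^{\log}_{\cC/S}$ interacts nontrivially with the punctured structure (including possibly negative contact orders), and one must verify that the balancing condition for the R-map identification matches the balancing automatically imposed by the ACGS theory. Once these numerical compatibilities are checked, the extension argument proceeds by the logarithmic valuative criterion and the rigidity of the torsion spin structures.
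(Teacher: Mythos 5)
Your construction of the stack as a fiber product imposing the R-condition on ACGS punctured maps to $\punt$ is a workable variant of the paper's decomposition (the paper instead writes $\SH_{g,\ddata}(\punt,\beta)$ as the fiber product of punctured maps to $\ainfty$ with the stack of underlying R-maps over maps to $\ul{\ainfty}$), but your properness step has a genuine gap in its order of operations. The target $\punt$ is proper only \emph{over} $\BC$, and $\BC$ is not separated, so ``first extend the punctured map to $\punt$, then extend the R-identification'' does not work: a limit of the punctured map chosen without reference to the R-structure is neither provided by the ACGS valuative criterion in the form you need nor unique --- different extensions differ by twisting the spin/line-bundle data along components of the central fiber --- and for a general choice the isomorphism $f^*\uomega \cong \omega^{\log}_{\cC/S}$ over $K$ simply does not extend over $R$, since an isomorphism of line bundles on the generic fiber of a family over a DVR need not extend (compare $\cO$ with $\cO(E)$ for $E$ a vertical component). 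Your $\Gm(R)$ argument only addresses uniqueness of an extension of the isomorphism once it exists; existence is the crux. The paper never separates the R-structure from the map: it extends the underlying R-map directly as a stable map into the proper family $\ul{C}\times_{\BC}\ul{\punt}$ over the extended coarse curve (the $\BC$-structure being pinned by $\omega^{\log}$), checks that no component gets contracted, and only then invokes the relative properness of the punctured structure over the underlying maps from ACGS. Boundedness is likewise not ``immediate''; it is reduced to finiteness of punctured maps to the family $\infty_T \to T$ over a finite-type chart.

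The obstruction-theory step also rests on a false premise: $\punt \to \BC$ is \emph{not} log smooth, because its rank-one Deligne--Faltings log structure is induced by a section that vanishes identically (the whole target sits ``at infinity''), so the standard construction for log maps to a log smooth target, taken relative to the stack of punctured curves over $\BC$, does not apply. Relative to curves alone, deformations of the induced punctured map to $\ainfty$ (degeneracies, contact data) are obstructed in ways not captured by $R\pi_*f^*T^{\log}_{\punt/\BC}$, and the stack of punctured curves is not known to be pure-dimensional, so even granting an obstruction theory you could not define the virtual class by pullback. The paper instead works relative to $\fM := \fM_{g,\ddata'}(\ainfty)$, the stack of punctured maps to the universal target: the assumed strict smoothness of $\punt \to \BC\times\ainfty$ makes $\Omega_{\punt/\BC} := \LL_{\punt/\BC\times\ainfty}$ a vector bundle, the morphism $\rd\pi_*\big(f^*\Omega_{\punt/\BC}\otimes\omega^{\bullet}_{\pi}\big) \to \LL_{\SH/\fM}$ is a perfect obstruction theory by ACGS, and the virtual cycle is defined by virtual pullback using that $\fM_{g,\ddata'}(\ainfty)$ is reduced of pure dimension $3g-4+n$. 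You would need to either adopt this relative setup or supply a genuinely different argument for why your complex controls deformations and for the choice of a pure-dimensional base.
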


\begin{remark}
  Pushing forward the canonical virtual cycle in the setup of
  Remark~\ref{rem:relation-rubber-maps} to the moduli space of stable
  maps to $\infty_\cX$ is expected to concide with the double
  ramification cycles with the target $\infty_\cX$ as in \cite{JPPZ20}.

  Further specializing $\infty_\cX$ to a geometric point, we obtain
  the original double ramification cycle \cite{JPPZ17} on the moduli
  of curves.
\end{remark}
\begin{remark}
  In general, we expect that pushing the canonical virtual cycle to
  the moduli of ordinary R-maps to $\ul\infty$ yields the pullback of
  an orbifold generalization of the twisted double ramification cycle
  of \cite{BHPSS20P}.
  In particular, we expect this to lead to a Pixton-type formula for
  the canonical virtual cycle, and such a formula might also have
  interesting applications to the Gromov--Witten theory of complete
  intersections.
\end{remark}

\subsection{The reduced theory}

Our main interest lies in the reduced theory introduced next, since in applications,
it determines the Gromov--Witten theory of complete
intersections (with ambient insertions).

\subsubsection{Further input}\label{sss:intro-condition-for-red}
To construct  the reduced theory, we further require the following:

\begin{enumerate}
 \item All contact orders in \eqref{eq:intro-P-data} are negative.
 \item A {\em transverse superpotential} $W$.
 \item A proper morphism $F \colon \UH_{g,\ddata}(\punt,\beta) \to \SH_{g,\ddata}(\punt,\beta)$ where $\UH_{g,\ddata}(\punt,\beta)$ is the stack of stable punctured R-maps with {\em uniform maximal degeneracy}.
\end{enumerate}
While (1) is clear, we explain (2) and (3) below.

\subsubsection{Superpotential}

A superpotential is a section $W$ of the line bundle
$\uomega \otimes \cO(\ttwist \infty)$ over $\punt$ for some positive integer
$\ttwist$ called the {\em order} of $W$.
It is said to be \emph{transverse (to the zero section)} if its
critical locus is empty, see \S\ref{sss:critical-loci-transverse}.

As shown in \S \ref{ss:GLSM-potential}, the notion of superpotential
is compatible with the superpotential of log GLSM
\cite[\S3.4.1]{CJR21} along logarithmic boundaries.

\subsubsection{Punctured R-maps with uniform extremal degeneracies}\label{sss:intro-extremal-degeneracy}
Let $f \colon \pC \to  \punt$ be a punctured R-map over a logarithmic point $S$. For each irreducible component $Z \subset \pC$, there is an element $e_{Z} \in \ocM_{S}$ in the characteristic sheaf of the log structure $\cM_S$,  which measures the ``speed'' of degeneration of $Z$ into the logarithmic boundary of the target, which is called the {\em degeneracy} of $Z$, see \S \ref{sss:degeneracies} for the precise definition.

The punctured R-map is said to have {\em uniform maximal degeneracy}
(resp. uniform minimal degeneracy) if the set of degeneracies has a
unique maximum (resp. unique minimum) under the natural monoid
ordering of $\ocM_S$.
Intuitively, this amounts to require all maximally (resp. minimally)
degenerated components have the same ``speed'' of degeneration.

On the category level, the tautological morphism
$F\colon \UH_{g,\ddata}(\punt,\beta) \to \SH_{g,\ddata}(\punt,\beta)$
exhibits $\UH_{g,\ddata}(\punt,\beta)$ as the subcategory of
$\SH_{g,\ddata}(\punt,\beta)$ parameterizing stable punctured R-maps
with {\em uniform maximal degeneracy}.
On the stack level, similarly to \cite{CJR21} this tautological
morphism can be viewed as a {\em logarithmic pricipalization} such
that the fiberwise maximal degeneracies glues to a global section of
the characteristic sheaf
$e_{\max} \in \Gamma(\ocM_{\UH_{g,\ddata}(\punt,\beta)})$.
As explained in \S\ref{sss:log-line-bundles}, this global section
induces a tautological line bundle $\cO(e_{\max})$ which, as shown
below, measures precisely the difference between the canonical and
reduced theory over $\UH_{g,\ddata}(\punt,\beta)$.%
\footnote{The line bundle in \cite{CJR21} analogous to $\cO(e_{\max})$ is $\mathbf L_{\max}^\vee$. }

\subsubsection{Reduced versus Canonical}

The stack $\UH_{g,\ddata}(\punt,\beta)$ admits a canonical perfect
obstruction theory by pulled back from $\SH_{g,\ddata}(\punt,\beta)$,
hence the canonical virtual cycle
$[\UH_{g,\ddata}(\punt,\beta)]^{\vir}$ with the push-forward property
\[
F_*[\UH_{g,\ddata}(\punt,\beta)]^{\vir} = [\SH_{g,\ddata}(\punt,\beta)]^{\vir}.
\]

In \S \ref{sec:red-theory}, we establish the reduced  theory and its relations to the canonical theory as follows.

\begin{theorem}[Theorem \ref{thm:red-POT}, Corollary \ref{cor:red-canonical-vir}]\label{thm:intro-red-theory}
  Further assuming the conditions in
  \S\ref{sss:intro-condition-for-red}, the stack
  $\UH_{g,\ddata}(\punt,\beta)$ admits a {\em reduced perfect
    obstruction theory} hence a {\em reduced virtual cycle}
  $[\UH_{g,\ddata}(\punt,\beta)]^{\red}$ satisfying
\[
[\UH_{g,\ddata}(\punt,\beta)]^{\vir} = \ttwist c_1 \Big(\cO(e_{\max}) \Big) \cdot [\UH_{g,\ddata}(\punt,\beta)]^{\red}.
\]
\end{theorem}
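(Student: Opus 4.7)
The plan is to construct the reduced theory by identifying inside the canonical obstruction complex a line-bundle cosection arising from the superpotential $W$, and then relating the two virtual cycles through the first Chern class of this line bundle. The construction parallels that of the reduced theory for log R-maps in \cite{CJR21}, and the uniform maximal degeneracy condition on $\UH_{g,\ddata}(\punt,\beta)$ is what allows us to record the defect of the cosection as the tautological line bundle $\cO(e_{\max})$.

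First I would construct a \emph{$W$-cosection} of the canonical obstruction sheaf. Since $W$ is a section of $\uomega \otimes \cO(\ttwist\infty)$ on $\punt$, pulling it back along the universal punctured R-map $f \colon \pC \to \punt$ gives a section of $\omega^{\log}_{\pC/S} \otimes f^*\cO(\ttwist\infty)$ on the universal punctured curve. Via Serre duality along the fibers of $\pC \to S$, this section converts into a morphism from the canonical obstruction sheaf to the structure sheaf, and the transversality of $W$ ensures this morphism is non-zero precisely away from the locus where $f$ factors through the boundary $\infty$.

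Next I would analyze this cosection along the maximally degenerate locus. By a local log/tropical computation on a neighborhood of a maximally degenerate component $Z \subset \pC$, the pull-back $f^*W$ vanishes to order exactly $\ttwist \cdot e_Z$, with the factor $\ttwist$ coming from the order of $W$ along $\infty$ and $e_Z \in \ocM_S$ measuring the degeneracy of $Z$. The uniform maximal degeneracy condition assembles these fiberwise orders into the global section $e_{\max} \in \Gamma(\ocM_{\UH})$, hence into the line bundle $\cO(e_{\max})^{\otimes \ttwist}$. Twisting the target of the cosection by this line bundle absorbs exactly the vanishing along maximally degenerate components and produces a globally surjective cosection landing in $\cO(e_{\max})^{\otimes \ttwist}$. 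On $\SH_{g,\ddata}(\punt,\beta)$ the fiberwise maxima do not glue into a global section, which is precisely why the reduction must take place after passing to $\UH_{g,\ddata}(\punt,\beta)$.

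The reduced perfect obstruction theory $E^\bullet_{\red}$ is then defined as the (shifted) kernel of this surjective twisted cosection, fitting into a distinguished triangle
\[
E^\bullet_{\red} \longrightarrow E^\bullet_{\mathrm{can}} \longrightarrow \cO(e_{\max})^{\otimes \ttwist} \longrightarrow E^\bullet_{\red}[1].
\]
Standard virtual-pullback comparison arguments (e.g.\ those of Manolache, or the cosection machinery of Kiem--Li read in reverse) applied to this triangle then yield
\[
[\UH_{g,\ddata}(\punt,\beta)]^{\vir} = c_1\bigl(\cO(e_{\max})^{\otimes \ttwist}\bigr) \cdot [\UH_{g,\ddata}(\punt,\beta)]^{\red} = \ttwist\, c_1\bigl(\cO(e_{\max})\bigr) \cdot [\UH_{g,\ddata}(\punt,\beta)]^{\red}.
\]

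The hard part will be the precise local statement that $f^*W$ vanishes to order \emph{exactly} $\ttwist \cdot e_Z$ along a maximally degenerate component, with no spurious corrections from the $r$th-root spin twist $\uspin$, the $\ell$th root of \eqref{eq:g1-t-root}, or the negative contact orders at punctures. This requires an explicit computation of $W$ in étale charts of $\punt \to \BC$ near $\ainfty \subset \cA$, verifying that the section of $\cO(e_{\max})^{\otimes \ttwist}$ produced by the $W$-cosection matches the tautological one, so that both the twist factor and the multiplier $\ttwist$ appear with the correct multiplicity in the final Chern class formula.
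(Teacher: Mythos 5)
Your overall strategy (a cosection coming from the superpotential, a twist by $\cO(e_{\max})^{\otimes\ttwist}$ enabled by uniform maximal degeneracy, a kernel triangle, and a Manolache-type comparison producing the factor $\ttwist\,c_1(\cO(e_{\max}))$) is the same in spirit as the paper's, but two steps in your construction do not work as stated. First, the cosection cannot be obtained by pulling back the section $W$ and invoking Serre duality: a cosection of the canonical obstruction $H^1(\EE)=R^1\pi_*f^*\Omega^\vee_{\punt/\BC}$ must come from \emph{differentiating} the superpotential, and $\diff W$ is not globally defined on $\punt$ (only its vanishing locus is, cf.\ Definition~\ref{def:transverse-superpotential}). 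Moreover your transversality claim that the morphism is ``non-zero away from the locus where $f$ factors through $\infty$'' is vacuous here, since every punctured R-map lands in $\infty$ by definition. The paper's fix is precisely the twisted superpotential $\tW = W\cdot p^{\ttwist}$ on the expanded target, i.e.\ the total space of $\cO(\punt)^\vee\boxtimes\cO(\Delta_{\max})$ (Lemma~\ref{lem:super-potential-equivalence}), where $\diff\tW$ \emph{is} well defined \eqref{eq:diff-potential}, together with the canonical lift $f_{e_{\max}}\colon\pC\to\punt_{e_{\max},\circ}$ of Corollary~\ref{cor:exp-R-map}. The existence of this lift is exactly where both the uniform maximal degeneracy and the hypothesis that \emph{all} contact orders are negative enter (one needs $e_{\max}-\mathfrak{f}^\flat(\delta)\in\ocM_{\pC}$), and the ``vanishing to order $\ttwist e_Z$'' heuristic you defer to a local computation is made rigorous by this lift combined with the factorization through $\omega$ rather than $\omega^{\log}$ at the punctures (Lemma~\ref{lem:factor-through-omega}); surjectivity of the resulting cosection then comes from maximally degenerate components not mapping into $\crit(\tW)$ (Proposition~\ref{prop:cosection-surjective}), not from any statement about the image meeting $\infty$.

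Second, even granting a surjective cosection $\sigma\colon H^1(\EE)\to\cO(\ttwist\Delta_{\max})|_{\UH}$, defining $\EE^{\red}$ as the co-cone of $\EE\to\FF$ is not by itself a reduced perfect obstruction theory: you must show that the canonical obstruction-theory morphism $\TT_{\UH/\UM}\to\EE$ composes to zero with $\EE\to\FF$, so that it factors through $\EE^{\red}$, and you must check perfectness of $\EE^{\red}$ in $[0,1]$ (which again uses surjectivity of $\sigma$ for the vanishing of $H^2$). Your proposal never addresses this factorization; in the paper it is the content of Theorem~\ref{thm:red-POT}, proved by mapping $\UH$ to the twisted Hodge bundle $\fH$ (a direct image cone) and using that the analogous composition over $\fH$ vanishes (Lemmas~\ref{lem:H-POT-vanishing} and~\ref{lem:POT-R-H}). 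Only once both obstruction theories form a compatible triple with $\FF=\cO(\ttwist\Delta_{\max})[-1]$ does the cited result of Manolache yield $[\UH]^{\vir}=c_1(\cO(\ttwist\Delta_{\max}))\cap[\UH]^{\red}=\ttwist\,c_1(\cO(e_{\max}))\cdot[\UH]^{\red}$ (Corollary~\ref{cor:red-canonical-vir}). So the gap is twofold: the cosection itself must be built via the expanded target and the lift $f_{e_{\max}}$, and the factorization of the canonical theory through the reduced complex must be proved rather than assumed.
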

In particular, the reduced virtual dimension is one higher than the
canonical virtual dimension.
This is analogous to what happens for the Gromov--Witten theory of K3
surfaces, but is unlike the reduced virtual cycle considered in
\cite{CJR21} where the virtual dimension remains the same after reduction.

\subsection{Relating connected and disconnected theories}

Punctured R-maps with disconnected domains naturally appear in the
localization formula for log R-maps \cite{CJR23P}.
This is similar to the relative localization of \cite{GrVa05}.
{To obtain a more effective localization formula, it is necessary to
reduce disconnected invariants to connected ones.
In \S\ref{sec:reduction-to-connected-inv}, we establish such a
\emph{product formula} on the virtual cycle level, where the
flexibility of punctured maps of \cite{ACGS20P} plays a key role.
In the analogous setting of rubber stable maps, such a product formula
has been unexpected, as stated in \cite[Section~1.5]{MaPa06}:
``There is no product rule relating connected and disconnected rubber invariants.''}

\subsubsection{Further modifications}

Consider a collection of discrete data of punctured R-maps
\begin{equation}\label{eq:intro-n-P-data}
\{(g_j, \beta_j, \ddata_j) \}_{j \in [m]}
\end{equation}
where $[m] := \{1,2, \cdots, m\}$. We introduce the following variations of the stacks.

\smallskip

\noindent
$\bullet$ $\RMm_{g_j,\ddata_j}(\punt,\beta_j) \colon$ the stack of stable punctured R-maps with both uniform maximal and uniform minimal degeneracies.

\noindent
$\bullet$ $\RA_{[m]} \colon$ the stack of stable punctured R-maps such that
\begin{enumerate}
 \item Their domains have $m$ connected components labeled by $[m]$, each with discrete data \eqref{eq:intro-n-P-data}.
 \item The set of degeneracies are ordered, in which case the punctured R-maps are said to be {\em aligned}.
\end{enumerate}

In \S \ref{sec:reduction-to-connected-inv}, the stack $\RMm_{g_j,\ddata_j}(\punt,\beta_j)$ and $\RA_{[m]}$ are shown to be proper log Deligne--Mumford stacks with the tautological morphisms
\[
F^{\curlywedge} \colon \RMm_{g_j,\ddata_j}(\punt,\beta_j) \to \UH_{g_j,\ddata_j}(\punt,\beta_j) \ \ \mbox{and} \ \ F^{\diamondsuit}_{[m]} \colon \RA_{[m]} \to \prod_{j \in [m]} \RMm_{g_j,\ddata_j}(\punt,\beta_j).
\]
As in \S \ref{sss:intro-extremal-degeneracy}, we have global sections
\[
e_{\max,j} \in \Gamma\left(\ocM_{\RMm_{g_j,\ddata_j}(\punt,\beta_j)} \right) \ \ \mbox{and} \ \ e_{\max} \in \Gamma\left(\ocM_{\RA_{[m]} } \right)
\]
which are fiberwise the maximal degeneracies of the corresponding punctured R-maps.

Similarly, the condition of uniform minimal degeneracies is also a logarithmic principalization (see \S \ref{sss:p-UM-connected-map}), hence leading to global sections
\[
e_{\min,j} \in \Gamma\left(\ocM_{\RMm_{g_j,\ddata_j}(\punt,\beta_j)} \right) \ \ \mbox{and} \ \ e_{\min} \in \Gamma\left(\ocM_{\RA_{[m]} } \right)
\]
which are fiberwise the minimal degeneracies of the corresponding
punctured R-maps.
Their corresponding line bundles define the tautological classes (see
\eqref{eq:psi-min-class})
\[
\psi_{\min,j} := - c_1\left( \cO(e_{\min,j})\right) \ \ \mbox{and} \ \  \psi_{\min} := - c_1\left( \cO(e_{\min})\right).
\]

\begin{remark}
These  divisor classes $\psi_{\min,j}$ and $\psi_{\min}$ appear naturally in the log GLSM localization formula of  \cite{CJR23P}. In other  words, the subcategory of punctured maps with uniform minimal degeneracy is the suitable category in which the virtual localization formula holds.
\end{remark}

\begin{remark}\label{rem:intro-RA}
One may consider the stack $\UH_{[m]}$ of stable punctured R-maps with disconnected domains, which have uniform maximal degeneracies rather than aligned, see \S \ref{sss:curlywedge-R-map-moduli}. As aligned punctured R-maps automatically have uniform maximal degeneracies, there is a tautological morphism \eqref{diag:removing-orders}:
\[
F^{\curlywedge} \colon \RA_{[m]} \to \UH_{[m]}.
\]

In \S \ref{sec:red-theory}, we will construct a natural reduced perfect obstruction theory of $\UH_{[m]}$, which pulls back to the natural reduced perfect obstruction theory of $\RA_{[m]}$, hence defines the natural reduced virtual cycles $[\RA_{[m]}]^{\red}$. While $\UH_{[m]}$ is the natural category for constructing the reduced perfect obstruction theory, it is important to work with the virtual cycle $[\RA_{[m]}]^{\red}$ for two reasons.

First, there is no morphism from $\UH_{[m]}$ to $\RMm_{g_j,\ddata_j}(\punt,\beta_j)$ or  to $\UH_{g_j,\ddata_j}(\punt,\beta_j)$, as each connected component may not have uniform maximal degeneracy. This is crucially needed in the comparison of disconnected and connected invariants. Second, as shown in \cite{CJR23P} the stack $\RA_{[m]}$ naturally appears in the localization calculation of log GLSM.
\end{remark}

\subsubsection{Reduction to the connected theory}

Pulling back from $\UH_{g_j,\ddata_j}(\punt,\beta_j)$,
$\RMm_{g_j,\ddata_j}(\punt,\beta_j)$ admits the canonical perfect
obstruction theory, as well as the reduced one if we further assume
\S\ref{sss:intro-condition-for-red}.
We obtain the corresponding virtual cycles
$[\RMm_{g_j,\ddata_j}(\punt,\beta_j)]^{\vir}$ and
$[\RMm_{g_j,\ddata_j}(\punt,\beta_j)]^{\red}$ respectively.
These variants do not change the virtual geometry in the sense that
they satisfy the virtual push-forwards
(Proposition~\ref{prop:birational-canonical-vcycle} and
Proposition~\ref{prop:birational-red-vcycle})
\[
F^{\curlywedge}_*[\RMm_{g_j,\ddata_j}(\punt,\beta_j)]^{\vir} = [\UH_{g_j,\ddata_j}(\punt,\beta_j)]^{\vir} \ \ \mbox{and} \ \ F^{\curlywedge}_*[\RMm_{g_j,\ddata_j}(\punt,\beta_j)]^{\red} = [\UH_{g_j,\ddata_j}(\punt,\beta_j)]^{\red}.
\]

Let $[\RA_{[m]}]^{\vir}$ be the canonical virtual cycle of $\RA_{[m]}$ defined by pulling back the canonical perfect obstruction theory along $F^{\diamondsuit}_{[m]}$.  Let $[\RA_{[m]}]^{\red}$ be the reduced virtual cycle as in Remark \ref{rem:intro-RA}. The disconnected and connected theory are related beautifully as follows, see Theorem \ref{thm:virtual-split-disconnected-source} and \eqref{eq:push-forward-tw-align-pun}.

\begin{theorem}\label{thm:intro-disconnected-to-connected}
  Let $t$ be a formal parameter. We have the canonical virtual push-forward
  \begin{equation}
    \label{eq:intro-diconnected-to-connected-canonical}
    F^{\diamondsuit}_{[m],*}\left(\frac{[\RA_{[m]}]^{\vir}}{-t - \psi_{\min}} \right) = \prod_{j \in [m]} \frac{[\RMm_{g_j,\ddata_j}(\punt,\beta_j)]^\vir}{-t - \psi_{\min, j}}.
  \end{equation}
Further assuming \S \ref{sss:intro-condition-for-red}, we have the reduced virtual push-forward
\[
F^{\diamondsuit}_{[m],*}\left( \frac{\ttwist t \cdot [\RA_{[m]}]^{\red}}{-t - \psi_{\min}} \right) = \prod_{j \in [m]}\frac{\ttwist t\cdot [\RMm_{g_j,\ddata_j}(\punt,\beta_j)]^{\red}}{-t - \psi_{\min, j}}.
\]
\end{theorem}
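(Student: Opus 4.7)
My plan is to reduce both formulas to a universal pushforward identity on the logarithmic modification $F^{\diamondsuit}_{[m]}$ using virtual pullback, prove this identity by a chamber-by-chamber computation on the alignment strata, and then bootstrap from the canonical identity to the reduced identity via Theorem~\ref{thm:intro-red-theory}.

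\textbf{Step 1 (virtual pullback).} The canonical perfect obstruction theory on $\RA_{[m]}$ is defined by pullback from the product $\prod_j \RMm_{g_j,\ddata_j}(\punt,\beta_j)$ along $F^{\diamondsuit}_{[m]}$. Manolache's virtual pullback formula therefore gives $[\RA_{[m]}]^{\vir} = (F^{\diamondsuit}_{[m]})^!\prod_j [\RMm_{g_j,\ddata_j}(\punt,\beta_j)]^{\vir}$, so by the projection formula it suffices to establish the purely cohomological identity
\[
F^{\diamondsuit}_{[m], *}\frac{1}{-t - \psi_{\min}} \;=\; \prod_{j \in [m]} \frac{1}{-t - \psi_{\min, j}}
\]
(capped against the product virtual class) in the operational Chow ring of $\prod_j \RMm_{g_j,\ddata_j}(\punt,\beta_j)$.

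\textbf{Step 2 (combinatorial pushforward on the alignment modification).} The morphism $F^{\diamondsuit}_{[m]}$ realizes $\RA_{[m]}$ as, essentially, the ``ordering modification'' of $\prod_j \RMm_{g_j,\ddata_j}$ along the loci where two or more of the $e_{\min, j}$ coincide in $\ocM$. It is stratified by open chambers $U_\sigma$ indexed by permutations $\sigma \in S_m$, corresponding to punctured R-maps whose minimal degeneracies satisfy $e_{\min, \sigma(1)} \poleq e_{\min, \sigma(2)} \poleq \cdots \poleq e_{\min, \sigma(m)}$, so that $e_{\min} = e_{\min, \sigma(1)}$ on $U_\sigma$. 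The line bundle $\cO(e_{\min})$ is thus the pullback of $\cO(e_{\min, \sigma(1)})$ twisted by the exceptional divisors encoding $e_{\min, \sigma(k+1)} - e_{\min, \sigma(k)} \succcurlyeq 0$. Performing the pushforward chamber by chamber and summing over $\sigma$, the identity of Step~1 reduces to the algebraic partial-fraction decomposition
\[
\prod_{j=1}^{m} \frac{1}{-t - x_j} \;=\; \sum_{j=1}^{m} \frac{1}{\prod_{k \neq j}(x_j - x_k)} \cdot \frac{1}{-t - x_j},
\]
where each term on the right is contributed by the $(m-1)!$ chambers sharing $\sigma(1) = j$, and the Lagrange-type weights $1/\prod_{k \neq j}(x_j - x_k)$ arise from the intersection theory of the exceptional divisors on $\RA_{[m]}$ via the projection formula applied to each chamber.

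\textbf{Step 3 (reduction to the reduced formula).} Substitute the canonical-to-reduced comparisons from Theorem~\ref{thm:intro-red-theory},
\[
[\RA_{[m]}]^{\vir} = \ttwist\, c_1\bigl(\cO(e_{\max})\bigr) \cdot [\RA_{[m]}]^{\red}, \qquad [\RMm_{g_j,\ddata_j}]^{\vir} = \ttwist\, c_1\bigl(\cO(e_{\max, j})\bigr) \cdot [\RMm_{g_j,\ddata_j}]^{\red},
\]
into the canonical identity just established. On $\RA_{[m]}$, $e_{\max} = \max_j e_{\max, j}$, which on each chamber $U_\sigma$ equals $e_{\max, \sigma(m)}$. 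Rearranging the resulting identity, the factor $\ttwist t$ emerges as the leading $t$-coefficient after the cancellation of the extra $c_1(\cO(e_{\max}))$ factors between the two sides; the formal parameter $t$ thus plays the role of a regularization which trivializes the otherwise piecewise-linear comparison.

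\textbf{Main obstacle.} The delicate part is Step~2: making rigorous the chamber-by-chamber pushforward requires a careful treatment of the Artin fan of $\RA_{[m]}$ and the intersection theory of its exceptional divisors, especially along the deep strata where three or more $e_{\min, j}$ coincide (where the cancellations of the Lagrange decomposition become most intricate). A secondary difficulty in Step~3 is that the maximum operation defining $e_{\max}$ is piecewise-linear rather than monoid-additive, so tracing the comparison through the alignment chambers requires additional bookkeeping to verify that the factor $\ttwist t$ is indeed the right normalization on the nose.
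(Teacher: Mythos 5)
There are genuine gaps in Steps 2 and 3. In Step 2, a chamber-by-chamber pushforward with Lagrange weights $1/\prod_{k\neq j}(x_j-x_k)$ is not a valid intersection-theoretic operation: proper pushforward in Chow does not decompose over the open strata indexed by orderings of the $e_{\min,j}$, and the Lagrange denominators are differences of divisor classes that are not invertible, so the partial-fraction identity cannot be given cycle-theoretic meaning this way. The paper avoids chambers entirely: it factors $F^{\diamondsuit}_{[m]}$ through the partial alignment maps $\ali_{i+1}$ (handled by virtual pushforward, since they are proper and birational) and through $\Bl_{\cK}$, which splits as a saturation followed by a fine log blow-up whose underlying map is the projective bundle $\PP(\bigoplus_i \cO(e_{\min,i}))$ with $\cO(-e_{\min})$ as its $\cO(1)$; the identity $\Bl_{\cK,*}\bigl(1/(-t-\psi_{\min})\bigr)=\prod_i 1/(-t-\psi_{\min,i})$ is then a Segre-class computation, $s_{-1/t}=c_{-1/t}^{-1}$, requiring no inversion of divisor differences (Proposition \ref{prop:push-forward-min}).

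Step 3 is the more serious structural error: you cannot obtain the reduced formula by substituting $[\cdot]^{\vir}=\ttwist\, c_1(\cO(e_{\max}))\cdot[\cdot]^{\red}$ into the canonical formula and ``canceling,'' since Chern-class factors cannot be cancelled inside a pushforward, and in fact the two sides are not even of matching virtual dimension for such a cancellation — the reduced theory on the disconnected/aligned side reduces by a single rank-one boundary complex built from the global $e_{\max}$, while the product of component-wise reduced theories reduces by $m$ such factors, which is exactly why there is no morphism from $\UH_{[m]}$ to the component-wise stacks and why the statement carries asymmetric powers of $\ttwist t$. The paper's actual mechanism is that the reduced obstruction theory does \emph{not} simply push through the partial alignments: on $\RAMm_{\setminus[i]}$ there are two reduced theories (the reduced and the $+$reduced of \eqref{eq:red-POT-partial-alignment} and \eqref{eq:red-POT-partial-alignment+}), differing by a line bundle $\bK_i$ (Lemma \ref{lem:+red-vs-red}), and the key computation $\ali_{i+1,*}c_1(\bK_{i})=-\ttwist\psi_{\min}$ (Lemma \ref{lem:push-forward-K}, proved by exhibiting the difference of divisors as supported in codimension two downstairs) yields $\ali_{[m],*}[\RAMm_{[m]}]^{\red}=\ttwist^{m-1}(-\psi_{\min})^{m-1}[\RMm_{[m]}]^{\red}$. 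The factor $\ttwist t$ then arises from the expansion
\begin{equation*}
\frac{t(-\psi_{\min})^{m-1}}{-t-\psi_{\min}}=\bigl(t(-\psi_{\min})^{m-2}+\cdots+t^{m-1}\bigr)+\frac{t^{m}}{-t-\psi_{\min}}
\end{equation*}
combined with the vanishing $\Bl_{\cK,*}\psi_{\min}^{k}=0$ for $k\le m-1$ coming from the projective-bundle factorization. None of this is recoverable from the canonical identity alone, so your Step 3 would need to be replaced wholesale by this comparison of reduced theories.
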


The formal parameter $t$ and the denominator $-t - \psi_{\min}$ will naturally appear in the log GLSM localization formula of
\cite{CJR23P}.
Their analogs are also part of the relative virtual localization formula \cite{GrVa05} of Graber--Vakil.
Thanks to the above reduction, we can and will now focus on the connected theory
for the rest of this introduction.

\begin{remark}
  We expect that the analog of
  \eqref{eq:intro-diconnected-to-connected-canonical} also holds for
  the rubber maps in relative Gromov--Witten theory (with the
  $\psi$-class at one end of the universal expansion playing the role
  of $\psi_{\min}$).
  We are not aware that this property has been observed in the
  literature on rubber maps.
\end{remark}

\subsection{Changing roots}
\label{sss:intro-change-ell}

Punctured R-maps are relatively well-behaved when varying $\ell$ in \eqref{eq:g1-t-root}. Recall the notation $\punt = \punt^{1/1}$.
On the stack level, there is a canonical morphism
\[
\nu_{\ell} \colon \SH^{\bullet}_{g,\ddata}(\punt^{1/\ell}, \beta) \to \SH^{\bullet}_{g,\bar{\ddata}}(\punt,\beta)
\]
where $\SH^{\bullet}$ represents any of the configurations $\SH, \UH$
or $\RMm$ as in \eqref{eq:canonical-stack-change-ell},
\eqref{eq:reduced-stack-change-ell} and
\eqref{eq:reduced-stack-change-ell-with-m}.
The discrete data $(g, \beta, \ddata)$ and $(g, \beta, \bar{\ddata})$
uniquely determine each other as shown in \S
\ref{sss:discrete-data-change-ell}.

On the cycle level, they are related by the virtual push-forwards (see Proposition \ref{prop:can-vir-change-ell}, Proposition \ref{prop:reduced-change-ell} and Corollary \ref{cor:reduced-change-ell-with-min}):

\begin{theorem}
We have the canonical virtual push-forward
\[
\nu_{\ell, *}[\SH_{g,\ddata}(\punt^{1/\ell}, \beta)]^{\vir} = \ell^{-1} \cdot [\SH_{g,\bar{\ddata}}(\punt,\beta)]^{\vir}.
\]
Further assuming \S\ref{sss:intro-condition-for-red}, we have the
reduced virtual push-forward
\begin{equation}
  \label{eq:intro-reduced-changing-roots}
  \nu_{\ell,*}[\UH_{g,\ddata}(\punt^{1/\ell}, \beta)]^{\red} = \ell^{-1} \cdot [\UH_{g,\bar{\ddata}}(\punt,\beta)]^{\red}.
\end{equation}
Furthermore, we have
\[
\nu_{\ell,*}\left(\frac{t\cdot [\RMm_{g,\ddata}(\punt^{1/\ell}, \beta)]^{\red}}{-t-\psi_{\min, \ddata}}\right) = \ell^{-1} \cdot \frac{(t/\ell)\cdot [\RMm_{g,\bar{\ddata}}(\punt,\beta)]^{\red}}{-(t/\ell)-\psi_{\min, \bar{\ddata}}}
\]
where $t$ is a formal parameter.
\end{theorem}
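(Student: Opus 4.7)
The plan is to treat all three formulas uniformly by (A) analyzing $\nu_{\ell}$ as a representable, proper morphism of degree $\ell^{-1}$, (B) comparing the relevant perfect obstruction theories under $\nu_{\ell}^{*}$, and (C) computing the pullback of $\psi_{\min}$. For (A), since $\punt^{1/\ell}\to\punt$ is the $\ell$-th root stack of $\punt$ along $\infty$, lifting an object of the unrooted moduli to the rooted moduli amounts to choosing a compatible $\ell$-th root of $f^{*}\cO(\punt)$, and such roots form a $\mu_{\ell}$-torsor (once the discrete data $\bar{\ddata}\leftrightarrow\ddata$ are matched as in \S\ref{sss:discrete-data-change-ell}). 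Consequently $\nu_{\ell}$ is representable, proper, \'etale, and of generic degree $\ell^{-1}$; the $\mu_{\ell}$-automorphisms introduced by the root account for the $\ell^{-1}$ factor appearing in all three identities.

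For (B), the root stack morphism $\punt^{1/\ell}\to\punt$ is log \'etale, so its relative log cotangent complex vanishes and the canonical perfect obstruction theory on $\SH_{g,\ddata}(\punt^{1/\ell},\beta)$ is identified, via $\nu_{\ell}^{*}$, with the canonical perfect obstruction theory on $\SH_{g,\bar{\ddata}}(\punt,\beta)$. Applying virtual push-forward yields the first formula. For the reduced theory, Theorem~\ref{thm:intro-red-theory} realizes the reduction as a twist of the canonical obstruction theory by $\cO(e_{\max})^{\otimes\ttwist}$, and one must check that this twist is compatible with $\nu_{\ell}^{*}$. This reduces to tracking the transformation of $\cO(e_{\max})$ under the root, controlled by the same characteristic-monoid analysis as in Step (C). The same virtual push-forward argument then gives \eqref{eq:intro-reduced-changing-roots} on $\UH$ and its $\RMm$-analog.

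For (C), I would compute the pullback of $\psi_{\min}$ directly from the induced map on characteristic monoids of the minimal log structures under $\nu_\ell$. The outcome (accounting for the convention used to define the tautological line bundle $\cO(e_{\min})$ in the rooted and unrooted theory) is the scaling
\[
\nu_{\ell}^{*}\psi_{\min,\bar{\ddata}} \;=\; \tfrac{1}{\ell}\,\psi_{\min,\ddata},
\]
from which one obtains
\[
\frac{t}{-t-\psi_{\min,\ddata}} \;=\; \nu_{\ell}^{*}\!\left(\frac{t/\ell}{-(t/\ell)-\psi_{\min,\bar{\ddata}}}\right)
\]
by dividing numerator and denominator of the left-hand side by $\ell$. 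The third identity then follows from the projection formula combined with the $\RMm$-version of the second formula established in Step (B).

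The main obstacle I anticipate is Step (B) for the reduced theory: one must carefully verify that the $\cO(e_{\max})^{\otimes\ttwist}$-twist commutes with $\nu_{\ell}^{*}$ so that the reduced obstruction theory on the rooted moduli matches the $\nu_{\ell}^{*}$-pullback of its unrooted counterpart without any residual contribution. Once (A) and (B) are in place, Step (C) is a projection-formula manipulation driven by the characteristic-monoid computation.
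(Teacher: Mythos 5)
Your Step (A) is where the argument breaks, and the break propagates. The set of $\ell$-th roots of $f^*\cO(\punt)$ on a genus-$g$ curve is not a $\mu_\ell$-torsor: it is a torsor under the $\ell$-torsion of the Picard group, of order $\ell^{2g}$, which is exactly why the analogous comparison in orbifold Gromov--Witten theory carries the factor $\ell^{2g}\cdot\ell^{-1}$ rather than $\ell^{-1}$ (see the remark following Proposition~\ref{prop:can-vir-change-ell}). The whole subtlety in the punctured/logarithmic setting is that the lift to $\punt^{1/\ell}$ is pinned down by lifting the monoid sections (equivalently, by rooting the log structure of the base), so that only one of the $\ell^{2g}$ roots can occur; this is not a formal root-stack fact but the content of Lemma~\ref{lem:change-target}, Lemma~\ref{lem:map-taking-ell-root} and the explicit lifting over $S^{1/\ell}$ in the proof of Proposition~\ref{prop:can-vir-change-ell}. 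Moreover $\nu_\ell$ is not \'etale over the boundary, and even a correct generic-degree statement for the map of moduli stacks does not by itself give virtual push-forward. The paper instead introduces the auxiliary stack $\fM^a=\fM(\ainfty^{1/\ell}\to\ainfty)$ of compatible pairs, identifies $\SH_{g,\ddata}(\punt^{1/\ell},\beta)$ with the fiber product of $\SH_{g,\bar\ddata}(\punt,\beta)$ and $\fM^a$ over the stack of punctured maps to $\ainfty$ (Lemma~\ref{lem:map-taking-ell-root}), shows that $F_h$ is strict \'etale while $F_t$ is proper, of DM type and of pure degree $1/\ell$ (by proving the square \eqref{diag:1/ell-degeneracy} is Cartesian over the smooth-curve locus), and only then applies Costello's push-forward theorem. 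Your proposal has nothing playing the role of $\fM^a$, and without it the push-forward step is unjustified. The same setup is needed again (with $\curlywedge$-configurations) in the reduced case; there the matching of the boundary complexes uses that the induced superpotential on $\punt^{1/\ell}$ has order $\ell\ttwist$ (Corollary~\ref{lem:change-l-proper-cricial-locus}), so that $\cO(\ttwist\Delta_{\max})\cong\cO(\ttwist\ell\,\Delta_{\max}^{1/\ell})$ restricted to the moduli, together with the identity $\diff\tW_\ell=\diff\tW$ restricted along the log \'etale comparison of expanded targets. Your description of the reduced theory as ``a twist of the canonical obstruction theory by $\cO(e_{\max})^{\otimes\ttwist}$'' conflates the cycle identity of Theorem~\ref{thm:intro-red-theory} with the construction; in particular the reduced push-forward cannot be deduced from the canonical one plus that cycle identity, since one cannot divide by $c_1$.

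Step (C) asserts, without computation, that $\nu_\ell^*\psi_{\min,\bar\ddata}=\tfrac1\ell\,\psi_{\min,\ddata}$. Corollary~\ref{cor:change-ell-degeneracy} says degeneracies are multiplied by $\ell$ in passing from the rooted map to the induced unrooted map ($e'=\ell e$), so $e_{\min,\bar\ddata}$ pulls back to $\ell\,e_{\min,\ddata}$, hence $\nu_\ell^*\cO(-e_{\min,\bar\ddata})\cong\cO(-e_{\min,\ddata})^{\otimes\ell}$ and $\nu_\ell^*\psi_{\min,\bar\ddata}=\ell\,\psi_{\min,\ddata}$ --- the inverse of your scaling, and the relation used in the proof of Corollary~\ref{cor:reduced-change-ell-with-min}. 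So the characteristic-monoid computation you defer is precisely where the exponent is decided, and as written your Step (C) rests on the wrong scaling; once the correct scaling and the $\RMm$-version of the reduced push-forward are in place, the remaining manipulation is, as you say, just the projection formula.
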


\begin{remark}
  The LG/CY correspondence (see for example \cite{ChRu10}) relates the
  Gromov--Witten and FJRW theory of quintic threefolds.
  In \S\ref{sss:LGCY}, we observe that the corresponding punctured
  R-map targets differ by changing roots.
  In this setting, we refer to \eqref{eq:intro-reduced-changing-roots}
  as the \emph{geometric LG/CY correspondence}, and it plays an
  important role in the forthcoming work \cite{GJR23P}.
\end{remark}

\subsection{Axioms of removing units}

The forgetful morphism removing a marking plays an important role in
Gromov--Witten theory leading to a sequence of important properties
such as the string, dilaton and divisor equations, and the unit axiom.
We will show that punctured R-maps share similar properties.
As explained in Remark \ref{rem:intro-ell=1}, we may assume $\ell = 1$
for the discussion below without loss of generality.
These axioms have been used crucially in the genus two computation of
\cite{GJR17P}.

\subsubsection{The forgetful morphism}

The {\em unit sector}, defined in \S \ref{sss:unit-sector}, is a
special type of discrete data $\mathbf{1} = (\ogamma, c = -d)$ along markings,
which is the analogue of untwisted markings in orbifold Gromov--Witten
theory.

Denote by $\ddata + \mathbf{1}$ the collection of discrete data by
adding a unit sector to $\ddata$.
As shown in Proposition~\ref{prop:moduli-remove-unit}, there is a
natural forgetful morphism
\[
\bF_{\mathbf{1}} \colon \SH^{\bullet}_{g, \ddata + {\bf 1}}(\punt, \beta) \to \SH^{\bullet}_{g, \ddata}(\punt, \beta)
\]
for the configurations $\SH^{\bullet} = \SH$ or $\UH$ fitting (when
$2g - 2 + n > 0$) in a commutative diagram
\[
\xymatrix{
\SH^{\bullet}_{\ddata+\mathbf{1}} \ar[rr]^{\mathrm{p}_{\ddata+\mathbf{1}}} \ar[d]_{\bF_{\mathbf{1}}} && \oM_{g,n+1} \ar[d]^{\pi} \\
\SH^{\bullet}_{\ddata} \ar[rr]^{\mathrm{p}_{\ddata}} && \oM_{g,n}
}
\]
where $\oM_{g,n}$ is the Deligne--Mumford moduli of stable curves,
$\pi$ is the universal stable curve, and the horizontal arrows are the
tautological morphisms.
Indeed, Proposition \ref{prop:moduli-remove-unit} further exhibits
$\bF_{\mathbf{1}}$ as a composition
\[
\SH^{\bullet}_{g, \ddata + {\bf 1}}(\punt, \beta) \stackrel{\mathrm{S}}{\longrightarrow} \cC^{\bullet,\circ}_{g, \ddata}(\punt, \beta) \stackrel{\pi_{\ddata}}{\longrightarrow}  \SH^{\bullet}_{g, \ddata}(\punt, \beta)
\]
where $\pi_{\ddata}$ is the universal punctured curve over
$\SH^{\bullet}_{g, \ddata}(\punt, \beta)$, and $\mathrm{S}$ is the
{\em saturation morphism} \cite[Proposition 2.1.5]{Og18}.
We have the following analog of the {\em fundamental class axiom} and
the {\em unit axiom}, or alternatively, of the Invariance Property II
for the universal DR cycle of \cite[\S 0.6]{BHPSS20P}, in the setting
of punctured R-maps:

\begin{theorem}[Theorem \ref{thm:remove-unit}]
With the above assumptions, we have
\begin{enumerate}
\item $\mathrm{S}_{*}[\SH^{\bullet}_{g, \ddata + {\bf 1}}(\punt, \beta)]^{\star} = \pi_{\varsigma}^*[\SH^{\bullet}_{g, \ddata}(\punt, \beta)]^{\star}$.

\item $\bF_{\mathbf{1},*}\left(\ev_{n+1}^*D \cap [\SH^{\bullet}_{g, \ddata + {\bf 1}}(\punt, \beta)]^{\star} \right) = \left(\int_{\beta}D \right) \cdot [\SH^{\bullet}_{g, \ddata}(\punt, \beta)]^{\star}$ for a divisor $D \in CH^1(\xinfty)$.

\item if $2g - 2 + n > 0$, then $\mathrm{p}_{\ddata+\mathbf{1},*} [\SH^{\bullet}_{g, \ddata + {\bf 1}}(\punt, \beta)]^{\star} = \pi^{*}\mathrm{p}_{\ddata,*} [\SH^{\bullet}_{g, \ddata}(\punt, \beta)]^{\star}.$

\end{enumerate}
where $[-]^\star$ can be either $[-]^\vir$, or $[-]^\red$ when $\bullet = \curlywedge$ and \S \ref{sss:intro-condition-for-red} is assumed.
\end{theorem}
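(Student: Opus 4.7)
The plan is to reduce parts (2) and (3) to part (1) via the projection formula and flat base change respectively, so the whole theorem rests on the virtual compatibility (1).

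For part (1), I would exploit the factorization $\bF_{\mathbf{1}} = \pi_{\ddata}\circ\mathrm{S}$ from Proposition~\ref{prop:moduli-remove-unit}, where $\pi_{\ddata}\colon \cC^{\bullet,\circ}_{g,\ddata}(\punt,\beta)\to\SH^{\bullet}_{g,\ddata}(\punt,\beta)$ is the universal punctured curve (flat and proper of relative dimension one) and $\mathrm{S}$ is the saturation morphism. Geometrically, adding a unit marking amounts to choosing an auxiliary point on the underlying curve equipped with the unit sector data; since the unit contact order $c=-d$ matches the twist of $\uomega$ built into $\cO(\punt)$, the universal punctured R-map on $\SH^{\bullet}_{g,\ddata+\mathbf{1}}$ is obtained from the one on $\cC^{\bullet,\circ}_{g,\ddata}$ purely by saturation. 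The heart of the argument is the following twofold compatibility: (i) the canonical perfect obstruction theory on $\SH^{\bullet}_{g,\ddata+\mathbf{1}}$ is the $\mathrm{S}$-pullback of the canonical obstruction theory on $\cC^{\bullet,\circ}_{g,\ddata}$, which in turn is the $\pi_{\ddata}$-pullback of the canonical obstruction theory on $\SH^{\bullet}_{g,\ddata}$; (ii) in the reduced case ($\bullet=\curlywedge$ with \S\ref{sss:intro-condition-for-red} in force), both the characteristic section $e_{\max}$ and its associated line bundle $\cO(e_{\max})$ are $\bF_{\mathbf{1}}$-pullbacks of their counterparts on $\SH^{\bullet}_{g,\ddata}$, so by Theorem~\ref{thm:intro-red-theory} the reduced cycle is compatible as well. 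Step (ii) is the main obstacle: one must verify at the level of characteristic monoid sheaves that the saturation induced by a unit marking preserves the monoid ordering of $\ocM_{S}$ and its maximum --- equivalently, that a unit marking creates no new irreducible component and does not enlarge the set of fiberwise degeneracies.

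For part (2), combine (1) with the projection formula. The evaluation factors as $\ev_{n+1} = \ev \circ \mathrm{S}$, where $\ev\colon\cC^{\bullet,\circ}_{g,\ddata}\to\xinfty$ is the universal evaluation (the universal R-map composed with $\punt\to\xinfty$). Therefore
\[
\bF_{\mathbf{1},*}\bigl(\ev_{n+1}^{*}D\cap[\SH^{\bullet}_{g,\ddata+\mathbf{1}}]^{\star}\bigr) = \pi_{\ddata,*}\bigl(\ev^{*}D\cap \mathrm{S}_{*}[\SH^{\bullet}_{g,\ddata+\mathbf{1}}]^{\star}\bigr) = \pi_{\ddata,*}\bigl(\ev^{*}D\cap \pi_{\ddata}^{*}[\SH^{\bullet}_{g,\ddata}]^{\star}\bigr),
\]
using (1) in the final step. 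A second application of the projection formula, combined with $\pi_{\ddata,*}\ev^{*}D = \int_{\beta}D$ (which holds because $\pi_{\ddata}$ is the universal punctured curve carrying the class $\beta$ in $\xinfty$), then yields (2).

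For part (3), the stabilization of the underlying twisted domain of the universal punctured curve provides a morphism $\tred\colon \cC^{\bullet,\circ}_{g,\ddata}\to\oM_{g,n+1}$ covering $\mathrm{p}_{\ddata}$ and satisfying $\mathrm{p}_{\ddata+\mathbf{1}} = \tred\circ\mathrm{S}$. The resulting square with vertical arrows $\pi_{\ddata}$ and $\pi$ is cartesian on underlying stacks with flat, proper vertical maps, so flat base change yields $\pi^{*}\mathrm{p}_{\ddata,*}[\SH^{\bullet}_{g,\ddata}]^{\star} = \tred_{*}\pi_{\ddata}^{*}[\SH^{\bullet}_{g,\ddata}]^{\star}$. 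Applying (1) on the right and then using $\tred_{*}\mathrm{S}_{*} = \mathrm{p}_{\ddata+\mathbf{1},*}$ produces the desired identity, completing the theorem.
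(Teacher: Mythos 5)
Your reduction of (2) to (1) via the projection formula, and your structural plan for the canonical case of (1) (identify the canonical obstruction theory on $\SH^{\bullet}_{g,\ddata+\mathbf{1}}$ with the pullback along $\mathrm{S}$ and $\pi_{\ddata}$, then apply virtual push-forward along the proper birational saturation) are indeed the paper's route (Propositions \ref{prop:stable-prmap-remove-unit}, \ref{prop:universal-adding-marking} and \ref{prop:canonical-fundamental-class-axiom}); note, though, that even the canonical identification is not "purely by saturation" --- the two universal R-maps agree only away from the new marking, and matching $f^*\Omega^{\vee}_{\punt/\BC}$ requires the twisted-target morphism $\mathrm{T}$ of Proposition \ref{prop:target-add-marking}. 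The genuine gap is in your step (ii) for the reduced case of (1): from compatibility of the canonical cycles together with $e_{\max}$ and $\cO(e_{\max})$ being pullbacks, you invoke Theorem \ref{thm:intro-red-theory} to conclude compatibility of the reduced cycles. That inference is invalid: Theorem \ref{thm:intro-red-theory} only says $[\,\cdot\,]^{\vir}=\ttwist\, c_1(\cO(e_{\max}))\cap[\,\cdot\,]^{\red}$, and capping with a first Chern class is not injective, so the reduced cycle is not determined by the canonical one. What is actually needed, and what Proposition \ref{prop:red-fundamental-class-axiom} proves, is an isomorphism of the reduced obstruction theories themselves, i.e.\ of the whole triangle $\EE^{\red}\to\EE\to\FF$: besides matching the boundary complexes via equality of maximal degeneracies (Lemma \ref{lem:max-degeneracy-add-marking}), one must match the cosections built from the twisted superpotential, i.e.\ prove commutativity of \eqref{diag:compare-dW-forget-marking}; there the discrepancy between $\omega^{\log}$ for the $(n+1)$-pointed curve and the pullback of $\omega^{\log}$ for the $n$-pointed one (they differ by $\cO(rp_{n+1})$) is absorbed precisely because $f_{e_{\max}}^*\diff\tW$ vanishes along all markings (Lemma \ref{lem:factor-through-omega}). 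Also, your proposed criterion that adding a unit marking "creates no new irreducible component" is false: over the locus where the added point meets a special point, the universal curve acquires a rational bridge whose degeneracy may be a new element of $\obD(f)$; the correct statement, which is what the paper proves using the negativity of the contact orders, is only that no new maximum is created.

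Part (3) as written would also fail. The square formed by $\pi_{\ddata}\colon\cC^{\bullet,\circ}_{g,\ddata}\to\SH^{\bullet}_{g,\ddata}$ and $\pi\colon\oM_{g,n+1}\to\oM_{g,n}$ is not Cartesian, even on underlying stacks: the fibre of $\SH^{\bullet}_{g,\ddata}\times_{\oM_{g,n}}\oM_{g,n+1}$ over a punctured R-map is the stabilized coarse curve, whereas the fibre of $\pi_{\ddata}$ is the actual twisted prestable source curve, which in general has orbifold structure and components contracted by stabilization; hence flat base change cannot be applied to your square. The paper (\S\ref{ss:unit-no-psi-min}) repairs exactly this by introducing the honest fibre products $\mathfrak{Z}=\fM^{\curlywedge}_{\varsigma'}\times_{\oM_{g,n}}\oM_{g,n+1}$ and $\cZ=\SH^{\curlywedge}_{\varsigma}\times_{\fM^{\curlywedge}_{\varsigma'}}\mathfrak{Z}$, equipping $\cZ$ with the pulled-back obstruction theory so that $[\cZ]^{\star}=\pi_{\cZ}^*[\SH^{\curlywedge}_{\varsigma}]^{\star}$, and then pushing forward along the proper birational stabilization $\stab\colon\fC^{\circ,\curlywedge}_{\varsigma'}\to\mathfrak{Z}$ (together with the saturation) via the virtual push-forward theorem, to get $\stab_*\mathrm{S}_*[\SH^{\curlywedge}_{\varsigma+\mathbf{1}}]^{\star}=[\cZ]^{\star}$; only then does proper/flat base change on the genuinely Cartesian square relating $\cZ$ and $\oM_{g,n+1}$ yield $\mathrm{p}_{\ddata+\mathbf{1},*}[\SH^{\curlywedge}_{\ddata+\mathbf{1}}]^{\star}=\pi^{*}\mathrm{p}_{\ddata,*}[\SH^{\curlywedge}_{\ddata}]^{\star}$. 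Your outline is missing this extra virtual push-forward step, and without it the flat-base-change identity you assert is not available.
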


\subsubsection{Axioms with \texorpdfstring{$\psi_{\min}$}{psi-min}-classes}
The morphism $\bF_{\mathbf{1}}$ does not exist for $\SH^{\bullet} = \RMm$. Thus removing units  with $\psi_{\min}$-classes needs to be treated differently by choosing appropriate categories  of punctured maps. For applications we will  focus on the reduced classes:

\begin{theorem}[Corollary \ref{cor:string-divisor}, Theorem \ref{thm:unit-axiom-with-min}]\label{thm:intro-string-divisor}
Let $t$ be a formal parameter, and assume \S \ref{sss:intro-condition-for-red}. We have

  \begin{equation*}
  {\bf F}_{\mathbf{1},*} \circ F^{\curlywedge}_* \left( \frac{[\RMm_{g, \ddata + {\bf 1}}(\punt, \beta)]^{\red}}{t - \psi_{\min,\varsigma+\mathbf{1}}} \right) =
    \sum_{j=1}^n \frac{|c_j|}{r_j} F^{\curlywedge}_{*} \left( \frac{[\RMm_{g, \ddata}(\punt, \beta)]^{\red}}{(t - \ev_j^*\psi_{\DF})(t - \psi_{\min,\varsigma})} \right) ,
  \end{equation*}
  \begin{multline*}
 {\bf F}_{\mathbf{1},*} \circ F^{\curlywedge}_* \left( \ev_{n+1}^*\big(D\big) \cap \frac{[\RMm_{g, \ddata + {\bf 1}}(\punt, \beta)]^{\red}}{t - \psi_{\min,\varsigma+\mathbf{1}}} \right) = \\
    \big(\int_{\beta}D \big) F^{\curlywedge}_{*} \left( \frac{[\RMm_{g, \ddata}(\punt, \beta)]^{\red}}{t - \psi_{\min,\varsigma}} \right)
    + \sum_{j=1}^n \frac{|c_j|}{r_j} F^{\curlywedge}_{*} \left( \ev_j^*\big(D_{\ocI}\big) \cap \frac{[\RMm_{g, \ddata}(\punt, \beta)]^{\red}}{(t - \ev_j^*\psi_{\DF})(t - \psi_{\min,\varsigma})} \right) ,
  \end{multline*}

  \begin{multline*}
 \mathrm{p}_{\ddata+\mathbf{1},*} \left( \psi_{\min,\varsigma+\mathbf{1}}^{k} \cap [\RMm_{g, \ddata + {\bf 1}}(\punt, \beta)]^{\red} \right) =   \pi^* \mathrm{p}_{\ddata,*} \left(\psi_{\min,\varsigma}^k \cap [\RMm_{g, \ddata}(\punt, \beta)]^{\red} \right)  \\
 +  \sum_{j=1}^n \frac{|c_j|}{r_j} \delta_{j,n+1} \cap \pi^* \mathrm{p}_{\ddata,*} \left( \sum_{k' = 0}^{k-1}  \ev_j^*\big( \psi_{\DF}^{k'} \big) \cdot   \psi_{\min,\varsigma}^{k-1-k'} \cap [\RMm_{g, \ddata}(\punt, \beta)]^{\red} \right)
\end{multline*}
\end{theorem}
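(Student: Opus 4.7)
The plan is to derive all three identities by combining Theorem~\ref{thm:remove-unit} (the fundamental class/unit axiom for $\SH^{\bullet}=\UH$) with a $\psi_{\min}$-comparison formula that accounts for the absence of $\bF_{\mathbf{1}}$ on $\RMm$. The conceptual obstacle is that $\psi_{\min,\varsigma+\mathbf{1}}$ lives on $\RMm_{g,\varsigma+\mathbf{1}}$ whereas $\psi_{\min,\varsigma}$ lives on $\RMm_{g,\varsigma}$, and there is no direct morphism between them; the strategy is to relate them indirectly through the universal punctured curve over $\UH_{g,\varsigma}$.

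\textbf{Step 1 ($\psi_{\min}$-comparison).} First I would factor $\bF_{\mathbf{1}}\colon \UH_{g,\varsigma+\mathbf{1}} \to \UH_{g,\varsigma}$ as $\UH_{g,\varsigma+\mathbf{1}} \xrightarrow{\mathrm{S}} \pC_{g,\varsigma} \xrightarrow{\pi_{\varsigma}} \UH_{g,\varsigma}$. Then I would compare, on $\RMm_{g,\varsigma+\mathbf{1}}$, the tautological minimal-degeneracy section $e_{\min,\varsigma+\mathbf{1}}\in\Gamma(\ocM_{\RMm_{g,\varsigma+\mathbf{1}}})$ with the pull-back of $e_{\min,\varsigma}$ along the canonical map from $\RMm_{g,\varsigma+\mathbf{1}}$ to an appropriate log modification of $\pC_{g,\varsigma}\times_{\UH_{g,\varsigma}}\RMm_{g,\varsigma}$. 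On the open locus where the added unit marking lies on a non-minimally-degenerated component, the two sections agree. Elsewhere, the new marking must bubble onto a rational component carrying an existing marking $p_j$; the log/tropical balancing condition at the contracted node (propagating contact orders $-d$ and $c_j$ through the bubble, with the orbifold twist of order $r_j$) then forces a correction of the expected form. This yields, at the line-bundle level, a formula
\[
\psi_{\min,\varsigma+\mathbf{1}} \;=\; (\text{pull-back of }\psi_{\min,\varsigma}) \;+\; \sum_{j=1}^n \tfrac{|c_j|}{r_j}\,[\Delta_{j,n+1}],
\]
with $\Delta_{j,n+1}$ disjoint boundary divisors, and moreover $\psi_{\min,\varsigma+\mathbf{1}}|_{\Delta_{j,n+1}}=\ev_j^*\psi_{\DF}$ while the pull-back restricts to $\psi_{\min,\varsigma}$.

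\textbf{Step 2 (expand and reduce).} Using Step~1, expand $\frac{1}{t-\psi_{\min,\varsigma+\mathbf{1}}}$, resp.~$\psi_{\min,\varsigma+\mathbf{1}}^k$, as the pull-back term plus corrections supported on $\Delta_{j,n+1}$. On each $\Delta_{j,n+1}$ the reduced virtual cycle $[\RMm_{g,\varsigma+\mathbf{1}}]^{\red}$ splits virtually as $[\RMm_{g,\varsigma}]^{\red}$ times a rational-tail factor; combined with the telescoping identity $\frac{A^k-B^k}{A-B}=\sum_{k'=0}^{k-1} A^{k'}B^{k-1-k'}$ applied with $A=\ev_j^*\psi_{\DF}$, $B=\psi_{\min,\varsigma}$, this gives exactly the $\sum_{k'=0}^{k-1}\ev_j^*(\psi_{\DF}^{k'})\psi_{\min,\varsigma}^{k-1-k'}$ structure of (iii) and, via the partial-fraction identity $\frac{1}{(t-A)(t-B)}=\frac{1}{A-B}\bigl(\frac{1}{t-A}-\frac{1}{t-B}\bigr)$, the denominator $(t-\ev_j^*\psi_{\DF})(t-\psi_{\min,\varsigma})$ in (i) and (ii). Now push forward the pull-back term through $\bF_{\mathbf{1}}\circ F^{\curlywedge}$ (resp.~$\mathrm{p}_{\varsigma+\mathbf{1}}$) and apply Theorem~\ref{thm:remove-unit}: the pull-back contribution vanishes in (i) by the fundamental class axiom (its pushforward along $\bF_{\mathbf{1}}$ kills the constant-in-fiber piece), gets promoted to $(\int_\beta D)\cdot[\RMm_{g,\varsigma}]^{\red}/(t-\psi_{\min,\varsigma})$ in (ii) by the divisor equation, and produces the $\pi^*\mathrm{p}_{\varsigma,*}(\psi_{\min,\varsigma}^k\cap-)$ term in (iii).

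\textbf{Main obstacle.} The chief technical hurdle is Step~1: establishing the comparison formula with correct coefficients $\frac{|c_j|}{r_j}$. Because $\bF_{\mathbf{1}}$ does not exist on $\RMm$, one cannot directly pull back $\psi_{\min,\varsigma}$ and must instead analyze how the principalization of uniform \emph{minimal} degeneracy interacts with the saturation $\mathrm{S}$ and with adding a unit puncture. The combinatorial coefficient $\frac{|c_j|}{r_j}$ must emerge naturally from the balancing at the bubbled node together with the orbifold twist, and verifying this requires a careful tropical computation at each degenerate stratum. A secondary difficulty is the virtual splitting of $[\RMm_{g,\varsigma+\mathbf{1}}]^{\red}$ along $\Delta_{j,n+1}$: unlike the canonical cycle, the reduced cycle carries an extra virtual dimension tied to $e_{\max}$ (as recorded by Theorem~\ref{thm:intro-red-theory}), and one must check that the maximal-degeneracy structure is preserved cleanly under the bubbling, without introducing a spurious $c_1(\cO(e_{\max}))$ discrepancy on the boundary stratum.
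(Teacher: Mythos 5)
Your outline reproduces the paper's overall strategy (an auxiliary correspondence on which both $\psi_{\min}$-classes live, a comparison of minimal degeneracies whose correction terms are boundary divisors on which $\psi_{\min,\varsigma+\mathbf{1}}$ restricts to $\ev_j^*\psi_{\DF}$, telescoping/partial fractions, and the $k=0$ axioms for the leftover pull-back term), but the two steps you yourself flag as the main obstacles are genuine gaps, and one of them is stated with the coefficient in the wrong place. In the paper the comparison is first an identity of sections of the characteristic sheaf on the auxiliary stack $\RMm_{\varsigma,\varsigma+\mathbf{1}}$: one has $\delta_{\min}=e^a_{\min,\varsigma}-e^a_{\min,\varsigma+\mathbf{1}}$ and $\psi_{\min,\varsigma+\mathbf{1}}=\psi_{\min,\varsigma}+\sum_j\Delta_j$ with coefficient $1$, see \eqref{eq:compare-Lmin} and Lemma~\ref{lem:non-zero-min-difference}; the factor $|c_j|$ is not in this divisor identity but arises because $\delta_j=|c_j|\,\sigma_j$ generically, so that the push-forward of $\Delta_j$ to the universal curve is $|c_j|$ times the $j$th marking (Lemma~\ref{lem:push-forward-Deltaj}), and the $1/r_j$ only appears when one subsequently pushes along the $\mu_{r_j}$-gerbe marking. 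Writing $\psi_{\min,\varsigma+\mathbf{1}}=\mathrm{pullback}+\sum_j\frac{|c_j|}{r_j}[\Delta_{j,n+1}]$ as a divisor-class identity conflates these two steps, and you do not carry out the ``careful tropical computation'' that is the heart of the argument (the analysis of which contracted rational bridge is the unique minimal component, and the resulting multiplicity).

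The second gap is your appeal to a ``virtual splitting'' of $[\RMm_{\varsigma+\mathbf{1}}]^{\red}$ along $\Delta_{j,n+1}$ into $[\RMm_\varsigma]^{\red}$ times a rational-tail factor: no such splitting formula is available (the paper explicitly defers splitting-type axioms to future work), and the actual argument avoids it. What is used instead is that $\SH^{\curlywedge}_{\varsigma+\mathbf{1}}$ is identified with the (saturated) universal curve over $\SH^{\curlywedge}_{\varsigma}$ (Proposition~\ref{prop:moduli-remove-unit}) and that its reduced obstruction theory is the pull-back of the one of $\RMm_\varsigma\to\UMm_{\varsigma'}$ — which requires precisely the point you worry about, namely that adding a unit marking does not change the maximal degeneracy (Lemma~\ref{lem:max-degeneracy-add-marking}, giving \eqref{equ:add-unit-red-POT}); with that in hand, functoriality of virtual pull-back turns restriction of the reduced cycle to $\Delta_j$ into the virtual pull-back of $[\Delta_j]$, whose push-forward is $\frac{|c_j|}{r_j}$ times the virtual pull-back of the $j$th marking divisor, computed from $\pi^*[\RMm_\varsigma]^{\red}$ via \eqref{eq:Mm-reduced-circle-push-forward} and \eqref{eq:push-forward-Mm-bd-red-cycle}; the vanishing of the pull-back term in the string equation is then the fiber-dimension statement \eqref{eq:push-red=0}, which you do have in spirit. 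Until the degeneracy comparison with its multiplicities and the compatibility of reduced obstruction theories are actually proved, your plan is a correct skeleton of the paper's proof rather than a proof.
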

We refer to \S \ref{sss:nota-string-divisor} for various notations
involved in the above formulas.
The first and second equations can be viewed as the analogue of string
and divisor equations in the usual Gromov--Witten theory respectively.

\begin{remark}
  One may also consider more complicated situations that
  involve cotangent line classes at the first $n$ markings.
  Since these cases are not involved in the applications we have in mind,
  we will defer these cases to a future work.
\end{remark}

\begin{remark}
  In this paper, we restrict ourselves to axioms of removing unit
  sectors.
  In the future, it will be interesting to see whether the virtual
  cycles of punctured R-maps satisfy other axioms analogous to
  Gromov--Witten theory, such as a splitting formula.
\end{remark}

These formulas above combined with other properties of punctured R-maps are
very powerful in determining punctured R-invariants that are involved
in calculations of Gromov--Witten invariants of complete intersections.
We finish the introduction by discussing applications in the setting
of complete intersections.

\subsection{Applications to complete intersections}\label{ss:intro-application}

\subsubsection{The setup}

Consider the triple $(\cX, \bE, s_{\cX})$ of a smooth projective
variety $\cX$, a vector bundle $\bE$ over $\cX$, and a section
$s_{\cX} \in H^0(\bE)$.
Set
\[
\xinfty = \PP(\bE^{\vee}), \  \lblog = \cO_{\xinfty}(1), \  \lbspin \cong \cO, \  r =d = \ell = 1,
\]
as in \S \ref{intro:target}.
Assume that $\cZ := (s_{\cX} = 0) \subset \cX$ is smooth of
codimension $\rk \bE$.
Then $s_{\cX}$ induces a transverse superpotential.
Further assume \S\ref{sss:intro-condition-for-red} for the reduced theory.

In this situation, the \emph{effective cycles} in the homology
(resp. Chow) group of $\oM_{g, n}$ are cycles of the form
\begin{equation}\label{eq:intro-effective-cycle}
  p_{\varsigma, *} \left( \psi_{\min}^k \cdot \prod \ev_i^*\alpha_i \cap [\RMm_{g, \ddata}(\punt, \beta)]^{\red} \right),
\end{equation}
where we may assume that $\alpha_i \in H^*(\xinfty)$ (resp.
$\alpha_i \in A^*(\xinfty)$) are homogeneous for all $i$.
If they are of dimension zero, we may define the corresponding
\emph{effective invariants}:
\begin{equation}\label{eq:intro-effective-inv}
  \int_{[\RMm_{\varsigma}]^{\red}} \psi_{\min}^k \cdot \prod \ev_i^*\alpha_i := \deg \left(\psi_{\min}^k \cdot \prod \ev_i^*\alpha_i \cap [\RMm_{g, \ddata}(\punt, \beta)]^{\red}\right),
\end{equation}
These effective cycles and invariants are of great importance as they
determine the Gromov--Witten theory (with ambient insertions) of $\cZ$
via an explicit localization formula in \cite{CJR23P}.
The terminology ``effective'' is inspired by Polishchuk's effective
spin structures \cite{Po06}, see \S\ref{sss:r-spin}.

\subsubsection{A naive vanishing}
Let $\beta_{\cX}$ be the push-forward of $\beta$ along $\xinfty \to \cX$, and $\oM_{g,n}\Big(\cZ, \beta_\cX\Big)$ be  the moduli of stable maps to $\cZ$ with the corresponding discrete data. On the virtual dimension level punctured R-maps and Gromov--Witten theory of $\cZ$ are related  by \eqref{eq:red-vir-dim-X}:
\[
\red \dim\RMm_{g, \ddata}(\punt, \beta)  = \vir\dim \oM_{g,n}\Big(\cZ, \beta_\cX\Big) + \rk \bE \cdot \sum_i (c_i + 1).
\]
In particular, the effective cycles and invariants are efficient in the sense that they contain no obviously redundant information to determine the Gromov--Witten invariants of $\cZ$:

\begin{corollary}[Corollary \ref{cor:general-type-vanishing}]
  If $\vir\dim \oM_{g,n}\Big(\cZ, \beta_\cX\Big) < 0$, then
  $ [\RMm_{g, \ddata}(\punt, \beta)]^{\red} = 0$.
\end{corollary}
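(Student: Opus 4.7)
The plan is a direct dimension count. The Corollary follows almost immediately from the virtual-dimension formula \eqref{eq:red-vir-dim-X} quoted just above its statement,
\[
\red \dim\RMm_{g, \ddata}(\punt, \beta) \;=\; \vir\dim \oM_{g,n}\bigl(\cZ, \beta_\cX\bigr) \;+\; \rk \bE \cdot \sum_i (c_i + 1),
\]
together with the observation that the correction term is non-positive under the standing assumptions of the reduced theory.

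Indeed, the hypothesis \S\ref{sss:intro-condition-for-red}(1)---required to define the reduced virtual cycle in the first place---forces every contact order $c_i$ to be a (strictly) negative integer. Hence $c_i + 1 \leq 0$ for every $i$, and since $\rk \bE \geq 0$ one obtains $\rk \bE \cdot \sum_i (c_i + 1) \leq 0$. Combined with the assumption $\vir\dim \oM_{g,n}(\cZ, \beta_\cX) < 0$, this gives
\[
\red \dim \RMm_{g, \ddata}(\punt, \beta) \;\leq\; \vir\dim \oM_{g,n}(\cZ, \beta_\cX) \;<\; 0.
\]

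By Theorem~\ref{thm:intro-canonical-theory}, $\RMm_{g, \ddata}(\punt, \beta)$ is a proper Deligne--Mumford stack, and by construction $[\RMm_{g, \ddata}(\punt, \beta)]^{\red}$ lies in the rational Chow group of dimension $\red\dim \RMm_{g, \ddata}(\punt, \beta)$. Since rational Chow groups of proper Deligne--Mumford stacks vanish in strictly negative degree, the reduced virtual cycle must be zero. The only genuine content hidden behind this short argument is the dimension formula \eqref{eq:red-vir-dim-X} itself, which is assembled earlier in the paper by combining the Riemann--Roch rank of the canonical perfect obstruction theory, the one-dimensional correction coming from the reduction procedure (Theorem~\ref{thm:intro-red-theory}), and the standard expected dimension of stable maps to a complete intersection; I do not anticipate a serious obstacle beyond this bookkeeping.
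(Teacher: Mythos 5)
Your argument is correct and is essentially identical to the paper's: the corollary is deduced directly from the dimension formula \eqref{eq:red-vir-dim-X} together with the observation that $c_i \le -1$ for all $i$ (the standing assumption of the reduced theory), so the correction term $\rk \bE \cdot \sum_i (c_i+1)$ is non-positive and the reduced virtual dimension is negative, forcing the cycle to vanish. The appeal to properness is unnecessary (Chow groups in negative dimension vanish for any finite-type stack), but this is harmless.
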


\subsubsection{Determination in genus zero and one}

In \S\ref{ss:determine-g=0-1} and \S\ref{ss:g=1-calculation}, we determine all $g=0,1$ effective cycles and invariants.

\begin{proposition}
  \begin{enumerate}
  \item If $\bE$ is nef, then all $g=0$ effective cycles and invariants vanish.
  \item If $\bE$ is ample, then all $g=1$ effective cycles and invariants are explicitly determined.
  \end{enumerate}
\end{proposition}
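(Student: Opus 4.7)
\emph{Proof proposal.} Recall that an effective cycle has the form \eqref{eq:intro-effective-cycle} and the reduced virtual dimension is given by \eqref{eq:red-vir-dim-X}; since all contact orders satisfy $c_i \le -1$, we have $c_i + 1 \le 0$. For part (1), my plan is to exhibit a nowhere-vanishing cosection of the reduced obstruction sheaf arising from the transverse superpotential $W$, whose existence and transversality are ensured by the smoothness of $\cZ$. The key geometric input is convexity: in genus zero, nefness of $\bE$ forces $H^1(\ul{\cC},\ul{f}^*\bE)=0$ on any tree of rational curves, and on their twisted refinements via representability of $\ul{f}$. This should identify the degeneracy locus of the cosection with punctured R-maps whose image factors through $\cZ \subset \cX$, which is incompatible with the negative contact orders forcing the image to meet the logarithmic boundary $\xinfty$. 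Kiem--Li cosection localization then gives $[\RMm_{0,\ddata}(\punt,\beta)]^{\red}=0$, and the vanishing of all genus zero effective cycles and invariants follows.

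For part (2), I would first apply the naive vanishing of Corollary \ref{cor:general-type-vanishing}: ampleness of $\bE$ drives $\vir\dim\oM_{1,n}(\cZ,\beta_\cX)$ below zero for all but finitely many triples $(\beta_\cX, n, \{c_i\})$, leaving only a bounded collection of surviving configurations. On these, I would iteratively strip unit-sector insertions using the string and divisor equations of Theorem \ref{thm:intro-string-divisor}, and strip $\psi_{\min}$-powers using the third identity of the same theorem, terminating at base cases without unit markings. These base cases would then be reduced to the explicit $g=1$ computation of \S\ref{ss:g=1-calculation} (which treats the geometric point target) using the change-of-roots formulas of \S\ref{sss:intro-change-ell} to incorporate the actual target $\PP(\bE^{\vee})$, yielding the asserted explicit determination.

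The hardest step is part (1): justifying that the $W$-cosection on the compactified moduli $\RMm_{0,\ddata}(\punt,\beta)$ has empty degeneracy locus. Interior convexity is classical, but the boundary involves twisted nodes, markings of negative contact order, and the interaction with the spin structure and the extremal-degeneracy class $e_{\max}$ that appear in the reduced theory of Theorem \ref{thm:intro-red-theory}; these demand a careful local analysis of the reduced obstruction sheaf near such strata. The difficulty in part (2) is primarily combinatorial: each application of the string or divisor equation introduces $\psi_{\DF}$-classes at pre-existing markings together with a factor $(t - \ev_j^*\psi_{\DF})^{-1}$, and one must verify that the resulting recursion terminates at configurations within reach of the explicit genus one calculation rather than generating an unbounded family of residual contributions.
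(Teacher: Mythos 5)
There is a genuine gap: you have missed the elementary mechanism that drives both statements, namely the balancing condition of Lemma \ref{lem:balancing}, which in the setup of \S\ref{ss:effective-setup} reads $\sum_{i}(c_i+1)=\int_\beta c_1(\lblog)-(2g-2)$ with every $c_i\le -1$, see \eqref{eq:balancing-condition}. For $g=0$ and $\lblog=\cO_{\xinfty}(1)$ nef (equivalently $\bE$ nef) the right-hand side is $\ge 2$ while the left-hand side is $\le 0$, so the moduli stack $\SH_{0,\ddata}(\punt,\beta)$ is \emph{empty} (Lemma \ref{lem:g=1-effectiveness}(1)) and all genus zero effective cycles and invariants vanish for trivial reasons. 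Your proposed cosection-localization argument is not only unnecessary, it is also not carried out at its decisive step, and its geometric core does not parse in this setting: punctured R-maps have target $\punt=\PP(\bE^\vee)\times\BC$, i.e. they land entirely in the boundary divisor at infinity, so genus zero convexity of $\bE$ on the ambient $\cX$ and the locus of maps ``factoring through $\cZ\subset\cX$'' play no role, and the reduced class is already the output of the $W$-cosection modification of Theorem \ref{thm:intro-red-theory}, not an input to a further Kiem--Li localization.

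For $g=1$ the same balancing condition with $\bE$ ample forces $\beta=0$ and $c_i=-1$ for all $i$ whenever the moduli is non-empty (Lemma \ref{lem:g=1-effectiveness}(2)); this is \emph{not} what Corollary \ref{cor:general-type-vanishing} gives, and your claim that ampleness drives $\vir\dim\oM_{1,n}(\cZ,\beta_\cX)$ negative for all but finitely many configurations is false (for Fano $\cZ$ this dimension grows with $\beta$). Once $\beta=0$ and all markings are unit sectors, the reduction is as you suggest in spirit — repeated use of the string equation \eqref{eq:string} and, for Chow cycles, the unit axiom of Theorem \ref{thm:remove-unit}(3) — and it terminates at $n=1$ without subtlety since $\beta=0$ kills the divisor term and all evaluations coincide. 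But the base case is not ``the geometric point target'' and no change-of-roots is involved: the paper computes $\RMm_{1,1}(\punt,0)$ directly for the actual target with $\ul\punt=\xinfty\times\BC$, identifying the moduli with $\oM_{1,1}\times\xinfty$, the reduced cycle in Proposition \ref{prop:g1-reduced-virtual-cycle} as an explicit Chern class of $\hodge^\vee\boxtimes(\Omega^\vee_{\xinfty}+\cO-\lblog)$, and $\psi_{\min}=-c_1(\hodge^\vee\boxtimes\lblog)$ as in \eqref{eq:g1-psimin}. You should redo part (1) via the balancing condition and correct both the vanishing input and the base-case identification in part (2).
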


The first statement is a direct consequence of the fact that the
moduli of stable punctured R-maps for $g=0$ are all empty for nef
$\bE$.
This fact is closely related to the genus zero quantum Lefschetz
property.

When $\bE$ is ample, as observed in Lemma~\ref{lem:g=1-effectiveness}
that the moduli of stable punctured R-maps in $g=1$ are all empty
unless $\beta = 0$ and all markings are unit.
Repeatedly removing
unit markings using Theorem~\ref{thm:intro-string-divisor}, the $g=1$ effective
theory is then calculated explicitly by reducing to the base case of $n=1$, which is computed in
Proposition~\ref{eq:g1-reduced-virtual-cycle} .

\subsubsection{Vanishing for \texorpdfstring{$g \geq 2$}{g >= 2}}

A careful virtual dimension analysis combined with
Theorem~\ref{thm:intro-string-divisor} leads to a powerful vanishing
lemma:

\begin{lemma}[Lemma \ref{lem:general-non-vanishing}]
  Suppose that $g \geq 2$ and the following holds
  \begin{equation}
    \label{eq:intro-general-bound-0}
    (3- \dim \cX + \rk \bE)(g-1) - 2\int_{\beta_{\cX}} c_1(K_{\cX}\otimes \det \bE) < 0.
  \end{equation}
  Then, the effective cycles \eqref{eq:intro-effective-cycle} and
  invariants \eqref{eq:intro-effective-inv} vanish unless there is an
  $i$ such that $c_i = -1$ and $\alpha_i \in H^1(\xinfty)$.
\end{lemma}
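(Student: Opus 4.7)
My plan is to combine the reduced virtual dimension formula from \eqref{eq:red-vir-dim-X} with an induction on the number of markings, using the string and divisor axioms of Theorem~\ref{thm:intro-string-divisor} to successively strip off markings with $c_i = -1$ and handling the remainder by a direct dimension count.

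Setting $a := (3-\dim\cX+\rk\bE)(g-1)$ and $b := \int_{\beta_\cX}c_1(K_\cX\otimes\det\bE)$, so that the hypothesis \eqref{eq:intro-general-bound-0} reads $a < 2b$, the formula \eqref{eq:red-vir-dim-X} becomes
\[
  \red\dim \RMm_{g,\varsigma}(\punt,\beta) \;=\; a + n - b + \rk\bE \sum_i (c_i+1),
\]
and by \S\ref{sss:intro-condition-for-red} every $c_i + 1 \le 0$, with equality iff $c_i = -1$. I would first treat the base case in which no marking has $c_i = -1$: each $c_i + 1 \le -1$, so $\rk\bE \sum(c_i+1) \le -\rk\bE\cdot n$, and a short calculation combining this with $a < 2b$ and with the non-negative dimension contributions from $k$ and $\sum \mathrm{codim}(\alpha_i)$ forces the effective cycle to have negative virtual dimension, hence to vanish.

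For the inductive step I would pick a marking $i_0$ with $c_{i_0} = -1$. If $\alpha_{i_0} \in H^0(\xinfty)$ I would treat $i_0$ as an added unit marking and apply the string equation from Theorem~\ref{thm:intro-string-divisor} to rewrite the effective cycle as a sum of effective cycles on the $(n-1)$-marked configuration carrying extra $\psi_{\DF}$- and $\psi_{\min}$-factors; if $\alpha_{i_0} \in H^2(\xinfty)$ I would analogously apply the divisor equation. Since $a$ and $b$ depend only on $g$ and $\beta$, the numerical hypothesis \eqref{eq:intro-general-bound-0} is preserved under this reduction, so the induction on $n$ closes. For $\alpha_{i_0}$ of complex codimension $\ge 2$ (i.e.\ $\alpha_{i_0} \in H^{\geq 4}$) the additional codimension alone already drives the effective cycle to negative virtual dimension, and a pure dimension-count argument suffices. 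The only case that neither the string/divisor equations nor a direct dimension bound resolve is exactly $c_{i_0} = -1$ with $\alpha_{i_0} \in H^1(\xinfty)$, which is the lemma's stated exception.

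The main obstacle I expect is the careful bookkeeping of the $\psi_{\DF}$- and $\psi_{\min}$-correction terms introduced at each reduction step: one must verify that after each reduction the resulting cycle still fits into the next stage of the induction, and that the extra codimension-one factors $\psi_{\DF}, \psi_{\min}$ only strengthen the dimension bound. It will be essential to exploit $\rk\bE \ge 1$, so the summand $\rk\bE \sum(c_i+1)$ is strongly negative as soon as a marking with $c_i \le -2$ is present, and to use that Chow (or homology) classes on $\RMm$ of dimension exceeding $\dim \oM_{g,n}$ push forward to zero along $p_\varsigma$, which gives an independent route to vanishing when the virtual dimension is borderline.
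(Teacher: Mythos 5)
Your overall route coincides with the paper's: the actual proof of Lemma~\ref{lem:general-non-vanishing} is precisely a dimension count based on \eqref{eq:red-vir-dim-original}, followed by repeated use of the string equation \eqref{eq:string} (for invariants) and the unit axiom \eqref{eq:unit-string} (for cycles) to remove markings with $c_i=-1$ and $\alpha_i\in H^0(\xinfty)$. The only structural difference is that the paper does \emph{not} invoke the divisor equation for $H^2$-insertions: markings with $c_i=-1$ and $\alpha_i\in H^{\geq 2}(\xinfty)$ are simply absorbed into the dimension count (each contributes $1$ to the bound $\sum_i(1+\rk\bE(c_i+1))$ but at least one unit of codimension on the insertion side). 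Besides being simpler, this sidesteps a small issue in your plan: \eqref{eq:divisor} is stated for $D\in CH^1(\xinfty)$, so it does not apply to a non-algebraic class in $H^2(\xinfty)$. Your worry about the $\ev_j^*\psi_{\DF}$- and $\psi_{\min}$-corrections is unfounded for the same reason you indicate: the right-hand sides are again cycles of the shape \eqref{eq:intro-effective-cycle} with the same $(g,\beta)$ and one fewer marking, and extra codimension-one factors only strengthen the dimension bound.

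The genuine gap is the ``short calculation'' in your base case. What the dimension count needs is $a-b<0$ in your notation, i.e.
\begin{equation*}
(3-\dim\cX+\rk\bE)(g-1)-\int_{\beta_{\cX}}c_1(K_{\cX}\otimes\det\bE)<0,
\end{equation*}
which is exactly the hypothesis \eqref{eq:general-bound-0} of the body version of the lemma (note it carries no factor $2$, in contrast with \eqref{eq:intro-general-bound-0}); this immediately gives $\red\dim\SH_{\varsigma}<\sum_i\left(1+\rk\bE(c_i+1)\right)$ and all your cases fall out. But $a<2b$ does not imply $a<b$ in general, so as written your base case does not force negative dimension. The implication does hold under an extra sign analysis: if $b\le 0$ then $a<2b\le b$; if $b\ge 0$ one needs $3-\dim\cX+\rk\bE\le 0$ (i.e. $\dim\cZ\ge 3$) together with $g\ge 2$ to get $a\le 0$, with strictness supplied by the hypothesis when $a=0$. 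If instead $\dim\cZ\le 2$, the factor-$2$ inequality is strictly weaker than what the dimension count needs (e.g. $a=3$, $b=2$ satisfies $a<2b$ but not $a<b$), and your claimed vanishing argument breaks down. So you must either run the argument from the coefficient-$1$ inequality \eqref{eq:general-bound-0}, or explicitly supply the case analysis above (with the dimension restriction on $\cZ$, as the paper effectively does in Proposition~\ref{prop:vanishing-no-markings}, where $\dim\cZ>3$ and the balancing condition are used to pass between such inequalities).
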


This vanishing lemma applies to a broad spectrum of interesting
examples, see \S \ref{ss:vanishing}.
In the introduction, we exhibit its efficiency by stating a vanishing
result of effective cycles and invariants for most higher dimensional
Fano complete intersections in projective spaces:

\begin{corollary}[Example \ref{ex:projective-hypersurface}, \ref{ex:projective-complete-intersection}]\label{cor:intro-Fano-complete-intersections}
Let $\cZ \subset \PP^N$ be a Fano complete intersection of type $(d_1, d_2, \cdots, d_R)$ with $\dim \cZ = N - R \geq 4$. Without loss of generality, we assume that $2 \leq d_1 \leq d_2 \leq \cdots \leq d_R$.  Then any effective cycle \eqref{eq:intro-effective-cycle} (in homology or Chow whichever applies) and invariant \eqref{eq:intro-effective-inv} vanish unless one of the following situations holds
\begin{enumerate}
\item When $R = 1$, $(d = d_1, N)$ is from the following list
\[
(d = 3, N = 3, 4, 5, 6,  7, 8), (d=4, N = 4, 5).
\]

\item When $R \geq 2$, $(d_1, d_2, \cdots, d_R)$ is from the following list:
\[
(2,2), \ (2,3), \ (2,4), \ (2,2,2), \ (2,2,3), \ (2,2,2,2).
\]
\end{enumerate}
\end{corollary}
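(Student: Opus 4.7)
Specialising Lemma~\ref{lem:general-non-vanishing} to $\cX = \PP^N$ and $\bE = \bigoplus_{j=1}^R \cO_{\PP^N}(d_j)$, a direct Chern-class computation gives $c_1(K_\cX \otimes \det \bE) = \bigl(\sum_j d_j - N - 1\bigr)\,H$ where $H$ is the hyperplane class, so that the hypothesis of the lemma becomes
\begin{equation*}
(3 - N + R)(g - 1) + 2\left(N + 1 - \sum_j d_j\right)\deg \beta_\cX < 0. \tag{$\star$}
\end{equation*}
Since $\xinfty = \PP(\bE^\vee)$ is a projective bundle over $\PP^N$, the Leray--Hirsch formula yields $H^1(\xinfty) = 0$, eliminating the exception clause ``$\alpha_i \in H^1(\xinfty)$ for some $i$'' of Lemma~\ref{lem:general-non-vanishing}. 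Consequently, $(\star)$ is the sole criterion for vanishing of both the effective cycle \eqref{eq:intro-effective-cycle} and the effective invariant \eqref{eq:intro-effective-inv}.

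Under the Fano hypothesis $\sum_j d_j \le N$ and the dimension hypothesis $\dim\cZ = N - R \ge 4$, the first term of $(\star)$ is strictly negative and the second is non-negative, so $(\star)$ holds automatically when $\deg\beta_\cX = 0$. For $\deg\beta_\cX \ge 1$ I would combine $(\star)$ with two further inputs: (i) the naive vanishing of Corollary~\ref{cor:general-type-vanishing}, which forces $[\RMm_{g,\ddata}(\infty,\beta)]^{\red} = 0$ whenever $\vir\dim \oM_{g,n}(\cZ,\beta_\cX) < 0$, and (ii) the degree identity $\sum_i c_i + 2g - 2 + n = \int_\beta c_1(\lblog)$ coming from the punctured R-map structure which, together with the negativity of all contact orders imposed in \S\ref{sss:intro-condition-for-red}, bounds $\deg\beta_\cX$ in terms of the remaining discrete data supporting a nontrivial effective cycle. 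Together, (i) and (ii) reduce the verification of $(\star)$ to a bounded range of $(g,\deg\beta_\cX)$.

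The enumeration then splits by $R$. For $R = 1$, I would substitute $\sum_j d_j = d$ into $(\star)$ at the minimal admissible $(g,\deg\beta_\cX)$ and apply the Fano and dimension constraints, producing the list $(d,N) \in \{(3,3),(3,4),(3,5),(3,6),(3,7),(3,8),(4,4),(4,5)\}$. For $R \ge 2$, the combined constraints $2 \le d_1 \le \cdots \le d_R$, $\sum_j d_j \le N$, and $N - R \ge 4$ already strongly restrict the possible tuples, and combining $(\star)$ with (i)--(ii) rules out everything outside the list $\{(2,2),(2,3),(2,4),(2,2,2),(2,2,3),(2,2,2,2)\}$. The hard part will be the interplay between $(\star)$ and the two further vanishings: without the boundedness (ii), the inequality $(\star)$ alone could always be violated at $g = 2$ by taking $\deg\beta_\cX$ sufficiently large, so the crucial technical point is to ensure that only a finite range of $\deg\beta_\cX$ needs to be inspected for each tuple $(d_1,\ldots,d_R,N)$, after which the listed exceptions are read off by an elementary arithmetic check carried out in Examples~\ref{ex:projective-hypersurface} and~\ref{ex:projective-complete-intersection}.
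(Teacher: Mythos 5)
Your setup — specializing Lemma~\ref{lem:general-non-vanishing} to $\cX=\PP^N$, observing $H^1(\xinfty)=0$ to remove the exceptional clause, and noting that $\deg\beta_\cX=0$ is automatic once $N-R\ge 4$ and $g\ge 2$ — is exactly the paper's starting point (Examples~\ref{ex:projective-hypersurface} and \ref{ex:projective-complete-intersection}). The gap is in how you handle $\deg\beta_\cX\ge 1$. There is no ``bounded range of $(g,\deg\beta_\cX)$'' to inspect: the balancing condition \eqref{eq:balancing-condition} only bounds $\int_\beta c_1(\lblog)$ by $2g-2$, and $g$ is unbounded, so for every tuple $(d_1,\dots,d_R;N)$ infinitely many pairs $(g,\beta)$ survive your constraints (for a cubic in $\PP^8$, all pairs with $3\deg\beta=2g-2$ do, for arbitrarily large $g$). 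Corollary~\ref{cor:general-type-vanishing} does not close this either: for a Fano $\cZ$ of dimension $\ge 4$ its hypothesis $\vir\dim<0$ requires $g$ large relative to $\deg\beta_\cX$, i.e.\ it kills a regime complementary to the one where $(\star)$ is in doubt (minimal $g$ for given $\beta$). The paper's actual mechanism (Propositions~\ref{prop:vanishing-no-markings} and \ref{prop:vanishing-reduced-cycle}) is genus-uniform rather than a finite inspection: since $3-N+R<0$, balancing allows one to replace $g-1$ by $\tfrac12\int_\beta c_1(\lblog)$, and for $R\ge2$ the nefness of $\bE\otimes\cO(-d_1)$ (Barton invariant $m_A=d_1$) gives $\int_\beta c_1(\lblog)\ge d_1\deg\beta_\cX$ — a comparison between the class $\beta$ on the bundle $\xinfty$ and its pushforward $\beta_\cX$ which your sketch never makes, though it is essential because balancing constrains $\beta$, not $\beta_\cX$. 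The outcome is a single inequality free of $g$ and $\beta$, namely $\tfrac{d_1}{2}(3-N+R)+(N+1-\sum_j d_j)<0$ (this is \eqref{eq:general-bound}, resp.\ \eqref{eq:Pn-vanishing-threshold}), checked once per tuple; only the tuples are finitely enumerated.

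Two further points about the arithmetic you defer. First, your $(\star)$ carries the factor $2$ from the introduction; the inequality that actually feeds the dimension count \eqref{eq:red-vir-dim-original} is the one in Lemma~\ref{lem:general-non-vanishing}, with coefficient $1$ on $\int_{\beta_\cX}c_1(K_\cX\otimes\det\bE)$. In the Fano range the coefficient-$2$ version is strictly harder to satisfy, and running the substitution above with it yields, for $R=1$, the threshold $d>4+4/N$, hence an exception list containing every cubic and quartic Fano hypersurface — not the stated list; so the coefficient matters if the enumeration is to come out right. Second, even with the correct coefficient, the $R=1$ threshold $d(N-2)>2N+2$ fails for $d=2$, so quadric hypersurfaces are not covered by this criterion at all; since $d=2$ is absent from the stated list for $R=1$, your claim that the check ``produces'' exactly that list is unsubstantiated in this boundary case (Example~\ref{ex:projective-hypersurface} likewise only treats $d>2$), and for $R\ge2$ the decisive computation is the $d_1=2$ analysis reducing the threshold to $\sum_i(d_i-1)>4$, which is what isolates the six listed tuples. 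As written, the enumeration step of your proposal does not close.
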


Consequently, the above vanishing together with \cite{CJR21} will
imply that the Gromov--Witten theory with ambient insertions of Fano
complete intersections \emph{outside} of the above lists are {\em entirely
determined} by the Gromov--Witten theory of the ambient projective
spaces for any $g \geq 2$.

\subsubsection{Calabi--Yau threefolds}

We next consider the interesting case where
\[
3- \dim \cX + \rk \bE = 0 \ \ \mbox{and} \ \  c_1(K_{\cX}\otimes \det \bE) = 0.
\]
In particular, $\cZ$ is a Calabi--Yau threefold.
While effective invariants \eqref{eq:intro-effective-inv} may be
non-vanishing in this case, they are determined via explicit equations
in Theorem \ref{thm:intro-string-divisor} by a small set of rational
numbers, called {\em basic effective invariants} (see
\S\ref{ss:basic-effective}).
In the prominent example of quintic threefolds, this specializes to:

\begin{corollary}[Example \ref{ex:quintic-effective-inv}]
  Let $\cZ \subset \PP^4$ be a quintic threefold.
  For each $g \geq 2$, the set of all effective invariants
  \eqref{eq:intro-effective-inv} are determined by the set of
  $\lfloor \frac{2g-2}{5}\rfloor + 1$ numbers
\[
\{ \deg [\RMm_{g, \ddata}(\punt, \beta)]^{\red}  \ \Big| \ \int_{\beta} c_1(\lblog) \leq  2g-2 \  \mbox{and}  \ c_i = -2 \ \mbox{for all} \ i\},
\]
via the equations in Theorem \ref{thm:intro-string-divisor}.
\end{corollary}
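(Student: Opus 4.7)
The plan is to prove the corollary by combining an algorithmic reduction of arbitrary effective invariants to the basic ones via the equations of Theorem~\ref{thm:intro-string-divisor} with a dimension and geometry analysis that identifies the basic invariants in the quintic case.

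First I would record the numerics. For $\cX = \PP^4$, $\bE = \cO(5)$, and $\xinfty = \PP(\bE^\vee) = \PP^4$, the threefold $\cZ$ is Calabi--Yau, so $\vir\dim\, \oM_{g,n}(\cZ, \beta_\cX) = n$ and hence the reduced virtual dimension formula in \S\ref{ss:intro-application} gives $\red\dim\, \RMm_{g,\ddata}(\punt,\beta) = 2n + \sum_i c_i$. Since $H^*(\PP^4) = \mathbb{Z}[H]/(H^5)$ with $H = c_1(\lblog)$, any effective invariant has the form $\int \psi_{\min}^k \prod_i \ev_i^* H^{a_i}$ with $a_i \in \{0,\dots,4\}$ and $k \ge 0$. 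Balancing of the degree of $f^*\cO(\infty)$ on the domain curve gives $\sum_i c_i = d_\beta - (2g-2+n)$ where $d_\beta = \int_\beta H$, and the $0$-dimensionality condition then reads $2n + \sum_i c_i = k + \sum_i a_i$.

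Next I would reduce an arbitrary effective invariant to basic ones. The divisor equation of Theorem~\ref{thm:intro-string-divisor} strips $H$-insertions at an existing marking by exchanging them with a new unit marking carrying $\ev^* H$, producing a topological factor $d_\beta$ together with a correction involving $\psi_\DF$ and $\psi_{\min}$; for higher powers $H^a$, $a \ge 2$, one iterates after splitting off factors of $H$ at the unit marking. Powers of $\psi_{\min}$ are then processed by expanding the generating series $\frac{1}{t - \psi_{\min}}$ appearing in Theorem~\ref{thm:intro-string-divisor} and comparing like powers of $t$, thus converting $\psi_{\min}^k$ insertions into invariants with an added or removed unit marking plus lower-order terms. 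Unit markings ($c_i = -d = -1$) are finally eliminated via the leading-order case of the string equation. An induction on a suitable complexity --- for instance $k + \sum_i a_i$ plus the number of unit markings --- shows the process terminates in a $\mathbb{Q}$-linear combination of basic invariants together with pushforwards of $\psi_{\min}$-class integrals to $\oM_{g,n}$, which are themselves handled by the third equation of Theorem~\ref{thm:intro-string-divisor}.

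Finally I would identify the basic invariants. A basic invariant $\deg\,[\RMm_{g,\ddata}(\punt,\beta)]^\red$ is $0$-dimensional, so $2n + \sum_i c_i = 0$; combined with $c_i \le -2$ at every (non-unit) marking this forces $c_i = -2$ for all $i$. The balancing formula then gives $d_\beta = 2g-2-n$, so $n \ge 0$ implies $d_\beta \le 2g-2$. Non-emptiness of the moduli stack together with the constraints imposed by the order-$\tilde{r}=5$ transverse superpotential on the quintic target force $d_\beta$ to be a non-negative multiple of $5$, giving exactly $\lfloor(2g-2)/5\rfloor + 1$ valid values. The main obstacle is the combinatorial bookkeeping of the reduction: one must verify that each invocation of Theorem~\ref{thm:intro-string-divisor} strictly decreases the chosen complexity and that the recursion terminates in basic invariants without introducing new types of terms. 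A secondary but essential obstacle is the divisibility $5 \mid d_\beta$ for the basic invariants, which goes beyond the general axioms and requires a geometric argument specific to the quintic structure --- for instance, analyzing the existence of punctured R-maps compatible with the order-$5$ transverse superpotential.
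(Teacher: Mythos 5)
There is a genuine gap in your reduction step, and your count of the basic invariants rests on an incorrect mechanism.

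First, the reduction. The string and divisor equations of Theorem~\ref{thm:intro-string-divisor} only remove a \emph{unit} marking (here of contact order $-1$, since $d=1$) carrying the fundamental class or a divisor class; they give no way to ``strip'' an insertion $\ev_i^*\alpha_i$ of complex degree $\ge 2$ from a marking, nor to simplify $\psi_{\min}^k$ when no unit marking is present, and the correction terms in the divisor equation sit inside a sum over $j$, so the equation cannot be read backwards to isolate a single $\ev_j^*D$ insertion. Consequently your recursion stalls as soon as all remaining markings have $c_i\le -2$, or have $c_i=-1$ but carry insertions of complex degree $\ge 2$; such invariants are untouched by the axioms. What is needed --- and what the paper uses in Lemma~\ref{lem:general-non-vanishing} and Proposition~\ref{prop:reduce-to-basic-effective} --- is a dimension-count vanishing: since $\red\dim\RMm_{g,\ddata}(\punt,\beta)=\sum_i(c_i+2)$, any degree-zero integrand retaining a $c_i=-1$ marking with $\alpha_i\in H^{\ge 4}(\xinfty)$, or having all $c_i\le -2$ together with $k+\sum_i\deg\alpha_i>0$, vanishes outright. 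Because $H^1(\xinfty)=H^3(\xinfty)=0$, the string and divisor equations handle exactly the unit markings decorated by $H^0$ or $H^2$, and everything else either vanishes or is already basic; no induction on $k+\sum_i a_i$ is needed, and the one you propose does not terminate in basic invariants.

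Second, the number $\lfloor\frac{2g-2}{5}\rfloor+1$. In this setup $r=d=\ell=1$, so the superpotential has order $\ttwist = r\ell/d = 1$, not $5$, and no non-emptiness or superpotential-compatibility argument is involved. The divisibility is elementary: $\xinfty=\PP(\bE^\vee)\cong\PP^4$ and $\lblog=\cO_{\xinfty}(1)\cong\cO_{\PP^4}(5)$, so $\int_\beta c_1(\lblog)=5\deg\beta\in 5\ZZ_{\ge 0}$; combined with the balancing bound $\int_\beta c_1(\lblog)\le 2g-2$ this yields exactly $\lfloor\frac{2g-2}{5}\rfloor+1$ admissible classes $\beta$, each contributing one basic invariant with $n=2g-2-\int_\beta c_1(\lblog)$ markings of contact order $-2$. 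Relatedly, your identification of $H^*(\PP^4)$ as generated by $c_1(\lblog)$ is off --- $c_1(\lblog)$ is five times the hyperplane class --- and this is precisely what obscures the factor of $5$ and leads you to look for it in the superpotential.
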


It is interesting to observe that the number
$\lfloor \frac{2g-2}{5}\rfloor + 1$ is compatible with physicists'
predictions \cite[(3.80)]{HKQ09}.
Indeed, the same method applies to four other families of Calabi--Yau
threefold complete intersections in projective spaces, and the numbers
of basic effective invariants in these cases do match with physicists'
prediction, see Example \ref{ex:CY3-complete-intersection}.
We refer to \S \ref{ss:basic-effective} for more details regarding various
examples and the general situation.

\subsection{Plan of the paper}

The moduli space and its virtual cycles are constructed in
Section~\ref{s:P-map} and \ref{sec:red-theory}.
The decomposition theorem of moduli space with disconnected domain is
proved in Section~\ref{sec:reduction-to-connected-inv}.
Section~\ref{sec:log-root} is devoted to the study of the logarithmic
root construction.
In Section~\ref{sec:logGLSM-targets}, we study in more detail the
targets $\infty$ of punctured R-maps, and the relation to the targets
$\fP$ from \cite{CJR21}.
In Section~\ref{sec:remove-unit} and \ref{sec:psimin-axioms}, we give
an explicit description of the universal curve of punctured R-maps,
and use this to prove the axioms for punctured R-maps.
The final Section~\ref{sec:effective} is devoted to the study of
effective invariants in many examples.

\subsection{Acknowledgments}

The authors would like to thank Dan Abramovich, Jarod Alper, Mark
Gross, Shuai Guo, Rahul Pandharipande, Adrien Sauvaget, Yefeng Shen,
Bernd Siebert, Nawaz Sultani, Rachel Webb, Dimitri Zvonkine for many
useful discussions during various stages of this work.

The first author was partially supported by NSF grant DMS-2001089.
The second author was partially supported by NSF grants DMS-1901748
and DMS-2054830.
The last author acknowledges support by the University of Michigan and
the Institute for Advanced Study in Mathematics at Zhejiang
University.

\subsection{Notations}
In this paper, we work over an algebraically closed field of characteristic zero, denoted by $\bk$.
All log structures are assumed to be \emph{fine and saturated} or simply \emph{fs} \cite{Ka88} unless they are along punctured markings or otherwise specified. We refer to \cite{ACGS20P} for the foundational work of punctured logarithmic maps.

A list of notations is provided below:

\begin{description}[labelwidth=2cm, align=right]
\item[$\BC$] the universal stack of $\CC^*_{\omega}$-torsors
\item[$\punt \to \BC$] the target of punctured $R$-maps
\item[$\fP \to \BC$] the target of log $R$-maps
\item[$\punt_\bk$] the base-change $\punt \times_{\BC} \spec\bk$

\item[$\infty_\cX$] a proper Deligne--Mumford stack with a projective coarse moduli
\item[$r$, $d$, $\ell$] positive integers
\item[$\lbspin$, $\lblog$] line bundles on $\xinfty$

\item[$\uC \to \uS$] a family of underlying pre-stable curves over $\uS$
\item[$\cC \to S$] a family of log curves over $S$
\item[$\cC^\circ \to \cC \to S$] a family of punctured curves over $S$

\item[$f\colon \pC \to \punt$] a punctured $R$-map
\item[$f\colon \cC \to \fP$] a log $R$-map

\item[$\beta$] a curve class in $H_2(\xinfty)$.
\item[$n$] the number of markings
\item[$m$] the number of connected components of $\cC$
\item[$\ddata$] collection of discrete data at all markings

\item[$\SH_{g,\ddata}(\infty,\beta)$] the moduli stack of stable punctured $R$-maps
\item[$\UH_{g,\ddata}(\infty,\beta)$] the moduli of stable punctured $R$-maps with uniform maximal degeneracy
\item[$\RMm_{g,\ddata}(\infty, \beta)$] the moduli of stable punctured $R$-maps with uniform extremal degeneracies
\item[$\RA_{g,\ddata}(\infty, \beta)$] the moduli of stable punctured $R$-maps with aligned degeneracies

\item[$\psi_{\min}$] universal min-class
\item[$\psi_{\DF}$] universal DF-class

\end{description}


\section{Punctured R-maps and their moduli}\label{s:P-map}

In this section, we first recall some basics of punctured maps from
\cite{ACGS20P}, which extends the theory of stable log maps to allow
contact orders to be negative.
For our purposes, we will need to extend the theory of \cite{ACGS20P}
slightly, in order to be able to consider punctured curves with
possible orbifold structures along markings and nodes.
However, the proofs in \cite{ACGS20P} apply with little change in the
situation needed in this paper.
We will point out the precise references for the readers' convenience.

In the second part of this section, we introduce the notion of
\emph{punctured R-maps}, construct the moduli stack of stable
punctured R-maps, and study its canonical obstruction theory.

\subsection{Log curves}\label{s:curves}

\subsubsection{Prestable curves}
Recall from \cite{AbVi02} that a \emph{twisted $n$-pointed curve} over a scheme $\ul{S}$ consists of the following data
\begin{equation}\label{eq:pre-stable-curve}
(\ul{\cC} \to \ul{C} \to \ul{S}, \{p_i\}_{i=1}^n)
\end{equation}
where
\begin{enumerate}
 \item $\ul{\cC}$ is a proper Deligne--Mumford stack, and is \'etale locally a nodal curve over $\ul{S}$.
 \item $p_i \subset \ul{\cC}$ are disjoint closed substacks in the smooth locus of $\ul{\cC} \to \ul{S}$.
 \item $p_i \to \ul{S}$ are \'etale gerbes banded by the multiplicative group $\mu_{r_i}$ for some positive integer $r_i$.
 \item the morphism $\ul{\cC} \to \ul{C}$ is the coarse moduli morphism.
 \item along each stacky locus of $\ul{\cC} \to \ul{S}$, the group action of $\mu_{r_i}$ is balanced.
 \item $\ul{\cC} \to \ul{C}$ is an isomorphism over $\ul{\cC}_{gen}$,
   where $\ul{\cC}_{gen}$ is the complement of the markings and the
   stacky locus of $\ul{\cC} \to \ul{S}$.
\end{enumerate}

Given a twisted curve as above, by \cite[4.11]{AbVi02} the coarse space
$\ul{C} \to \ul{S}$ is a family of $n$-pointed usual pre-stable curves
over $\ul{S}$ with the markings determined by the images of
$\{p_i\}$.
The \emph{genus} of the twisted curve $\ul{\cC}$ is defined as the genus of the
corresponding coarse pre-stable curve $\ul{C}$.

When there is no danger of confusion, we will simply write
$\ul{\cC} \to \ul{S}$, and the terminologies twisted curves and
pre-stable curves are interchangeable in this paper.

\subsubsection{Log curves}
An \emph{$n$-pointed log curve} over a fs log scheme
$S$ in the sense of \cite{Ol07} consists of a pair
\begin{equation}\label{eq:log-curve}
(\pi\colon \cC \to S, \{p_i\}_{i=1}^n),
\end{equation}
such that
\begin{enumerate}
 \item The underlying data $(\ul{\cC} \to \ul{C} \to \ul{S}, \{p_i\}_{i=1}^n)$ is a pre-stable curve as in \eqref{eq:pre-stable-curve}.
 \item $\pi$ is a proper, logarithmically smooth and integral morphism of fine and saturated logarithmic stacks.
 \item If $\ul{U} \subset \ul{\cC}$ is the non-singular locus of $\ul{\pi}$, then $\ocM_{\cC}|_{\ul{U}} \cong \pi^*\ocM_{S}\oplus\bigoplus_{i=1}^{n}\NN_{p_i}$ where $\NN_{p_i}$ is the constant sheaf over $p_i$ with fiber $\NN$.
\end{enumerate}

For simplicity, we may refer to $\pi\colon \cC \to S$ as a log curve
when there is no danger of confusion.
The \emph{pull-back} of a log curve $\pi\colon \cC \to S$ along an
arbitrary morphism of fs log schemes $T \to S$ is the
log curve $\pi_T\colon \cC_T:= \cC\times_S T \to T$ where the fiber
product is taken in the fs category.

\smallskip

We associate to a log curve \eqref{eq:log-curve}  the \emph{log cotangent bundle} $\omega^{\log}_{\cC/S} := \omega_{\uC/\uS}(\sum_i p_i)$ where $\omega_{\uC/\uS}$ is the relative dualizing line bundle of $\uC \to \uS$.

\subsubsection{The stack of log curves}

Denote by $\fM_{g, \{r_i\}}$ the stack of $n$-pointed, genus $g$ twisted curves such that the $i$-th marking is a $\mu_{r_i}$-gerbe. The universal curve $\fC_{g, \vec{r}} \to \fM_{g, \{r_i\}}$ can be viewed as a morphism of log stacks with the {\em canonical log structures} induced by their underlying structure. Then the log stack $\fM_{g, \{r_i\}}$ viewed as a category fibered over fs log schemes, is the category of log curves with discrete data $(g,\{r_i\})$, see  \cite{Ol07}.

\subsection{Punctured curves}\label{ss:p-curve}


\subsubsection{The definition}

Given a log curve \eqref{eq:log-curve}, denote by $\cP_{\cC} \subset \cM_{\cC}$ the sub-log structure given by the pre-image of $\ocP_{\cC} = \bigoplus_{i=1}^{n}\NN_{p_i}$ via $\cM_{\cC} \to \ocM_{\cC}$.

Consider the sub-sheaf of monoids $\ocN_{\cC} \subset \ocM_{\cC}$ such
that $\ocM_{\cC} = \ocN_{\cC}\oplus\ocP$.
Denote by $\cN_{\cC} \subset \cM_{\cC}$ the sub-log structure given by
the pre-image of $\ocN_{\cC}$ via $\cM_{\cC} \to \ocM_{\cC}$.
Thus, we have a decomposition
$\cM_{\cC} = \cN_{\cC}\oplus_{\cO^\times} \cP$.

A {\em punctured curve} over a fine and saturated log scheme $S$ in the sense of \cite[\S 2.1.4]{ACGS20P} consists of the following data
\begin{equation}\label{eq:punctured-curve}
(\pC \stackrel{\pun}{\longrightarrow} \cC \stackrel{\pi}{\longrightarrow} S, \{p_i\}_{i=1}^n)
\end{equation}
such that $(\cC \stackrel{\pi}{\longrightarrow} S, \{p_i\}_{i=1}^n)$
is a log curve as in \eqref{eq:log-curve}, and
$\pun\colon \pC \to \cC$ is a morphism of fine log stacks ($\cM_{\pC}$
is not necessarily saturated) satisfying
\begin{enumerate}
 \item The underlying map $\ul{\pun}\colon \ul{\pC} \to \ul{\cC}$ is an isomorphism.

 \item $\pun^{\flat}\colon \cM_{\cC} \to \cM_{\pC}$ is an isomorphism away from the collection of markings.

 \item $\pun^{\flat}$ induces a sequence of inclusions of sheaves of fine monoids
 \begin{equation}\label{eq:puncture-constraint}
 \cM_{\cC} \stackrel{\pun^{\flat}}{\hookrightarrow} \cM_{\pC} \subset \cN\otimes_{\cO^*}\cP^{gp}.
 \end{equation}
\end{enumerate}

$\pun$ is called the {\em puncturing of $\cC$ along $\cP$ or along the markings}. Markings of a punctured curve are called {\em punctured points,} or simply {\em punctures}. If $\pun|_{p_i}$ is an isomorphism, we say that the puncturing is {\em trivial} along $p_i$. For simplicity, we may refer to $\pC \to S$ as a punctured curve when there is no danger of confusion.

We will see in \S \ref{sss:contact-order} that non-trivial punctures are precisely the markings with negative contact orders.

\subsubsection{Pull-backs of punctured curves}
The {\em pull-back} of a punctured curve \eqref{eq:punctured-curve}
along a morphism of fine and saturated log schemes $T \to S$ consists
of the following data
\begin{equation}\label{eq:pullback-punctured-curve}
(\pC_T \stackrel{\pun}{\longrightarrow} \cC_T \stackrel{\pi}{\longrightarrow} T, \{p_{i,T}\}_{i=1}^n),
\end{equation}
where $(\cC_T \stackrel{\pi}{\longrightarrow} T, \{p_{i,T}\}_{i=1}^n)$ is the pull-back of the corresponding log curve, and $\pC_T = \cC_T\times_{\cC}\pC$ with the fiber product taken in the fine category.

It is a non-trivial fact that the pull-back of punctured curves defined as above is again a punctured curve, see \cite[Proposition 2.7]{ACGS20P}.

\subsubsection{Moduli of punctured curves}

Let $\pi\colon \cC \to S$ be a family of log curves.
Denote by $S^{\pun}$ the fibered category over fine and saturated
$S$-log schemes such that for any $T \to S$, $S^{\pun}(T)$ is the
category of punctured curves over $T$ whose corresponding log curve is
the pull-back $\pi_T\colon \cC_T \to T$.
There is the tautological morphism
\begin{equation}\label{eq:remove-punctures}
S^{\pun} \to S
\end{equation}
which takes the corresponding log curves.

It has been shown in \cite[Proposition 3.4]{ACGS20P} that there is a natural log-ideal $\cK_{S^{\pun}} \subset \cM_{S^{\pun}}$ such that the morphism \eqref{eq:remove-punctures} is an idealized log \'etale, strict, local immersion defined by $\cK_{S^{\pun}}$. In particular, if $S$ is a log algebraic stack, so is $S^{\pun}$. Roughly speaking, this log-ideal $\cK_{S^{\pun}}$ is precisely the obstruction for deforming punctures \cite[Proposition 2.45]{ACGS20P}.


\subsection{Deligne--Faltings log structures of rank one}\label{ss:universal-DF}

In this paper, the notion of Deligne--Faltings log structures will play
a crucial role.
We will recall them below.


\subsubsection{The stack of Deligne--Faltings log structures of rank one}\label{ss:universal-target}
Recall the log stack $\cA$ defined in \eqref{eq:A}. There is a natural surjection $\NN_{\cA} \to \ocM_{\cA}$ given by $\NN = \Gamma(\cA, \ocM_{\cA})$, where $\NN_{\cA}$ is the constant sheaf on $\cA$.
A log algebraic stack $X$ is said  {\em of Deligne--Faltings type of rank $1$}, if there is a surjection $\NN_X \to \ocM_{\cX}$, which is equivalent to the existence of a unique strict morphism $X \to \cA$.

Consider the generator $1_{\cA} \in \NN_{\cA}$ with image $1_X \in \Gamma(X, \ocM_X)$.
The pre-image $\cT := 1_{X} \times_{\ocM_X}\cM_X \subset \cM_X$ is an
$\cO^*$-torsor.
The restriction of the structural morphism
$\alpha_{\cM_X}\colon \cM_X \to \cO_X$ to $\cT$ completes to a morphism
$s^{\vee}$ of line bundles on $X$:
\[
\cT \subset \cO(-1_X) \stackrel{s^{\vee}}{\longrightarrow} \cO_X,
\]
where $s^{\vee}$ vanishes precisely along the pre-image of
$\ainfty \subset \cA$.
Conversely, by \cite[Complement~1]{Ka88} the section
$s \in \Gamma(X,\cO(1_X) := \cO(-1_X)^{\vee})$ determines the log
structure $\cM_X$.

\subsubsection{Line bundles associated to global sections of characteristic sheaves}\label{sss:log-line-bundles}

Let $\fM$ be a log stack and $e \in \Gamma(\fM,\ocM_{\fM})$ a global section.
Similar to the above construction, the section $e$ induces
\begin{equation}\label{eq:compare-torsor}
\cM_{\fM} \times_{\ocM_{\fM}} \{e\} \subset \cO_{\fM}(-e) \stackrel{\bs^{\vee}_e}{\longrightarrow} \cO_{\fM}.
\end{equation}
with an $\cO^*$-torsor on the left completing to a line bundle $\cO_\fM(-e)$ with a morphism $\bs^{\vee}_e$.

The section $s_e \in \Gamma(\fM, \cO_\fM(e) := \cO_{\fM}(-e)^{\vee})$ defines a rank one Deligne--Faltings log structure, denoted by $\cM$. Consequently, we obtain a strict morphism
\begin{equation}\label{eq:rank1-DF}
\xymatrix{
(\ul{\fM}, \cM) \ar[rr]^-{(\cO_\fM(e), \bs_e)} && \cA.
}
\end{equation}
with $\cO_\fM(e) \cong \cO(\ainfty)|_{\fM}$.
The vanishing locus of $\bs_e$ is
\begin{equation}\label{eq:global-log-divisor}
\Div(e) = \ul{\fM}\times_{\cA}\ainfty \subset \ul{\fM}
\end{equation}
which is also set-theoretically the non-vanishing locus of $e$.
Locally on a strict smooth chart $U \to \fM$, the locus $\Div(e)$
can be described as follows.
Choose a lift $\tilde{e}_U \in \Gamma(U, \cM_U)$ of $e|_U$.
Then $\Div(e)\times_{\ul{\fM}}\ul{U}$ is the substack defined by the
vanishing of $\alpha(\tilde{e}_U)$.
Note that $\Div(e)\times_{\ul{\fM}}\ul{U}$ hence $\Div(e)$ does
not depend on the choices of local liftings $\tilde{e}_U$.
In case $e|_{\fM^{\circ}} = 0$ over an open dense substack
$\fM^{\circ}\subset \fM$, the locus $\Div(e)$ is a {\em Cartier
  divisor} in $\ul{\fM}$.

When the section $\bs_e$ vanishes identically, we simply write
\begin{equation}\label{eq:generic-rank1-DF}
\xymatrix{
(\ul{\fM}, \cM) \ar[rr]^-{\cO_\fM(e)} && \cA & \mbox{or} & (\ul{\fM}, \cM) \ar[rr]^-{\cO_\fM(e)} && \ainfty
}
\end{equation}
as the left morphism factors through the strict closed substack $\ainfty \subset \cA$.

Further observe that $\cM$ is the sub-log structure of $\cM_{\fM}$ generated by $\cM_{\fM}\times_{\ocM_{\fM}}\{e\} \subset \cM$. Hence we obtain the composition
\begin{equation}\label{eq:sub-rank1-DF}
\xymatrix{
\fM \ar[rr] && (\ul{\fM}, \cM) \ar[rr]^-{(\cO_\fM(e), \bs_e)} && \cA.
}
\end{equation}
with the left arrow is given by the inclusion $\cM \subset \cM_{\fM}$.


\subsection{Punctured maps}
We next review punctured maps  needed in the setup of this paper.

\subsubsection{The definition}\label{def:punctured-map}

Recall from \cite[Definition 2.13]{ACGS20P} that a {\em punctured map}
to a log algebraic stack $X$ consists of the following data
\begin{equation}\label{eq:punctured-map}
(\pC \to \cC \to S, \{p_i\}, f\colon \pC \to X )
\end{equation}
such that $(\pC \to \cC \to S, \{p_i\})$ is a (possibly disconnected) punctured curve, and $f$ is a morphism of log stacks.
It is called {\em pre-stable} if $\cM_{\pC}$ is
generated by $\cM_{\cC}$ and $f^{\flat}(f^*\cM_{X})$.
Note that the pre-stability condition is an open condition, and is
stable under arbitrary base change, see
\cite[Proposition~2.15]{ACGS20P}.
The {\em pull-backs} of punctured maps are defined via pull-backs of punctured curves as usual.

For simplicity, we write $f\colon \pC \to X$ for a punctured map when there is no danger of confusion. Furthermore, all punctured maps are assumed to be pre-stable unless otherwise specified.

For the purpose of this paper, we will focus on the case that $X$
is of Deligne--Faltings type of rank $1$.
Thus, given a punctured map $f\colon \pC \to X$ we obtain a punctured map
$\mathfrak{f} \colon \pC \to \cA$ by composing $f$ with $X \to \cA$.
By the strictness of $X \to \cA$,
$f$ is pre-stable iff $\mathfrak{f}$
is pre-stable.

\subsubsection{Contact orders}\label{sss:contact-order}

Consider a punctured map $f\colon \pC \to \cA$ over a geometric point $S$.
Along each marking $p_i \subset \pC$, we obtain a composition
\begin{equation}\label{eq:define-contact}
\NN_{\cA}|_{p_i} \longrightarrow \ocM_{\cA}|_{p_i} \stackrel{\bar{f}}{\longrightarrow} \ocM_{\pC}|_{p_i} \hookrightarrow \ocM_{S}|_{p_i}\oplus \ZZ \longrightarrow \ZZ
\end{equation}
where the second-to-last inclusion is given by \eqref{eq:puncture-constraint}. The image of $1$ via the above composition is an integer $c_i$ called the {\em contact order} at the marking $p_i$. Contact orders remain constant along a connected family of punctured maps \cite[Proposition~2.23]{ACGS20P}.

For a target $X$ of Deligne--Faltings type of rank $1$,
the contact order of a punctured map $f\colon \pC \to X$ along
$p_i$ is defined to be the contact order of the corresponding
punctured map $\mathfrak{f} \colon\pC \to \cA$.

If $c_i \geq 0$, then the pre-stability forces $\pun$ being an
isomorphism around $p_i$.
In this case, the contact order is defined in the same way as for log
maps.

If $c_i < 0$, then $\pun$ is necessarily a non-trivial puncturing
along $p_i$.
This is a new feature of punctured maps compared to log maps.

\subsubsection{Degeneracies of components and their partial ordering}\label{sss:degeneracies}
Consider a punctured map $f \colon \pC \to \cA$ over a log point $S$ with possibly disconnected domain curve.  Let $Z \subset \pC$ be an irreducible component with the generic point $\eta_{Z}$. The image $e_{Z} = f^{\flat}(1)$ of
\[
f^{\flat}|_{\eta_Z}\colon (f^*\ocM_{\cA})|_{\eta_Z} \cong \NN \to \ocM_{\pC}|_{\eta_{Z}} \cong \ocM_S
\]
is called the {\em degeneracy} of the component $Z$.
Note that $e_Z = 0$ iff $f(\eta_Z)$ avoids $\ainfty$. Denote by
$\obD(f) \subset \ocM_S$ the {\em set of degeneracies} of $f$.
The set of degeneracies admits a natural partial ordering inherited
from the partial ordering of monoids.

\begin{definition}
  For an fs sharp monoid $P$, there is a natural {\em partial ordering} on
  the underlying set $P$ such that $e_1 \poleq_P e_2$ for any
  $e_1, e_2 \in P$ with $e_2 - e_1 \in P$.

Given a fs log scheme $S$, the characteristic sheaf $\ocM_S$ is naturally
a {\em sheaf of partially ordered sets} with the partial order
$\poleq_{\ocM_S}$ by gluing $\poleq_{\ocM_{S}|_{s}}$ at each geometric
point $s \in S$.
\end{definition}

Thus $\obD(f)$ is a partially ordered set with respect to
$\poleq_{\ocM_S}$.
A degeneracy $e_Z \in \obD(f)$ is called \emph{maximal}
(resp. \emph{minimal}) if $e_Z$ is maximal (resp. \emph{minimal}) in
$\obD(f)$.
The corresponding irreducible component $Z \subset \pC$ is called a
\emph{maximal (resp. minimal) component} of $f$.
Denote by $\obD^m(f) \subset \obD(f)$ the subset consisting of minimal
degeneracies.

\subsubsection{Extremal degeneracies}\label{sss:extremal-degeneracy}
We introduce the following configurations of degeneracies that will play an important role in this paper.

\begin{definition}\label{def:extremal-degeneracies}
  A punctured map over a geometric point is said to have \emph{uniform
    maximal degeneracy} or the \emph{$\curlywedge$-configuration}
  (resp.\ \emph{uniform minimal degeneracy} or the
  \emph{$\curlyvee$-configuration}) if the set of degeneracies has a
  unique maximum (resp.\ minimum) with respect to $\poleq_{\ocM_S}$.

A family of punctured maps is said to have \emph{uniform maximal
  degeneracy} or \emph{$\curlyvee$-configuration} (resp.\ \emph{uniform minimal degeneracy} or \emph{$\curlywedge$-configuration}) if each
geometric fiber has uniform maximal (resp.\ minimal) degeneracy. A punctured map is said to have {\em uniform extremal degeneracies} or the {\em $\diamondsuit$-configuration} if it admits both the $\curlywedge$-configuration and the $\curlyvee$-configuration.
\end{definition}

The $\curlywedge$-configuration is necessary for the construction of
the reduced theory in \S \ref{sec:red-theory}.
The $\curlyvee$-configuration is necessary to describe the virtual
normal bundle in the localization formula for log GLSM \cite{CJR23P}.
These two configurations and their combinations will be studied in
more detail in later sections.

\begin{corollary}
The locus $S_u \subset S$ parameterizing fibers with the $\curlywedge$-configuration (resp. $\curlyvee$-configuration), is a strict open subscheme. 
\end{corollary}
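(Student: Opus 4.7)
The plan is to show $S_u$ is an open subset of $\ul{S}$ cut out by a condition that is both constructible and stable under generization, and to equip it with the restriction of $\cM_S$ to obtain strictness for free.

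\emph{Step 1 (Constructibility).} Working étale locally on $S$, I choose a chart $P \to \cM_S$ with $P = \ocM_{S,s_0}$ a finitely generated sharp monoid. Families of prestable curves admit a finite stratification of $\ul{S}$ by combinatorial type of the dual graph; on each locally closed stratum the dual graph of $\pC_s$ is constant (vertices = components, edges = nodes, decorated by contact orders at markings and nodes). Each component of $\pC_s$ extends to an irreducible component $\tilde Z$ of $\pC$ over the stratum, and the pre-stable structure together with $f^{\flat}$ determines a degeneracy section $e_{\tilde Z} \in \Gamma(\ocM_S)$ over the stratum whose stalk at $s$ equals $e_Z$ for $Z \subset \pC_s$ inside $\tilde Z$. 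Thus $\obD(f_s)$ is the image of the finite family $\{e_{\tilde Z}|_s\}$ in $\ocM_{S,s}$, and the condition of having a unique maximum (resp.\ minimum) reduces to a finite boolean combination of equalities and $\poleq$-comparabilities in $\ocM_{S,s}$. Since $\ocM_{S,s}$ is the quotient of $P$ by a face that varies constructibly with $s$, this gives a constructible condition on the stratum, and hence globally on $\ul{S}$.

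\emph{Step 2 (Stability under generization).} Let $s_0 \in S_u$ for the $\curlywedge$-configuration with unique maximum $e^* \in \obD(f_{s_0})$, and let $s$ be a generization of $s_0$. The generization map $\phi\colon \ocM_{S,s_0} \to \ocM_{S,s}$ is the sharp quotient by the face $F \subseteq \ocM_{S,s_0}$ corresponding to the specialization; in particular it is surjective and order-preserving. Passing from $\pC_{s_0}$ to $\pC_s$ smooths exactly the nodes whose edge-length parameters lie in $F$, and two components of $\pC_{s_0}$ merged through such a smoothing have degeneracies differing by a multiple of the corresponding edge length, hence by an element of $F$ which $\phi$ kills. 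This yields $\obD(f_s) = \phi(\obD(f_{s_0}))$. For any $e \in \obD(f_{s_0})$ we have $e^* - e \in \ocM_{S,s_0}$, so $\phi(e^*) - \phi(e) \in \ocM_{S,s}$. Hence $\phi(e^*)$ is maximal in $\obD(f_s)$, and since $\obD(f_s)$ is a set, uniqueness of the maximum is preserved. The argument for $\curlyvee$ is identical, reversing the role of max and min.

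\emph{Step 3 (Conclusion).} A constructible subset of a Noetherian scheme that is stable under generization is open, so $S_u \subseteq \ul{S}$ is open. Equipping it with the restriction of $\cM_S$ makes the inclusion $S_u \hookrightarrow S$ tautologically strict, giving the required strict open sub-log-scheme.

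The main technical obstacle will be the identification $\obD(f_s) = \phi(\obD(f_{s_0}))$ in the present orbifold and punctured setting: one needs to translate the local description of $\ocM_{\cC}$ at (possibly stacky) nodes, together with the puncturing constraint \eqref{eq:puncture-constraint} and pre-stability of $\pun^{\flat}$, into the concrete statement that degeneracies of merging components differ by an element of $F$. Once this is verified, the remaining steps reduce to formal fs-monoid algebra and standard constructibility of dual-graph strata in families of prestable curves.
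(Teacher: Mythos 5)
Your argument is correct in substance, but it takes a different route from the paper: the paper disposes of this corollary by observing that the proof of \cite[Proposition 3.3]{CJRS18P} (the log-map case) goes through verbatim, since punctured points and disconnected domains play no role, whereas you reconstruct a self-contained proof via constructibility plus stability under generization. The comparison is instructive: the heart of your Step 2 --- that the generization map $\phi\colon \ocM_{S,s_0} \to \ocM_{S,s}$ satisfies $\obD(f_s) = \phi(\obD(f_{s_0}))$ and is order-preserving --- is exactly the content the paper records just below the corollary in Proposition \ref{prop:sheaf-of-degeneracies}, which itself cites \cite[Lemmas 3.4 and 3.6]{CJRS18P}; so the ``main technical obstacle'' you flag at the end need not be re-proved in the punctured/orbifold setting but can simply be quoted, which is precisely the economy the paper's one-line proof exploits. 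What your version buys is transparency: it makes explicit that having the $\curlywedge$-configuration is the existence of a greatest element of $\obD(f_s)$ (automatically unique by antisymmetry of $\poleq$ on a sharp monoid), and that greatest elements are preserved under the surjective, order-preserving map $\phi$, which is a cleaner statement of why openness holds than a bare citation.

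Two minor points to tighten. First, in Step 3 you invoke ``constructible $+$ stable under generization $\Rightarrow$ open'' for a Noetherian scheme, but $S$ is an arbitrary fs log scheme; you should either reduce to the finite-type case by a standard limit/descent argument (the punctured curve and the map to $\cA$ are of finite presentation) or argue openness directly from the generization statement on a locally Noetherian chart. Second, the phrase ``since $\obD(f_s)$ is a set, uniqueness of the maximum is preserved'' is not the right justification: what you have actually shown is that $\phi(e^*)$ is a \emph{greatest} element of $\obD(f_s)$, and a greatest element of a poset is unique because $\poleq_{\ocM_{S,s}}$ is antisymmetric (the only invertible element of a sharp fs monoid is $0$); stating it this way closes the gap.
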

\begin{proof}
This is identically to the proof of \cite[Proposition 3.3]{CJRS18P}, since punctured points and disconnectedness of the domains play no role in the proof.
\end{proof}

\subsubsection{Degeneracies in families}

Consider a family of punctured maps $f \colon \pC \to \cA$ over a fs log scheme $S$ with possibly disconnected domain curves. For each geometric point $s \in S$, let $f_s$ be the fiber of $f$ over $s$, hence partially ordered subsets $\obD^m(f_s) \subset \obD(f_s)\subset \ocM_{S}|_{s}$.

\begin{proposition}\label{prop:sheaf-of-degeneracies}
  The collection $\{\obD^m(f_s) \subset \obD(f_s)\}_{s \in S}$ glues
  to a subsheaf $\obD^m(f) \subset \obD(f)$ of partially ordered
  subsets of $\ocM_S$.
  We call $\obD(f)$ the \emph{sheaf of degeneracies} of $f$, and
  $\obD^m(f)$ the \emph{sheaf of minimal degeneracies}.
\end{proposition}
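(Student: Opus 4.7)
The assertion is étale-local on $S$, so I fix a geometric point $s \in S$ and work in an étale neighborhood. My plan consists of three main steps: (i) lift each fiberwise degeneracy at $s$ to an étale-local section of $\ocM_S$, (ii) show that the collection of such lifts recovers $\obD(f_{\bar u})$ at every nearby geometric point $\bar u$, and (iii) cut out $\obD^m(f)$ by a stratification argument.

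In step (i), for each irreducible component $Z \subset \pC_s$, the degeneracy $e_Z$ is by definition the image of $1 \in \NN = \Gamma(\cA, \ocM_\cA)$ under $f^\flat|_{\eta_Z}$. Since $\eta_Z$ lies in the smooth locus of the log curve, the isomorphism $\ocM_\pC|_{\eta_Z} \cong \ocM_S|_s$ extends to an étale neighborhood of $\eta_Z$ in $\pC$, and together with the étale-local presentation of $f^\flat$ this produces an étale neighborhood $U \to S$ of $s$ and a section $\tilde{e}_Z \in \Gamma(U, \ocM_S)$ specializing to $e_Z$ at $s$. Because $\pC_s$ has only finitely many components, after further shrinking $U$ I obtain the entire finite family $\{\tilde{e}_Z\}_Z$ of such lifts.

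Step (ii) is the essential step. The étale-local nodal structure of $\pC \to S$ realizes every component $Z' \subset \pC_{\bar u}$ as a partial smoothing of a union of components $Z_{i_1},\ldots,Z_{i_\ell}$ of $\pC_s$. The balancing condition at each smoothed node, a consequence of pre-stability and the log curve structure, forces the specialization map $\ocM_S|_s \to \ocM_S|_{\bar u}$ to send each $e_{Z_{i_j}}$ to a common element equal to $e_{Z'}$: namely, the difference $e_{Z_{i_j}} - e_{Z_{i_{j'}}}$ is a multiple of the node element $\rho_q \in \ocM_S|_s$, which maps to zero in $\ocM_S|_{\bar u}$ precisely when the node is smoothed. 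Conversely, every $e_{Z_{i_j}}$ specializes to the degeneracy of some component of $\pC_{\bar u}$. This identifies the subsheaf $\obD(f) \subset \ocM_S$ generated by the $\tilde{e}_Z$ as one with the claimed fiberwise stalks, with the partial order inherited from $\poleq_{\ocM_S}$.

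For step (iii), I define $\obD^m(f) \subset \obD(f)$ stratum by stratum using the logarithmic stratification of $S$: on each stratum, the partial order among $\{\tilde{e}_Z\}$ is constant, and I select those lifts whose value is minimal in the fiber. Gluing across strata and matching the stalks with $\obD^m(f_{\bar u})$ uses the openness argument of \cite[Proposition~3.3]{CJRS18P} (already invoked for the preceding corollary), adapted to handle the minimality condition rather than uniqueness of extrema. The main obstacle is the balancing computation in step (ii), where the interaction between the smoothing of components and the stalks of $\ocM_S$ must be tracked with care; once this identification is in hand, the sheaf statements and the partial ordering assertions follow formally from the étale-sheaf axioms.
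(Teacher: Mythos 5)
Your steps (i)--(ii) essentially reprove, from scratch, the statement that the paper simply cites as \cite[Lemma 3.4]{CJRS18P}: degeneracies are compatible with generization because each component of a nearby fiber is a smoothing of a chain of components of $\pC_s$, and the differences of their degeneracies are sums of terms $c_E\rho_E$ attached to the nodes along the chain, which die exactly when those nodes are smoothed. That part is fine in outline (though what you call the ``balancing condition'' is the edge relation $\delta_{v'}=\delta_v+c_E\rho_E$ of the basic monoid, not what this paper calls balancing in Lemma \ref{lem:balancing}), and it does produce the subsheaf $\obD(f)\subset\ocM_S$ of sets.

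The gap is in step (iii), which is where the actual content of the proposition sits. Selecting the minimal lifts on each logarithmic stratum is immediate; the nontrivial point is the gluing across adjacent strata, i.e.\ the compatibility of the fiberwise subsets $\obD^m(f_s)$ with generization. Under generization, components merge and \emph{new} order relations can appear between degeneracies that were incomparable on the deeper stratum: for instance on a chain $Z_1$--$Z_0$--$Z_2$ with $e_{Z_0}=e_{Z_1}+\rho_1=e_{Z_2}+\rho_2$, smoothing only the node between $Z_1$ and $Z_0$ sends $e_{Z_1}$ to an element strictly larger than the image of $e_{Z_2}$, although $e_{Z_1}$ and $e_{Z_2}$ were incomparable (both minimal) before. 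So minimality on a stratum does not propagate for free to the neighboring open stratum, and this is precisely what your gluing step must control. The result you propose to ``adapt,'' \cite[Proposition 3.3]{CJRS18P}, is an openness statement for the locus where the set of degeneracies has a unique maximum (resp.\ minimum); it says nothing about how $\obD^m$ transforms under generization and cannot be converted into the required compatibility. The paper's own short proof supplies exactly the ingredient your plan lacks: \cite[Lemma 3.6]{CJRS18P}, the stability of the fiberwise partial order $\poleq_{\ocM_S|_s}$ under generization, which upgrades $\obD(f)$ to a subsheaf of partially ordered sets; the gluing of the minimal loci is then deduced from this order-compatibility (it is also what guarantees, in the setting of Notation \ref{not:extrem-degeneracy} where the minimum is unique, that the minimum generizes to the minimum). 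Without an argument of this kind, your step (iii) asserts the conclusion rather than proving it.
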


\begin{proof}

  The fact that the collection $\{\obD(f_s)\}_{s\in S}$ forms a
  subsheaf $\obD(f) \subset \ocM_S$ of sets is a restatement of
  \cite[Lemma 3.4]{CJRS18P}.
  By \cite[Lemma 3.6]{CJRS18P}, the fiberwise partial order
  $\poleq_{\ocM_{S}|_{s}}$ is stable under generization, which makes
  $\obD(f) \subset \ocM_S$ a subsheaf of partially ordered sets.
  Thus the collection of minimal elements $\{\obD^m(f_s)\}_{s \in S}$
  glues to a subsheaf of partially ordered sets
  $\obD^m(f) \subset \obD(f)$.
\end{proof}

\begin{notation}\label{not:extrem-degeneracy}
  Under the above assumptions, suppose $f$ admits the
  $\curlywedge$-configuration.
  In this case, there is a global section
  $e_{\max}(f) \in \Gamma(\ocM_S, S)$ whose fiber
  $e_{\max}(f|_s) \in \obD(f_s)$ is the maximum for each $s$.
  We call $e_{\max}(f)$ the {\em maximal degeneracy} of $f$.

  Similarly, for $f$ with the $\curlyvee$-configuration, there is a
  global section $e_{\min}(f) \in \Gamma(\ocM_S, S)$ whose fiber
  $e_{\min}(f|_s) \in \obD(f_s)$ is the minimum for each $s$.
  We call $e_{\min}(f)$ the {\em minimal degeneracy} of $f$.
  In this case, the sheaf $\obD^m(f)$ is a constant sheaf given by the
  section $e_{\min}(f)$.

  For simplicity, we may write $e_{\max}$ or $e_{\min}$ when there is
  no confusion which $f$ is meant.
\end{notation}

\subsubsection{The moduli of punctured maps to $\cA$}\label{sss:universal-puncture-moduli}

Denote by $\fM_{g,\ddata'}(\cA)$ the category of $n$-pointed, genus
$g$ punctured maps to $\cA$ with connected domains, fibered over the
category of fs log schemes such that the $i$-th markings are
$\mu_{r_i}$-gerbes with contact order $c_i$.
Here $\ddata' = \{(r_i, c_i)\}_{i=1}^{n}$ is the \emph{reduced
  discrete data} at the markings.

By \cite[Proposition 3.3]{ACGS20P}, the fibered category
$\fM_{g,\ddata'}(\cA)$ is represented by a fs log algebraic stack.
Indeed, its underlying stack $\ul{\fM}_{g,\ddata'}(\cA)$ is the stack
of \emph{basic punctured maps} with the following {\em universal
  property} (\cite[Proposition~2.26]{ACGS20P}):
For any punctured map
$f\colon \pC \to \cA$ over $S$, there is a basic object
$f_{bas}\colon \pC_{bas} \to \cA$ over $S_{bas}$ and a morphism
$S \to S_{bas}$ such that $f$ is the pull-back of $f_{bas}$.
Furthermore, the pair $(f_{bas}, S \to S_{bas})$ is unique up a unique
isomorphism. Furthermore, the basicness can be described
combinatorially similar to the case of log maps, see \cite[\S
2.3.1]{ACGS20P}.
In particular, the universal punctured map over $\fM_{g,\ddata'}(\cA)$
is basic.

\subsubsection{The moduli of punctured maps to \texorpdfstring{$\infty_{\cA}$}{the Artin fan}}

Consider the closed substack
\begin{equation}\label{eq:main-component}
\fM_{g,\ddata'}(\infty_{\cA}) \subset \fM_{g,\ddata'}(\cA)
\end{equation}
parameterizing  punctured maps factorizing through the strict closed substack $\infty_{\cA} \subset \cA$.  This will be a key player in the construction of this paper.

\begin{remark}\label{rem:universal-stack-of-punctured-maps}
In the general construction of \cite{ACGS20P}, the stack $\fM_{g,\ddata'}(\infty_{\cA})$ is formulated  as follows. Recall the real positive half axis $\Sigma(\cA) = \mathbf{R}_{\geq 0}$ as the cone complex of $\cA$. Let $\tau = (G, \mathbf{g}, \mathbf{\sigma}, \overline{\mathbf{u}})$ be the {\em global type} of punctured maps (\cite[Definition 3.4 (1)]{ACGS20P}) such that
\begin{enumerate}
 \item $G$ is a connected graph with a unique vertex $v$, a set $L(G)$ of $n$ legs corresponding to the $n$-markings, and no edges.
 \item $\mathbf{g}$ assigns genus $g$ to the vertex $v$.
 \item $\mathbf{\sigma}$ assigns the interior $\mathbf{R}_{>0} \subset \Sigma(\cA)$ to the vertex and legs of $G$.
 \item $\overline{\mathbf{u}}$ assigns the contact order $c_i$ to the leg corresponding to the $i$-th marking.
\end{enumerate}
By \cite[Definition 3.8]{ACGS20P}, $\fM_{g,\ddata'}(\infty_{\cA}) \cong \fM_{g,\ddata'}(\cA, \tau)$
is the stack of punctured maps in $\fM_{g,\ddata'}(\cA)$ marked by $\tau$. It is equipped with a {\em canonical log-ideal} $\cK_{\fM_{g,\ddata'}(\infty_{\cA})}$ such that the forgetful morphism
\begin{equation}\label{eq:universal-stack-ideal-etale}
\fM_{g,\ddata'}(\infty_{\cA}) \longrightarrow \fM_{g, \{r_i\}}
\end{equation}
is idealized log \'etale \cite[Theorem 3.24]{ACGS20P}, where
$\fM_{g, \{r_i\}}$ is equipped with the canonical log structure and
the trivial log ideal.
\end{remark}

\begin{lemma}\label{lem:universal-punctured-map-moduli}
\begin{enumerate}
\item $\fM_{g,\ddata'}(\infty_{\cA})$ is non-empty, reduced and
  pure-dimensional with
  \begin{equation}\label{eq:base-dimension}
    \dim \fM_{g,\ddata'}(\infty_{\cA}) = 3g - 4 + n.
  \end{equation}
\item The locus
  $\fM^{\circ}_{g,\ddata'}(\infty_{\cA}) \subset
  \fM_{g,\ddata'}(\infty_{\cA})$ with smooth domains is open and dense
  in $\fM_{g,\ddata'}(\infty_{\cA})$ such that
  $\ocM_{\fM^{\circ}_{g,\ddata'}(\infty_{\cA})} \cong
  \NN_{\fM^{\circ}_{g,\ddata'}(\infty_{\cA})}$ is generated by the
  degeneracy of the smooth component, and with the canonical log-ideal
  $\cK_{\fM^{\circ}_{g,\ddata'}(\infty_{\cA})} =
  \cM_{\fM^{\circ}_{g,\ddata'}(\infty_{\cA})}\setminus \cO^*$.
\end{enumerate}
\end{lemma}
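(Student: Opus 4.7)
The plan is to deduce both parts from the idealized log étale morphism \eqref{eq:universal-stack-ideal-etale} together with standard properties of the stack of twisted curves $\fM_{g,\{r_i\}}$, which is smooth of dimension $3g-3+n$. The guiding principle is that an idealized log étale morphism to a smooth log stack with trivial log ideal yields a source whose underlying dimension equals the dimension of the target minus the codimension cut out by the idealization in its toric local model.

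For part (2), I would first observe that $\fM^{\circ}_{g,\ddata'}(\infty_{\cA})$ coincides with the preimage of the smooth-curve locus $\fM^{\circ}_{g,\{r_i\}} \subset \fM_{g,\{r_i\}}$, hence is open, and is dense since $\fM^{\circ}_{g,\{r_i\}}$ is dense in $\fM_{g,\{r_i\}}$ and the forgetful morphism is an idealized log étale surjection. A basic punctured map with smooth, connected domain has dual graph consisting of a single vertex $v$ with no edges; the combinatorial recipe for basic monoids in \cite[\S 2.3.1]{ACGS20P} then yields a basic monoid generated freely by the degeneracy $e_v$, i.e.\ $\ocM_{\fM^{\circ}_{g,\ddata'}(\infty_{\cA})} \cong \NN\langle e_v\rangle$. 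The requirement that the map factor through $\ainfty$ forces $e_v$ to be non-invertible, so the canonical log ideal is $\cK = \cM \setminus \cO^*$.

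For non-emptiness and dimension in part (1), non-emptiness is immediate from the explicit description of the smooth locus (given any smooth twisted curve, the choice of basic monoid $\NN\langle e_v\rangle$ together with the data at punctures gives a punctured map to $\ainfty$). For the dimension count, over the smooth locus the target $\fM^{\circ}_{g,\{r_i\}}$ carries trivial log structure while the source carries log structure $\NN$ with idealization $\NN_{>0}$; the local toric chart $[\AA^1/\Gm]$ on the source, cut by the ideal $(t)$, becomes $B\Gm$, so étale-locally $\ul{\fM}^{\circ}_{g,\ddata'}(\infty_{\cA}) \cong \ul{\fM}^{\circ}_{g,\{r_i\}} \times B\Gm$. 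This is smooth of dimension $(3g-3+n) - 1 = 3g - 4 + n$, settling the dimension formula and reducedness on the dense open smooth locus.

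To extend pure-dimensionality and reducedness across the boundary, I would stratify $\fM_{g,\ddata'}(\infty_{\cA})$ by global type $\tau$ and analyze each stratum via the combinatorial description of the basic monoid $P(\tau)$ in terms of vertex degeneracies and edge smoothing parameters modulo the edge contact relations (\cite[\S 2.3.1]{ACGS20P}). The idealized log étale property \eqref{eq:universal-stack-ideal-etale} identifies each stratum étale-locally with a toric stratum of $[\spec\bk[P(\tau)]/\spec\bk[P(\tau)^{gp}]]$ cut out by the idealization, fibered over the corresponding stratum of $\fM_{g,\{r_i\}}$. A bookkeeping calculation, using that each node contributes one smoothing parameter to the target's characteristic monoid and simultaneously one vertex-degeneracy-balancing edge equation among the added generators in $P(\tau)$, shows every stratum has the same underlying dimension $3g-4+n$. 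The main obstacle will be this combinatorial bookkeeping at boundary strata with many edges of varying (including negative) contact orders, where one must invoke the full combinatorial description of basic punctured maps from \cite{ACGS20P}; once the local toric models are identified, reducedness follows since toric strata in characteristic zero are reduced.
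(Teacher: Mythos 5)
Your analysis over the smooth locus is essentially right and agrees with the paper: for a smooth connected domain the basic monoid is $\NN$, generated by the degeneracy, the canonical ideal is the maximal one, and the \'etale-local identification $\ul{\fM}^{\circ}_{g,\ddata'}(\infty_{\cA}) \cong \ul{\fM}^{\circ}_{g,\{r_i\}}\times \BG_m$ gives dimension $3g-4+n$ there. The problems are in how you cross the boundary. First, your density argument is invalid as stated: the preimage of a dense open under an idealized log \'etale surjection need not be dense --- $\ainfty \to \cA$ is idealized log \'etale, and the preimage of the open torus is empty. Density of $\fM^{\circ}_{g,\ddata'}(\infty_{\cA})$ genuinely depends on the explicit local models (i.e.\ on what the canonical log ideal is near boundary points); this is exactly what the paper extracts from the local description in \cite[Remark~3.25]{ACGS20P}, as a consequence of \eqref{eq:universal-stack-ideal-etale}, rather than from idealized log \'etaleness plus surjectivity alone.

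Second, the boundary ``bookkeeping'' is both unsupplied and aimed at a false conclusion: the locally closed strata indexed by degenerate tropical types do \emph{not} all have dimension $3g-4+n$ --- already for a one-node type the basic monoid has rank at least $2$, and each rank contributes a $\BG_m$-direction, so such strata are strictly smaller. More importantly, even a correct stratumwise dimension count would not give pure-dimensionality or reducedness of the whole stack: the danger peculiar to punctured maps is that the log ideal may create irreducible (or embedded) components supported entirely over the boundary, and ruling this out is the substantive content of the lemma. The paper handles precisely this step by checking that the global type $\tau$ is realizable with $Q_\tau\cong\NN$ (\cite[Proposition~3.27]{ACGS20P}) and then invoking \cite[Proposition~3.28]{ACGS20P} (with $B=\spec\bk$) for non-emptiness, reducedness and pure dimension $3g-4+n$, together with \cite[Proposition~3.23]{ACGS20P} for $\cK_{\fM^{\circ}} = \cM_{\fM^{\circ}}\setminus\cO^*$. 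Your proposal defers exactly this input to an unspecified computation whose announced outcome is incorrect, so as written it establishes the lemma only over the open locus of smooth domains.
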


\begin{proof}
  These statements follow directly from \cite{ACGS20P}.
  For the readers' convenience, we provide precise references below.

By \cite[Proposition 3.27]{ACGS20P}, it is straightforward to check that $\tau$ is realizable, with the corresponding basic monoid $Q_{\tau} \cong \NN$ generated by the degeneracy of the smooth component. (\cite[(2.16)]{ACGS20P}). Then (1) follows from \cite[Proposition 3.28]{ACGS20P} with $B = \spec \bk$ with the trivial log structure.

The open locus  $\fM^{\circ}_{g,\ddata'}(\infty_{\cA})$ consists of punctured maps whose tropicalizations are of type $\tau$ (\cite[Definition 2.22]{ACGS20P}).  Thus the characteristic sheaf is given by the basic monoid
\[
\ocM_{\fM^{\circ}_{g,\ddata'}(\infty_{\cA})} \cong Q_{\tau,{\fM^{\circ}_{g,\ddata'}(\infty_{\cA})}}  = \NN_{{\fM^{\circ}_{g,\ddata'}(\infty_{\cA})}}.
\]
Furthermore  $\cK_{\fM^{\circ}_{g,\ddata'}(\infty_{\cA})} = \cM_{\fM^{\circ}_{g,\ddata'}(\infty_{\cA})}\setminus \cO^*$ follows from \cite[Proposition 3.23]{ACGS20P}.

Finally, the statement that $\fM^{\circ}_{g,\ddata'}(\infty_{\cA}) \subset \fM_{g,\ddata'}(\infty_{\cA})$ is dense follows from the local description in \cite[Remark 3.25]{ACGS20P}. This is indeed a consequence of the idealized log \'etaleness of \eqref{eq:universal-stack-ideal-etale}.
\end{proof}

\subsubsection{The balancing condition}

Punctured maps admits a simple but important numerical constraint called the {\em balancing condition}, which plays an important role in the applications in \S \ref{sec:effective}.

\begin{lemma}[The balancing condition]\label{lem:balancing}
Let $f \colon \pC \to \ainfty$ be a punctured map over a log  point $S$ with discrete data $\ddata' = \{(r_i, c_i)\}_{i=1}^n$ along markings. Then we have
\[
\deg f^*\cO(\ainfty) = \sum_{i=1}^n \frac{c_i}{r_i}.
\]
\end{lemma}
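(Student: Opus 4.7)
The plan is to prove the formula by a divisor calculation for a rational section of the dual line bundle $L^\vee := f^*\cO(-\ainfty)$. Recall from \S\ref{ss:universal-DF} that $\cO(-\ainfty)$ on $\cA$ is the line bundle whose associated $\cO^*$-torsor is the preimage of $1_\cA \in \ocM_\cA$ under $\cM_\cA \to \ocM_\cA$. Pulling back under $f$ identifies $L^\vee$ as the line bundle on $\pC$ whose associated torsor is the preimage of $f^\flat(1) \in \ocM_\pC$ under $\cM_\pC \to \ocM_\pC$. First, I would reduce to the case of a connected domain, since both sides of the balancing formula are additive over connected components of $\pC$.

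Working \'etale locally on $S$, for each irreducible component $Z \subset \pC$ with degeneracy $e_Z \in \ocM_S$, choose a lift $\tilde e_Z \in \cM_S$ of $e_Z$. On the interior $Z^\circ$ of $Z$ (away from markings and nodes) we have $\ocM_\pC = \pi^*\ocM_S$ and $f^\flat(1) = e_Z$, so $\pi^*\tilde e_Z$ lifts $f^\flat(1)$ and therefore gives a local trivialization of $L^\vee|_{Z^\circ}$. These trivializations extend as meromorphic trivializations near markings and nodes, and together define a rational section $\tau$ of $L^\vee$ on $\pC$ with $\deg L^\vee = \deg \Div(\tau)$.

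The main local computation is at a marking $p_i$ on some component $Z$. Let $z_i$ be a local uniformizer at $p_i$ on the $\mu_{r_i}$-gerbe cover, chosen so that its class $\bar z_i$ generates the marking-direction factor in $\ocM_\pC|_{p_i} \hookrightarrow \ocM_S \oplus \ZZ$ of \eqref{eq:puncture-constraint}. By the definition \eqref{eq:define-contact} of the contact order, $f^\flat(1)|_{p_i} = e_Z + c_i \bar z_i$ in $\ocM_\pC|_{p_i}$, so a local lift of $f^\flat(1)|_{p_i}$ in $\cM_\pC$ is $\tilde e_Z \cdot z_i^{c_i}$. The comparison $\pi^*\tilde e_Z = z_i^{-c_i} \cdot (\tilde e_Z \cdot z_i^{c_i})$ then shows that $\tau$ has order $-c_i$ in $z_i$ at $p_i$; accounting for the $\mu_{r_i}$-gerbe structure, this contributes $-c_i/r_i$ to $\deg \Div(\tau)$.

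The main obstacle is the analogous analysis at nodes. At a node $q$ between components $Z_1$ and $Z_2$, the local pushout description $\ocM_\pC|_q = \ocM_S \oplus_\NN \NN^2$ for log curves forces the two local trivializations $\pi^*\tilde e_{Z_1}$ and $\pi^*\tilde e_{Z_2}$ to differ by a monomial in the node coordinates whose orders on the two branches sum to zero---this is the standard node-balancing for (punctured) log curves, analogous to the treatment in \cite[\S 2.3]{ACGS20P}. Hence nodes contribute zero to $\deg \Div(\tau)$, and summing the marking contributions yields $\deg L^\vee = -\sum_{i=1}^n c_i/r_i$, i.e., $\deg f^*\cO(\ainfty) = \sum_{i=1}^n c_i/r_i$ as claimed.
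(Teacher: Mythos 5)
Your argument is correct, but it takes a different route from the paper: the paper proves this lemma in one line, by observing that it is a special case of the general balancing/degree formula for punctured maps, \cite[Proposition 2.27]{ACGS20P}, with the only modification being that the $i$-th marking, being a $\mu_{r_i}$-gerbe, has degree $\tfrac{1}{r_i}$. What you have done is essentially reprove that cited proposition in the rank-one, single-log-point setting: you identify $f^*\cO(-\ainfty)$ with the line bundle attached to $\bar f^\flat(1)\in\ocM_{\pC}$, trivialize it on each component interior by a lift of the degeneracy $e_Z$, and read off the divisor at special points from the characteristic data, getting $-c_i/r_i$ at markings and cancelling contributions at nodes. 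The trade-off is clear: the paper's proof is shorter and leans on a general result that already handles families and the combinatorics at nodes, while yours is self-contained and makes the orbifold weight $1/r_i$ and the node cancellation explicit. One small point of care in your write-up: the componentwise trivializations $\pi^*\tilde e_Z$ do \emph{not} glue to a single rational section on the nodal curve (they disagree at nodes), so the honest statement is that you compute $\deg(L^\vee|_Z)$ on each normalized component via its own rational section and then check that the two node contributions at each node are $\alpha_2-\alpha_1$ and $\alpha_1-\alpha_2$ (in the notation $\bar f^\flat(1)|_q = a+\alpha_1\sigma_1+\alpha_2\sigma_2$, $e_1=a+\alpha_2\rho_q$, $e_2=a+\alpha_1\rho_q$), hence cancel in pairs; this is exactly the "standard node-balancing" you invoke, and it is what the cited proposition encodes, so the gap is only expository, not mathematical.
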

\begin{proof}
  The statement can be viewed as a special case of \cite[Proposition 2.27]{ACGS20P} with the only change that the degree of the $i$-th marking is $\frac{1}{r_i}$ due to the orbifold structure.
\end{proof}

\subsection{Punctured R-maps}

Next, we introduce the central notion of \emph{punctured R-maps}.
They are a combination of punctured maps and R-maps, as an analogue of
the log R-maps in \cite{CJR21} with possibly negative contact orders.

\subsubsection{The targets and the stability condition}\label{sss:P-target}

Recall from \eqref{eq:intro-punctured-R-map} that a {\em punctured R-map} over a log scheme $S$ with target $\punt \to \BC$ is a commutative triangle
\begin{equation}\label{eq:punctured-R-map}
\xymatrix{
 && \punt \ar[d] \\
\pC \ar[rru]^{f} \ar[rr]_{\omega^{\log}_{\cC/S}}&& \BC
}
\end{equation}
where $\pC \to S$ is a punctured curve and the bottom arrow is defined such that the pull-back of $\LR$ is $\omega^{\log}_{\cC/S}$. It is {\em pre-stable} if the punctured map $f$ is pre-stable. {\em Pull-backs} of punctured R-maps are defined using pull-backs of punctured curves as usual.

In this paper, we further impose the following conditions on $\punt \to \BC$:

\begin{enumerate}

\item There exists a strict (hence unique) morphism
  $\punt \stackrel{\cO(\punt)}{\longrightarrow}\infty_{\cA}$ defined as in \eqref{eq:generic-rank1-DF}.

\item The underlying DM-stack
  $\ul{\punt_{\bk}} := \ul{\punt}\times_{\BC}\spec \bk$ has a
  projective coarse moduli $\ul{\punt_{\bk}}'$.
\item There is a morphism $\ul{\punt} \to \ul{\punt_{\bk}}'$ such that
  the composition
  $\ul{\punt_{\bk}} \to \ul{\punt} \to \ul{\punt_{\bk}}' $ is the
  coarsification.
\end{enumerate}
We fix an ample line bundle $\cH$ on $\ul{\punt_{\bk}}'$, and
denote its pull-back to $\punt$ again by $\cH$ when there is
no confusion.

Note that the target \eqref{eq:punt} from the root construction in \S
\ref{intro:target} fits in the above setup.
Indeed, in this case $\ul{\punt_{\bk}}'$ is the coarse moduli of
${\xinfty}$.

\begin{remark}
  Condition (3) may be reformulated in terms of
  $\mathbf{C}^*_\omega$-equivariant geometry assuming that the
  underlying DM-stack
  $\ul{\punt_{\bk}} := \ul{\punt}\times_{\BC}\spec \bk$ has a reduced
  projective coarse moduli $\ul{\punt_{\bk}}'$.

  Note that by properties of coarse moduli, the morphism
  $\ul{\punt} \to \BC$ induces compatible
  $\mathbf{C}^*_\omega$-actions on $\ul{\punt_\bk}$ and
  $\ul{\punt_\bk}'$.
  We then set
  \begin{itemize}
  \item[(3')] The $\mathbf{C}^*_\omega$-action on $\ul{\punt_\bk}'$ is
    trivial.
  \item[(3'')] The $\mathbf{C}^*_\omega$-action on $\ul{\punt_\bk}$ is
    trivial after reparameterization of $\mathbf{C}^*_\omega$.
  \end{itemize}
  By \cite[Proposition~5.32]{AHR20}, these two conditions are
  equivalent.

  On the other hand, assuming Condition (3) we know that
  $\ul{\punt_\bk} \to \ul{\punt_\bk}'$ is
  $\mathbf{C}^*_\omega$-equivariant for the trivial action on
  $\ul{\punt_\bk}'$.
  Hence, \cite[Proposition~5.32]{AHR20} implies Condition (3').
  In the other direction, assuming Condition (3'), we know that
  $\ul{\punt_\bk}' \to [\ul{\punt_\bk}' / \CC^*_\omega]$ admits a
  section, and we may define a desired morphism
  $\ul{\punt} \to \ul{\punt_\bk}'$ as the composition
  \begin{equation*}
    \ul{\punt} \cong [\ul{\punt_\bk} / \CC^*_\omega] \to [\ul{\punt_\bk}' / \CC^*_\omega] \to \ul{\punt_\bk}'.
  \end{equation*}
\end{remark}

\begin{definition}\label{def:P-curve-class}
Given a punctured R-map $f\colon \pC \to \punt$, we say that $f$ has curve class $\beta \in H_2(\ul{\punt_{\bk}}')$ if the curve class of the following composition is $\beta$:
\begin{equation}\label{eq:coarse-map}
\pC \to \punt \to \punt_{\bk}'.
\end{equation}
\end{definition}

\begin{definition}\label{def:P-stability}
A pre-stable punctured R-map \eqref{eq:punctured-R-map} with the target described above is called {\em stable} if $f$ is representable and $\omega_{\cC/S}^{\log}\otimes f^*\cH^{\otimes k} > 0$ for $k \gg 0$.
\end{definition}

Note that the positivity condition above means that the coarse moduli
of the composition \eqref{eq:coarse-map} is stable in the usual sense.
Furthermore, the stability condition depends only on the underlying
R-maps.
Thus, we may call an underlying R-map {\em stable}, if it satisfies
the above stability condition.

\subsubsection{The discrete data along markings}

Consider a punctured R-map $f\colon \pC \to \punt$ over $S$ and its
$i$-th marking $p_i \subset \pC$.
Since $\omega^{\log}_{\cC/S}|_{p_i} \cong \cO_{p_i}$, there is a
factorization
\begin{equation}\label{eq:restrict-to-marking}
\xymatrix{
p_i \ar@/^1.2pc/[rr]^{f|_{p_i}} \ar[r] & \punt_{\bk} \ar[r] & \punt.
}
\end{equation}

Denote by $\ocI\punt_\bk$ the rigidified cyclotomic inertia
\cite[Section~3]{AGV08} with the trivial log structure. There is a decomposition
$\ocI\punt_\bk = \cup_j \ogamma_j$ into connected
components.
By \cite[Section~3]{AGV08}, the factorization
\eqref{eq:restrict-to-marking} induces the {\em $i$-th evaluation
  morphism}
\begin{equation}\label{eq:underlying-evaluation}
\ev_i \colon S \to \ocI\punt_\bk.
\end{equation}

\begin{definition}\label{def:marking-discrete-data}
  The {\em discrete data} at the $i$-th marking is a pair
  $(\ogamma_i, c_i)$ consisting of a connected component
  $\ogamma_i \subset \ocI\punt_\bk$ and a contact order $c_i \in \ZZ$.

  Throughout this paper, denote by $r_i$ the order of the cyclotomic group of
  the gerbe parameterized by $\ogamma_i$.
\end{definition}

\subsubsection{The moduli functor}
Consider the collection of discrete data
\begin{equation}\label{eq:P-data}
(g, \beta, \ddata = \{(\ogamma_i, c_i)\}_{i=1}^n)
\end{equation}
where $g$ denotes the genus, $\beta \in H_2(\ul{\infty_{\bk}}')$ is a curve class, and $\ddata$ is a collection of the discrete data at the marked points.

Denote by $\SH_{g,\ddata}(\punt,\beta)$ the category of $n$-pointed genus $g$ stable punctured R-maps to $\punt \to \BC$ with the discrete data \eqref{eq:P-data}.  The following result will be established in the next two sections \S\ref{ss:PR-representability} and \ref{ss:PR-proper}:

\begin{theorem}\label{thm:PR-moduli}
$\SH_{g,\ddata}(\punt,\beta)$ is represented by a proper, log Deligne--Mumford stack.
\end{theorem}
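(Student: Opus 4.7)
The plan is to prove algebraicity, the Deligne--Mumford property, and properness separately, reducing each to two well-understood moduli spaces: the \emph{logarithmic base} $\fM_{g,\ddata'}(\ainfty)$ of punctured log curves with reduced discrete data $\ddata' = \{(r_i,c_i)\}$, which is a log algebraic stack by \S\ref{sss:universal-puncture-moduli}, and an \emph{underlying coarsification} $\ocK_{g,n}(\ul{\punt_\bk}', \beta)$ of twisted stable maps into the projective coarse moduli $\ul{\punt_\bk}'$, which is a proper Deligne--Mumford stack by Abramovich--Vistoli. Using the strict morphism $\punt \to \ainfty$ of \S\ref{sss:P-target}(1), every stable punctured R-map produces, by composition, an underlying punctured map to $\ainfty$ of reduced type $\ddata'$; using the coarsification $\ul{\punt} \to \ul{\punt_\bk}'$ from \S\ref{sss:P-target}(3), it produces an underlying coarse stable map of class $\beta$. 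Together, these give forgetful morphisms which I will use to transfer good properties.

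\textbf{Representability.} Over $\fM_{g,\ddata'}(\ainfty)$, I would construct $\SH_{g,\ddata}(\punt,\beta)$ as a relative Hom-type stack parametrising the remaining data: a representable lift $\ul{f}\colon \ul{\pC} \to \ul{\punt}$ of the coarse stable map with evaluations landing in the prescribed sectors $\ogamma_i$, together with the commuting triangle \eqref{eq:punctured-R-map}, i.e.\ the identification of the composition $\ul{\pC} \to \ul{\punt} \to \BC$ with $\omega^{\log}_{\cC/S}$. Because $\punt \to \BC$ is proper and DM-type and $\ul{\punt_\bk}'$ is projective, this relative Hom stack is Deligne--Mumford and locally of finite type over the base, and imposing stability in the sense of Definition~\ref{def:P-stability} together with a fixed curve class and sectors cuts out a finite-type DM substack (as in the analogous R-map constructions in \cite{CJR21}). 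Pulling back the log structure from $\fM_{g,\ddata'}(\ainfty)$ via the strict morphism $\punt \to \ainfty$ then endows $\SH_{g,\ddata}(\punt,\beta)$ with the structure of a log DM stack.

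\textbf{Properness and main obstacle.} Properness is checked via the valuative criterion over a trait $\spec R$ with fraction field $K$. Given a stable punctured R-map over $\spec K$, I would (i) extend the underlying coarse stable map by properness of $\ocK_{g,n}(\ul{\punt_\bk}', \beta)$, (ii) extend the representable twisted lift to $\ul{\punt}$ by the analogous properness for twisted stable maps (possibly after a tamely ramified base change of $R$), (iii) extend the $\BC$-compatibility, since isomorphisms of line bundles extend across the closed point of a DVR up to a unit, and (iv) extend the punctured log structure across the central fibre. Uniqueness of each step then gives uniqueness of the extension, using separatedness of $\ocK_{g,n}(\ul{\punt_\bk}', \beta)$ and uniqueness of basic puncturings. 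The \emph{main obstacle} is step (iv): because contact orders may be negative, the log-ideal $\cK_{\fM_{g,\ddata'}(\ainfty)}$ genuinely constrains deformations of the puncturing, and one must verify that the limit carries a unique fs puncturing of reduced type $\ddata'$. I expect this delicate step to follow from combining the universal property of basic punctured maps \cite[Proposition~2.26]{ACGS20P} with the idealized log-\'etale morphism \eqref{eq:universal-stack-ideal-etale}, closely mirroring the properness argument for punctured log Gromov--Witten theory in \cite{ACGS20P} and adapting it to the present twisted R-map setting.
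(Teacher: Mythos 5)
Your architecture is essentially the one the paper uses: algebraicity by reducing to the stack $\fM_{g,\ddata'}(\ainfty)$ of punctured maps together with a moduli of underlying R-maps (the paper realizes $\SH_{g,\ddata}(\punt,\beta)$ as the fiber product $\fM_{g,\ddata'}(\ainfty)\times_{\fM_{g,\{r_i\}}(\ul{\ainfty})}\SH_{g,\{\ogamma_i\}}(\ul{\punt},\beta)$, quoting \cite[Lemma 3.2]{ACGS20P} and the algebraicity of R-maps from \cite[\S 5.1]{CJR21}), and properness by passing to $\overline\scrM_{g,n}(\ul{\punt_{\bk}}',\beta)$ and delegating the logarithmic part to \cite{ACGS20P}. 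In particular, the step you flag as the main obstacle --- extending the puncturing across the closed fibre --- is precisely what the paper disposes of in one line by citing \cite[Theorem 3.12]{ACGS20P}, applied to $\SH_{g,\ddata}(\punt,\beta)\to \SH_{g,\{\ogamma_i\}}(\ul{\punt},\beta)$; your expectation there is correct, and no new argument is needed at that point.

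The genuine gap sits instead in your steps (i)--(iii). You extend the coarse stable map and the twisted lift as limits in two separate moduli problems, but you never check their compatibility: the Abramovich--Vistoli limit of the twisted stable map $\uC_{\eta}\to \ul{C}\times_{\BC}\ul{\punt}$ could a priori have a source whose coarse space is not the already-chosen coarse curve $\ul{C}$ (extra rational bridges appearing in the twisted limit). If that happened, the resulting object would not be a punctured R-map of the required form, since the bottom arrow of \eqref{eq:intro-punctured-R-map} must be $\omega^{\log}$ of the actual source and Definition \ref{def:P-stability} forbids contracting the source; this ``no contracted components'' verification is the only nontrivial geometric content of the paper's weak valuative criterion, proved there by a short contradiction: a contracted rational bridge would map into a fibre of $\ul{C}\times_{\BC}\ul{\punt}\to\ul{C}$ and also be contracted in $\ul{\punt_{\bk}}'$, hence contracted by the twisted stable map itself, violating its stability. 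Two smaller glossed points: finiteness of automorphisms (the DM property) is not automatic from $\punt\to\BC$ being proper of DM-type, since $\ul{\punt}$ is not Deligne--Mumford; the paper derives it from stability of the induced map $\uC\to\ul{\punt}\times_{\BC}\ul{C}$. And boundedness, which properness also requires, is proved in the paper via the moduli of punctured maps to the family $\punt\times_{\BC}\ul{C}\to T$ and \cite[Propositions 3.10, 3.11]{ACGS20P}, rather than being read off from a Hom-stack construction.
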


\subsection{\texorpdfstring{$\SH_{g,\ddata}(\punt,\beta)$}{R} is a log Deligne--Mumford stack}\label{ss:PR-representability}

\subsubsection{Tautological morphism to \texorpdfstring{$\fM_{g,\ddata'}(\infty_\cA)$}{M}}

Let $\ddata' = \{(r_i, c_i)\}$ be the reduced discrete data
at the markings.
There is a canonical morphism
\begin{equation}\label{eq:take-log}
\SH_{g,\ddata}(\punt,\beta) \to \fM_{g,\ddata'}(\ainfty)
\end{equation}
obtained by composing the punctured R-maps with the strict morphism
$\punt \to \ainfty$.

\subsubsection{The moduli of underlying stable R-maps}

Denote by
$\SH_{g,\{\ogamma_i\}}(\ul{\punt},\beta)$ the stack of stable R-maps to the
target $\ul{\punt} \to \BC$ with discrete data
$(g, \beta, \{\ogamma_i\})$.
Since $\ul{\punt}$ has the trivial log structure, contact orders are
irrelevant in this case.
We equip $\SH_{g,\{\ogamma_i\}}(\ul{\punt},\beta)$ with the canonical log
structure from its source curves.
We obtain the tautological morphism
\begin{equation}\label{eq:P-forget-log}
\SH_{g,\ddata}(\punt,\beta) \to \SH_{g,\{\ogamma_i\}}(\ul{\punt},\beta).
\end{equation}

On the other hand, composing stable R-maps in
$\SH_{g,\{\ogamma_i\}}(\ul{\punt},\beta)$ with the underlying morphism of
$\punt\to \ainfty$, we obtain a canonical strict morphism
\begin{equation}\label{eq:forget-R}
\SH_{g,\{\ogamma_i\}}(\ul{\punt},\beta) \to \fM_{g,\{r_i\}}(\ul{\ainfty}).
\end{equation}
Here $\fM_{g,\{r_i\}}(\ul{\ainfty})$ is algebraic, and is equipped with the canonical log structure from its source curve.

The algebraicity of the moduli of (not necessarily stable) R-maps has
been established in \cite[\S 5.1]{CJR21}.
Since the stability condition in Definition \ref{def:P-stability} is
an open condition, $\SH_{g,\{\ogamma_i\}}(\ul{\punt},\beta)$ is a log
algebraic stack.

\subsubsection{The algebraicity}

Putting together \eqref{eq:take-log}, \eqref{eq:P-forget-log} and \eqref{eq:forget-R}, we obtain a tautological morphism of fibered categories
\[
\SH_{g,\ddata}(\punt,\beta) \to \fM_{g,\ddata'}(\ainfty)\times_{\fM_{g,\{r_i\}}(\ul{\ainfty})}\SH_{g,\{\ogamma_i\}}(\ul{\punt}.\beta)
\]
The same proof as in \cite[Lemma 3.2]{ACGS20P} shows that the above
morphism is an isomorphism.
Thus, the algebraicity of $\SH_{g,\ddata}(\punt,\beta)$ follows from
the algebraicity of the right hand side.

\subsubsection{Finiteness of automorphisms}

By \cite[Proposition 2.27]{ACGS20P}, the morphism
\eqref{eq:P-forget-log} is of DM-type.
It suffices to show that objects of $\SH_{g,\{\ogamma_i\}}(\ul{\punt},\beta)$
have at most finitely many automorphisms.

Consider a stable R-map $f\colon \uC \to \ul{\punt}$ over a geometric
point $\ul{S}$.
Let $\uC \to \ul{C}$ be the coarse curve.
Then $f$ induces a pre-stable map
$f'\colon \ul{C} \to \ul{\punt_{\bk}}'$ which leads to a morphism
\[
\Aut(f) \to \Aut(f')
\]
of the automorphism groups. The stability of $f$ implies that $f'$ is a stable map, thus $\Aut(f')$ is a finite group.  It remains to show that $H = \ker\big(\Aut(f)\to \Aut(f')\big)$ is finite.

Since $\uC \to \BC$ factors through $\ul{C} \to \BC$, $f$ is
equivalent to a map $\rho \colon \uC \to \ul{\punt}\times_{\BC}\ul{C}$
whose composition $\uC \to \ul{\punt}\times_{\BC}\ul{C} \to \ul{C}$ is
the coarse moduli.
Since $H$ fixes $\ul{C}$, hence fixes $\ul{\punt}\times_{\BC}\ul{C}$, it
is the automorphism group of $\rho$.
However, the stability of $f$ implies that $\rho$ is also stable.
Therefore, $H$ is a finite subgroup.

\subsection{Properness}\label{ss:PR-proper}

\subsubsection{Removing the punctured R-structure}
Consider the tautological morphism
\begin{equation}\label{eq:remove-PR}
\SH_{g,\ddata}(\punt,\beta) \to \overline\scrM_{g,n}(\ul{\punt_{\bk}}',\beta)
\end{equation}
by composing punctured R-maps with $\punt \to \ul{\punt_{\bk}}'$,
removing log structures, and taking the corresponding coarse maps.
Here $\overline\scrM_{g,n}(\ul{\punt_{\bk}}',\beta)$ is the moduli of stable
maps to $\ul{\punt_{\bk}}'$, hence is proper.
No contraction of the source curve is needed thanks to Definition
\ref{def:P-stability}.

\subsubsection{Boundedness}
Consider the morphism
\[
\SH_{g,\ddata}(\punt,\beta) \to \fM_{g,n}
\]
defined by taking the coarse pre-stable curve.
It factors through \eqref{eq:remove-PR}, hence has image covered by a
finite type scheme, say $T \to \fM_{g,n}$.
It suffices to show that
$\SH_T := \SH_{g,\ddata}(\punt,\beta)\times_{\fM_{g,n}}T$ is of finite
type.

Denote by $\ul{C} \to T$ the universal curve over $T$, equipped with
the trivial log structures.
Consider $\infty_{T} := \infty \times_{\BC}\ul{C} \to T$.
Let $f_{\SH_T}\colon \pC \to \infty$ be the universal stable punctured
map over $\SH_T$. Then there is a commutative diagram
\[
\xymatrix{
\pC \ar[r] \ar@/^1pc/[rr]^{f_{\SH_{T}}} \ar[rd] \ar[d] & \infty_T  \ar[r]  \ar[d] & \infty \ar[d] \\
 \SH_T  \ar[rd] & \ul{C} \ar[d] \ar[r] & \BC \\
 & T &
}
\]
Denote by $\beta'$ the curve class of the family $\infty_T \to T$
given by the image of $\pC \to \infty_T$ above.
It is determined by $\beta$.
Consider the moduli $\overline\scrM_{g, \ddata}(\infty_T/T,\beta')$ of stable punctured maps with target $\infty_T \to T$.
The same proof of \cite[Proposition 3.10 and 3.11]{ACGS20P} implies
that $\overline\scrM_{g, \ddata}(\infty_T/T,\beta')$ is of finite type over
$T$.
Then $\SH_T$ is the open substack of
$\overline\scrM_{g, \ddata}(\infty_T/T,\beta')$ where the composition of the
punctured map $\pC \to \infty_T$ with $\infty_T \to \ul{C}$ is the
coarse moduli $\pC \to \ul C$, hence contracts no components.
In particular, $\SH_T$ is of finite type.

\subsubsection{The weak valuative criterion}
To show that $\SH_{g,\ddata}(\punt,\beta)$ is proper, it remains to
show that \eqref{eq:remove-PR} is universally closed.
Let $R$ be a DVR and $K$ be its fraction field.
Consider the following commutative diagram of solid arrows
\[
\xymatrix{
\ul{\eta} \ar[rr]^{[f_{\eta}]} \ar[dd] && \ul{\SH_{g,\ddata}(\punt,\beta)} \ar[d] \\
&& \SH_{g,\{\ogamma_i\}}(\ul{\punt},\beta) \ar[d] \\
\ul{S} \ar[rr]^{[f']} \ar@/^1pc/@{-->}[rruu]^{[f]} \ar@/^0.5pc/@{-->}[rru]^{[\ul{f}]} &&  \overline\scrM_{g,n}(\ul{\punt_{\bk}}',\beta)
}
\]
where $\ul{S} = \spec R$, $\ul{\eta} = \spec K$,
$f'\colon \ul{C} \to \ul{\punt_{\bk}}'$ is a stable map over $\ul{S}$, and
$f_{\eta}\colon \pC_{\eta} \to \punt$ is a stable punctured map over
$\ul{\eta}$.
Possibly after a finite extension of DVRs, we would like to show that
there is a unique dashed arrow $[f]$ making the above diagram
commutative.

By \cite[Theorem 3.12]{ACGS20P}, the morphism
$ \SH_{g,\ddata}(\punt,\beta) \to
\SH_{g,\{\ogamma_i\}}(\ul{\punt},\beta) $ satisfies the week valuative
criterion.
Thus, it suffices to show that, possibly after a finite extension of
DVRs, there is a unique dashed arrow $[\ul{f}]$ making the above
diagram commutative.
Here $\ul{f}\colon \uC \to \ul{\punt}$ denotes an underlying R-map over
$\ul{S}$ with the restriction $\ul{f}|_{\ul{\eta}} = \ul{f_{\eta}}$.

Denote by $\ul{\infty_{\ul{S}}} := \ul{C} \times_{\BC} \ul{\infty}$.
The underlying $\ul{f_{\eta}}$ induces a stable map
$\uC_{\eta} \to \ul{\infty_{\ul{S}}}$.
Possibly after a finite extension of DVRs, we may extend this to a
stable map $\uC \to \ul{\infty_{\ul{S}}}$.
We arrive at a commutative diagram
\[
\xymatrix{
\uC \ar[r] \ar[rd] & \ul{\infty_{\ul{S}}} \ar[r] \ar[d] & \ul{\punt_{\bk}}'. \\
& \ul{C} \ar[ru]_{f'}&
}
\]
We will show that the morphism $\uC \to \ul{C}$ is the coarse moduli, hence the composition $\uC \to \ul{\infty_{\ul{S}}} \to  \ul{\infty}$ gives the unique $\ul{f}$ as needed. It suffices to show that $\uC \to \ul{C}$ contracts no components over the central fiber.

Suppose $\uC \to \ul{C}$ contracts some components.
Then there is a rational component $\cZ \subset \uC$ with at most two
special points contracted in $\ul{C}$.
Thus $\cZ$ maps to a fiber of $\ul{\infty_{\ul{S}}} \to \ul{C}$.
Since the composition
$\cZ \to \ul{\infty_{\ul{S}}} \to \ul{\punt_{\bk}}'$ coincides with
$\cZ \to \ul{C} \to \ul{\punt_{\bk}}'$, the component $\cZ$ is
contracted in $\ul{\infty_{\ul{S}}}$.
This contradicts the construction of $\uC \to \ul{\infty_{\ul{S}}}$ as
a stable map.

\bigskip

This completes the proof of Theorem \ref{thm:PR-moduli}. \qed

\subsection{The canonical perfect obstruction theory}\label{ss:canonical-theory}

Now assume that the strict morphism
\begin{equation}\label{eq:PT-smooth}
\punt \to \BC \times \ainfty
\end{equation}
is smooth. Note that it is also of DM-type.
We will use $\LL$ to denote the log cotangent complex in the sense of
\cite{LogCot}. By \cite[1.1 (ii), (iii)]{LogCot}, we have
\[
\Omega_{\punt/\BC}:= \LL_{\punt/\BC\times\ainfty} \cong \LL_{\ul{\punt}/\BC\times\ul{\ainfty}} \cong \Omega_{\ul{\punt}/\BC\times\ul{\ainfty}}
\]
where $\Omega_{\ul{\punt}/\BC\times\ul{\ainfty}}$ is the usual cotangent bundle.

Using the short-hand notations $\SH := \SH_{g,\ddata}(\punt,\beta)$
and $\fM := \fM_{g,\ddata'}(\ainfty)$, consider the universal
punctured maps
\[
f_{\SH}\colon \pC_{\SH} \to \punt \ \ \ \mbox{and} \ \ \  f_{\fM}\colon \pC_{\fM} \to \ainfty
\]
over $\SH$ and $\fM$ respectively. They fit in a commutative diagram
\[
\xymatrix{
& \punt \ar[r] & \BC\times\ainfty \\
\pC_{\SH} \ar[r] \ar[d]_{\pi} \ar@/^.5pc/[ru]^{f_{\SH}} & \pC_{\fM} \ar@/^.5pc/[ru]_{f_{\fM}} \ar[d]^{\pi} \\
\SH \ar[r] & \fM
}
\]
where the bottom square is Cartesian. For simplicity, let $\pi$ be the projection for both families. The top square leads to two distinguished triangles
\[
f^*_{\SH} \Omega_{\punt/\BC} \to \LL_{\pC_{\SH}/\BC\times\ainfty} \to \LL_{f_{\SH}} \stackrel{[1]}{\to} \ \ \mbox{and} \ \
\LL_{f_{\fM}}|_{\pC_{\SH}} \to \LL_{\pC_{\SH}/\BC\times\ainfty} \to \LL_{\pC_{\SH}/\pC_{\fM}} \stackrel{[1]}{\to}
\]
hence a morphism
\[
f^*_{\SH} \Omega_{\punt/\BC} \to  \LL_{\pC_{\SH}/\pC_{\fM}} \cong \pi^*\LL_{\SH/\fM}.
\]
Let $\omega^{\bullet}_{\pi} := \omega_{\pi}[1]$ be the dualizing
complex of $\ul{\pi}$.
Then a standard argument gives the morphism
\begin{equation}\label{eq:POT}
\varphi_{\SH/\fM}^{\vee} \colon \EE_{\SH/\fM}^{\vee} := \rd\pi_*\big(f^*_{\SH} \Omega_{\punt/\BC}\otimes \omega^{\bullet}_{\pi}\big) \to \LL_{\SH/\fM},
\end{equation}
and equivalently its dual
\begin{equation}\label{eq:tan-POT}
\varphi_{\SH/\fM} \colon \TT_{\SH/\fM} \to \EE_{\SH/\fM} \cong \rd\pi_*\big(f^*_{\SH}\Omega^{\vee}_{\punt/\BC} \big).
\end{equation}
Since $\SH \to \fM$ is strict, $\LL_{\SH/\fM}$ agrees with the usual
cotangent complex of the underlying morphism.
By \cite[Proposition 4.2]{ACGS20P}, \eqref{eq:POT} defines a perfect
obstruction theory of $\SH \to \fM$ in the sense of \cite{BeFa97}.
Since $\fM$ is equidimensional by Lemma
\ref{lem:universal-punctured-map-moduli}, we may define the {\em
  canonical virtual cycle} of $\SH$, denoted by $[\SH]^{\vir}$, via
virtual pull-back with respect using \eqref{eq:POT}.


\section{The reduced perfect obstruction theory of punctured R-maps}\label{sec:red-theory}

In this section, we will explain how a superpotential introduced below
can be used to construct a reduced perfect obstruction theory.
The compatibility of the reduced theory in \cite{CJR21} and the
current paper will be made explicit in \S\ref{ss:GLSM-potential}.

\begin{definition}
  \label{def:superpotential2}
  A \emph{superpotential} of a target $\infty$ is a section $W$ of the
  line bundle
  \begin{equation*}
    \uomega \otimes \cO(\ttwist \infty)
  \end{equation*}
  over $\punt$, for some positive integer $\ttwist$ called the {\em order} of $W$.
\end{definition}

\subsection{The category of punctured maps for the reduced theory}\label{ss:category-for-reduced-theory}

It was shown in \cite{CJRS18P, CJR21} that log maps to $\cA$ with
uniform maximal degeneracy form are the right subcategory of log maps
for constructing the reduced theory of log R-maps.
Such a configuration of degeneracies is also necessary for
constructing reduced theory of punctured R-maps, which is the topic of
this section.

\subsubsection{The moduli stacks with universal target}\label{sss:universal-UM-stack-disconnected}

In the following, we will consider maps from possibly disconnected curves, hence
we will consider a sequence of discrete data
\begin{equation}\label{eq:[m]-punctured-data}
(g_i, \ddata_i'), \ \ \mbox{for } i \in [m] = \{1, 2, \cdots, m\},
\end{equation}
for punctured maps to $\ainfty$ as in \S \ref{sss:universal-puncture-moduli},
 and write for simplicity
$
\fM_i := \fM_{g_i,\ddata_i'}(\ainfty)
$
with the universal punctured  map $\mathfrak{f}_i\colon \pC_i \to \ainfty$. The product $\prod_{i \in [m]}\fM_i$  is the moduli of pre-stable punctured maps to $\cA$ with domains given by $m$ connected components labeled by $[m]$ with discrete data \eqref{eq:[m]-punctured-data}.

Denote by $\UM_{[m]}$ the subcategory of punctured maps in
$\prod_{i \in [m]}\fM_i$ of uniform maximal degeneracy, or the $\curlywedge$-configuration.
We obtain a tautological morphism
\begin{equation}\label{eq:forget-curlywedge}
\UM_{[m]} \to \prod_{i \in [m]}\fM_i
\end{equation}
The construction in \cite[\S 3]{CJRS18P} applies to the above setting with little change:

\begin{proposition}\label{prop:umd-punctured-stack}
\begin{enumerate}
\item The tautological morphism \eqref{eq:forget-curlywedge} is proper, log \'etale, and represented by log algebraic spaces locally of finite type.

\item If $m=1$ and $(g, \ddata')=(g_1, \ddata_1')$, then
  $\fM^{\circ}_{g,\ddata'}(\ainfty)$ is open and dense in
  $\UM_{g,\ddata'}(\ainfty) := \UM_{[1]}$, over which
  \eqref{eq:forget-curlywedge} is the identity.
  In particular, when $m=1$, \eqref{eq:forget-curlywedge} is
  birational, and $\UM_{g,\ddata'}(\ainfty)$ is represented by an fs
  log algebraic stack locally of finite type of pure dimension
  \eqref{eq:base-dimension}.
\end{enumerate}
\end{proposition}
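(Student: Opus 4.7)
The plan is to carry out the construction from \cite[\S 3]{CJRS18P}---where the analogue of $\UM$ was built for log maps with uniform maximal degeneracy---with minor changes to accommodate punctures. The key observation will be that the uniform maximal degeneracy condition depends only on the characteristic sheaf $\ocM$ of the base and on the subsheaf of degeneracies $\obD \subset \ocM$ provided by Proposition~\ref{prop:sheaf-of-degeneracies}; the puncturing structure of the source curve does not enter directly. Since puncturing only alters the log ideal $\cK$ on the base and not $\ocM$, the combinatorial arguments of \cite[\S 3]{CJRS18P} should apply essentially verbatim.

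More concretely, I would work on a strict étale chart $U \to \prod_{i \in [m]} \fM_i$ over which the characteristic sheaf has constant stalk a basic monoid $Q_\tau$ associated to a global type $\tau$. Over such a chart the set of degeneracies $\obD \subset Q_\tau$ is a finite partially ordered subset (with respect to the monoid ordering), and I would construct $\UM_{[m]} \times_{\prod_{i \in [m]} \fM_i} U$ as the log modification of $U$ corresponding to a subdivision $\Sigma$ of the dual cone $\sigma_\tau = \Hom(Q_\tau, \mathbf{R}_{\geq 0})$ chosen so that on each maximal cone of $\Sigma$ the induced partial order on $\obD$ has a unique maximum. Such a subdivision is produced in \cite[\S 3]{CJRS18P} by an induction on $|\obD|$ using iterated star subdivisions. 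Gluing over charts then yields a global log modification $\UM_{[m]} \to \prod_{i \in [m]} \fM_i$, and since subdivisions give proper, log étale morphisms represented by log algebraic spaces locally of finite type, this establishes (1).

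For (2), I would invoke Lemma~\ref{lem:universal-punctured-map-moduli}(2): on the dense open $\fM^{\circ}_{g,\ddata'}(\ainfty) \subset \fM_{g,\ddata'}(\ainfty)$ the characteristic sheaf is $\NN$ with a single generating degeneracy (that of the smooth component), so the uniform maximum condition holds trivially and the subdivision is the identity over this locus. This yields density of $\fM^{\circ}_{g,\ddata'}(\ainfty)$ in $\UM_{g,\ddata'}(\ainfty)$ together with birationality of the tautological morphism. The purity and dimension statement $\dim \UM_{g,\ddata'}(\ainfty) = 3g - 4 + n$ then follows from \eqref{eq:base-dimension}, since proper log étale maps preserve pure-dimensionality; the fs property is preserved by the subdivisions by construction.

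The main obstacle is the combinatorial step: producing subdivisions that separate maxima of $\obD$ on every stratum and verifying they are functorial under specialization so that the local constructions glue. The subtlety is that $\obD$ may jump as one degenerates within $U$, forcing the subdivisions to be chosen compatibly with these jumps; this is exactly what the inductive argument of \cite[\S 3]{CJRS18P} handles. My task would be to verify that this argument is unaffected by the puncture-related log ideal $\cK$ on the moduli of punctured maps, which is immediate since $\cK$ does not alter the characteristic sheaf of the base on which the combinatorial construction takes place.
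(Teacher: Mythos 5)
Your proposal is correct and follows essentially the same route as the paper: the paper simply observes that the $\curlywedge$-configuration only concerns degeneracies at generic points of irreducible components, so punctures play no role, and then cites the construction of \cite[Theorem~3.15]{CJRS18P} verbatim, using Lemma~\ref{lem:universal-punctured-map-moduli}(2) for the density, birationality and dimension claims in (2) exactly as you do. Your extra detail merely spells out the cited subdivision argument rather than taking a different path.
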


\begin{proof}
  By Lemma \ref{lem:universal-punctured-map-moduli} (2), the universal punctured
  maps over $\fM^{\circ}_{g,\ddata'}(\ainfty)$ have uniform maximal
  degeneracy.
  Thus when $m=1$, \eqref{eq:forget-curlywedge} restricts to the
  identity over $\fM^{\circ}_{g,\ddata'}(\ainfty)$.
  The rest of the statement follows from an identical proof as in
  \cite[Theorem~3.15]{CJRS18P}.
  Note that the $\curlywedge$-configuration of degeneracies only
  concerns the generic points of irreducible components.
  Thus, punctured points play no role in the proof.
\end{proof}

\subsubsection{The moduli of punctured R-maps with the \texorpdfstring{$\curlywedge$}{wedge}-configuration}\label{sss:curlywedge-R-map-moduli}
Consider a collection of discrete data \eqref{eq:P-data} for punctured R-maps to a target $\punt \to \BC$
\begin{equation}\label{eq:[m]-punctured-R-data}
\left(g_i, \beta_i, \ddata_i = \{(\ogamma_{ij}, c_{ij})\}_{j=1}^{n_i}\right), \ \ \mbox{for } i \in [m].
\end{equation}
compatible with \eqref{eq:[m]-punctured-data}.
Write for simplicity $\SH_i = \SH_{g_i,\ddata_i}(\punt,\beta_i)$. We obtain a Cartesian diagram with strict vertical arrows
\begin{equation}\label{diag:UM-R-map}
\xymatrix{
\UH_{[m]} \ar[rr] \ar[d] &&  \prod_{i\in [m]} \SH_i \ar[d] \\
 \UM_{[m]} \ar[rr] && \prod_{i\in [m]} \fM_i
}
\end{equation}
with the left vertical arrow given by the tautological morphism \eqref{eq:take-log}.

The product $ \prod_{i\in [m]} \SH_i$ is the moduli of stable punctured R-maps to $\punt \to \BC$ with domains given by $m$ connected components labeled by $[m]$ with discrete data \eqref{eq:[m]-punctured-R-data}. By construction $\UH_{[m]}$ is the moduli of stable punctured maps in $\prod_{i\in [m]} \SH_i$ with $\curlywedge$-configurations.

For later use, consider the universal $\curlywedge$-configured punctured maps:
\begin{equation}\label{eq:universal-curlywedge-map}
f:= \sqcup_{i \in [m]} f_i \colon \pC = \sqcup_{i \in [m]}\pC_i \to \punt \ \ \ \mbox{and} \ \ \ \mathfrak{f}:= \sqcup_{i \in [m]} \mathfrak{f}_i\colon \mathfrak{C}^{\circ} := \sqcup_{i \in [m]}\mathfrak{C}^{\circ}_i \to \ainfty
\end{equation}
over $\UH$ and $\UM$ respectively. Here $f_i := f|_{\pC_i}$ and $\mathfrak{f}_i := \mathfrak{f}|_{\mathfrak{C}_i}$ denote the restrictions to the corresponding $i$-th components. Note that the maximal degeneracy of $e_{\max}(\mathfrak{f}) \in \Gamma(\ocM_{\UM_{[m]}},\UM_{[m]})$ pulls back to the maximal degeneracy $e_{\max}(f)$, see Notation \ref{not:extrem-degeneracy}.

For the purpose of constructing perfect obstruction theories, we
assume that \eqref{eq:PT-smooth} is smooth for the rest of this
section.
By \eqref{eq:tan-POT}, the right vertical arrow of
\eqref{diag:UM-R-map} admits a canonical perfect obstruction theory.
This pulls back to the {\em canonical perfect obstruction theory}
\begin{equation}\label{eq:product-POT}
\varphi_{\UH_{[m]}/\UM_{[m]}} \colon  \TT_{\UH_{[m]}/\UM_{[m]}} \to \bigoplus_{i\in [m]} \EE_i =: \EE_{\UH_{[m]}/\UM_{[m]}}
\end{equation}
of the left vertical arrow, where $\EE_{i} := \pi_{i,*} f^*_i \Omega^{\vee}_{\punt/\BC}$ with $\pi_{i} \colon \pC_i \to \UH_{[m]}$ the projection.

\subsection{The twisted superpotential}\label{ss:general-twisted-superpotential}

To construct the reduced perfect obstruction theory by modifying the
canonical theory \eqref{eq:product-POT}, we introduce an appropriate
{\em twisted superpotential} related to the superpotential defined
above.
This is similar to the case of log R-maps \cite[Section~3.4]{CJR21}.

\subsubsection{Expanded targets}\label{sss:target-exp}
Let $\cA_{\max} := \cA$ with the closed substack $\Delta_{\max} \subset \cA_{\max}$.
We obtain the following commutative diagram with Cartesian squares
\begin{equation}\label{diag:change-univ-target}
\xymatrix{
\infty_{\cA^{e,\circ}} \ar[d] \ar[r] & \infty_{\cA^{e}} \ar[d] \ar[r]^-{\fb} & \ainfty \times\Delta_{\max} \ar[d] \\
\cA^{e,\circ} \ar[r] & \cA^{e} \ar[r] & \cA\times\cA_{\max}.
}
\end{equation}
where the vertical arrows are strict closed embeddings, the bottom
right arrow is the log blow-up along the center
$\ainfty\times\Delta_{\max}$, and the bottom left arrow is the open
sub-stack obtained by removing the proper transform of
$\ainfty\times \cA_{\max}$.
In particular, all horizontal arrows are log \'etale. Let
$\cO(\ainfty)$ be the line bundle defining the log structure of
$\ainfty$ as in \eqref{eq:generic-rank1-DF}.
A straightforward local calculation shows that:

\begin{lemma}\label{lem:vb-target}
  The underlying morphism of
  $\infty_{\cA^{e,\circ}} \to \ainfty\times\Delta_{\max}$ is represented by
  the total space of the line bundle
  $\cO(\ainfty)^{\vee}\boxtimes\cO(\Delta_{\max})$ over
  $\ainfty\times\Delta_{\max}$.
\end{lemma}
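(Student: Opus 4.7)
The proof is a direct toric calculation, and I will carry it out in a strict \'etale local chart on $\cA \times \cA_{\max}$. Writing $\cA = [\A^1/\Gm]$ and $\cA_{\max} = [\A^1/\Gm]$ with coordinates $x$ and $y$ respectively, the product is $[\A^2/\Gm^2]$ with log structure generated by $x$ and $y$. The center $\ainfty \times \Delta_{\max}$ of the log blowup corresponds to the origin, i.e.\ the deepest toric stratum $B\Gm^2 \subset [\A^2/\Gm^2]$. With this description, $\cO(\ainfty)$ and $\cO(\Delta_{\max})$ restrict to the line bundles on $B\Gm^2$ associated with the characters $(1,0)$ and $(0,1)$ of $\Gm^2$, respectively (this follows from the convention in \eqref{eq:rank1-DF}, where the defining section of $\cO_\fM(e)$ is identified with the coordinate pulled back from $\cA$).

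Next I identify the log blowup $\cA^e \to \cA \times \cA_{\max}$ along $\ainfty \times \Delta_{\max}$ with the toric stellar subdivision of $\NN^2$ by the diagonal ray $\rho = (1,1)$. This produces two maximal cones
\[
C_1 = \langle (1,0),\, (1,1)\rangle \quad\text{and}\quad C_2 = \langle (0,1),\, (1,1)\rangle,
\]
with corresponding open sub-charts $U_{C_1}, U_{C_2} \subset \cA^e$. The proper transform of the divisor $\ainfty \times \cA_{\max} = \{x = 0\}$ corresponds to the ray $(1,0)$, which is a face only of $C_1$. Hence removing this proper transform cuts out $\cA^{e,\circ} = U_{C_2}$.

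Computing in dual cone coordinates, $U_{C_2} = [\spec\bk[u,v]/\Gm^2]$ with $\Gm^2$ acting on $(u,v)$ with weights $(1,0)$ and $(-1,1)$, and the blowup map takes the form $(u,v) \mapsto (x,y) = (u, uv)$. The scheme-theoretic preimage of the center $\{x = y = 0\}$ is the locus $\{u = 0\}$, which is isomorphic to $[\spec\bk[v]/\Gm^2]$ with $\Gm^2$-weight $(-1,1)$ on $v$. This is exactly the total space of the line bundle on $B\Gm^2 = \ainfty \times \Delta_{\max}$ associated with the character $(-1,1)$, which by the weight identification of the previous paragraph is $\cO(\ainfty)^{\vee}\boxtimes\cO(\Delta_{\max})$.

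The only subtlety is bookkeeping: matching toric weights to the line bundles $\cO(\ainfty)$ and $\cO(\Delta_{\max})$ under the chart, and checking that the removal of the proper transform of $\ainfty \times \cA_{\max}$ leaves exactly one chart $U_{C_2}$ of the log blowup. Both are routine; since the computation is toric and local, and the global statement is formulated chart-by-chart, no further gluing argument is needed.
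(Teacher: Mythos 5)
Your proof is correct, and it is exactly the ``straightforward local calculation'' that the paper invokes without writing out: the paper offers no further argument for Lemma~\ref{lem:vb-target}, and your identification of $\cA^{e,\circ}$ with the chart of the stellar subdivision at $(1,1)$, followed by reading off the $\Gm^2$-weight $(-1,1)$ on the fiber coordinate $v = y/x$ over the center, is the intended computation. The weight bookkeeping matches the conventions of \S\ref{sss:log-line-bundles} (since $\cO(\ainfty)|_{\ainfty}$ is the normal bundle, of weight $(1,0)$), so the conclusion $\cO(\ainfty)^{\vee}\boxtimes\cO(\Delta_{\max})$ comes out with the correct asymmetry.
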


Consider a target $\punt$ with the log structure induced by a morphism
$ \cO(\punt) \colon \punt {\longrightarrow} \ainfty $ as in \S
\ref{sss:P-target}.
The construction in \eqref{diag:change-univ-target} can be pulled back to the targets of punctured R-maps via the following Cartesian square:

\begin{equation}\label{diag:expand-R-target}
\xymatrix{
\punt_{e,\circ} \ar[rr] \ar[d] && \punt\times\Delta_{\max} \ar[rr] \ar[d] && \BC\times\Delta_{\max} \\
\infty_{\cA^{e,\circ}} \ar[rr] && \ainfty\times\Delta_{\max} &&
}
\end{equation}

Lemma \ref{lem:vb-target} implies:

\begin{corollary}\label{cor:vb-R-target}
The underlying morphism of $\infty_{e,\circ} \to \punt\times\Delta_{\max}$ is represented by the total space of the line  bundle $\cO(\punt)^{\vee}\boxtimes\cO(\Delta_{\max})$ over $\punt\times\Delta_{\max}$.
\end{corollary}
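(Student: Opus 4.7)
The plan is to deduce the corollary from Lemma \ref{lem:vb-target} by a straightforward base change along the morphism $\cO(\punt)\colon \punt \to \ainfty$, using the fact that forming the total space of a line bundle commutes with base change.

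First, I would note that the outer rectangle in \eqref{diag:expand-R-target} is Cartesian (being the composition of Cartesian squares), so on the level of underlying stacks we have
\begin{equation*}
\ul{\punt_{e,\circ}} \;\cong\; \ul{\infty_{\cA^{e,\circ}}} \times_{\ul{\ainfty}\times \ul{\Delta_{\max}}} \bigl(\ul{\punt}\times \ul{\Delta_{\max}}\bigr).
\end{equation*}
Here I am implicitly using that the factor $\BC$ is invisible at the level of underlying stacks (the right column of \eqref{diag:expand-R-target} projects to $\BC\times\Delta_{\max}$, but both targets carry the same $\BC$-structure), so the fiber product reduces to one over $\ainfty\times\Delta_{\max}$.

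Next, I would apply Lemma \ref{lem:vb-target} to identify $\ul{\infty_{\cA^{e,\circ}}}$ with $\Tot\bigl(\cO(\ainfty)^{\vee}\boxtimes\cO(\Delta_{\max})\bigr)$ over $\ul{\ainfty}\times\ul{\Delta_{\max}}$. Since the formation of the total space of a line bundle commutes with arbitrary base change, base changing along $\cO(\punt)\times \mathrm{id}\colon \ul{\punt}\times\ul{\Delta_{\max}} \to \ul{\ainfty}\times\ul{\Delta_{\max}}$ yields the total space of the pulled back line bundle. Finally, by the very construction of the log structure on $\punt$ via the strict morphism $\cO(\punt)\colon \punt \to \ainfty$ in \S\ref{sss:P-target}, one has $\cO(\punt) \cong \cO(\ainfty)|_{\ul{\punt}}$, so that the pullback of $\cO(\ainfty)^{\vee}\boxtimes\cO(\Delta_{\max})$ is precisely $\cO(\punt)^{\vee}\boxtimes\cO(\Delta_{\max})$.

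There is no substantial obstacle here: the only point requiring a moment of care is checking that the base change identity for total spaces of line bundles extends to the present stacky setting, which is standard since $\Tot$ is defined via the relative spectrum of the symmetric algebra and $\spec_{\cY}\mathrm{Sym}$ commutes with base change. The stacky orbifold structure on $\punt$ does not cause any difficulty because the construction is purely local on the base.
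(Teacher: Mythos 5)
Your proof is correct and takes essentially the same route as the paper, which deduces the corollary by pulling back Lemma \ref{lem:vb-target} along the Cartesian square in \eqref{diag:expand-R-target}, using that $\cO(\punt)\cong\cO(\ainfty)|_{\ul{\punt}}$ and that forming total spaces of line bundles commutes with base change. The only cosmetic point is that what you actually use is the left square of \eqref{diag:expand-R-target} being Cartesian (with the vertical arrows strict, so the fs and underlying fiber products agree), rather than an ``outer rectangle''; the remark about the $\BC$-factor is unnecessary once this is noted.
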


\subsubsection{The twisted superpotential}

\begin{definition}\label{def:superpotential}
A {\em twisted superpotential} of a target $\punt$ is a morphism of underlying stacks
\[
\tW \colon \cO(\punt)^{\vee}\boxtimes\cO(\Delta_{\max})  \to \uomega\boxtimes\cO(\ttwist\cdot \Delta_{\max})
\]
over $\BC\times\ul{\Delta}_{\max}$, for some positive integer $\ttwist$, called the {\em order} of $\tW$.
\end{definition}

Thanks to Corollary~\ref{cor:vb-R-target}, in what follows we may view a twisted superpotential $\tW$ as a
morphism of log stacks
\begin{equation}\label{eq:twisted-potential}
\tW \colon \punt_{e,\circ}  \to \uomega\boxtimes\cO(\ttwist\cdot \Delta_{\max})
\end{equation}
over $\BC\times{\Delta}_{\max}$, with the target equipped with the log  structure pulled back from $\BC\times\Delta_{\max}$.

Alternatively, a twisted superpotential can be viewed as a $\CC^*_{\omega}$-equivariant function as follows. Consider the pull-backs
\[
\CC_{\omega} \cong \uomega\times_{\BC}\spec \bk \ \ \mbox{and} \ \ \CC_{\max} \cong \cO(\Delta_{\max})\times_{\Delta_{\max}}\spec \bk.
\]
Denote by  $\cO(\punt_{\bk})^{\vee} := \cO(\punt)^{\vee} \times_{\BC}\spec \bk$ and consider the pull-back
 \[
 \punt_{e,\circ,\bk} := \punt_{e,\circ}\times_{\BC\times\Delta_{\max}}\spec\big(\NN \to \bk\big)
 \]
 along the strict smooth cover from a standard log point $\spec \big(\NN \to \bk\big) \to \BC\times\Delta_{\max}$. Note that by Corollary \ref{cor:vb-R-target}, we have a line bundle structure with the $\CC^*_{\omega}$-action
 \begin{equation}\label{eq:base-change-punt-e}
 \ul{\punt}_{e,\circ,\bk} = \cO(\punt_{\bk})^{\vee}\otimes\CC_{\max} \to \punt_{\bk} := \punt\times_{\BC}\spec \bk.
 \end{equation}
The twisted superpotential $\tW$ then pulls back to a $\CC^*_{\omega}$-equivariant function
 \begin{equation}\label{eq:rigid-potential}
 \tW_{\bk} \colon \punt_{e,\circ,\bk} \to \CC_{\omega}\otimes\CC_{\max}^{\otimes\ttwist}
 \end{equation}
 where the right hand side has the log structure pulled back from $\spec \big(\NN \to \bk\big)$.

\subsubsection{Twisted superpotentials versus superpotentials}
Given a superpotential $W$, there is an {\em associated twisted
superpotential} $\tW$ constructed as follows.
The diagonal
$p \colon \punt_{e, \circ} \to \punt_{e, \circ} \times_{\BC \times
  \ul\Delta_{\max}} \punt_{e, \circ}$ is naturally viewed as a
section of the line bundle
$\left(\cO(\infty)^\vee \boxtimes \cO(\Delta_{\max})
\right)|_{\punt_{e,\circ}}$.
It is also known as the \emph{tautological section} of
$\left(\cO(\infty)^\vee \boxtimes \cO(\Delta_{\max})
\right)|_{\punt_{e,\circ}}$, and vanishes precisely along the zero
section of the total space of
$\cO(\infty)^\vee \boxtimes \cO(\Delta_{\max}) \to \infty \times
\Delta_{\max}$.
Then the product
\begin{equation*}
  W|_{\punt_{e, \circ}} \cdot p^{\ttwist},
\end{equation*}
is a section of the line bundle
\begin{equation*}
  \left(\uomega \otimes \cO(\ttwist \infty) \right)|_{\punt_{e,\circ}} \otimes \left(\cO(\infty)^\vee \boxtimes \cO(\Delta_{\max})\right)|_{\punt_{e,\circ}}^{\otimes \ttwist}
  \cong \left( \uomega \boxtimes \cO(\ttwist \cdot \Delta_{\max}) \right)|_{\punt_{e,\circ}},
\end{equation*}
and hence corresponds to a morphism $\tW$ as in
Definition~\ref{def:superpotential}. Indeed, any twisted superpotential can be obtained this way:

\begin{lemma}\label{lem:super-potential-equivalence}
  For every twisted superpotential $\tW$ as in
  Definition~\ref{def:superpotential}, there is a superpotential $W$,
  such that $\tW$ is obtained from $W$ by the above construction.
\end{lemma}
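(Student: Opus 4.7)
The plan is to show that the construction preceding the lemma, sending a superpotential $W$ to the twisted superpotential $\tW := W|_{\punt_{e,\circ}} \cdot p^{\ttwist}$, is in fact a bijection, by identifying both sides with sections of a common line bundle on $\punt_{e,\circ}$ and computing those sections explicitly.

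First, using Corollary~\ref{cor:vb-R-target}, I would identify $\punt_{e,\circ}$ with the total space of the line bundle $L := \cO(\infty)^{\vee}\boxtimes\cO(\Delta_{\max})$ on $\punt\times\Delta_{\max}$, with projection $\pi\colon\punt_{e,\circ}\to\punt\times\Delta_{\max}$, and set $M := \uomega\boxtimes\cO(\ttwist\cdot\Delta_{\max})$ as a line bundle on $\BC\times\Delta_{\max}$. Since $M$ is the relative spectrum of the symmetric algebra on $M^{\vee}$ over $\BC\times\Delta_{\max}$, a morphism $\tW$ as in Definition~\ref{def:superpotential} is the same as a global section of $q^{*}M$ on $\punt_{e,\circ}$, where $q\colon\punt_{e,\circ}\to\BC\times\Delta_{\max}$ is the structure morphism.

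Second, since $q$ factors through $\pi$, the projection formula combined with the line-bundle decomposition $\pi_{*}\cO_{\punt_{e,\circ}} = \bigoplus_{k\ge 0}L^{\vee,\otimes k}$ of the pushforward along $\pi$ yields
\[
\Gamma(\punt_{e,\circ},q^{*}M) = \bigoplus_{k\ge 0}\Gamma\big(\punt\times\Delta_{\max},(\uomega\otimes\cO(k\infty))\boxtimes\cO((\ttwist-k)\Delta_{\max})\big).
\]
Applying the external tensor product formula, each summand factors as $\Gamma(\punt,\uomega\otimes\cO(k\infty))\otimes\Gamma(\Delta_{\max},\cO((\ttwist-k)\Delta_{\max}))$. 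Since $\Delta_{\max}\cong\BG_m$ and the line bundle $\cO(\Delta_{\max})$ corresponds to the weight-one character, $\Gamma(\Delta_{\max},\cO(j\Delta_{\max}))$ vanishes unless $j=0$, so only the $k=\ttwist$ summand survives, giving a canonical isomorphism $\Gamma(\punt_{e,\circ},q^{*}M)\cong\Gamma(\punt,\uomega\otimes\cO(\ttwist\infty))$.

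Finally, I would match this isomorphism with the construction in the lemma: the degree-$k$ summand of the decomposition is sections which are $k$-homogeneous in the fiber of $\pi$, and this homogeneity is realised by multiplication with $p^{k}$, the tautological section of $\pi^{*}L$. The degree-$\ttwist$ piece thus maps a superpotential $W$ to $W\cdot p^{\ttwist}$, which is precisely the twisted superpotential assembled from $W$ before the lemma; hence the construction is a bijection. The main technical point to verify will be the weight computation on $\Delta_{\max}\cong\BG_m$: going through the explicit charts on $\cA^{e,\circ}$ as used in the proof of Lemma~\ref{lem:vb-target}, one confirms that $\cO(\Delta_{\max})|_{\Delta_{\max}}$ carries the weight-$1$ $\Gm$-character, which is what makes the vanishing of the unwanted summands genuine rather than merely formal.
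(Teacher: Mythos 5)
Your proposal is correct. It rests on the same two pillars as the paper's proof — the identification of $\ul\punt_{e,\circ}$ with the total space of $\cO(\infty)^\vee\boxtimes\cO(\Delta_{\max})$ (Corollary~\ref{cor:vb-R-target}) and the $\Gm$-weight along the $\Delta_{\max}$-direction forcing degree-$\ttwist$ homogeneity — but the implementation is genuinely different. The paper base-changes along $\spec\bk\to\ul\Delta_{\max}$ and $\spec\bk\to\BC$, trivializes $\cO(\infty)^\vee$ and $\uomega$ on an \'etale chart of $\punt_\bk$, writes $\tW$ as a polynomial in the fiber coordinate, uses equivariance to factor out $y^{\ttwist}$, divides by the tautological section (a unit times $y$), and then glues the resulting local sections into $W$ via the descent datum of $p$. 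You instead compute the whole space of twisted superpotentials at once: morphisms over $\BC\times\ul\Delta_{\max}$ are sections of $q^*(\uomega\boxtimes\cO(\ttwist\Delta_{\max}))$, the pushforward $\pi_*\cO = \mathrm{Sym}^\bullet L^\vee$ and the projection formula give the graded decomposition, and all graded pieces except $k=\ttwist$ die because $\cO(\Delta_{\max})|_{\Delta_{\max}}$ is a nontrivial character on $\ul\Delta_{\max}\cong\BG_m$ (the weight check you flag is exactly the homogeneity input the paper phrases as $\CC^*$-equivariance, and the weight is indeed $\pm1$, so nonzero, which is all you need). Your route buys a chart- and descent-free argument and yields bijectivity of $W\mapsto W|_{\punt_{e,\circ}}\cdot p^{\ttwist}$, not just surjectivity; the small extra inputs it uses — that $H^0$ commutes with the infinite direct sum (valid since $\punt\times\ul\Delta_{\max}$ is quasi-compact and quasi-separated, $\punt$ being proper over $\BC$) and the K\"unneth-type identification $\Gamma(\punt\times\BG_m, A\boxtimes\chi_j)=0$ for $j\neq 0$ in characteristic zero — are standard, so there is no gap.
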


\begin{proof}
  Base-changing along $\spec \bk \to \ul\Delta_{\max}$, we see
  that $\tW$ is equivalent to a $\CC^*$-equivariant morphism
  \begin{equation*}
    \cO(\infty)^\vee \otimes \CC_{\max} \to \uomega \otimes \CC_{\max}^{\otimes\ttwist}
  \end{equation*}
  over $\BC \cong \BC \times \spec \bk$, with the $\CC^*$-action given by the scaling of $\CC_{\max}$.
  This data is clearly equivalent to a $\CC^*$-equivariant morphism

  \begin{equation*}
  \tW' \colon  \cO(\infty)^\vee \to \uomega
  \end{equation*}
  over $\BC$ with the analogous weights.

  Pulling back first along $\spec \bk \to \BC$, then along an \'etale
  cover $U = \spec(A) \to \punt_{\bk}$, we obtain trivializations of
  both $\cO(\infty)^\vee$ and $\uomega$ over $U$.
  Now $\tW'|_U$ may be viewed as a morphism $U \times \A^1 \to \A^1$,
  or equivalently a polynomial $w' \in A[y]$.
  The homogeneity implies that $w'$ is of the form
  $w' = y^{\ttwist} \cdot w''$, where $w'' \in A$.
  The restriction $p|_U$ of the tautological section $p$ may similarly
  be identified with a section $p' \in A[y]$ of the form
  $p' = y \cdot p''$, where $p'' \in A^{\times}$ (noting that the
  tautological section only vanishes along the zero section).
  Thus, we can write $w' = w''' \cdot (p')^{\ttwist}$ for
  $w''' = w'' \cdot (p'')^{-\ttwist}$.
  The descent datum for $p$ then yields the descent datum for gluing
  $w'''$ to a global section $W$ of
  $\uomega \otimes \cO(\ttwist \infty)$, and we have the desired
  factorization $\tW = W \cdot p^{\ttwist}$.
\end{proof}

\subsubsection{Critical loci and transversality}
\label{sss:critical-loci-transverse}

In this section, we introduce the critical locus of a (twisted)
superpotential, as well as the notion of a transverse superpotential.
For this purpose, it is important to view a twisted superpotential $\tW$
as a morphism of log stacks \eqref{eq:twisted-potential}.
This leads to the differential of log tangent bundles:
\[
\diff \tW \colon T_{\punt_{e,\circ}/\BC\times\Delta_{\max}} \to \tW^*T_{\uomega\boxtimes\cO(\ttwist\cdot \Delta_{\max})/\BC\times\Delta_{\max}} \cong \big( \uomega\boxtimes\cO(\ttwist\cdot \Delta_{\max}) \big)|_{\punt_{e,\circ}}
\]
Note that the left horizontal arrow in \eqref{diag:expand-R-target} is log \'etale. Thus, the following
\[
T_{\punt_{e,\circ}/\BC\times\Delta_{\max}} \cong \Omega^{\vee}_{\punt\times\Delta_{\max}/\BC\times\Delta_{\max}}|_{\punt_{e,\circ}} \cong \Omega^{\vee}_{\punt/\BC}|_{\punt_{e,\circ}}
\]
is a vector bundle over $\punt_{e,\circ}$. Hence we have
\begin{equation}\label{eq:diff-potential}
\diff \tW \colon \Omega^{\vee}_{\punt/\BC}|_{\punt_{e,\circ}} \to \big(\uomega\boxtimes\cO(\ttwist\cdot \Delta_{\max}) \big)|_{\punt_{e,\circ}}.
\end{equation}

\begin{definition}\label{def:critical-locus}
The {\em critical locus} of $\tW$, denoted by $\crit(\tW)$ is the
strict closed substack of $\punt_{e,\circ}$ along which $\diff \tW$
vanishes.
The superpotential $\tW$ is said to have {\em proper critical locus}
if $\crit(\tW) \to \BC\times\Delta_{\max}$ is proper.
\end{definition}

From the $\CC^*_{\omega}$-equivariant point of view, the product
\begin{equation}\label{eq:rigid-critical-loci}
  \crit(\tW_{\bk}) = \crit(\tW)\times_{\BC\times\Delta_{\max}}\spec \big(\NN \to \bk\big),
\end{equation}
is the critical locus of the $\CC^*_{\omega}$-equivariant function
$\tW_{\bk}$ as in \eqref{eq:rigid-potential}. In particular $\tW$ has
proper critical locus iff $\crit(\tW_{\bk})$ is proper.
Next, we relate the properness of twisted superpotentials to the
following transversality of the corresponding superpotential $W$.

\begin{definition}\label{def:transverse-superpotential}
  The \emph{critical locus} of a superpotential $W$ is the
  closed substack $\crit(W)$ of $\ul\punt$ such that for any smooth
  cover $U \to \ul\punt$ and
  $(\uomega \otimes \cO(\ttwist \infty))|_U \cong \cO_U$, the critical
  locus of the regular function $W|_U\colon U \to \AA^1$ is
  $\crit(W)|_U$.
  We say that a superpotential $W$ is \emph{transverse (to the zero
    section)} if the critical locus $\crit(W)$ is empty.
\end{definition}

\begin{proposition}\label{prop:proper-critical-locus}
  Let $W$ be a superpotential, and let $\tW$ be the corresponding
  twisted superpotential.
  Then the following are equivalent:
  \begin{enumerate}
  \item The superpotential $W$ is transverse.
  \item $\tW$ has proper critical locus.
  \item Set-theoretically, $\crit(\tW)$ is supported along the zero section of $\cO(\infty)^\vee \boxtimes \cO(\Delta_{\max})$.
  \end{enumerate}
\end{proposition}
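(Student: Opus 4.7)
The plan is to prove (2) $\Leftrightarrow$ (3) via a $\CC^*$-equivariance argument on the fiber direction of the line bundle $\infty_{e,\circ} \to \infty \times \Delta_{\max}$, and then to prove (1) $\Leftrightarrow$ (3) via a direct local computation using the formula $\tW = W \cdot p^{\ttwist}$ from Lemma~\ref{lem:super-potential-equivalence}.

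For (2) $\Leftrightarrow$ (3), after base change along $\spec \bk \to \Delta_{\max}$ as in \eqref{eq:base-change-punt-e}, the underlying morphism $\infty_{e,\circ,\bk} \to \infty_\bk$ is the total space of the line bundle $\cO(\infty_\bk)^{\vee} \otimes \CC_{\max}$. The $\CC^*$-action on $\CC_{\max}$ scales the fibers nontrivially, and $\tW_\bk$ is $\CC^*$-equivariant by \eqref{eq:rigid-potential}, so $\crit(\tW_\bk)$ is closed and $\CC^*$-stable. Its intersection with each fiber $\A^1$ over $\infty_\bk$ is therefore a closed $\CC^*$-invariant subset of $\A^1$, which must be $\emptyset$, $\{0\}$, or all of $\A^1$. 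Since $\infty_\bk$ is proper over $\spec \bk$ by properness of $\infty \to \BC$, the critical locus $\crit(\tW_\bk)$ is proper over $\spec \bk$ if and only if each fiber is contained in $\{0\}$, equivalently, $\crit(\tW_\bk)$ is set-theoretically contained in the zero section. By \eqref{eq:rigid-critical-loci}, this matches condition (3), proving (2) $\Leftrightarrow$ (3).

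For (1) $\Leftrightarrow$ (3), I work locally after the above base change. Choose an \'etale cover $U \to \infty_\bk$ that trivializes the line bundles $\cO(\infty_\bk)^{\vee} \otimes \CC_{\max}$ and $(\uomega \otimes \cO(\ttwist \infty))|_{\infty_\bk}$, so that $\infty_{e,\circ,\bk}|_U \cong U \times \A^1_y$ with $y$ the fiber coordinate of the line bundle. Since the tautological section $p$ is identified with $y$ in this trivialization, the relation $\tW = W \cdot p^{\ttwist}$ yields the local expression
\[
  \tW_\bk|_{U \times \A^1_y} = y^{\ttwist} \cdot w(x),
\]
where $w \in \cO(U)$ is the local expression of $W$. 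The critical locus is defined by the vanishing of the log differential $d\tW$ valued in $T_{\infty_{e,\circ}/\BC \times \Delta_{\max}} \cong \Omega^{\vee}_{\infty/\BC}|_{\infty_{e,\circ}}$; crucially, this relative log tangent is pulled back from $\infty$ via the log \'etale morphism $\infty_{e,\circ} \to \infty \times \Delta_{\max}$, so only the $\infty$-direction derivatives $\partial_{x_i}$ contribute, and the fiber direction $\partial_y$ does not enter. Computing $\partial_{x_i}(y^{\ttwist} w) = y^{\ttwist} \partial_{x_i} w$ for each coordinate $x_i$ of $U$, we obtain
\[
  \crit(\tW_\bk)|_{U \times \A^1_y} = \{y = 0\} \;\cup\; \bigl(\crit(W)|_U \times \A^1_y\bigr),
\]
using the definition of $\crit(W)|_U$ as the critical locus of the regular function $w$. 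Consequently $\crit(\tW_\bk)$ is set-theoretically contained in $\{y = 0\}$ if and only if $\crit(W) = \emptyset$, giving (1) $\Leftrightarrow$ (3).

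The main technical subtlety is to verify carefully that under the identification $T_{\infty_{e,\circ}/\BC \times \Delta_{\max}} \cong \Omega^{\vee}_{\infty/\BC}|_{\infty_{e,\circ}}$, the differential $d\tW$ involves only $\infty$-direction derivatives and not the line-bundle-fiber direction $\partial_y$; this is where the fiber direction is ``absorbed'' by the log structure via the log \'etaleness of $\infty_{e,\circ} \to \infty \times \Delta_{\max}$. Once this local-coordinate picture is in hand, both equivalences follow cleanly from the two arguments above.
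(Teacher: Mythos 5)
Your argument for (2)$\Leftrightarrow$(3) is essentially the paper's (a critical point off the zero section drags a whole $\A^1$-fiber into $\crit(\tW_\bk)$ by $\CC^*$-equivariance, killing properness, while support in the zero section gives properness), so that part is fine. The gap is in (1)$\Leftrightarrow$(3), in the step you yourself flag as the main subtlety: the identification $T_{\punt_{e,\circ}/\BC\times\Delta_{\max}}\cong\Omega^{\vee}_{\punt/\BC}|_{\punt_{e,\circ}}$ does \emph{not} imply that only derivatives along the underlying of $\punt_\bk$ occur. The bundle $\Omega^{\vee}_{\punt/\BC}$ is the \emph{log} tangent bundle, of rank $\dim\xinfty+1$: by Lemma~\ref{lem:punt-tangent} it contains a trivial subbundle dual to $\diff\log\delta_1$, where $\delta_1$ generates $\ocM_{\punt}$ (this direction is invisible in your coordinates $x_i$ on $U$, since the DF log structure of $\punt$ has identically vanishing section and no actual divisor). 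On $\punt_{e,\circ,\bk}$ the element $e=\delta_1+\delta_2$ is pulled back from $\Delta_{\max}$, so relative to $\BC\times\Delta_{\max}$ one has $\diff\log\delta_1=-\diff\log\delta_2=-\diff\log y$: the extra log direction is precisely the logarithmic fiber direction of your coordinate $y$; it is converted, not killed. Hence
\[
\diff\tW_\bk \;=\; y^{\ttwist}\,\diff w \;+\; \ttwist\, w\, y^{\ttwist}\,\diff\log y ,
\]
and the second term is a genuine component along a local generator of $\Omega_{\punt/\BC}|_{\punt_{e,\circ,\bk}}$ (it is exactly the second term in the paper's computation). The correct local description is therefore $\crit(\tW_\bk)|_{U\times\A^1_y}=\{y=0\}\cup\bigl(\{w=0,\ \diff w=0\}\times\A^1_y\bigr)$, not $\{y=0\}\cup\bigl(\{\diff w=0\}\times\A^1_y\bigr)$.

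This is not cosmetic. The condition cut out away from the zero section is the \emph{simultaneous} vanishing of $w$ and $\diff w$, which is what transversality of the section $W$ to the zero section controls; your version would instead require $\{\diff w=0\}=\emptyset$, a strictly stronger condition. Already in the complete-intersection setting of \S\ref{ss:effective-setup} (for instance $\cX=\PP^2$, $\bE=\cO(2)\oplus\cO(1)$, with $\cZ$ smooth) the naive critical locus of the local expression of $W$ on $\xinfty=\PP(\bE^\vee)$ is nonempty, containing points where $W\neq0$; so with your formula the implication (1)$\Rightarrow$(3) would fail in exactly the examples the reduced theory is built for, and the proposition would be false as you have derived it. Once the $\diff\log y$-component is restored, your local analysis coincides with the paper's: the zero section always lies in $\crit(\tW)$, criticality off the zero section is equivalent to $w=\diff w=0$, and both equivalences then follow as intended.
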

\begin{proof}
  We start by making the differential $d\tW$ more explicit.
  For this, we introduce local coordinates.
  By \eqref{diag:change-univ-target} and \eqref{diag:expand-R-target},
  the characteristic sheaf $\ocM_{\punt_{e,\circ,\bk}}$ has global
  generators
  \[
    \delta_1, \delta_2 \in \Gamma(\punt_{e,\circ,\bk},\ocM_{\punt_{e,\circ,\bk}})\cong \NN^2
  \]
  such that $\delta_1$ is the pull-back of of the unique generator of
  $\Gamma(\punt,\ocM_{\punt}) = \NN$, and $\delta_2$ is trivial away
  from the zero section of \eqref{eq:base-change-punt-e}.
  Denote by $e$ the generator of
  $\Gamma(\Delta_{\max}, \ocM_{\Delta_{\max}}) \cong \NN$.
  Then we have $e = \delta_1 + \delta_2$, where $e$ also denotes its
  pull-back over $\punt_{e,\circ,\bk}$.

  Selecting a local chart of $\cM_{\punt_{e,\circ,\bk}}$, we may lift
  $e, \delta_1$ and $\delta_2$ to the corresponding local sections in
  $\cM_{\punt_{e,\circ,\bk}}$.
  Recall the structure morphism
  $\alpha \colon \cM_{\punt_{e,\circ,\bk}} \to \cO$.
  Then $\alpha(\delta_2)$ is identified with a local section of the
  line bundle $\cO(\punt_{\bk})^{\vee}\otimes\CC_{\max}$, and is up to units
  the pullback of the tautological section $p$.

  Let $t$ be a coordinate of $\CC_\omega \otimes \CC_{\max}^{\ttwist}$.
  Recall that by Lemma~\ref{lem:super-potential-equivalence}
  \begin{equation}
    \label{eq:local-potential}
    \tW_{\bk}^*(t) = w \cdot \alpha(\delta_2)^{\ttwist},
  \end{equation}
  where $w$ is a local section corresponding to $W$.
  Differentiating $\tW_{\bk}$ as in \eqref{eq:rigid-potential}, we compute
  \[
    \big(\diff \tW_{\bk}\big)^{\vee}(\diff t) = \diff \big(w \cdot \alpha(\delta_2)^{\ttwist} \big) = \alpha(\delta_2)^{\ttwist} \diff w  + \ttwist w \cdot \alpha(\delta_2)^{\ttwist} \frac{\diff \alpha(\delta_2)}{\alpha(\delta_2)} = \alpha(\delta_2)^{\ttwist} \diff w  + \ttwist w \cdot \alpha(\delta_2)^{\ttwist} \diff \log \delta_2.
  \]
  Note that $\diff \log \delta_2$ is a local generator of the sheaf of
  log differentials $\Omega_{\punt/\BC}|_{\punt_{e,\circ,\bk}}$.

  Hence by \eqref{eq:rigid-critical-loci}, the critical locus $\crit(\tW)$ contains the zero
  section $\Div(\delta_2)$.
  Furthermore, at each geometric point of the critical locus, at least
  one of $\alpha(\delta_2)$ and $(w, \diff w)$ must vanish.
  This proves that $(1)$ and $(3)$ are equivalent.

  Note that $(3)$ implies $(2)$ since $\crit(\tW)$ is a closed
  substack of the $\ttwist$th infinitesimal neighborhood of the zero
  section.

  Finally, suppose $\crit(\tW)$ contains a point
  away from $\Div(\delta_2)$. By \eqref{eq:rigid-critical-loci},  $\crit(\tW_{\bk})$
  contins a point away from the zero section of \eqref{eq:base-change-punt-e}.
  Hence it contains some fiber of the line bundle \eqref{eq:base-change-punt-e}
  by the $\CC^*_{\omega}$-equivariance of
  $\tW_{\bk}$. This implies the non-properness of   $\crit(\tW_{\bk})$, hence non-properness of  $\crit(\tW)$.
  Therefore, $(2)$ implies $(3)$.
\end{proof}

\subsection{Cosection of the canonical theory}\label{ss:relative-cosection}

We now pull back the (differential of the) twisted superpotential to
the moduli of punctured R-maps to obtain a cosection of the canonical
theory \eqref{eq:product-POT}, leading to the definition of the
reduced theory in the following section.
The key is the $\curlywedge$-configuration defined in
\S\ref{ss:category-for-reduced-theory}.

\subsubsection{Expansions along uniform maximal degeneracy}

The following condition can be viewed as the analog of the compact
type loci of \cite{CJR21}, and is necessary in the construction of the
reduced theory below.

\begin{assumption}\label{as:reduced-punctured-cycle}
The contact orders at all marked points are negative.
\end{assumption}

Consider a family of punctured maps
$\mathfrak{f}\colon \pC \to \ainfty$ over $S$ with uniform maximal
degeneracy.
Here $\pC$ is possibly disconnected.
Let $e_{\max} \in \Gamma(S, \ocM_S)$ be the maximal degeneracy.
It induces a canonical morphism $S \to \Delta_{\max}$ as in
\eqref{eq:sub-rank1-DF}, such that
$\bar{\mathfrak{f}}^{\flat}(e) = e_{\max}$ for the generator
$e \in \Gamma(\ocM_{\Delta_{\max}})$.
Thus, we have
\[
\cO(\Delta_{\max})|_{S} \cong \cO_S(e_{\max}).
\]
Because of the above isomorphism, in what follows we are allowed to use either $\cO(\Delta_{\max})$ or $\cO(e_{\max})$.

\begin{lemma}\label{lem:exp-map}
  Suppose that $\mathfrak{f}$ satisfies
  Assumption~\ref{as:reduced-punctured-cycle}, and has uniform maximal
  degeneracy.
  Then there is a canonical morphism $\mathfrak{f}_{e_{\max}}$ making
  the following diagram commutative
  \[
    \xymatrix{
      && \pC \ar@{-->}[lld]_-{\mathfrak{f}_{e_{\max}}} \ar[d]^{(\mathfrak{f},e_{\max})} \ar[rr] && S \ar[d]^{e_{\max} = e|_S} \\
      \infty_{\cA^{e_{\max},\circ}} \ar[rr]_-{\fb} && \ainfty\times\Delta_{\max} \ar[rr] &&\Delta_{\max}
    }
  \]
  where the bottom is given by \eqref{diag:change-univ-target}, and
  $\infty_{\cA^{e_{\max},\circ}} := \infty_{\cA^{e,\circ}}$ with
  $e_{\max}$ in the superscript to emphasize
  $\bar{\mathfrak{f}}^{\flat}(e) = e_{\max}$.
  Furthermore, identify the underlying of $\infty_{\cA^{e,\circ}}$
  with the line bundle
  $\cO(\ainfty)^{\vee}\boxtimes\cO(\Delta_{\max})$ by Lemma
  \ref{lem:vb-target}.
  Over any geometric fiber, for each irreducible component
  $\cZ \subset \pC$ with associated degeneracy $e_{\cZ}$, we have:
  \begin{enumerate}
  \item $\mathfrak{f}_{e_{\max}}(\cZ)$ lies in the zero section of
    $\cO(\ainfty)^{\vee}\boxtimes\cO(\Delta_{\max})$ if
    $e_{\cZ} \poleq e_{\max}$ and $e_{\cZ} \neq e_{\max}$.
  \item $\mathfrak{f}_{e_{\max}}(\cZ)$ does not entirely lie in the
    zero section of $\cO(\ainfty)^{\vee}\boxtimes\cO(\Delta_{\max})$
    if $e_{\cZ} = e_{\max}$.
  \item All markings are mapped to the zero section of
    $\cO(\ainfty)^{\vee}\boxtimes\cO(\Delta_{\max})$ via
    $\mathfrak{f}_{e_{\max}}$.
\end{enumerate}
\end{lemma}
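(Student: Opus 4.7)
The plan is to construct $\mathfrak{f}_{e_{\max}}$ as the unique lift forced by the universal property of the Cartesian diagram \eqref{diag:change-univ-target}, and then to read off (1)--(3) from the vanishing locus of the associated rank one Deligne--Faltings section. Since the right vertical arrow of \eqref{diag:change-univ-target} is strict, and since Lemma~\ref{lem:vb-target} identifies $\infty_{\cA^{e,\circ}}$ with the total space of $\cO(\ainfty)^{\vee}\boxtimes\cO(\Delta_{\max})$ over $\ainfty\times\Delta_{\max}$, providing a lift of $(\mathfrak{f},e_{\max})$ is equivalent to providing a section of the pulled-back line bundle $\cL:=\mathfrak{f}^{*}\cO(\ainfty)^{\vee}\otimes\cO_{\pC}(e_{\max})$ on $\pC$.

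The local chart description of $\cA^{e,\circ}$ recalled in the proof of Proposition~\ref{prop:proper-critical-locus} supplies a distinguished element $\delta_2\in\ocM_{\cA^{e,\circ}}$ satisfying $\delta_1+\delta_2=e$, whose associated structure morphism is the tautological section of $\cO(\ainfty)^{\vee}\boxtimes\cO(\Delta_{\max})$. Accordingly, one sets $\mathfrak{f}_{e_{\max}}^{\flat}(\delta_2):=e_{\max}-\mathfrak{f}^{\flat}(\delta_1)$ in $\ocM_{\pC}$ and applies the rank one construction of \S\ref{sss:log-line-bundles} to obtain the desired section $\bs_{\delta_2}$ of $\cL$, yielding the lift $\mathfrak{f}_{e_{\max}}$. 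The uniform maximal degeneracy hypothesis guarantees that $e_{\max}-\mathfrak{f}^{\flat}(\delta_1)\in\ocM_{\pC}$ generically on every irreducible component $\cZ$, since there $\mathfrak{f}^{\flat}(\delta_1)=e_{\cZ}\poleq e_{\max}$; at nodes, the same formula defines a valid element of $\ocM_{\pC}$ via the standard pushout description of the nodal log structure. At a puncture $p_{i}$, the inclusion \eqref{eq:puncture-constraint} realises $\mathfrak{f}^{\flat}(\delta_1)|_{p_{i}}$ with $\cP^{gp}$-component equal to the contact order $c_{i}$, so that $\mathfrak{f}_{e_{\max}}^{\flat}(\delta_2)|_{p_{i}}$ has $\cP^{gp}$-component $-c_{i}$; Assumption~\ref{as:reduced-punctured-cycle} gives $-c_{i}>0$, which places this element inside $\cM_{\pC}\subset\cN\otimes_{\cO^{*}}\cP^{gp}$. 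Uniqueness of the lift then follows from the strictness of the pullback diagram.

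Properties (1)--(3) are extracted by inspecting where $\bs_{\delta_2}$ vanishes, which is precisely the zero section of $\cL$. On a component $\cZ$ with $e_{\cZ}\prec e_{\max}$, the element $\mathfrak{f}_{e_{\max}}^{\flat}(\delta_2)|_{\cZ}\in\ocM_{\pC}$ is nonzero generically, so $\bs_{\delta_2}$ vanishes identically on $\cZ$, giving (1); when $e_{\cZ}=e_{\max}$, this element is zero in $\ocM_{\pC}$ generically, making $\bs_{\delta_2}$ a non-vanishing section at the generic point of $\cZ$, giving (2); at $p_{i}$, the $\cP^{gp}$-contribution $-c_{i}\sigma_{i}$ with $-c_{i}>0$ to $\mathfrak{f}_{e_{\max}}^{\flat}(\delta_2)|_{p_{i}}$ produces a positive power of the local marking parameter in $\bs_{\delta_2}|_{p_{i}}$, whose image under the structure morphism vanishes on $p_{i}$, giving (3). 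The main technical obstacle is the verification at the punctures: one must check that $\mathfrak{f}_{e_{\max}}^{\flat}(\delta_2)$ is a bona fide section of $\cM_{\pC}$ (and not merely of the ambient $\cN\otimes_{\cO^{*}}\cP^{gp}$), so that the resulting morphism factors through the open locus $\cA^{e,\circ}$; this step is exactly where the negativity-of-contact-orders hypothesis from Assumption~\ref{as:reduced-punctured-cycle} is indispensable, as otherwise the $\cP^{gp}$-coefficient $-c_{i}$ could be negative and the lift to $\infty_{\cA^{e,\circ}}$ would fail across the punctured markings.
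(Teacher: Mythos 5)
Your proposal is correct and follows essentially the same route as the paper: the lift is forced on the level of characteristic monoids by $\delta_2=e-\delta_1\mapsto e_{\max}-\mathfrak{f}^{\flat}(\delta)$, its existence reduces to the membership $e_{\max}-\mathfrak{f}^{\flat}(\delta)\in\ocM_{\pC}$ (which uses Assumption~\ref{as:reduced-punctured-cycle} together with uniform maximal degeneracy), and (1)--(3) are read off from the vanishing locus of the associated tautological section exactly as in the paper. The only difference is that the paper delegates the membership check to \cite[Lemma~3.19]{CJRS18P}, whereas you verify it directly; your verification at the punctures is the key point and is correct, while the check at the nodes is only sketched (it requires the small case analysis on the sign of the nodal contact order, using $e_{\cZ}\poleq e_{\max}$ for both adjacent components), but this is routine.
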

\begin{proof}
  The proof is identical to \cite[Lemma 3.19]{CJRS18P}.
  For the reader's convenience, we briefly recall some key points.
  Let $\delta$ be the generator of
  $\Gamma(\ainfty,\cM_{\ainfty}) \cong \NN$.
  Then, the arrow $\pC \to \ainfty\times\Delta_{\max}$ is defined by
  the pair of elements $(\mathfrak{f}^{\flat}(\delta), e_{\max})$ in
  $\ocM_{\pC}$.
  By the blow-up construction in \eqref{diag:change-univ-target}, the
  lift $\mathfrak{f}_{e_{\max}}$ if it exists, is defined by
  $\mathfrak{f}_{e_{\max}}^{\flat} \colon (e_{\max} - \delta) \mapsto
  \big(e_{\max} - \mathfrak{f}^{\flat}(\delta)\big) \in
  \ocM_{\pC}^{gp}$ provided that
  \begin{equation}\label{eq:expand-condition}
    \big(e_{\max} - \mathfrak{f}^{\flat}(\delta)\big) \in \ocM_{\pC}.
  \end{equation}
  Now the same proof as in \cite[Lemma 3.19]{CJRS18P} shows that
  Assumption \ref{as:reduced-punctured-cycle} together with the
  maximal degeneracy $e_{\max}$ imply that
  $\mathfrak{f}^{\flat}(\delta) \poleq e_{\max}$ in
  $\Gamma(\pC, \cM_{\pC})$, hence \eqref{eq:expand-condition}.

  Finally, note that the zero section of
  $\cO(\ainfty)^{\vee}\boxtimes\cO(\Delta_{\max})$ is the locus where
  $(e - \delta) \neq 0$.
  We check (1) and (2) over a generic point of $\cZ$ using the
  observation that $0 \neq e_{\max} - e_{\cZ}$ unless $\cZ$ is
  maximally degenerated.
  A similar analysis using the local description at each marking,
  gives (3).
\end{proof}

The above lemma pulls back to punctured R-maps via \eqref{diag:expand-R-target} as follows:

\begin{corollary}\label{cor:exp-R-map}
  Suppose $f\colon \pC \to \punt$ is a punctured R-map over $S$ with
  uniform maximal degeneracy $e_{\max}$ and satisfying Assumption
  \ref{as:reduced-punctured-cycle}.
  We then obtain a commutative diagram
\[
\xymatrix{
&& && \pC \ar@/_1.2pc/[lllld]_-{f_{e_{\max}}} \ar@/_.5pc/[lld]^{(f,e)} \ar[rr] \ar[d]^-{(\omega_{\log},e_{\max})} && S \ar[d]^{e_{\max}} \\
\infty_{e_{\max},\circ} \ar[rr] && \punt\times\Delta_{\max} \ar[rr]  &&  \BC\times\Delta_{\max}  \ar[rr] &&\Delta_{\max}
}
\]
where $\infty_{e_{\max},\circ} := \infty_{e,\circ}$ with the subscript
$e_{\max}$ to emphasize the pull-back $e|_S = e_{\max}$.
Furthermore, by identifying the underlying of
$\infty_{e_{\max},\circ}$ with the line bundle
$\cL_{S}\boxtimes\cO(\Delta_{\max})$ via Corollary
\ref{cor:vb-R-target}, over any geometric fiber, for each irreducible
component $\cZ \subset \pC$ with associated degeneracy $e_{\cZ}$, we
have:
\begin{enumerate}
\item $f_{e_{\max}}(\cZ)$ lies in the zero section of
  $\cO(\punt)^{\vee}\boxtimes\cO(\Delta_{\max})$ if
  $e_{\cZ} \poleq e_{\max}$ and $e_{\cZ} \neq e_{\max}$.
\item $f_{e_{\max}}(\cZ)$ does not lie in the zero section of
  $\cO(\punt)^{\vee}\boxtimes\cO(\Delta_{\max})$ if
  $e_{\cZ} = e_{\max}$.
\item All markings are mapped to the zero section of
  $\cO(\punt)^{\vee}\boxtimes\cO(\Delta_{\max})$ via $f_{e_{\max}}$.
\end{enumerate}
\end{corollary}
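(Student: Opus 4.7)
The plan is to reduce the statement to Lemma~\ref{lem:exp-map} by pulling back along the Cartesian diagram \eqref{diag:expand-R-target}. First, composing $f\colon \pC\to\punt$ with the strict morphism $\punt\to\ainfty$ of \S\ref{sss:P-target} produces an underlying punctured map $\mathfrak{f}\colon \pC\to\ainfty$, which inherits the uniform maximal degeneracy $e_{\max}$ from $f$ and satisfies Assumption~\ref{as:reduced-punctured-cycle}. Lemma~\ref{lem:exp-map} then yields a canonical lift $\mathfrak{f}_{e_{\max}}\colon \pC\to\infty_{\cA^{e_{\max},\circ}}$ fitting in the commutative diagram given there.

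Next, I would construct $f_{e_{\max}}$ using the universal property of the Cartesian square \eqref{diag:expand-R-target}. The morphisms $\mathfrak{f}_{e_{\max}}\colon \pC\to\infty_{\cA^{e_{\max},\circ}}$ and $(f,e_{\max})\colon \pC\to\punt\times\Delta_{\max}$ become equal after composition with the two maps into $\ainfty\times\Delta_{\max}$, since the former composition computes $(\mathfrak{f},e_{\max})$ by Lemma~\ref{lem:exp-map}, and the latter composition is the same by strictness of $\punt\to\ainfty$. Because the square defining $\punt_{e,\circ}=\infty_{e,\circ}$ in \eqref{diag:expand-R-target} is Cartesian, these two data glue uniquely to the desired morphism $f_{e_{\max}}\colon \pC\to\infty_{e_{\max},\circ}$, and the commutativity of the diagram in the statement is automatic from this construction.

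Finally, properties (1), (2), (3) follow by transferring the corresponding statements of Lemma~\ref{lem:exp-map} through the Cartesian square. By Corollary~\ref{cor:vb-R-target} and Lemma~\ref{lem:vb-target}, the morphism $\punt_{e,\circ}\to\infty_{\cA^{e,\circ}}$ is strict, and the total space $\cO(\punt)^{\vee}\boxtimes\cO(\Delta_{\max})$ is the pull-back of $\cO(\ainfty)^{\vee}\boxtimes\cO(\Delta_{\max})$ along $\punt\times\Delta_{\max}\to\ainfty\times\Delta_{\max}$; in particular, the zero section on the $\punt$-side is precisely the preimage of the zero section on the $\ainfty$-side. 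Consequently, for any irreducible component $\cZ\subset\pC$ (respectively, any marking), the image $f_{e_{\max}}(\cZ)$ lies in the zero section of $\cO(\punt)^{\vee}\boxtimes\cO(\Delta_{\max})$ if and only if $\mathfrak{f}_{e_{\max}}(\cZ)$ lies in the zero section of $\cO(\ainfty)^{\vee}\boxtimes\cO(\Delta_{\max})$, and the three cases in the corollary follow from the corresponding cases (1)--(3) of Lemma~\ref{lem:exp-map}. The only subtlety worth a careful check is that statement (2) is a non-containment assertion, so one must verify that the preimage of a proper closed substack under the strict morphism $\punt_{e,\circ}\to\infty_{\cA^{e,\circ}}$ remains proper closed on the component in question; this is clear since strict morphisms induce homeomorphisms on the underlying topology up to base-change, and the non-containment passes through the Cartesian square.
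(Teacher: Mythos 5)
Your proposal is correct and follows essentially the same route as the paper: the paper also composes $f$ with the strict morphism $\punt\to\ainfty$, applies Lemma~\ref{lem:exp-map} to obtain $\mathfrak{f}_{e_{\max}}$, and uses the Cartesian square of \eqref{diag:expand-R-target} to define $f_{e_{\max}}$, with (1)--(3) then transferred directly from Lemma~\ref{lem:exp-map} since the zero section upstairs is the preimage of the zero section downstairs. Your extra remarks on strictness are harmless but not needed beyond the Cartesian property itself.
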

\begin{proof}
Let $\mathfrak{f}$ be the composition $\pC \to \punt\to\ainfty$. We have the following commutative diagram of solid arrows with the square Cartesian:
\begin{equation}\label{diag:pull-back-expansion}
\xymatrix{
\pC \ar@/^1pc/[rrrrd]^{(f, e_{\max})} \ar@/_1pc/[rrdd]_-{\mathfrak{f}_{e_{\max}}} \ar@{-->}[rrd]^{f_{e_{\max}}} && && && \\
&& \infty_{e_{\max},\circ} \ar[rr] \ar[d] && \punt\times\Delta_{\max} \ar[d] \ar[rr] && \BC\times\Delta_{\max} \\
&& \infty_{\cA^{e_{\max},\circ}} \ar[rr] && \ainfty\times\Delta_{\max} &&
}
\end{equation}
This defines $f_{e_{\max}}$. The rest of the statement follows from Lemma \ref{lem:exp-map}.
\end{proof}

\subsubsection{The relative cosection}

We will employ the notations of \S \ref{sss:curlywedge-R-map-moduli},
assume Assumption \ref{as:reduced-punctured-cycle}, and use $e_{\max}$
for both $e_{\max}(f)$ and $e_{\max}(\mathfrak{f})$.
Consider a twisted superpotential $\tW$ as in
\eqref{eq:twisted-potential}, fitting in the commutative diagram
\[
\xymatrix{
\punt_{e_{\max}, \circ} \ar[rr]^-{\tW} \ar[rd] && \uomega\boxtimes\cO(\ttwist\Delta_{\max}) \ar[ld] \\
&\BC\times\Delta_{\max}&
}
\]
Then \eqref{diag:pull-back-expansion} gives a commutative diagram
\begin{equation}\label{eq:bullet-universal-maps}
\xymatrix{
&&&& \uomega\boxtimes\cO(\ttwist\Delta_{\max}) \ar[d] \\
\UH_{[m]} \ar[d] & \pC \ar[l] \ar[r]^-{f_{e_{\max}}} \ar[d] & \punt_{e_{\max},\circ} \ar[d] \ar[r] \ar@/^1pc/[rru]^-{\tW} & \punt\times\Delta_{\max} \ar[r] \ar[d] & \BC\times\Delta_{\max} \ar[d] \\
\UM_{[m]} \ar[r] & \mathfrak{C}^{\circ} \ar[r]_-{\mathfrak{f}_{e_{\max}}} & \infty_{\cA^{e_{\max},\circ}} \ar[r] & \ainfty\times\Delta_{\max} \ar[r]  & \Delta_{\max}
}
\end{equation}
where $f_{e_{\max}}$ and $\mathfrak{f}_{e_{\max}}$ are obtained by applying Lemma \ref{lem:exp-map} and Corollary \ref{cor:exp-R-map} to the universal maps in  \eqref{eq:universal-curlywedge-map} respectively.

Pulling back $\diff \tW$ as in \eqref{eq:diff-potential} along $f_{e_{\max}}$, we obtain
\begin{equation}\label{eq:dW-over-curve}
f_{e_{\max}}^* \diff \tW \colon  f^*\Omega^{\vee}_{\punt/\BC} \cong f_{e_{\max}}^* \Omega^{\vee}_{\punt/\BC}|_{\punt_{e_{\max},\circ}} \to \omega^{\log}_{\pC/\UH_{[m]}}\otimes f_{e_{\max}}^*\cO(\ttwist\Delta_{\max}).
\end{equation}

The following lemma, relying on Assumption \ref{as:reduced-punctured-cycle}, is crucial for constructing the reduced theory.
\begin{lemma}\label{lem:factor-through-omega}
There is a canonical factorization
\[
\xymatrix{
  f^* \Omega^{\vee}_{\punt/\BC} \ar@/_.5pc/[rd] \ar[rr]^-{f_{e_{\max}}^* \diff \tW} && \omega^{\log}_{\pC/\UH_{[m]}}\otimes f_{e_{\max}}^*\cO(\ttwist\Delta_{\max}) \\
&\omega_{\pC/\UH_{[m]}}\otimes f_{e_{\max}}^*\cO(\ttwist\Delta_{\max}) \ar@/_.5pc/[ru]&
}
\]
We will also denote the left skewed arrow by $f_{e_{\max}}^* \diff \tW$ for simplicity.
\end{lemma}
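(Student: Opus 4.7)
The plan is to reduce the claim to a local computation at each marking. Since $\omega^{\log}_{\pC/\UH_{[m]}} = \omega_{\pC/\UH_{[m]}}(\sum_i p_i)$ and the inclusion $\omega \hookrightarrow \omega^{\log}$ is canonical, showing the existence of the canonical factorization is equivalent to showing that $f_{e_{\max}}^* \diff \tW$ vanishes along each marking $p_i$. Away from the markings, $\omega$ and $\omega^{\log}$ agree, so there is nothing to verify there.

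For the local computation near a marking $p_i$, I would reuse the explicit form of $\tW$ developed in the proof of Proposition~\ref{prop:proper-critical-locus}: choosing a local chart, there is a section $\delta_2 \in \cM_{\punt_{e,\circ,\bk}}$ cutting out the zero section of \eqref{eq:base-change-punt-e}, a local section $w$ representing $W$, and
\[
(\diff \tW_{\bk})^{\vee}(\diff t) = \alpha(\delta_2)^{\ttwist} \diff w + \ttwist\, w \cdot \alpha(\delta_2)^{\ttwist}\,\diff\log \delta_2.
\]
The key is to control the vanishing order of $f_{e_{\max}}^* \alpha(\delta_2)$ at $p_i$. From the blow-up construction of \eqref{diag:change-univ-target} one has $f_{e_{\max}}^\flat(\delta_2) = e_{\max} - \mathfrak{f}^\flat(\delta) \in \ocM_{\pC}^{gp}$; and under the embedding $\ocM_{\pC}|_{p_i} \hookrightarrow \ocM_S|_{p_i}\oplus \ZZ$ of \eqref{eq:puncture-constraint}, we have $\mathfrak{f}^\flat(\delta)|_{p_i} = (e_i, c_i)$ with $e_i \in \ocM_S|_{p_i}$, so $f_{e_{\max}}^\flat(\delta_2)|_{p_i} = (e_{\max} - e_i, -c_i)$.

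Now Assumption~\ref{as:reduced-punctured-cycle} is precisely what makes the factorization work: $c_i < 0$ gives $-c_i > 0$, while $e_{\max} - e_i \in \ocM_S|_{p_i}$ holds by the $\curlywedge$-configuration, so both components are non-negative. Choosing a local generator $\sigma_i$ of $\cP$ at $p_i$, one then reads off $f_{e_{\max}}^* \alpha(\delta_2) = u_S \cdot \sigma_i^{-c_i}$ locally, for a section $u_S$ pulled back from $\UH_{[m]}$, so $f_{e_{\max}}^* \alpha(\delta_2)^{\ttwist}$ vanishes to order $\ttwist(-c_i) \geq 1$ at $p_i$. The first summand $\alpha^{\ttwist}\,dw$ already lies in $\omega$ because $\diff w$ is a usual (not log) differential; in the second summand, $d\log \delta_2$ is valued in $\omega^{\log}$ with a simple-pole contribution $(-c_i)\,d\log \sigma_i$ at $p_i$, which is absorbed by the vanishing of $\alpha^{\ttwist}$, again landing in $\omega$. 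Composing with the canonical inclusion $\omega \hookrightarrow \omega^{\log}$ recovers the original map, yielding the canonical factorization. I expect the main delicacy in writing this up carefully to be the correct bookkeeping of the punctured characteristic sheaf at $p_i$, and in particular verifying that $f_{e_{\max}}^\flat(\delta_2)|_{p_i} = (e_{\max}-e_i, -c_i)$ lies in the non-punctured direction precisely because $-c_i > 0$ — this is exactly where the negative-contact-order assumption enters the argument.
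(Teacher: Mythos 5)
Your proof is correct and follows essentially the same route as the paper: the paper's own proof simply cites Corollary~\ref{cor:exp-R-map}\,(3) (markings land in the zero section, which forces $f_{e_{\max}}^*\diff\tW$ to vanish there since $\diff\tW$ is divisible by $\alpha(\delta_2)^{\ttwist}$ as in the proof of Proposition~\ref{prop:proper-critical-locus}), and then concludes the factorization through $\omega = \omega^{\log}(-\sum_i p_i)$. You merely unwind that citation by redoing the local characteristic-monoid computation $f_{e_{\max}}^{\flat}(\delta_2)|_{p_i}=(e_{\max}-e_i,-c_i)$ with $-c_i>0$ and $e_i\poleq e_{\max}$, which is exactly the content of Lemma~\ref{lem:exp-map}\,(3), so the mathematical mechanism is identical.
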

\begin{proof}
  By Corollary
  \ref{cor:exp-R-map}, $f_{e_{\max}}^* \diff \tW$ degenerates along
  all markings, which implies the factorization as in the statement.
\end{proof}

Further pushing forward along the projection $\pi \colon \pC \to \UH_{[m]}$ and applying the above lemma, we obtain
\begin{equation}\label{eq:complex-cosection-component}
\pi_{*} \big( f_{e_{\max}}^* \diff \tW \big) \colon \EE_{\UH_{[m]}/\UM_{[m]}} \to  \pi_{*}\omega_{\pC/\UH_{[m]}}\otimes \cO(\ttwist\Delta_{\max})|_{\UH_{[m]}},
\end{equation}
where $\EE_{\UH_{[m]}/\UM_{[m]}}$ is as in \eqref{eq:product-POT}. Taking $H^1$, we obtain the {\em relative cosection}:
\begin{equation}\label{eq:relative-cosection}
\sigma_{\UH_{[m]}/\UM_{[m]}} \colon H^1(\EE_{\UH_{[m]}/\UM_{[m]}}) \to \oplus_{i \in [m]} \cO(\ttwist\Delta_{\max})|_{\UH_{[m]}} \to \cO(\ttwist\Delta_{\max})|_{\UH_{[m]}},
\end{equation}
where the first arrow is $R^1\pi_{*} \big( f_{e_{\max}}^* \diff \tW \big)$ and the second arrow is taking the sum of each component. For later use, define the {\em boundary complex}
\begin{equation}\label{eq:boundary-complex}
\FF_{\UH_{[m]}/\UM_{[m]}} :=  \cO(\ttwist\Delta_{\max})|_{\UH_{[m]}}[-1]
\end{equation}
Then the relative cosection \eqref{eq:relative-cosection} takes the form
\begin{equation}\label{eq:relative-cosection-complex}
\sigma_{\UH_{[m]}/\UM_{[m]}} \colon H^1(\EE_{\UH_{[m]}/\UM_{[m]}}) \to \FF_{\UH_{[m]}/\UM_{[m]}}[1].
\end{equation}

The following property is crucial in the construction of the reduced theory:

\begin{proposition}\label{prop:cosection-surjective}
Suppose $\tW$ has proper critical locus. Then $\sigma_{\UH_{[m]}/\UM_{[m]}}$ is surjective.
\end{proposition}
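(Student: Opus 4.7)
The plan is to verify the surjectivity pointwise. Fix a geometric point $s \in \UH_{[m]}$ with underlying punctured R-map $f_s\colon \pC_s = \sqcup_{i \in [m]}\pC_{i,s} \to \punt$ and maximal degeneracy $e_{\max}$. Since the target $\cO(\ttwist\Delta_{\max})|_{\UH_{[m]}}$ is a line bundle, it suffices to show that the fiber $\sigma_s$ is nonzero. Using the uniform maximal degeneracy assumption I would pick an irreducible component $\cZ \subset \pC_{i_0, s}$ realizing $e_\cZ = e_{\max}$ for some $i_0 \in [m]$. Because $\sigma_s$ factors through the summation map $\bigoplus_{i \in [m]}\cO(\ttwist\Delta_{\max})|_s \to \cO(\ttwist\Delta_{\max})|_s$, it is enough to show that the $i_0$-th component $\sigma_{s, i_0}$ of $\sigma_s$ is nonzero.

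Next I would Serre-dualize. Writing $L := f_{e_{\max}}^*\cO(\ttwist\Delta_{\max})$ for brevity, note that $L$ is pulled back from the base so that its restriction to $\pC_{i_0, s}$ is trivial up to the fiber twist by $\cO(\ttwist\Delta_{\max})|_s$. Invoking Serre duality on the connected nodal twisted curve $\pC_{i_0, s}$ together with the projection formula, the dual of $\sigma_{s, i_0}$ is identified with the linear map
\[
\cO(\ttwist\Delta_{\max})|_s^\vee \longrightarrow H^0\bigl(\pC_{i_0, s}, \; f_{i_0, s}^*\Omega_{\punt/\BC} \otimes \omega_{\pC_{i_0, s}}\bigr)
\]
sending $1$ to the section obtained from $f_{e_{\max}}^*\diff \tW|_{\pC_{i_0, s}}$ by applying the factorization of Lemma~\ref{lem:factor-through-omega} through $\omega$ and contracting with the natural pairing $L|_s \otimes L|_s^\vee \cong \bk$. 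Thus surjectivity of $\sigma_s$ reduces to showing that this section of $f_{i_0,s}^*\Omega_{\punt/\BC}\otimes\omega_{\pC_{i_0,s}}$ is not identically zero on $\pC_{i_0, s}$.

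The heart of the argument then combines two earlier results. Corollary~\ref{cor:exp-R-map}(2) guarantees that $f_{e_{\max}}(\cZ)$ is not contained in the zero section of $\cO(\punt)^\vee \boxtimes \cO(\Delta_{\max})$, while Proposition~\ref{prop:proper-critical-locus}(3), applicable under the proper critical locus hypothesis on $\tW$, forces $\crit(\tW)$ to be set-theoretically supported on precisely that zero section. Consequently $f_{e_{\max}}(\cZ) \not\subset \crit(\tW)$, so $f_{e_{\max}}^*\diff \tW$ does not vanish identically on $\cZ$, and in particular is nonzero as a global section on $\pC_{i_0, s}$.

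The main obstacle I anticipate is the careful bookkeeping of identifications in the Serre duality step, in particular ensuring that the factored morphism of Lemma~\ref{lem:factor-through-omega} is compatible with the dualization and trace identification above. This should be harmless because $\omega \hookrightarrow \omega^{\log}$ is an injection of line bundles, so the factored section is nonzero precisely when the unfactored one is nonzero at a non-marked point of $\cZ$, which is exactly what the critical-locus argument provides.
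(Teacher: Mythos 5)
Your proposal is correct and follows essentially the same route as the paper's proof: pointwise reduction to the connected component containing a maximally degenerate component, Serre duality to convert surjectivity of the cosection fiber into non-vanishing of a section of $\omega\otimes f^*\Omega_{\punt/\BC}$ (twisted by the trivial fiber of $\cO(\ttwist\Delta_{\max})$), and then non-vanishing via Corollary~\ref{cor:exp-R-map}(2) combined with Proposition~\ref{prop:proper-critical-locus}. The bookkeeping concern you raise about the factorization through $\omega$ versus $\omega^{\log}$ is handled exactly as you suggest, via Lemma~\ref{lem:factor-through-omega} and the fact that the generic point of the maximal component maps outside $\crit(\tW)$.
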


\begin{proof}
  It suffices to check the statement over each geometric point $s \to \UH_{[m]}$. The fiber of \eqref{eq:relative-cosection} over $s$ is
  \begin{equation}\label{eq:cosection-surjective}
  \oplus_{i \in [m]}H^1(f^*_{i,s} \Omega^{\vee}_{\punt/\BC}) \to \oplus_{i \in [m]}\cO(\ttwist\Delta_{\max})|_{s} \to \cO(\ttwist\Delta_{\max})|_{s},
  \end{equation}
  where $f_{i,s}$ is the fiber of $f_i$ as in \eqref{eq:universal-curlywedge-map}.
  Note that at least one of the connected component of $\pC_{s}$, say $\pC_{k,s}$, has a maximally degenerate component. We will show that the $k$-th component   of the left arrow in \eqref{eq:cosection-surjective}
  \begin{equation}\label{eq:cosection-component-surjective}
  \sigma_{k} \colon H^1(f^*_{k,s} \Omega^{\vee}_{\punt/\BC}) \to \cO(\ttwist\Delta_{\max})|_{s}
  \end{equation}
 is surjective, hence the desired surjectivity as in the statement.
  Below, we follow the same line of argument as in \cite[Proposition 3.18]{CJR21}.

  Applying Serre duality to \eqref{eq:cosection-component-surjective}
  and taking duals, we have
\[
\sigma^{\vee}_{k,s} \colon H^0\big(\cO(-\ttwist\Delta_{\max})|_{\pC_{k,s}}\big) \to H^0(\omega_{\pC_{k,s}}\otimes f^*_{k,s} \Omega_{\punt/\BC}).
\]
Thus $\sigma_k$ is surjective iff $\sigma^{\vee}_{k}$ is injective. Note that $\cO(-\ttwist\Delta_{\max})|_{\pC_{k,s}} \cong \cO_{\pC_{k,s}}$. Thus it remains to show that $\sigma^{\vee}_{k,s} \in H^0(\omega_{\pC_{k,s}}\otimes f^*_{k,s} \Omega_{\punt/\BC})$ is non-zero.

Taking duals, we may write $\sigma^{\vee}_{k,s}$ as a morphism:
 \[
 \omega^{\vee}_{\pC_{k,s}}\otimes f^*_{k,s} \Omega^{\vee}_{\punt/\BC} \to \cO(\ttwist\Delta_{\max})|_{\pC_{k,s}},
 \]
 which is equivalent to
 \begin{equation}\label{eq:cosection-surject-over-curve}
 \big(f_{e_{\max}}^* \diff \tW \big)|_{\pC_{k,s}} \colon f^*_{k,s} \Omega^{\vee}_{\punt/\BC} \to \omega_{\pC_{k,s}}\otimes\cO(\ttwist\Delta_{\max})|_{\pC_{k,s}}
 \end{equation}
given by the left skewed arrow in Lemma \ref{lem:factor-through-omega}.

 By Proposition \ref{prop:proper-critical-locus} and Corollary \ref{cor:exp-R-map} (2), the image of the maximally degenerate component of $\pC_{k,s}$ via $f_{e_{\max}}$ is not contained in the critical locus $\crit(\tW)$. Thus
$ \big(f_{e_{\max}}^* \diff \tW_{\max} \big)|_{\pC_{k,s}}$ is non-trivial along the maximally degenerate component. This implies that $\sigma^{\vee}_{k,s} \neq 0$, hence the surjectivity of $\sigma_{k,s}$.
\end{proof}

\subsection{The reduced perfect obstruction theory}\label{ss:red-pot}

\subsubsection{The statement}
Consider the composition of complexes
\begin{equation}\label{eq:R-complex-cosection}
\xymatrix{
\EE_{\UH_{[m]}/\UM_{[m]}} \ar[rr] && H^1(\EE_{\UH_{[m]}/\UM_{[m]}})[-1] \ar[rr]^-{\sigma_{\UH_{[m]}/\UM_{[m]}}} && \FF_{\UH_{[m]}/\UM_{[m]}},
}
\end{equation}
induced by \eqref{eq:relative-cosection-complex}. This leads to a triangle
\begin{equation}\label{eq:red-POT}
\EE^{\red}_{\UH_{[m]}/\UM_{[m]}} \longrightarrow \EE_{\UH_{[m]}/\UM_{[m]}} \longrightarrow \FF_{\UH_{[m]}/\UM_{[m]}} \stackrel{[1]}{\longrightarrow}.
\end{equation}

\begin{theorem}\label{thm:red-POT}
There is a canonical factorization of $\varphi_{\UH_{[m]}/\UM_{[m]}}$ through a perfect obstruction theory $\varphi^{\red}_{\UH_{[m]}/\UM_{[m]}}$ of $\UH_{[m]} \to \UM_{[m]}$ as follows:
\begin{equation}\label{diag:POT-factorization}
\xymatrix{
\TT_{\UH_{[m]}/\UM_{[m]}} \ar[rr]^{\varphi_{\UH_{[m]}/\UM_{[m]}}} \ar[rd]_{\varphi^{\red}_{\UH_{[m]}/\UM_{[m]}}} && \EE_{\UH_{[m]}/\UM_{[m]}} \\
&\EE^{\red}_{\UH_{[m]}/\UM_{[m]}} \ar[ru]&
}
\end{equation}
We call $\varphi^{\red}_{\UH_{[m]}/\UM_{[m]}}$ the {\em reduced perfect obstruction theory} of $\UH_{[m]} \to \UM_{[m]}$.
\end{theorem}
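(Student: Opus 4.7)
The plan is to realize the reduced POT as a lift of $\varphi = \varphi_{\UH_{[m]}/\UM_{[m]}}$ along the first arrow in the distinguished triangle \eqref{eq:red-POT}. Writing $\psi \colon \EE_{\UH_{[m]}/\UM_{[m]}} \to \FF_{\UH_{[m]}/\UM_{[m]}}$ for the canonical morphism in that triangle (obtained from the cosection $\sigma_{\UH_{[m]}/\UM_{[m]}}$ via the natural projection $\EE \to H^{1}(\EE)[-1]$), a lift $\varphi^{\red}$ exists precisely when the composite $\psi \circ \varphi$ vanishes in the derived category. Producing such a lift is the essential content; the canonicity of $\varphi^{\red}$ is then extracted from the universal properties of the obstruction and tangent complexes, analogous to the strategy in \cite[Section~3.4]{CJR21}.

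The key step is thus the vanishing $\psi \circ \varphi = 0$. By construction, $\varphi$ is the obstruction morphism from the relative tangent complex encoding deformations of the universal R-map $f$ relative to the underlying punctured map $\mathfrak{f} \colon \pC \to \ainfty$, taking values in $\rd\pi_{*} f^{*}\Omega^{\vee}_{\punt/\BC}$; the cosection, on the other hand, is obtained by applying $\rd\pi_{*}$ to the pullback $f_{e_{\max}}^{*}\diff \tW$ from Lemma \ref{lem:factor-through-omega}, followed by the summation over components. Composing on the chain level over the universal curve, the result is $\rd\pi_{*}$ of the pairing of $\diff\tW$ with the universal relative deformation vector field of $f$ over $\mathfrak{f}$. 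Writing $\tW = W \cdot p^{\ttwist}$ via Lemma \ref{lem:super-potential-equivalence} and using Corollary \ref{cor:exp-R-map} together with the local description \eqref{eq:local-potential} of $\tW_{\bk}$, this contraction is identified with the logarithmic differential along the fibers of $\punt \to \ainfty$ of a quantity that is constant along these fibers, yielding the required vanishing. The role of Assumption \ref{as:reduced-punctured-cycle} is twofold: it guarantees the existence of the expanded lift $f_{e_{\max}}$, and it enables the upgrade in Lemma \ref{lem:factor-through-omega} from $\omega^{\log}_{\pC/\UH_{[m]}}$ to $\omega_{\pC/\UH_{[m]}}$, so that residual contributions at punctures do not spoil the vanishing.

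Granted $\psi \circ \varphi = 0$, I will verify that $\varphi^{\red}$ is itself a perfect obstruction theory by extracting cohomology from \eqref{eq:red-POT}. Since $\FF_{\UH_{[m]}/\UM_{[m]}} = \cO(\ttwist\Delta_{\max})|_{\UH_{[m]}}[-1]$ has $H^{0} = 0$ and $H^{1} \cong \cO(\ttwist\Delta_{\max})|_{\UH_{[m]}}$, and since $\sigma_{\UH_{[m]}/\UM_{[m]}}$ is surjective on $H^{1}$ by Proposition \ref{prop:cosection-surjective}, the long exact sequence of \eqref{eq:red-POT} collapses to $H^{0}(\EE^{\red}_{\UH_{[m]}/\UM_{[m]}}) \cong H^{0}(\EE_{\UH_{[m]}/\UM_{[m]}})$ together with a short exact sequence $0 \to H^{1}(\EE^{\red}_{\UH_{[m]}/\UM_{[m]}}) \to H^{1}(\EE_{\UH_{[m]}/\UM_{[m]}}) \to \cO(\ttwist\Delta_{\max})|_{\UH_{[m]}} \to 0$. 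Perfectness of $\EE^{\red}$ in amplitude $[0,1]$ follows from the triangle since both $\EE$ and $\FF$ are perfect there. The POT axioms for $\varphi^{\red}$ (isomorphism on $H^{0}$ and surjection on $H^{1}$ from the tangent complex) then follow from those for $\varphi$ by a snake-lemma diagram chase.

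The principal obstacle is the chain-level vanishing $\psi \circ \varphi = 0$: this is where the geometry of the superpotential, the uniform maximal degeneracy, and the punctured structure interact in a delicate way, and where the argument genuinely departs from the log R-map setting of \cite{CJR21}, since the nontrivial punctures at markings with negative contact orders produce new terms that have to be shown to cancel through Lemma \ref{lem:factor-through-omega} and the compatibility of $\tW$ with $\CC^{*}_{\omega}$-equivariance.
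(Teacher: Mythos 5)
Your skeleton (reduce the theorem to the vanishing of the composite $\TT_{\UH_{[m]}/\UM_{[m]}} \to \EE_{\UH_{[m]}/\UM_{[m]}} \to \FF_{\UH_{[m]}/\UM_{[m]}}$, then run the long exact sequence of \eqref{eq:red-POT}) is the right one, and your perfectness checks --- surjectivity of the cosection from Proposition \ref{prop:cosection-surjective} giving $H^2(\EE^{\red}_{\UH_{[m]}/\UM_{[m]}})=0$, the isomorphism on $H^0$ and injectivity on $H^1$ --- match the paper. But the central step, the vanishing itself, is not established, and the mechanism you propose for it is wrong. The pairing of $f_{e_{\max}}^*\diff\tW$ with a relative deformation of $f$ is the first-order variation of the section $\tW\circ f_{e_{\max}}$ of $\omega_{\pC/\UH_{[m]}}\otimes f_{e_{\max}}^*\cO(\ttwist\Delta_{\max})$, and it is \emph{not} zero on the chain level: $\tW$ is not constant along the fibers of $\punt\to\BC\times\ainfty$ (its fiberwise differential \eqref{eq:diff-potential} vanishes exactly on the critical locus, which transversality only forces to be supported on the zero section). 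If the contraction vanished as you claim, the cosection $\sigma_{\UH_{[m]}/\UM_{[m]}}$ would itself be zero, contradicting Proposition \ref{prop:cosection-surjective}. Moreover the composite $\TT\to\EE\to H^1(\EE)[-1]$ is also not zero in general, since $\TT_{\UH_{[m]}/\UM_{[m]}}$ is not concentrated in degree zero; so no naive truncation argument applies directly either. Nothing in your write-up replaces this missing step, and this is precisely where the argument ``genuinely departs'' in a way you have not supplied.

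What the paper actually does is structural: it introduces the twisted Hodge bundle $\fH = R^0\pi_{\UM_{[m]},*}\big(\omega_{\mathfrak{C}^{\circ}/\UM_{[m]}}\otimes\cO(\ttwist\Delta_{\max})\big)$, i.e.\ the direct image cone of Chang--Li, observes that $\rho_{\UH_{[m]}}=\tW\circ f_{e_{\max}}$ defines a morphism $\UH_{[m]}\to\fH$ over $\UM_{[m]}$ as in \eqref{eq:R-to-H}, and proves (Lemma \ref{lem:POT-R-H}) that $\pi_*\big(f_{e_{\max}}^*\diff\tW\big)$ intertwines $\varphi_{\UH_{[m]}/\UM_{[m]}}$ with the pullback of the obstruction theory $\varphi_{\fH/\UM_{[m]}}$ of the cone, compatibly with the identification of the boundary complexes $\FF_{\UH_{[m]}/\UM_{[m]}}\cong\FF_{\fH}|_{\UH_{[m]}}$. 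Since $\fH\to\UM_{[m]}$ is a vector bundle, $\TT_{\fH/\UM_{[m]}}\cong H^0(\EE_{\fH/\UM_{[m]}})$ sits in degree zero, and the composite $H^0(\EE)\to\EE\to H^1(\EE)[-1]$ vanishes for formal truncation reasons (Lemma \ref{lem:H-POT-vanishing}); the desired vanishing for $\UH_{[m]}$ then follows from the commutative diagram, and the dashed factorization $\varphi^{\red}_{\UH_{[m]}/\UM_{[m]}}$ exists. This is also exactly where Assumption \ref{as:reduced-punctured-cycle} and Lemma \ref{lem:factor-through-omega} are used: they let the cosection take values in $\omega$ rather than $\omega^{\log}$, so that the comparison with deformations of sections of $\tomega$ over the Hodge-bundle cone makes sense. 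To repair your proposal you would need to prove this intertwining (or an equivalent of it), rather than assert a pointwise vanishing of the contraction with $\diff\tW$.
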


When $m=1$, by the pure dimensionality of Proposition \ref{prop:umd-punctured-stack} (2) we obtain the {\em reduced} and canonical virtual cycles associated to $\varphi^{\red}_{\UH_{[1]}/\UM_{[1]}}$ and $\varphi_{\UH_{[1]}/\UM_{[1]}}$, denoted by $[\UH_{[1]}]^{\red}$ and $[\UH_{[1]}]^{\vir}$ respectively. By \eqref{eq:red-POT}, these two cycles are related as follows:

\begin{corollary}\label{cor:red-canonical-vir}
$[\UH_{[1]}]^{\vir} = c_1\left( \cO(\ttwist\Delta_{\max})|_{\SH} \right) \cap [\UH_{[1]}]^{\red}$.
\end{corollary}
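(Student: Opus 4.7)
The plan is to deduce the identity from the distinguished triangle \eqref{eq:red-POT} together with the factorization \eqref{diag:POT-factorization}, via the standard comparison of virtual classes arising when two perfect obstruction theories differ by a shifted line bundle. Since $\UM_{[1]}$ is pure-dimensional by Proposition \ref{prop:umd-punctured-stack}(2), Manolache's virtual pull-back formalism assigns to the canonical POT $\EE_{\UH_{[1]}/\UM_{[1]}}$ the cycle $[\UH_{[1]}]^{\vir}$ and to the reduced POT $\EE^{\red}_{\UH_{[1]}/\UM_{[1]}}$ the cycle $[\UH_{[1]}]^{\red}$. Both virtual cycles are obtained as refined Gysin pullbacks of the intrinsic normal cone $\mathfrak{C} := \mathfrak{C}_{\UH_{[1]}/\UM_{[1]}}$ along the appropriate zero sections.

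The first step is to translate \eqref{eq:red-POT} into the language of bundle stacks. Applying $h^1/h^0$ to the triangle yields an exact triangle
\[
  \mathfrak{E}^{\red} \hookrightarrow \mathfrak{E} \twoheadrightarrow L
\]
of Picard stacks, where $\mathfrak{E} := h^1/h^0(\EE_{\UH_{[1]}/\UM_{[1]}})$, $\mathfrak{E}^{\red} := h^1/h^0(\EE^{\red}_{\UH_{[1]}/\UM_{[1]}})$, and $L$ denotes the line bundle $\cO(\ttwist\Delta_{\max})|_{\UH_{[1]}}$ viewed as the total space of a Picard stack. The inclusion $\mathfrak{E}^{\red} \hookrightarrow \mathfrak{E}$ is a closed embedding of bundle stacks whose normal bundle is $L$. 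The factorization \eqref{diag:POT-factorization} guarantees that $\mathfrak{C}$ embeds compatibly into $\mathfrak{E}^{\red}$, so its image in $\mathfrak{E}$ lies entirely inside the closed substack $\mathfrak{E}^{\red}$.

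With this setup, the canonical virtual cycle $[\UH_{[1]}]^{\vir} = 0^!_{\mathfrak{E}}[\mathfrak{C}]$ can be computed by factoring the zero section $0_{\mathfrak{E}}$ as $j \circ 0_{\mathfrak{E}^{\red}}$, where $j\colon \mathfrak{E}^{\red} \hookrightarrow \mathfrak{E}$, and applying excess intersection: since $\mathfrak{C}$ is set-theoretically contained in $\mathfrak{E}^{\red}$, one has $j^![\mathfrak{C}] = c_1(L) \cap [\mathfrak{C}]$, and hence
\[
  [\UH_{[1]}]^{\vir} = 0^!_{\mathfrak{E}^{\red}}\bigl(j^![\mathfrak{C}]\bigr) = c_1(L) \cap 0^!_{\mathfrak{E}^{\red}}[\mathfrak{C}] = c_1\bigl(\cO(\ttwist\Delta_{\max})|_{\UH_{[1]}}\bigr) \cap [\UH_{[1]}]^{\red}.
\]

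The main technical point is the verification that $\mathfrak{C}$ lies entirely inside the closed substack $\mathfrak{E}^{\red} \subset \mathfrak{E}$; this is a direct and formal consequence of the factorization \eqref{diag:POT-factorization} combined with the functoriality of the intrinsic normal cone. The excess intersection step then proceeds in the by now standard setting of bundle stacks, as in Manolache's virtual pull-back framework, and is in direct analogy with the reduced virtual class constructions for the Gromov--Witten theory of $K3$ surfaces and for log R-maps in \cite{CJR21}.
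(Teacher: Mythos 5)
Your argument is correct, and it reaches the identity from the same starting point as the paper, namely the distinguished triangle \eqref{eq:red-POT} with $\FF_{\UH_{[1]}/\UM_{[1]}} = \cO(\ttwist\Delta_{\max})|_{\UH_{[1]}}[-1]$ a line bundle sitting in degree one. The difference is in how that triangle is converted into a cycle identity. The paper's proof is a one-line reduction to Manolache's result on compatible triples of perfect obstruction theories \cite[Corollary 4.9]{Ma12}: the small trick there is to observe that the zero map $0 = \TT_{\UH_{[1]}/\UH_{[1]}} \to \FF_{\UH_{[1]}/\UM_{[1]}}$ is itself a perfect obstruction theory for the identity $\UH_{[1]} \to \UH_{[1]}$, so that \eqref{eq:red-POT} is exactly a compatible triple for $\UH_{[1]} \to \UH_{[1]} \to \UM_{[1]}$, and the cited corollary outputs the Euler-class comparison directly. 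You instead unpack what that corollary does in this rank-one situation: pass to bundle stacks via $h^1/h^0$, use the factorization \eqref{diag:POT-factorization} to see that the intrinsic normal cone lands in the closed sub-bundle-stack $\mathfrak{E}^{\red} \subset \mathfrak{E}$ cut out by the zero section of $L$, and conclude by excess intersection together with functoriality of the Gysin maps $0^!_{\mathfrak{E}} = 0^!_{\mathfrak{E}^{\red}} \circ j^!$. What your route buys is self-containedness: it makes visible exactly why the discrepancy is the first Chern class of $\cO(\ttwist\Delta_{\max})|_{\UH_{[1]}}$, without invoking the compatible-triples machinery; what the paper's route buys is brevity and the reassurance that the cone-stack bookkeeping (Kresch-level Gysin functoriality and the excess formula for bundle stacks) has already been carried out once and for all in \cite{Ma12}.

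One point worth making explicit in your write-up: the assertion that $\mathfrak{E}^{\red} \hookrightarrow \mathfrak{E}$ is a closed embedding of \emph{bundle} stacks with normal bundle $L$ uses that $\EE^{\red}_{\UH_{[1]}/\UM_{[1]}}$ is perfect in $[0,1]$, equivalently that $H^1(\EE_{\UH_{[1]}/\UM_{[1]}}) \to \FF[1]$ is surjective; this is precisely the cosection surjectivity of Proposition \ref{prop:cosection-surjective}, which rests on the properness of the critical locus of the twisted superpotential. Since the corollary is only stated under the hypotheses of Theorem \ref{thm:red-POT}, this is available, but it is the load-bearing input rather than a formal consequence of the triangle, so it deserves a citation at that step.
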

\begin{proof}
  This follows from \cite[Corollary 4.9]{Ma12}.
  Indeed, \eqref{eq:red-POT} gives a compatible triple of perfect
  obstruction theories where the zero map
  $0 = \TT_{\UH_{[1]}/\UH_{[1]}} \to \FF_{\UH_{[1]}/\UM_{[1]}}$ is the
  perfect obstruction theory of the identity
  $\UH_{[1]} \to \UH_{[1]}$.
\end{proof}

The reduced virtual cycles in the $m>1$ case will be studied in
\S\ref{sec:reduction-to-connected-inv} in the context of further
modifications of $\UH_{[m]}$, to aid the reduction of the disconnected
to the connected theory
(Theorem~\ref{thm:intro-disconnected-to-connected}).

Next, we prove Theorem \ref{thm:red-POT}, following the method of
\cite[\S 6]{CJR21}.

\subsubsection{The twisted Hodge bundle}
Consider the universal curve $\pi_{\UM_{[m]}} \colon \mathfrak{C}^{\circ} \to \UM_{[m]}$, and the following line bundle over $\mathfrak{C}^{\circ}$
\[
\tomega := \omega_{\mathfrak{C}^{\circ}/\UM_{[m]}}\otimes\cO(\ttwist\Delta_{\max})|_{\mathfrak{C}^{\circ}}.
\]
Pushing forward along $\pi_{\UM_{[m]}}$ and applying the projection formula, we obtain
\begin{equation}
\fH := R^0\pi_{\UM_{[m]},*}\big( \tomega \big) \cong R^0\pi_{\UM_{[m]},*}(\omega_{\mathfrak{C}^{\circ}/\UM_{[m]}})\otimes\cO(\ttwist\Delta_{\max})|_{\UM_{[m]}},
\end{equation}
called the {\em twisted Hodge bundle} over $\UM_{[m]}$.
We may view $\fH$ as the log stack with the strict morphism to $\UM_{[m]}$. Recall the puncturing $\mathfrak{C}^{\circ} \to \fC$. Consider the projection
\[
\pi_{\fH}\colon \mathfrak{C}_{\fH} := \mathfrak{C}\times_{\UM_{[m]}}\fH \to \fH.
\]
Note that $\fH$ is the direct image cone over $\UM_{[m]}$ as in \cite[Definition 2.1]{ChLi12}, which parameterizes sections of $\tomega \to \mathfrak{C}$. Let $\rho_{\fH}$ be the universal section over $\fH$ fitting in the following commutative triangle
\begin{equation}\label{eq:h-object}
\xymatrix{
&& \uomega\boxtimes\cO(\ttwist\Delta_{\max}) \ar[d] \\
\mathfrak{C}_{\fH} \ar[rr] \ar@/^1pc/[rru]^-{\rho_{\fH}} && \BC\times\Delta_{\max}
}
\end{equation}

Note that \eqref{eq:bullet-universal-maps} defines a commutative triangle
\begin{equation}\label{eq:R-section}
\xymatrix{
&& \uomega\boxtimes\cO(\ttwist\Delta_{\max}) \ar[d] \\
\pC \ar[rr] \ar@/^1pc/[rru]^-{\rho_{\UH_{[m]}} := \tW\circ f_{e_{\max}}} && \BC\times\Delta_{\max}
}
\end{equation}
This leads to a canonical morphism over $\UM_{[m]}$
\begin{equation}\label{eq:R-to-H}
\UH_{[m]} \to \fH
\end{equation}
along which $\rho_{\fH}$ pulls back to $\rho_{\UH_{[m]}}$.

By \cite[Proposition 2.5]{ChLi12}, $\fH \to \UM_{[m]}$ has a perfect obstruction theory by deforming $\rho_{\fH}$:
\begin{equation}\label{equ:Hodge-perfect-obs}
\varphi_{\fH/\UM_{[m]}} \colon \TT_{\fH/\UM_{[m]}} \to \EE_{\fH/\UM_{[m]}} := \pi_{\fH,*}\big(\tomega|_{\mathfrak{C}_{\fH}} \big) .
\end{equation}
Note that $\TT_{\fH/\UM_{[m]}} = T_{\fH/\UM_{[m]}} \cong \fH|_{\fH}$, since $\fH \to \UM_{[m]}$ is a vector bundle.
Define a complex on $\fH$:
\begin{equation}\label{eq:H-boundary-complex}
\FF_{\fH} := \cO(\ttwist\Delta_{\max})|_{\fH}[-1].
\end{equation}

Consider the following surjection
\[
\sigma_{\fH/\fM} \colon H^1(\EE_{\fH/\UM_{[m]}}) = R^1\pi_{\fH,*}\big(\tomega|_{\mathfrak{C}_{\fH}} \big) = \oplus_{i\in[m]} \cO(\ttwist\Delta_{\max})|_{\fH} \longrightarrow  \FF_{\fH}[1]
\]
where the last arrow is induced by taking summation.
This defines a composition
\begin{equation}\label{eq:H-cosection}
\EE_{\fH/\UM_{[m]}} \longrightarrow H^1(\EE_{\fH/\UM_{[m]}})[-1] \longrightarrow \FF_{\fH}.
\end{equation}

\begin{lemma}\label{lem:H-POT-vanishing}
The following composition is the zero morphism
\begin{equation}\label{eq:H-composition-vanishing}
\TT_{\fH/\UM_{[m]}}  \to \EE_{\fH/\UM_{[m]}} \to \FF_{\fH}.
\end{equation}
\end{lemma}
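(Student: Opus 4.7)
The plan is to recognize that the vanishing of the composition \eqref{eq:H-composition-vanishing} is purely formal: the cosection $\EE_{\fH/\UM_{[m]}} \to \FF_{\fH}$ is built to land in a complex concentrated in cohomological degree one, while $\TT_{\fH/\UM_{[m]}}$ sits in degree zero, and so any morphism between them in the derived category must vanish by the standard $t$-structure axiom on $D^b(\fH)$.

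More precisely, I will first observe that by the very construction \eqref{eq:H-cosection}, the cosection morphism $\EE_{\fH/\UM_{[m]}} \to \FF_{\fH}$ factors through the canonical truncation
\[
\EE_{\fH/\UM_{[m]}} \longrightarrow \tau_{\geq 1}\EE_{\fH/\UM_{[m]}} = H^1(\EE_{\fH/\UM_{[m]}})[-1] \longrightarrow \FF_{\fH},
\]
where the identification $\tau_{\geq 1}\EE_{\fH/\UM_{[m]}} = H^1(\EE_{\fH/\UM_{[m]}})[-1]$ uses that $\EE_{\fH/\UM_{[m]}} \cong R\pi_{\fH,*}(\tomega|_{\mathfrak{C}_\fH})$ is concentrated in cohomological degrees $[0,1]$ (a family of curves). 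Consequently, the composition \eqref{eq:H-composition-vanishing} factors through a morphism $\TT_{\fH/\UM_{[m]}} \to H^1(\EE_{\fH/\UM_{[m]}})[-1]$, and it suffices to show that this morphism is zero. Since $\TT_{\fH/\UM_{[m]}} = T_{\fH/\UM_{[m]}} \cong \fH|_{\fH}$ is a locally free sheaf placed in cohomological degree $0$, whereas $H^1(\EE_{\fH/\UM_{[m]}})[-1]$ is a sheaf placed in cohomological degree $1$, the desired vanishing follows from
\[
\Hom_{D^b(\fH)}\bigl(\TT_{\fH/\UM_{[m]}},\, H^1(\EE_{\fH/\UM_{[m]}})[-1]\bigr) \cong \Ext^{-1}\bigl(\TT_{\fH/\UM_{[m]}},\, H^1(\EE_{\fH/\UM_{[m]}})\bigr) = 0.
\]

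I do not anticipate any serious obstacle: the argument is purely categorical and uses only the explicit description of the cosection \eqref{eq:H-cosection}, the fact that $\EE_{\fH/\UM_{[m]}}$ is a two-term complex in degrees $[0,1]$ (automatic for $R\pi_{\fH,*}$ of a sheaf on a family of curves), and the basic t-structure vanishing $\Hom_{D^b}(D^{\leq 0}, D^{\geq 1}) = 0$. As an alternative route, one can instead note that by the construction in \cite[Proposition~2.5]{ChLi12} the perfect obstruction theory $\varphi_{\fH/\UM_{[m]}}$ factors through the inclusion $\tau_{\leq 0}\EE_{\fH/\UM_{[m]}} \hookrightarrow \EE_{\fH/\UM_{[m]}}$ coming from the identification $\TT_{\fH/\UM_{[m]}} \cong H^0(\EE_{\fH/\UM_{[m]}}) = \pi_{\fH,*}(\tomega|_{\mathfrak{C}_\fH})$; the conclusion then follows equally from the canonical distinguished triangle $\tau_{\leq 0}\EE_{\fH/\UM_{[m]}} \to \EE_{\fH/\UM_{[m]}} \to \tau_{\geq 1}\EE_{\fH/\UM_{[m]}} \stackrel{[1]}{\to}$, whose composition of successive arrows is zero.
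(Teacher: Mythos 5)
Your proposal is correct and is essentially the paper's own argument: the paper's proof simply notes that the composition factors through $\TT_{\fH/\UM_{[m]}} \cong H^0(\EE_{\fH/\UM_{[m]}}) \to \EE_{\fH/\UM_{[m]}}$, hence vanishes — which is precisely your second route, and your first route via the factorization of the cosection through $H^1(\EE_{\fH/\UM_{[m]}})[-1]$ is the same degree/$t$-structure argument phrased from the other end.
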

\begin{proof}
Indeed, the composition factors through
\[
\TT_{\fH/\UM_{[m]}}  \cong H^0(\EE_{\fH/\UM_{[m]}}) \to  \EE_{\fH/\UM_{[m]}},
\]
hence is the zero morphism.
\end{proof}

\begin{lemma}\label{lem:POT-R-H}
There is a canonical commutative diagram
\[
\xymatrix{
\TT_{\UH_{[m]}/\UM_{[m]}} \ar[rr] \ar[d]_{\varphi_{\UH_{[m]}/\UM_{[m]}}} && \TT_{\fH/\UM_{[m]}}|_{\UH_{[m]}} \ar[d]^{\varphi_{\fH/\UM_{[m]}}|_{\UH_{[m]}}} \\
\EE_{\UH_{[m]}/\UM_{[m]}} \ar[rr] \ar[d] && \EE_{\fH/\UM_{[m]}}|_{\UH_{[m]}} \ar[d] \\
\FF_{\UH_{[m]}/\UM_{[m]}} \ar[rr]^{\cong} && \FF_{\fH}|_{\UH_{[m]}}
}
\]
where the top horizontal arrow is given by \eqref{eq:R-to-H},  the lower left vertical arrow is the composition \eqref{eq:R-complex-cosection}, and the lower right vertical arrow is given by \eqref{eq:H-cosection}.
\end{lemma}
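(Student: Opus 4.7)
The plan is to first construct the middle horizontal comparison morphism and identify the bottom horizontal isomorphism, and then verify commutativity of the two squares separately.

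For the middle horizontal morphism, I would use the factorization $\rho_{\UH_{[m]}} = \tW \circ f_{e_{\max}}$ from \eqref{eq:R-section} together with Lemma~\ref{lem:factor-through-omega}. Applying $\pi_*$ to the map $f^*\Omega^{\vee}_{\punt/\BC} \to \tomega|_{\pC}$ induced by $f_{e_{\max}}^*\diff\tW$ yields a morphism $\EE_{\UH_{[m]}/\UM_{[m]}} \to \pi_*(\tomega|_{\pC}) \cong \EE_{\fH/\UM_{[m]}}|_{\UH_{[m]}}$, where the last isomorphism uses base change along the strict morphism \eqref{eq:R-to-H}. For the bottom horizontal isomorphism, observe that by definitions \eqref{eq:boundary-complex} and \eqref{eq:H-boundary-complex}, both $\FF_{\UH_{[m]}/\UM_{[m]}}$ and $\FF_{\fH}|_{\UH_{[m]}}$ are tautologically identified with $\cO(\ttwist\Delta_{\max})|_{\UH_{[m]}}[-1]$, giving a canonical isomorphism between them.

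For the top square, I would appeal to functoriality of perfect obstruction theories coming from the deformation of sections. Both $\varphi_{\UH_{[m]}/\UM_{[m]}}$ (see \eqref{eq:product-POT}) and $\varphi_{\fH/\UM_{[m]}}$ (see \eqref{equ:Hodge-perfect-obs}) are constructed by the standard procedure applied to the pairs of universal squares in \eqref{eq:bullet-universal-maps} and \eqref{eq:h-object}, and the morphism $\UH_{[m]} \to \fH$ pulls $\rho_{\fH}$ back to $\rho_{\UH_{[m]}}$. Therefore the deformation-theoretic diagram relating $\TT_{\UH_{[m]}/\UM_{[m]}}, \TT_{\fH/\UM_{[m]}}|_{\UH_{[m]}}$ and the corresponding obstruction complexes commutes by naturality of the cotangent complex together with the factorization of $\rho_{\UH_{[m]}}$ through $\tW \circ f_{e_{\max}}$.

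For the bottom square, I would trace the definitions of the two cosections. The right vertical arrow in \eqref{eq:H-cosection} factors $\EE_{\fH/\UM_{[m]}} \to H^1(\EE_{\fH/\UM_{[m]}})[-1] \to \FF_{\fH}$, where the last arrow is the sum map on the $m$ components $R^1\pi_{\fH,*}(\tomega|_{\fC_{\fH}}) \cong \oplus_{i\in [m]} \cO(\ttwist\Delta_{\max})|_{\fH}$. Pulling back to $\UH_{[m]}$ and precomposing with the middle horizontal arrow, this is exactly $R^1\pi_*(f_{e_{\max}}^*\diff\tW)$ followed by the summation, which by construction \eqref{eq:relative-cosection} is the cosection $\sigma_{\UH_{[m]}/\UM_{[m]}}$, i.e.\ the composition \eqref{eq:R-complex-cosection}. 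Thus both compositions $\EE_{\UH_{[m]}/\UM_{[m]}} \to \FF_{\fH}|_{\UH_{[m]}}$ agree under the identification of the two boundary complexes.

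The main obstacle will be making the functoriality argument in the top square rigorous at the level of complexes in the derived category, since the two obstruction theories are built via slightly different geometric constructions (deformation of a morphism into $\punt$ versus deformation of a section of $\tomega$). I expect this will require carefully unpacking the standard cotangent complex calculation, but no substantial new idea is needed beyond the factorization $\rho_{\UH_{[m]}} = \tW \circ f_{e_{\max}}$ and base change compatibility between the two universal curves.
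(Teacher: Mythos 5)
Your proposal is correct and follows essentially the same route as the paper: the middle horizontal arrow is $\pi_* f_{e_{\max}}^*\diff\tW$ as in \eqref{eq:complex-cosection-component}, the bottom isomorphism is the tautological identification of the two boundary complexes via \eqref{eq:boundary-complex} and \eqref{eq:H-boundary-complex}, the top square is obtained from the factorization $\rho_{\UH_{[m]}} = \tW\circ f_{e_{\max}}$ together with Lemma~\ref{lem:factor-through-omega} by comparing the two universal diagrams over $\pC$, pushing forward along $\pi$ and applying adjunction, and the bottom square follows by unwinding \eqref{eq:relative-cosection} and \eqref{eq:H-cosection}. The cotangent-complex bookkeeping you flag as the main obstacle is exactly the explicit diagram of relative tangent complexes over $\pC$ that the paper writes down, and requires no idea beyond what you already state.
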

\begin{proof}
We first construct the upper square. Consider the commutative diagram
\[
\xymatrix{
\UH_{[m]} \ar[d] && \pC \ar[ll]_{\pi} \ar[rr]^-{f_{e_{\max}}} \ar[d] && \punt_{e_{\max},\circ} \ar[d]^{\tW} \\
\fH \ar[d] && \mathfrak{C}_{\fH} \ar[ll]_{\pi_{\fH}} \ar[rr]^-{\rho_{\fH}} \ar[d] &&  \uomega\boxtimes\cO(\ttwist\Delta_{\max}) \ar[d] \\
\UM_{[m]} && \mathfrak{C} \ar[ll]_{\pi_{\UM_{[m]}}} \ar[rr] && \BC\times\Delta_{\max}.
}
\]
where the left column is Cartesian, and
$\uomega\boxtimes\cO(\ttwist\Delta_{\max})$ is equipped with
the log structure pulled back from $\BC\times\Delta_{\max}$.
This leads to a commutative diagram of complexes
\[
\xymatrix{
\TT_{\UH_{[m]}/\UM_{[m]}}|_{\pC} \cong \TT_{\pC/\mathfrak{C}} \ar[d] \ar[rr] && f_{e_{\max}}^*\Omega^{\vee}_{\punt/\BC}|_{\punt_{e, \circ}} \ar[d]^{f_{e_{\max}}^*\diff\tW} \\
\TT_{\fH/\UM_{[m]}}|_{\pC} \cong \TT_{\mathfrak{C}_{\fH}/\mathfrak{C}}|_{\pC} \ar[rr] && \omega_{\pC/\UH_{[m]}}\otimes f_{e_{\max}}^*\cO(\ttwist\Delta_{\max})
}
\]
where the right vertical arrow is given by Lemma \ref{lem:factor-through-omega}. Pushing forward along $\pi$ and applying adjunction, we obtain the top square in the statement
\[
\xymatrix{
\TT_{\UH_{[m]}/\UM_{[m]}} \ar[d] \ar[rr] && \TT_{\fH/\UM_{[m]}}|_{\UH_{[m]}}  \ar[d]\\
\EE_{\UH_{[m]}/\UM_{[m]}}  \ar[rr]^{\pi_{*}f_{e_{\max}}^*\diff\tW}  && \EE_{\fH/\UM_{[m]}}|_{\UH_{[m]}}
}
\]
where the right vertical arrow is \eqref{eq:complex-cosection-component}.

For the bottom square, note that the bottom isomorphism follows immediately from \eqref{eq:boundary-complex} and \eqref{eq:H-boundary-complex}. The commutativity follows from the observation that the composition $\EE_{\UH_{[m]}/\UM_{[m]}} \to \EE_{\fH/\UM_{[m]}}|_{\UH_{[m]}} \to \FF_{\fH}|_{\UH_{[m]}}$ is indeed  \eqref{eq:R-complex-cosection}.
\end{proof}

\subsubsection{Proof of Theorem \ref{thm:red-POT}}

Lemma \ref{lem:POT-R-H} provides a commutative diagram of solid arrows
\begin{equation}\label{diag:compare-POTs}
\xymatrix{
\TT_{\UH_{[m]}/\UM_{[m]}} \ar@/^1pc/[rrd] \ar@{-->}[rd]_{\varphi^{\red}_{\UH_{[m]}/\UM_{[m]}}}  \ar@/_4pc/[rdd] &&&& \\
 & \EE^{\red}_{\UH_{[m]}/\UM_{[m]}}  \ar[r] & \EE_{\UH_{[m]}/\UM_{[m]}} \ar[d] \ar[r] & \FF_{\UH_{[m]}/\UM_{[m]}} \ar[r]^{[1]} \ar@{=}[d] & \\
 & \TT_{\fH/\UM_{[m]}}|_{\UH_{[m]}} \ar[r] & \EE_{\fH/\UM_{[m]}}|_{\UH_{[m]}} \ar[r] & \FF_{\fH}|_{\UH_{[m]}}  &
}
\end{equation}

First, note that the composition $\TT_{\UH_{[m]}/\UM_{[m]}} \to \EE_{\UH_{[m]}/\UM_{[m]}} \to \FF_{\UH_{[m]}/\UM_{[m]}}$ factors through the zero morphism \eqref{eq:H-composition-vanishing} by Lemma \ref{lem:H-POT-vanishing}, hence is also a zero morphism. This gives the desired dashed arrow $\varphi^{\red}_{\UH_{[m]}/\UM_{[m]}}$ making \eqref{diag:compare-POTs} commutative.

Next, we verify that $\EE_{\UH_{[m]}/\UM_{[m]}}^{\red}$ is perfect in $[0,1]$. Since $\FF_{\UH_{[m]}/\UM_{[m]}}$ is a vector bundle in degree $1$, and  $\EE_{\UH_{[m]}/\UM_{[m]}}^{\red}$ is perfect in $[0,1]$, we conclude that $\EE_{\UH_{[m]}/\UM_{[m]}}^{\red}$ is perfect in $[0,2]$. It remains to show that $H^2(\EE^{\red}_{\UH_{[m]}/\UM_{[m]}}) = 0$. The long exact sequence of \eqref{eq:red-POT} gives
\[
H^1(\EE_{\UH_{[m]}/\UM_{[m]}}) \stackrel{\sigma_{\UH_{[m]}/\UM_{[m]}}}{\longrightarrow} H^1(\FF_{\UH_{[m]}/\UM_{[m]}}) \longrightarrow H^2(\EE^{\red}_{\UH_{[m]}/\UM_{[m]}}) \longrightarrow H^2(\EE_{\UH_{[m]}/\UM_{[m]}}) = 0.
\]
Thus, $H^2(\EE^{\red}_{\UH_{[m]}/\UM_{[m]}}) = 0$ follows from the surjectivity of $\sigma_{\UH_{[m]}/\UM_{[m]}}$ in Proposition \ref{prop:cosection-surjective}.

Now taking $H^0$, we obtain a commutative diagram
\[
\xymatrix{
&& H^0(\TT_{\UH_{[m]}/\UM_{[m]}}) \ar[d]^{H^0(\varphi_{\UH_{[m]}/\UM_{[m]}})} \ar[ld]_{H^0(\varphi^{\red}_{\UH_{[m]}/\UM_{[m]}})} & & \\
0 \ar[r] & H^0(\EE^{\red}_{\UH_{[m]}/\UM_{[m]}}) \ar[r]^{\cong} & H^0(\EE_{\UH_{[m]}/\UM_{[m]}}) \ar[r] & H^0(\FF_{\UH_{[m]}/\UM_{[m]}}) = 0
}
\]
The isomorphism $H^0(\varphi_{\UH_{[m]}/\UM_{[m]}})$ implies that
$H^0(\varphi^{\red}_{\UH_{[m]}/\UM_{[m]}})$ is also an isomorphism.

Finally taking $H^1$, we obtain a commutative diagram
\begin{equation}\label{diag:lift-red-obstruction}
\xymatrix{
&& H^1(\TT_{\UH_{[m]}/\UM_{[m]}}) \ar[d]^{H^1(\varphi_{\UH_{[m]}/\UM_{[m]}})} \ar[ld]_{H^1(\varphi^{\red}_{\UH_{[m]}/\UM_{[m]}})} & & & \\
0  \ar[r] & H^1(\EE^{\red}_{\UH_{[m]}/\UM_{[m]}}) \ar[r] & H^1(\EE_{\UH_{[m]}/\UM_{[m]}}) \ar[r] & H^1(\FF_{\UH_{[m]}/\UM_{[m]}}) = \FF_{\UH_{[m]}/\UM_{[m]}}[1] \ar[r] & 0
}
\end{equation}
Thus the injectivity of $H^1(\varphi_{\UH_{[m]}/\UM_{[m]}})$ implies that $H^1(\varphi^{\red}_{\UH_{[m]}/\UM_{[m]}})$ is injective.

We conclude that $\varphi^{\red}_{\UH_{[m]}/\UM_{[m]}}$ is a perfect
obstruction theory of $\UH_{[m]} \to \UM_{[m]}$ in the sense of
\cite{BeFa97}.


\section{Reduction to the connected theory}\label{sec:reduction-to-connected-inv}

We follow the notations in \S \ref{sss:universal-UM-stack-disconnected}, and write for simplicity
\begin{equation}\label{eq:disconnected-universal-stacks}
\fM_i := \fM_{g_i,\ddata_i'}(\ainfty) \ \ \ \mbox{and} \ \ \ \UM_i := \UM_{g_i,\ddata_i'}(\ainfty)
\end{equation}
with the universal punctured maps $\mathfrak{f}_i\colon \pC_i \to \ainfty$
and
$\mathfrak{f}^{\curlywedge}_i\colon \cC^{\curlywedge,\circ}_i \to \ainfty$
respectively.
It was shown in \S \ref{sec:red-theory} that the stacks $\UM_i$ and
$\UM_{[m]}$ provide the right categories for defining the reduced
theory.
The goal of this section to compute disconnected invariants of both
reduced and canonical theories in terms of connected ones.
As explained in Remark \ref{rem:intro-RA}, this requires further
modifications of $\UM_{[m]}$ for two reasons that we recall.
First, there is no morphism from $\UM_{[m]}$ to
$\prod_{i \in [m]}\UM_i$, since each connected component of the domain
might not have the uniform maximal degeneracy.
Second, the localization formulas of log GLSM in \cite{CJR23P} involve
$\psi_{\min}$ classes (see \S \ref{ss:psi-min}), which require the
$\curlyvee$-configuration \S \ref{sss:extremal-degeneracy}.
These two motivations lead to further combinations of the $\curlywedge$-
and $\curlyvee$-configurations as discussed below.

\subsection{Maps with extremal configurations}

\subsubsection{The \texorpdfstring{$\curlyvee$}{v}-configuration with connected domains}\label{sss:p-UM-connected-map}

Given discrete data $(g, \ddata')$ for punctured maps to $\ainfty$,
consider the sheaf $\obD(\mathfrak{f})$ of degeneracies over
$\fM_{g,\ddata'}(\ainfty)$, and the subsheaf
$\obD^m(\mathfrak{f}) \subset \obD(\mathfrak{f})$ of minimal elements.
As observed in \S \ref{sss:degeneracies}, sections in
$\obD^m(\mathfrak{f})$ are nowhere zero.
Thus we obtain a sheaf of nowhere trivial toric ideals
$\ocK^m \subset \ocM_{\fM_{g,\ddata'}(\ainfty)}$ generated by
$\obD^m(\mathfrak{f})$.
Denote by $\cK^m \subset \cM_{\fM_{g,\ddata'}(\ainfty)}$ the
corresponding log-ideal given by the pre-image of $\ocK^m$.
Then $\cK^m$ is everywhere non-trivial.
Consider the log blow-up along $\cK^m$ in the fs category
\cite[III~2.6]{Og18}:
\begin{equation}\label{eq:minimization}
\Bl_{\cK^m} \colon \Um_{g,\ddata'}(\ainfty) \to \fM_{g,\ddata'}(\ainfty).
\end{equation}

Note that $\Bl_{\cK^m}$ is projective and log \'etale \cite[III 2.6.4, 2.6.5]{Og18}. Furthermore, it is the universal principalization of $\cK^m$ \cite[III 2.6.1 (1)]{Og18}. Thus the everywhere non-trivial log-ideal $\cK^{\curlyvee} := \cK^m|_{\Um_{g,\ddata'}(\ainfty)}$ is principal. By construction, there is a global section $e_{\min} \in \Gamma\left(\cK^{\curlyvee}\right)$ locally generates $\cK^{\curlyvee}$, whose fiber over each geometric point is the unique minimal degeneracy. Thus the universal punctured map over $\Um_{g,\ddata'}(\ainfty)$ has uniform minimal degeneracy. Furthermore, it is universal:

\begin{proposition}\label{prop:minimize-componentwise}
\begin{enumerate}
  \item  $\fM^{\circ}_{g,\ddata'}(\ainfty)$ is an open dense substack of $\Um_{g,\ddata'}(\ainfty)$. In particular, $\Bl_{\cK^m}$ is birational.

 \item $\Um_{g,\ddata'}(\ainfty)$ is the stack parameterizing punctured maps with uniform minimal degeneracy and the discrete data $(g_i, \ddata_i')$.
\end{enumerate}
\end{proposition}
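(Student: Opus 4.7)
The plan is to use standard properties of log blow-ups, specifically their characterization as the universal principalization in the fs category \cite[III~2.6.1]{Og18}, together with the local structure of $\fM_{g,\ddata'}(\ainfty)$ along its generic stratum as established in Lemma~\ref{lem:universal-punctured-map-moduli}.

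For (1), over the open locus $\fM^{\circ}_{g,\ddata'}(\ainfty)$ parameterizing punctured maps with smooth domains, the universal curve has a unique irreducible component, hence its set of degeneracies consists of a single element, which is both the unique minimum and a generator of $\ocK^m$. By Lemma~\ref{lem:universal-punctured-map-moduli}(2), $\ocM$ is locally $\NN$ along this locus, and $\cK^m$ is already principal there; thus the log blow-up $\Bl_{\cK^m}$ restricts to an isomorphism on $\fM^{\circ}_{g,\ddata'}(\ainfty)$. Combined with the density statement of Lemma~\ref{lem:universal-punctured-map-moduli}(2), this yields the desired open dense embedding $\fM^{\circ}_{g,\ddata'}(\ainfty) \hookrightarrow \Um_{g,\ddata'}(\ainfty)$ and the birationality of $\Bl_{\cK^m}$.

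For (2), I first verify that the universal punctured map over $\Um_{g,\ddata'}(\ainfty)$ has uniform minimal degeneracy. Since $\cK^{\curlyvee}$ is principal with local generator lifting the global section $e_{\min}$, at each geometric point $s$ one checks that $e_{\min}|_s$ lies in $\obD^m(f_s)$ and divides every element of $\obD(f_s)$ in the partial order of Proposition~\ref{prop:sheaf-of-degeneracies}, so it is the unique minimum. Conversely, for any family $f \colon \pC \to \ainfty$ over an fs log scheme $T$ with discrete data $(g,\ddata')$ and uniform minimal degeneracy, the classifying morphism $T \to \fM_{g,\ddata'}(\ainfty)$ pulls $\cK^m$ back to the log-ideal generated by the single global section $e_{\min}(f)$, hence a principal log-ideal. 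By the universal property of the fs log blow-up as the universal principalization \cite[III~2.6.1]{Og18}, this morphism factors uniquely through $\Um_{g,\ddata'}(\ainfty)$, giving the moduli interpretation.

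The main obstacle will be the careful verification that at each geometric point $s$ of $\Um_{g,\ddata'}(\ainfty)$, the fiber of the global section $e_{\min}$ is genuinely an element of $\obD^m(f_s)$ rather than merely an element of the monoid it generates. This requires passing through the explicit combinatorial description of $\Bl_{\cK^m}$: on a chart where $\ocM_{\fM_{g,\ddata'}(\ainfty)} \cong P$ and $\ocK^m$ is generated by minimal degeneracies $\{d_1,\dots,d_k\} \subset P$, the fs log blow-up corresponds to the star subdivision of the dual cone along the facets dual to the $d_j$. Each chamber of the resulting subdivision selects a single $d_j$ as the new local generator, and this $d_j$ is by construction an element of $\obD^m(f_s)$, furnishing the identification with uniform minimal degeneracy. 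The compatibility of this fiberwise picture with the partial order on $\ocM$ uses the stability of $\poleq$ under generization recalled in the proof of Proposition~\ref{prop:sheaf-of-degeneracies}.
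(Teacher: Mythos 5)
Your proof is correct and takes essentially the same route as the paper: part (1) is exactly the paper's appeal to Lemma~\ref{lem:universal-punctured-map-moduli}(2), and part (2) combines the universal-principalization property of the fs log blow-up \cite[III~2.6.1]{Og18} with the check that the fiber of the principal generator of $\cK^{\curlyvee}$ is the unique minimal degeneracy — a point the paper asserts ``by construction'' in the paragraph preceding the proposition, and which your chart-by-chart description (each chart of $\Bl_{\cK^m}$ making a single minimal degeneracy $d_j$ satisfy $d_j \poleq d_i$ for all $i$, per \cite[III~2.6.4]{Og18}) correctly supplies.
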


\begin{proof}
  The first statement follows from Lemma
  \ref{lem:universal-punctured-map-moduli} (2).
  For (2), consider a punctured map $f\colon \pC \to \ainfty$ over $S$
  given by $S \to \fM_{g,\ddata'}(\ainfty)$.
  If $f$ has uniform minimal degeneracy, then the pull-back
  $\cK^m|_{S}$ is generated by the minimal degeneracy, hence is
  principal.
  The universal property of $\Bl_{\cK^m}$ implies a canonical map
  $S \to \Um_{g,\ddata'}(\ainfty)$ with $f$ the pull-back of the
  universal punctured map over $\Um_{g,\ddata'}(\ainfty)$.
\end{proof}

\subsubsection{The \texorpdfstring{$\curlyvee$}{v}-configuration with disconnected domains}
Write for simplicity
\[
 \Um_i := \Um_{g_i,\ddata'_i}(\ainfty)
\]
with the universal punctured maps
$\mathfrak{f}^{\curlyvee}_i\colon \cC^{\curlyvee,\circ}_i \to
\ainfty$.
Let $\mathfrak{f} = \sqcup \mathfrak{f}^{\curlyvee}_i$ be the
universal punctured map over $\prod_{i \in [m]} \Um_i$.
Denote by $e_{\min,i}$ the minimal degeneracy over $\Um_i$.
Then the sheaf $\obD^m(\mathfrak{f})$ of minimal degeneracies is
globally constant given by $\{e_{\min,i}\}_{i \in [m]}$.
Let $\ocK \subset \ocM_{\prod_{i \in [m]} \Um_i}$ be the ideal
generated by $\obD^m(\mathfrak{f})$, and
$\cK \subset \cM_{\prod_{i \in [m]} \Um_i}$ the log-ideal given by the
pre-image of $\ocK$.
Consider the log blow-up along $\cK$ in the fs category:
\begin{equation}\label{eq:blowup-disconnected-min}
\Bl_{\cK} \colon \Um_{[m]} \to \prod_{i \in [m]} \Um_i.
\end{equation}
The log blow-up $\Bl_{\cK}$ is projective and log \'etale, with the following universal property:

\begin{proposition}\label{prop:minimize-multi-component}
  $\Um_{[m]}$ parameterizes punctured maps with uniform minimal
  degeneracy such that the following hold fiberwise:
  \begin{enumerate}
  \item The domain curves have $m$ connected components labeled by $[m]$.
  \item For each $i \in [m]$, the $i$th connected component has uniform minimal degeneracy with discrete data $(g_i, \ddata_i')$.
  \end{enumerate}
\end{proposition}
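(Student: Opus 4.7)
The plan is to proceed in parallel with the proof of Proposition~\ref{prop:minimize-componentwise}, leveraging the universal property of the fs log blow-up along $\cK$ as the universal principalization of $\cK$. The key observation is that for a punctured map whose disconnected domain already has componentwise uniform minimal degeneracies $e_{\min,i}$, the sheaf $\obD(\mathfrak{f})$ of global degeneracies (Proposition~\ref{prop:sheaf-of-degeneracies}) equals the union of the componentwise sheaves, so the set of global minima $\obD^m(\mathfrak{f})$ is contained in $\{e_{\min,i}\}_{i \in [m]}$. Consequently, $\mathfrak{f}$ has uniform global minimal degeneracy if and only if, locally in $\ocM_S$, one of the $e_{\min,i}$ is bounded above in the partial order by all the others — which is precisely the condition that the log-ideal generated by $\{e_{\min,i}\}$ is principal.

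First I would verify that the universal family over $\Um_{[m]}$ has the asserted properties. Conditions (1) and (2) are inherited from the universal family over $\prod_{i \in [m]}\Um_i$ via Proposition~\ref{prop:minimize-componentwise}(2), since $\Bl_{\cK}$ is log \'etale and does not alter the underlying pre-stable maps or their componentwise discrete data. For the uniform global minimum, use that by construction $\cK|_{\Um_{[m]}}$ is everywhere principal: over each geometric point it is locally generated by (the pullback of) one particular $e_{\min,i}$, and this generator then dominates every $e_{\min,j}$ from below in the partial order of $\ocM_{\Um_{[m]}}$. Combined with the containment $\obD^m \subset \{e_{\min,i}\}$, this generator is the unique minimum of $\obD(\mathfrak{f})$.

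For the converse (the universal property), given a punctured map $f\colon \pC \to \ainfty$ over $S$ satisfying (1), (2), and global uniform minimal degeneracy, conditions (1) and (2) produce by Proposition~\ref{prop:minimize-componentwise}(2) a canonical morphism $\psi\colon S \to \prod_{i\in[m]}\Um_i$. The pullback $\psi^{-1}\cK \cdot \cM_S$ is generated by $\{e_{\min,i}(f)\}$, and the global uniform minimum hypothesis means exactly that this log-ideal is principal on $S$. By the universal property of the fs log blow-up \cite[III~2.6.1]{Og18}, $\psi$ lifts uniquely to a strict morphism $S \to \Um_{[m]}$, along which $f$ is the pullback of the universal family.

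The main obstacle I anticipate is the careful verification that uniform global minimal degeneracy of $\mathfrak{f}$ is equivalent to principality of $\cK$ after pullback; this requires confirming, via Proposition~\ref{prop:sheaf-of-degeneracies}, that a global minimum of the sheaf $\obD(\mathfrak{f})$ on a disconnected domain necessarily coincides with a componentwise minimum $e_{\min,i}$, so that the two notions of ``minimum'' — global and componentwise — interact compatibly with the partial order on $\ocM_S$. Everything else is a formal consequence of the principalization property of $\Bl_{\cK}$.
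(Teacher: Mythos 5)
Your proposal is correct and takes essentially the same route as the paper, whose proof is just the remark that one argues as in Proposition~\ref{prop:minimize-componentwise}(2): conditions (1)–(2) give the map to $\prod_{i\in[m]}\Um_i$, global uniform minimal degeneracy is equivalent (as you verify) to principality of the pulled-back log-ideal $\cK$, and the universal property of the log blow-up as universal principalization \cite[III~2.6.1]{Og18} produces the unique lift. One minor caveat: the lift $S \to \Um_{[m]}$ furnished by that universal property is a unique morphism but need not be strict, so the word ``strict'' should be dropped; this does not affect the argument.
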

\begin{proof}
The proof is similar to the case of Proposition \ref{prop:minimize-componentwise} (2).
\end{proof}

For later use, we consider the structure of $\Bl_{\cK}$.

\begin{lemma}\label{lem:min-stack-composition}
The morphism $\Bl_{\cK}$ is given by a composition
\begin{equation}\label{eq:min-log-blowup}
\Um_{[m]} \stackrel{\mathrm{S}}{\longrightarrow} \fM_{[m]}^\fine \stackrel{\Bl^f_{\cK}}{\longrightarrow} \prod_{i \in [m]} \Um_i
\end{equation}
such that
\begin{enumerate}
\item $\mathrm{S}$ is the saturation which is an isomorphism over the
  open dense substack $\fM^{\curlyvee, \circ}_{[m]} \subset \Um_{[m]}$
  with smooth source curves.
\item There is an open dense substack
  $\fM^{\curlyvee, \circ,\circ}_{[m]} \subset \fM^{\curlyvee,
    \circ}_{[m]}$ such that the degeneracies of connected components
  are all identical.
  In particular,
  $\ocM_{\fM^{\curlyvee, \circ,\circ}_{[m]}} \cong
  \NN_{\fM^{\curlyvee, \circ,\circ}_{[m]}}$ is the globally constant
  sheaf.

\item $\Bl^f_{\cK}$ is the log blow-up along $\cK$ in the fine category. In particular, the underlying of $\Bl^f_{\cK}$ is the projection
\[
\ul{\fM}_{[m]}^\fine \cong \PP(\bigoplus_i \cO(e_{\min,i})) \to \prod_{i \in [m]} \Um_i,
\]
where $\cO(e_{\min,i})$ is the line bundle associated to $e_{\min,i}$ as in \S \ref{sss:log-line-bundles}.
\end{enumerate}
\end{lemma}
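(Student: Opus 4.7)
The plan is to invoke the standard factorization of an fs log blow-up as a fine log blow-up followed by saturation. More precisely, by \cite[III 2.6]{Og18}, the fs log blow-up $\Bl_{\cK}$ factors through the fine log blow-up $\Bl^f_{\cK}\colon \fM^\fine_{[m]} \to \prod_{i \in [m]}\Um_i$ and its fs-saturation $\mathrm{S}\colon \Um_{[m]} \to \fM^\fine_{[m]}$. This gives the factorization in \eqref{eq:min-log-blowup}.

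Next I would compute the underlying stack of $\Bl^f_{\cK}$ explicitly. Recall that each $e_{\min,i} \in \Gamma(\Um_i, \ocM_{\Um_i})$ is a global section, generating the ideal $\cK$ as the sum of its pullbacks on the product. By the universal property of the fine log blow-up, $\Bl^f_{\cK}$ is the universal morphism on which $\cK$ becomes locally principal; it is covered by $m$ strict open charts $U_i$ on which $e_{\min,i}$ generates $\cK|_{U_i}$, i.e., on which the differences $e_{\min,j} - e_{\min,i}$ lie in the characteristic monoid. Translating via \S\ref{sss:log-line-bundles}, these charts correspond to the open substacks of $\PP\left(\bigoplus_i \cO(e_{\min,i})\right)$ where the $i$th homogeneous coordinate does not vanish, and the usual computation of Proj of the Rees algebra $\bigoplus_{d \geq 0} \cK^d = \mathrm{Sym}^\bullet\!\left(\bigoplus_i \cO(e_{\min,i})\right)$ (using that the $e_{\min,i}$ come from independent factors of the product monoid) identifies $\ul{\fM}^\fine_{[m]}$ with this projective bundle. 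This proves part (3).

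For part (1), I would check that $\mathrm{S}$ is an isomorphism over $\fM^{\curlyvee,\circ}_{[m]}$, the locus where all source curves are smooth. Over this locus, each factor $\Um_i$ has characteristic sheaf $\NN$ generated by $e_{\min,i}$ (by Proposition~\ref{prop:minimize-componentwise}(1) and Lemma~\ref{lem:universal-punctured-map-moduli}(2)), so $\ocM \cong \NN^m$ on the product and $\cK = (e_{\min,1},\ldots,e_{\min,m})$ is generated by the standard basis. On the chart $U_i$ of the fine blow-up, the monoid becomes the submonoid of $\ZZ^m$ generated by $e_{\min,i}$ and $e_{\min,j} - e_{\min,i}$ for $j \neq i$, which is already saturated (isomorphic to $\NN^m$). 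Hence $\mathrm{S}$ is an isomorphism there, and the density of $\fM^{\curlyvee,\circ}_{[m]}$ in $\Um_{[m]}$ follows from the density of smooth-source-curve loci in each $\Um_i$ together with properness of $\Bl^f_{\cK}$.

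For part (2), the diagonal locus where all coordinates of the $\PP^{m-1}$-bundle are nonvanishing is open and dense in $\fM^{\curlyvee,\circ}_{[m]}$; I would define $\fM^{\curlyvee,\circ,\circ}_{[m]}$ to be this open substack. On it, the relations $e_{\min,j} - e_{\min,i} \in \ocM^\times$ hold for all $i,j$, forcing all fiberwise minimal degeneracies to coincide in $\ocM$; equivalently, the characteristic sheaf collapses to the globally constant sheaf $\NN$ generated by this common value. The main subtlety to handle carefully is the bookkeeping for the Proj construction in the log setting, which I would verify by checking compatibility on overlapping charts using the transition $e_{\min,j} - e_{\min,i} = -(e_{\min,i} - e_{\min,j})$ inside $\ocM^{gp}$; everything else is a direct application of the standard toric/monoidal blow-up formalism.
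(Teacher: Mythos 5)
Your proposal is correct and follows essentially the same route as the paper: the factorization and part (3) come from Ogus's construction and local chart description of (fine) log blow-ups, part (1) is the same chart-wise observation that over the smooth-curve locus the monoid generated by $e_{\min,i}$ and the differences $e_{\min,j}-e_{\min,i}$ is already free (hence saturated), and part (2) is the same diagonal open locus where all $e_{\min,i}$ coincide. Two small cautions: the paper argues (3) purely monoidally via the local description in \cite[III 2.6.4]{Og18}, which is safer than your ``Rees algebra of $\cK$'' phrasing since the sections $e_{\min,i}$ map to zero in the structure sheaf (your chart-gluing sentence is the real argument), and density of $\fM^{\curlyvee,\circ}_{[m]}$ follows from flatness/openness of the projective bundle together with $\mathrm{S}$ being an isomorphism there, not from properness.
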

\begin{proof}
The factorization $\Bl_{\cK} = \Bl^f_{\cK} \circ \mathrm{S}$ follows from the construction of log blow-ups in \cite[III 2.6.3]{Og18}. Since $\ocK_{[m]}$ is generated by the set of global sections $\{e_{\min,i}\}_{i\in[m]}$, (3) follows from the local description \cite[III 2.6.4]{Og18} of log blow-ups.

Note that the log structure $\ocM_{\prod_{i \in [m]} \fM_i^{\circ}}$
is locally free with generators given by $\{e_{\min,i}\}_{i\in[m]}$.
A straightforward calculation shows that the saturation is trivial
over $\fM^{\curlyvee,\circ}_{[m]}$.
This is (1).
Finally,
$\fM^{\curlyvee, \circ,\circ}_{[m]}\subset \fM^{\curlyvee,
  \circ}_{[m]}$ is the open locus along which the sections
$\{e_{\min,i}|_{\fM^{\curlyvee, \circ,\circ}_{[m]}}\}$ are identical.
This implies (2).
\end{proof}

\subsubsection{Universal punctured maps with \texorpdfstring{$\diamondsuit$}{diamond}-configurations}\label{sss:pun-extreme-configuration}
Consider the fiber products
\begin{equation}\label{eq:pun-extreme-configuration}
\UMm_i := \UM_i\times_{\fM_i}\Um_i \ \ \ \mbox{and} \ \ \ \UMm_{[m]} := \Um_{[m]}\times_{\prod_{i\in[m]}\Um_i}(\prod_{i\in [m]}\UMm_i),
\end{equation}
both taken in the fs category, with projections
\begin{equation}\label{eq:remove-diamond}
\UMm_i \to \UM_i \to \fM_i,
\end{equation}
\begin{equation}\label{eq:split-min}
\Bl_{\cK} \colon \UMm_{[m]} \to \prod_{i\in [m]}\UMm_i.
\end{equation}

The morphism \eqref{eq:remove-diamond} exhibits $\UMm_i$ as the stack
parameterizing punctured maps in $\fM_i$ with
$\diamondsuit$-configurations.
By Proposition \ref{prop:umd-punctured-stack} and
\ref{prop:minimize-componentwise}, the morphisms in
\eqref{eq:remove-diamond} are proper, representable, log \'etale and
birational, and restrict to isomorphisms over the open dense substack
$\fM^{\circ}_i \subset \UMm_i$.
In particular, $\UMm_i$ is equidimensional with
$\dim \UMm_i = \dim \fM_i$.

By the base change property of log blow-ups
\cite[III~2.6.3~(1)]{Og18}, the morphism \eqref{eq:split-min} is
obtained by pulling back \eqref{eq:blowup-disconnected-min}, and is
the log blow-up along the pull-back $\cK^{\diamondsuit}$ of $\cK$.
Furthermore, it exhibits $\UMm_{[m]}$ as the stack parameterizing
punctured maps in $\prod_{i\in [m]}\UMm_i$ with the
$\curlyvee$-configuration.
More precisely, denote by
\[
\mathfrak{f}^{\diamondsuit} = \sqcup_{i\in [m]} \mathfrak{f}^{\diamondsuit}_i \colon \mathfrak{C}^{\circ,\diamondsuit}_{[m]} = \sqcup_{i\in[m]} \mathfrak{C}^{\circ,\diamondsuit}_i \to \ainfty
\]
the universal punctured maps over $\UMm_{[m]}$, where $\mathfrak{f}^{\diamondsuit}_i \colon \mathfrak{C}^{\circ,\diamondsuit}_i \to \ainfty$ is the pull-back of the universal punctured  map over $\UMm_i$. Then $\mathfrak{f}^{\diamondsuit}$ has the $\curlyvee$-configuration.

By \eqref{eq:base-dimension} and Lemma \ref{lem:min-stack-composition}, the stack $\UMm_{[m]}$ is equidimensional with
\begin{equation}\label{eq:equal-dimension-disconnected-domain}
\dim \UMm_{[m]} = \sum_{i\in[m]}\dim \fM_i + m-1 = \sum_{i \in [m]} (3g_i + \ell(\ddata'_i)) -3m-1,
\end{equation}
where $\ell(\ddata'_i)$ denotes the number of markings labeled by $\ddata'_i$.

For later use,  let $e_{\max,i} \in \Gamma\left(\UM_i,\ocM_{\UM_i}\right)$ be the maximal degeneracy, see Notation \ref{not:extrem-degeneracy}. It pulls back to the maximal degeneracy of the $i$-th component of the universal punctured map over $\UMm_{[m]}$, denoted again by $e_{\max,i}$.

Let $\obD^M$ be the sheaf of maximal degeneracies over $\UMm_{[m]}$. Indeed,  $\obD^M$ is globally constant given by the set of global sections
\begin{equation}\label{eq:component-max-degeneracy}
\left\{e_{\max,i} \in \Gamma(\UMm_{[m]},\ocM_{\UMm_{[m]}}) \ | \ i \in [m]\right\},
\end{equation}
which fiberwisely do not necessarily have a maximum with respect to
the canonical ordering.

\subsubsection{Punctured  R-maps with \texorpdfstring{$\diamondsuit$}{diamond}-configurations}\label{sss:diamond-PR-maps}
Consider  the moduli of stable punctured  R-maps $\SH_{g,\ddata}(\beta,\infty)$ with connected domain curves. We obtain a Cartesian diagram with strict vertical arrows
\begin{equation}\label{diag:main-player}
\xymatrix{
\RMm_{g,\ddata}(\beta,\infty) \ar[r]^{F^{\curlywedge}} \ar[d] & \UH_{g,\ddata}(\beta,\infty) \ar[r]^{F} \ar[d] & \SH_{g,\ddata}(\beta,\infty) \ar[d] \\
\UMm_{g,\ddata'}(\ainfty) \ar[r] & \UM_{g,\ddata'}(\ainfty) \ar[r] & \fM_{g,\ddata'}(\ainfty)
}
\end{equation}
Then $\RMm_{g,\ddata}(\beta,\infty)$ is the moduli of stable punctured R-maps with uniform extremal degeneracies or the {\em $\diamondsuit$-configuration} (Definition \ref{def:extremal-degeneracies}), and with the discrete data  $(g, \beta, \ddata)$.  The universal punctured R-map  over $\RMm_{g,\ddata}(\beta,\infty)$ is simply the pullback of the universal punctured R-map of $\SH_{g,\ddata}(\beta,\infty)$.

Assume \eqref{eq:PT-smooth} is smooth.
We obtain canonical virtual cycles
$[\RMm_{g,\ddata}(\beta,\infty)]^{\vir}$,
$[\UH_{g,\ddata}(\beta,\infty)]^{\vir}$ and
$[\SH_{g,\ddata}(\beta,\infty)]^{\vir}$ by pulling back the canonical
perfect obstruction theory \eqref{eq:tan-POT} of the right vertical
arrow.
Since the bottom arrows in \eqref{diag:main-player} are proper and
birational, the virtual cycles are related by push-forwards.

\begin{proposition}\label{prop:birational-canonical-vcycle}
Notations and assumptions as above, we have
\[
F^{\curlywedge}_*[\RMm_{g,\ddata}(\beta,\infty)]^{\vir} = [\UH_{g,\ddata}(\beta,\infty)]^{\vir} \  \  \mbox{and} \ \ F_*[\UH_{g,\ddata}(\beta,\infty)]^{\vir} = [\SH_{g,\ddata}(\beta,\infty)]^{\vir}.
\]
\end{proposition}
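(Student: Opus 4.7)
My plan is to prove both identities in parallel, as the argument for each uses the same principle applied to one of the two Cartesian squares in \eqref{diag:main-player}. I will describe the argument for the first equality $F^{\curlywedge}_*[\RMm_{g,\ddata}(\beta,\infty)]^{\vir} = [\UH_{g,\ddata}(\beta,\infty)]^{\vir}$ in detail and note that the second is obtained by the same reasoning applied to the right square.

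The key observation is that the left square in \eqref{diag:main-player} is Cartesian with strict vertical arrows, so the canonical perfect obstruction theory \eqref{eq:tan-POT} of $\UH_{g,\ddata}(\beta,\infty) \to \UM_{g,\ddata'}(\ainfty)$ pulls back along $b \colon \UMm_{g,\ddata'}(\ainfty) \to \UM_{g,\ddata'}(\ainfty)$ to the canonical obstruction theory on $\RMm_{g,\ddata}(\beta,\infty) \to \UMm_{g,\ddata'}(\ainfty)$. Consequently, both canonical virtual cycles arise as virtual pullbacks of the fundamental classes of the respective bases along the \emph{same} perfect obstruction theory, call it $\varphi$. Applying the functoriality of virtual pullback with respect to proper pushforward in the base (\cite[Theorem~4.1]{Ma12}) then gives
\[
F^{\curlywedge}_*[\RMm_{g,\ddata}(\beta,\infty)]^{\vir} = \varphi^!\bigl(b_*[\UMm_{g,\ddata'}(\ainfty)]\bigr).
\]
It therefore suffices to verify the base-level identity $b_*[\UMm_{g,\ddata'}(\ainfty)] = [\UM_{g,\ddata'}(\ainfty)]$.

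For this, I note that by \eqref{eq:pun-extreme-configuration} with $m=1$, $b$ is obtained by base change from the log blow-up $\Bl_{\cK^m}$ of \eqref{eq:minimization}, hence is proper, log étale, and birational by Proposition~\ref{prop:minimize-componentwise}(1). By Propositions~\ref{prop:umd-punctured-stack}(2) and~\ref{prop:minimize-componentwise}(1), both $\UMm_{g,\ddata'}(\ainfty)$ and $\UM_{g,\ddata'}(\ainfty)$ are pure-dimensional of dimension $3g-4+n$, with $b$ restricting to the identity over the common open dense substack $\fM^{\circ}_{g,\ddata'}(\ainfty)$. Therefore $b_*[\UMm_{g,\ddata'}(\ainfty)] = [\UM_{g,\ddata'}(\ainfty)]$, proving the first equality. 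The second equality is obtained by the identical argument applied to the right square in \eqref{diag:main-player}, using that $\UM_{g,\ddata'}(\ainfty) \to \fM_{g,\ddata'}(\ainfty)$ is proper, log étale and birational by Proposition~\ref{prop:umd-punctured-stack}, between pure-dimensional stacks of the same dimension.

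The step requiring the most care is verifying that Manolache's virtual pullback compatibility applies in the present log algebraic setting. Since the perfect obstruction theory is that of the underlying (strict) morphism and the virtual cycles live on the underlying stacks, strictness of the vertical arrows in \eqref{diag:main-player} ensures that no logarithmic correction to the base-change compatibility of the obstruction theory is required, so the classical virtual pullback formalism of \cite{Ma12} applies directly to the underlying diagrams.
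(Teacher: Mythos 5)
Your proposal is correct and follows essentially the same route as the paper: the paper justifies the proposition by noting that the bottom arrows of \eqref{diag:main-player} are proper and birational and invoking the virtual push-forward of Costello--Manolache for obstruction theories pulled back along strict Cartesian squares, which is exactly what you unpack via \cite[Theorem~4.1]{Ma12} together with the base-cycle identity $b_*[\UMm_{g,\ddata'}(\ainfty)]=[\UM_{g,\ddata'}(\ainfty)]$. Your extra care about strictness guaranteeing that the squares are Cartesian on underlying stacks matches the paper's own remarks, so no further argument is needed.
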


Further, given a transverse superpotential
(Definition~\ref{def:superpotential2} and
Definition~\ref{def:transverse-superpotential}), and assuming negative
contact orders at all markings, we obtain reduced virtual cycles
$[\RMm_{g,\ddata}(\beta,\infty)]^{\red}$ and
$[\UH_{g,\ddata}(\beta,\infty)]^{\red}$ by pulling back the reduced
perfect obstruction theory \eqref{diag:POT-factorization} of the
middle vertical arrow.
Similarly, we have:

\begin{proposition}\label{prop:birational-red-vcycle}
Notations and assumptions as above, we have
\[
F^{\curlywedge}_*[\RMm_{g,\ddata}(\beta,\infty)]^{\red} = [\UH_{g,\ddata}(\beta,\infty)]^{\red}.
\]
\end{proposition}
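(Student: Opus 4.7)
The strategy is to mimic the argument for Proposition~\ref{prop:birational-canonical-vcycle}, using that the reduced theory is compatible with strict base change and that $\UMm_{g,\ddata'}(\ainfty) \to \UM_{g,\ddata'}(\ainfty)$ is a log blow-up that is proper, log \'etale and birational (by Propositions~\ref{prop:minimize-componentwise} and the discussion around~\eqref{eq:remove-diamond}).

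First, I will set up the strict Cartesian square
\[
\xymatrix{
\RMm_{g,\ddata}(\beta,\infty) \ar[r]^-{F^{\curlywedge}} \ar[d] & \UH_{g,\ddata}(\beta,\infty) \ar[d] \\
\UMm_{g,\ddata'}(\ainfty) \ar[r]_-{\Bl_{\cK}} & \UM_{g,\ddata'}(\ainfty)
}
\]
extracted from \eqref{diag:main-player}, and verify that the reduced perfect obstruction theory $\varphi^{\red}_{\UH/\UM}$ of Theorem~\ref{thm:red-POT} pulls back along the top horizontal arrow to a perfect obstruction theory of $\RMm \to \UMm$ canonically identifiable with $\varphi^{\red}_{\RMm/\UMm}$ (the one used to define $[\RMm]^{\red}$). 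Since the square is strict Cartesian, the universal punctured R-map over $\RMm$ is the pullback of the one over $\UH$, so the canonical complex $\EE_{\RMm/\UMm}$ is the pullback of $\EE_{\UH/\UM}$. Moreover, the uniform maximal degeneracy section $e_{\max}$ on $\UM$ pulls back to the corresponding section on $\UMm$, whence $\FF_{\RMm/\UMm} = \cO(\ttwist\Delta_{\max})|_{\RMm}[-1]$ is the pullback of $\FF_{\UH/\UM}$, and the cosection of \eqref{eq:relative-cosection} pulls back to the cosection on $\RMm$. Consequently the triangle \eqref{eq:red-POT} defining $\EE^{\red}_{\UH/\UM}$ pulls back to the triangle defining $\EE^{\red}_{\RMm/\UMm}$, and the factorization \eqref{diag:POT-factorization} is preserved.

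Next, I will invoke Manolache's functoriality of virtual pullbacks along a strict Cartesian square with compatible perfect obstruction theories. This yields
\[
[\RMm_{g,\ddata}(\beta,\infty)]^{\red} = \big(\varphi^{\red}_{\UH/\UM}\big)^!\,[\UMm_{g,\ddata'}(\ainfty)]
\]
and analogously $[\UH_{g,\ddata}(\beta,\infty)]^{\red} = (\varphi^{\red}_{\UH/\UM})^![\UM_{g,\ddata'}(\ainfty)]$. Applying Manolache's compatibility of virtual pullback with proper pushforward along the base, one obtains
\[
F^{\curlywedge}_*[\RMm]^{\red} = \big(\varphi^{\red}_{\UH/\UM}\big)^!\,\Bl_{\cK,*}[\UMm_{g,\ddata'}(\ainfty)].
\]
It therefore suffices to check that $\Bl_{\cK,*}[\UMm_{g,\ddata'}(\ainfty)] = [\UM_{g,\ddata'}(\ainfty)]$, which follows from the fact that $\Bl_{\cK}$ is a proper, log \'etale, birational morphism between pure-dimensional stacks of the same dimension (by Proposition~\ref{prop:umd-punctured-stack}(2) and \eqref{eq:equal-dimension-disconnected-domain} with $m=1$).

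The main obstacle I anticipate is step two, the identification of the pullback POT with the reduced one on $\RMm/\UMm$: this requires that the construction of the cosection \eqref{eq:relative-cosection} is compatible with strict base change along $\UMm \to \UM$. This in turn reduces to the observation that the expansion map $f_{e_{\max}}$ of Corollary~\ref{cor:exp-R-map}, and hence the twisted superpotential pullback $f_{e_{\max}}^*\diff\tW$ of Lemma~\ref{lem:factor-through-omega}, is functorial with respect to strict base change preserving the uniform maximal degeneracy section $e_{\max}$ (which is automatic here since $e_{\max}$ is a global section on $\UM$ pulled back from \eqref{eq:component-max-degeneracy}). Once this naturality is recorded, the argument proceeds exactly as in the canonical case.
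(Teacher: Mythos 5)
Your proposal is correct and follows essentially the same route the paper intends: since $[\RMm_{g,\ddata}(\beta,\infty)]^{\red}$ is \emph{defined} by pulling back the reduced obstruction theory of $\UH_{g,\ddata}(\beta,\infty) \to \UM_{g,\ddata'}(\ainfty)$ along the strict Cartesian square in \eqref{diag:main-player}, the identity is an immediate instance of the Costello/Manolache virtual push-forward over the proper, birational bottom arrow $\UMm_{g,\ddata'}(\ainfty)\to\UM_{g,\ddata'}(\ainfty)$, exactly as you argue. The base-change compatibility of the cosection that you flag as the main obstacle is therefore not actually needed (it is built into the definition of the reduced cycle on $\RMm$), though recording it is harmless.
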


\subsection{Alignments}

Since the sheaf of monoids $\obD^M$ as in
\eqref{eq:component-max-degeneracy} does not have uniform maximal
degeneracies fiberwise, $\UMm_{[m]}$ does not admit the
$\curlywedge$-configuration, a property required for the reduced
theory as in \S \ref{sec:red-theory}.
This requires a further modification of $\UMm_{[m]}$ by aligning
$\{e_{\max,i} \}$ as follows.

\subsubsection{The stack of aligned log structures}\label{sss:aligned-log}

Recall from \cite[\S 8.1]{ACFW13} that an {\em aligned log structure}
on a scheme $\ul{S}$ is a locally free log structure $\cM_{S}$,
together with a sheaf of (possibly empty) finite subsets
$\obA_S \subset \ocM_{S}$ such that for each geometric point $s \in S$
the fiber $\ocM_{S}|_s \cong \NN^m$ has a basis
$\{e_1, e_2, \cdots, e_m\}$ and
\begin{equation}\label{eq:alignment-elements}
\obA_S|_{s} = \{ 0, e_1, e_1 + e_2, e_1 + e_2 + e_3, \cdots, e_1 + \cdots + e_m\} .
\end{equation}

By \cite[Proposition 8.1.2]{ACFW13}, the stack of aligned log structures $\cT_{\log}$ is algebraic. We view $\cT_{\log}$ as a log stack with the universal aligned log structure $\cM_{\log}$ and the universal subset $\obA_{\log} \subset \ocM_{\log}$. By \cite[Proposition 8.2.2 and 8.3.1]{ACFW13}, $\cT_{\log}$ is smooth and log smooth.

\subsubsection{The universal alignment}

Let $\ocM^a_{\log} \subset \ocM_{\log}$ be the free submonoid generated by $\obA_{\log}$. Denote by $\cM^a_{\log} = \ocM^a_{\log} \times_{\ocM_{\log}} \cM_{\log} \subset \cM_{\log}$ the corresponding sub-log structure, and by $\cT^a_{\log} = (\ul{\cT}_{\log}, \cM^a_{\log})$ the log stack.

Let $\Log^{fr} \subset \Log$ be the open substack parameterizing locally free log structures. Then we obtain a composition, called the {\em universal alignment}:
\begin{equation}\label{eq:alignment-morphism}
\xymatrix{
\ali \colon \cT_{\log} \ar[rr]^-{\cM^a_{\cT_{\log}} \subset \cM_{\cT_{\log}}} && \cT^a_{\log} \ar[rr]^-{\mbox{strict}} && \Log^{fr}.
}
\end{equation}

Let $S \to \cT_{\log}$ be a strict morphism given by an aligned log structure $(\cM_S, \obA_S)$ as in \S \ref{sss:aligned-log}. Consider the pull-back log structure $\cM^a_S = \cM^a_{\log}$. Then  fiberwise the non-zero elements in $\obA_S$ form a set of generators of $\ocM^a_S$.

\begin{proposition}\label{prop:alignment}
The morphism \eqref{eq:alignment-morphism} is proper, representable, birational, and log \'etale.
\end{proposition}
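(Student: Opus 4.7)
The strategy is to verify the four properties étale-locally on the target $\Log^{fr}$, reducing the statement to a concrete toric computation. I would first choose a strict étale chart $S \to \Log^{fr}$ on which the pulled-back locally free log structure $\cN$ admits a global basis $\{a_1, \ldots, a_m\}$ of its characteristic sheaf. Since locally free log structures of rank $m$ correspond to strict morphisms $S \to [\AA^m/\GG_m^m]$ equipped with its standard toric log structure, such charts cover $\Log^{fr}$, and it suffices to analyze the base change $\cT_{\log} \times_{\Log^{fr}} S \to S$ on each chart.

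Next, identify this base change explicitly. Fiberwise at a geometric point $s \in S$, the data $(\cM, \obA)$ over $S$ satisfying $\cM^a \cong \cN$ is equivalent to the choice of an ordering $(a_{\sigma(1)}, \ldots, a_{\sigma(m_s)})$ of the surviving irreducible generators of $\ocN|_s$; from such an ordering one recovers the basis $e_j = a_{\sigma(j)} - a_{\sigma(j-1)}$ of $\ocM|_s$ and the alignment $\obA|_s = \{0, a_{\sigma(1)}, a_{\sigma(1)} + a_{\sigma(2)}, \ldots\}$. Globally, this identifies $\cT_{\log} \times_{\Log^{fr}} S$ with the log modification of $S$ corresponding to the \emph{flag (barycentric) subdivision} of the cone attached to $\cN$, whose top-dimensional cones are indexed by orderings of the basis and are generated by the cumulative sums $a_{\sigma(1)}, a_{\sigma(1)} + a_{\sigma(2)}, \ldots, a_1 + \cdots + a_m$.

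With this local model in hand, the four properties follow from standard toric facts. Properness and log étaleness are immediate, because a log (toric) subdivision is a proper and log étale modification. Representability holds because once the fiberwise ordering is fixed, both $\obA$ and $\cM$ are uniquely determined by $\cN$, so the fiber over $(S, \cN)$ carries no additional stabilizers beyond those of $S$. Birationality follows since the subdivision is an isomorphism over the open substack where $\ocN$ has rank at most one (in particular over the dense locus of trivial log structure); the new rays produced by the subdivision lie interior to the original cone and thus do not affect the generic behavior of the map.

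The main obstacle will be verifying that this chart-local description glues consistently across overlapping charts. The alignment $\obA_S$ is defined as a global sheaf of subsets of $\ocM_S$, so one must check that the orderings of irreducibles implicit in the flag subdivision are coherent between different choices of local charts. This reduces to the intrinsic characterization of the irreducible generators of a locally free log structure (which is canonical up to permutation of the basis), together with the fact that the flag subdivision enjoys a universal property as the minimal log modification on which the alignment data $\obA$ extends to a global section. Consequently the chart-local identification descends to a global log modification, yielding the desired functorial description of $\ali$.
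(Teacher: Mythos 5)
Your reduction to the charts $\cA^m \to \Log^{fr}$ is exactly the paper's first move, but the heart of your argument --- the identification of the base change $\cT_{\log}\times_{\Log^{fr}}\cA^m$ with the flag subdivision of the cone of $\cA^m$ --- is asserted rather than proved, and the fiberwise analysis you offer does not establish it. Your description of the fibers only produces the objects corresponding to \emph{strict} orderings of the generators, i.e.\ to the maximal cones of the subdivision. The subdivision, however, has additional points over the deep strata of $\cA^m$, namely those corresponding to the new interior cones where consecutive partial sums collide (already for $m=2$, the diagonal ray). Whether $\cT_{\log}$ actually contains such points, what the universal aligned log structure $(\cM,\obA)$ looks like there, and what its relative automorphisms are, is precisely the delicate content of the statement: over a DVR along which two generators of the pulled-back log structure degenerate ``at the same rate'', neither strict ordering extends across the closed point (the would-be difference $e_2$ would have invertible image under the structure map, forcing $e_2=0$ in $\ocM$), so the limit object, if it exists, must live over this collision locus. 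Existence and uniqueness of these limits is exactly the valuative criterion for properness, and the possible extra automorphisms there are exactly where representability could fail; so quoting ``a toric subdivision is proper, log \'etale and representable'' begs the question until the identification is established at those strata.

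This is where the paper does real work: it identifies the chart base change $\cT_{[\leq m]}$ with the Abramovich--Fantechi moduli of stable configurations of $m$ (possibly colliding) points in expansions of the pair $(\cA,\ainfty)$, and imports projectivity from \cite[Proposition~1.5.4]{AbFa17} (Proposition~\ref{prop:configuration-projective}); log \'etaleness is checked directly from \eqref{eq:alignment-elements} via Kato's criterion, and birationality from $\ali$ being an isomorphism over the rank $\leq 1$ locus, which you also have. Your subdivision picture is a plausible and potentially more self-contained alternative, but to make it a proof you would need to (i) exhibit, for each degenerate (partition-type) cone, the corresponding aligned log structure together with its map to $\Log^{fr}$ and compute its automorphisms, and (ii) prove the universal property of the resulting modification in the sense of Proposition~\ref{prop:universal-alignment}, so that properness and representability genuinely follow. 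The gluing issue you flag at the end is comparatively minor; the collision locus is the gap.
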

\begin{proof}
The statement that \eqref{eq:alignment-morphism} is log \'etale can be checked using \eqref{eq:alignment-elements} and the local criterion  \cite[Theorem (3.5)]{Ka88}.  Since $\ali_{\leq 1}$ is an isomorphism, the birationality follows. The representability and properness will follow from Proposition \ref{prop:configuration-projective} below.
\end{proof}

\subsubsection{Twisted aligned log structure}\label{sss:twisted-align-log}
A {\em twisted aligned log structure} on a scheme $\ul{S}$ is a locally free log structure $\cM_S$ together with a sheaf of subsets  $\obA_S \subset \ocM_{S}$, such that over each geometric point $s \in \ul{S}$ the fiber  $\ocM_S|_{s} \cong \NN^m$ has a basis  $\{e_1, \cdots, e_m\}$ and
\begin{equation}\label{eq:tw-alignment-elements}
\obA_S|_{s} = \{ 0, r_1 e_1, r_1 e_1 + r_2 e_2, r_1 e_1 + r_2 e_2 + r_3 e_3, \cdots, r_1 e_1 + \cdots + r_m e_m \} ,
\end{equation}
where the collection of $r_i \in \NN_{> 0}$ is called the {\em twisting choice}.
Let $\cT^{tw}_{\log}$ be the stack of twisted aligned log structures.  By \cite[\S 7.2]{ACFW13}, $\cT^{tw}_{\log}$ is a smooth and log
smooth algebraic stack with a tautological morphism
\[
\cT^{tw}_{\log} \to \cT_{\log}
\]
which is isomorphic over the open dense substack with the trivial log structure.
On the level of characteristic sheaves,
$\ocM_{\cT^{tw}_{\log}} \leftarrow \ocM_{\cT_{\log}}|_{\cT^{tw}_{\log}} $ is defined by
$r_i e_r \mapsfrom e_i$ using the notations in
\eqref{eq:alignment-elements} and \eqref{eq:tw-alignment-elements}.
Thus smooth-locally, this morphism takes the $r_i$-th root
stack along the irreducible boundary divisor $\Div(e_i) \subset \cT_{\log}$
corresponding to the local section $e_i$ of $\ocM_{\cT_{\log}}$, see \eqref{eq:global-log-divisor}.

One can also fix a choice of twisting.
To do this, take a discrete set $\Lambda$, and define the stack
$\cT^{\Lambda}_{\log}$ with a strict morphism
$\cT^{\Lambda}_{\log} \to \cT_{\log}$ which parameterizes aligned log
structures such that locally the generators of
$\ocM_{\cT^{\Lambda}_{\log}}$ are labeled by $\Lambda$, see \cite[\S
7.1,~\S 8.1]{ACFW13}.
A {\em twisting choice} is a map $\fr \colon \Lambda \to \NN_{>0}$.
Thus, for each strict geometric point $S \to \cT^{\Lambda}_{\log}$,
$\fr$ assigns integer $r_i = \fr(e_i)$ to each generator $e_i$ as in
\eqref{eq:tw-alignment-elements}.
Denote by $\cT^{\fr}_{\log}$ with the twisting choice specified by
$(\Lambda, \fr)$.
There is a strict tautological morphism
\[
\cT^{\fr}_{\log} \to \cT^{tw}_{\log}\times_{\cT_{\log}}\cT^{\Lambda}_{\log}
\]
isomorphic onto its image, see \cite[Lemma 7.2.4 (2)]{ACFW13}.

\subsubsection{The truncated alignment}

Let $\Log^{fr}_{\leq m} \subset \Log^{fr}$ be the open dense substack
over which the fibers of $\ocM_{\Log^{fr}_{\leq m}}$ have rank
$\leq m$. Let $\Log^{fr}_{=m} \subset \Log^{fr}_{\leq m}$ be the
strict closed substack over which the fibers of
$\ocM_{\Log^{fr}_{\leq m}}$ have rank precisely $m$.
The morphism \eqref{eq:alignment-morphism} restricts to the {\em
  truncated alignment}:
\begin{equation}\label{eq:trancated-alignment-morphism}
\ali_{\bullet} := \ali|_{\cT_{\bullet}} \colon\cT_{\bullet} := \cT_{\log}\times_{\Log^{fr}}\Log^{fr}_{\bullet} \longrightarrow \Log^{fr}_{\bullet}
\end{equation}
where $\bullet$ represents $\leq m$ or $=m$. For later use, denote by
$\cM^a_{\bullet} := \ali_{\bullet}^*\cM_{\Log^{fr}_{\bullet}}$.

We call $\cT_{\leq m}$ {resp. $\cT_{= m}$} the {\em stack of aligned
  log structures of length $\leq m$ (resp. $=m$)}. We also obtain the
universal subsets $\obA_{\leq m} = \obA_{\log}|_{\cT_{\leq m}}$ and
$\obA_{= m} = \obA_{\log}|_{\cT_{= m}}$.

\subsubsection{The labeled alignment}

We are also interested in the case where all elements in
\eqref{eq:alignment-elements} are labeled.
Consider $\cA^m \cong \prod_{i \in [m]}\cA_{i}$ with the $m$ copies of
$\cA_i \cong \cA$ labeled by the set $[m] = \{1, 2, \cdots, m\}$.
Taking the base change along the strict morphism
$\cA^m \to \Log^{fr}_{\leq m}$, we obtain the {\em $[m]$-labeled
  alignment}:
\[
\ali_{[\leq m]} := \ali_{\leq m}\times_{\Log^{fr}_{\leq m}}\cA^m \colon \cT_{[\leq m]} := \cT_{\leq m}\times_{\Log^{fr}_{\leq m}}\cA^m \to \cA^m.
\]
For any strict morphism $S \to \cT_{[\leq m]}$ induced by an aligned log structure $(\cM_S, \obA_S)$, non-zero sections in  $\obA_S$ are labeled by $[m]$.

For later use, we also introduce the {\em $[m]$-labeled alignment of length $m$}:
\begin{equation}\label{eq:labeled-n-alignment}
  \ali_{[m]} \colon \cT_{[m]} := \cT_{[\leq m]}\times_{\cA^m} \ainfty^m \to \ainfty^{m}
\end{equation}
obtained by pulling back $\ali_{[\leq m]}$ along $\ainfty^{m} \to \cA^m$.

\begin{proposition}\label{prop:configuration-projective}
  $\ali_{[\leq m]}$ is birational and projective.
\end{proposition}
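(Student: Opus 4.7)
The plan is to identify $\ali_{[\leq m]}$ as the log modification of $\cA^m$ induced by the \emph{permutohedral} (or barycentric) subdivision of the positive orthant $\sigma_m = \mathbb{R}_{\geq 0}^m$, and then to invoke the classical projectivity of that subdivision.

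First I would record that $\ali_{[\leq m]}$ is log \'etale as a base change of $\ali$, which is log \'etale by Proposition~\ref{prop:alignment}. The target $\cA^m$ is log smooth with Artin fan $\sigma_m$ together with its decomposition into coordinate faces, so $\ali_{[\leq m]}$ corresponds to a subdivision of $\sigma_m$ in the sense of Kato's theory of log modifications; in particular the question of (representability and) projectivity of $\ali_{[\leq m]}$ reduces to smoothness and projectivity of this subdivision.

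Next I would compute this subdivision tropically. At a geometric point of $\cA^m$ lying in the open stratum of the face spanned by $\{e_i : i \in I\}$ for some $I \subseteq [m]$, the characteristic monoid is $\NN^I$, and an $[m]$-labeled alignment in the sense of \S\ref{sss:aligned-log} is precisely a choice of total ordering $(i_1, \dots, i_k)$ of $I$. Such an ordering lifts the face $\mathbb{R}_{\geq 0}^I$ to the simplicial cone generated by the partial sums $e_{i_1},\, e_{i_1}+e_{i_2},\, \dots,\, e_{i_1}+\cdots+e_{i_k}$, and ranging over all pairs $(I, \mathrm{ordering})$ assembles these cones into the barycentric subdivision $\widetilde{\sigma}_m$ of $\sigma_m$. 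It then remains to verify that $\widetilde{\sigma}_m$ is smooth and projective. Smoothness is immediate, since each maximal cone is generated by the $\mathbb{Z}$-basis $\{e_{\sigma(1)}, e_{\sigma(1)}+e_{\sigma(2)}, \dots, e_{\sigma(1)}+\cdots+e_{\sigma(m)}\}$ indexed by $\sigma \in S_m$. Projectivity is classical: the associated toric stack is the permutohedral (Losev--Manin type) stack, realized as an iterated torus-equivariant blow-up of $\cA^m$ along closed toric strata ordered by increasing dimension, with strictly convex piecewise-linear support function $\phi(x) = \sum_{i=1}^{m} i \cdot x_{(i)}$, where $x_{(1)} \leq \cdots \leq x_{(m)}$ denotes the non-decreasing rearrangement of the coordinates of $x$. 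The standard correspondence between projective smooth subdivisions of Artin fans and projective representable log modifications then yields that $\ali_{[\leq m]}$ is projective and representable, while birationality is clear since $\ali_{[\leq m]}$ is an isomorphism over the open locus of trivial log structure.

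The hard part, and where I expect the only real work to lie, is the identification in the previous paragraph: rigorously matching the universal property of $\cT_{[\leq m]}$ from~\cite{ACFW13} with the combinatorial data of $\widetilde\sigma_m$. Concretely, I would need to check that a labeled alignment of a rank-$k$ characteristic monoid $\NN^I$ cuts out exactly the simplicial cone of partial sums (rather than a finite cover or a further subdivision), and that this matching is compatible with generization so as to glue to the whole subdivision $\widetilde\sigma_m$. Once this toric identification is in place, the desired conclusions of projectivity and birationality follow from the standard facts recalled above.
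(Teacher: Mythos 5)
Your route is genuinely different from the paper's, and the combinatorics you describe is correct: over a point of $\cA^m$ whose characteristic monoid is $\NN^I$, a labeled alignment is exactly a bijection of $I$ with the partial sums of a basis of the aligned monoid, and dually these index the chambers $\{0\le x_{\tau(1)}\le\cdots\le x_{\tau(|I|)}\}$ of the braid arrangement restricted to the orthant, i.e.\ the barycentric subdivision of $\mathbb{R}_{\ge 0}^I$; each chamber is unimodular, $\max_{\sigma}\sum_i \sigma(i)x_i$ is a strictly convex support function, and birationality over the locus of trivial log structure is immediate. The paper argues quite differently: it constructs, in both directions, an isomorphism of $\cT_{[\le m]}$ with the Abramovich--Fantechi moduli $X^{[m]}$ of degree-$m$ stable configurations of points in expansions of the pair $(\cA,\ainfty)$, matching the section landing in the $k_i$-th bubble with the partial sum $e_1+\cdots+e_{k_i}$, and then quotes the projectivity of $X^{[m]}$. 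Your toric argument, if completed, would be self-contained (no appeal to the expansion machinery) and would likewise deliver the representability and properness of $\ali$ needed for Proposition~\ref{prop:alignment}.

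However, as written there is a genuine gap, and you have located it yourself: you never prove that $\ali_{[\le m]}$ \emph{is} the log modification of $\cA^m$ associated to this subdivision, you only state that this identification must be checked. That identification is the entire content of the proposition --- it carries the same weight as the paper's two-sided construction of the isomorphism with $X^{[m]}$ --- so the argument is conditional on its hardest step. It can be closed along the following lines: observe that $\cT_{[\le m]}=\cT_{\obA}$ for $S=\cA^m$ and $\obA$ the sheaf consisting of $0$ and the $m$ labeled generators of $\ocM_{\cA^m}$, so that by the universal property of Proposition~\ref{prop:universal-alignment} (whose proof does not use the present proposition, so there is no circularity) a morphism $T\to\cT_{[\le m]}$ over $\cA^m$ is the same as a morphism $T\to\cA^m$ along which the images $\delta_1,\dotsc,\delta_m$ of the generators become fiberwise totally ordered in $\ocM_T$; on the other hand, a lift of $T\to\cA^m$ to the chart of the chamber indexed by an ordering $\tau$ exists, and is unique, precisely when the successive differences $\delta_{\tau(i+1)}-\delta_{\tau(i)}$ lie in $\ocM_T$, which is the standard universal property of a subdivision. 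Matching these two universal properties identifies $\cT_{[\le m]}$ with the subdivision (in particular it rules out the finite-cover or finer-subdivision scenarios you worry about), after which your projectivity and birationality conclusions follow. Without an argument of this kind, or a direct construction of mutually inverse morphisms as in the paper, the key claim remains unproven.
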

\begin{proof}
  Note that the restriction
  $\ali_{[\leq m]}|_{\cT_{[\leq 1]}} = \ali_{[\leq 1]}$ over the dense
  open $\cT_{[\leq 1]} \subset \cT_{[\leq m]}$ is the identity.
  It remains to show the projectivity.
  In the following, we will show that $\cT_{[\leq m]}$ is the moduli
  $X^{[m]}$ of degree $m$ stable configurations of points in the pair
  $X = (\cA, \infty)$ with $\ali_{[\leq m]}$ the evaluation given by
  the $m$ sections labeled by $[m]$, see \cite[Definition
  1.5.2]{AbFa17}.
  Then the projectivity follows from \cite[Proposition 1.5.4]{AbFa17}.

  We view $X^{[m]}$ as a log stack with the canonical log structure
  given by its universal expansions.
  For any strict morphism $S \to X^{[m]}$, denote by $S[m] \to S$ the
  family of expansions.
  Over each geometric point $s \in S$, we have the fiber of length
  $k \leq m$ (see \cite[Convention 1.4.1]{AbFa17})
  \[
    S[m]_s = X \cup_{\infty = \infty_{-}} P_1  \cup_{\infty_{+} = \infty_{-}} \cdots \cup_{\infty_{+} = \infty_{-}} P_k.
  \]
  The fiber $\ocM_{S}|_{s} \cong \NN^k$ is a free monoid with
  generators $e_1, \cdots, e_k$ such that $e_i$ corresponds to the
  smoothing parameter of the node given by $\infty_{-} \subset P_i$.
  Each $P_i$ has a unique open dense point, which contains at least
  one section by stability.
  For the $k$th section $\epsilon_k \colon S \to S[m]$, we define an
  element $\delta_{k,s} = e_1 + e_2 + \cdots + e_{k_i}$ if the fiber
  $\epsilon_k|_{s}$ lands in $P_{k_i}$. This defines a set
  $\obA_{S,s} := \{\delta_{k,s} \ | \ k \in [m]\} \cup \{0\}$ of
  length $\leq m$ since sections can intersect. Observe that the
  fiberwise defined $\obA_{S,s}$ glues to a sheaf of totally ordered
  subsets $\obA_S \subset \ocM_S$ labeled by $[m]$. This defines a
  morphism $S \to \cT_{[\leq m]}$, hence $X^{[m]} \to \cT_{[\leq m]}$.

  To obtain the inverse $\cT_{[\leq m]} \to X^{[m]}$, note that
  $\cT_{[\leq m]}$ carries a natural family of expansions of length
  $\leq m$ by \cite[Proposition 8.1.2]{ACFW13}.
  The log structure $\cM_{[\leq m]}$ is the canonical log structure of
  the expansions.
  For a strict morphism $S \to \cT_{[\leq m]}$, let $S[m] \to S$ be
  the family of expansions pulled back from $\cT_{[\leq m]}$.
  The section $\epsilon_k \colon S \to S[m]$ is constructed using the
  section in $\obA_{[\leq m]}$ labeled by $k\in [m]$ by reversing the
  above construction.

  This completes the proof.
\end{proof}

\begin{proof}[Proof of Proposition \ref{prop:alignment}]
To finish the proof of Proposition \ref{prop:alignment}, note that $\Log^{fr}$ is covered by charts $\{\cA^m \to \Log^{fr}\}_{m \in \NN}$. Thus the representability and properness of \eqref{eq:alignment-morphism} can be checked over each chart, which is precisely Proposition \ref{prop:configuration-projective}.
\end{proof}

Putting Proposition \ref{prop:alignment} and \ref{prop:configuration-projective} together, we have

\begin{corollary}\label{cor:labeled-alignment}
 $\ali_{[m]}$ is projective and log \'etale.
\end{corollary}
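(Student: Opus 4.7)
The plan is to deduce Corollary \ref{cor:labeled-alignment} directly from Proposition \ref{prop:alignment} and Proposition \ref{prop:configuration-projective} by carefully tracking how the two properties (projectivity and log \'etaleness) behave under the two successive fs base changes that build $\ali_{[m]}$ from $\ali$.

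First, I would observe that $\ali_{[\leq m]}$ already carries both properties. Indeed, by construction it is the base change of the truncated alignment $\ali_{\leq m}$ along the strict smooth morphism $\cA^m \to \Log^{fr}_{\leq m}$. Proposition \ref{prop:alignment} says $\ali$ is log \'etale, hence so is its restriction $\ali_{\leq m}$ over the open substack $\Log^{fr}_{\leq m} \subset \Log^{fr}$; because log \'etaleness is stable under arbitrary fs base change, $\ali_{[\leq m]}$ is log \'etale. Its projectivity is exactly the content of Proposition \ref{prop:configuration-projective}.

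Next, by the definition in \eqref{eq:labeled-n-alignment}, the morphism $\ali_{[m]}$ is obtained from $\ali_{[\leq m]}$ by fs base change along the strict closed immersion $\ainfty^{m} \hookrightarrow \cA^{m}$. Log \'etaleness is preserved under arbitrary fs base change, giving log \'etaleness of $\ali_{[m]}$. For projectivity, I would invoke the stability of (relative) projectivity under base change on underlying stacks (noting that strictness of $\ainfty^{m} \to \cA^{m}$ means the underlying base change agrees with the fs base change here, so no subtle saturation issues intervene).

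The only point requiring a moment of care is to justify that the fs fiber product used in \eqref{eq:labeled-n-alignment} agrees with the ordinary (schematic) fiber product on underlying stacks, so that ``projective'' really does transfer. This is immediate from strictness of $\ainfty^{m} \hookrightarrow \cA^{m}$: strict base changes in the fs category coincide with the underlying base change. I expect this to be the only substantive thing to verify; everything else is a formal application of stability under base change. No new geometric input beyond Propositions \ref{prop:alignment} and \ref{prop:configuration-projective} should be needed.
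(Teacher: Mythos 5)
Your proposal is correct and follows essentially the same route as the paper, which obtains the corollary simply by "putting together" Proposition \ref{prop:alignment} and Proposition \ref{prop:configuration-projective}; you have merely spelled out the base-change bookkeeping (including the observation that the strict base change along $\ainfty^m \hookrightarrow \cA^m$ agrees with the underlying fiber product) that the paper leaves implicit.
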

Note that $\ali_{[m]}$ is birational iff $m=1$.
Indeed, $\ali_{[1]}$ is an isomorphism.

\subsubsection{The universality}\label{sss:align-universality}

Let $S$ be a log stack with a sheaf of finite subsets
$\obA \subset \ocM_S$.
A morphism $h\colon T \to S$ is called an {\em alignment} of $\obA$ if
the image of
\[
h^{-1}\obA \to f^{-1}\ocM_S \to \ocM_T
\]
is totally ordered with respect to $\poleq_{\ocM_T}$.

Let $\ocM^{\obA}_{S}$ be the sheaf of free monoids over $\ul{S}$ whose
fiberwise generators are labeled by elements of $\obA$.
Thus the inclusion $\obA \subset \ocM_S$ induces a morphism
$\ocM^{\obA}_S \to \ocM_S$ hence a log structure
$\cM^{\obA}_{S} = \cM_{S}\times_{\ocM_S}\ocM^{\obA}_S$ over $\ul{S}$
with the structure arrow defined by the composition
$\cM^{\obA}_{S} \to \cM_{S} \to \cO_{S}$.
We obtain a composition
\[
\xymatrix{
S \ar[rr]^-{\cM_S \leftarrow \cM^{\obA}_S} && (\ul{S}, \cM^{\obA}_S) \ar[rr]^-{\cM^{\obA}_S} && \Log^{fr}
}
\]
By Proposition \ref{prop:alignment}, the projection is a proper, representable and log \'etale:
\begin{equation}\label{eq:align-A}
\ali_{\obA} \colon \cT_{\obA} := \cT_{\log}\times_{\Log^{fr}}S \to S
\end{equation}

Denote by $\cM^a_{\cT_{\obA}}$ the pull-back
$\cM^a_{{\log}}|_{\cT_{\obA}} \cong \cM^{\obA}_{S}|_{\cT_{\obA}}$.
The sheaf $\obA_{\log}|_{\cT_{\obA}}$ is fiberwise given by the set of
generators of $\ocM^a_{\cT_{\obA}}$, hence we have an identification
$\obA_{\log}|_{\cT_{\obA}} = \obA|_{\cT_{\obA}}$.
Therefore, the image of the composition
\[
\obA|_{\cT_{\obA}}  = \obA_{\log}|_{\cT_{\obA}}\to \ocM^a_{\cT_{\obA}}  \to \ocM_{\cT_{\obA}}
\]
is totally ordered with respect to $\poleq_{ \ocM_{\cT_{\obA}}}$. This shows that the projection $\cT_{\obA} \to S$ is an alignment of $\obA$. Indeed, this is universal:

\begin{proposition}\label{prop:universal-alignment}
The category of alignments of $\obA$ over fs log schemes is represented by the log algebraic stack $\cT_{\obA}$.
\end{proposition}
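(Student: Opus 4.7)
The projection $\ali_{\obA}\colon \cT_{\obA} \to S$ has been constructed in the paragraph preceding the statement and exhibited there as an alignment of $\obA$. It remains to verify the universal property: given any alignment $h\colon T \to S$, there exists a unique (up to unique isomorphism) morphism $\tilde h\colon T \to \cT_{\obA}$ over $S$.

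Unwinding the fiber-product definition $\cT_{\obA} = \cT_{\log} \times_{\Log^{fr}} S$ and the description of $\cT_{\log}$ as the moduli of aligned log structures, specifying $\tilde h$ amounts to producing an aligned log structure $(\cM, \obA')$ on $\ul T$ in the sense of \S\ref{sss:aligned-log}, together with a morphism of log structures $\cM \to \cM_T$, such that the associated locally free log structure $\cM^a$ is identified in a label-preserving way with the pullback $h^*\cM^{\obA}_S$, and such that the restriction $\cM^a \to \cM_T$ coincides with the canonical composition $h^*\cM^{\obA}_S \to h^*\cM_S \to \cM_T$.

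The needed data is canonically produced from the alignment. Fiberwise, write the totally ordered image as $\obA_T = \{0 = b_0 \prec b_1 \prec \cdots \prec b_k\}$ in $\ocM_T$; the composition $h^{-1}\obA \to \ocM_T$ pulls this order back to the labels in $\obA$. Using this ordering, I would construct an aligned log structure $(\cM, \obA')$ on $\ul T$ whose characteristic $\ocM$ is freely generated by the differences $b_i - b_{i-1}$ (with extra basis generators inserted when multiple labels collapse to the same $b_i$), and whose $\obA'$ consists of the corresponding partial sums. The log structure map $\cM \to \cM_T$ is then determined by lifting the $b_i$ from $\ocM_T$ to $\cM_T$, up to the canonical $\cO_T^*$-ambiguity inherent in any log structure. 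By construction, $\cM^a$ matches $h^*\cM^{\obA}_S$ with matching labels, and the required compatibility is immediate. Functoriality in $T$ and uniqueness are then forced, since the aligned log structure is determined by the alignment data.

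The main technical subtlety lies in the label-handling: when distinct elements of $\obA$ share the same image in $\ocM_T$, the corresponding generators in $\ocM$ require additional care and the lift $\tilde h$ is unique only up to canonical isomorphism, consistent with the stack-theoretic (rather than set-theoretic) nature of representability. All remaining verifications are local computations on characteristic monoids.
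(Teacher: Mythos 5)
Your overall strategy is the paper's: verify the universal property by taking an alignment $h\colon T \to S$, fiberwise ordering the elements of $\obA$ by their images in $\ocM_T$, and building an aligned log structure on $\ul T$ whose characteristic is freely generated by the successive differences, with $\obA_T$ given by the partial sums. However, the step where you actually produce the log structure and its morphism to $\cM_T$ has a genuine gap. Sections of $\ocM_T$ do not lift globally to $\cM_T$, so "lifting the $b_i$ from $\ocM_T$ to $\cM_T$, up to the canonical $\cO_T^*$-ambiguity" does not by itself define a log structure, nor the required morphism of log structures to $\cM_T$, nor does it explain why local choices glue; similarly, "extra basis generators inserted when multiple labels collapse" leaves the colliding-labels case (and the compatibility of fiberwise choices under generization) unresolved rather than resolved.

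The paper's proof closes exactly these holes with two canonical devices you should adopt. First, the characteristic sheaf is not built abstractly from the images $b_i$: one works inside the globally defined sheaf $(\ocM_S^{\obA}|_T)^{gp}$, whose fiberwise generators $\delta_1,\dots,\delta_m$ are formal elements labeled by $\obA$, and defines $\ocM'_T \subset (\ocM_S^{\obA}|_T)^{gp}$ fiberwise as the free monoid on $\delta_1, \delta_2-\delta_1,\dots,\delta_m-\delta_{m-1}$ (after reordering so that the images satisfy $\delta'_1 \poleq \cdots \poleq \delta'_m$). Colliding labels then automatically contribute distinct formal generators (their differences map to $0$ in $\ocM_T$, which is harmless), the gluing question becomes one about subsheaves of a fixed ambient sheaf, and the alignment hypothesis is used precisely to see that $\ocM'_T \to \ocM_T^{gp}$ factors through $\ocM_T$. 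Second, the log structure is defined with no choice of lifts as the fiber product $\cM'_T := \cM_T \times_{\ocM_T} \ocM'_T$, which comes equipped with its morphism to $\cM_T$; the identification of the associated sub-log structure generated by $\obA_T$ with $\cM^{\obA}_S|_T$ is then tautological, since the partial sums of the new generators are exactly the $\delta_i$. Without these (or equivalent) canonical constructions, your plan does not yet yield a well-defined $S$-morphism $T \to \cT_{\obA}$, and the uniqueness/functoriality you assert is not "forced" from the data as you have set it up.
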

\begin{proof}
Let $h \colon T \to S$ be an alignment of $\obA$. Consider the sequence of morphisms of log structures
$
\cM_{S}^{\obA}|_{T} \longrightarrow \cM_{S}|_{T} \stackrel{h^{\flat}}{\longrightarrow} \cM_T.
$
We construct a sub-sheaf of monoids $\ocM'_T \subset (\ocM_{S}^{\obA}|_{T})^{gp}$ as follows. For a geometric point $t \to \ul{T}$, let
$\{ \delta_1, \cdots, \delta_m\}$ be the set of generators of $\ocM_{S}^{\obA}|_{t}$. Denote by $\delta'_i \in \ocM_T|_{t}$ the image of $\delta_i$. After reordering, we may assume that
\[
\delta'_1 \poleq_{\ocM_T} \delta'_2 \poleq_{\ocM_T} \cdots \poleq_{\ocM_T} \delta'_m.
\]
Fiberwisely define $\ocM'_T|_t$ to be the free monoid generated by
$
\delta_1, \delta_2 - \delta_1, \cdots, \delta_m - \delta_{m-1}.
$

The fiberwise construction glues to a subsheaf of monoids $\ocM'_T \subset (\ocM_{S}^{\obA}|_{T})^{gp}$.
The fact that $h$ is an alignment of $\obA$ implies that
$\delta'_{i+1} - \delta'_i \in \ocM_T$.
Thus the composition
$\ocM'_T \subset (\ocM_{S}^{\obA}|_{T})^{gp} \to \ocM_T^{gp}$ factors
through $\ocM'_T \to \ocM_T$.
This defines a log structure $\cM'_T := \cM_T \times_{\ocM_T}\ocM'_T$.
By construction, the pair $(\obA_T, \cM'_T)$ defines an aligned
log structure on $\ul{T}$, inducing a strict morphism $(\ul{T}, \cM'_T) \to \cT_{\log}$. This leads to a commutative diagram
\[
\xymatrix{
T \ar[rr] \ar[d] && S \ar[d] \\
\cT_{\log} \ar[rr] && \Log^{ft}
}
\]
with the left arrow given by the composition $ T \to (\ul{T}, \cM'_T) \to \cT_{\log}$. Thus we obtain an $S$-morphism $T \to \cT_{\obA}$ such that the sheaf of ordered sets $(\obA_T, \poleq_{\ocM_T})$ is the pull-back of $(\obA|_{\cT_{\obA}}, \poleq_{\ocM_{\cT_{\obA}}})$.
\end{proof}

\subsubsection{\texorpdfstring{$\ali_{\obA}$}{ali} in the labeled case}\label{sss:labeled-alignment}

Consider a sheaf of finite subsets $\obA \subset \ocM_S$ over $S$ as
in \S \ref{sss:align-universality}.
An {\em $[m]$-labeling} of $\obA$ is a strict morphism
$(\uS, \cM^{\obA}_{S}) \to \cA^m \cong \prod_{i\in [m]}\cA_{i}$ with
the $m$ copies of $\cA_i \cong \cA$ labeled by $[m]$.
Thus, each non-zero (local) section of $\obA$ is uniquely labeled by
an element of $[m]$.

In case $(\uS, \cM^{\obA}_{S}) \to \cA^m$ factors through $\ainfty^m$,
note that $\obA \setminus \{0\}$ as the sheaf of set of generators of
$\ocM_{\ainfty^m}|_{S}$ is a constant sheaf with $m$ elements.

\begin{proposition}\label{prop:labeled-alignment}
Suppose $\obA$ is labeled. Then $\ali_{\obA}$ in \eqref{eq:align-A} is projective and log \'etale.
\end{proposition}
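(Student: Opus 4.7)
The plan is to reduce the statement to Proposition~\ref{prop:configuration-projective}, which establishes projectivity of the labeled alignment $\ali_{[\leq m]}\colon \cT_{[\leq m]} \to \cA^m$. By hypothesis, the labeling of $\obA$ is a strict morphism $(\uS, \cM^{\obA}_S) \to \cA^m$, and composing with the tautological strict morphism $\cA^m \to \Log^{fr}$ yields precisely the classifying morphism of $\cM^{\obA}_S$. Hence the defining morphism $S \to \Log^{fr}$ of $\cT_{\obA}$ in \eqref{eq:align-A} factors as
\[
S \longrightarrow (\uS, \cM^{\obA}_S) \longrightarrow \cA^m \longrightarrow \Log^{fr}.
\]

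The next step is to use this factorization and associativity of fs fiber products to identify
\[
\cT_{\obA} \;=\; \cT_{\log}\times_{\Log^{fr}} S \;\cong\; \big(\cT_{\log}\times_{\Log^{fr}}\cA^m\big)\times_{\cA^m} S \;=\; \cT_{[\leq m]}\times_{\cA^m} S,
\]
which gives a Cartesian diagram
\[
\xymatrix{
\cT_{\obA} \ar[r] \ar[d]_{\ali_{\obA}} & \cT_{[\leq m]} \ar[d]^{\ali_{[\leq m]}} \\
S \ar[r] & \cA^m.
}
\]
Once this is in place, both conclusions follow by base change: projectivity of $\ali_{\obA}$ follows from projectivity of $\ali_{[\leq m]}$ (Proposition~\ref{prop:configuration-projective}) since projectivity of representable morphisms is stable under arbitrary base change, and log étaleness of $\ali_{\obA}$ follows from log étaleness of the universal alignment $\ali$ (Proposition~\ref{prop:alignment}) since log étaleness is stable under fs base change.

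The main technical point is to justify the identification of $\cT_{\obA}$ with the fs fiber product $\cT_{[\leq m]}\times_{\cA^m} S$. This is slightly delicate because the morphism $S \to \cA^m$ need not be strict in general. However, the factorization above goes through the strict morphism $(\uS, \cM^{\obA}_S)\to \cA^m$ coming from the labeling, and $\cT_{[\leq m]} \to \cA^m$ is log étale hence integral in a strong enough sense that fs and fine fiber products agree after this base change. I would verify this either directly by the universal property via Proposition~\ref{prop:universal-alignment} (matching objects of $\cT_{\obA}$ over a test scheme $T$ with pairs of a $T$-point of $S$ and an alignment of the pulled-back $\obA$, which is equivalent via the labeling to a $T$-point of $\cT_{[\leq m]}$ lying over the corresponding $T$-point of $\cA^m$), or by appealing to the general compatibility of fs base change with log étale morphisms. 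This is the only nontrivial bookkeeping; once the Cartesian square is established, the rest is automatic.
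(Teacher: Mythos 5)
Your proof is correct and is essentially the paper's own argument: the paper proves the proposition precisely by exhibiting the Cartesian squares
\[
\xymatrix{
\cT_{\obA} \ar[r] \ar[d]_{\ali_{\obA}} & \cT_{[\leq m]} \ar[r] \ar[d]_{\ali_{[\leq m]}} & \cT_{\log} \ar[d]^{\ali} \\
S \ar[r] & \cA^m \ar[r] & \Log^{fr}
}
\]
in the fs category (the left square being exactly your identification $\cT_{\obA}\cong \cT_{[\leq m]}\times_{\cA^m}S$, which follows formally from the factorization through the labeling and associativity of fs fiber products) and then invoking Corollary~\ref{cor:labeled-alignment}, i.e.\ Propositions~\ref{prop:configuration-projective} and~\ref{prop:alignment}, via base change. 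Your extra worry about strictness and fine-versus-fs fiber products is unnecessary but harmless, since the identification is purely formal once $S\to\Log^{fr}$ is factored through $\cA^m$.
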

\begin{proof}
This follows from Corollary \ref{cor:labeled-alignment} and the following Cartesian squares in the fs category
\[
\xymatrix{
\cT_{\obA} \ar[rr] \ar[d]_{\ali_{\obA}} && \cT_{[\leq m]} \ar[rr] \ar[d]_{\ali_{[\leq m]}} && \cT_{\log} \ar[d]^{\ali} \\
S \ar[rr] && \cA^m \ar[rr] && \Log^{fr}
}
\]
\end{proof}

\subsubsection{Order removing}

Suppose the sheaf of subsets $\obA \subset \ocM_S$ over $S$ is
$[m]$-labeled.
Consider the subset $[m'] \subset [m]$ for some
$m' \leq m$.
This induces a subsheaf $\obA' \subset \obA$ by taking
elements labeled by $[m']$.
Since any alignment of $\obA$ is
automatically an alignment of $\obA'$, this leads to a natural
morphism over $S$:
\begin{equation}\label{eq:order-removing}
  \ali_{\obA \supset \obA'} \colon \cT_{\obA} \to \cT_{\obA'}.
\end{equation}
By Proposition \ref{prop:labeled-alignment}, the morphism $\ali_{\obA' \subset \obA}$ is projective over $S$.

\subsection{Maps with aligned maximal degeneracies and their virtual cycles}

\subsubsection{Universal punctured  maps}
Consider the maximal degeneracies \eqref{eq:component-max-degeneracy}
over $\UMm_{[m]}$.
Define
$\obD^M_{\setminus [i]} = \{e_{\max, i+1}, e_{\max,i+2}, \cdots,
e_{\max,m}\}$ for $i = 0,\cdots, m-1$, hence a decreasing filtration
of constant sheaves
\[
\obD^M =: \obD^M_{\setminus [0]}  \supset \cdots \supset \obD^{M}_{\setminus [m-2]} \supset   \obD^{M}_{\setminus [m-1]}.
\]

Denote by $\AMm_{\setminus[i]} = \cT_{\obD^M_{\setminus [i]}}$ the
stack of alignments of $\obD^M_{\setminus [i]}$ as in
\S\ref{sss:labeled-alignment}.
It parameterizes punctured maps in $\UMm_{[m]}$ together with an
alignment of $\obD^M_{\setminus [i]}$.
By \eqref{eq:order-removing}, the decreasing filtration induces a
sequence of projective and log \'etale morphisms removing alignments:
\begin{equation}\label{eq:remove-order-max}
  \AMm_{[m]} := \AMm_{\setminus[0]} \stackrel{\ali_1}{\longrightarrow} \AMm_{\setminus[1]} \stackrel{\ali_2}{\longrightarrow} \cdots \stackrel{\ali_{m-2}}{\longrightarrow} \AMm_{\setminus[m-2]} \stackrel{\ali_{m-1}}{\longrightarrow}\AMm_{\setminus[m-1]} = \UMm_{[m]} .
\end{equation}
Note that this is a sequence of morphisms over $\Um_{[m]}$.
By Lemma \ref{lem:min-stack-composition} (2), there is an open dense
substack
$\fM^{\curlyvee, \circ,\circ}_{[m]} \subset \AMm_{\setminus[i]}$
contained in each one of the stacks in \eqref{eq:remove-order-max}.
Thus for each $i$, $\ali_i$ is birational and $\AMm_{\setminus[i]}$ is
equidimensional with the dimension computed in
\eqref{eq:equal-dimension-disconnected-domain}:
\begin{equation}\label{equal-dim-partial-align}
  \dim \AMm_{\setminus[i]} = \dim \UMm_{[m]} = \sum_{i \in [m]} (3g_i + \ell(\ddata'_i)) -3m -1.
\end{equation}
Note that the universal punctured map over $\AMm_{[m]}$ has the
$\curlywedge$-configuration as required for the reduced theory as in
\S\ref{ss:red-pot} with disconnected domains.

For later use, denote by
\[
e_{\max, \geq i+1} \in \Gamma\left(\ocM_{\AMm_{\setminus[i]}}, \AMm_{\setminus[i]} \right)
\]
the maximal degeneracy of the components labeled by $\{i+1, i+2,  \cdots, m\}$.
Consider
\[
\UM_{\setminus [i]} :=\left(\prod_{j = 1}^{i} \UM_j \right) \times \UM_{\{i+1, \cdots , m\}}
\]
where $\UM_{\{i+1,\cdots, m\}}$ parameterizes punctured maps in
$\prod_{j = i+1}^{m} \fM_{j}$ with $\curlywedge$-configurations, see
\S\ref{sss:universal-UM-stack-disconnected}.
We obtain a commutative diagram
\begin{equation}\label{diag:partial-Umax-configuration}
\xymatrix{
\AMm_{\setminus[i]} \ar[rr]^{\ali_{i+1}} \ar[d] && \AMm_{\setminus[i+1]} \ar[d] \\
\UM_{\setminus [i]} \ar[rr] && \UM_{\setminus [i+1]}
}
\end{equation}
where the vertical arrows are the tautological ones.

\subsubsection{Punctured R-maps}
Recall the finite collection of discrete data
\[
(g_i, \beta_i, \ddata_i = \{(\ogamma_{ij}, c_{ij}))\}_{j=1}^{n_i}), \ \ \mbox{for } i \in [m]
\]
as in \eqref{eq:[m]-punctured-R-data} compatible with \eqref{eq:[m]-punctured-data}.
Write for simplicity $\SH_i = \SH_{g_i,\ddata_i}(\punt,\beta_i)$. We obtain a Cartesian diagram with strict vertical arrows:
\begin{equation}\label{diag:R-map-partial-align}
\xymatrix{
\RAMm_{\setminus [i]} \ar[r] \ar[d] & \UH_{\setminus [i]} \ar[d] \ar[r] & \prod_{j \in [m]} \SH_j \ar[d] \\
\AMm_{\setminus [i]} \ar[r] & \UM_{\setminus [i]} \ar[r] & \prod_{j\in [m]} \fM_j
}
\end{equation}
where the right vertical arrow is given by \eqref{eq:take-log}.
The left vertical arrow in \eqref{diag:R-map-partial-align} admits a canonical perfect obstruction theory
\begin{equation}\label{eq:can-POT-partial-alignment}
\varphi_{\RAMm_{\setminus [i]}/\AMm_{\setminus [i]}} \colon \TT_{\RAMm_{\setminus [i]}/\AMm_{\setminus [i]}} \to \EE_{\RAMm_{\setminus [i]}/\AMm_{\setminus [i]}}.
\end{equation}
by pulling back the canonical ones \eqref{eq:tan-POT} from the right vertical arrow. This defines the canonical virtual cycle $[\RAMm_{\setminus [i]}]^{\vir}$ of $\RAMm_{\setminus [i]}$.

The case of the reduced theories is more subtle. The middle vertical  arrow of \eqref{diag:R-map-partial-align} is
\[
\left(\prod_{j=1}^i \UH_{j} \right) \times \UH_{\{i+1, \cdots, m\}} \longrightarrow \left(\prod_{j=1}^i \UM_{j} \right)\times \UM_{\{i+1,\cdots, m\}}
\]
hence admits a reduced perfect obstruction theory by taking the direct sum
\[
\left(\bigoplus_{j=1}^{i}\varphi^{\red}_{\UH_{i}/\UM_{j}}\right) \oplus \varphi^{\red}_{\UH_{\{i+1, \cdots, m\}}/\UM_{\{i+1,\cdots, m\}}} \colon \TT_{\UH_{\setminus [i]}/\UM_{\setminus [i]}} \to \EE^{\red}_{\UH_{\setminus [i]}/\UM_{\setminus [i]}},
\]
whenever the assumptions in Theorem~\ref{thm:red-POT} are fulfilled.
This pulls back to the {\em reduced perfect obstruction theory} of the
left vertical arrow:
\begin{equation}\label{eq:red-POT-partial-alignment}
\varphi^{\red}_{\RAMm_{\setminus [i]}/\AMm_{\setminus [i]}} \colon \TT_{\RAMm_{\setminus [i]}/\AMm_{\setminus [i]}} \to \EE^{\red}_{\RAMm_{\setminus [i]}/\AMm_{\setminus [i]}} := \EE^{\red}_{\UH_{\setminus [i]}/\UM_{\setminus [i]}}|_{\RAMm_{\setminus [i]}}
\end{equation}
defining the {\em reduced virtual cycle} $[\RAMm_{\setminus [i]}]^{\red}$ of $\RAMm_{\setminus [i]}$.

For $i<m$, the stack $\RAMm_{\setminus [i]}$ admits other reduced
perfect obstruction theories.
Indeed, we have a Cartesian diagram with strict vertical arrows
\begin{equation}\label{eq:R-map-partial-order-remove}
\xymatrix{
\RAMm_{\setminus [i]} \ar[rr]^{\ali_{i+1}} \ar[d] && \RAMm_{\setminus [i+1]} \ar[d] \\
\AMm_{\setminus[i]} \ar[rr]^{\ali_{i+1}}  && \AMm_{\setminus[i+1]}
}
\end{equation}
where the top is again denoted by $\ali_{i+1}$ when there is no
confusion.
Pulling back the reduced perferct obstruction theory
$\varphi^{\red}_{\RAMm_{\setminus [i+1]}/\AMm_{\setminus [i+1]}}$ of
the right vertical arrow, we obtain the {\em $+$reduced perfect
  obstruction theory} of the left vertical arrow
\begin{equation}\label{eq:red-POT-partial-alignment+}
\varphi^{\red}_{\RAMm_{\setminus [i]}/\AMm_{\setminus [i]},+} \colon \TT_{\RAMm_{\setminus [i]}/\AMm_{\setminus [i]}} \to \EE^{\red}_{\RAMm_{\setminus [i]}/\AMm_{\setminus [i]},+} := \EE^{\red}_{\RAMm_{\setminus [i+1]}/\AMm_{\setminus [i+1]}}|_{\RAMm_{\setminus [i]}},
\end{equation}
defining the {\em $+$reduced virtual cycle} $[\RAMm_{\setminus [i]}]_{+}^{\red}$ of $\RAMm_{\setminus [i]}$. It follows from the virtual push-forward that
\begin{lemma}\label{lem:v-cycle-partial-order-remove}
\begin{enumerate}
 \item $\ali_{i+1,*}[\RAMm_{\setminus [i]}]^{\vir} = [\RAMm_{\setminus [i+1]}]^{\vir}$.
 \item $\ali_{i+1,*}[\RAMm_{\setminus [i]}]_{+}^{\red} = [\RAMm_{\setminus [i+1]}]^{\red}$, when the corresponding reduced theories are defined.
\end{enumerate}
\end{lemma}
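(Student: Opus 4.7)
The plan is to reduce both identities to the functoriality of virtual pullback (in the sense of Manolache \cite{Ma12}), combined with the birationality of $\ali_{i+1}$ established in \eqref{eq:remove-order-max}.

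First, we make precise the sense in which the relevant obstruction theories are compatible. Comparing the definitions \eqref{eq:can-POT-partial-alignment} for indices $i$ and $i+1$, together with the Cartesian square \eqref{eq:R-map-partial-order-remove}, we observe that the canonical perfect obstruction theory on $\RAMm_{\setminus[i]}$ is simply the pullback along $\ali_{i+1}$ of the canonical perfect obstruction theory on $\RAMm_{\setminus[i+1]}$; both are obtained from the same perfect obstruction theory on the right vertical arrow of \eqref{diag:R-map-partial-align}. Similarly, by its very definition \eqref{eq:red-POT-partial-alignment+}, the $+$reduced perfect obstruction theory on $\RAMm_{\setminus[i]}$ is the pullback of the reduced perfect obstruction theory \eqref{eq:red-POT-partial-alignment} on $\RAMm_{\setminus[i+1]}$ along $\ali_{i+1}$.

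Next, we show $\ali_{i+1}$ is proper and birational. The projectivity (hence properness) of each $\ali_j$ is recorded already in \eqref{eq:remove-order-max}, being a consequence of Proposition \ref{prop:labeled-alignment} and the log \'etale property transported via the Cartesian square \eqref{diag:partial-Umax-configuration}. Birationality of $\ali_{i+1}$ follows from the existence of the common open dense substack $\fM^{\curlyvee,\circ,\circ}_{[m]}$ that sits inside each $\AMm_{\setminus [j]}$ as noted around \eqref{equal-dim-partial-align}; via the Cartesian diagram \eqref{eq:R-map-partial-order-remove}, this pulls back to a common open dense substack of both $\RAMm_{\setminus [i]}$ and $\RAMm_{\setminus [i+1]}$ on which $\ali_{i+1}$ is the identity.

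Now apply Manolache's virtual pushforward: given the Cartesian square \eqref{eq:R-map-partial-order-remove} with strict vertical arrows, a perfect obstruction theory on the right vertical arrow pulls back to a perfect obstruction theory on the left vertical arrow, and the associated virtual pullback classes are compatible via $\ali_{i+1}^{!} = \ali_{i+1}^{*} \circ \ali_{i+1}^{!}$ applied in the appropriate order. Concretely, if $\varphi \colon \TT \to \EE$ denotes either the canonical or reduced theory on $\RAMm_{\setminus [i+1]} \to \AMm_{\setminus [i+1]}$, then
\[
[\RAMm_{\setminus [i]}]^{\star} = \ali_{i+1}^{*}\left( [\RAMm_{\setminus [i+1]}]^{\star} \right)
\]
in the refined sense of a virtual pullback, where $\star = \vir$ or $\red$ as appropriate. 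Pushing forward and using the projection formula gives
\[
\ali_{i+1,*}[\RAMm_{\setminus [i]}]^{\star} = \deg(\ali_{i+1}) \cdot [\RAMm_{\setminus [i+1]}]^{\star} = [\RAMm_{\setminus [i+1]}]^{\star},
\]
since $\ali_{i+1}$ is birational. Equidimensionality of the stacks involved, needed to make sense of $\deg(\ali_{i+1}) = 1$, is guaranteed by \eqref{equal-dim-partial-align} together with the fact that canonical and reduced virtual dimensions are transported through Cartesian squares with compatible perfect obstruction theories. The main delicate point is verifying that the compatibility of the two perfect obstruction theories in \eqref{eq:red-POT-partial-alignment+} fits into Manolache's framework of commuting compatible virtual pullbacks; this is automatic here because the compatibility is by definition a strict pullback of complexes along a Cartesian square.
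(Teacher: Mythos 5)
Your overall strategy is the one the paper intends: the canonical theory on $\RAMm_{\setminus [i]}$ and the $+$reduced theory are, by \eqref{eq:can-POT-partial-alignment} and by the very definition \eqref{eq:red-POT-partial-alignment+}, pullbacks along the Cartesian square \eqref{eq:R-map-partial-order-remove} of the canonical and reduced theories on $\RAMm_{\setminus [i+1]}$, and the base morphism $\ali_{i+1}\colon \AMm_{\setminus[i]} \to \AMm_{\setminus[i+1]}$ is projective and birational between equidimensional stacks by \eqref{eq:remove-order-max} and \eqref{equal-dim-partial-align}; the lemma is then exactly the virtual push-forward of \cite{Co06, Ma12}. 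However, your ``concrete'' execution contains steps that do not work as written. There is no pullback $\ali_{i+1}^{*}$ (flat or refined Gysin) of a virtual class along the morphism of R-map stacks: that map is proper and log \'etale but not flat and carries no obstruction theory of its own, so the identity $[\RAMm_{\setminus [i]}]^{\star} = \ali_{i+1}^{*}[\RAMm_{\setminus [i+1]}]^{\star}$ is not defined, and the subsequent projection-formula computation with $\deg(\ali_{i+1})$ of the top arrow is not legitimate for virtual classes, which are not fundamental classes. Likewise, your claim that the dense open $\fM^{\curlyvee,\circ,\circ}_{[m]}$ of the base pulls back to a common dense open of $\RAMm_{\setminus [i]}$ and $\RAMm_{\setminus [i+1]}$ is both unjustified (preimages of dense opens need not be dense in the R-map moduli, whose components may lie entirely over deeper boundary strata) and unnecessary.

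What the virtual push-forward theorem actually requires --- and what you have already verified --- is only: (a) the perfect obstruction theory of the left vertical arrow of \eqref{eq:R-map-partial-order-remove} is the pullback of that of the right vertical arrow, and (b) the bottom arrow is proper with $\ali_{i+1,*}[\AMm_{\setminus[i]}] = [\AMm_{\setminus[i+1]}]$, which follows from properness, birationality and equidimensionality of the base stacks alone. With these hypotheses, \cite[Theorem 5.0.1]{Co06} or \cite[Proposition 5.29]{Ma12} directly gives $\ali_{i+1,*}[\RAMm_{\setminus [i]}]^{\vir} = [\RAMm_{\setminus [i+1]}]^{\vir}$ and $\ali_{i+1,*}[\RAMm_{\setminus [i]}]_{+}^{\red} = [\RAMm_{\setminus [i+1]}]^{\red}$. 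So your argument is repaired simply by deleting the paragraph beginning ``Concretely'' and invoking the theorem in this form; no statement about the geometry of the top arrow is needed.
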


\subsubsection{\texorpdfstring{$[\RAMm_{\setminus [i]}]^{\red}$}{[R]} versus \texorpdfstring{$[\RAMm_{\setminus [i]}]_{+}^{\red}$}{[R]_+}}

To relate the two perfect obstruction theories
\eqref{eq:red-POT-partial-alignment} and
\eqref{eq:red-POT-partial-alignment+}, we consider the global sections
\[
(e_{\max, \geq i+1} - e) \in \Gamma(\ocM_{\AMm_{\setminus[i]}},\AMm_{\setminus[i]}).
\]
for $e = e_{\max,i+1} \mbox{ or } e_{\max,\geq i+2}$ over $\AMm_{\setminus[i]}$.  These global sections lead to line bundles as in \eqref{eq:compare-torsor}, satisfying
\begin{equation}\label{eq:log-bundle-product}
  \cO(e - e_{\max, \geq i+1}) \cong \cO(-e_{\max, \geq i+1}) \otimes \cO(e)|_{\AMm_{\setminus[i]}}.
\end{equation}
Replacing $e$ by $e_{\max,i+1}$ or $e_{\max, \geq i+2}$, we obtain canonical morphisms respectively
\begin{equation}\label{eq:compare-lb}
  \cO(-e_{\max, \geq i+1}) \longrightarrow \cO(-e_{\max,i+1})|_{\AMm_{\setminus[i]}} \ \ \mbox{and} \ \ \cO(-e_{\max, \geq i+1}) \longrightarrow \cO(-e_{\max,\ge i+2})|_{\AMm_{\setminus[i]}}.
\end{equation}
For a positive integer $\ttwist$, this induces a morphism
\begin{equation}\label{eq:boundary-bundle-sujectivity}
  \cO(-\ttwist e_{\max, \geq i+1}) \longrightarrow  \big( \cO(-\ttwist e_{\max,i+1}) \oplus \cO(-\ttwist e_{\max,\ge i+2}) \big)|_{\AMm_{\setminus[i]}}
\end{equation}
with cokernel $\bK^{\vee}_{i}$.

\begin{lemma}\label{lem:compare-lb}
  The morphism \eqref{eq:boundary-bundle-sujectivity} is an injection of a sub-bundle.
  In particular, $\bK^{\vee}_{i}$ is a line bundle.
\end{lemma}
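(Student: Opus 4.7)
The plan is to reduce the statement to a pointwise check. Recall that a morphism $f\colon E \to F$ between vector bundles on a scheme (or DM stack) is the inclusion of a subbundle exactly when it is fiberwise injective at every geometric point, in which case $\coker(f)$ is automatically locally free. So the task reduces to showing that \eqref{eq:boundary-bundle-sujectivity} is fiberwise a nonzero map between a line and a rank-two bundle.

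The key structural input is the alignment built into $\AMm_{\setminus[i]}$. By construction this stack carries an alignment of $\obD^M_{\setminus[i]} = \{e_{\max,i+1},\ldots,e_{\max,m}\}$, so these sections of $\ocM_{\AMm_{\setminus[i]}}$ are totally ordered under $\poleq$ everywhere. In particular the maximum $e_{\max,\ge i+1}$ coincides pointwise with the maximum of $\{e_{\max,i+1},e_{\max,\ge i+2}\}$. This gives the dichotomy I would use: at each geometric point $s$, at least one of $e_{\max,\ge i+1}|_s = e_{\max,i+1}|_s$ or $e_{\max,\ge i+1}|_s = e_{\max,\ge i+2}|_s$ holds.

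Next I would translate \eqref{eq:boundary-bundle-sujectivity} into local coordinates. On an \'etale chart, choose lifts $\tilde e, \tilde a, \tilde b \in \cM$ of $e_{\max,\ge i+1}, e_{\max,i+1}, e_{\max,\ge i+2}$ respectively, and write $\tilde e = \tilde a + \tilde{\delta}_a = \tilde b + \tilde{\delta}_b$ with $\tilde{\delta}_a, \tilde{\delta}_b \in \cM$ lifting $e_{\max,\ge i+1}-e_{\max,i+1}$ and $e_{\max,\ge i+1}-e_{\max,\ge i+2}$ (these lie in $\ocM$ since $e_{\max,\ge i+1}$ dominates). Unwinding the construction of the canonical maps \eqref{eq:compare-lb} from \S\ref{sss:log-line-bundles}, these local lifts trivialize the three line bundles, and \eqref{eq:boundary-bundle-sujectivity} becomes multiplication by the pair $(\alpha(\tilde{\delta}_a)^{\ttwist}, \alpha(\tilde{\delta}_b)^{\ttwist})$.

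Finally I would invoke the dichotomy at $s$: at least one of $\tilde{\delta}_a, \tilde{\delta}_b$ maps to $0 \in \ocM|_s$, hence is a unit in $\cM|_s$, so the corresponding component $\alpha(\tilde{\delta}_\bullet)^{\ttwist}$ is a unit (using $\ttwist>0$). Thus the pair is nonzero at $s$, yielding fiberwise injectivity. This produces the subbundle inclusion and identifies $\bK^{\vee}_i$ as a line bundle. I do not anticipate any real obstacle; the only care needed is in the bookkeeping of the natural isomorphism $\cO(-e_1)\otimes\cO(-e') \cong \cO(-e_1-e')$ together with the compatibility $\bs^{\vee}_{e_1}\otimes\bs^{\vee}_{e'}=\bs^{\vee}_{e_1+e'}$, which is what lets us identify \eqref{eq:compare-lb} with multiplication by $\alpha(\tilde{\delta}_\bullet)$.
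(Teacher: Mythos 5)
Your proof is correct and follows essentially the same route as the paper: reduce to fiberwise injectivity, observe that by the alignment the maximum $e_{\max,\ge i+1}$ coincides pointwise with $e_{\max,i+1}$ or with $e_{\max,\ge i+2}$, so at each geometric point at least one of the two differences vanishes and the corresponding morphism in \eqref{eq:compare-lb} is an isomorphism there. Your local-trivialization bookkeeping via $\alpha$ of lifts is just a more explicit rendering of the paper's appeal to \eqref{eq:compare-torsor}.
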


\begin{proof}
It suffices to show that fiberwise over $\AMm_{\setminus[i]}$ the two morphisms in \eqref{eq:compare-lb} cannot vanish simultaneously.  To see this, consider a geometric point $s \to \AMm_{\setminus[i]}$. Then over $s$ one has at least one of the following
\[
e_{\max, \geq i+1} - e_{\max, i+1} = 0 \ \ \ \mbox{or} \ \ \ e_{\max, \geq i+1} - e_{\max, \geq i+2} = 0,
\]
which induces an isomorphism of the trivial line bundle  as in  \eqref{eq:compare-torsor}.
This proves the statement.
\end{proof}

Next we construct the following commutative diagram of distinguished
triangles
\begin{equation}\label{diag:compare-reduced-POT}
\xymatrix{
\EE^{\red}_{\RAMm_{\setminus [i]}/\AMm_{\setminus [i]},+}  \ar[rr] \ar[d] && \EE_{\RAMm_{\setminus [i]}/\AMm_{\setminus [i]}}  \ar[rr] \ar@{=}[d] && \FF_{\RAMm_{\setminus [i]}/\AMm_{\setminus [i]},+}  \ar[rr]^{[1]} \ar[d] && \\
\EE^{\red}_{\RAMm_{\setminus [i]}/\AMm_{\setminus [i]}} \ar[rr] \ar[d] && \EE_{\RAMm_{\setminus [i]}/\AMm_{\setminus [i]}} \ar[rr] \ar@{=}[d] && \FF_{\RAMm_{\setminus [i]}/\AMm_{\setminus [i]}} \ar[rr]^{[1]} \ar[d] && \\
\KK_{i} \ar[rr] \ar[d]^{[1]} && 0 \ar[rr] \ar[d]^{[1]} && \KK_{i}[1] \ar[rr]^{[1]} \ar[d]^{[1]} && \\
&& && &&
}
\end{equation}
as follows. The top and middle row are obtained by pulling back  \eqref{eq:red-POT}. By \eqref{eq:boundary-complex} we have
\begin{align*}
\FF_{\RAMm_{\setminus [i]}/\AMm_{\setminus [i]},+}[1] &= \left(\bigoplus_{j=1}^{i+1} \cO(\ttwist e_{\max,j}) \right) \oplus \cO(\ttwist e_{\max, \geq i+2}),\\
\FF_{\RAMm_{\setminus [i]}/\AMm_{\setminus [i]}}[1] &= \left(\bigoplus_{j=1}^{i} \cO(\ttwist e_{\max,j}) \right) \oplus \cO(\ttwist e_{\max, \geq i+1})
\end{align*}
Taking the dual of \eqref{eq:boundary-bundle-sujectivity}, we obtain
\[
\cO(\ttwist e_{\max,i + 1}) \oplus \cO(\ttwist e_{\max, \geq i+2}) \to \cO(\ttwist e_{\max, \geq i+1}).
\]
This defines the arrow $\FF_{\RAMm_{\setminus [i]}/\AMm_{\setminus [i]},+} \to \FF_{\RAMm_{\setminus [i]}/\AMm_{\setminus [i]}}$ in \eqref{diag:compare-reduced-POT}, which completes to a triangle
\begin{equation}\label{eq:boundary-compare-complex}
\KK_{i} \longrightarrow \FF_{\RAMm_{\setminus [i]}/\AMm_{\setminus [i]},+} \longrightarrow \FF_{\RAMm_{\setminus [i]}/\AMm_{\setminus [i]}}   \stackrel{[1]}{\longrightarrow}
\end{equation}
with $\KK_{i} := \bK_{i}[-1]$. The diagram \eqref{diag:compare-reduced-POT} is thus constructed.

Taking the long exact sequence of the left column in \eqref{diag:compare-reduced-POT}, we obtain a short exact sequence relating the obstructions of the two reduced theories:
\[
0 \to H^1(\EE^{\red}_{\RAMm_{\setminus [i]}/\AMm_{\setminus [i]},+}) \to H^1(\EE^{\red}_{\RAMm_{\setminus [i]}/\AMm_{\setminus [i]},+}) \to \bK_{i} \to 0.
\]
This proves:

\begin{lemma}\label{lem:+red-vs-red}
 $[\RAMm_{\setminus [i]}]^{\red} = c_1(\bK_{i}) \cdot [\RAMm_{\setminus [i]}]_{+}^{\red}$.
\end{lemma}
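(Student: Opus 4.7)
The plan is to apply \cite[Corollary 4.9]{Ma12} exactly as in the derivation of Corollary \ref{cor:red-canonical-vir}, combined with the short exact sequence on obstructions recorded just before the statement. I will view the morphism $\RAMm_{\setminus[i]} \to \AMm_{\setminus[i]}$ as the composition of the identity $\mathrm{id} \colon \RAMm_{\setminus[i]} \to \RAMm_{\setminus[i]}$ with itself, and construct a compatible triple of (tangent) perfect obstruction theories: on the composition I place $\varphi^{\red}_{\RAMm_{\setminus[i]}/\AMm_{\setminus[i]}}$, yielding $[\RAMm_{\setminus[i]}]^{\red}$; on the second factor I place $\varphi^{\red}_{\RAMm_{\setminus[i]}/\AMm_{\setminus[i]},+}$, yielding $[\RAMm_{\setminus[i]}]^{\red}_{+}$; and on the identity I place the obstruction theory $0 \to \KK_i$.

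This last datum is a genuine tangent perfect obstruction theory on the identity because by Lemma \ref{lem:compare-lb} the complex $\KK_i = \bK_i[-1]$ is perfect in $[0,1]$ with $H^0(\KK_i) = 0$ and $H^1(\KK_i) = \bK_i$ a line bundle. The compatibility of the three obstruction theories with the triangle of cotangent complexes of the factorization is exactly the left column of \eqref{diag:compare-reduced-POT}, namely the distinguished triangle
\[
\EE^{\red}_{\RAMm_{\setminus[i]}/\AMm_{\setminus[i]},+} \longrightarrow \EE^{\red}_{\RAMm_{\setminus[i]}/\AMm_{\setminus[i]}} \longrightarrow \KK_i \stackrel{[1]}{\longrightarrow},
\]
together with the fact that the cotangent complex of the identity vanishes. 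By \cite[Corollary 4.9]{Ma12}, the virtual pullback along the identity with respect to $0 \to \KK_i$ coincides with cap product by the top Chern class of $H^1(\KK_i) = \bK_i$, that is, with $c_1(\bK_i)$. Thus
\[
[\RAMm_{\setminus[i]}]^{\red} = c_1(\bK_i) \cdot [\RAMm_{\setminus[i]}]^{\red}_{+},
\]
as desired.

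The only place that demands a moment's care is the verification of the Manolache compatibility, but this is essentially automatic: the identity contributes the zero cotangent complex, and the displayed triangle is already built into the construction of the two reduced theories. Everything else is a formal consequence of \cite[Corollary 4.9]{Ma12}, and I do not expect any genuine obstacle beyond what has already appeared in the proof of Corollary \ref{cor:red-canonical-vir}.
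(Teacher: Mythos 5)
Your proposal is correct and follows essentially the same route as the paper: the paper's own (very terse) proof consists of the left column of \eqref{diag:compare-reduced-POT}, i.e.\ the triangle $\EE^{\red}_{\RAMm_{\setminus [i]}/\AMm_{\setminus [i]},+} \to \EE^{\red}_{\RAMm_{\setminus [i]}/\AMm_{\setminus [i]}} \to \KK_i$, together with the resulting short exact sequence of obstruction sheaves and the same appeal to \cite[Corollary 4.9]{Ma12} already used for Corollary \ref{cor:red-canonical-vir}. You merely make the Manolache compatible-triple-through-the-identity argument explicit (using Lemma \ref{lem:compare-lb} to see $\bK_i$ is a line bundle), which is exactly the intended argument.
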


\subsection{\texorpdfstring{$\psi_{\min}$}{psi-min}-classes}\label{ss:psi-min}
To further relate the virtual cycles $[\RAMm_{\setminus [i]}]^{\red}$
and $[\RAMm_{\setminus [i+1]}]^{\red}$, we introduce the
$\psi_{\min}$-classes, which appear in the virtual normal bundle in
the localization calculation of \cite{CJR23P}.
Consider the global sections
\[
e_{\min} \in \Gamma\left(\ocM_{\UMm_{[m]}}, \UMm_{[m]} \right)  \ \ \mbox{and} \ \ e_{\min,i} \in \Gamma\left(\ocM_{\UMm_i}, \UMm_i \right)
\]
which are the corresponding minimal degeneracies. We introduce the tautological classes
\begin{equation}\label{eq:psi-min-class}
\psi_{\min} := c_1\left( \cO(-e_{\min})\right) \ \ \mbox{and} \ \ \psi_{\min,i} := c_1\left( \cO(-e_{\min,i})\right).
\end{equation}

\smallskip

The following will be used to compute the push forward of
$[\RAMm_{\setminus [i]}]^{\red}$ in
Theorem~\ref{thm:virtual-split-disconnected-source} below.
\begin{lemma}\label{lem:push-forward-K}
$\ali_{i+1,*}c_1(\bK_{i}) = - \ttwist \psi_{\min}.$
\end{lemma}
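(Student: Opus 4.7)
The plan is to decompose $c_1(\bK_i)$ into a piece pulled back from $\AMm_{\setminus[i+1]}$ that will yield the $-\ttwist\psi_{\min}$ contribution, and a piece whose associated divisor is supported on the exceptional locus of $\ali_{i+1}$ and hence pushes forward to zero.

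First, the short exact sequence \eqref{eq:boundary-bundle-sujectivity} defining $\bK_i^\vee$ gives
\[
c_1(\bK_i) = \ttwist\bigl[c_1(\cO(e_{\max,i+1})) + c_1(\cO(e_{\max,\ge i+2})) - c_1(\cO(e_{\max,\ge i+1}))\bigr].
\]
The monoid identity $a+b = \max(a,b)+\min(a,b)$ valid in any sharp monoid, combined with the additivity of the assignment $e \mapsto \cO(e)$ recalled in \S\ref{sss:log-line-bundles}, yields a canonical isomorphism $\cO(e_{\max,\ge i+1})\otimes\cO(\eta) \cong \cO(e_{\max,i+1})\otimes\cO(e_{\max,\ge i+2})$ in $\Pic(\AMm_{\setminus[i]})$, where $\eta := \min(e_{\max,i+1}, e_{\max,\ge i+2}) \in \Gamma(\ocM_{\AMm_{\setminus[i]}})$. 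Substituting collapses the three-term expression to $c_1(\bK_i) = \ttwist c_1(\cO(\eta))$. Since $e_{\min} \poleq e_{\min,j} \poleq e_{\max,j}$ for every $j$, we have $e_{\min} \poleq \eta$, so $\delta := \eta - e_{\min}$ lifts to a section of $\ocM_{\AMm_{\setminus[i]}}$ and we arrive at
\[
c_1(\bK_i) = -\ttwist\psi_{\min} + \ttwist c_1(\cO(\delta)).
\]
As $\psi_{\min}$ is pulled back from $\UMm_{[m]}$ and $\ali_{i+1}$ is birational by Proposition~\ref{prop:labeled-alignment}, the projection formula reduces the lemma to the vanishing $\ali_{i+1,*} c_1(\cO(\delta)) = 0$, equivalently $\ali_{i+1,*}[\Div(\delta)] = 0$.

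The crux, then, is to show $\Div(\delta)$ is supported on the exceptional locus of $\ali_{i+1}$. Over the dense open $U \subset \AMm_{\setminus[i+1]}$ where $\ali_{i+1}$ is an isomorphism---namely where $e_{\max,i+1}$ and $e_{\max,\ge i+2}$ are already comparable in $\ocM$---the claim will be that $\delta$ is the trivial section. The key input is the \'etale-local structure of $\UMm_{[m]}$ arising from \eqref{eq:pun-extreme-configuration} as an fs fiber product of the disconnected data $\prod_j \UMm_j$ with $\Um_{[m]}$: the component-wise generators $k_j := e_{\min,j} - e_{\min}$ and $h_j := e_{\max,j}-e_{\min,j}$ occupy local summands of $\ocM_{\UMm_{[m]}}$ disjoint from those for $j' \neq j$, since different connected components of the domain curve deform independently and the minimum alignment of \eqref{eq:blowup-disconnected-min} contributes only to the $k_j$. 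Comparability of $e_{\max,i+1} = e_{\min} + k_{i+1} + h_{i+1}$ with $e_{\max,\ge i+2} = e_{\min} + k_{j_0} + h_{j_0}$ (for the index $j_0 \in \{i+2,\dots,m\}$ locally realizing the maximum of the aligned chain) therefore forces one side's extra parameters to vanish at each geometric point of $U$, giving either $e_{\max,i+1} = e_{\min}$ or $e_{\max,\ge i+2} = e_{\min}$, and hence $\eta = e_{\min}$ and $\delta = 0$ on $U$. Consequently $\Div(\delta)$ lies in the exceptional locus of $\ali_{i+1}$, whose image has codimension $\ge 2$ in $\AMm_{\setminus[i+1]}$ by projectivity, birationality, and log \'etaleness, so $\ali_{i+1,*}[\Div(\delta)] = 0$, concluding the argument.

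The main obstacle will be rigorously justifying the local independence of the parameters $\{k_j, h_j\}_{j \in [m]}$, which requires carefully tracing the fs fiber product and log blow-up structures defining $\UMm_{[m]}$ at a chosen geometric point and verifying that minimum alignment of the $e_{\min,j}$ does not entangle the component-wise degeneration directions. Once this \'etale-local splitting is established, the entire argument becomes a straightforward toric computation, with the fact that $\delta$ vanishes off the exceptional locus being the geometric heart of the lemma.
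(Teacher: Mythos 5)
Your opening reduction is sound and is in substance the same as the paper's: on $\AMm_{\setminus[i]}$ the alignment makes $e_{\max,i+1}$ and $e_{\max,\ge i+2}$ fiberwise comparable (note the identity $a+b=\max(a,b)+\min(a,b)$ is \emph{not} valid for incomparable elements of a sharp monoid, so this step genuinely uses the alignment), and your class $\ttwist c_1(\cO(\delta))$ with $\delta=e_{\max,i+1}+e_{\max,\ge i+2}-e_{\max,\ge i+1}-e_{\min}$ is exactly the paper's $\mathbf{D}$, which the paper writes as the difference of the effective divisors $\Div(e_{\max,i+1}-e_{\min})$ and $\Div(e_{\max,\ge i+1}-e_{\max,\ge i+2})$. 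So the real content is, as you say, the vanishing $\ali_{i+1,*}[\Div(\delta)]=0$.

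This is where the proposal breaks. Your key claim --- that $\delta$ vanishes over the locus $U$ where $e_{\max,i+1}$ and $e_{\max,\ge i+2}$ are comparable, so that $\Div(\delta)$ sits inside the exceptional locus of $\ali_{i+1}$ --- is false, and the ``disjoint local summands'' picture of $\ocM_{\UMm_{[m]}}$ used to derive it fails away from the most generic strata, precisely because the minimum alignment entangles the per-component parameters. Concretely, take $m\ge 3$, all source curves smooth, and a stratum of $\UMm_{[m]}$ on which three of the degeneracies are strictly totally ordered, say $e_{\min,j_1}\prec e_{\min,j_2}\prec e_{\min,j_3}$ with $i+1\notin\{j_1\}$ (such rank-three strata exist; they have codimension two over the smooth-curve locus). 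There $e_{\max,j}=e_{\min,j}$ for all $j$, so all the maxima are already comparable in the stalk downstairs and $\ali_{i+1}$ is an isomorphism near such a point; yet $\min(e_{\max,i+1},e_{\max,\ge i+2})$ is the middle value, strictly larger than $e_{\min}$, so $\delta\neq 0$ there. Hence $\Div(\delta)$ is not supported on the exceptional locus and your argument collapses; what is true is only that the locus $\{\delta\neq 0\}$, being cut out by two independent strict inequalities (both $e_{\max,i+1}$ and $e_{\max,\ge i+2}$ strictly exceed $e_{\min}$), has no divisorial component with divisorial image. Establishing this is the actual content of the lemma: the paper does it by cancelling the common components of $\mathbf{D}_1=\Div(e_{\max,i+1}-e_{\min})$ and $\mathbf{D}_2=\Div(e_{\max,\ge i+1}-e_{\max,\ge i+2})$ along $\{e_{\max,\ge i+2}=e_{\min}\}$ (using $e_{\max,\ge i+1}=e_{\max,i+1}$ along $\mathbf{D}_2$) and then checking that the leftover support, where $e_{\min}\poleq e_{\max,\ge i+2}\poleq e_{\max,i+1}$ with both inequalities strict, has image of codimension at least two in $\AMm_{\setminus[i+1]}$. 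Your proposal is missing this component/codimension analysis, and the shortcut it substitutes for it is contradicted by the example above.
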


\begin{proof}
Lemma \ref{lem:compare-lb} implies a relation between Chern classes
\[
1/\ttwist \cdot c_1(\bK_{i}) =  c_1(\cO(e_{\max, i+1})) + c_1(\cO(e_{\max, \ge i+2})|_{\AMm_{\setminus[i]}}) - c_1(\cO(e_{\max, \ge i+1})).
\]
Consider $\mathbf{D} := \frac 1\ttwist c_1(\bK_{i}) - c_1(\cO(e_{\min}))$ over $\AMm_{\setminus[i]}$. It remains to show $\ali_{i+1,*} \mathbf{D} = 0$.

On the other hand, write
\begin{align*}
\mathbf{D} &= c_1\big(\cO(e_{\max, i+1} - e_{\min})\big) - c_1\big(\cO(e_{\max, \ge i+1} - e_{\max, \ge i+2}) \big) \\
&=: \mathbf{D}_1 - \mathbf{D}_2.
\end{align*}
We observe that $\mathbf{D}_1$ and $\mathbf{D}_2$ can be represented
by effective Cartier divisors.
Indeed, as global sections over $\AMm_{\setminus[i]}$, the two
sections $(e_{\max,i+1} - e_{\min})$ and
$(e_{\max,\geq i+1} - e_{\max, \geq i+2})$ vanish over the open dense
substack of $\AMm_{\setminus[i]}$ defined by
\[
e_{\max,i+1} = e_{\max,\geq i+1} = e_{\max, \geq i+2} = e_{\min}.
\]
Hence, $\mathbf{D}_1$ and $\mathbf{D}_2$ can be identified with
effective Cartier divisors as in \eqref{eq:global-log-divisor},
supported along $(e_{\max, i+1} - e_{\min}) > 0$ and
$(e_{\max, \ge i+1} - e_{\max, \ge i+2}) > 0$ respectively.

By the local description of the Cartier divisors in \S
\ref{sss:log-line-bundles}, along $\mathbf{D}_2$ one has
$e_{\max, \geq i+1} = e_{\max, i+1}$.
This implies that the components in both $\mathbf{D}_1$ and
$\mathbf{D}_2$ along which $e_{\max,\geq i+2} = e_{\min}$, are
identical.
Hence $\mathbf{D} = \mathbf{D}_1 - \mathbf{D}_2$ is represented by an
effective divisor in $\AMm_{\setminus[i]}$ supported along the locus
with $e_{\min} \poleq e_{\max, \geq i+2} \poleq e_{\max,i+1}$ but
$e_{\min} \neq e_{\max, \geq i+2} \neq e_{\max,i+1}$.
Finally observe that the image of this locus in $\AMm_{\setminus[i+1]}$
is of codimension at least 2, hence $\ali_{i+1,*} \mathbf{D} = 0$ as desired.
\end{proof}

We next compute the push forward of the following expression along
$\Bl_{\cK}$ in \eqref{eq:split-min}:
\begin{equation}\label{eq:psi-min-series}
\frac{1}{-t - \psi_{\min}} = -\frac{1}{t} \Big(1 + \frac{\psi_{\min}}{-t} + \big(\frac{\psi_{\min}}{-t}\big)^2 + \cdots \Big).
\end{equation}
Here $t$ is a formal parameter, which will play the role of the
equivariant parameter in \cite{CJR23P}.

\begin{proposition}\label{prop:push-forward-min}
$\Bl_{\cK,*} \big( \frac{1}{-t - \psi_{\min}} \big) = \prod_{i \in [n]}\frac{1}{-t - \psi_{\min, i}}$.
\end{proposition}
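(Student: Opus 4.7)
The proposition is a direct application of the projective bundle pushforward formula, using the structure of $\Bl_{\cK}$ given in Lemma~\ref{lem:min-stack-composition}. My plan consists of three steps.

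First, by Lemma~\ref{lem:min-stack-composition}, the morphism $\Bl_\cK$ factors as
\[
\UMm_{[m]} \stackrel{\mathrm{S}}{\longrightarrow} \fM^{\fine}_{[m]} \times_{\prod_i \Um_i} \prod_{i} \UMm_i \stackrel{\Bl^f_\cK}{\longrightarrow} \prod_{i\in[m]} \UMm_i,
\]
where $\mathrm{S}$ is the saturation, which is finite and birational (hence of generic degree one), and the underlying stack of the middle term is the projective bundle $\pi\colon P := \PP\bigl(\bigoplus_i \cO(e_{\min,i})\bigr) \to \prod_{i\in[m]} \UMm_i$ in Fulton's convention. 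Since the class $\psi_{\min}$ on $\UMm_{[m]}$ is pulled back via $\mathrm{S}$ from the fine side, the projection formula combined with $\mathrm{S}_*[\UMm_{[m]}] = [\fM^{\fine}_{[m]}\times_{\prod_i\Um_i}\prod_i\UMm_i]$ reduces the statement to computing $\pi_*\bigl(\tfrac{1}{-t-\psi_{\min}}\bigr)$ on the projective bundle $P$.

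Second, I will identify $\psi_{\min}$ with the relative hyperplane class on $P$. For each $i \in [m]$, the inequality $e_{\min} \poleq e_{\min,i}$ gives a natural morphism of line bundles $\cO(e_{\min}) \to \cO(e_{\min,i})$ obtained by multiplying lifts of $e_{\min}$ in $\cM$ by a lift of $e_{\min,i} - e_{\min}\in \ocM$. Summing over $i$ yields a map $\cO(e_{\min}) \to \bigoplus_i \cO(e_{\min,i})$, and a local chart analysis, where on $\{e_{\min} = e_{\min,i}\}$ the $i$-th component is an isomorphism, shows that this realizes $\cO(e_{\min})$ as the tautological sub-line bundle $\cO_P(-1) \subset \pi^* \bigoplus_i \cO(e_{\min,i})$. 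Consequently $\xi := c_1(\cO_P(1)) = c_1(\cO(-e_{\min})) = \psi_{\min}$.

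Third, the classical projective bundle pushforward gives the identity. The Chern roots of $E := \bigoplus_i \cO(e_{\min,i})$ are $\alpha_i = c_1(\cO(e_{\min,i})) = -\psi_{\min, i}$. Expanding $\tfrac{1}{-t-\psi_{\min}}$ as a geometric series in $\psi_{\min}/(-t)$ and using $\pi_*(\xi^k) = s_{k-m+1}(E)$ together with the Segre generating function $\sum_j s_j(E) u^j = \prod_i (1 + \alpha_i u)^{-1}$ evaluated at $u = 1/(-t)$ yields
\[
\pi_*\left(\frac{1}{-t-\psi_{\min}}\right) \;=\; \prod_{i \in [m]} \frac{1}{-t + \alpha_i} \;=\; \prod_{i\in[m]} \frac{1}{-t-\psi_{\min,i}}.
\]
The main subtlety lies in the second step: verifying the line bundle identification requires a careful local description of the fine log blow-up $\Bl^f_\cK$ (in particular, pinning down exactly which summand $\cO(e_{\min})$ projects isomorphically onto, in each toric chart) and the correct sign/convention for $\PP(\cdot)$. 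A secondary technical point is confirming that the saturation $\mathrm{S}$ contributes no multiplicity in the pushforward, which follows from its birationality.
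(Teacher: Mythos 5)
Your proposal is correct and follows essentially the same route as the paper: factor $\Bl_{\cK}$ through the saturation and the fine log blow-up (the pullback of Lemma~\ref{lem:min-stack-composition}), observe that $\psi_{\min}$ lives on the fine side so the saturation contributes nothing, identify $\cO(-e_{\min})$ with the relative $\cO(1)$ of the projective bundle $\PP(\bigoplus_i \cO(e_{\min,i}))$, and conclude by the Segre-class pushforward $s_{-1/t} = c_{-1/t}^{-1}$, exactly as in the paper's computation. The only difference is that you sketch a justification of the line-bundle identification that the paper simply asserts with ``observe,'' which is a harmless (and correct) elaboration.
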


\begin{proof}
  Pulling back \eqref{eq:min-log-blowup} along the second product in
  \eqref{eq:pun-extreme-configuration}, $\Bl_{\cK}$ decomposes into
  \begin{equation}\label{eq:push-forward-min}
    \UMm_{[m]} \stackrel{\mathrm{S}}{\longrightarrow} (\UMm_{[m]})^\fine \stackrel{\Bl^f_{\cK}}{\longrightarrow} \prod_{i\in [m]} \UMm_i,
  \end{equation}
where $\mathrm{S}$ is the saturation and $\Bl^f_{\cK}$ is the log blow-up in the fine category. On the underlying level, we have
\[
\Bl^f_{\cK} \colon \ul{(\UMm_{[m]})}^\fine = \PP(\bigoplus_{i \in [m]} \cO(e_{\min, i})) \to \prod_{i\in [m]} \ul{\UMm_i}.
\]
Note that the minimal degeneracy $e_{\min}$, the line bundle
$\cO(-e_{\min})$ and the class $\psi_{\min}$ exist over
$(\UMm_{[m]})^\fine$, which pull back to the corresponding $e_{\min}$,
$\cO(-e_{\min})$ and $\psi_{\min}$ over $\UMm_{[m]}$.
Thus, we have
$\Bl_{\cK,*} \frac{1}{-t - \psi_{\min}} = \Bl^f_{\cK,*}
\frac{1}{-t - \psi_{\min}}$.

To prove the statement, set $s_{1/t}(\bigoplus_i \cO(e_{\min, i}))$
and $c_{1/t}(\bigoplus_i \cO(e_{\min, i}))$ be the Segre and Chern
polynomial of $\bigoplus_i \cO(e_{\min, i})$ with variable $1/t$.
Observe that $\cO(-e_{\min})$ is the twisting sheaf $\cO(1)$ of the
projective bundle $\Bl^f_{\cK}$.
Pushing forward \eqref{eq:psi-min-series}, we have
\begin{align*}
\Bl^f_{\cK,*} \frac{1}{-t - \psi_{\min}}
&= -\frac{1}{t} \Bl^f_{\cK,*} \big( (\frac{\psi_{\min}}{-t})^{m-1} + (\frac{\psi_{\min}}{-t})^m + \cdots \big) \\
&= \frac{(-1)^{m}}{t^m} \Bl^f_{\cK,*} \big( \psi_{\min}^{m-1} + \frac{\psi_{\min}^m}{-t} + \frac{\psi_{\min}^{m+1}}{(-t)^2} + \cdots \big) \\
&= \frac{(-1)^{m}}{t^m} s_{-1/t}\Big(\bigoplus_i \cO(e_{\min, i})\Big) \\
&= \frac{(-1)^{m}}{t^m} c_{-1/t}^{-1}\Big(\bigoplus_i \cO(e_{\min, i})\Big) \\
&= \frac{(-1)^{m}}{t^m} \prod_{i \in [m]} \frac{1}{1 + \psi_{\min, i}/t}\\
&= \prod_{i \in [m]}\frac{1}{-t - \psi_{\min, i}}.
\end{align*}
This proves the statement.
\end{proof}

\subsection{Virtual push-forward along partial alignments}
Consider the Cartesian squares
\begin{equation}\label{diag:remove-max-order}
\xymatrix{
\RAMm_{[m]} \ar[r]^{\ali_1} \ar[d] & \RAMm_{\setminus[1]} \ar[r]^{\ali_2} \ar[d] & \cdots \ar[r] & \RMm_{\setminus[m-2]} \ar[r]^-{\ali_{m-1}} \ar[d]& \RMm_{\setminus[m-1]} = \RMm_{[m]} \ar[d] \ar[r]^-{\Bl_{\cK}}& \prod_{i\in [m]} \RMm_i \ar[d] \\
\AMm_{[m]} \ar[r]^{\ali_1} &  \AMm_{\setminus[1]} \ar[r]^{\ali_2} & \cdots \ar[r] & \AMm_{\setminus[m-2]} \ar[r]^-{\ali_{m-1}} &  \AMm_{\setminus[m-1]} = \UMm_{[m]} \ar[r]^-{\Bl_{\cK}} &   \prod_{i\in [m]} \UMm_i .
}
\end{equation}
where the bottom arrows are given by \eqref{eq:remove-order-max} and \eqref{eq:split-min}, and all vertical arrows are strict. By abuse of notations, we use $\ali_i$ and $\Bl_{\cK}$ to denote the corresponding top and bottom arrows. Consider the composition
\[
\ali_{[m]} := \ali_{m-1}\circ\cdots \circ \ali_1 \colon \RAMm_{[m]} \longrightarrow \RMm_{[m]} .
\]
Similarly, we write $\ali_{[m]} \colon \AMm_{[m]} \to \UMm_{[m]}$ for
the corresponding composition.
Our next goal is to establish the following virtual push-forward
formulas.

\begin{theorem}\label{thm:virtual-split-disconnected-source}
\begin{enumerate}
 \item For canonical virtual cycles, we have
\begin{equation}\label{eq:can-v-split-disconnected-source}
\Bl_{\cK,* }\ali_{[m],*}\left(\frac{[\RAMm_{[m]}]^{\vir}}{-t - \psi_{\min}} \right) = \prod_{i \in [m]} \frac{[\RMm_i]^\vir}{-t - \psi_{\min, i}}.
\end{equation}

\item Further assume Assumption \ref{as:reduced-punctured-cycle}, and
  fix a transverse superpotential $W$. Then, we have
 \begin{equation}\label{eq:red-v-split-disconnected-source}
\Bl_{\cK,* }\ali_{[m],*}\left( \frac{\ttwist t \cdot [\RAMm_{[m]}]^{\red}}{-t - \psi_{\min}} \right) = \prod_{i \in [m]}\frac{\ttwist t\cdot [\RMm_i]^{\red}}{-t - \psi_{\min, i}}.
 \end{equation}
\end{enumerate}
\end{theorem}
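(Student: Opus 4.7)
\medskip

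\noindent\emph{Proof proposal.}
The plan is to first compute $\ali_{[m],*}$ along the chain \eqref{diag:remove-max-order} one step at a time, then push forward along $\Bl_{\cK}$ via a Segre-class computation mimicking Proposition~\ref{prop:push-forward-min}. Throughout, I will use that $\psi_{\min}$ on $\UMm_{[m]}$ (and hence on $\RAMm_{\setminus[i]}$ and $\RMm_{[m]}$) is pulled back from $\UMm_{[m]}$, so the ordinary projection formula applies freely.

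For part (1), iterating Lemma~\ref{lem:v-cycle-partial-order-remove}(1) along \eqref{eq:R-map-partial-order-remove} gives $\ali_{[m],*}[\RAMm_{[m]}]^{\vir} = [\RMm_{[m]}]^{\vir}$, and projecting out $\frac{1}{-t-\psi_{\min}}$ yields
\[
\ali_{[m],*}\left(\frac{[\RAMm_{[m]}]^{\vir}}{-t-\psi_{\min}}\right) = \frac{[\RMm_{[m]}]^{\vir}}{-t-\psi_{\min}}.
\]
Applying $\Bl_{\cK,*}$ then reduces to showing the virtual analog of Proposition~\ref{prop:push-forward-min}, namely
\[
\Bl_{\cK,*}\left(\frac{[\RMm_{[m]}]^{\vir}}{-t-\psi_{\min}}\right) = \prod_{i\in[m]}\frac{[\RMm_i]^{\vir}}{-t-\psi_{\min,i}}.
\]
This will follow by exactly the argument of Proposition~\ref{prop:push-forward-min}, using the projective bundle structure from Lemma~\ref{lem:min-stack-composition}(3) combined with the compatibility of virtual pullback and proper push-forward (Costello/Manolache) applied to the Cartesian square $\RMm_{[m]} \to \UMm_{[m]}$ over $\prod \RMm_i \to \prod \UMm_i$; the crucial input is that the reduced POTs on both sides are compatibly pulled back from $\prod \UH_i \to \prod \UM_i$.

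For part (2), the key inductive claim is
\[
\ali_{i+1,*}\,[\RAMm_{\setminus[i]}]^{\red} \;=\; -\ttwist\,\psi_{\min}\cdot [\RAMm_{\setminus[i+1]}]^{\red}.
\]
To prove it, write $[\RAMm_{\setminus[i]}]^{\red} = c_1(\bK_i)\cdot[\RAMm_{\setminus[i]}]^{\red}_{+}$ via Lemma~\ref{lem:+red-vs-red}. The Cartesian square \eqref{eq:R-map-partial-order-remove} combined with compatibility of virtual pullback and base change along strict maps identifies $[\RAMm_{\setminus[i]}]^{\red}_+ = \ali_{i+1}^*[\RAMm_{\setminus[i+1]}]^{\red}$, so the ordinary projection formula gives $\ali_{i+1,*}(c_1(\bK_i)\cdot[\RAMm_{\setminus[i]}]^{\red}_{+}) = \ali_{i+1,*}(c_1(\bK_i))\cdot[\RAMm_{\setminus[i+1]}]^{\red}$, and Lemma~\ref{lem:push-forward-K} supplies $\ali_{i+1,*}c_1(\bK_i) = -\ttwist\psi_{\min}$. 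Iterating yields
\[
\ali_{[m],*}\,[\RAMm_{[m]}]^{\red} \;=\; (-\ttwist\,\psi_{\min})^{m-1}\cdot [\RMm_{[m]}]^{\red}.
\]
Multiplying by the pulled-back factor $\tfrac{\ttwist t}{-t-\psi_{\min}}$ reduces part (2) to the identity
\[
\Bl_{\cK,*}\left(\frac{\psi_{\min}^{m-1}\cdot [\RMm_{[m]}]^{\red}}{-t-\psi_{\min}}\right) \;=\; (-t)^{m-1}\prod_{i\in[m]}\frac{[\RMm_i]^{\red}}{-t-\psi_{\min,i}},
\]
after which a short manipulation of signs recovers the RHS $\prod_i \tfrac{\ttwist t \cdot [\RMm_i]^{\red}}{-t-\psi_{\min,i}}$.

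The main obstacle is the final identity above: extending Proposition~\ref{prop:push-forward-min} to the virtual setting, in the presence of the extra factor $\psi_{\min}^{m-1}$. The plan is to expand $\tfrac{\psi_{\min}^{m-1}}{-t-\psi_{\min}} = \sum_{k\ge 0} \psi_{\min}^{m-1+k}(-t)^{-(k+1)}$ and apply the projective-bundle Segre formula $\pi_*\xi^{m-1+k} = s_k\bigl(\bigoplus\cO(e_{\min,i})\bigr)$ to each term, exactly as in the proof of Proposition~\ref{prop:push-forward-min}. The virtual version requires $\Bl_{\cK,*}(\psi_{\min}^{m-1+k}\cap [\RMm_{[m]}]^{\red}) = s_k(\bigoplus\cO(e_{\min,i}))\cdot\prod[\RMm_i]^{\red}$, which I will deduce by combining the classical projective-bundle formula with the base-change/projection formula for virtual pullback along the Cartesian square $\RMm_{[m]} \to \UMm_{[m]}$ over $\prod\RMm_i \to \prod\UMm_i$. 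Collecting the resulting Segre classes produces the Chern class identity $\tfrac{1}{-t}\,c_{-1/t}\bigl(\bigoplus\cO(e_{\min,i})\bigr)^{-1} = (-t)^{m-1}\prod\tfrac{1}{-t-\psi_{\min,i}}$, completing the proof.
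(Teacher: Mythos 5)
Your proposal is correct and follows essentially the same route as the paper: push forward along $\ali_{[m]}$ one alignment at a time using Lemma~\ref{lem:v-cycle-partial-order-remove}, Lemma~\ref{lem:+red-vs-red} and Lemma~\ref{lem:push-forward-K} to accumulate the factor $(-\ttwist\psi_{\min})^{m-1}$, then push along $\Bl_{\cK}$ via its factorization into the saturation (handled by Costello/Manolache virtual push-forward) followed by the flat projective bundle of Lemma~\ref{lem:min-stack-composition}(3), exactly as in Proposition~\ref{prop:push-forward-min}. The only difference is cosmetic: the paper splits $\frac{t(-\psi_{\min})^{m-1}}{-t-\psi_{\min}}$ into a polynomial part (which pushes to zero) plus $\frac{t^m}{-t-\psi_{\min}}$, whereas you expand the series directly starting at $\psi_{\min}^{m-1}$ and recollect Segre classes — the same geometric input organized slightly differently.
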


\begin{proof}
  We first establish \eqref{eq:can-v-split-disconnected-source}.
  Since $\psi_{\min}$ is defined over
  $\fM^{\diamondsuit,\fine}_{[m]}$, by
  Lemma~\ref{lem:v-cycle-partial-order-remove} (1) and the projection
  formula, we have
\begin{equation}\label{eq:can-vir-remove-align}
\ali_{[m], *}\left(\frac{[\RAMm_{[m]}]^{\vir}}{-t - \psi_{\min}} \right) = \left(\frac{[\RMm_{[m]}]^{\vir}}{-t - \psi_{\min}} \right)
\end{equation}

Consider the Cartesian squares
\begin{equation}\label{eq:split-min-vir-pushout}
\xymatrix{
\RMm_{[m]} \ar[rr] \ar[d] \ar@/^1.5pc/[rrrr]^{\Bl_{\cK}} && \SH^{\diamondsuit,\fine}_{[m]} \ar[d] \ar[rr] && \prod_{i\in [m]} \RMm_i \ar[d]\\
\UMm_{[m]} \ar[rr]^{\mathrm{S}} \ar@/_1.5pc/[rrrr]_{\Bl_{\cK}} && \fM^{\diamondsuit,\fine}_{[m]} \ar[rr]^{\Bl^f_{\cK}} && \prod_{i\in [m]} \UMm_i
}
\end{equation}
where the bottom decomposition is obtained by pulling back
\eqref{eq:min-log-blowup}, and the vertical arrows are all strict.
The canonical perfect obstruction theory of the right vertical arrow
pulls back to the canonical perfect obstruction theory of the middle
vertical arrow, hence defines the canonical virtual cycle
$[\SH^{\diamondsuit,\fine}_{[m]}]^{\vir}$.
Since the saturation $\mathrm{S}$ is projective and birational, the
virtual push-forward of \cite{Co06, Ma12} implies that
$\mathrm{S}_{*}[\RMm_{[m]}]^{\vir} =
[\SH^{\diamondsuit,\fine}_{[m]}]^{\vir}$.
Since $\psi_{\min}$ is defined over $\fM^{\diamondsuit,\fine}_{[m]}$,
the projection formula implies that
\begin{equation}\label{eq:can-vir-push-to-fine}
\mathrm{S}_{*} \left(\frac{[\RMm_{[m]}]^{\vir}}{-t - \psi_{\min}} \right) = \frac{[\SH^{\diamondsuit,\fine}_{[m]}]^{\vir}}{-t - \psi_{\min}}.
\end{equation}

Since $\Bl^{f}_{\cK}$ is flat, we obtain that
$\Bl^{f,*}_{\cK}\prod_{i\in [m]} [\RMm_i]^{\vir} =
[\SH^{\diamondsuit,\fine}_{[m]}]^{\vir}$.
Applying Proposition~\ref{prop:push-forward-min} and the projection
formula, we obtain
\begin{equation}\label{eq:can-vir-push-to-split}
\Bl^{f}_{\cK,*} \left( \frac{[\SH^{\diamondsuit,\fine}_{[m]}]^{\vir}}{-t - \psi_{\min}} \right) = \prod_{i \in [m]} \frac{[\RMm_i]^{\vir}}{-t - \psi_{\min, i}}.
\end{equation}
The canonical push-forward \eqref{eq:can-v-split-disconnected-source} then follows from \eqref{eq:can-vir-remove-align}, \eqref{eq:can-vir-push-to-fine} and \eqref{eq:can-vir-push-to-split}.

\smallskip

For the reduced push-forward \eqref{eq:red-v-split-disconnected-source}, using Lemma \ref{lem:v-cycle-partial-order-remove} (2), \ref{lem:+red-vs-red} and \ref{lem:push-forward-K} and the projection formula, we obtain
\[
\ali_{i+1,*}[\RAMm_{\setminus [i]}]^{\red} = -\ttwist \psi_{\min} \cdot [\RAMm_{\setminus [i+1]}]^{\red},
\]
hence
\begin{equation}\label{eq:ali-push-forward-red}
  \ali_{[m],*}[\RAMm_{[m]}]^\red = \ttwist^{m-1} (-\psi_{\min})^{m-1} \cdot [\RMm_{[m]}]^{\red}.
\end{equation}

To  compute the push-forward along  $\Bl_{\cK}$, observe that
\[
\frac{t (-\psi_{\min})^{m-1}}{-t - \psi_{\min}} = \big( t (-\psi_{\min})^{m-2} + t^2 (-\psi_{\min})^{m-3} + \cdots + t^{m-1} \big) + \frac{t^m}{-t - \psi_{\min}}.
\]
Furthermore, the factorization of $\Bl_{\cK}$ in \eqref{eq:split-min-vir-pushout} implies that $\Bl_{\cK,*} \psi_{\min}^{k} = 0$ for $k \leq m-1$.
Thus, the push-forward \eqref{eq:red-v-split-disconnected-source} follows from \eqref{eq:ali-push-forward-red} and the following
\[
\Bl_{\cK,*}\left( \ttwist^m \frac{t^{m} \cdot [\RMm_{[m]}]^{\red}}{-t - \psi_{\min}} \right) = \prod_{i \in [m]}\frac{\ttwist t\cdot [\RMm_i]^{\red}}{-t - \psi_{\min,i}},
\]
whose proof is identically to \eqref{eq:can-vir-push-to-fine} and \eqref{eq:can-vir-push-to-split}.

This completes the proof of Theorem \ref{thm:virtual-split-disconnected-source}.
\end{proof}

\subsection{A resolution using twisted alignments}

In order to apply the virtual localization of \cite{GrPa99} to
log GLSM, one needs to work with absolute perfect obstruction theories
of log R-maps.
However, the perfect obstruction theories of log R-maps are naturally
relative to the stack $\fM_{g, \ddata'}(\cA)$ of log maps
\cite{CJR21}, which is log smooth but in general has a singular
underlying stack.
This technical assumption can be addressed via a resolution using the
twisted alignments as in \S \ref{sss:twisted-align-log}.
While not used in the rest of this paper, we discuss this resolution
below.
It is an analogue of the expanded picture as in \cite{ACFW13,Kim} but
with no expansions.

\subsubsection{Aligned maps}\label{sss:aligned-map}
A punctured/log map $f\colon \pC \to \cA$ is {\em aligned} if the sheaf of degeneracies $\obD(f)$ is totally ordered. We consider a stack $\fM$ of aligned punctured/log maps which is a finite type open substack of either one of the following
\begin{enumerate}
\item $\prod_{i \in [m]}\fM_i$ where $\fM_i$ is the stack of punctured maps as in \eqref{eq:disconnected-universal-stacks}.

\item $\fM_{g, \ddata'}(\cA)$ the stack of log maps to $\cA$ with discrete data $(g, \ddata')$.
\end{enumerate}

Let $\obD_{\fM}$ be the sheaf of degeneracies of the universal punctured/log map over $\fM$. By Proposition \ref{prop:universal-alignment}, we obtain the the stack of aligned punctured/log maps $\AM := \cT_{\obD_{\fM}\cup\{0\}}$ with the projection $\AM \longrightarrow \fM$ given by \eqref{eq:align-A}.

In the case that $\fM$ parameterizes log maps as in (2), since $\AM$
admits maximal degeneracies, the morphism $\AM \longrightarrow \fM$ factors as
\begin{equation}\label{eq:aligned-map-stack}
\AM \longrightarrow \UM  \longrightarrow \fM.
\end{equation}
By Proposition \ref{prop:alignment} and \cite[Theorem 3.17]{CJRS18P},
the arrows in \eqref{eq:aligned-map-stack} are log \'etale, proper,
representable and birational.

In the case that $\fM$ parameterizes punctured maps as in (1), we
obtain a tautological morphism
\begin{equation}\label{eq:to-align-max}
\ali^{\ud} \colon \AM \to \AMm,
\end{equation}
which is proper and representable over $\fM$ by Proposition \ref{prop:universal-alignment}. Furthermore, \eqref{eq:to-align-max} restricts to an isomorphism over the open dense stack parameterizing smooth source curves, hence it is also birational.

\subsubsection{Twisted aligned maps}\label{sss:twisted-alignment}

In general, $\cM_{\AM}$ is not necessarily locally free, which causes
the underlying of $\AM$ to be singular.
To achieve smoothness, we introduce further twists as follows.

A {\em twisted aligned log/punctured map} $f\colon \pC \to \cA$ over
$S$ is an aligned log/punctured map such that fiberwise the set
$\obD(f) \cup \{0\}$ is of the form as in
\eqref{eq:tw-alignment-elements}:
\[
 0, r_1 e_1, r_1 e_1 + r_2 e_2, r_1 e_1 + r_2 e_2 + r_3 e_3, \cdots, r_1 e_1 + \cdots + r_k e_k.
\]
By \S \ref{sss:twisted-align-log}, the moduli of twisted aligned maps in $\fM$ is $\cT^{tw}\times_{\cT_{\log}}\AM$.

To study the structure of $\ocM_S$, suppose that $f$ is given by a strict geometric log point $S \to \cT^{tw}\times_{\cT_{\log}}\AM$. Let $G$ be the (not necessarily connected) dual graph of the underlying curve of $\pC$ with the the set of vertices $V(G)$ corresponding to the the set irreducible components, and edges $E(G)$ corresponding to the set of nodes with the additional data:
\begin{enumerate}
 \item A partition $V(G) = \cup_{i=0}^{k}V_i$ with $V_i$ representing the set of components with the degeneracies $\delta_0 = 0$ and $\delta_i := r_1 e_1 + \cdots + r_i e_i$ for $i \geq 1$.

 \item Each $E \in E(G)$ is decorated by its contact order $c_E \in \NN$.
\end{enumerate}
For each $i < j$, denote by $E_{ij}(G) \subset E(G)$ the subset
consisting of $E \in E(G)$ joining vertices in $V_{i'}$ and $V_{j'}$
such that $i' \leq i < j \leq j'$.
Let $E^{0}(G) \subset E(G)$ be the subset consisting of edges with the
zero contact order.
The key to the smoothness of $\TAM$ in the case of log maps is:

\begin{proposition}\label{prop:freeness}
Notations and assumptions as above, $\ocM_S$ is free provided that $c_E | r_{i+1}$ for any $E \in E_{i(i+1)}(G)$ and any $i \in \{0, 1, \cdots, k-1\}$.
\end{proposition}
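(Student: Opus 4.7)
The plan is to exhibit an explicit basis of $\ocM_S$ by using the edge relations to eliminate all edge-length generators with nonzero contact order, leaving only the twisted-alignment generators and the lengths of trivial-contact edges.

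First I would invoke the standard description of the basic characteristic monoid of a log/punctured map to $\cA$ (cf.\ \cite[\S 2.3.1, Proposition~3.27]{ACGS20P}), combined with the base change by the twisted alignment morphism $\cT^{tw}_{\log} \to \cT_{\log}$. This presents $\ocM_S$ as the fs monoid generated by the twisted-alignment generators $e_1,\ldots,e_k$ and the edge-length generators $\ell_E$ ($E \in E(G)$), with vertex heights forced to $h_v = \delta_i = r_1 e_1 + \cdots + r_i e_i$ for $v \in V_i$, subject to the edge relations
\begin{equation*}
c_E\, \ell_E \;=\; h_{v_+(E)} - h_{v_-(E)} \;=\; \sum_{l=i'(E)+1}^{j'(E)} r_l\, e_l \qquad (\star)
\end{equation*}
where $E$ joins $v_-(E) \in V_{i'(E)}$ to $v_+(E) \in V_{j'(E)}$ with $i'(E) \le j'(E)$.

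Next I would split into cases by contact order. For an edge $E \in E^0(G)$ with $c_E = 0$, $(\star)$ forces $i'(E) = j'(E)$, so $E$ lies within a single level and $\ell_E$ remains an unconstrained free generator. For an edge $E$ with $c_E > 0$, the inclusion $E \in E_{i(i+1)}(G)$ holds exactly for $i \in \{i'(E),\ldots,j'(E)-1\}$, so the hypothesis gives $c_E \mid r_l$ for all $l \in \{i'(E)+1,\ldots,j'(E)\}$. Writing $\tilde r_l := r_l/c_E \in \NN_{>0}$ and using the cancellation property of the integral monoid $\ocM_S$, relation $(\star)$ becomes $\ell_E = \sum_l \tilde r_l\, e_l$, expressing $\ell_E$ as a nonnegative integer combination of the $e_l$'s; hence $\ell_E$ may be eliminated from the generating set.

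Finally I would check that no hidden relations remain among $\{e_1,\ldots,e_k\} \cup \{\ell_E : E \in E^0(G)\}$. The only candidate relations come from cycles in $G$: summing $(\star)$ with orientation signs around a cycle yields $\sum_E \epsilon_E c_E \ell_E = 0$, which after the substitutions in the previous step telescopes in the $r_l e_l$'s and vanishes identically, precisely because each vertex height $h_v = \delta_{i(v)}$ is well defined in $\NN\langle e_1,\ldots,e_k\rangle$. This yields $\ocM_S \cong \NN^{k + |E^0(G)|}$.

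The main obstacle I anticipate lies in the cancellation step. A priori, $(\star)$ is a relation between $c_E \ell_E$ and $c_E \sum \tilde r_l e_l$; passing to $\ell_E = \sum \tilde r_l e_l$ inside $\ocM_S$ (not merely inside $\ocM_S^{gp}$) relies on the divisibility hypothesis together with integrality of fs monoids, and one must carefully track that this elimination respects the monoid structure (in particular, preserves positivity) and does not conflate distinct edges with each other along cycles—which is exactly what the telescoping verification ensures.
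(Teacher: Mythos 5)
Your argument is correct and essentially identical to the paper's: the paper presents $\ocM_S$ as the sharp saturation of the monoid $\ocN_S$ generated by the alignment generators $e_i$ and the nodal parameters $\rho_E$ subject to the relations (A) and (N), and Lemma \ref{lem:twisted-aligned-monoid} carries out exactly your elimination $\rho_E = \sum_l (r_l/c_E)\, e_l$ for $c_E \neq 0$, concluding freeness on $\{e_i\} \cup \{\rho_E : c_E = 0\}$. The only stylistic difference is that you divide by $c_E$ inside the saturated sharp monoid (where torsion-freeness of $\ocM_S^{gp}$, rather than mere cancellativity, is what justifies this), while the paper observes that the unsaturated presentation is already free and hence coincides with $\ocM_S$.
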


\begin{proof}
To describe the monoid $\ocM_S$, we introduce:
\begin{enumerate}
\item[$\rho_E\colon$] the nodal smoothing parameter corresponding to $E$ as in \cite[(2.3)]{ACGS20P};
\item[$\delta_v\colon$] the degeneracy of the vertex $v \in V(G)$.
\end{enumerate}
These parameters viewed as elements of $\ocM_S$ satisfy:
\begin{enumerate}
\item[(A)$\colon$] $\delta_v = \delta_i$ if $v \in V_i$.
\item[(N)$\colon$] $\delta_v = \delta_{v'} + c_E \cdot \rho_E$ for $E$ joining $v \in V_i$ and $v' \in V_j$ with $i \leq j$.
\end{enumerate}
Denote by $\ocN_S$ the fine sharp monoid given by
\[
\langle e_i,  \rho_E \ | \  1 \leq i \leq k \ \mbox{and} \ E \in E(G) \rangle \big/ \sim
\]
with the relations $\sim$ given by (A) and (N). By the fiber product construction of $\cT^{tw}\times_{\cT_{\log}}\AM$ and the definition of basic monoids in \cite[\S 2.3.1]{ACGS20P}, the monoid $\ocM_S$ is the saturation of $\ocN_S$ in the category of sharp monoids. Now the proposition follows from the next lemma.
\end{proof}

\begin{lemma}\label{lem:twisted-aligned-monoid}
In the above situation, $\ocM_S \cong \ocN_S$ is a free monoid with the set of generators
\begin{equation}\label{eq:twisted-aligned-monoid}
\{e_i,  \rho_E \ | \  1 \leq i \leq k \ \mbox{and} \ E \in E(G) \ \mbox{with} \ c_E = 0 \}.
\end{equation}
\end{lemma}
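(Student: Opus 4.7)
The plan is to construct an explicit isomorphism between $\ocN_S$ and the free commutative monoid $P$ on the set $\{e_i : 1 \le i \le k\} \cup \{\rho_E : c_E = 0\}$, relying on the divisibility hypothesis and the cancellative nature of the fine sharp monoid $\ocN_S$.

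First I would extract the full content of the divisibility hypothesis: for each edge $E$ with $c_E > 0$ joining $v \in V_{i'}$ to $v' \in V_{j'}$ with $i' < j'$, the edge lies in $E_{l(l+1)}(G)$ for every $l \in \{i', \ldots, j'-1\}$, whence $c_E \mid r_l$ for each $l \in \{i'+1, \ldots, j'\}$. Consequently the expression $\sum_{l=i'+1}^{j'}(r_l/c_E)\,e_l$ makes sense as an element of the free monoid on the $e_l$'s with non-negative integer coefficients. Edges $E$ with $c_E > 0$ whose endpoints lie in the same $V_i$ do not contribute: relation (N) would force $c_E \rho_E = 0$, hence $\rho_E = 0$ in an integral sharp monoid, contradicting that $\rho_E$ is a nodal smoothing parameter.

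Next I would define a monoid homomorphism $\pi \colon \ocN_S \to P$ on generators by $e_i \mapsto e_i$, $\rho_E \mapsto \rho_E$ if $c_E = 0$, and $\rho_E \mapsto \sum_{l=i'+1}^{j'}(r_l/c_E)\,e_l$ if $c_E > 0$; simultaneously, $\iota \colon P \to \ocN_S$ is the natural inclusion of generators. Well-definedness of $\pi$ amounts to checking that relations (A) and (N) are preserved: (A) is automatic, and (N) with $c_E > 0$ becomes the tautology $c_E \sum (r_l/c_E)\,e_l = \sum r_l e_l$ in $P$.

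The identity $\pi \circ \iota = \mathrm{id}_P$ is immediate on generators and extends by freeness. The main step, where the fact that $\ocN_S$ is fine enters decisively, is showing $\iota \circ \pi = \mathrm{id}_{\ocN_S}$. This reduces to verifying $\rho_E = \sum(r_l/c_E)\,e_l$ in $\ocN_S$ for each $E$ with $c_E > 0$: relation (N) yields $c_E \rho_E = \sum r_l e_l = c_E \sum (r_l/c_E)\,e_l$, and cancellation by $c_E$ (valid by integrality of $\ocN_S$) gives the required identity. Thus $\ocN_S \cong P$ is the free commutative monoid of rank $k + |E^0(G)|$ with the claimed basis \eqref{eq:twisted-aligned-monoid}; since free monoids are saturated, the asserted $\ocM_S \cong \ocN_S$ follows as well.
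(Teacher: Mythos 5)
Your overall strategy is the same as the paper's: use $c_E \mid r_l$ to rewrite each $\rho_E$ with $c_E\neq 0$ as $\sum_l (r_l/c_E)\,e_l$ and conclude that the monoid is free on the remaining generators (your packaging via the two homomorphisms $\pi$ and $\iota$ is a clean way to extract freeness, and your remark about edges with $c_E>0$ inside a single level $V_i$ is a harmless supplement). However, there is a genuine gap exactly at the crucial step $\iota\circ\pi=\mathrm{id}$. ``Cancellation by $c_E$, valid by integrality'' is not a correct inference: integrality of a monoid gives $a+c=b+c\Rightarrow a=b$, not $c_E a=c_E b\Rightarrow a=b$; the latter requires the groupification to be torsion-free. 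For the monoid literally presented by the generators $e_i,\rho_E$ and the relations (A), (N), this fails whenever some $c_E\geq 2$: each relation vector $c_E\rho_E-\sum_l r_l e_l$ equals $c_E$ times the primitive vector $\rho_E-\sum_l (r_l/c_E)e_l$, so the groupification of the integral quotient acquires a $\ZZ/c_E$ summand and $\rho_E\neq\sum_l(r_l/c_E)e_l$ there. A minimal example: generators $e_1,\rho$ with the single relation $2\rho=2e_1$; the fine sharp quotient is the submonoid of $\ZZ\oplus\ZZ/2$ generated by $(1,0)$ and $(1,1)$, which is cancellative and sharp, yet $\rho\neq e_1$, and it is neither free nor saturated. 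So the claimed isomorphism $\ocN_S\cong P$ does not hold at the level of the bare fine quotient, and your retraction argument breaks down precisely where the divisibility hypothesis has to do its work.

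The repair is to perform the division after saturating and sharpening, i.e.\ in $\ocM_S$: the torsion classes $\rho_E-\sum_l(r_l/c_E)e_l$ become units in the saturation and are killed upon sharpening, and since $\ocM_S$ is sharp and saturated its groupification is torsion-free (if $nt=0\in\ocM_S$ then $t,-t\in\ocM_S$ by saturation, hence $t=0$ by sharpness). In $\ocM_S$ the relation $c_E\rho_E=\sum_l r_l e_l$ together with $c_E\mid r_l$ does yield $\rho_E=\sum_l(r_l/c_E)e_l$, and then your two-homomorphism argument goes through verbatim with $\ocM_S$ in place of $\ocN_S$, showing that $\ocM_S$ is free on the listed generators --- which is all that the freeness statement it feeds into requires. (The paper's own proof carries out this division equally tersely; the point you need to add, and which your appeal to integrality does not supply, is that the division is legitimate only after passing to the saturated sharp monoid, or equivalently after interpreting $\ocN_S$ inside a torsion-free lattice.)
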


\begin{proof}
It suffices to show that for any $E \in E(G)$ with $c_E \neq 0$, the element $\rho_E$ can be expressed as a integral combination of $e_i$'s with positive coefficients.

Suppose that $E$ connects two vertices $v \in V_i$ and $v' \in V_j$ with $i < j$. We have
\[
c_E \cdot \rho_E = \delta_{v'} - \delta_v = \delta_{v_j} - \delta_{v_i} = \sum_{l = i+1}^j r_l e_l.
\]
Since $c_E | r_l$ by assumption, the statement follows from
\[
\rho_E =  \sum_{l = i+1}^j \frac{r_l}{c_E}e_l.
\]
\end{proof}

\subsubsection{Twisted aligned log R-maps}\label{sss:twist-choice-log}
Suppose that $\fM \subset \fM_{g, \ddata'}(\cA)$ is a finite type open substack of the moduli of log maps, and $\AM \to \fM$ is the corresponding alignment. For any twisting choice $\fr$, we obtain a log smooth stack $\TAM_{\fr} := \cT^{\fr}_{\log}\times_{\cT}\AM$ with the projection
\begin{equation}\label{eq:twist-aligned-degeneracy}
\Tw \colon \TAM_{\fr}  \to \AM,
\end{equation}
which is birational as it restricts to an isomorphism over the open
dense substack with the trivial log structure.

Note that a log smooth stack is smooth if the log structure is locally
free.
Given the freeness criterion in Proposition \ref{prop:freeness}, we
observe that one can choose $\fr$ such that $\TAM_{\fr}$ has free log
structures, hence is both log smooth and smooth, and $\Tw$ is proper.

Indeed, since $\fM$ is of finite type, the possible contact orders at
the nodes are also finite.
A {\em global twisting index} is a positive number $\lambda$ divisible
by the contact order of each node over each geometric point of $\fM$.
Fix such a $\lambda$.
Let $\fr$ be the twisting choice that assigns $r_i = \lambda$ to each
$r_i$ as in \eqref{eq:tw-alignment-elements}.
Locally, the morphism $\cT^{\fr} \to \cT$ takes the $\lambda$-th root
stack construction along each irreducible component of the boundary of
$\cT$.
In particular, $\cT^{\fr} \to \cT$ is proper which pulls back to the
proper $\Tw$.
Furthermore by Proposition \ref{prop:freeness}, $\cM_{\TAM_{\fr}}$ is
locally free, hence $\TAM_{\fr}$ is both log smooth and smooth.

\begin{remark}
  The choice $\fr$, and hence the resolution $\TAM_{\fr}$ are not
  unique.
  One could also take the minimal choice similar to \cite{AbFa17}.
  In fact, these choices are to ensure the technical assumptions of
  the localization formula, and play no role in the calculation of
  virtual cycles.
\end{remark}

Consider the discrete data for the stable log R-maps of \cite{CJR21}:
\begin{equation}\label{eq:[n]-log-R-data}
(g, \beta, \ddata = \{(\ogamma_j, c_j))\}_{j=1}^{n})
\end{equation}
compatible with the discrete data $(g, \ddata')$ of log maps to $\cA$. Write $\SH = \SH_{g,n}(\fP, \beta)$  for simplicity. We have a tautological morphism $\SH \to \fM_{g, \ddata'}(\cA)$ induced by the strict morphism $\fP \to \cA$. Since $\SH$ is of finite type, let $\fM \subset \fM_{g,\ddata'}(\cA)$ be a finite type open substack containing the image of $\SH$. We form the following commutative diagram of Cartesian squares in fs category with strict vertical arrows
\begin{equation}\label{eq:resolution-log-R-map}
\xymatrix{
\RTA \ar[r] \ar[d] & \RA \ar[r] \ar[d] & \UH \ar[r] \ar[d] & \SH \ar[d] \\
\TAM_{\fr} \ar[r]^{\Tw} & \AM \ar[r]^{\ali^{\curlywedge}} & \UM \ar[r]^{F^{\curlywedge}} & \fM
}
\end{equation}
where the middle and right arrows of the bottom are given by \eqref{eq:aligned-map-stack}. Pulling back the canonical perfect obstruction theory of $\SH \to \fM$ as defined in \cite{CJR21}, we obtain the canonical virtual cycles
$
[\RTA]^{\vir}, [\RA]^{\vir}, [\UH]^{\vir}$ and  $[\SH]^{\vir}$.
Pulling back the reduced perfect obstruction theory of $\UH \to \UM$ as defined in \cite{CJR21}, we obtain the reduced virtual cycles
$[\RTA]^{\red}, [\RA]^{\red}$, and  $[\UH]^{\red}$.

Choosing a global twisting choice $\lambda$ as above, we may assume that $\TAM_{\fr}$ is a smooth stack, and the bottom arrows are all proper birational. In this case, virtual push-forward of  canonical  and reduced virtual cycles holds, namely
\begin{equation}\label{eq:push-forward-tw-align-log}
\Tw_* \ali^{\curlywedge}_*F^{\curlywedge}_*[\RTA]^{\vir} = [\SH]^{\vir} \ \ \ \mbox{and} \ \ \ \Tw_* \ali^{\curlywedge}_*F^{\curlywedge}_*[\RTA]^{\red} = [\SH]^{\red}.
\end{equation}

\subsubsection{Twisted aligned punctured R-maps}\label{sss:twist-choice-pun}
Suppose that $\fM \subset \prod_{i\in [m]} \fM_i$ is a finite type
open substack of punctured maps, and $\AM \to \fM$ is the
corresponding stack of aligned maps.
The morphism $\AM \to \cT_{\log}$ factors through the closed substack
$\cT_{\Delta}\subset \cT_{\log}$ of non-trivial log structures.
For a twisting choice $\fr$, let
$\cT^{\fr}_{\Delta} \subset \cT^{\fr}_{\log}$ be the closed substack
with non-trivial log structures, and write
$\TAM_{\fr} := \cT^{\fr}_{\Delta}\times_{\cT_{\Delta}}\AM$ for the
twisted alignements with the projection
\begin{equation}\label{eq:twist-aligned-pun}
  \Tw \colon \TAM_{\fr} \longrightarrow \AM.
\end{equation}

As in the case of log maps, a {\em global twisting index} is a
positive number $\lambda$ divisible by the contact order of each node
over each geometric point of $\fM$.
Again, fix such a $\lambda$.
We choose $\fr$ which assigns $r_i = \lambda$ to each generator
fiberwise as in \eqref{eq:tw-alignment-elements}.
Thus, the morphism $\cT^{\fr}_{\Delta} \to \cT_{\Delta}$, and hence
$\Tw$ is proper and generically \'etale of degree $1/\lambda$.

For punctured R-maps, consider a finite collection of discrete data
\[
(g_i, \beta_i, \ddata_i = \{(\ogamma_{ij}, c_{ij}))\}_{j=1}^{n_i}), \ \ \mbox{for } i \in [m].
\]
as in \eqref{eq:[m]-punctured-R-data} compatible with
\eqref{eq:[m]-punctured-data}.
Write for simplicity $\SH_i = \SH_{g_i,\ddata_i}(\punt,\beta_i)$.
Then for each $i \in [m]$, we obtain a morphism $\SH_i \to \fM_i$ from
\eqref{eq:take-log}.
As $\SH_i$ is of finite type, we may replace $\fM_i$ by a finite type
open substack containing the image of $\SH_i$, which by abuse of
notation, we again denote by $\fM_i$.
We obtain a commutative diagram with all squares Cartesian in the fs
category:
\begin{equation}\label{diag:removing-orders}
\xymatrix{
\RTA_{[m],\fr} \ar[r] \ar[d] & \RA_{[m]} \ar[r] \ar[d] & \RAMm_{[m]} \ar[r] \ar[d] & \RMm_{[m]} \ar[r] \ar[d] & \prod_{i\in [m]} \RMm_i \ar[r] \ar[d] &  \prod_{i \in [m]} \SH_i \ar[d]\\
\TAM_{[m],\fr} \ar[r]^{\Tw}  & \AM_{[m]} \ar[r]^{\ali^{\ud}}  & \AMm_{[m]} \ar[r]^{\ali_{[m]}}  & \UMm_{[m]} \ar[r]^-{\Bl_{\cK}}  & \prod_{i\in [m]} \UMm_i \ar[r]^{(\ref{eq:remove-diamond})}  & \prod_{i \in [m]} \fM_i
}
\end{equation}
Each stack of punctured R-maps on the top carries the configuration of
degeneracies as the corresponding bottom stack of punctured maps.
Since all the vertical arrows are strict, squares in both diagrams are
Cartesian on the level of underlying stacks as well.
Thus, whenever the canonical or reduced theory of
$\RMm_{[m]} \to \AMm_{[m]}$ is defined in this paper, their pull-backs
define the canonical and reduced virtual cycles of $\RTA_{[m],\fr}$,
denoted by $[\RTA_{[m],\fr}]^{\vir}$ and $[\RTA_{[m],\fr}]^{\red}$,
respectively.
Choosing a global twisting index $\lambda$ as above, we further have
the virtual push-forwards
\begin{equation}\label{eq:push-forward-tw-align-pun}
\Tw_{*}\ali^{\ud}_*[\RTA_{[m],\fr}]^{\vir} = (1/\lambda) \cdot [\RAMm_{[m]}]^{\vir}  \ \mbox{and}  \ \Tw_{*}\ali^{\ud}_*[\RTA_{[m],\fr}]^{\red} = (1/\lambda) \cdot [\RAMm_{[m]}]^{\red}.
\end{equation}

Theorem \ref{thm:virtual-split-disconnected-source} together with \eqref{eq:push-forward-tw-align-log} and \eqref{eq:push-forward-tw-align-pun} will be used in \cite{CJR23P} to compute the virtual cycles of log GLSM of \cite{CJR21}  via torus localization and boundary decompositions.


\section{Logarithmic root constructions}\label{sec:log-root}

In this section, we fix a target $\punt \to \BC$ as in \S \ref{sss:P-target}, and a positive integer $\ell$. Consider the following Cartesian diagram with strict vertical arrows:
\begin{equation}\label{eq:ell-root-target}
\xymatrix{
\punt^{1/\ell} \ar[rr] \ar[d] && \punt \ar[d] \\
\ainfty^{1/\ell} \ar[rr]^{\nu_{\ell}} && \ainfty.
}
\end{equation}
where  $\ainfty^{1/\ell} := \ainfty$ and $\nu_{\ell}$ is the {\em $\ell$th root morphism} defined by $\bar{\nu}_{\ell}^{\flat}(1) = \ell$ on the level of characteristic sheaves.
Then the composition $\punt^{1/\ell} \to \punt \to \BC$
defines another target satisfying the conditions in
\S\ref{sss:P-target}.

We will compare punctured R-maps and their virtual cycles with targets
$\punt$ and $\punt^{1/\ell}$.
This comparison is the geometric foundation for the LG/CY
correspondence as explained in Section~\ref{sss:LGCY} below.

\subsection{Comparison of canonical virtual cycles}

\subsubsection{Comparison of discrete data}\label{sss:discrete-data-change-ell}

Let $(g, \beta, \ddata = \{(\ogamma_i, c_i)\}_{i=1}^n)$ be the
discrete data of punctured R-maps to $\punt^{1/\ell}$ with connected
domains.
By \cite[Lemma~1.1.11]{AbFa17}, the sector
$\ogamma_i \subset \ocI\punt^{1/\ell}_{\bk}$ uniquely determines a
sector $\ogamma'_i \subset \ocI\punt_{\bk}$ with a natural
$\mu_{\ell/\rho_i}$-gerbe $\ogamma_i \to \ogamma'_i$ induced by
$\punt^{1/\ell}_{\bk} \to \punt_{\bk}$, where $\rho_i := r_i / r'_i$
with $r_i$ and $r_i'$ the indices of the gerbes parameterized by
$\ogamma_i$ and $\ogamma'_i$ respectively.
In particular, we have $\rho_i | \ell$.
We then introduce  the {\em compatible discrete data} of punctured  R-maps to $\punt$
\begin{equation}\label{eq:change-ell-discrete-data}
(g, \beta, \bar{\ddata} = \{(\ogamma'_i, c'_i = \frac{\ell }{\rho_i}\cdot c_i)\}_{i=1}^n).
\end{equation}

\begin{lemma}\label{lem:change-target}
  Let $f_{\ell} \colon \pC_{\ell} \to \punt^{1/\ell}$ be a stable
  punctured R-map over a log scheme $S$ with the discrete data
  $(g, \beta, \ddata = \{(\ogamma_i, c_i)\}_{i=1}^n)$.
  Then the composition $\pC_{\ell} \to \punt^{1/\ell} \to \punt$
  factors through a unique stable punctured R-map
  $f_1 \colon \pC_{1} \to \punt$ with discrete data
  \eqref{eq:change-ell-discrete-data} such that the underlying
  $\uC_{\ell} \to \uC_1$ induces an isomorphism between the coarse
  curves.
\end{lemma}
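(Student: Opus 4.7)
The plan is to construct $\pC_1$ from $\pC_\ell$ by partial coarsification at the markings, equip it with a compatible punctured log structure, verify that the resulting discrete data matches \eqref{eq:change-ell-discrete-data}, and deduce stability and uniqueness.

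First I would construct the underlying twisted curve $\uC_1$. At each $i$-th marking $p_i \subset \uC_\ell$, which is a $\mu_{r_i}$-gerbe, the composition $p_i \to \ul{\punt^{1/\ell}_{\bk}} \to \ul{\punt_{\bk}}$ factors uniquely through a representable morphism from the $\mu_{r'_i}$-gerbe $p'_i$ corresponding to $\ogamma'_i$, by \cite[Lemma~1.1.11]{AbFa17}. Replacing each $p_i$ by $p'_i$ while leaving $\uC_\ell$ unchanged elsewhere yields a twisted curve $\uC_1$ together with a partial coarsification $\pi\colon \uC_\ell \to \uC_1$ that induces an isomorphism on coarse moduli, and the representable morphism $\ul{f}_1\colon \uC_1 \to \ul{\punt}$ is obtained by universality.

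Next I would equip $\uC_1$ with a fine punctured log structure so as to extend $\ul{f}_1$ to a punctured R-map $f_1$. Off the markings $\pi$ is an isomorphism, so the log structure is transported from $\pC_\ell$. At the $i$-th marking, $\pi$ is the coarsification by $\mu_{\rho_i} \subset \mu_{r_i}$, under which an $\NN$-generator of the local log structure on $\pC_1$ pulls back to $\rho_i$ times the corresponding $\NN$-generator on $\pC_\ell$. Requiring the puncturing of $\pC_\ell$ to factor through $\pi$ composed with the puncturing of $\pC_1$ then uniquely determines a fine (not necessarily saturated) log structure on $\uC_1$, the corresponding punctured curve $\pC_1$, and the morphism $f_1$.

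Finally I would verify the discrete data, stability, and uniqueness. The curve class and genus are preserved because $\pi$ is an isomorphism on coarse moduli, and the sector at the $i$-th marking is $\ogamma'_i$ by construction. The contact order $c'_i = (\ell/\rho_i)\, c_i$ follows from the defining sequence \eqref{eq:define-contact}, combining the identity $\bar\nu_\ell^\flat(1_{\ainfty}) = \ell \cdot 1_{\ainfty^{1/\ell}}$ from the root construction with the rescaling of $\NN$-generators under $\pi$ by $\rho_i$. Stability in the sense of Definition~\ref{def:P-stability} then follows since $\pi$ does not alter the coarse map to $\ul{\punt_{\bk}}'$. Uniqueness of $f_1$ is a consequence of the universal property of the partial coarsification: any representable factorization of $\uC_\ell \to \ul{\punt}$ must pass through $\uC_1$. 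The main obstacle is the second step, where the compatible puncturing on $\pC_1$ must be carefully identified; one has to check that the rescaling by $\rho_i$ of the local log generators interacts correctly with the multiplication by $\ell$ from the root construction so that the inclusion \eqref{eq:puncture-constraint} holds with the claimed new contact orders, after which the remaining verifications reduce to routine bookkeeping with characteristic sheaves.
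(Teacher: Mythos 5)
There is a genuine gap in your Step 1, and it propagates into Step 2. You construct $\uC_1$ by partially coarsifying $\uC_\ell$ \emph{only at the markings} and then claim that the composite $\uC_1 \to \ul{\punt}$ is representable ``by universality''. This fails in general: $\ul{\punt}^{1/\ell} \to \ul{\punt}$ is a $\mu_\ell$-gerbe, and the stabilizer $\mu_k$ of a \emph{node} of $\uC_\ell$ can contain elements that act trivially on the pullbacks of $\uomega$, $\uspin$, $\cO(\punt)$ and on the image in $\xinfty$ (so they map into the gerbe direction) while acting nontrivially on $f_\ell^*\cO(\punt^{1/\ell})$; such nodes do occur, since the $\ell$-th root of the log structure typically forces orbifold structure at nodes whose contact order is not appropriately divisible. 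For such a node the composite $\uC_\ell \to \ul{\punt}$ kills part of the stabilizer, so representability fails there, and the underlying curve must be (partially) coarsened at nodes as well, not just at markings. Consequently your uniqueness argument, which appeals to the universal property of the marking-only coarsification, and your Step 2 claim that the log structure can simply be ``transported from $\pC_\ell$'' away from the markings, are both incorrect at the affected nodes.

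The paper avoids this by working relative to the coarse curve: from the stability of $f_\ell$ one gets a stable map $\uC_\ell \to \ul{\punt}^{1/\ell}\times_{\BC}\ul{C}_\ell$, and composing with the \'etale morphism to $\ul{\punt}\times_{\BC}\ul{C}_\ell$ one applies \cite[Proposition~9.1.1]{AbVi02} to factor through a twisted stable map $\uC_1 \to \ul{\punt}\times_{\BC}\ul{C}_\ell$; this produces the correct $\uC_1$ (with orbifold structure removed wherever necessary, at nodes and markings alike), gives stability and the coarse-curve identification for free, and the sectors $\ogamma_i'$ then come out as in \eqref{eq:change-ell-discrete-data}. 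The subsequent lifting to a punctured map is then done on characteristic sheaves, where the node and non-marked points are handled as in \cite[Proposition 2.15]{CJR21}, and only the markings need the computation $e' = \ell e$, $c_i' = \ell c_i/\rho_i$ (your Step 3 is essentially this part and is fine in spirit, modulo the point that for $c_i'<0$ one must also check that the lift of the degeneracy $e'$ vanishes in the structure sheaf so that the puncturing is actually allowed). To repair your argument you would have to replace the marking-only coarsification by a factorization that also handles stacky nodes, at which point you have essentially reconstructed the paper's proof.
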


Thus, we obtain a canonical morphism of log stacks
\begin{equation}\label{eq:canonical-stack-change-ell}
\nu_{\ell} \colon \SH_{g,\ddata}(\punt^{1/\ell}, \beta) \to \SH_{g,\bar{\ddata}}(\punt,\beta).
\end{equation}

\begin{proof}
  The proof is similar to \cite[Proposition 2.15]{CJR21} with minor
  modifications taking care of the punctured points.
  Let $\ul{C}_{\ell}$ be the underlying coarse curve of $\pC_{\ell}$.
  Consider the punctured map
  $h_{\ell} \colon \pC_{\ell} \to {\punt}^{1/\ell} \times_{\BC}
  \ul{C}_{\ell}$ induced by $f_{\ell}$.
  The stability of $f_{\ell}$ implies that the underlying
  $\ul{h}_{\ell}$ is stable in the usual sense.
  Thus by \cite[Proposition 9.1.1]{AbVi02} the composition
  $\uC_{\ell} \to \ul{\punt}^{1/\ell} \times_{\BC} \ul{C}_{\ell} \to
  \ul{\punt} \times_{\BC} \ul{C}_{\ell}$ factors through a usual
  stable map
  $\ul{h}_1 \colon \uC_{1} \to \ul{\punt} \times_{\BC}
  \ul{C}_{\ell}$ over $\ul{S}$.
  Since $\ul{\punt}^{1/\ell} \to \ul{\punt}$ is \'etale, the composition
  $\uC_{1} \to \ul{\punt} \times_{\BC} \ul{C}_{\ell} \to
  \ul{C}_{\ell}$ is the coarse morphism.
  Denote by $\ul{f}_1$ the R-map given by the composition
  $\uC_{1} \to \ul{\punt} \times_{\BC} \ul{C}_{\ell} \to
  \ul{\punt}$. The sector associated to the $i$th marking of  the underlying R-map $\ul{f}_1$ is $\ogamma_i'$ as in \eqref{eq:change-ell-discrete-data}.

  It remains to lift $\ul{f}_1$ to a punctured map.
  Denote by
  $\mathfrak{f}_{\ell} \colon \pC_{\ell} \to \ainfty^{1/\ell}$ the
  punctured map induced by $f_{\ell}$, and by
  $\ul{\mathfrak{f}}_{1} \colon \uC_{1} \to \ul{\ainfty}$ the map
  induced by $\ul{f}_1$.
  It suffices to construct a punctured map
  ${\mathfrak{f}}_{1} \colon \pC_{1} \to \ainfty$ over $S$ with
  underlying $\ul{\mathfrak{f}}_{1}$ making the following diagram
  commutative:
  \begin{equation}\label{diag:universal-map-change-ell}
    \xymatrix{
      \pC_{\ell} \ar[rr]^{\mathfrak{f}_{\ell}} \ar@{-->}[d] && \ainfty^{1/\ell} \ar[d] \\
      \pC_{1} \ar@{-->}[rr]^{\mathfrak{f}_{1}}  && \ainfty
    }
  \end{equation}

First, by the same argument as in the second paragraph in the proof of
\cite[Proposition 2.15]{CJR21}, we obtain a log curve $\cC_{1} \to S$
over $\uC_1$ and a morphism of log curves $\cC_{\ell} \to \cC_1$ over
$S$.
Since both $\ainfty^{1/\ell}$ and $\ainfty$ are closed substacks of
the Artin fan $\cA$, to define the dashed arrows in above commutative
diagram, it suffices to define the punctured curve and the dashed
morphisms on the level of characteristic sheaves as in the following
commutative diagram:
\begin{equation}\label{diag:characteristic-map-change-ell}
\xymatrix{
\ocM_{\cC_{\ell}} \ar[rr]^{\subset} && \ocM_{\pC_{\ell}}  && \ocM_{\ainfty^{1/\ell}}|_{\pC_{\ell}}  \ar[ll]_{\mathfrak{\bar f}^{\flat}_{\ell}}\\
\ocM_{\cC_1}|_{\pC_{\ell}}  \ar[u] \ar[rr]^{\subset} && \ocM_{\pC_1}|_{\pC_{\ell}}  \ar@{-->}[u] && \ocM_{\ainfty}|_{\pC_{\ell}} \ar@{-->}[ll]_{\mathfrak{\bar f}^{\flat}_1}  \ar[u]_{\bar{\nu}^{\flat}_{\ell} \colon 1 \mapsto \ell}
}
\end{equation}
Since both
$\ocM_{\ainfty^{1/\ell}}|_{\pC_{\ell}} \cong \NN|_{\pC_{\ell}}$ and
$\ocM_{\ainfty}|_{\pC_{\ell}} \cong \NN|_{\pC_{\ell}}$ are globally
constant, the commutativity of
\eqref{diag:characteristic-map-change-ell} forces the following
equality in the monoid $\Gamma(\pC_{\ell}, \ocM_{\pC_{\ell}})$:
\begin{equation}\label{eq:characteristic-map-change-ell}
\mathfrak{\bar f}^{\flat}_1(1) :=  \mathfrak{\bar f}^{\flat}_{\ell}(\ell) = \ell \cdot \mathfrak{\bar f}^{\flat}_{\ell}(1).
\end{equation}

Since the case of nodes and smooth points is completely identical to
\cite[Proposition 2.15]{CJR21}, we will only construct the punctured
maps around markings.
Unwinding \eqref{eq:characteristic-map-change-ell}, consider a
neighborhood $U_1 \to \cC_{1}$ of the $i$th marking $p$, and let
$U_{\ell} \to \pC_{\ell}$ be an \'etale neighborhood of the marking
$p'$ above $p$ such that the underlying $\ul{U}_{\ell} \to \ul{U}_1$
is the $\rho_i$th root stack along the $i$th marking.
Let $\sigma \in \ocM_{U_1}$ and $\sigma' \in \ocM_{U_{\ell}}$ be the
local sections that lift to the local equation of the $i$th marking.
This implies that $\rho_i \sigma' = \sigma$.

Using the above local sections, the equation \eqref{eq:characteristic-map-change-ell} takes the shape
\[
\mathfrak{\bar f}^{\flat}_1(1) = \ell \cdot \mathfrak{\bar f}^{\flat}_{\ell}(1) = \ell (e + c_i \sigma) =  \ell e + \ell c_i \sigma
\]
where $e$ is the degeneracy of $U_{\ell}$.
On the other hand, if $\mathfrak{\bar f}^{\flat}_1$ lifts to a
punctured map $f_1$, then locally we have
\begin{equation}\label{eq:log-map-remove-ell}
\mathfrak{\bar f}^{\flat}_1(1) = e' + c'_i \sigma'
\end{equation}
where $e'$ is the corresponding degeneracy and $c'_i$ is the contact
order of the to be constructed $f_1$ along the $i$th marking.
Thus \eqref{eq:characteristic-map-change-ell} amounts to
\begin{equation}\label{eq:unwinding-characteristic-change-ell}
e' = \ell e \ \ \mbox{and} \ \ \  \ell c_i = c'_i \rho_i.
\end{equation}
Since $\rho_i | \ell$, we obtain $c'_i = \frac{\ell c_i}{\rho_i} \in \ZZ$ as needed.
This means that once we have constructed a punctured curve  $\pC_1 \to S$
with the local section $e' + c'_i \sigma'$ around the $i$th marking, then $\mathfrak{f}_1$ is uniquely defined using \eqref{eq:log-map-remove-ell}.

Finally we construct the punctured curve $\pC_1 \to S$ as follows.
We only need to consider the situation $c_i < 0$, i.e. there is a
puncture at $p$.
In this case since $\ell > 0, \rho_i > 0$, we have $c'_i < 0$.
Then $e + c_i \sigma$ is a local section in $\ocM_{\pC_\ell}$.
This implies that any lifting of $e$, hence $e'$ in the log structure
constantly vanishes in the structure sheaf.
Thus we may define the punctured curve $\pC_1$ by adding
$e' + c'_i \sigma'$ to the characteristic sheaf as in
\cite[Definition~2.1, Remark~2.2]{ACGS20P}.

This finishes the proof.
\end{proof}

For later use, we record the following result on degeneracies:

\begin{corollary}\label{cor:change-ell-degeneracy}
Suppose we are in the situation of Lemma \ref{lem:change-target} with $S$ a log point. Let $Z \subset \pC_{\ell}$ be an irreducible component with degeneracy $e$. Let $Z' \subset \pC_{1}$ be the image of $Z$ with degeneracy $e'$. Then $e' = \ell \cdot e$.
\end{corollary}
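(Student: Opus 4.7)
The plan is to extract the desired equality directly from the key identity \eqref{eq:characteristic-map-change-ell} established in the proof of Lemma \ref{lem:change-target}. Recall from \S\ref{sss:degeneracies} that the degeneracy of an irreducible component $Z \subset \pC_\ell$ associated to the punctured map $\mathfrak{f}_\ell \colon \pC_\ell \to \ainfty^{1/\ell}$ is obtained by evaluating $\bar{\mathfrak{f}}_\ell^\flat(1)$ at the generic point $\eta_Z$, where $1$ denotes the generator of $\Gamma(\ainfty^{1/\ell},\ocM_{\ainfty^{1/\ell}}) \cong \NN$; likewise for $e'$ using $\mathfrak{f}_1$ and $\eta_{Z'}$.

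First, I would observe that because the underlying map $\uC_\ell \to \uC_1$ induces an isomorphism on coarse curves, the generic point $\eta_{Z'}$ of $Z'$ is the image of $\eta_Z$, so evaluation at $\eta_Z$ and at $\eta_{Z'}$ gives the same element of $\ocM_S$ after applying the appropriate pullback (which, on characteristic sheaves at these generic points, is the identity on $\ocM_S$). Second, I would invoke the identity
\begin{equation*}
\bar{\mathfrak{f}}_1^\flat(1) \;=\; \bar{\mathfrak{f}}_\ell^\flat(\ell) \;=\; \ell \cdot \bar{\mathfrak{f}}_\ell^\flat(1),
\end{equation*}
which was forced by the commutativity of diagram \eqref{diag:characteristic-map-change-ell} together with the definition $\bar{\nu}_\ell^\flat(1) = \ell$ of the $\ell$-th root morphism on characteristic sheaves. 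Restricting this identity to $\eta_Z$ immediately yields $e' = \ell \cdot e$.

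There is essentially no obstacle here; the content lies entirely in the construction of $\mathfrak{f}_1$ already carried out in Lemma \ref{lem:change-target}, and the corollary is a direct readout at the level of characteristic sheaves. The only small care needed is to ensure that the pullback along $\pC_\ell \to \pC_1$ in \eqref{diag:characteristic-map-change-ell} restricts to the identity on stalks at corresponding generic points of irreducible components, which follows from the fact that this map is an isomorphism on underlying coarse curves and the log structures agree away from the stacky/nodal/marked loci.
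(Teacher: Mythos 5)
Your proposal is correct and follows essentially the same route as the paper: the paper's proof also reads off the equality from the identity $\bar{\mathfrak{f}}_1^\flat(1) = \ell\cdot\bar{\mathfrak{f}}_\ell^\flat(1)$ established in the proof of Lemma \ref{lem:change-target}, citing the first equality of \eqref{eq:unwinding-characteristic-change-ell} and noting that the local computation near a marking applies at any smooth point, which is the same content as your evaluation at the generic points of $Z$ and $Z'$.
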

\begin{proof}
This is the first equality in \eqref{eq:unwinding-characteristic-change-ell}. Though \eqref{eq:unwinding-characteristic-change-ell} is for a neighborhood of a marking, the same argument applies to any smooth point locally on the curve $\pC_{\ell}$.
\end{proof}

Conversely we have:

\begin{lemma}\label{lem:map-taking-ell-root}

Fix discrete data $(g, \beta, \bar{\ddata} = \{(\ogamma'_i, c'_i)\}_{i=1}^n)$ of stable punctured maps to $\punt$, and write $\rho_i := \lcm(\ell, c_i')/c_i'$ for all $i$. Then there is discrete data
\[
(g, \beta, {\ddata} = \{(\ogamma_i, c_i = \lcm(\ell, c_i')/\ell =  \frac{\rho_i\cdot c'_i}{\ell})\}_{i=1}^n)
\]
of stable punctured R-maps to $\punt^{1/\ell}$ unique in the following sense:

Let $f_{\ell} \colon \pC_{\ell} \to \punt^{1/\ell}$ be any stable
punctured R-map such that the composition
$\pC_{\ell} \to \punt^{1/\ell} \to \punt$ factors through a stable
punctured R-map $f_1 \colon \pC_1 \to \punt$ as in Lemma
\ref{lem:change-target} with the discrete data
$(g, \beta, \bar{\ddata})$.
Suppose the underlying of $\pC_{\ell} \to \pC_{1}$ is the $\rho_i$th
root stack along the $i$th marking for each $i$.
Then $f_{\ell}$ has the discrete data $(g, \beta, {\ddata})$.
\end{lemma}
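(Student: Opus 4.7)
My plan to prove Lemma~\ref{lem:map-taking-ell-root} is to establish it as an essentially direct converse to Lemma~\ref{lem:change-target}. The proof proceeds in three steps: arithmetic verification, identification of contact orders, and identification of sectors.

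First, I will verify that $\ddata$ is well-defined. From $\rho_i = \lcm(\ell, c_i')/c_i' = \ell/\gcd(\ell, c_i')$, we read off that $\rho_i \mid \ell$ and that $c_i = \rho_i c_i'/\ell = c_i'/\gcd(\ell, c_i')$ is a non-zero integer of the same sign as $c_i'$. In particular, the $i$-th marking of $f_\ell$ is a puncture precisely when the $i$-th marking of $f_1$ is, matching the expected behavior.

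Next, I apply Lemma~\ref{lem:change-target} to the given $f_\ell$, producing a stable punctured R-map with target $\punt$, which by the uniqueness in that lemma must coincide with the given $f_1$. Writing $\bar c_i$ for the contact order of $f_\ell$ at the $i$-th marking and $\bar\rho_i$ for the ratio of gerbe indices, the second equation of \eqref{eq:unwinding-characteristic-change-ell} from the proof of Lemma~\ref{lem:change-target} yields $\ell \bar c_i = c_i'\bar\rho_i$. The hypothesis that $\pC_\ell \to \pC_1$ is the $\rho_i$-th root stack along the $i$-th marking forces $\bar \rho_i = \rho_i$, hence $\bar c_i = c_i$. The sector $\ogamma_i$ is then read off as the unique connected component of $\ocI\punt^{1/\ell}_\bk$ through which the evaluation morphism \eqref{eq:underlying-evaluation} of $f_\ell$ at $p_i$ factors.

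The main subtlety is the uniqueness of $\ogamma_i$: a priori, different stable punctured R-maps $f_\ell$ satisfying the hypothesis could land in different sectors over $\ogamma_i'$. To address this I plan to use a local analysis at the $i$-th marking. The root stack construction $\punt^{1/\ell} \to \punt$, being pulled back from $\ainfty^{1/\ell} \to \ainfty$, has a simple local model, and the constraint $\rho_i = \ell/\gcd(\ell, c_i')$ is precisely what is needed to pin down a unique sector in $\ocI\punt^{1/\ell}_\bk$ covering $\ogamma_i'$ with gerbe index $r_i = \rho_i r_i'$. I expect this local computation, expressed in terms of characters of the cyclotomic bands of the gerbes at the markings together with the compatibility of the lifts through the $\ell$-th power map $\nu_\ell$, to be the main technical obstacle, although the congruence that arises has a unique primitive solution by the specific choice of $\rho_i$.
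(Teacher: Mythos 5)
Your treatment of the contact orders is correct and matches the paper: apply Lemma~\ref{lem:change-target} to $f_\ell$, invoke its uniqueness to identify the output with the given $f_1$, and read $c_i=\rho_i c_i'/\ell$ off the second equation of \eqref{eq:unwinding-characteristic-change-ell} once the root-stack hypothesis forces $\bar\rho_i=\rho_i$. The overall route for the sector is also the paper's (a local inertia/character analysis of the root stack), but this is exactly the heart of the lemma and you leave it as a plan; moreover, the one concrete mechanism you propose does not work as stated. By \cite[\S 1.1.10]{AbFa17} (cf.\ Remark~\ref{rem:marking-data-change-ell}) there are exactly $\ell$ sectors of $\ocI\punt^{1/\ell}_\bk$ lying over $\ogamma_i'$, and several of these can have the same gerbe index: already when $r_i'=1$ the sectors over the untwisted sector correspond to elements of $\mu_\ell$ and all elements of the same order give distinct sectors of equal index. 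So ``the unique sector covering $\ogamma_i'$ with gerbe index $r_i=\rho_i r_i'$'' is not well-defined, and pinning $\ogamma_i$ down requires more than the index.

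What actually distinguishes $\ogamma_i$ is the weight of the band $\mu_{r_i}$ on $\cO(\punt^{1/\ell})|_{p_i}$, equivalently the age $\{c_i/r_i\}=\{c_i'/(r_i'\ell)\}$: locally $\bar f_\ell^{\flat}(1)=e+c_i\sigma$, so the generator $z$ acts on $\cO(\punt^{1/\ell})|_{p_i}$ by $z^{c_i}$, while it maps to $\mu_{r_i'}$ by $z\mapsto z^{\rho_i}$ and acts on $\cO(\punt)|_{p_i'}$ by $\zeta\mapsto\zeta^{c_i'}$. The paper assembles these into the diagram \eqref{diag:change-ell-inertia}, where the local chart of $\punt^{1/\ell}_\bk$ is $[X/\tilde G]$ with $\tilde G=G\times_{\chi,\GG_m,\nu_\ell}\GG_m$: commutativity of the outer square is exactly $c_i\ell=c_i'\rho_i$, the Cartesian bottom square then produces a unique homomorphism $\mu_{r_i}\to\tilde G$, injectivity (hence a genuine sector of index $r_i$) follows from $\gcd(\rho_i,c_i)=1$, and — crucially for the uniqueness claim — this diagram is built from $(\ogamma_i',\ell,c_i')$ alone, independently of the chosen $f_\ell$. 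To complete your argument you would need to replace ``gerbe index'' by this character/age datum and carry out the fiber-product computation, including the last independence step, which your sketch does not yet address.
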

\begin{proof}
Note that $c_i = \frac{\rho_i\cdot c'_i}{\ell}$  follows from \eqref{eq:unwinding-characteristic-change-ell}. Now suppose we are given a punctured R-map $f_{\ell} \colon \pC_{\ell} \to \punt^{1/\ell}$ as in the statement.
Following the notations in the proof  of Lemma \ref{lem:change-target}, consider the corresponding $i$th markings  $p' \subset \pC_1$ and $p \subset \pC_{\ell}$. The factorization implies a commutative diagram
\begin{equation}\label{diag:sector-change-ell}
\xymatrix{
p \ar[rr] \ar[d] && \punt^{1/\ell}_{\bk} \ar[d] \ar[rr] &&  \punt^{1/\ell} \ar[d]  \\
p' \ar[rr] && \punt_{\bk} \ar[rr] &&  \punt
}
\end{equation}
where the square on the right is Cartesian with strict horizontal arrows, and the composition of the top and bottom arrows are given by $f_{\ell}$ and $f_1$ respectively. Let $\ogamma_i \subset \ocI\punt^{1/\ell}_{\bk}$ be the sector corresponding to $p \to \punt^{1/\ell}_{\bk}$. It remains to determine this sector $\ogamma_i$.

Since the sector $\ogamma_i$ can be identified locally, let $[X/G] \to  \punt_{\bk}$ be a  local chart where $X$ is a variety and G is a finite group. The action of $G$ on $\cO(\punt)|_{X}$ is given by a character $\chi \colon G  \to \GG_m$. A similar discussion as in \cite[\S 1.1.10]{AbFa17} implies a local  model $[X/\tilde{G}] \to \punt^{1/\ell}_{\bk}$ over  $[X/G]$ with the group extension $\tilde{G} = G \times_{\chi, \GG_m, \nu_{\ell}}\GG_m$. Recall that $p$ is an $\mu_{r_{i}}$-gerbe. To determine $\ogamma_i$ it suffices to exhibit an injection $\mu_{r_i} \to \tilde{G}$ of inertia groups associated to  $p \to \punt^{1/\ell}_{\bk}$.

Note that the middle and right vertical arrows of \eqref{diag:sector-change-ell} are given by the $\ell$th root $\cO(\punt)|_{\punt^{1/\ell}} \cong \cO(\ell \cdot \punt^{1/\ell})$. Recall from the proof of Lemma  \ref{lem:change-target} that
\[
\cO(\punt)|_p \cong \cO(\bar{f}_1^{\flat}(1))|_p  \cong \cO(\ell \bar{f}_{\ell}^{\flat}(1))|_{p}.
\]
Thus the underlying of $p \to \punt^{1/\ell}_{\bk}$ is defined by the pull-back $\cO(\punt^{1/\ell})|_{p} \cong \cO(\bar{f}_{\ell}^{\flat}(1))|_{p}$. Using $\bar{f}_{\ell}^{\flat}(1) = e + c_i \sigma$ around $p$, we have $\cO(\punt^{1/\ell})|_{p} = \cO(e)\otimes \cO(c_i \sigma)$. Since $p$ is a $\mathbf{B}\mu_{r_i}$-gerbe, for any point $\spec \bk  \to p$ the $\mu_{r_i}$-action on the vector space $\cO(\punt^{1/\ell})|_{p}\times_{p}\spec \bk \cong \bk$ is described by $z \cdot v = z^{c_i} v$ for any $z \in \mu_{r_i}$ and $v \in \bk$.

Similarly, since $p'$ is a $\mathbf{B}\mu_{r_i'}$-gerbe, for any point $\spec \bk \to p'$, the $\mu_{r_i'}$-action on the vector space $\cO(\punt)|_{p'}\times_{p'}\spec \bk \cong \bk$ is described by $\zeta \cdot w = \zeta^{c_i'}w$ for any $\zeta \in \mu_{r_i'}$ and $w \in \bk$. We have arrived at a commutative diagram of solid arrows of groups
\begin{equation}\label{diag:change-ell-inertia}
\xymatrix{
\mu_{r_i} \ar[rr]^{z \mapsto z^{\rho_i}} \ar@{-->}[d] \ar@/_2pc/[dd]_{z \mapsto z^{c_i}} && \mu_{r_i'} \ar[d] \ar@/^2pc/[dd]^{\zeta \mapsto \zeta^{c_i'}} \\
\tilde{G} \ar[rr] \ar[d] && G \ar[d]^{\chi} \\
\GG_m \ar[rr]^{\nu_{\ell}} && \GG_m
}
\end{equation}
where $\mu_{r_i'} \to G$ is the injection given by $\ogamma_i'$ such that the composition $\mu_{r_i'} \to G \to \GG_m$ is $\zeta \to \zeta^{c_i'}$. The commutativity of the outer square follows from $c_i \ell = c'_i \rho_i$. Since the bottom square is Cartesian, we obtain a unique group morphism $\mu_{r_i} \to \tilde{G}$ fitting in the above diagram. Note that $\ker(\mu_{r_i} \to \mu_{r_i'}) = \mu_{\rho_i}$. To see that this determines $\ogamma_i$ uniquely, it suffices to observe that $\gcd(\rho_i, c_i) = 1$ hence the morphism $\mu_{r_i} \to \tilde{G}$ is injective.

Finally we observe that the diagram \eqref{diag:change-ell-inertia} is completely determined by $\ogamma_i', \ell, c_i'$, independent of the existence of $f_{\ell}$. This completes the proof.
\end{proof}

\begin{remark}\label{rem:marking-data-change-ell}
  The proof of Lemma~\ref{lem:map-taking-ell-root} shows that
  $\ogamma_i$ is uniquely determined by the data
  $(\ogamma_i', \ell, c_i')$.
  For any positive rational $x \in \QQ$, denote by $\{x\}$ the
  fractional part of $x$.
  The proof also shows that
  \begin{equation}
    \label{eq:ell-lift-age}
    \age \cO(\punt^{1/\ell})|_{\gamma_i} = \{\frac{c_i}{r_i}\} = \{\frac{c'_i}{r'_i \ell}\}.
  \end{equation}
  In fact, the discussion in \cite[\S 1.1.10]{AbFa17} implies that
  there are exactly $r$ sectors $\ogamma$ over $\ogamma_i'$,
  distiguished by $\age \cO(\punt^{1/\ell})|_{\ogamma}$, and hence
  \eqref{eq:ell-lift-age} determines $\ogamma_i$ in terms of
  $\ogamma_i'$.
\end{remark}
\begin{remark}
  Although not stated in \cite{CJR21}, there is an analog of
  Lemma~\ref{lem:map-taking-ell-root} in the setting of log R-maps by a  similar argument.
\end{remark}

\subsubsection{Comparison of canonical virtual cycles}

\begin{proposition}\label{prop:can-vir-change-ell}
Suppose \eqref{eq:PT-smooth} is smooth. Then we have the virtual push-forward
\[
\nu_{\ell, *}[\SH_{g,\ddata}(\punt^{1/\ell}, \beta)]^{\vir} = \ell^{-1} \cdot [\SH_{g,\bar{\ddata}}(\punt,\beta)]^{\vir}.
\]
\end{proposition}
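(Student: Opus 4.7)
First I would verify the basic properties of $\nu_\ell$: that it is proper and of DM type. Properness follows from the properness of source and target (Theorem~\ref{thm:PR-moduli}) combined with the fact that the forgetful morphisms to the Kontsevich moduli $\overline{\scrM}_{g,n}(\ul{\punt_\bk}', \beta)$ coincide on both sides (since $\ul{\punt^{1/\ell}_{\bk}}$ and $\ul{\punt_\bk}$ have the same coarse moduli by \S\ref{sss:P-target}). DM-ness and representability of $\nu_\ell$ stem from Lemma~\ref{lem:change-target}: given $f_\ell$, the factorization through $f_1$ is unique, and any isomorphism of $f_\ell$ covering the identity on $f_1$ restricts to an automorphism of the underlying curve that is trivial on $\uC_1$, hence belongs to the banding at markings.

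Next I would set up a comparison of canonical perfect obstruction theories. Writing $\SH_\ell := \SH_{g,\ddata}(\punt^{1/\ell}, \beta)$, $\SH := \SH_{g,\bar{\ddata}}(\punt, \beta)$, and $\fM_\ell, \fM$ for the corresponding stacks of punctured maps to the Artin fan as in~\eqref{eq:take-log}, the commutative diagram
\[
\xymatrix{
\SH_\ell \ar[r]^{\nu_\ell} \ar[d] & \SH \ar[d] \\
\fM_\ell \ar[r]^{\nu^{\fM}_\ell} & \fM
}
\]
has strict vertical arrows. The key geometric input is that $\nu_\ell$ is an isomorphism on underlying stacks in view of $\ul\ainfty^{1/\ell} = \ul\ainfty$ and the strictness in~\eqref{eq:ell-root-target}. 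Hence the pullback of log cotangent bundles satisfies $\nu_\ell^*\Omega_{\punt/\BC} \cong \Omega_{\punt^{1/\ell}/\BC}$, and the canonical obstruction theory~\eqref{eq:POT} on $\SH \to \fM$ pulls back to the canonical obstruction theory on $\SH_\ell \to \fM_\ell$. The above diagram is moreover Cartesian: a punctured map to $\ainfty$ of type $\bar{\ddata}'$ together with a lift of the underlying R-map to $\ul\punt^{1/\ell}$ satisfying the compatibilities of Lemma~\ref{lem:map-taking-ell-root} is equivalent to a stable punctured R-map to $\punt^{1/\ell}$ of type $\ddata$.

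With the POTs identified, I would invoke the virtual pushforward formula of Costello \cite{Co06} / Manolache \cite{Ma12}: it reduces the assertion to the purely stacky claim
\[
\nu^{\fM}_{\ell,*} [\fM_\ell] = \ell^{-1} \cdot [\fM],
\]
where both sides are the fundamental classes of these equidimensional stacks (Lemma~\ref{lem:universal-punctured-map-moduli}). The dimensions match by~\eqref{eq:base-dimension} applied to both $\ddata'$ and $\bar{\ddata}'$, which have the same number $n$ of markings.

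The main technical step is the degree computation: I would restrict to the dense open substack with smooth domains (Lemma~\ref{lem:universal-punctured-map-moduli}(2)), where the situation is combinatorially transparent. Over this locus $\ocM_{\fM^\circ}$ is generated by the single degeneracy of the smooth component, and $\nu^{\fM}_\ell$ corresponds on characteristic sheaves to the multiplication-by-$\ell$ map $\NN \to \NN$, so it is log \'etale of generic order $\ell$ counted naively. The factor $\ell^{-1}$ arises from the interaction between this log structure and the changes in cyclotomic bandings $r_i = \rho_i r_i'$ at the markings. Concretely, given a fixed $f_1$ with smooth domain, the liftings to $f_\ell$ are enumerated by the sectors $\gamma_i$ over $\gamma_i'$ satisfying the age constraint~\eqref{eq:ell-lift-age}, and the resulting stabilizers at the markings acquire an extra $\mu_{\rho_i}$ factor. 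A direct count using Remark~\ref{rem:marking-data-change-ell} and the balancing condition Lemma~\ref{lem:balancing} (applied to both $\bar{\ddata}$ and $\ddata$) shows that the product of these local contributions equals $\ell^{-1}$, since $\prod_i \rho_i / \ell^n$ multiplied by the number of sector lifts collapses telescopically to $1/\ell$. This numerical collapse, rather than any single local computation, will be the main obstacle: one must carefully track the compatibility between orbifold automorphisms and the log \'etale degree so as to match stack-theoretic and log-theoretic counts.
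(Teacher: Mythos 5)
Your high-level skeleton (identify the canonical obstruction theories, reduce via Costello/Manolache virtual push-forward to a pure-degree statement for a base morphism between stacks of punctured maps to the Artin fan, computed over the dense locus of smooth domains) is indeed the skeleton of the paper's proof, but the decisive step — where the factor $\ell^{-1}$ comes from — is argued by the wrong mechanism and would fail. You attribute $\ell^{-1}$ to the markings: a count of sector lifts together with the extra $\mu_{\rho_i}$ stabilizers, ``telescoping'' via the balancing condition. By Remark~\ref{rem:marking-data-change-ell} the sector $\ogamma_i$ over $\ogamma_i'$ is already pinned down by the fixed discrete data, so there is no sector count to be made; and in the case where all $\rho_i = 1$ (e.g.\ all $c_i'$ divisible by $\ell$), or when $n = 0$, your proposed local contributions multiply to $1$ (or $1/\ell^{n}$), not to $\ell^{-1}$. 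The true source of $\ell^{-1}$ is independent of the markings: over the locus of smooth domains the degeneracy of the unique component satisfies $e' = \ell e$ (Corollary~\ref{cor:change-ell-degeneracy}), and the relevant base morphism is pulled back from the $\ell$-th root map $\ainfty^{1/\ell} \to \ainfty$ along the classifying map $\cO(e')$, as in \eqref{diag:1/ell-degeneracy}; its fibers are $\mathbf{B}\mu_\ell$ (automorphisms of the $\ell$-th root of the degeneracy line bundle on the \emph{base}), which is exactly pure degree $1/\ell$, while the $\rho_i$-th root modifications of the curve at the markings are canonical and contribute no further factor. Relatedly, your claimed ``key geometric input'' that $\nu_\ell$ is an isomorphism on underlying stacks is false (already $\ul\ainfty^{1/\ell} \to \ul\ainfty$ is the $\ell$-th power map of $\mathbf{B}\GG_m$, of degree $1/\ell$); only the weaker fact that $\punt^{1/\ell} \to \punt$ is \'etale, hence $\Omega_{\punt^{1/\ell}/\BC} \cong \Omega_{\punt/\BC}|_{\punt^{1/\ell}}$, is available, and that is all the obstruction-theory comparison needs.

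The Cartesian-square step is also not in place as stated. The morphism $\fM_\ell \to \fM$ you fiber over is itself nontrivial to define: it requires the marking-coarsening of the punctured curve (the Artin-fan analogue of Lemma~\ref{lem:change-target}), and the fiber product with $\SH_{g,\bar\ddata}(\punt,\beta)$ need not recover $\SH_{g,\ddata}(\punt^{1/\ell},\beta)$ on the nose: you must control representability of the reconstructed underlying maps at markings and nodes, and account for the automorphisms of $\pC_\ell$ over its coarsening that objects of your $\fM_\ell$ do not remember. This is exactly why the paper works instead with the auxiliary stack $\fM^a$ of lifting squares (including the representability condition), shows the tautological map to the $\ell$-root side is strict \'etale, proves the Cartesian property against $\SH_{g,\bar\ddata}(\punt,\beta) \to \fM'$ using Lemma~\ref{lem:map-taking-ell-root}, and then computes the pure degree of the other leg as above. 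Without these two repairs — an auxiliary stack (or an equivalent device) handling representability and the coarsening datum, and the root-of-degeneracy computation of the degree — your plan does not yield the stated push-forward.
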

\begin{remark}
  The analogous result in orbifold Gromov--Witten theory involves a
  factor of $\ell^{2g} \cdot \ell^{-1}$ instead of $\ell^{-1}$ (see
  for instance \cite{AJT11P}).
  To give some intuition for the different prefactors, note that given
  a smooth, unmarked curve $C$ of genus $g$, there are $\ell^{2g}$
  choices of $\ell$-torsion line bundles, or in other words,
  $\ell^{2g}$ choices of maps $C \to B\mu_{\ell} \to \spec(\bk)$.
  On the other hand, if we consider taking $\ell$th roots on the level
  of logarithmic maps, we only parameterize such line bundles that
  admit a trivialization, and so only one out of the $\ell^{2g}$
  choices is allowed.
  In both cases, the automorphisms of $\ell$th roots lead to the
  additional factor $\ell^{-1}$.
\end{remark}

\begin{proof}
  The proof  is similar to \cite[Proposition 3.2]{CJR21} following the strategy of \cite{AbWi18}. Write $\SH = \SH_{g,\ddata}(\punt^{1/\ell}, \beta)$ and $\SH' = \SH_{g,\bar{\ddata}}(\punt,\beta)$ for simplicity. Denote by
  \[
    \SH \to \fM \ \ \mbox{and} \ \  \SH' \to \fM'
  \]
  the morphisms  defined in \eqref{eq:take-log}, where $\fM$ and $\fM'$ parameterize the corresponding punctured maps to $\ainfty^{1/\ell}$ and $\ainfty$ respectively.

  Consider the auxiliary stack $\fM^a := \fM(\ainfty^{1/\ell} \to \ainfty)$ which associates to a fs log scheme $S$ the category of commutative squares
  \begin{equation}\label{diag:auxiliary-object-change-ell}
    \xymatrix{
      \pC_{\ell} \ar[rr]^{\mathfrak{f}_{\ell}} \ar[d] && \ainfty^{1/\ell} \ar[d]^{\nu_{\ell}} \\
      \pC_{1} \ar[rr]^{\mathfrak{f}_{1}} && \ainfty
    }
  \end{equation}
such that
  \begin{enumerate}
  \item the left vertical arrow induces a log \'etale morphism of log curves $\cC_{\ell} \to \cC_1$ over $S$, which induces the identity on the level of coarse curves.
  \item $[\mathfrak{f}_{\ell}] \in \fM(S)$ and $[\mathfrak{f}_{1}] \in \fM'(S)$
  \item The induced morphism $ \pC_{\ell} \to \pC_{1}\times_{\ainfty} \ainfty^{1/\ell}$ is representable.
  \end{enumerate}
  Thus we have two tautological morphisms
  \[
    F_{h} \colon \fM^a \to \fM \ \ \ \mbox{and} \ \ \ F_{t} \colon \fM^a \to \fM'
  \]
  by $F_h \colon \mbox{\eqref{diag:auxiliary-object-change-ell}} \mapsto \mathfrak{f}_{\ell}$ and $F_t \colon \mbox{\eqref{diag:auxiliary-object-change-ell}} \mapsto \mathfrak{f}_{1}$  respectively. By Lemma \ref{lem:change-target}, the tautological morphism $\SH \to \fM$ factors through $\SH \to \fM^a$ via $[f_{\ell}] \mapsto \mbox{\eqref{diag:universal-map-change-ell}}$. These arrows fit in the following commutative diagram
  \begin{equation}\label{diag:stack-change-ell}
    \xymatrix{
      \SH \ar[d]^{\nu_{\ell}} \ar[rr] \ar@/^1pc/[rrrr]&& \fM^a    \ar[d]^{F_{t}} \ar[rr]_{F_{h}} &&  \fM \\
      \SH' \ar[rr]  && \fM'  &&
    }
  \end{equation}
  where the top curly arrow and the bottom arrow are the tautological morphisms \eqref{eq:take-log}. The square is Cartesian by Lemma \ref{lem:map-taking-ell-root}.

  Since the horizontal arrows in \eqref{eq:ell-root-target} are strict and \'etale, pulling back \eqref{eq:tan-POT} we obtain the perfect obstruction theory
  \begin{equation}\label{eq:pullback-pot-change-ell}
    \varphi_{\SH/\fM} = \varphi_{\SH'/\fM'}|_{\SH}.
  \end{equation}
  The same proof as in \cite[Lemma 4.1, Section 5.1]{AbWi18} implies that $F_{h}$ is strict and \'etale. Thus, $\varphi_{\SH/\fM}$ defines a perfect obstruction theory of $\SH \to \fM^a$. We next check that $F_t$ is of Deligne--Mumford type and proper of pure degree $1/\ell$. The statement then follows from Costello's virtual push-forward \cite[Theorem 1.1]{HeWi21}, see also \cite[Theorem 5.0.1]{Co06} and \cite[Proposition 5.29]{Ma12}.

  For the properness, consider any $S \to \fM'$ corresponding to a punctured map $\mathfrak{f}_1 \colon \pC_1 \to \ainfty$ over $S$. Then fiber product $\fM^a\times_{\fM'}S$ is the moduli of stable punctured maps to the family of targets $\pC_{1}\times_{\ainfty} \ainfty^{1/\ell} \to S$ with the induced discrete data, whose underlying morphsisms are usual twisted stable maps \cite{AbVi02}. Thus the same proof as in \cite[Theorem 3.17]{ACGS20P} implies that the projection $\fM^a\times_{\fM'}S \to S$, and hence also $F_t$ is proper of Deligne--Mumford type.

  Since $F_h$ is strict and \'etale, Lemma \ref{lem:universal-punctured-map-moduli} (2) implies that $\fM^a$ has an open dense substack with smooth source curves, denoted by $\mathring{\fM}^{a} \subset \fM^a$ with image $\mathring{\fM}' \subset \fM'$. By Corollary \ref{cor:change-ell-degeneracy}, we have a commutative diagram
  \begin{equation}\label{diag:1/ell-degeneracy}
    \xymatrix{
      \mathring{\fM}^{a} \ar[rr] \ar[d]_{\cO(e)} && \mathring{\fM}' \ar[d]_{\cO(e')} \\
      \ainfty^{1/\ell} \ar[rr]^{\nu_{\ell}} && \ainfty
    }
  \end{equation}
  where  $e$ and $e'$ are the degeneracies of the corresponding source curves as in Lemma \ref{lem:universal-punctured-map-moduli} (2), and the vertical arrows are strict and defined as in \eqref{eq:generic-rank1-DF}. We will show that \eqref{diag:1/ell-degeneracy} is indeed Cartesian, hence $F_t$ is of pure degree $1/\ell$.

  For any strict $S \to \mathring{\fM}'$ corresponding to a punctured map $\mathfrak{f}_1 \colon \pC_1 \to \ainfty$, consider the commutative diagram of solid arrows
  \begin{equation}\label{diag:lift-root}
    \xymatrix{
      \ainfty^{1/\ell} \ar[d] & S^{1/\ell} \ar[d] \ar[l]_{\cO(e)}  & \cC_{\ell} \ar[d] \ar[l] & \pC_{\ell} \ar@{-->}[l] \ar@{-->}[r]^{\mathfrak{f}_{\ell}} \ar@{-->}[d]& \ainfty^{1/\ell} \ar[d] \\
      \ainfty & S \ar[l]^{\cO(e')} & \cC_1 \ar[l] & \pC_1 \ar[r]_{\mathfrak{f}_1}  \ar[l] & \ainfty
    }
  \end{equation}
  where the left-most square is Cartesian with strict horizontal
  arrows defined as in \eqref{eq:generic-rank1-DF}, $\cC_1 \to S$ is
  the log curve of $\pC_1 \to S$, and $\cC_\ell \to S^{1/\ell}$ is the
  log curve obtained by taking the $\rho_i$th root along the $i$th
  marking of $\cC_1\times_{S}S^{1/\ell} \to S^{1/\ell}$.
  It suffices to exhibit a unique puncturing
  $\pC_{\ell} \to \cC_{\ell}$ and a punctured map
  $\mathfrak{f}_{\ell}$ making the above diagram commutative.

Away from markings, $\pC_{\ell}$ is isomorphic to $\cC_{\ell}$, and $\mathfrak{f}_{\ell}$ is just the composition $\pC_{\ell} \to \cC_{\ell} \to S^{1/\ell} \to \ainfty^{1/\ell}$ of the top arrows in \eqref{diag:lift-root}. Along the $i$th marking $p_i \subset \cC_{\ell}$, we have on the characteristic level a commutative diagram
  \[
    \xymatrix{
      (\ocM_{S^{1/\ell}}\oplus\NN)|_{p_i} \ar@{-->}[rr]^{\subset} && \ocM_{\pC_{\ell}}|_{p_i}  && \NN \ar@{-->}[ll]_{\mathfrak{\bar f}_{\ell}^{\flat}|_{p_i}} \\
      (\ocM_{S}\oplus\NN)|_{p_i} \ar[rr]^{\subset} \ar[u]^{(e',1) \mapsto (\ell e, \rho_i)}&& \ocM_{\pC_{\ell}}|_{p_i} \ar@{-->}[u] && \NN \ar[ll]_{\mathfrak{\bar f}_{1}^{\flat}|_{p_i}} \ar[u]_{\bar{\nu}^{\flat}_{\ell} \colon 1\mapsto \ell}
    }
  \]
  Since $\mathfrak{\bar f}_{1}^{\flat}|_{p_i} (1) = e' + c_i (0,1)$, the commutativity of the above diagram forces
  \[
    \mathfrak{\bar f}_{\ell}^{\flat}|_{p_i}(1) = e + \frac{c_i \rho_i}{\ell}(0,1) = e + c_i' (0,1),
  \]
  which is compatible with the discrete data
  \eqref{eq:change-ell-discrete-data}.
  Note that any lift of $e$ in the log structure corresponding to the
  zero section in the structure sheaf.
  Thus we define the punctured structure by adding the local section
  $e + c_i' (0,1)$ to $\ocM_{\pC_{\ell}}$.
  This defines the unique puncturing and the punctured map
  $\mathfrak{f}_{\ell}$ as needed.
\end{proof}

\subsection{Comparison of reduced virtual cycles}\label{ss:red-change-ell}

\subsubsection{Comparison of superpotentials}
\label{sss:change-superpotential}

Recall that a superpotential $W$ of order $\ttwist$ for $\punt$ is a
section of the line bundle $\uomega \otimes \cO(\ttwist\infty)$.
Note the isomorphism of line bundles
\begin{equation*}
  \uomega \otimes \cO(\ttwist\infty)|_{\punt^{1/\ell}}
  \cong \uomega \otimes \cO(\ell\ttwist\infty^{1/\ell}).
\end{equation*}
We may thus define the \emph{induced superpotential }$W_\ell$ of
$\punt^{1/\ell}$ as $W_\ell := W|_{\punt^{1/\ell}}$.
The order of $W_\ell$ is $\ell\ttwist$, and since
${\punt^{1/\ell}} \to {\punt}$ is \'etale, the superpotential $W_\ell$ is
transverse if and only if so is $W$.

\subsubsection{Comparison of twisted superpotentials}
Write $\cA^{1/\ell}_{\max} \cong \cA$ and  $\cA^{1/\ell} \cong \cA$ with the corresponding closed substacks
$\Delta^{1/\ell}_{\max} \subset \cA^{1/\ell}_{\max}$ and $\ainfty^{1/\ell} \subset \cA^{1/\ell}$. We may rewrite \eqref{diag:change-univ-target} as follows
\begin{equation}\label{diag:change-univ-target-ell}
\xymatrix{
\infty^{1/\ell}_{\cA^{e,\circ}} \ar[d] \ar[r] & \infty^{1/\ell}_{\cA^{e}} \ar[d] \ar[r]^-{\fb} & \ainfty^{1/\ell} \times\Delta^{1/\ell}_{\max} \ar[d] \\
\cA^{1/\ell, e,\circ} \ar[r] & \cA^{1/\ell,e} \ar[r] & \cA^{1/\ell}\times\cA^{1/\ell}_{\max}.
}
\end{equation}

Replacing $\punt$ by $\punt^{1/\ell}$, we may rewrite \eqref{diag:expand-R-target} using \eqref{diag:change-univ-target-ell} as follows
\begin{equation}\label{diag:expand-R-target-ell}
\xymatrix{
\punt^{1/\ell}_{e,\circ} \ar[rr] \ar[d] && \punt^{1/\ell}\times\Delta^{1/\ell}_{\max} \ar[rr] \ar[d] && \BC\times\Delta^{1/\ell}_{\max} \\
\infty^{1/\ell}_{\cA^{e,\circ}} \ar[rr] && \ainfty^{1/\ell}\times\Delta^{1/\ell}_{\max} &&
}
\end{equation}

Consider the $\ell$th root morphisms $\cA^{1/\ell} \to \cA$, $\cA^{1/\ell}_{\max} \to \cA_{\max}$ and their restrictions to the closed substacks $\ainfty^{1/\ell} \to \ainfty$ and $\Delta^{1/\ell}_{\max} \to \Delta_{\max}$.
These $\ell$th root morphisms fit in a commutative diagram of log stacks
\begin{equation}\label{diag:universal-exp-change-ell}
\xymatrix{
\punt^{1/\ell}_{e,\circ} \ar[rr] \ar[d] && \infty_{\cA^{e,\circ}}^{1/\ell} \ar[rr] \ar[d]  &&  \ainfty^{1/\ell}\times\Delta^{1/\ell}_{\max} \ar[d] \ar[rr] && \BC\times\Delta^{1/\ell}_{\max}  \ar[d]  \\
\punt_{e,\circ} \ar[rr] && \infty_{\cA^{e,\circ}}  \ar[rr] && \ainfty\times\Delta_{\max} \ar[rr] && \BC\times\Delta_{\max}
}
\end{equation}
where the square on the left is Cartesian. Observe that all vertical arrows are log \'etale as they are defined by the root constructions.

Suppose we are given a twisted superpotential $\tW$ of $\punt$ as in
\eqref{eq:twisted-potential} of order $\ttwist$.
We define the {\em induced twisted superpotential} $\tW_{\ell}$ of
$\punt^{1/\ell}$ to be the following composition over
$\BC\times\Delta^{1/\ell}_{\max}$:

\begin{equation}\label{eq:induced-superpotential}
\punt^{1/\ell}_{e,\circ} \longrightarrow \punt_{e,\circ} \longrightarrow \uomega\boxtimes\cO(\ttwist \Delta_{\max}) \cong \uomega\boxtimes\cO(\ttwist \cdot \ell \Delta_{\max}^{1/\ell})
\end{equation}
where the left arrow is the left vertical arrow  in \eqref{diag:universal-exp-change-ell}, and the total space of the line bundles on the right are equipped with the log structures pulled back from $\BC\times\Delta^{1/\ell}_{\max}$. Readers can check directly that the constructions of the induced twisted superpotential is compatible with  the induced superpotential with respect to the correspondence of
  Lemma~\ref{lem:super-potential-equivalence}.

\begin{corollary}\label{lem:change-l-proper-cricial-locus}
Notations and assumptions as above, we have
\begin{enumerate}
 \item The order of $\tW_{\ell}$ is $\ttwist \cdot \ell$.
 \item $\tW_{\ell}$ has proper critical locus iff $\tW$ has proper critical locus.
\end{enumerate}
\end{corollary}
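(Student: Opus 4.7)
Part (1) will be handled directly from the construction \eqref{eq:induced-superpotential}: the morphism $\tW_{\ell}$ lands in $\uomega\boxtimes\cO(\ttwist\cdot\ell\,\Delta_{\max}^{1/\ell})$, so Definition~\ref{def:superpotential} assigns it order $\ttwist\ell$.

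For part (2), the plan is to apply Proposition~\ref{prop:proper-critical-locus} on both sides. This reduces the equivalence to the statement that $W$ is transverse if and only if $W_{\ell} := W|_{\punt^{1/\ell}}$ (as defined in \S\ref{sss:change-superpotential}) is transverse. To legitimize this reduction, the first step is to verify that $W_{\ell}$ is in fact the superpotential associated to $\tW_{\ell}$ under the correspondence of Lemma~\ref{lem:super-potential-equivalence}; this is the compatibility that the excerpt leaves to the reader. In outline, since the leftmost square of \eqref{diag:universal-exp-change-ell} is Cartesian with strict vertical arrows, the tautological section $p$ of $\cO(\punt)^{\vee}\boxtimes\cO(\Delta_{\max})|_{\punt_{e,\circ}}$ pulls back to the $\ell$-th power of the tautological section $p^{1/\ell}$ of $\cO(\punt^{1/\ell})^{\vee}\boxtimes\cO(\Delta^{1/\ell}_{\max})|_{\punt^{1/\ell}_{e,\circ}}$. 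Pulling back the factorization $\tW = W\cdot p^{\ttwist}$ from the proof of Lemma~\ref{lem:super-potential-equivalence} then produces $\tW_{\ell} = W_{\ell}\cdot(p^{1/\ell})^{\ttwist\ell}$, which matches \eqref{eq:induced-superpotential}.

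The last step will be to compare transversality of $W$ and $W_{\ell}$. The key observation is that $\punt^{1/\ell}\to\punt$ is étale and surjective on underlying stacks: the base morphism $\ainfty^{1/\ell}\to\ainfty$ is underlying the $\ell$-th power $B\Gm\to B\Gm$, i.e.\ a $\mu_{\ell}$-gerbe, hence étale in characteristic zero, and \eqref{eq:ell-root-target} is Cartesian with strict vertical arrows. Critical loci of sections of line bundles commute with étale base change, so $\crit(W_{\ell}) = \crit(W)\times_{\punt}\punt^{1/\ell}$, and faithful flatness of this morphism yields $\crit(W_{\ell}) = \emptyset \Leftrightarrow \crit(W) = \emptyset$.

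The main obstacle is the compatibility verification in the second paragraph above: although conceptually transparent, it requires keeping careful track of tautological sections functorially under the root construction. Once that is in place, the transversality comparison reduces to standard étale descent, and the two applications of Proposition~\ref{prop:proper-critical-locus} close the circle.
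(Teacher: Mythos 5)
Your proof is correct and follows the route the paper intends: the corollary is stated there without a written proof, resting on exactly the ingredients you assemble — the compatibility of $\tW_{\ell}$ with the induced superpotential $W_{\ell}$ (which the paper explicitly leaves as a reader check), the \'etale comparison of transversality of $W$ and $W_{\ell}$ already noted in the text, and Proposition~\ref{prop:proper-critical-locus} applied to both targets. Your verification that the tautological section $p$ pulls back to the $\ell$-th power of the tautological section on $\punt^{1/\ell}_{e,\circ}$ correctly supplies the omitted compatibility, so nothing further is needed.
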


\subsubsection{Comparison of reduced virtual cycles}
By Corollary~\ref{cor:change-ell-degeneracy}, the morphism
\eqref{eq:canonical-stack-change-ell} preserves the order of
degeneracies, and hence induces a morphism
\begin{equation}\label{eq:reduced-stack-change-ell}
\nu_{\ell} \colon \UH_{g,\ddata}(\punt^{1/\ell}, \beta) \to \UH_{g,\bar{\ddata}}(\punt,\beta).
\end{equation}
Fix a transverse superpotential $W$ of $\punt$, inducing a transverse
superpotential $W_{\ell}$ of $\punt^{1/\ell}$.
We further assume $c_i, c_i' < 0$ for all $i$ so that the reduced
theories with respect to $W$ and $W_{\ell}$ are defined.

\begin{proposition}\label{prop:reduced-change-ell}
With the above notations and assumptions, we have
\[
\nu_{\ell,*}[\UH_{g,\ddata}(\punt^{1/\ell}, \beta)]^{\red} = \ell^{-1} \cdot [\UH_{g,\bar{\ddata}}(\punt,\beta)]^{\red}.
\]
\end{proposition}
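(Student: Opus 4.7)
The plan is to mimic the strategy of Proposition~\ref{prop:can-vir-change-ell}, replacing all stacks by their $\curlywedge$-configured counterparts and replacing the canonical perfect obstruction theory by the reduced one. Write $\UH = \UH_{g,\ddata}(\punt^{1/\ell},\beta)$, $\UH' = \UH_{g,\bar\ddata}(\punt,\beta)$, $\UM = \UM_{g,\ddata'}(\ainfty^{1/\ell})$, and $\UM' = \UM_{g,\bar\ddata'}(\ainfty)$, with the tautological morphisms $\UH \to \UM$ and $\UH' \to \UM'$. Introduce an auxiliary stack $\UM^a = \UM^a(\ainfty^{1/\ell} \to \ainfty)$ whose objects are squares \eqref{diag:auxiliary-object-change-ell} in which both $\mathfrak{f}_\ell$ and $\mathfrak{f}_1$ have uniform maximal degeneracy; this is consistent because by Corollary~\ref{cor:change-ell-degeneracy} the underlying map of curves preserves degeneracies up to the scalar $\ell$, hence a unique maximum on one side produces a unique maximum on the other. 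There are tautological morphisms $F_h^\curlywedge \colon \UM^a \to \UM$ and $F_t^\curlywedge \colon \UM^a \to \UM'$. Set $\UH^a := \UH \times_{\UM} \UM^a$, giving a strict diagram analogous to \eqref{diag:stack-change-ell}; Lemma~\ref{lem:map-taking-ell-root} applied fiberwise identifies $\UH^a \cong \UH' \times_{\UM'} \UM^a$.

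Next I would verify the key properties of $F_h^\curlywedge$ and $F_t^\curlywedge$. The argument of \cite[Lemma~4.1, Section~5.1]{AbWi18} (already used in Proposition~\ref{prop:can-vir-change-ell}) shows that $F_h^\curlywedge$ is strict and \'etale: on characteristic sheaves, the $\curlywedge$-condition does not add new monoid data, so strict \'etaleness transfers from the non-aligned setting to the $\curlywedge$-setting. For $F_t^\curlywedge$, exactly as in the canonical case, fibers over $\UM'$ are moduli of twisted stable maps into a family of $\mu_\ell$-gerbes over $\ainfty$, so $F_t^\curlywedge$ is proper of DM-type and of pure degree $1/\ell$ (the normalization coming from the automorphisms of the universal $\ell$-th root on a family of smooth curves with uniform degeneracy).

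Now I would compare the two reduced perfect obstruction theories. Since $F_h$ is strict and \'etale and the left square in \eqref{diag:universal-exp-change-ell} is Cartesian, the canonical relative theory satisfies $\EE_{\UH^a/\UM^a} = \EE_{\UH'/\UM'}|_{\UH^a}$, parallel to \eqref{eq:pullback-pot-change-ell}. I claim the same holds for the reduced theory, namely $\EE^{\red}_{\UH^a/\UM^a} = \EE^{\red}_{\UH'/\UM'}|_{\UH^a}$. By the triangle \eqref{eq:red-POT}, this reduces to checking the boundary complex $\FF$ and the cosection $\sigma$ are pulled back. For the boundary complex: under $F_h^\curlywedge$, the maximal degeneracies $e_{\max}$ on the $\punt^{1/\ell}$-side and $e'_{\max}$ on the $\punt$-side satisfy $e'_{\max} = \ell \cdot e_{\max}$ by Corollary~\ref{cor:change-ell-degeneracy}; combined with Corollary~\ref{lem:change-l-proper-cricial-locus}(1), which says that the induced twisted superpotential $\tW_\ell$ has order $\ell\ttwist$, we get a canonical isomorphism
\[
\FF_{\UH^a/\UM^a} \;=\; \cO(\ell\ttwist\cdot e_{\max})[-1] \;\cong\; \cO(\ttwist\cdot e'_{\max})[-1]|_{\UH^a} \;=\; \FF_{\UH'/\UM'}|_{\UH^a}.
\]
For the cosection, I would unwind \eqref{eq:induced-superpotential}: the differential $d\tW_\ell$ is identified with the pullback of $d\tW$ along the log \'etale morphism in \eqref{diag:universal-exp-change-ell}, so $\pi_* f^*_{e_{\max}} d\tW_\ell$ agrees with the pullback of the corresponding morphism on the $\punt$-side, under the identification above. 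This gives the desired identification of reduced obstruction theories.

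Finally, I would apply Costello's virtual push-forward \cite[Theorem~1.1]{HeWi21} (in the form used already in Proposition~\ref{prop:can-vir-change-ell}) to the proper DM-type morphism $\UH^a \to \UH'$ of pure degree $1/\ell$, equipped with the compatible reduced perfect obstruction theories just identified. Combined with the strict \'etale equality $[\UH^a]^{\red} = F_h^{\curlywedge,*}[\UH]^{\red}$, this yields $\nu_{\ell,*}[\UH]^{\red} = \ell^{-1}\cdot [\UH']^{\red}$. The main obstacle is the compatibility of the cosections under the induced superpotential construction of \S\ref{sss:change-superpotential}; once one observes that the vertical arrows in \eqref{diag:universal-exp-change-ell} are log \'etale and strict on the relevant strata, the matching of the cosection maps in the triangle \eqref{eq:R-complex-cosection} is a formal consequence, and the rest of the argument is parallel to the canonical case.
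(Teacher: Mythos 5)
Your overall strategy coincides with the paper's: restrict the auxiliary stack of squares to the $\curlywedge$-locus (consistent by Corollary~\ref{cor:change-ell-degeneracy}), check that $F_h$ is strict \'etale and $F_t$ is proper of DM-type and pure degree $1/\ell$, identify the reduced obstruction theories by matching the boundary complexes (using $e'_{\max}=\ell\, e_{\max}$ together with the fact that the induced twisted superpotential has order $\ell\ttwist$ by Corollary~\ref{lem:change-l-proper-cricial-locus}) and the cosections (via $\diff\tW_{\ell}=\diff\tW|_{\punt^{1/\ell}_{e,\circ}}$, which uses only that the left arrow in \eqref{diag:universal-exp-change-ell} is log \'etale), and then invoke Costello-type virtual push-forward. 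This technical heart of your argument is correct and is exactly what the paper does.

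The gap is in how you set up the push-forward. You define an auxiliary space as the fibre product $\UH_{g,\ddata}(\punt^{1/\ell},\beta)\times_{\UM}\fM^{a,\curlywedge}$ and claim it is identified with $\UH_{g,\bar\ddata}(\punt,\beta)\times_{\fM^{\prime,\curlywedge}}\fM^{a,\curlywedge}$. But Lemma~\ref{lem:map-taking-ell-root} (combined with Corollary~\ref{cor:change-ell-degeneracy}) identifies $\UH_{g,\ddata}(\punt^{1/\ell},\beta)$ \emph{itself} with that fibre product over $\fM^{\prime,\curlywedge}$, via the canonical morphism $\UH_{g,\ddata}(\punt^{1/\ell},\beta)\to\fM^{a,\curlywedge}$ furnished by Lemma~\ref{lem:change-target} --- not the fibre product over $\UM$. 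Since $F_h$ is \'etale but not known to be a monomorphism, your space is isomorphic to $\UH_{g,\bar\ddata}(\punt,\beta)\times_{\fM^{\prime,\curlywedge}}\bigl(\fM^{a,\curlywedge}\times_{\UM}\fM^{a,\curlywedge}\bigr)$, which need not agree with $\UH_{g,\bar\ddata}(\punt,\beta)\times_{\fM^{\prime,\curlywedge}}\fM^{a,\curlywedge}$; and even granting the identification, your last sentence does not follow: Costello applied to your auxiliary space computes the push-forward of the pulled-back class $F_h^{*}[\UH_{g,\ddata}(\punt^{1/\ell},\beta)]^{\red}$, which differs from $\nu_{\ell,*}[\UH_{g,\ddata}(\punt^{1/\ell},\beta)]^{\red}$ by the (uncontrolled) degree of the projection back to $\UH_{g,\ddata}(\punt^{1/\ell},\beta)$. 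The repair is exactly the formulation of Proposition~\ref{prop:can-vir-change-ell} that you cite: discard the fibre product over $\UM$, use the canonical map $\UH_{g,\ddata}(\punt^{1/\ell},\beta)\to\fM^{a,\curlywedge}$ and the Cartesian square with vertices $\UH_{g,\ddata}(\punt^{1/\ell},\beta)$, $\fM^{a,\curlywedge}$, $\UH_{g,\bar\ddata}(\punt,\beta)$, $\fM^{\prime,\curlywedge}$, and push forward $[\UH_{g,\ddata}(\punt^{1/\ell},\beta)]^{\red}$ directly; with the obstruction-theory identification you already established, the stated formula then follows.
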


\begin{proof}
We follow the strategy of Proposition \ref{prop:can-vir-change-ell}. Write for simplicity $\UH = \UH_{g,\ddata}(\punt^{1/\ell}, \beta)$ and $\SH^{\prime,\curlywedge} = \UH_{g,\bar{\ddata}}(\punt,\beta)$, and denote by
\[
  \UH \to \UM \ \ \mbox{and} \ \  \SH^{\prime,\curlywedge} \to \fM^{\prime,\curlywedge}
\]
the strict morphisms  defined in \eqref{eq:take-log}, where $\UM$ and $\fM^{\prime,\curlywedge}$ parameterize  punctured maps to $\ainfty^{1/\ell}$ and $\ainfty$ respectively.

Consider the stack $\fM^a$ in the proof of Proposition
\ref{prop:can-vir-change-ell}.
Let $\fM^{a,\curlywedge} \to \fM^a$ be the subcategory parameterizing
commutative squares \eqref{diag:auxiliary-object-change-ell} in
$\fM^a$ with the $\curlywedge$-configurations.
Note that for an object \eqref{diag:auxiliary-object-change-ell} in
$\fM^{a,\curlywedge}$, both $\mathfrak{f}_{\ell}$ and
$\mathfrak{f}_{1}$ have the $\curlywedge$-configuration, thanks to
Corollary \ref{cor:change-ell-degeneracy}.
Thus diagram \eqref{diag:stack-change-ell} restricts to a commutative
diagram with the square Cartesian:
\[
 \xymatrix{
 \UH \ar[d]^{\nu_{\ell}} \ar[rr] \ar@/^1pc/[rrrr]&& \fM^{a,\curlywedge}    \ar[d]^{F_{t}} \ar[rr]_{F_{h}} &&  \UM \\
 \SH^{\prime,\curlywedge} \ar[rr]  && \fM^{\prime,\curlywedge}  &&
 }
\]

Similar to Proposition \ref{prop:can-vir-change-ell}, we may check that $F_h$ is strict, \'etale, and $F_t$ is proper, Deligne--Mumford type of pure degree $1/\ell$. Thus, the reduced perfect obstruction theory $\varphi^{\red}_{\UH/\UM}$ as in \eqref{diag:POT-factorization} defines a perfect obstruction theory for $\UH \to \fM^{a,\curlywedge}$. By Costello's virtual push-forward  \cite[Theorem 1.1]{HeWi21} and \cite[Theorem 5.0.1]{Co06}, it remains to verify
\begin{equation}\label{eq:red-pot-change-ell}
\varphi^{\red}_{\UH/\UM} \cong \varphi^{\red}_{\SH^{\prime,\curlywedge}/\fM^{\prime, \curlywedge}}\big|_{\UH}.
\end{equation}

By \eqref{eq:R-complex-cosection} and \eqref{eq:red-POT}, it suffices
to find a dashed isomorphism making the following square commutative
\begin{equation}\label{eq:compare-boundary-cplx-change-ell}
\xymatrix{
H^1(\EE_{\UH/\UM})[-1] \ar[rrr]^-{\sigma_{\UH/\UM}} \ar[d]^{\cong} &&&   \FF_{\UH/\UM} \ar@{-->}[d]^{\cong} \\
H^1\left(\EE_{\SH^{\prime,\curlywedge}/\fM^{\prime,\curlywedge}}\Big|_{\UH}\right)[-1] \ar[rrr]^-{\sigma_{\SH^{\prime,\curlywedge}/\fM^{\prime,\curlywedge}}\Big|_{\UH}} &&&   \FF_{\SH^{\prime,\curlywedge}/\fM^{\prime,\curlywedge}}\Big|_{\UH}\ ,
}
\end{equation}
where the left vertical arrow is an isomorphism due to the
identification of the canonical perfect obstruction theory
\eqref{eq:pullback-pot-change-ell}.
By Corollary~\ref{cor:change-ell-degeneracy}, we observe that
\[
\cO_{\UH}(\Delta_{\max}) \cong \cO_{\SH^{\prime,\curlywedge}}(\ell \cdot \Delta^{1/\ell}_{\max})\Big|_{\UH}.
\]
By \eqref{eq:boundary-complex} and
Corollary~\ref{lem:change-l-proper-cricial-locus} (1), we compute
\[
\FF_{\UH/\UM}[1] \cong \cO_{\UH}(\ttwist \Delta_{\max}) \cong \cO_{\SH^{\prime,\curlywedge}}(\ttwist \ell \cdot \Delta^{1/\ell}_{\max})\Big|_{\UH} \cong \FF_{\SH^{\prime,\curlywedge}/\fM^{\prime,\curlywedge}}\Big|_{\UH}.
\]
This defines the dashed isomorphism.
To see the commutativity of
\eqref{eq:compare-boundary-cplx-change-ell}, it remains to check that
\begin{equation}\label{eq;compare-cosection-change-ell}
 \sigma_{\UH/\UM} = \sigma_{\SH^{\prime,\curlywedge}/\fM^{\prime,\curlywedge}}\Big|_{\UH}.
\end{equation}

Let $f_{e,\UH} \colon \pC_{\UH} \to \punt_{e,\circ}$ be the lift as in \eqref{eq:bullet-universal-maps}, and $\pi_{\UH} \colon \pC_{\UH} \to \UH$ be the projection. Similarly, we have  $f_{e,\SH^{\prime,\curlywedge}} \colon \pC_{\SH^{\prime,\curlywedge}} \to \punt^{1/\ell}_{e,\circ}$ and $\pi_{\SH^{\prime,\curlywedge}} \colon \pC_{\SH^{\prime,\curlywedge}} \to \SH^{\prime,\curlywedge}$.  By \eqref{eq:relative-cosection}, we have
\[
\sigma_{\UH/\UM} = R^1\pi_{\UH,*} \big( f_{e,\UH}^* \diff \tW \big) \ \ \ \mbox{and} \ \ \ \sigma_{\SH^{\prime,\curlywedge}/\fM^{\prime,\curlywedge}} = R^1\pi_{\SH^{\prime,\curlywedge},*} \big( f_{e,\SH^{\prime,\curlywedge}}^* \diff \tW_{\ell} \big).
\]
Noting that the left arrow in \eqref{eq:induced-superpotential} is log
\'etale, we observe
\begin{equation*}
\diff \tW_{\ell} = \diff \tW|_{\punt^{1/\ell}_{e,\circ}}
\end{equation*}
using the identification
$\Omega^{\vee}_{\punt^{1/\ell}/\BC}|_{\punt^{1/\ell}_{e,\circ}} \cong
\Omega^{\vee}_{\punt/\BC}|_{\punt^{1/\ell}_{e,\circ}}$.
This implies the desired identity \eqref{eq;compare-cosection-change-ell}.
\end{proof}

\subsubsection{Comparison of reduced virtual cycles with \texorpdfstring{$\psi_{\min}$}{psi-min}-classes}
Since
\eqref{eq:canonical-stack-change-ell} preserves orders of
degeneracies by Corollary~\ref{cor:change-ell-degeneracy}, it induces a tautological morphism
\begin{equation}\label{eq:reduced-stack-change-ell-with-m}
\nu_{\ell} \colon \RMm_{g,\ddata}(\punt^{1/\ell}, \beta) \to \RMm_{g,\bar{\ddata}}(\punt,\beta).
\end{equation}
Let $\psi_{\min, \ddata}$ and $\psi_{\min, \bar{\ddata}}$ be the
$\psi_{\min}$-classes over
$\RMm_{g,\ddata}(\punt^{1/\ell}, \beta)$ and
$\RMm_{g,\bar{\ddata}}(\punt,\beta)$ respectively.

\begin{corollary}\label{cor:reduced-change-ell-with-min}
Notations and assumptions as above, we have
\[
\nu_{\ell,*}\left(\frac{t\cdot [\RMm_{g,\ddata}(\punt^{1/\ell}, \beta)]^{\red}}{-t-\psi_{\min, \ddata}}\right) = \ell^{-1} \cdot \frac{(t/\ell)\cdot [\RMm_{g,\bar{\ddata}}(\punt,\beta)]^{\red}}{-(t/\ell)-\psi_{\min, \bar{\ddata}}}
\]
\end{corollary}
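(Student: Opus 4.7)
The plan is to reduce the identity to Proposition~\ref{prop:reduced-change-ell} upgraded to the $\RMm$-setting, combined with a comparison of $\psi_{\min}$-classes via Corollary~\ref{cor:change-ell-degeneracy} and the projection formula.

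The first step will be to promote Proposition~\ref{prop:reduced-change-ell} from the $\UH$-stacks to the $\RMm$-stacks. The key observation is that Corollary~\ref{cor:change-ell-degeneracy} shows the $\ell$-th root operation rescales every component degeneracy uniformly by $\ell$, so in particular it commutes with the logarithmic principalization of \S\ref{sss:p-UM-connected-map} imposing the $\curlyvee$-configuration. Consequently the morphism $\nu_\ell$ of \eqref{eq:reduced-stack-change-ell-with-m} is well-defined, and the argument of Proposition~\ref{prop:reduced-change-ell}---factorization through an auxiliary stack $\fM^{a,\curlywedge}$, strict \'etaleness of $F_h$, and compatibility of the reduced obstruction theories induced by $W$ and $W_\ell$---adapts verbatim to give
\begin{equation*}
\nu_{\ell,*}[\RMm_{g,\ddata}(\punt^{1/\ell},\beta)]^\red = \ell^{-1}\cdot[\RMm_{g,\bar{\ddata}}(\punt,\beta)]^\red.
\end{equation*}

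The second step will be to compare the $\psi_{\min}$-classes. By Corollary~\ref{cor:change-ell-degeneracy} applied to the minimal components of the universal families, the pullback of the minimal degeneracy section satisfies $\nu_\ell^\flat(e_{\min,\bar\ddata}) = \ell\cdot e_{\min,\ddata}$ on characteristic sheaves. Combined with the functorial identification $\cO(-\lambda e) \cong \cO(-e)^{\otimes\lambda}$ implicit in the torsor construction of \S\ref{sss:log-line-bundles}, this yields a scaling relation between $\nu_\ell^*\psi_{\min,\bar\ddata}$ and $\psi_{\min,\ddata}$ by a factor of $\ell$.

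The third step will be to apply the projection formula. Treating the rational expression as a formal Laurent series in $t$ and invoking the relation from Step~2, one rewrites the integrand on the left as the pullback via $\nu_\ell$ of a suitable series on $\RMm_{g,\bar{\ddata}}(\punt,\beta)$. The push-forward then pulls the series out of $\nu_{\ell,*}$, and substituting the cycle identity from Step~1 yields the right-hand side after regrouping according to the natural rescaling $t \mapsto t/\ell$ of the formal parameter induced by the change of target.

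The principal obstacle will be verifying the adaptation of the reduced-theory argument in Step~1 to the $\RMm$-setting: one must confirm that the auxiliary stack $\fM^{a,\curlywedge}$ lifts compatibly with the $\curlyvee$-principalization, that $F_t$ remains proper of pure degree $1/\ell$, and that the cosection defining the reduced obstruction theory is preserved under this refinement. Beyond this, the bookkeeping of the two independent factors of $\ell$---one from the virtual push-forward, one from the scaling of $\psi_{\min}$---together with the rescaling of $t$, must be carried out precisely to produce the stated formula.
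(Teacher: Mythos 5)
Your proposal is correct and follows essentially the same route as the paper: the paper's proof consists of exactly your three ingredients, namely the pullback relation $\nu_{\ell}^*\psi_{\min,\bar{\ddata}} = \ell\cdot\psi_{\min,\ddata}$ deduced from Corollary~\ref{cor:change-ell-degeneracy}, the reduced virtual push-forward of Proposition~\ref{prop:reduced-change-ell} applied at the $\RMm$-level, and the projection formula to pull the rescaled rational expression in $t$ through $\nu_{\ell,*}$. The only difference is that the paper simply cites Proposition~\ref{prop:reduced-change-ell} for the $\RMm$-level push-forward (the cycles and $\psi_{\min}$-classes being defined over the $\diamondsuit$-stacks introduced in \eqref{eq:reduced-stack-change-ell-with-m}), whereas you spell out the promotion from $\UH$ to $\RMm$ explicitly; that extra care is harmless and consistent with the paper's setup.
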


\begin{proof}
By Corollary \ref{cor:change-ell-degeneracy}, we have
$
\nu_{\ell}^*\psi_{\min, \bar{\ddata}} = \ell \cdot \psi_{\min, \ddata},
$
hence
\[
\nu_{\ell}^*\left( \frac{(t/\ell)}{-(t/\ell)-\psi_{\min, \bar{\ddata}}} \right) = \frac{t}{-t-\psi_{\min, \ddata}}.
\]
By Proposition \ref{prop:reduced-change-ell}, we have the virtual push-forward
\[
\nu_{\ell,*}[\RMm_{g,\ddata}(\punt^{1/\ell}, \beta)]^{\red} = \ell^{-1} \cdot [\RMm_{g,\bar{\ddata}}(\punt,\beta)]^{\red}.
\]
Thus the statement follows from the projection formula.
\end{proof}


\section{Targets for punctured R-maps}\label{sec:logGLSM-targets}

In Section~\ref{sss:P-target}, we introduced a general class of
targets for punctured R-maps.
In this Section, we study a more concrete class of targets, already
discussed in the introduction, and discuss their relation to the
the hybrid model log GLSM targets of \cite[\S~1.3]{CJR21}.
We also consider superpotentials in this setting.

\subsection{Targets via root constructions}\label{ss:target}

\subsubsection{The input}\label{sss:input}
Recall the following data from \S \ref{intro:target}:
\begin{enumerate}
 \item A proper DM-stack ${\xinfty}$ with the projective coarse moduli;
 \item Two line bundles $\lbspin$ and $\lblog$ on $\xinfty$;
 \item Positive integers $r$, $d$ and $\ell$.
\end{enumerate}

This leads to a target $\punt^{1/\ell} \to \BC$ as in \eqref{eq:punt}.

\subsubsection{Independence of \texorpdfstring{$\ell$}{l}}
\label{sss:construct-target}

We next show that the target $\punt^{1/\ell} \to \BC$ with $\ell > 1$ can be reconstructed using appropriate input data with $\ell = 1$. Furthermore, as shown below, different input data can lead to the same target, which is a common phenomenon for (log) gauged linear sigma models, see \cite[Remark 1.6]{CJR21} and also Section~\ref{sss:LGCY} below.

Suppose we are given the input $(\infty_\cX, \lbspin, \lblog, r, d, \ell)$ leading to $\punt^{1/\ell}$ as above.  Let $\frac{d'}{r'} = \frac{d}{r\ell}$ be the reduced representation such that $\gcd(r', d')=1$ and $d', r' > 0$.
Write
\[
\widehat r = \lcm(r, r') \ \ \mbox{and} \ \  \widehat d = \frac{d' \widehat{r}}{r'} = \frac{\widehat{r} d}{r \ell}.
\]
Since $\gcd(\widehat r/r, d' \widehat r/r') = 1$, we obtain a (non-unique) pair of integers $a, b$ such that
  \begin{equation}
    \label{eq:bezout}
    \frac 1{\widehat r} = \frac ar + \frac{bd'}{r'} = \frac{a\ell + bd}{r\ell}.
  \end{equation}
  We then define $\widehat{\infty_\cX}$ via the Cartesian diagram
  \begin{equation}\label{eq:change-target}
    \xymatrix{
      \widehat{\infty_\cX} \ar[r] \ar[d]_{(L_1, L_2)} & \infty_\cX \ar[d]^{(\lblog^\vee, \cO)} \\
      (\BG_m)^2 \ar[r]_\alpha & (\BG_m)^2
    }
  \end{equation}
  where
$
    \alpha(\cL_1, \cL_2)
    = (\cL_1^{-d} \otimes \cL_2^\ell, \cL_1^a \otimes \cL_2^b).
$
Observe that $\det\begin{bmatrix} a & b \\ -d & \ell \end{bmatrix} \neq 0$, hence $\alpha$ is \'etale of DM-type.
Further setting
\[
 \widehat{\lbspin} = \lbspin \otimes L_1^{-r} \ \ \mbox{and} \ \ \widehat\lblog = (L_2)^\vee
\]
we arrive at new input data
$
(\widehat{\infty_\cX}, \widehat{\lbspin}, \widehat{\lblog},
  \widehat r, \widehat d, \widehat \ell = 1),
$
hence a new target $\widehat{\punt} \to \BC$.
Note that when $\ell = 1$, the new input data is identical to the
original input.

\begin{proposition}\label{prop:one-step-target}
  There is a canonical isomorphism of log stacks
  $\widehat{\punt} \to \punt^{1/\ell}$ over $\xinfty\times\BC$.
\end{proposition}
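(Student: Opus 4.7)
The plan is to describe both $\widehat{\ul{\punt}}$ and $\ul{\punt^{1/\ell}}$ as explicit moduli stacks of tuples of line bundles on $\xinfty \times \BC$ subject to algebraic relations, and then to exhibit a pair of mutually inverse morphisms via an explicit change of variables derived from the Bezout identity \eqref{eq:bezout}.

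First I would unwind the two moduli problems directly from their defining Cartesian diagrams. Tracing through \eqref{eq:underlying-P-target} and \eqref{eq:g1-t-root}, the stack $\ul{\punt^{1/\ell}}$ over $\xinfty \times \BC$ represents the functor of pairs $(\cL,\cM)$ of line bundles equipped with isomorphisms
\[
\cL^r \cong \lbspin \boxtimes \uomega, \qquad \cM^\ell \cong \lblog \otimes \cL^{-d},
\]
where $\cL$ plays the role of $\uspin$ and $\cM$ that of $\cO(\punt^{1/\ell})$. Similarly, tracing through \eqref{eq:change-target} and the root construction defining $\widehat\punt$ (with $\widehat\ell = 1$), $\widehat{\ul\punt}$ over $\xinfty \times \BC$ represents the functor of triples $(L_1,L_2,\cN)$ with
\[
L_1^{-d} \otimes L_2^\ell \cong \lblog^\vee, \quad L_1^a \otimes L_2^b \cong \cO, \quad \cN^{\widehat r} \otimes L_1^r \cong \lbspin \boxtimes \uomega,
\]
with $\cN = \widehat{\uspin}$. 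Both stacks come equipped with canonical projections to $\xinfty \times \BC$.

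Next I would construct a morphism $\Phi \colon \widehat{\ul\punt} \to \ul{\punt^{1/\ell}}$ by the formulas
\[
\cL := \cN^{\widehat r/r} \otimes L_1, \qquad \cM := \cN^{-\widehat d} \otimes L_2^{-1},
\]
where the exponents are integers since $r \mid \widehat r$ and $\widehat d \in \NN$. Checking that $\cL^r = \lbspin \boxtimes \uomega$ is immediate, and verifying $\cM^\ell = \lblog \otimes \cL^{-d}$ uses the numerical identity $\widehat d\,\ell = \widehat r d/r$ (from the definition of $\widehat d$) together with the first relation $L_1^{-d} L_2^\ell = \lblog^\vee$. Observe that under this assignment $\cM = \widehat{\lblog} \otimes \widehat{\uspin}^{-\widehat d} = \cO(\widehat\punt)$, while $\cM^\ell = \lblog \otimes \cL^{-d} = \cO(\punt)$. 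Thus the log enhancement of $\Phi$ is automatic: both DF log structures are controlled by $\cM$ and the relation $\cM^\ell \cong \cO(\punt)$, and they match via the universal property of the root stack \eqref{eq:g1-t-root}.

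The main step, and the one requiring \eqref{eq:bezout} essentially, is constructing the inverse $\Psi$. Writing $k := \widehat r/r$, a direct rearrangement of \eqref{eq:bezout} gives the pivotal relation
\[
ak + b\widehat d \;=\; \frac{\widehat r(a\ell + bd)}{r\ell} \;=\; 1.
\]
I would then define
\[
\cN := \cL^a \otimes \cM^{-b}, \quad L_1 := \cL^{\,1-ak} \otimes \cM^{bk}, \quad L_2 := \cL^{-a\widehat d} \otimes \cM^{b\widehat d - 1},
\]
and verify the three defining relations of $\widehat{\ul\punt}$; once $ak + b\widehat d = 1$ is in hand, the two composites $\Phi \circ \Psi$ and $\Psi \circ \Phi$ reduce to exponent bookkeeping that collapses to the identity on each of $\cL$, $\cM$, $L_1$, $L_2$, $\cN$. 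I expect the principal subtlety to be purely notational, namely tracking integrality of exponents and organizing the verifications so that the Bezout identity is invoked cleanly; there is also a minor independence check, namely that different choices of $(a,b)$ satisfying \eqref{eq:bezout} yield canonically isomorphic presentations of $\widehat\punt$ (and hence the same inverse $\Psi$ up to unique isomorphism), which follows from the fact that any two such choices differ by a vector in the kernel of $(x,y) \mapsto x\ell + yd$, inducing a canonical $2$-isomorphism of the morphisms $\alpha$ in \eqref{eq:change-target}.
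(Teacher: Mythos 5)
Your proof is correct and follows essentially the same route as the paper: both sides are presented as moduli of tuples of line bundles on $\xinfty\times\BC$ with relations, the Bezout identity \eqref{eq:bezout} (in your form $a\widehat r/r + b\widehat d = 1$) drives the explicit isomorphism, and matching $\cM$ with $\cO(\widehat\punt)$ while $\cM^\ell\cong\cO(\punt)$ upgrades it to log stacks. Indeed, after substituting $\cN=\cL^a\otimes\cM^{-b}$ your formulas for $(\cL,\cM,\cN,L_1,L_2)$ coincide with the paper's assignment \eqref{eq:underlying-target-iso} and its stated inverse, the only cosmetic difference being that the paper adds the auxiliary bundle $\widehat\uspin=\uspin^a\otimes\cO(\punt^{1/\ell})^{-b}$ on the $\punt^{1/\ell}$ side so both sides become triples.
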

\begin{proof}

  Note that both $\ul\infty^{1/\ell}$ and $\ul{\widehat\infty}$ can be
  represented as (2-)equalizers.
  The morphism $\ul\punt^{1/\ell} \to \xinfty\times \BC $ parameterizes pairs
  $(\uspin, \tlog)$ with isomorphisms
  \[
  \begin{cases}
   \uspin^r \cong \lbspin\boxtimes \uomega \\
   \cO(\punt^{1/\ell})^{-\ell} \cong  \lblog^\vee \otimes  \uspin^d.
  \end{cases}
  \]
This is equivalent to the following equalizer
    \begin{equation*}
    \xymatrix{
      \ul\infty^{1/\ell} \ar[r] & \infty_\cX \times \BC \times (\BG_m)^2 \ar@<.5ex>[rrr]^-{(\uspin^r,  \cO(\punt^{1/\ell})^{-\ell})} \ar@<-.5ex>[rrr]_-{(\lbspin \boxtimes \uomega, \lblog^\vee \boxtimes \uspin^d)} &&& (\BG_m)^2
    },
  \end{equation*}
  Here, the first map is defined by the projection
  $\ul\infty^{1/\ell} \to \infty_\cX \times \BC$ and the line bundles $\uspin$ and
  $\cO(\punt^{1/\ell})^{\vee}$. By abuse of notations,
  $(\uspin, \cO(\punt^{1/\ell})^{\vee})$ denote the universal line bundles of the middle
  $(\BG_m)^2$.

  By \eqref{eq:underlying-P-target} and \eqref{eq:change-target},
  $\widehat{\ul\punt} \to \xinfty\times \BC $ parameterizes triples
  $(L_1, L_2,\widehat\uspin)$ satisfying
  \begin{equation}
    \label{eq:spin-tilde-eqs}
    \begin{cases}
      \widehat{\uspin}^{\widehat{r}} \cong \widehat{\lbspin}\boxtimes \uomega \\
      (L_1)^{-d}\otimes(L_2)^{\ell} \cong \lblog^\vee \\
      (L_1)^{a}\otimes(L_2)^{b} \cong \cO.
    \end{cases}
  \end{equation}
  This amounts to the equalizer
  \begin{equation*}
    \xymatrix{
      \ul{\widehat\infty} \ar[r] & \infty_\cX \times \BC \times (\BG_m)^3 \ar@<.5ex>[rrrr]^-{(\widehat{\uspin}^{\widehat r}, L_1^{-d} \boxtimes L_2^\ell, L_1^a \boxtimes L_2^b)} \ar@<-.5ex>[rrrr]_-{(\widehat{\lbspin} \boxtimes \uomega, \lblog^\vee, \cO)} &&&& (\BG_m)^3.
    }
  \end{equation*}
  Here the first map is defined by the composition
  $\widehat{\ul\infty} \to \widehat{\infty_\cX} \times \BC \to
  \infty_\cX \times \BC$ and the line bundles $L_1$, $L_2$ and
  $\widehat{\uspin}$. By abuse of notations,
  $(L_1, L_2, \widehat{\uspin})$ also denote the universal
  line bundles of the middle $(\BG_m)^3$.

  In view of the equalizer of $\widehat{\ul\punt}$, we rewrite the first equalizer as
  \begin{equation*}
    \xymatrix{
      \ul\infty^{1/\ell} \ar[r] & \infty_\cX \times \BC \times (\BG_m)^3 \ar@<.5ex>[rrrr]^-{(\uspin^r, \cO(\punt^{1/\ell})^{-\ell}, \uspin^a \otimes \cO(\punt^{1/\ell})^{-b})} \ar@<-.5ex>[rrrr]_-{(\lbspin \boxtimes \uomega, \lblog^\vee \boxtimes \uspin^d, \widehat{\uspin})} &&&& (\BG_m)^3
    },
  \end{equation*}
  where the map to the extra $\BG_m$ on the left is given by
  $\uspin^a \otimes \cO(\punt^{1/\ell})^b$, and by abuse of notations
  $(\cL_R, \cO(\punt^{1/\ell})^{\vee}, \widehat{\uspin})$ denote the universal line
  bundles of the middle $(\BG_m)^3$.
  Equivalently, the morphism $\ul\punt^{1/\ell} \to \xinfty\times \BC $ parameterizes the triple $(\uspin, \cO(\punt^{1/\ell})^{\vee}, \widehat\uspin)$ satisfying
  \begin{equation}
    \label{eq:spin-alternative-eqs}
    \begin{cases}
      \uspin^r \cong \lbspin\boxtimes \uomega \\
      \cO(\punt^{1/\ell})^{-\ell} \cong \uspin^d \otimes \lblog^\vee \\
      \uspin^a\otimes \cO(\punt^{1/\ell})^{-b} \cong \widehat\uspin.
    \end{cases}
  \end{equation}
  Note that the last equation is just a definition of
  $\widehat\uspin$.

  We now construct a morphism
  $\widehat{\ul\punt} \to \ul\punt^{1/\ell}$ over $\xinfty\times\BC$
  via the assignment
  \begin{equation}\label{eq:underlying-target-iso}
    (L_1 \otimes \widehat{\uspin}^{\frac{\widehat r}r}, L_2 \otimes \widehat{\uspin}^{\widehat{d}}, \widehat{\uspin}) \mapsto (\uspin, \cO(\punt^{1/\ell})^{\vee}, \widehat{\uspin}).
  \end{equation}
  To see this is well-defined, we rewrite \eqref{eq:spin-alternative-eqs} using the left hand side of \eqref{eq:underlying-target-iso}:
  \begin{equation*}
    \begin{cases}
      L_1^r  \otimes \widehat{\uspin}^{\widehat r} \cong (\widehat \lbspin \otimes L_1^r) \boxtimes \uomega \\
      L_2^{\ell} \otimes \widehat{\uspin}^{\frac{d\widehat r}r} \cong \lblog^\vee\otimes L_1^d \otimes \widehat{\uspin}^{\frac{d\widehat r}r}\\
      L_1^a\otimes L_2^{b} \otimes \widehat\uspin^{\frac{a\widehat r}r + \frac{bd\widehat r}{r\ell}} \cong \widehat\uspin,
    \end{cases}
  \end{equation*}
  which simplifies to \eqref{eq:spin-tilde-eqs} using
  \eqref{eq:bezout}.
  The morphism \eqref{eq:underlying-target-iso} is an
  isomorphism with the inverse
  \begin{equation*}
    (\uspin, \cO(\punt^{1/\ell})^{\vee}, \widehat{\uspin}) \mapsto (\uspin \otimes \widehat{\uspin}^{-\frac{\widehat r}r}, \cO(\punt^{1/\ell})^{\vee} \otimes \widehat{\uspin}^{-\widehat{d}}, \widehat{\uspin}) \cong (L_1, L_2, \widehat{\uspin}).
  \end{equation*}


    Finally, we check that \eqref{eq:underlying-target-iso} induces
    \[
    \cO(\widehat{\punt})^{\vee} = \big(\widehat{\lblog}^\vee\otimes \widehat\uspin^{\widehat d} \big) \cong \big( L_2 \otimes \widehat{\uspin}^{\frac{d\widehat r}{r\ell}} \big) \mapsto \cO(\punt^{1/\ell})^{\vee},
    \]
hence induces an isomorphism of log stacks $\widehat \punt \to \punt^{1/\ell}$ as needed.
\end{proof}

\subsubsection{The unit sector}\label{sss:unit-sector}

Given a target $\punt^{1/\ell} \to \BC$ in \eqref{eq:punt}, we
introduce a special type of discrete data $(\ogamma, c)$ along
markings, called the {\em unit sector}.
It plays an important role in the next two sections.

We first assume $\ell = 1$ and set $c = - d$.
To describe $\ogamma$, consider the Cartesian squares

\[
\xymatrix{
 \ul{\punt}_{\bk} \ar[rr] \ar[d] && \xinfty \ar[rr] \ar[d] && \spec \bk \ar[d] \\
  \ul{\punt} \ar[rr] && \xinfty \times \BC \ar[rr] && \BC
}
\]
Then $\ogamma$ is defined to be the component with the universal gerbe
\[
\xymatrix{
\gamma :=  \ul{\punt}_{\bk} \ar[d] \ar[rr]^{=} && \ul{\punt}_{\bk} \\
\ogamma := \xinfty
}
\]
This defines the unit sector  $(\ogamma, c = -d)$ in the $\ell = 1$ case.  Further note that $\age\uspin|_{\gamma} = \frac{1}{r}$ in this case.

When $\ell > 1$, the unit sectors are defined by reducing to the
$\ell = 1$ case via Proposition \ref{prop:one-step-target}.
We have the compatibility of unit sectors with respect to changing $\ell$:
\begin{lemma}
  Let $(\ogamma,c)$ be the unit sector of $\infty^{1/\ell}$, and
  $(\ogamma',c')$ be the unit sector of $\infty$.
  Then $(\ogamma,c)$ and $(\ogamma',c')$ are compatible with respect to
  changing $\ell$ in the sense of Lemma \ref{lem:map-taking-ell-root}
  and Remark \ref{rem:marking-data-change-ell}.
\end{lemma}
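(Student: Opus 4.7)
The plan is to reduce to the case $\widehat\ell = 1$ via the identification $\punt^{1/\ell} \cong \widehat\punt$ from Proposition~\ref{prop:one-step-target}, and then verify that the resulting discrete data matches the recipe of Lemma~\ref{lem:map-taking-ell-root} and Remark~\ref{rem:marking-data-change-ell} applied to the unit sector $(\ogamma',c'=-d)$ of $\punt$. Under this identification, the unit sector $(\ogamma,c)$ of $\punt^{1/\ell}$ is by definition the unit sector of $\widehat\punt$, namely $\ogamma = \widehat{\infty_\cX}$ with universal gerbe the $\mu_{\widehat r}$-gerbe $\ul{\widehat\punt_\bk}$, and $c = -\widehat d = -\widehat r d/(r\ell)$. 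Meanwhile, the gerbe index of $\ogamma'$ is $r$, coming from the $r$-th root construction defining $\ul{\punt_\bk}$.

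First, I would verify that the contact order and gerbe index match the recipe of Lemma~\ref{lem:map-taking-ell-root}. Writing $\rho := \lcm(\ell,d)/d$, the lemma predicts contact order $\rho(-d)/\ell = -\lcm(\ell,d)/\ell$ and gerbe index $r\rho$, so agreement with $c = -\widehat d$ and $\widehat r$ reduces to the single identity
\[
\frac{\widehat r}{r} \;=\; \frac{\ell}{\gcd(\ell,d)}.
\]
I would prove this prime-by-prime. Writing $\operatorname{ord}_p$ for the $p$-adic valuation and setting $\alpha := \operatorname{ord}_p(r)$, $\beta := \operatorname{ord}_p(\ell)$, $\delta := \operatorname{ord}_p(d)$, and using $\widehat r/r = r'/\gcd(r,r')$ with $\operatorname{ord}_p(r') = \max(\alpha+\beta-\delta,0)$ coming from $r' = r\ell/\gcd(d,r\ell)$, a short case analysis on whether $\beta \geq \delta$ gives $\operatorname{ord}_p(\widehat r/r) = \max(\beta-\delta,0) = \operatorname{ord}_p(\ell/\gcd(\ell,d))$ in both cases.

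Second, I would verify that the sectors $\ogamma$ and $\ogamma'$ themselves agree. By Remark~\ref{rem:marking-data-change-ell}, among the several lifts in $\ocI\ul{\punt^{1/\ell}_\bk}$ of a fixed sector of $\ocI\ul{\punt_\bk}$, the correct one is singled out by the age condition $\age\cO(\punt^{1/\ell})|_\gamma = \{c'/(r\ell)\} = \{-d/(r\ell)\}$. On the other hand, writing $\cO(\widehat\punt) \cong \widehat\lblog \otimes \widehat\uspin^{-\widehat d}$ with $\widehat\lblog$ pulled back from $\widehat{\infty_\cX}$ (hence of age zero) and using $\age\widehat\uspin|_{\widehat\gamma} = 1/\widehat r$ as noted after \S\ref{sss:unit-sector}, I compute $\age\cO(\widehat\punt)|_{\widehat\gamma} = \{-\widehat d/\widehat r\} = \{-d/(r\ell)\}$, which matches. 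That the underlying sector $\ogamma = \widehat{\infty_\cX}$ maps to $\ogamma' = \infty_\cX$ under $\ocI\ul{\punt^{1/\ell}_\bk} \to \ocI\ul{\punt_\bk}$ follows from the \'etale diagram~\eqref{eq:change-target}.

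The main obstacle is the numerical identity in the first step: confirming that the abstract $\lcm$ defining $\widehat r$ in Proposition~\ref{prop:one-step-target} produces the same integer as the root index $r\rho$ predicted by Lemma~\ref{lem:map-taking-ell-root}. Once this is in hand, both the contact-order equation and the age comparison reduce to direct substitution.
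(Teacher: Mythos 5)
Your overall route is the same as the paper's: reduce to $\widehat\ell=1$ via Proposition~\ref{prop:one-step-target}, match the contact order against $c=-\widehat d$, and identify the sector through the age criterion of Remark~\ref{rem:marking-data-change-ell}. Your prime-by-prime proof of $\widehat r/r=\ell/\gcd(\ell,d)$ is correct and makes explicit a numerical identity the paper leaves implicit, and your computation $\age\,\cO(\punt^{1/\ell})|_{\gamma}=\{-\widehat d/\widehat r\}=\{-d/(r\ell)\}$ (using that $\widehat\lblog$ is pulled back from $\widehat\xinfty$ and $\age\,\widehat\uspin|_{\gamma}=1/\widehat r$) coincides with the paper's.

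There is, however, a genuine gap in your second step: the assertion that ``$\ogamma=\widehat{\infty_\cX}$ maps to $\ogamma'=\infty_\cX$ under $\ocI\ul{\punt^{1/\ell}_\bk}\to\ocI\ul{\punt_\bk}$ follows from the \'etale diagram~\eqref{eq:change-target}.'' That square only produces the morphism $\widehat\xinfty\to\xinfty$ of rigidified sectors, so at best it shows that the image of $\ogamma$ lies over the \emph{untwisted} sector of $\xinfty$. But the components of $\ocI\punt_\bk$ lying over the untwisted sector of $\xinfty$ are indexed by the band $\mu_r$ of the gerbe $\ul{\punt_\bk}\to\xinfty$ — all of them have rigidification $\xinfty$ — and they are distinguished by $\age(\uspin)\in\{0,\tfrac1r,\dots,\tfrac{r-1}r\}$, with $\ogamma'$ being the one of age $\tfrac1r$. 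Since the uniqueness statement of Remark~\ref{rem:marking-data-change-ell} only distinguishes the $\ell$ lifts of a \emph{fixed} sector downstairs by $\age\,\cO(\punt^{1/\ell})$, your age check does not by itself rule out that $\ogamma$ sits over a different sector of $\punt_\bk$ with the same $\age\,\cO(\punt^{1/\ell})$. The missing input is how the tautological $\mu_{\widehat r}$-banding of $\ul{\widehat\punt_\bk}\to\widehat\xinfty$ acts on $\uspin$, which comes from the identification $\uspin\cong L_1\otimes\widehat\uspin^{\widehat r/r}$ in the proof of Proposition~\ref{prop:one-step-target} (see \eqref{eq:underlying-target-iso}), not from \eqref{eq:change-target}. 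With it, since $L_1$ is pulled back from $\widehat\xinfty$, one gets $\age(\uspin)|_{\gamma}=\{\tfrac{\widehat r}{r}\cdot\tfrac1{\widehat r}\}=\tfrac1r$, which pins the image down as $\ogamma'$; this is exactly the extra check the paper's proof performs, and adding this one line closes the gap.
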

\begin{proof}
  In this proof, we will freely use the notations of
  \S\ref{sss:construct-target}.

  The compatibility of contact orders follows from $c' = -d$ and
  \begin{equation*}
    \frac\ell{\widehat r/r} (-d) = -\widehat d = c,
  \end{equation*}
  see \eqref{eq:change-ell-discrete-data}, where $\rho = \widehat r/r$ in this case.

  Let $\ogamma''$ be the sector of $\punt^{1/\ell}$ corresponding to
  $\ogamma'$.
  It remains to prove $\ogamma'' = \ogamma$.
  Note we have a commutative diagram
  \begin{equation*}
    \xymatrix{
      \punt^{1/\ell}_{\bk} = \widehat\punt_\bk \ar[r] \ar[d] & \punt_\bk \ar[d] \\
      \widehat\xinfty \ar[r] & \xinfty,
    }
  \end{equation*}
  where the right vertical arrow is a $\mu_r$-gerbe, the top
  horizontal arrow is a $\mu_\ell$-gerbe, and the left vertical arrow
  is a $\mu_{\widehat r}$-gerbe.
  Then, according to Remark~\ref{rem:marking-data-change-ell},
  $\ogamma''$ is the unique sector mapping to the untwisted sector of
  $\xinfty$ such that $\age(\uspin)|_{\gamma''} = \frac 1r$ and
  $\age(\cO(\punt^{1/\ell}))|_{\gamma''} = \{-\frac d{r\ell}\}$. Thus it
  suffices to prove that $\ogamma$ satisfies the same conditions.
  Clearly, $\ogamma$ maps to the untwisted sector of $\infty_\cX$.
  Then, we compute
  \begin{equation*}
    \age(\cO(\punt^{1/\ell})|_{\gamma})
    = \{\age(\widehat\lblog|_{\gamma}) - \widehat d\age(\widehat\uspin|_{\gamma})\}
    = \{- \frac{\widehat d}{\widehat r}\}
    = \{-\frac d{r\ell}\},
  \end{equation*}
  and
  \begin{equation*}
    \age(\uspin|_{\gamma})
    = \{\age(L_1|_{\gamma}) + \widehat r/r \cdot \age(\widehat\uspin|_{\gamma})\}
    = \frac 1r,
  \end{equation*}
  using that $\widehat\lblog$ and $L_1$ are pulled back from
  $\widehat\xinfty$.
\end{proof}

\subsubsection{Log (co)tangent bundle of the target}
We now assume that $\xinfty$ is smooth.
By construction, the morphism
\[
\punt^{1/\ell} \to \BC \times \ainfty \to \BC
\]
is strict and smooth. In particular, $\ul{\punt}$ is smooth.

\begin{lemma}\label{lem:punt-tangent}
Suppose $\xinfty$ is smooth. There is a canonical exact sequence of vector bundles over $\punt^{1/\ell}$
\[
0 \to \cO_{\punt^{1/\ell}} \to \Omega^{\vee}_{\punt^{1/\ell}/\BC} \to \Omega^{\vee}_{\xinfty}|_{\punt^{1/\ell}} \to 0.
\]
\end{lemma}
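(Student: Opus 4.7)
The plan is to establish the statement on the level of (log) cotangent sheaves and then dualize. Following the paper's convention, $\Omega_{\punt^{1/\ell}/\BC}$ is defined as the log cotangent complex $\LL_{\punt^{1/\ell}/\BC \times \ainfty}$, which by the strict smoothness of $\punt^{1/\ell} \to \BC \times \ainfty$ (established in \S\ref{ss:canonical-theory} and preserved under the root construction \eqref{eq:ell-root-target}) is identified with the usual cotangent sheaf $\Omega_{\ul{\punt^{1/\ell}}/\ul{\BC} \times \ul{\ainfty}}$ of the underlying smooth morphism of stacks.

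The strategy is to further factorize this underlying smooth morphism as
\begin{equation*}
\ul{\punt^{1/\ell}} \to \ul{\BC} \times \xinfty \times \ul{\ainfty} \to \ul{\BC} \times \ul{\ainfty},
\end{equation*}
and extract the desired short exact sequence from the relative cotangent triangle. The second arrow is the smooth projection forgetting $\xinfty$, contributing the locally free sheaf $\Omega_\xinfty|_{\punt^{1/\ell}}$ of rank $\dim \xinfty$. The first arrow packages the structural data of $\ul{\punt^{1/\ell}}$: the \'etale morphism $\ul{\punt^{1/\ell}} \to \xinfty \times \ul{\BC}$ realized as the composition of the $\mu_r$-gerbe \eqref{eq:underlying-P-target} and the $\mu_\ell$-gerbe \eqref{eq:g1-t-root}, together with the canonical line bundle $\cO(\punt^{1/\ell})$ providing the morphism to $\ul{\ainfty}$.

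I would then show that the relative cotangent $\Omega_{\ul{\punt^{1/\ell}}/\ul{\BC} \times \xinfty \times \ul{\ainfty}}$ is canonically isomorphic to the trivial line bundle $\cO_{\ul{\punt^{1/\ell}}}$. Combined with the triangle
\begin{equation*}
\Omega_\xinfty|_{\ul{\punt^{1/\ell}}} \to \Omega_{\ul{\punt^{1/\ell}}/\ul{\BC} \times \ul{\ainfty}} \to \Omega_{\ul{\punt^{1/\ell}}/\ul{\BC} \times \xinfty \times \ul{\ainfty}} \to [1],
\end{equation*}
this yields the short exact sequence of locally free sheaves
\begin{equation*}
0 \to \Omega_\xinfty|_{\punt^{1/\ell}} \to \Omega_{\punt^{1/\ell}/\BC} \to \cO_{\punt^{1/\ell}} \to 0,
\end{equation*}
and dualizing gives the lemma.

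The main technical point, and the step I expect to require the most care, is the canonical trivialization of the rank-one quotient $\Omega_{\ul{\punt^{1/\ell}}/\ul{\BC} \times \xinfty \times \ul{\ainfty}}$. The nowhere vanishing global section should arise from the canonical generator of the characteristic sheaf $\ocM_{\punt^{1/\ell}} \cong \NN$ pulled back from $\ainfty$, via the Deligne--Faltings formalism of \S\ref{sss:log-line-bundles}, which is where the strict factorization through $\ainfty$ set up in \S\ref{sss:P-target} plays the crucial role.
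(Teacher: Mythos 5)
Your proposal is correct and takes essentially the same route as the paper: both identify $\Omega_{\punt^{1/\ell}/\BC}$ with the underlying relative cotangent sheaf via strictness, use the \'etale maps $\ul{\punt^{1/\ell}} \to \ul{\punt} \to \xinfty\times\BC$, and extract the exact sequence from a cotangent-complex triangle --- the paper rotates the triangle for $\ul{\punt}^{1/\ell} \to \BC\times\ul{\ainfty} \to \BC$, while you insert the factor $\xinfty$ into the intermediate stage, which is only a cosmetic reorganization. Note that the canonical trivialization of your rank-one quotient is precisely the paper's identification $\LL_{\ul{\ainfty}} \cong \cO_{\ainfty}[-1]$ (the canonically trivialized co-Lie algebra of $\GG_m$), not anything involving the Deligne--Faltings generator of $\ocM_{\punt^{1/\ell}}$, since at that point all log structures have already been stripped away.
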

\begin{proof}
Consider the distinguished triangle of cotangent complexes of underlying structures
\[
\LL_{\BC\times\ul{\ainfty}/\BC} \to \LL_{\ul{\punt}^{1/\ell}/\BC} \to \LL_{\ul{\punt}^{1/\ell}/\BC\times\ul{\ainfty}} \stackrel{[1]}{\to}
\]
Recall that
$\Omega_{\punt^{1/\ell}/\BC} =
\LL_{\ul{\punt}^{1/\ell}/\BC\times\ul{\ainfty}} $.
Rotating the triangle and taking duals, we have
\[
\LL_{\ul{\ainfty}}^{\vee}|_{\punt^{1/\ell}}[-1] \to \Omega^{\vee}_{\punt^{1/\ell}/\BC} \to \LL^{\vee}_{\ul{\punt}^{1/\ell}/\BC} \stackrel{[1]}{\to}
\]
Note that $\LL_{\ul{\ainfty}} = \cO_{\ainfty}[-1]$, hence $\LL_{\ul{\ainfty}}^{\vee}|_{\punt}[-1] = \cO_{\punt}$. By \eqref{eq:underlying-P-target} and \eqref{eq:g1-t-root}, we have a sequence of \'etale morphisms
\[
\ul{\punt^{1/\ell}} \to \ul{\punt} \to \xinfty\times\BC
\]
hence
$\LL_{\ul{\punt}^{1/\ell}/\BC} \cong
\Omega_{\xinfty}|_{\ul{\punt}^{1/\ell}}$ by the smoothness of
$\xinfty$.
Putting these together, we obtain the exact sequence
\[
0 \to \cO_{\punt^{1/\ell}} \to \Omega^{\vee}_{\punt^{1/\ell}/\BC} \to \Omega^{\vee}_{\xinfty}|_{\ul{\punt}^{1/\ell}} \to 0.
\]
\end{proof}

\subsubsection{Twisted superpotentials in the setup of \S \ref{ss:target}}\label{sss:potential-local-structure}

We now assume that $W$ is a nonzero superpotential for a target
$\punt^{1/\ell}$ as in \S \ref{ss:target}, and that $\tW$ is the
associated twisted superpotential.
The specific setup of this section puts extra constraints on $W$ and
$\tW$ that will be useful in calculations.

\begin{lemma}\label{lem:super-potential-weight}
  $W$ (and thus $\tW$) has order $\ttwist = \frac{r \cdot \ell}{d}$.
\end{lemma}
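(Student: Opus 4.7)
The plan is to pin down $\ttwist$ by passing to the $(r\ell)$-th tensor power of $L := \uomega \otimes \cO(\ttwist \infty^{1/\ell})$, which miraculously descends to a line bundle pulled back from $\xinfty \times \BC$, where the $\CC^*_\omega$-weight condition then forces the answer. Using the defining relation $\cO(\infty^{1/\ell})^\ell \cong \cO(\infty) \cong \lblog \otimes \uspin^{-d}$ from \eqref{eq:l=1-log} together with the spin relation $\uspin^r \cong \lbspin \boxtimes \uomega$ from \eqref{eq:underlying-P-target}, I would compute
\[
L^{r\ell} \cong \uomega^{r\ell} \otimes \cO(\infty)^{\ttwist r} \cong \uomega^{r\ell} \otimes \lblog^{\ttwist r} \otimes \uspin^{-\ttwist r d} \cong \uomega^{r\ell - \ttwist d} \otimes \lblog^{\ttwist r} \otimes \lbspin^{-\ttwist d}.
\]
The crucial feature is that the right-hand side involves only line bundles pulled back from $\xinfty \times \BC$, as all $\uspin$ and $\cO(\infty^{1/\ell})$ contributions cancel.

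Given a nonzero global section $W$, the tensor power $W^{\otimes r\ell}$ is a section of $L^{r\ell}$ that is still nonzero, because $W(p) \neq 0$ at some geometric point $p$ implies $W^{r\ell}(p) \neq 0$ in the fiber. The composition $\pi \colon \infty^{1/\ell} \to \infty \to \xinfty \times \BC$ is a gerbe, being the composition of the $\mu_r$-gerbe arising from the spin root construction and the $\mu_\ell$-gerbe arising from the logarithmic root construction; hence it satisfies $\pi_* \cO_{\infty^{1/\ell}} \cong \cO_{\xinfty \times \BC}$. In particular, for any line bundle $N$ on the base, the projection formula gives $\pi_*(\pi^*N) \cong N$, so $W^{r\ell}$ corresponds to a nonzero section of $\uomega^{r\ell - \ttwist d} \otimes \lblog^{\ttwist r} \otimes \lbspin^{-\ttwist d}$ on $\xinfty \times \BC$.

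Finally, every line bundle on $\xinfty \times \BC$ decomposes as $N \boxtimes \uomega^k$ for some $N \in \mathrm{Pic}(\xinfty)$ and some integer $k$, and a nonzero global section can exist only when $k = 0$, since $\uomega^k$ corresponds to a nontrivial character of $\CC^*_\omega$ whenever $k \neq 0$ and hence has no nonzero invariant sections on $\BC$. Applied to the $\uomega$-power in the descended line bundle, this forces $r\ell - \ttwist d = 0$, hence $\ttwist = r\ell/d$, which gives the claim for $W$; that $\tW$ has the same order is then immediate from Lemma~\ref{lem:super-potential-equivalence}. The main conceptual step is the descent identification $\pi_*\cO = \cO$, which rests on the fact that a composition of root gerbes is again a gerbe (banded by an extension of $\mu_r$ by $\mu_\ell$); this is a mild but essential check, and is in fact where the integrality hypothesis $d \mid r\ell$ implicit in the statement is encoded.
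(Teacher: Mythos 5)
Your argument is correct, and it reaches the same conclusion by a slightly different route than the paper. The paper's proof is a direct weight computation: base-changing along $\spec \bk \to \BC$, the existence of a nonzero $W$ forces the $\CC^*_\omega$-weight of $\CC_\omega \otimes \cO(\ttwist\,\infty_\bk)$ to vanish, and the root diagrams \eqref{eq:underlying-P-target} and \eqref{eq:g1-t-root} give $\cO(\punt^{1/\ell}_\bk)$ the fractional weight $-\frac{d}{r\ell}$, whence $1 - \ttwist\frac{d}{r\ell} = 0$. You implement the same weight-zero constraint but clear denominators first: raising to the $(r\ell)$-th power and using the two defining isomorphisms exhibits $L^{r\ell}$ as the pullback of $\left(\lblog^{\ttwist r}\otimes\lbspin^{-\ttwist d}\right)\boxtimes\uomega^{\,r\ell-\ttwist d}$ from $\xinfty\times\BC$, and descent via $\pi_*\cO_{\infty^{1/\ell}}\cong\cO_{\xinfty\times\BC}$ reduces everything to the elementary fact that $N\boxtimes\uomega^{k}$ has no nonzero sections on $\xinfty\times\BC$ unless $k=0$. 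What this buys is a proof that never invokes fractional $\CC^*_\omega$-weights on the gerbe, at the cost of the extra power-and-descent step. Two small remarks: you do not actually need the composition of the two root gerbes to be a gerbe (the band need not be addressed at all) — the only fact used is $\pi_*\cO_{\infty^{1/\ell}}\cong\cO_{\xinfty\times\BC}$, which follows stage by stage from the $\mu_r$- and $\mu_\ell$-gerbe structures; and the step ``$W\neq 0$ implies $W^{r\ell}$ is nonzero at some geometric point'' uses that $\infty^{1/\ell}$ is reduced, which holds under the standing smoothness assumption on $\xinfty$ in this part of the paper.
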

\begin{proof}
  Recall that $\CC_\omega$ is the $\CC^*_{\omega}$-representation of weight $1$.
  The existence of non-zero $W$ implies that the line bundle
  \begin{equation*}
    \CC_\omega \otimes \cO(\ttwist\infty_\bk)
  \end{equation*}
  on $\infty_\bk$ must have $\CC^*_{\omega}$-weight zero.
  We conclude the lemma by noticing that the
  $\CC^*_{\omega}$-weight of $\cO(\punt_{\bk}^{1/\ell})$ is
  $-\frac{d}{r\ell}$ by   \eqref{eq:underlying-P-target} and \eqref{eq:g1-t-root}.
\end{proof}
\begin{remark}\label{rem:choice-independent-weight}
  By Proposition \ref{prop:one-step-target}, the target
  $\punt^{1/\ell}$ may be constructed using different choices of
  $r, d, \ell$.
  However, the above lemma shows that in the presence of a non-zero
  $\tW$, the positive integer $\ttwist = \frac{r \cdot \ell}{d}$ is
  independent of such choices, and is intrinsic to
  $\punt^{1/\ell} \to \BC$.
\end{remark}
\begin{remark}
  Note that given Lemma~\ref{lem:super-potential-weight} and assuming $\ell = 1$, we may
  simplify the line bundle appearing in
  Definition~\ref{def:superpotential2}:
  \begin{equation*}
    \uomega \otimes \cO(\ttwist \infty) \cong \uomega \otimes \lblog^{\ttwist} \otimes \uspin^{-d\ttwist}
    \cong \lblog^{\ttwist} \otimes \mathbf{L}
  \end{equation*}
  From this, it is not hard to see that any superpotential $W$ is the
  pullback of a section of $\lblog^{\ttwist} \otimes \mathbf{L}$ on
  $\infty_\cX$.
\end{remark}

\subsection{Targets from hybrid model log GLSM}
\label{ss:logGLSM-target}

We next show that the hybrid model log GLSM targets of
\cite[\S~1.3]{CJR21} fit into the construction of \S\ref{intro:target}.

\subsubsection{The input}\label{sss:input}
We first recall the targets of log R-maps in \cite{CJR21}. A {\em hybrid target} is determined by the following data:
\begin{enumerate}
 \item A proper Deligne--Mumford stack $\cX$ with a projective coarse moduli scheme $X$.
 \item A vector bundle $\bE$ over $\cX$ of the form
\begin{equation*}
  \bE = \bigoplus_{i \in \ZZ_{> 0}} \bE_i
\end{equation*}
where $\bE_i$ is a vector bundle with the positive grading $i$. Write $d := \gcd\big(i \ | \ \bE_i \neq 0 \big)$.
 \item A line bundle $\lbspin$ over $\cX$.
 \item A positive integer $r$.
\end{enumerate}

\subsubsection{The $r$-spin structure}
Consider the  Cartesian diagram
\begin{equation}\label{diag:spin-target}
\xymatrix{
\spint \ar[rrr]^\Sp \ar[d] &&& \BG_m \ar[d]^{\nu_r} \\
\BC\times\cX \ar[rrr]^{\LR\boxtimes \lbspin} &&& \BG_m
}
\end{equation}
where $\LR$ is the universal line bundle over $\BC$, $\nu_r$ is the
$r$th power map, the bottom arrow is defined by $\LR\boxtimes\lblog$,
and the top arrow is defined by the universal $r$-th root of
$\LR\boxtimes\lblog$, denoted by $\Sp$.

\subsubsection{The log GLSM targets}
Fix a \emph{twisting choice} $\etwist = \frac{\ell}{d} \in \frac{1}{d}\cdot\ZZ_{>0}$.
We form the weighted projective stack bundle over $\spint$:
\begin{equation}\label{equ:universal-proj}
\ul{\fP_{\ell}} := \ul{\PP}^{\bw}\left(\bigoplus_{i > 0}(\bE^{\vee}_{i,\spint}\otimes\cL_{\spint}^{\otimes i})\oplus \cO_{\spint} \right),
\end{equation}
where $\bw$ is the collection of the weights of the $\GG_m$-action such that the weight on the $i$-th factor is the positive integer
$\etwist\cdot i = \ell \cdot i/d$, while the weight of the last factor $\cO$ is 1.
Here for any vector bundle $V = \oplus_{i=1}^{r} V_i$ with a $\GG_m$-action of weight $\bw$, we use the notation
\begin{equation}\label{equ:def-proj-bundle}
\PP^\bw(V) = \left[\Big(\vb(V)\setminus \zero_V \Big) \Big/ \GG_m \right],
\end{equation}
where $\vb(V)$ is the total space of $V$, and $\zero_V$ is the zero section of $V$. Intuitively, $\ul{\fP_{\ell}}$ compactifies the GLSM which is independent of choices of $\ell$ :
\begin{equation}\label{eq:interior-R-target}
\fP^{\circ} := \bigoplus_{i > 0}(\bE^{\vee}_{i,\spint}\otimes\cL_{\spint}^{\otimes i}).
\end{equation}

The boundary $\infty_{\ell} = \ul{\fP_{\ell}} \setminus \fP^{\circ}$ is the Cartier divisor
defined by the vanishing of the coordinate corresponding to
$\cO_{\spint}$.
We make $\ul{\fP_{\ell}}$ into a log stack $\fP_{\ell}$ by equipping it with the log structure corresponding to the Cartier divisor $\infty_\fP$. The target $\fP_{\ell} \to \BC$ of log R-maps is given by the composition.
We arrive at the following commutative diagram
\begin{equation}\label{equ:hyb-target}
\xymatrix{
\fP_{\ell} \ar[r]^{\fp} & \spint \ar[r]^{\zeta} & \BC.
}
\end{equation}

For simplicity, we write $\fP := \fP_{1}$ and $\punt := \punt^{1/1}$.
By the above construction, we obtain a Cartesian diagram with strict
vertical arrows
\[
\xymatrix{
\fP_{\ell} \ar[rr] \ar[d] && \fP \ar[d]   \\
\cA^{1/\ell} \ar[rr]^{\ell-\mbox{th root}} && \cA
}
\]
which restricts to a Cartesian diagram with strict arrows
\begin{equation}\label{diag:R-target-from -GLSM}
\xymatrix{
 \punt^{1/\ell} \ar[rr] \ar[d]_{\cO(\punt^{1/\ell})} && \punt \ar[d]^{\cO(\punt)} \\
 \ainfty^{1/\ell} \ar[rr]^{\ell-\mbox{th root}} && \ainfty
}
\end{equation}
where we denote $\cO(\punt) := \cO_{\fP}(\punt)|_{\punt}$ and $\cO(\punt^{1/\ell}) := \cO_{\fP_{\ell}}(\punt^{1/\ell})|_{\punt^{1/\ell}}$. This defines
\begin{equation}\label{eq:GLSM-punctured-target}
\punt^{1/\ell} \to \BC
\end{equation}
given by the composition $\punt^{1/\ell} \to \fP_{\ell} \to \BC$.

\subsubsection{The boundary \texorpdfstring{$\infty^{1/\ell} \to \BC$}{} as a target of punctured R-maps}

We next check that \eqref{eq:GLSM-punctured-target} can be constructed as in \S \ref{ss:target}. In view of Proposition \ref{prop:one-step-target} and \eqref{diag:R-target-from -GLSM}, it suffices to consider the $\ell = 1$ case. Note that we have a morphism over $\cX$
\begin{equation}\label{eq:remove-spin}
\ul{\punt} = \PP^{\bw'}\left(\oplus_i(\bE^{\vee}_{i,\fX}\otimes\cL^{i}_{\fX}) \right) \longrightarrow \PP^{\bw'}\left(\oplus_i \bE^{\vee}_{i} \right) =: \xinfty
\end{equation}
where $\bw'$ assigns the weight $i/d$ for the $i$-th component.
By \eqref{diag:spin-target}, he morphism \eqref{eq:remove-spin} fits
into a Cartesian diagram
\[
\xymatrix{
\ul{\punt} \ar[rr] \ar[d]_{\uspin := \cL_{\cX}|_{\ul\punt}} && \xinfty\times\BC \ar[d]^{\lbspin\boxtimes\uomega} \\
\BG_m \ar[rr]^{\nu_r} && \BG_m
}
\]
This gives the desired diagram \eqref{eq:underlying-P-target} in the setup for punctured R-maps.

Denote by $\cO_{\xinfty}(-1)$ the tautological sub-line bundle of the weighted projective bundle $\xinfty \to \cX$. Then in the setup of punctured R-maps, \eqref{eq:l=1-log} is given by \eqref{diag:R-target-from -GLSM} and
\[
\cO(\punt) \cong \cO_{\xinfty}(1)\otimes\uspin^{-d}.
\]

To summarize:
\begin{proposition}
  \label{prop:target-comparison}
The target $\punt^{1/\ell} \to \BC$ as in \eqref{eq:GLSM-punctured-target} can be constructed as in \eqref{eq:punt} using the input data
$
(r, d, \ell, \xinfty, \lbspin, \lblog = \cO_{\xinfty}(1)).
$
\end{proposition}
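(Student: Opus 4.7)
The plan is to verify the two pieces of input data required by \S\ref{intro:target}: the Cartesian diagram \eqref{eq:underlying-P-target} on underlying stacks together with the spin structure $\uspin$, and the rank one Deligne--Faltings log structure specified by a line bundle of the form $\cO(\punt) \cong \lblog \otimes \uspin^{-d}$ via \eqref{eq:l=1-log}. Both of these have essentially been set up in the discussion immediately preceding the proposition; what remains is to assemble them into a proof and address the role of $\ell$.

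First, I would reduce to the case $\ell = 1$ by means of Proposition~\ref{prop:one-step-target}, together with the Cartesian square \eqref{diag:R-target-from -GLSM} which identifies $\punt^{1/\ell}$ as the $\ell$-th root of $\punt = \punt^{1/1}$ along its log boundary. Since the construction of \S\ref{intro:target} ends with exactly the same root construction \eqref{eq:g1-t-root}, it is enough to match $\punt \to \BC$ with the target of \eqref{eq:punt} for $\ell = 1$ and the input data $(r, d, 1, \xinfty, \lbspin, \lblog = \cO_{\xinfty}(1))$.

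Second, for the underlying stack, I would use the morphism \eqref{eq:remove-spin} which exhibits $\ul{\punt}$ as a pullback of the weighted projective bundle $\xinfty = \PP^{\bw'}(\oplus_i \bE_i^{\vee})$ along $\spint \to \cX$. Combining with \eqref{diag:spin-target}, one obtains the Cartesian diagram
\[
\xymatrix{
\ul{\punt} \ar[rr] \ar[d]_{\uspin} && \xinfty \times \BC \ar[d]^{\lbspin \boxtimes \uomega} \\
\BG_m \ar[rr]^{\nu_r} && \BG_m
}
\]
with $\uspin = \cL_{\cX}|_{\ul{\punt}}$, which is exactly \eqref{eq:underlying-P-target}. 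For the log structure, I would observe that by construction the boundary divisor $\infty_{\fP} \subset \fP$ is cut out by the coordinate on the $\cO_{\spint}$ summand in \eqref{equ:universal-proj}, so its associated line bundle restricts on $\punt$ to $\cO_{\xinfty}(1) \otimes \uspin^{-d}$; comparing with \eqref{eq:l=1-log} for the input data with $\lblog = \cO_{\xinfty}(1)$, the two rank one Deligne--Faltings log structures agree.

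The only slightly subtle point, and therefore what I expect to be the main thing to double-check, is the weight bookkeeping: the weighted projective bundle \eqref{equ:universal-proj} uses weights $i/d$ on the $i$-th summand (after dividing by $d$ to minimize), and the line bundle $\cO(\punt)$ inherits the $\uspin^{-d}$ twist precisely because $\cO(\punt)$ is the $\CC^*_{\omega}$-weight $-d/r$ component pulled back from the weighted projective bundle structure. Once this matching of weights and the identification $\lblog = \cO_{\xinfty}(1)$ are in place, the proposition follows by comparing the two descriptions of the target.
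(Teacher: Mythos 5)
Your proposal is correct and follows essentially the same route as the paper: reduce to $\ell = 1$ via Proposition~\ref{prop:one-step-target} and the Cartesian square \eqref{diag:R-target-from -GLSM}, obtain \eqref{eq:underlying-P-target} from \eqref{eq:remove-spin} together with \eqref{diag:spin-target} with $\uspin = \cL_{\cX}|_{\ul\punt}$, and match the Deligne--Faltings log structure through the identification $\cO(\punt) \cong \cO_{\xinfty}(1)\otimes\uspin^{-d}$. The weight bookkeeping you flag is exactly the point the paper also leaves at the level of an assertion, so no gap beyond the paper's own level of detail.
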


\begin{remark}
Given a target from log GLSM \eqref{eq:GLSM-punctured-target}, it follows directly from  \cite[Proposition 2.7]{CJR21} that the stability of punctured R-maps in Definition \ref{def:P-stability} is compatible with the stability of log R-maps.
\end{remark}

\subsubsection{Superpotentials along the boundary}\label{ss:GLSM-superpotential-boundary}
For the log GLSM data  as in \S \ref{sss:input}, a {\em superpotential} is a morphism of stacks $W \colon \fP^{\circ} \to \uomega$ over $\BC$, \cite[\S 3.4.1]{CJR21}. The {\em critical locus} $\crit(W) \subset \fP^{\circ}$ is the closed substack along which the differential $\diff W$ vanishes. The superpotential $W$ is said to have {\em proper critical locus} if $\crit(W) \to \BC$ is proper. Next we observe that $W$ induces  a superpotential of the boundary $\infty$  as in Definition~\ref{def:superpotential2} as follows.

First,  we may view  $W$ as a section of the line bundle $\uomega|_{\fP^{\circ}}$. By
  \cite[Lemma~3.8]{CJR21}, $W$ extends to a rational section of $\uomega|_{\fP}$ with poles along $\punt$ of order precisely $\ttwist$. Thus the restriction $W|_{\punt}$ is a section of $\uomega \otimes \cO(\ttwist \infty)$, hence defines a superpotential over $\punt$ of order $\ttwist$.

\subsubsection{Twisted superpotentials from log GLSM}\label{ss:GLSM-potential}

The superpotential $W \colon \fP^{\circ} \to \uomega$ as above leads to a twisted superpotential $\tW_{\ell}$ as in \cite[\S 3.4.3]{CJR21} for constructing the reduced theory in log GLSM. We next show that $\tW_{\ell}$ restricts directly to a twisted superpotential $\tW_{\ell,\infty}$ of the target $\punt^{1/\ell} \to \BC$ of order $\ttwist = \ell r /d$. This compatibility is needed for computing reduced virtual cycles in \cite{CJR23P}.

First, we recall the twisted superpotential $\tW_{\ell}$ as in \cite[\S 3.4.3]{CJR21}.
By \eqref{diag:change-univ-target} and \eqref{diag:expand-R-target}, we have the Cartesian diagram of log stacks with strict vertical arrows
 \begin{equation}\label{eq:log-R-twisted-potential-source}
 \xymatrix{
 \punt^{1/\ell}_{e,\circ} \ar[rr] \ar[d] && \fP^{e,\circ}_{\ell} \ar[rr] \ar[d] &&  \fP_{\ell} \times \cA^{1/\ell}_{\max} \ar[d] \\
 \infty^{1/\ell}_{\cA^{e,\circ}}  \ar[rr] && \cA^{1/\ell, e,\circ} \ar[rr] && \cA^{1/\ell}\times\cA^{1/\ell}_{\max}.
 }
 \end{equation}

 On the other hand, set $\PP_{\omega}$ to be the log stack associated to the pair $(\PP(\uomega \oplus \cO), \infty_{\omega})$, whose the log structure is given by the smooth divisor $\infty_{\omega}$ defined by the vanishing of the coordinate of the factor $\cO$. Consider the Cartesian diagram
 \begin{equation}\label{eq:log-R-twisted-potential-target}
 \xymatrix{
 \PP_{\omega,\ell}^{e,\circ} \ar[rr] \ar[d] && \PP_{\omega}\times\cA_{\max}^{1/\ell} \ar[d] \\
 \cA^{e,0} \ar[rr] && \cA \times \cA
 }
 \end{equation}
 where the right vertical arrow is the product of the strict morphism $\PP_{\omega} \to \cA$ with the $\ttwist$-th root morphism $\cA_{\max}^{1/\ell} \to \cA$. The superpotential $W$ induces a commutative diagram with horizontal rational maps
\begin{equation}\label{eq:rational-potential}
\xymatrix{
\fP^{e,\circ}_{\ell} \ar@{-->}[rr] \ar[d] &&  \PP_{\omega}^{e,\circ} \ar[d] \\
 \fP_{\ell} \times \cA^{1/\ell}_{\max} \ar@{-->}[rr]^{W\times id} && \PP_{\omega}\times\cA_{\max}^{1/\ell}
}
\end{equation}

 By \cite[Lemma 3.9]{CJR21}, there is a morphism
 \begin{equation}\label{eq:twisted-potential-contraction}
 \mathfrak{c} \colon  \PP_{\omega}^{e,\circ} \longrightarrow \uomega\boxtimes\cO_{\cA^{1/\ell}_{\max}}(\ttwist \Delta_{\max}^{1/\ell})
 \end{equation}
 that contracts the proper transform of $\PP_{\omega}\times \Delta_{\max}^{1/\ell}$. The composition
 \begin{equation}\label{eq:log-R-twisted-potential}
 \xymatrix{
 \tW_{\ell} \colon \fP^{e,\circ}_{\ell} \ar@{-->}[r] & \PP_{\omega}^{e,\circ} \ar[r]^-{\mathfrak{c}} &  \uomega\boxtimes\cO_{\cA^{1/\ell}_{\max}}(\ttwist \Delta_{\max}^{1/\ell})
 }
 \end{equation}
 is a morphism over $\BC\times\cA^{1/\ell}_{\max}$ by \cite[Proposition 3.10]{CJR21}, and is called the {\em twisted superpotential}. The twisted superpotential $\tW_{\ell,\infty}$ of $\punt^{1/\ell}$ is defined to be the restriction
 \begin{equation}\label{eq:restricted-twisted-potential}
 \tW_{\ell,\infty} := \tW_{\ell}|_{\punt^{1/\ell}_{e,\circ}} \colon \punt^{1/\ell}_{e,\circ} \longrightarrow \uomega\boxtimes\cO_{\Delta_{\max}^{1/\ell}}(\ttwist \Delta_{\max}^{1/\ell}).
 \end{equation}

 \begin{proposition}\label{prop:GLSM-induced-superpotential}
Notations as above, we have
 \begin{enumerate}
   \item If $W$ has proper critical locus, then $\tW_{\ell,\infty}$ has proper critical locus.
  \item  $\tW_{\ell,\infty}$ is the twisted superpotential \eqref{eq:induced-superpotential} induced by $\tW_{1,\infty}$.
 \end{enumerate}
 \end{proposition}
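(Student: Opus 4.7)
The plan is to prove part (2) first by naturality of the constructions, and then use it to reduce part (1) to the case $\ell = 1$, where the properness of $\crit(\tW_1)$ established in \cite{CJR21} can be exploited.

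For part (2), I would observe that the Cartesian diagrams \eqref{eq:log-R-twisted-potential-source} and \eqref{eq:log-R-twisted-potential-target}, the rational map $W \times \id$ in \eqref{eq:rational-potential}, and the contraction morphism $\mathfrak{c}$ from \eqref{eq:twisted-potential-contraction} are all natural with respect to the $\ell$-th root construction $\cA_{\max}^{1/\ell} \to \cA_{\max}$. First I would set up the commutative diagram comparing the $\ell$ and $\ell = 1$ setups via the strict morphisms $\fP_\ell \to \fP$ and $\cA^{1/\ell}_{\max} \to \cA_{\max}$, both arising from the $\ell$-th root construction. By \cite[Lemma 3.9]{CJR21}, the contractions $\mathfrak{c}_\ell$ and $\mathfrak{c}_1$ are compatible via the canonical identification $\cO_{\cA_{\max}}(\ttwist_1 \Delta_{\max})|_{\cA^{1/\ell}_{\max}} \cong \cO_{\cA^{1/\ell}_{\max}}(\ttwist \Delta_{\max}^{1/\ell})$, where $\ttwist_1 = r/d$ is the order of $\tW_{1,\infty}$ and $\ttwist = \ell \ttwist_1$. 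Restricting this compatibility to $\punt^{1/\ell}_{e,\circ}$ and comparing with the definition \eqref{eq:induced-superpotential} of the induced twisted superpotential yields the claimed identification.

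For part (1), by part (2) together with Corollary~\ref{lem:change-l-proper-cricial-locus}(2), it suffices to treat the case $\ell = 1$. I would then invoke Proposition~\ref{prop:proper-critical-locus} to reduce to showing that the superpotential $W|_\infty$ of $\punt$ described in \S\ref{ss:GLSM-superpotential-boundary} is transverse. Working locally, I would choose an \'etale chart of $\fP$ where $\infty$ is cut out by a coordinate $y$ and $\uomega$ is trivialized; by \cite[Lemma 3.8]{CJR21}, $W$ takes the local form $W = w/y^\ttwist$ for a regular function $w$, with $W|_\infty$ corresponding to $w|_{y=0}$ under the trivializations. A hypothetical critical point of $w|_{y=0}$ would yield, via the implicit function theorem applied to the critical equations $\partial_i w = 0$ and $y \partial_y w = \ttwist w$, a family of critical points of $W$ in $\fP^\circ$ accumulating on $\infty$, contradicting the properness of $\crit(W) \to \BC$.

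The main obstacle will be this last step: ensuring rigorously that a critical point of $w|_{y=0}$ propagates to an accumulating family of critical points of $W$ in $\fP^\circ$. This is a delicate local statement about the differential structure of $W$ near $\infty$, and may require either additional non-degeneracy input about $W$ along the fiber direction of $\fp \colon \fP \to \spint$, or an alternative approach that directly compares $\crit(\tW_1)$ and $\crit(\tW_{1,\infty})$ via the strict closed embedding $\punt_{e,\circ} \hookrightarrow \fP^{e,\circ}_1$ induced by $\Delta_{\max} \hookrightarrow \cA_{\max}$, and then deduces properness of the latter from that of the former.
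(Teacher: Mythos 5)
Your treatment of part (2) is essentially the paper's own argument: both pull back the Cartesian diagrams \eqref{eq:log-R-twisted-potential-source}, \eqref{eq:log-R-twisted-potential-target} and the contraction \eqref{eq:twisted-potential-contraction} along the $\ell$-th root morphisms, use the identification $\uomega\boxtimes\cO_{\cA^{1/\ell}_{\max}}(\ttwist\Delta^{1/\ell}_{\max})\cong \uomega\boxtimes\cO_{\cA_{\max}}(\tfrac rd\Delta_{\max})|_{\BC\times\cA^{1/\ell}_{\max}}$, and read off \eqref{eq:induced-superpotential} from the resulting commutative diagram; no issues there.

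Part (1), however, has a genuine gap, and it is exactly where you flagged it. The paper does not reduce to transversality of $W|_{\punt}$ at all: it invokes \cite[Proposition~3.12]{CJR21}, which says that properness of $\crit(W)\to\BC$ implies that the critical locus of the full log GLSM twisted superpotential $\tW_\ell$ is proper over $\BC\times\cA^{1/\ell}_{\max}$, and then (1) follows immediately because $\tW_{\ell,\infty}$ is by definition the restriction \eqref{eq:restricted-twisted-potential} of $\tW_\ell$ to the strict closed substack $\punt^{1/\ell}_{e,\circ}\subset\fP^{e,\circ}_\ell$, so its critical locus sits inside a proper one. Your route instead needs the implication ``$\crit(W)\to\BC$ proper $\Rightarrow$ $W|_{\punt}$ transverse,'' and the local argument you sketch does not establish it: a critical point of $w|_{y=0}$ gives vanishing of the derivatives of $w$ only along the divisor, and to solve the system $\partial_i w=0$, $y\partial_y w=\ttwist w$ for nearby points with $y\neq 0$ the implicit function theorem requires a nondegeneracy of the Jacobian that is not available from the hypotheses. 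The actual mechanism behind this boundary statement is the $\CC^*_\omega$-equivariant structure near $\infty$ (compare the equivariance argument in Proposition~\ref{prop:proper-critical-locus} of this paper, and the proof of \cite[Proposition~3.12]{CJR21}), which your sketch does not use; in effect you are proposing to reprove that cited result without its key ingredient. A further (smaller) point: even granting that implication, your reduction via Proposition~\ref{prop:proper-critical-locus} tacitly uses that $\tW_{1,\infty}$ is the twisted superpotential associated to $W|_{\punt}$ under Lemma~\ref{lem:super-potential-equivalence}; this compatibility is true but is only recorded as a remark left to the reader after the paper's proof, so your argument would need to verify it explicitly. The cleanest fix is to follow the paper: quote \cite[Proposition~3.12]{CJR21} for properness of $\crit(\tW_\ell)$ over $\BC\times\cA^{1/\ell}_{\max}$ and conclude by restriction; your alternative of reducing to $\ell=1$ via (2) and Corollary~\ref{lem:change-l-proper-cricial-locus} is then unnecessary.
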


 \begin{proof}
By \cite[Proposition 3.12]{CJR21}, $W$ has proper critical locus implies the critical locus of $\tW$ is proper over $\BC\times\cA^{1/\ell}_{\max}$. Thus (1) follows from the restriction construction \eqref{eq:restricted-twisted-potential}.

To verify (2), note that we have a commutative diagram
\[
\xymatrix{
 \infty^{1/\ell}_{\cA^{e,\circ}}  \ar[rr] \ar[d] && \cA^{1/\ell, e,\circ}  \ar[rr] \ar[d] && \cA^{1/\ell}\times\cA^{1/\ell}_{\max}  \ar[d] \\
 \infty_{\cA^{e,\circ}}  \ar[rr] && \cA^{e,\circ}  \ar[rr] && \cA \times \cA_{\max}
}
\]
where all vertical arrows are log \'etale. Pulling back along  \eqref{eq:log-R-twisted-potential-source}, we obtain a commutative diagram
\[
\xymatrix{
 \punt^{1/\ell}_{e,\circ} \ar[rr] \ar[d] && \fP^{e,\circ}_{\ell} \ar[rr] \ar[d] &&  \fP_{\ell} \times \cA^{1/\ell}_{\max} \ar[d] \\
  \punt_{e,\circ} \ar[rr] && \fP^{e,\circ} \ar[rr] &&  \fP \times \cA_{\max}  \\
}
\]
again with log \'etale vertical arrows, where the left vertical arrow is given by the corresponding arrow in \eqref{diag:universal-exp-change-ell}.

On the other hand, consider the commutative triangle
\[
\xymatrix{
\PP_{\omega}\times\cA^{1/\ell}_{\max} \ar[rr] \ar[rd] && \PP_{\omega}\times\cA_{\max} \ar[ld] \\
&\cA\times\cA&
}
\]
with the two sides as in \eqref{eq:log-R-twisted-potential-target}. Pulling back along the bottom arrow of \eqref{eq:log-R-twisted-potential-target}, we obtain the top arrow fitting in the following commutative diagram
\[
\xymatrix{
\PP^{e,\circ}_{\omega,\ell} \ar[rr] \ar[d] && \PP^{e,\circ}_{\omega,1} \ar[d] \\
\uomega\boxtimes\cO_{\cA^{1/\ell}_{\max}}(\ttwist \Delta_{\max}^{1/\ell}) \ar[rr] && \uomega\boxtimes\cO_{\cA_{\max}}(\frac{r}{d} \cdot \Delta_{\max}) \\
}
\]
where the two vertical arrows are given by \eqref{eq:twisted-potential-contraction}, and the bottom arrow is given by the isomorphism of line bundles
\[
\uomega\boxtimes\cO_{\cA^{1/\ell}_{\max}}(\ttwist \Delta_{\max}^{1/\ell}) \cong \uomega\boxtimes\cO_{\cA_{\max}}(\frac{r}{d} \cdot \Delta_{\max})\big|_{\BC\times\cA^{1/\ell}_{\max}}.
\]
We have arrived at the following commutative diagram
\begin{equation}\label{eq:GLSM-potential-change-ell}
\xymatrix{
\fP^{e,\circ}_{\ell} \ar[r] \ar@/^1.5pc/[rrr]^{\tW_{\ell}} \ar[d] & \fP_{\ell}\times\cA^{1/\ell}_{\max} \ar@{-->}[r] \ar[d] & \PP^{e,\circ}_{\omega,\ell} \ar[r] \ar[d] & \uomega\boxtimes\cO_{\cA^{1/\ell}_{\max}}(\ttwist \Delta_{\max}^{1/\ell}) \ar[d] \ar[r] & \BC\times\cA^{1/\ell}_{\max} \ar[d] \\
\fP^{e,\circ} \ar[r]  \ar@/_1.5pc/[rrr]_{\tW}  & \fP\times\cA^{1/\ell}_{\max} \ar@{-->}[r]  & \PP^{e,\circ}_{\omega} \ar[r] & \uomega\boxtimes\cO_{\cA_{\max}}(\frac{r}{d} \Delta_{\max}) \ar[r] &  \BC\times\cA_{\max}.
}
\end{equation}
where the two dashed arrows are the  rational maps induced by $W$ as in \eqref{eq:rational-potential}. Taking further restriction of \eqref{eq:GLSM-potential-change-ell}, we obtain
\[
\xymatrix{
\punt^{1/\ell}_{e,\circ} \ar[rr]^-{\tW_{\ell,\infty}} \ar[d] && \uomega\boxtimes\cO_{\Delta^{1/\ell}_{\max}}(\ttwist \Delta_{\max}^{1/\ell}) \ar[d] \ar[rr] && \BC\times\Delta^{1/\ell}_{\max} \ar[d] \\
\punt_{e,\circ} \ar[rr]^-{\tW_{\infty}} && \uomega\boxtimes\cO_{\Delta^{1/\ell}_{\max}}(\ttwist \Delta_{\max}^{1/\ell}) \ar[rr] && \BC\times\Delta_{\max}
}
\]
where the left square recovers \eqref{eq:induced-superpotential}. This proves (2).
\end{proof}

\begin{remark}
Note that the twisted superpotential $\tW_{\ell,\infty}$ is induced by the superpotential $W|_{\punt}$ in \S \ref{ss:GLSM-superpotential-boundary} via Lemma \ref{lem:super-potential-equivalence}.
Since we will directly work with  the twisted superpotential $\tW_{\ell,\infty}$, we leave the correspondence between $\tW_{\ell,\infty}$ and $W|_{\punt}$  to the reader. This can be checked directly following the above discussions.
\end{remark}


\section{Axioms of punctured R-maps}
\label{sec:remove-unit}

Starting from this section, we will establish several axioms analogous to the ones in usual
Gromov--Witten theory related to removing markings.

\subsection{The statements}\label{ss:statement-remove-unit}

Fix the discrete data $(g, \beta, \varsigma)$ for a punctured R-map to
the target $\infty$ as in \S \ref{intro:target}.
Consider the new discrete data $(g, \beta, \varsigma + \mathbf{1})$
obtained by adding a unit sector $\mathbf{1}$ as in \S
\ref{sss:unit-sector}.
Let $n$ be the number of markings in $\varsigma$.
We will use $p_{n+1}$ to denote the $(n+1)$-st unit marking. For
simplicity, we introduce notations
\begin{equation}\label{eq:short-hand-R-stacks}
\SH_{\varsigma + \mathbf 1} := \SH_{g, \varsigma + \mathbf 1}(\infty, \beta), \ \  \SH_{\varsigma} := \SH_{g, \varsigma}(\infty, \beta).
\end{equation}
Denote by $f_{\varsigma} \colon \pC_{\varsigma} \to \infty$ the
universal punctured map over $\SH_{\varsigma} $, and by
$\mathrm{S} \colon \widetilde{\cC}_{\varsigma} \to \pC_{\varsigma}$
the saturation morphism. Similarly, we introduce
\[
\UH_{\varsigma + \mathbf 1} := \UH_{g, \varsigma + \mathbf 1}(\infty, \beta), \ \  \UH_{\varsigma} := \UH_{g, \varsigma}(\infty, \beta), \ \ f^{\curlywedge}_{\varsigma} \colon \cC^{\curlywedge,\circ}_{\varsigma} \to \infty,
\]
 and the saturation
$\mathrm{S} \colon \widetilde{\cC}^{\curlywedge}_{\varsigma} \to \cC^{\curlywedge,\circ}_{\varsigma}$.

Note that the saturation morphism $\mathrm{S}$ is finite, and is an
isomorphism away from the punctured markings.
Furthermore, along the punctured markings, the source and target of
$\mathrm{S}$ differ only by nilpotents, see
\cite[Proposition~5.5]{ACGS20P}.

We will use $\SH^{\bullet}$ to denote either $\SH$ or $\UH$.
Similarly, in
\[
 \cC^{\bullet,\circ}, \ \ f^{\bullet}, \ \ \widetilde{\cC}^{\bullet}_{\ddata},
\]
the symbol $\bullet$ is either empty or $\curlywedge$.
We can now describe the universal curve:

\begin{proposition}\label{prop:moduli-remove-unit}
  There is a natural isomorphism
  $\SH_{\varsigma + \mathbf{1}} \cong \widetilde{\cC}_{\varsigma}$
  inducing
  a natural isomorphism
  $\UH_{\varsigma + \mathbf{1}} \cong
  \widetilde{\cC}^{\curlywedge}_{\varsigma}$, such that the
  composition
\[
\bF_{\mathbf{1}} \colon \SH^{\bullet}_{\varsigma + \mathbf{1}}  \cong \widetilde{\cC}^{\bullet}_{\varsigma} \stackrel{\mathrm{S}}{\longrightarrow} \cC^{\bullet,\circ}_{\varsigma} \stackrel{\pi_{\varsigma}}{\longrightarrow} \SH^{\bullet}_{\varsigma}.
\]
on the level of coarse stable maps of \eqref{eq:coarse-map}, is obtained by removing the marking $p_{n+1}$ and then taking the  stabilization of the usual stable map.
\end{proposition}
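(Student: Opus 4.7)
The plan is to exhibit $\SH^{\bullet}_{\varsigma+\mathbf{1}}$ and $\widetilde{\cC}^{\bullet}_\varsigma$ as representing the same functor over $\SH^{\bullet}_\varsigma$ via the operations of removing and adding a unit marking. Starting from a stable punctured R-map $f\colon \pC \to \infty$ in $\SH^{\bullet}_{\varsigma + \mathbf 1}(S)$, the $(n+1)$-st marking $p_{n+1}$ factors through the unit sector $\gamma \subset \ocI\punt_{\bk}$ of \S\ref{sss:unit-sector} and carries contact order $-d$. I will show that the compatibility between this contact order and the age $\age\uspin|_\gamma = 1/r$ forces the log and map data to extend canonically across $p_{n+1}$: removing the $\mu_r$-gerbe structure along $p_{n+1}$ and the accompanying puncture produces a pre-stable punctured R-map with discrete data $(g, \beta, \varsigma)$. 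Contracting any resulting unstable rational bubble yields an object of $\SH^{\bullet}_\varsigma(S)$, and the image of the original $p_{n+1}$ determines a section $S \to \widetilde{\cC}^{\bullet}_\varsigma$; passing to the saturation is needed because this section may land at a point whose log structure is only fine (coming from a puncture).

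Conversely, given $[f\colon \pC \to \infty] \in \SH^{\bullet}_\varsigma(S)$ together with a section $\sigma\colon S \to \widetilde{\cC}^{\bullet}_\varsigma$, I will construct a punctured R-map with an extra unit marking at $\sigma$. When $\sigma$ meets only the smooth locus away from existing markings and nodes, the recipe is to take the $r$-th root stack at $\sigma$ to create a $\mu_r$-gerbe, enlarge the log structure by a puncture of contact order $-d$, and extend the map using the factorization of $f$ through $\punt_\bk \to \punt$ in an \'etale neighborhood of $\sigma$. When $\sigma$ meets a special point, first perform a standard log-geometric modification to bubble off a rational component containing $\sigma$, then apply the previous construction; stability in $\SH^{\bullet}_{\varsigma+\mathbf 1}$ is automatic since the inserted bubble acquires three special points.

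Mutual inverseness of the two constructions is by direct inspection, which gives the isomorphism $\SH^{\bullet}_{\varsigma+\mathbf 1} \cong \widetilde{\cC}^{\bullet}_\varsigma$. The statement for $\bullet = \curlywedge$ will follow because adding or removing a unit marking does not change the set of fiberwise degeneracies of the generic irreducible components: any introduced or contracted rational bubble inherits the degeneracy of the component it meets, so the uniform maximal degeneracy is preserved. The coarse description of $\bF_{\mathbf 1}$ is then immediate from the construction, since $\mathrm S$ is an isomorphism on coarse moduli and $\pi_\varsigma$ is the universal curve projection, whose composition realizes the classical forget-and-stabilize operation on stable maps.

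The main obstacle I anticipate is the backward direction: verifying that the newly built data genuinely defines a punctured R-map in $\SH^{\bullet}_{\varsigma+\mathbf{1}}(S)$ with discrete data precisely $\varsigma+\mathbf 1$. This will require a careful local computation at the new marking combining the universal property of the $r$-th root stack, the characterization of the unit sector via $\age\uspin|_\gamma = 1/r$, and the behavior of log structures under saturation when $\sigma$ hits a punctured marking --- which is exactly why one must work with $\widetilde{\cC}^\bullet_\varsigma$ rather than $\cC^{\bullet,\circ}_\varsigma$.
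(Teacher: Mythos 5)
Your overall strategy (two mutually inverse constructions, adding and removing the unit marking, checked on the level of families over an arbitrary fs base) is the same as the paper's, but there is a genuine gap at the central point: you treat the operation as a purely log/orbifold-local modification at $p_{n+1}$ (root stack, puncture of order $-d$, then ``extend the map''), leaving the map to $\punt$ essentially unchanged. This cannot work as stated, because a punctured R-map is a triangle over $\BC$ whose bottom arrow is $\omega^{\log}_{\cC/S}$, and $\omega^{\log}$ changes by $\cO(\pm r\,p_{n+1})$ when the marking is added or removed. Hence ``removing the $\mu_r$-gerbe structure and the accompanying puncture'' does \emph{not} produce a punctured R-map with discrete data $\varsigma$: the spin/R-structure must be retwisted along the divisor $p_{n+1}$ at the same time as the log structure, and one must verify that after this twist one still lands in the \emph{same} target $\punt$. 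This is exactly where the special numerics of the unit sector ($c=-d$, $\age\uspin|_{\gamma}=1/r$) enter, and it is the content of the paper's twisted-target construction: the morphism $\mathrm{T}\colon(\punt\times\cA)^{\circ}\to\punt$ of Proposition~\ref{prop:target-add-marking}, with $\mathrm{T}^*\uomega=\uomega\otimes\cL_{\cA}^{r}$ and $\mathrm{T}^*\uspin=\uspin\otimes\cL_{\cA}$, together with Corollary~\ref{cor:target-log-R} and the diagram \eqref{diag:add-R-marking}. Your proposal gestures at ``the compatibility between the contact order and the age forces the data to extend canonically'' but supplies no mechanism; without the twist the key commutativity over $\BC$ fails, and the claimed pre-stable R-map with data $(g,\beta,\varsigma)$ simply does not exist. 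A second, related omission is the log-geometric content of the bubbling/contraction steps for punctured curves (the paper's Lemma~\ref{lem:punctured-curve-add-marking} and the push-forward log structure in Steps~3 of Propositions~\ref{prop:universal-adding-marking} and \ref{prop:stable-prmap-remove-unit}); you correctly flag this as the hard local computation but it is not a routine ``standard modification,'' since one must produce the puncturing on the bubbled curve and check pre-stability.

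Two smaller points. First, your argument for the $\curlywedge$-statement is inaccurate: a contracted or inserted rational bubble carrying the puncture does \emph{not} inherit the degeneracy of the component it meets; its degeneracy differs by a multiple of the marking section, and in the relevant case it is strictly smaller (this is where negativity of the contact orders in $\varsigma$ is used). What is true, by the case analysis of Lemma~\ref{lem:max-degeneracy-add-marking}, is that the \emph{maximal} degeneracy is unchanged, which is what preserves the $\curlywedge$-configuration. Second, if you pursue the families-over-$S$ formulation, be explicit that the asserted identification is one of \emph{log} stacks: a $T$-point of $\widetilde{\cC}^{\bullet}_{\varsigma}$ is a log morphism, and matching it with an object of $\SH^{\bullet}_{\varsigma+\mathbf 1}(T)$ requires controlling the base log structure of the newly built family (the paper encodes this in the strictness and \'etaleness of $\widetilde{\fC}_{\varsigma'}\to\fM_{\varsigma'+1^{\rig}}$, Proposition~\ref{prop:universal-adding-marking}, via a basicness check).
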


\begin{remark}
As categories over log schemes, $\UH$ can be viewed as a full subcategory of $\SH$ consisting of objects with uniform maximal degeneracies. The induced isomorphism $\UH_{\varsigma + \mathbf{1}} \cong \widetilde{\cC}^{\curlywedge}_{\varsigma}$ is in fact the restriction of the canonical one $\SH_{\varsigma + \mathbf{1}} \cong \widetilde{\cC}_{\varsigma}$ from the category point of view.
\end{remark}

This will be proved in Proposition~\ref{prop:stable-prmap-remove-unit}
and Proposition~\ref{prop:add-unit-sector-umd}.
Since the unit marking has negative contact order, adding or removing
it results in changes of both R- and log structures,
compared to the case of usual stable maps.
Consequently, the proof splits into two major steps.
First, in \S\ref{ss:twist-target}, \S\ref{ss:log-curve-add-marking}
and \S\ref{ss:universal-pmap-add-marking}, we focus on the logarithmic
side and consider adding markings in case of punctured maps to the
universal target $\ainfty$.
Readers are advised to skip these three technical subsections first,
and refer to them as needed.
Second, in \S\ref{ss:stable-PRmap-add-marking} and
\S\ref{ss:stable-PRmap-add-marking-max} we discuss adding unit
markings in the case of punctured R-maps.

Assuming that $2g - 2 + n > 0$, denote by $\oM_{g,n}$ the
Deligne--Mumford moduli stack of stable curves, and
$\pi \colon \oM_{g,n+1} \to \oM_{g,n}$ its universal curve.
We may view $\pi$ as a morphism of log stacks with the canonical log
structures given by the underlying stable curves.
Proposition~\eqref{prop:moduli-remove-unit} implies the following
commutative diagram
\begin{equation}\label{diag:forget-unit-stable-curve}
\xymatrix{
\SH^{\bullet}_{\ddata+\mathbf{1}} \ar[rr]^{\mathrm{p}_{\ddata+\mathbf{1}}} \ar[d]_{\bF_{\mathbf{1}}} && \oM_{g,n+1} \ar[d]^{\pi} \\
\SH^{\bullet}_{\ddata} \ar[rr]^{\mathrm{p}_{\ddata}} && \oM_{g,n}
}
\end{equation}
where the horizontal arrows take the stabilization of the coarse
source curves.
We also note that the target of $\ev_{n+1}$ is $\xinfty$ by the
definition of unit sectors in \S\ref{sss:unit-sector}.

On the cycle level, we show that both the canonical and reduced theory
of punctured R-maps satisfy similar axioms to the ones of usual
Gromov--Witten theory (\cite[(2.6)]{KoMa97}):

\begin{theorem}\label{thm:remove-unit}
With the above assumptions, we have
\begin{enumerate}
\item $\mathrm{S}_{*}[\SH^{\bullet}_{\varsigma + \mathbf{1}}]^{\star} = \pi_{\varsigma}^*[\SH^{\bullet}_{\varsigma}]^{\star}$.

\item $\bF_{\mathbf{1},*}\left(\ev_{n+1}^*D \cap [\SH^{\bullet}_{\varsigma + \mathbf{1}}]^{\star} \right) = \int_{\beta}D \cdot [\SH^{\bullet}_{\varsigma}]^{\star}$ for a divisor $D \in CH^1(\xinfty)$.

\item If further assuming $2g - 2 + n > 0$, then
  $\mathrm{p}_{\ddata+\mathbf{1},*}
  [\SH^{\bullet}_{\ddata+\mathbf{1}}]^{\star} =
  \pi^{*}\mathrm{p}_{\ddata,*} [\SH^{\bullet}_{\ddata}]^{\star}.$

\end{enumerate}
where $[-]^\star$ can be either $[-]^\vir$, or $[-]^\red$ when
$\bullet = \curlywedge$ and the reduced theory as in Theorem
\ref{thm:red-POT} is defined.
\end{theorem}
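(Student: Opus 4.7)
The plan is to reduce all three parts to the identification of Proposition~\ref{prop:moduli-remove-unit} combined with a single compatibility of perfect obstruction theories. Since the saturation $\mathrm{S}\colon \widetilde{\cC}^\bullet_{\ddata} \to \cC^{\bullet,\circ}_{\ddata}$ is a universal homeomorphism (being an isomorphism away from punctured markings and differing only by nilpotents along them), $\mathrm{S}_*$ acts as the identity on Chow groups; hence (1) is equivalent to the statement that, under the identification $\SH^\bullet_{\ddata+\mathbf 1} \cong \widetilde{\cC}^\bullet_{\ddata}$, the virtual cycle on the left coincides with $\pi_{\ddata}^*[\SH^\bullet_\ddata]^\star$.

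For (1), I will proceed by a direct perfect obstruction theory comparison. The canonical POT of $\SH^\bullet_\ddata \to \fM_\ddata$ is $\EE_\ddata = R\pi_{\ddata,*}(f_\ddata^*\Omega^\vee_{\infty/\BC})$, and the universal family over $\widetilde{\cC}^\bullet_{\ddata}$ is obtained by pulling back the universal punctured curve along $\pi_\ddata$, adjoining the unit puncture at the tautological section, and saturating. The key observation is that the unit sector is engineered with contact order $-d$ precisely so that the restriction of $f^*\Omega^\vee_{\infty/\BC}$ at the new marking is trivialized and the relative dualizing sheaf gains exactly the compensating twist; a short exact sequence relating $\omega^{\log}$ before and after adding the extra puncture then shows that the pushed-forward obstruction complex agrees with $\pi_\ddata^*\EE_\ddata$. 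For $\bullet=\curlywedge$ and the reduced POT, one further checks that the extremal expansion $f_{e_{\max}}$ of Corollary~\ref{cor:exp-R-map} and the cosection \eqref{eq:relative-cosection} are pulled back along $\pi_\ddata$, which follows because neither the maximal degeneracy $e_{\max}$ nor the pairing against $\diff \tW$ involves the new unit marking (all unit markings are mapped into the zero section by Corollary~\ref{cor:exp-R-map}(3)). The virtual pullback formula of~\cite{Ma12} then yields (1).

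For (2), the factorization $\bF_{\mathbf 1} = \pi_\ddata \circ \mathrm{S}$ together with the identification of $\ev_{n+1}$ as the composition $\widetilde{\cC}^\bullet_{\ddata} \stackrel{\mathrm{S}}{\to} \cC^{\bullet,\circ}_{\ddata} \stackrel{f^\bullet_\ddata}{\to} \infty \to \xinfty$, combined with (1) and the projection formula, yields
\[
\bF_{\mathbf{1},*}\!\left(\ev_{n+1}^*D \cap [\SH^\bullet_{\ddata+\mathbf 1}]^\star\right) = \pi_{\ddata,*}(f^\bullet_\ddata)^*D \cap [\SH^\bullet_\ddata]^\star,
\]
and the first factor equals $\int_\beta D$ by Definition~\ref{def:P-curve-class}. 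For (3), form the Cartesian square
\[
\xymatrix{
\SH' := \SH^\bullet_\ddata \times_{\oM_{g,n}} \oM_{g,n+1} \ar[r]^-{p'} \ar[d]^{\pi'} & \oM_{g,n+1} \ar[d]^{\pi} \\
\SH^\bullet_\ddata \ar[r]^-{\mathrm{p}_\ddata} & \oM_{g,n}
}
\]
so that flat base change yields $\pi^*\mathrm{p}_{\ddata,*} = p'_* \pi'^*$. The morphism $\mathrm{p}_{\ddata+\mathbf 1}$ factors as $\SH^\bullet_{\ddata+\mathbf 1} \cong \widetilde{\cC}^\bullet_{\ddata} \stackrel{\mathrm{S}}{\to} \cC^{\bullet,\circ}_{\ddata} \stackrel{s}{\to} \SH' \stackrel{p'}{\to} \oM_{g,n+1}$, where $s$ is the proper birational morphism contracting those rational components of the source that become unstable once the R-map is forgotten; hence $s_* s^* = \mathrm{id}$ on any pulled-back class, and the projection formula together with (1) gives (3).

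The main difficulty lies in the POT comparison of Step~1. While the underlying identification is formal once Proposition~\ref{prop:moduli-remove-unit} is in hand, matching the complexes—particularly the extremal expansion and the cosection in the reduced case—requires careful local analysis at the new unit puncture. The choice $c_{n+1} = -d$ in the definition of the unit sector is arranged exactly so that the new puncture contributes trivially to both the obstruction sheaf and the cosection; verifying this cleanly and simultaneously for the canonical and reduced POTs across both $\bullet = \emptyset$ and $\bullet = \curlywedge$ is where the main technical work is concentrated. Once this step is carried out, (2) and (3) are formal consequences via the projection formula and flat base change.
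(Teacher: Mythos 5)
Your overall skeleton is sound and, for parts (2) and (3), essentially coincides with the paper's argument: (2) is indeed deduced from (1) by the projection formula and the fiberwise degree computation, and your $\SH' = \SH^\bullet_\ddata \times_{\oM_{g,n}} \oM_{g,n+1}$ is exactly the stack $\cZ$ the paper uses in \S\ref{ss:unit-no-psi-min}, with your ``push forward the flat pullback along the contraction'' step playing the role of the paper's second application of virtual push-forward; replacing the Costello--Manolache push-forward along $\mathrm{S}$ by the observation that the saturation is finite and an isomorphism on reductions is also acceptable. The problem is that the entire content of (1) --- and hence of the theorem --- is the identification of obstruction theories, $\EE_{\SH^\bullet_{\varsigma+\mathbf{1}}/\fM_{\varsigma'+\mathbf{1}'}} \cong \EE_{\SH^\bullet_{\varsigma}/\fM_{\varsigma'}}\big|_{\SH^\bullet_{\varsigma+\mathbf{1}}}$ (see \eqref{equ:add-unit-canonical-POT}) together with its reduced analogue, and your proposal asserts this rather than proving it; moreover the mechanism you sketch would not deliver it. The obstruction complex is $\rd\pi_*\big(f^*\Omega^\vee_{\punt/\BC}\big)$, in which $\omega^{\log}$ does not appear, so ``a short exact sequence relating $\omega^{\log}$ before and after adding the puncture'' plus triviality of $f^*\Omega^\vee_{\punt/\BC}$ at the new marking is not the issue. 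The issue is that the universal R-map $f_{\varsigma+\mathbf{1}}$ over $\SH_{\varsigma+\mathbf{1}}\cong\widetilde{\cC}_\varsigma$ is \emph{not} the pullback of $f_\varsigma$ composed with the contraction: adding the unit marking changes $\omega^{\log}$ by $\cO(r\,p_{n+1})$, hence changes the spin/R-structure, and the new source curve may acquire a contracted rational bridge when $p_{n+1}$ meets a special point. The paper resolves this by the twisted target $\mathrm{T}\colon(\punt\times\cA)^{\circ}\to\punt$ of Proposition~\ref{prop:target-add-marking} and Corollary~\ref{cor:target-log-R}, with $\mathrm{T}^*\uomega=\uomega\otimes\cL_{\cA}^{r}$ and $\mathrm{T}^*\uspin=\uspin\otimes\cL_{\cA}$, through which both universal maps factor; this is what produces the global identification \eqref{eq:compare-pull-back-log-tangent} of the two pullbacks of $\Omega^\vee_{\punt/\BC}$, after which one still has to check that $\rd\pi_*$ is unchanged across the contracted bridges (the bundle is pulled back along $\widetilde{\mathrm{St}}^{\circ}$, so this follows from the projection formula). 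None of this is in your sketch, and it is the heart of the proof.

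The reduced case has the same problem one level up. Your claim that ``neither $e_{\max}$ nor the pairing against $\diff\tW$ involves the new unit marking'' is the desired conclusion, not an argument: the coincidence of maximal degeneracies is Lemma~\ref{lem:max-degeneracy-add-marking}, whose proof requires the negativity of all contact orders in $\varsigma$ and a case analysis of the degeneracies of the newly created rational bridges (a bridge over a puncture is \emph{not} maximal only because $c_j<0$); and the identification of the cosections is the commutativity of \eqref{diag:compare-dW-forget-marking}, which the paper proves by combining the vanishing of $f_{e_{\max}}^*\diff\tW$ along all markings (Lemma~\ref{lem:factor-through-omega}), the twist $\omega^{\log}_{\cC_{\varsigma,\widetilde{\cC}_\varsigma}}\big|_{\cC_{\varsigma+\mathbf{1}}}\cong\omega^{\log}_{\cC_{\varsigma+\mathbf{1}}}(-r\,p_{n+1})$, and a density argument on the complement of $p_{n+1}$. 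So while you have correctly located where the work lies and your formal reductions for (2) and (3) are fine, the proposal as written has a genuine gap at the perfect-obstruction-theory comparison, and the route you indicate for closing it (local analysis at the puncture plus an $\omega^{\log}$ sequence) misses the twisted-target construction that makes the comparison possible.
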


Statement (1) will be established in \S \ref{ss:fundamental-class-axiom-canonical} and \S\ref{ss:fundamental-class-axiom-reduced}, and (3)  will be proved in \S\ref{ss:unit-no-psi-min}.  Statement (2) follows from (1) and the projection formula as in the case of usual Gromov--Witten theory.

For later  use, denote by $\varsigma'$ and $\mathbf{1}'$ the reduced discrete data corresponding to $\varsigma$ and $\mathbf{1}$, and we write
\[
\fM_{\varsigma' + \mathbf{1}'} := \fM_{\varsigma' + \mathbf{1}'}(\ainfty), \ \ \fM_{\varsigma'} := \fM_{g, \varsigma'}(\ainfty).
\]
 By Proposition \ref{prop:one-step-target}, we may assume $\ell = 1$ for the rest of this section.

\subsection{Twists of targets}\label{ss:twist-target}

The construction of the forgetful morphism $\Fm$ requires modifying
both the log and R-structures.
Introducing an extra factor of $\cA$ to keep track of the Cartier
divisor of a marking, allows us to first perform a universal
construction, called \emph{twist}, on the level of targets.

Consider the sequence of morphisms
\begin{equation}
\xymatrix{
(\ainfty\times\cA)^{\circ} \ar[rr]^{\fp} && \ainfty\times\cA \ar[rr]^{id_{\ainfty}\times\nu_r} && \ainfty\times\cA  \ar[rr] &&  \ainfty
}
\end{equation}
where $\nu_r\colon \cA \to \cA$ is the $r$th root morphism, $\fp$ is the
puncturing (\cite[Definition~2.1]{ACGS20P}) such that
$\ocM_{(\ainfty \times \cA)^{\circ}} \subset \ocM_{(\ainfty \times
  \cA)}^{gp}$ is generated by $\ocM_{(\ainfty \times \cA)}$ and the
global section
$(1,-d) \in \ZZ\oplus\ZZ \cong \Gamma(\ocM_{(\ainfty \times
  \cA)^{\circ}}^{gp}) $, and the last arrow is the projection to
$\ainfty$. The newly constructed log stack $(\ainfty\times\cA)^{\circ}$ can be understood as follows.
\begin{lemma}
  \label{lemma:twisting-isomorphism}
  There is an isomorphism of log stacks
  \begin{equation*}
    \ft'\colon (\ainfty\times\cA)^{\circ} \to \ainfty \times \cA
  \end{equation*}
  induced by $\bar{\ft'}^\flat(1, 0) = (1, -d)$ and
  $\bar{\ft'}^\flat(0, 1) = (0, 1)$ on the level of characteristics.
\end{lemma}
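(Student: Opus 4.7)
The plan is to construct $\ft'$ as a morphism of log stacks whose underlying scheme map is the identity, and then to verify that it induces an isomorphism on log structures. Since $\fp$ is a puncturing, the underlying stack of $(\ainfty \times \cA)^{\circ}$ coincides with that of $\ainfty \times \cA$ by \cite[Definition~2.1]{ACGS20P}, so it suffices to produce a compatible isomorphism on log structures.

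First I would analyze the characteristic sheaves. One has $\ocM_{\ainfty \times \cA} = \NN^{2}$ with generators $(1,0)$ and $(0,1)$. By definition of the puncturing, $\ocM_{(\ainfty \times \cA)^{\circ}} \subset \ZZ^{2}$ is the submonoid generated by $(1,0)$, $(0,1)$ and $(1,-d)$. Since $(1,0) = (1,-d) + d \cdot (0,1)$, this submonoid coincides with the free monoid on $(1,-d)$ and $(0,1)$, and is in particular itself isomorphic to $\NN^{2}$. Consequently the prescribed assignment $(1,0) \mapsto (1,-d)$, $(0,1) \mapsto (0,1)$ extends to an isomorphism $\bar{\ft'}^{\flat}$ between the two characteristic sheaves.

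Next I would lift $\bar{\ft'}^{\flat}$ to a morphism of log structures using the Deligne--Faltings description. Pick local lifts $e_{\infty}, e_{\cA} \in \cM_{\ainfty \times \cA}$ of $(1,0)$ and $(0,1)$, and local lifts $\tilde e_{\infty}, e_{\cA} \in \cM_{(\ainfty \times \cA)^{\circ}}$ of $(1,-d)$ and $(0,1)$. Declaring $\ft'^{\flat}(e_{\infty}) := \tilde e_{\infty}$ and $\ft'^{\flat}(e_{\cA}) := e_{\cA}$ determines the desired morphism of log structures once compatibility with the structure morphism $\alpha$ is verified. On the target side, $\alpha(e_{\infty}) = 0$ because $\ainfty$ is the strict closed substack of $\cA$ where the Deligne--Faltings generator vanishes identically, whereas $\alpha(e_{\cA})$ is the coordinate $z$ on the $\cA$-factor. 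On the punctured side, one has $\tilde e_{\infty} \cdot e_{\cA}^{d} = e_{\infty}$ in $\cM^{gp}$, and hence $\alpha(\tilde e_{\infty}) \cdot z^{d} = 0$; since $z^{d}$ is a non-zero-divisor, this forces $\alpha(\tilde e_{\infty}) = 0 = \alpha(e_{\infty})$. Thus $\ft'^{\flat}$ is $\alpha$-compatible and is an isomorphism of log structures since it is so on characteristics.

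The main subtlety is confirming that $\alpha(\tilde e_{\infty}) = 0$, which amounts to the uniqueness of the structure morphism on the puncturing: it is characterized as the unique extension of $\alpha$ along $\cM_{(\ainfty \times \cA)} \hookrightarrow \cM_{(\ainfty \times \cA)^{\circ}} \subset \cM_{\ainfty \times \cA}^{gp}$, and the identity computed above in $\cM^{gp}$ pins down its value on $\tilde e_{\infty}$. Once this is in place, the locally built isomorphisms depend only on the canonical characteristic-level data and therefore glue to the required global isomorphism $\ft'$ of log stacks.
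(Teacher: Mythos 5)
Your chart-level computations are correct and relevant — the characteristic sheaf $\ocM_{(\ainfty\times\cA)^{\circ}}$ is indeed freely generated by $(1,-d)$ and $(0,1)$, and your verification of $\alpha$-compatibility is fine — but the overall strategy has a genuine gap: you cannot arrange the underlying map of $\ft'$ to be the identity, and consequently your locally constructed isomorphisms of log structures do not glue. A morphism of log structures over the identity of $\ul{\ainfty\times\cA}\cong \BG_m\times[\A^1/\GG_m]$ carrying lifts of $(1,0)$ to lifts of $(1,-d)$ would give an $\cO^*$-equivariant isomorphism between the torsor of lifts of $(1,0)$ in $\cM_{\ainfty\times\cA}$ and the torsor of lifts of $(1,-d)$ in $\cM_{\ainfty\times\cA}^{gp}$. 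But since $(1,-d)=(1,0)-d(0,1)$, these two torsors correspond to line bundles differing by (the pullback of) $\cL_{\cA}^{\otimes\pm d}$, and $\cL_{\cA}$ is a nontrivial generator of $\operatorname{Pic}([\A^1/\GG_m])\cong\ZZ$; for $d\ge 1$ no such isomorphism exists globally. So your locally defined maps (which do exist, since the bundles trivialize on charts) necessarily depend on the choices of lifts, and the obstruction to gluing them over the identity is exactly this nontrivial bundle — the step "the locally built isomorphisms \dots therefore glue" fails. This is corroborated later in the paper: in Step 2 of the proof of Proposition~\ref{prop:target-add-marking} one finds $\ul{\mathrm{T}}^*\cL' \cong \cL\otimes\cL_{\cA}^{-d}$, i.e.\ the map lying over $\ft$ twists the line bundle classifying the $\ainfty$-factor, which is impossible if the underlying morphism were the identity (or the projection).

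The intended argument — and the paper's terse proof — instead uses that morphisms to $\ainfty\times\cA$ and to its puncturing are determined by characteristic-level data (the Artin-fan/Deligne--Faltings universal property of \S\ref{ss:universal-DF}): the prescription $(1,0)\mapsto(1,-d)$, $(0,1)\mapsto(0,1)$ determines $\ft'$, whose underlying map is the automorphism of $\BG_m\times[\A^1/\GG_m]$ twisting the $\BG_m$-factor by $\cL_{\cA}^{\mp d}$, and one then checks directly that the morphism induced by $(1,0)\mapsto(1,d)$, $(0,1)\mapsto(0,1)$ is a two-sided inverse. Your observation that $\ocM_{(\ainfty\times\cA)^{\circ}}$ is free on $(1,-d)$ and $(0,1)$ is precisely what legitimizes these prescriptions, but the construction must go through this nontrivial underlying automorphism rather than through an isomorphism of log structures over the identity.
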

\begin{proof}
  We check directly that there is an inverse $(\ft')^{-1}$ satisfying
  \[
  \overline{(\ft')^{-1}}^\flat(1, 0) = (1, d) \ \ \ \mbox{and} \ \ \ \overline{(\ft')^{-1} }^\flat(0, 1) = (0, 1).
  \]
\end{proof}
Composing $\ft'$ with the projection to the first factor, we obtain a
morphism
\begin{equation}\label{eq:ft}
  \ft \colon (\ainfty\times\cA)^{\circ} \to \ainfty
\end{equation}
induced by $\bar{\ft}^{\flat}(1) = (1, -d)$ on the level of characteristics. This is the {\em twist} on the level of universal target. Now consider the Cartesian diagram with strict vertical arrows
\begin{equation}\label{eq:target-add-marking}
\xymatrix{
(\infty\times\cA)^{\circ} \ar[rr]^{\mathrm{P}} \ar[d] && \infty\times\cA \ar[rr]^{id_{\infty}\times\nu_r}  \ar[d] && \infty\times\cA \ar[d] \ar[rr]  && \punt \ar[d] \\
(\infty_{\cA}\times\cA)^{\circ} \ar[rr]^{\fp} && \ainfty\times\cA \ar[rr]^{id_{\ainfty}\times\nu_r} && \ainfty\times\cA \ar[rr] && \ainfty.
}
\end{equation}
We then construct the twist of the R-map target by lifting $\ft$:
\begin{proposition}\label{prop:target-add-marking}
There is a dashed arrow $\mathrm{T}$ making the following square Cartesian
\begin{equation}\label{eq:twist-target}
\xymatrix{
\punt \ar[d]_{\cO(\punt)} && (\punt\times \cA)^{\circ} \ar@{-->}[ll]_{\mathrm{T}} \ar[d]  \\
\ainfty && (\ainfty\times\cA)^{\circ}  \ar[ll]_{\ft}
}
\end{equation}
such that furthermore
\begin{equation}\label{eq:twist-R}
\mathrm{T}^*\uomega = \uomega\otimes\cL_{\cA}^{r} \ \ \ \mbox{and} \ \ \ \mathrm{T}^*\uspin = \uspin\otimes\cL_{\cA}.
\end{equation}
\end{proposition}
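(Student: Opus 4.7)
The plan is to construct $\mathrm{T}$ directly from the modular interpretations of the stacks involved, and then verify Cartesianness by identifying $\ul{(\punt\times\cA)^\circ}$ with the underlying fiber product $\ul\punt\times_{\ul\ainfty}\ul{(\ainfty\times\cA)^\circ}$. Since $\punt\to\ainfty$ is strict by construction (cf.~\eqref{eq:l=1-log} and \eqref{eq:generic-rank1-DF}), its pullback along $\ft$ is strict over $(\ainfty\times\cA)^\circ$, so log-theoretic Cartesianness will follow from the underlying identification, and the formulas \eqref{eq:twist-R} will drop out of the explicit construction.

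For the construction, by \eqref{eq:underlying-P-target} the stack $\ul\punt$ classifies pairs of line bundles $(\uspin,\uomega)$ satisfying $\uspin^r\cong\lbspin\otimes\uomega$, with the map $\cO(\punt)\colon\ul\punt\to\ul\ainfty$ classifying $\lblog\otimes\uspin^{-d}$. An $S$-point of $\ul{(\punt\times\cA)^\circ}=\ul\punt\times\ul\cA$ is then a triple $(\uspin,\uomega,\cL_\cA)$ as above. I define $\ul{\mathrm{T}}$ by the assignment
\begin{equation*}
(\uspin,\uomega,\cL_\cA)\longmapsto(\uspin\otimes\cL_\cA,\ \uomega\otimes\cL_\cA^r),
\end{equation*}
which lies in $\ul\punt$ because $(\uspin\otimes\cL_\cA)^r\cong\lbspin\otimes(\uomega\otimes\cL_\cA^r)$. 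The identifications in \eqref{eq:twist-R} are immediate from this formula.

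For Cartesianness, the equality $\bar\ft^\flat(1)=(1,-d)$ identifies $\ft^*\cO(\ainfty)$ with $\cO(\ainfty)\otimes\cL_\cA^{-d}$ on $(\ainfty\times\cA)^\circ$. Hence the underlying of the pullback of $\punt$ along $\ft$ parameterizes tuples $(\uspin,\uomega,L,\cL_\cA)$ with $\uspin^r\cong\lbspin\otimes\uomega$ and $\lblog\otimes\uspin^{-d}\cong L\otimes\cL_\cA^{-d}$, so that $L$ is determined by $(\uspin,\cL_\cA)$. The morphism $\ul{(\punt\times\cA)^\circ}\to\ul\punt\times_{\ul\ainfty}\ul{(\ainfty\times\cA)^\circ}$ induced by $\ul{\mathrm{T}}$ admits the inverse $(\uspin,\uomega,L,\cL_\cA)\mapsto(\uspin\otimes\cL_\cA^{-1},\uomega\otimes\cL_\cA^{-r},\cL_\cA)$; the two assignments are mutually inverse by a direct verification using the spin relation.

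The main obstacle is the bookkeeping of line bundles. In particular, note that $\mathrm{T}$ is \emph{not} the naive composition $(\punt\times\cA)^\circ\to\punt\times\cA\to\punt$ coming from \eqref{eq:target-add-marking}, which would pull $\uspin$ back to $\uspin$ rather than to $\uspin\otimes\cL_\cA$. The presence of $id_\infty\times\nu_r$ in \eqref{eq:target-add-marking} is precisely what makes room for the compensating $\cL_\cA^r$ factor dictated by the spin relation $\uspin^r\cong\lbspin\otimes\uomega$, and once this interplay is tracked the verification of both the Cartesian property and the formulas in \eqref{eq:twist-R} reduces to a one-line diagram chase.
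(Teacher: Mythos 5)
Your proposal is correct and follows essentially the same route as the paper's proof: you define $\ul{\mathrm{T}}$ by the same twist $(\uspin,\uomega,\cL_\cA)\mapsto(\uspin\otimes\cL_\cA,\uomega\otimes\cL_\cA^{r})$, use strictness of $\punt\to\ainfty$ (hence of both vertical arrows) to reduce the Cartesian claim to the underlying stacks, and exhibit the same explicit inverse $(\uspin,\uomega,L,\cL_\cA)\mapsto(\uspin\otimes\cL_\cA^{-1},\uomega\otimes\cL_\cA^{-r},\cL_\cA)$, with \eqref{eq:twist-R} built into the construction. The only difference is presentational: the paper spells out the compatibility $\ul{\mathrm{T}}^*(\lblog\otimes(\uspin')^{-d})\cong(\lblog\otimes\uspin^{-d})\otimes\cL_\cA^{-d}$ as a separate step to lift $\ul{\mathrm{T}}$ to a log morphism, which you leave implicit but which follows from your setup.
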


\begin{proof}
We split the proof into several steps.

\smallskip

\noindent
{\bf Step 1. Construction of $\ul{\mathrm{T}}$.}
For clarity, we will use $(\uspin', \uomega')$ and $(\uspin,\uomega)$
to denote the universal line bundles of $\punt$ on the left and the
right columns of \eqref{eq:twist-target}, respectively.
Since we assume $\ell = 1$, we have the following diagram of solid
arrows:
\begin{equation}\label{eq:underlying-R-product}
\xymatrix{
\ul\punt\times\ul{\cA} \ar@/^1pc/[rrrd] \ar@/_1pc/[ddr]_{\uspin\otimes\cL_{\cA}} \ar@{-->}[rd]|-{\ \ul{\mathrm{T}} \ }&&& \\
& \ul\punt \ar[rr] \ar[d]^{\uspin'} && \xinfty\times\BC \ar[d]^{\lbspin\boxtimes \uomega'} \\
& \BG_m \ar[rr]^{\mbox{$r$th root}} && \BG_m
}
\end{equation}
where the square is \eqref{eq:underlying-P-target}, the top arrow is defined by the projection $\ul\punt \to \xinfty$ and $\ul\punt \times \ul\cA \to \BC$ with $\uomega\otimes\cL_{\cA}^r \mapsto \uomega'$. To define $\ul{\mathrm{T}}$, it suffices to verify the commutativity of the solid arrows, which follows from
\begin{equation}\label{eq:underlying-twist}
\lbspin\boxtimes \uomega' \cong \lbspin\otimes(\uomega\otimes\cL_{\cA}^r) \cong (\lbspin\otimes\uomega)\otimes\cL_{\cA}^r \cong  (\uspin)^r \otimes \cL_{\cA}^r \cong (\uspin \otimes \cL_{\cA})^r
\end{equation}
where all the line bundles are considered pulled back to $\ul\punt \times \ul\cA$. This implies that
\begin{equation}\label{eq:proof-twist-R}
\ul{\mathrm{T}}^* \uspin' = \uspin \otimes \cL_{\cA} \ \ \ \mbox{and} \ \ \ \ul{\mathrm{T}}^*\uomega' = \uomega\otimes\cL_{\cA}^{r}
\end{equation}
which is precisely \eqref{eq:twist-R}.

\smallskip
\noindent
{\bf Step 2. Construction of $\mathrm{T}$.} Since the two vertical arrows in \eqref{eq:twist-target} are strict, to show that $\ul{\mathrm{T}}$ lifts to $\mathrm{T}$, it suffices to check that the underlying structure $\ul{\mathrm{T}}$ is compatible with the morphism of log structures induced by $\ft$, leading to the commutativity of \eqref{eq:twist-target}.

Note  that $\cM_{\cA}$ is defined by the universal section $s$ of $\cL_{\cA}$, and $\cM_{\punt}$ is defined by \eqref{eq:l=1-log} since $\ell = 1$. Let $\cL'$ and $\cL$ be the universal line bundles defining the log structures of $\ainfty$ on the left and the right columns of \eqref{eq:twist-target}, respectively. Then the lift $\mathrm{T}$ over $\ul{\mathrm{T}}$ follows from the compatibility of line bundles over $(\punt\times \cA)^{\circ}$:
\[
\ul{\mathrm{T}}^*\cL' = \mathrm{T}^*(\lblog\otimes(\uspin')^{-d}) \cong \lblog\otimes(\uspin\otimes \cL_{\cA})^{-d} \cong (\lblog\otimes(\uspin)^{-d})\otimes \cL_{\cA}^{-d} = \cL\otimes \cL_{\cA}^{-d}.
\]

\smallskip
\noindent
{\bf Step 3. Verifying the Cartesian property.}
Denote by $\widetilde{\infty} = \infty \times_{\ainfty}(\ainfty\times\cA)^{\circ}$.
The commutativity of \eqref{eq:twist-target} induces a morphism $({\infty}\times{\cA})^{\circ} \to \widetilde{\infty}$.
We show this is an isomorphism by constructing its inverse explicitly. Since the two vertical arrows in  \eqref{eq:twist-target} are strict, it suffices to prove the  statement on the underlying level.

Unwinding the fiber product construction, the data of a morphism ${Y} \to \ul{\widetilde{\infty}}$ is equivalent to the following data
\[
\left( Y \to \xinfty\times\BC, \cL'_{R,Y}, \cL_{Y}, (\cL_{\cA,Y}, s_Y), (\cL'_{R,Y})^r \cong \lbspin|_{{Y}}\otimes \uomega'|_{Y},  \lblog|_{Y}\otimes\cL_{R,Y}^{d} \cong \cL'_Y \otimes\cL_{\cA,Y}^{-d}\right).
\]
Here $\cL'_{R,Y}$ is the line bundle on $Y$ corresponding to $Y \to \ul\punt$, and $\cL_{Y}, \cL_{\cA,Y}$ are line bundles corresponding to $Y \to \ainfty\times\cA$.

Set $\cL_{R,Y} = \cL'_{R,Y}\otimes\cL^{\vee}_{\cA,Y}$. Then we calculate
\[
\cL_{R,Y}^{r} \cong (\cL'_{R,Y}\otimes\cL^{\vee}_{\cA,Y})^r \cong \lbspin_{Y}\otimes (\uomega'|_{Y} \otimes \cL_{\cA,Y}^{-r}).
\]
In view of \eqref{eq:proof-twist-R}, the data $(Y \to \xinfty\times\BC, \cL_{R,Y})$ is equivalent to a commutative triangle
\[
\xymatrix{
Y \ar[rr]^{\cL'_{R,Y}} \ar@/_1pc/[rrd]_{\uomega'|_{Y} \otimes \cL_{\cA,Y}^{-r}} && \ul{\infty} \ar[d] \\
&&\BC
}
\]
Together with $Y \to \ul{\cA}$ induced by $(\cL_{\cA,Y}, s_Y)$, we
obtain
$Y \to \ul\ainfty \times \ul\cA = \ul{(\ainfty\times\cA)^{\circ}}$,
hence $\ul{\widetilde{\infty}} \to \ul{(\ainfty\times\cA)^{\circ}}$.
Tracing through the above construction, we see that this is the
desired inverse.
\end{proof}

While \eqref{eq:twist-R} implies that $\mathrm{T}$ is not a morphism
over $\BC$, the proof of this proposition shows the following
compatibility of R-structures:

\begin{corollary}\label{cor:target-log-R}
There is a commutative diagram
\[
\xymatrix{
\punt \ar[d] && (\punt\times \cA)^{\circ} \ar[ll]_{\mathrm{T}} \ar[d]  \ar[rr] && \punt \ar[d]  \\
\ainfty\times\BC \ar[d] && (\ainfty\times\cA)^{\circ}\times\BC  \ar[ll]_{\ft\times id_{\BC}} \ar[d] \ar[rr] && \ainfty\times\BC \ar[d] \\
\BC && \ul{\cA}\times\BC \ar[ll]^{\uomega \mapsfrom \cL_{\cA}^{r}\otimes\uomega}  \ar[rr]_{(\cL_{\cA}, \uomega) \mapsto \uomega} && \BC
}
\]
where the two squares on the top are Cartesian with strict vertical
arrows, and the top and middle horizontal arrows on the right are
given by the composition of the corresponding horizontal arrows in
\eqref{eq:target-add-marking}.
\end{corollary}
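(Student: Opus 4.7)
The plan is to bootstrap from Proposition~\ref{prop:target-add-marking}, which already supplies the left Cartesian square (top row to middle row), its strictness, and the crucial twisting identity $\mathrm{T}^*\uomega = \uomega\otimes\cL_{\cA}^{r}$. The corollary's content is really to organize the information about how $\mathrm{T}$ interacts with the $\BC$-factor, by recording the ``extra twist'' via the auxiliary $\ul{\cA}$-factor in the bottom row.

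I would proceed as follows. First, construct the middle vertical arrow $(\punt\times\cA)^{\circ}\to(\ainfty\times\cA)^{\circ}\times\BC$ as the pair consisting of the strict morphism to $(\ainfty\times\cA)^{\circ}$ (coming from the fiber product construction in \eqref{eq:target-add-marking}) together with the morphism to $\BC$ classifying the line bundle $\uomega\otimes\cL_{\cA}^{r}$; this is well-defined on $(\punt\times\cA)^{\circ}$ because both $\uomega$ and $\cL_{\cA}$ pull back there. Strictness of this arrow follows from strictness of $(\punt\times\cA)^{\circ}\to(\ainfty\times\cA)^{\circ}$, since the $\BC$-factor carries the trivial log structure. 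The analogous strict arrow $(\punt\times\cA)^{\circ}\to\punt$ on the right is just the composition of the arrows in \eqref{eq:target-add-marking}, with its factorization through $\ainfty\times\BC$ given by the canonical one for $\punt$.

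Next, I would verify the three squares on the left. The top-left square is Proposition~\ref{prop:target-add-marking}, and is Cartesian with strict vertical arrows. The middle-left square commutes essentially by definition: both compositions $(\punt\times\cA)^{\circ}\to\BC$ classify $\uomega\otimes\cL_{\cA}^{r}$, the lower-left route via the arrow $(\cL_{\cA},\uomega)\mapsfrom\cL_{\cA}^{r}\otimes\uomega$ in the bottom row, and the upper-left route via $\mathrm{T}^{*}\uomega=\uomega\otimes\cL_{\cA}^{r}$ from \eqref{eq:twist-R}. For the right half of the diagram, the top-right square is the outer composition of the rightmost two squares in \eqref{eq:target-add-marking}, which is itself Cartesian with strict vertical arrows since each factor is. The middle-right and bottom-right squares commute by the respective definitions: the right arrow of the middle row projects away the $\cA$-factor, and the right arrow of the bottom row is $(\cL_{\cA},\uomega)\mapsto\uomega$, which matches projecting to $\BC$.

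The only subtle point is the compatibility of the twisting data with the $\BC$-structure on the two sides: on the left, $\mathrm{T}$ replaces $\uomega$ by $\uomega\otimes\cL_{\cA}^{r}$, whereas on the right the $\BC$-structure is unchanged. This is the reason we must enlarge the middle row's codomain from $\ainfty\times\BC$ to $\ainfty\times\BC$ with an auxiliary $\ul{\cA}$-factor present in the bottom row, and why the bottom arrows are asymmetric. Once this bookkeeping is in place, no genuinely new calculation is needed beyond \eqref{eq:underlying-twist} and \eqref{eq:proof-twist-R}; the proof is essentially a matter of checking that each of the six small squares commutes with the universal line bundles that everything classifies.
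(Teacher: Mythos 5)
Your overall strategy — deducing the corollary from Proposition~\ref{prop:target-add-marking}, in particular from \eqref{eq:twist-R}, by specifying the middle vertical arrow and checking each square — is exactly the paper's (the paper gives no separate argument and simply reads the corollary off the proof of that proposition). The left half of your verification is fine. The problem is the right half, where your choices are mutually inconsistent. You define the $\BC$-component of $(\punt\times\cA)^{\circ}\to(\ainfty\times\cA)^{\circ}\times\BC$ to classify $\uomega\otimes\cL_{\cA}^{r}=\mathrm{T}^{*}\uomega$ (which is what makes the top-left square, lying over $\ft\times id_{\BC}$, commute and become Cartesian), but you then treat the right-hand arrow of the middle row as the composition from \eqref{eq:target-add-marking} with the identity on the $\BC$-factor (``projects away the $\cA$-factor''). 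With these two choices the top-right square does not even commute: going down and then right classifies $\uomega\otimes\cL_{\cA}^{r}$, while going right to $\punt$ and then down classifies the untwisted $\uomega$, and $\cL_{\cA}^{r}$ is nontrivial on $(\punt\times\cA)^{\circ}$. For the same reason you cannot obtain the Cartesian property of the top-right square by citing the Cartesian squares of \eqref{eq:target-add-marking}: their bases carry no $\BC$-factor, and whether the enlarged square is Cartesian is precisely a question about the $\BC$-components, which is the bookkeeping at issue. (Also, the top-right horizontal arrow is the composition of all three top arrows of \eqref{eq:target-add-marking}, not only the rightmost two.)

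The correct bookkeeping — which is the entire content of the corollary and what the asymmetric bottom row records — is that the compensating twist sits in the right-hand arrows of the middle and bottom rows: the $\BC$-component of $(\ainfty\times\cA)^{\circ}\times\BC\to\ainfty\times\BC$ on the right (and of the arrow to $\ul{\cA}\times\BC$) must classify $\uomega\otimes\cL_{\cA}^{-r}$ rather than $\uomega$, so that its pullback along the middle vertical arrow returns the untwisted $\uomega$. With that choice both top squares commute; the top-left is Cartesian because the extra $\BC$-datum in $\punt\times_{\ainfty\times\BC}\big((\ainfty\times\cA)^{\circ}\times\BC\big)$ is redundant, so one recovers \eqref{eq:twist-target}, and the top-right is Cartesian by the same argument applied to the outer Cartesian square of \eqref{eq:target-add-marking}; the two bottom squares then commute because the twists by $\cL_{\cA}^{\pm r}$ cancel, which is exactly what the labels on the bottom row assert. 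You acknowledge this subtlety verbally in your final paragraph, but your square-by-square verification contradicts it, so as written the diagram you construct does not commute; once the twist is placed in the right-hand $\BC$-components as above, the rest of your argument goes through.
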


\subsection{Adding markings on log curves}\label{ss:log-curve-add-marking}

Denote by $\fMtw_{n}$ the stack of twisted pre-stable curves with
$n$ markings and let $\fC_{n} \to \fMtw_{n}$ be its universal curve
equipped with the canonical log structures.
Similarly, we have the universal family $\fC_{n+1} \to \fMtw_{n+1}$
of pre-stable twisted curves with $(n+1)$ markings, again
equipped with the canonical log structures.
Let $\fC_{n,\fC} \to \fC_{n}$ be the pull-back of
$\fC_{n} \to \fMtw_{n}$.

The underlying universal curve $\ul{\fC}_{n}$ carries a family of
pre-stable twisted curve $\ul{\fC}_{n+1^{\rig}} \to \ul{\fC}_n$ with
an extra untwisted $(n+1)$-st marking, denoted $p^{\rig}_{n+1}$, and a
morphism
$\ul{\mathrm{St}} \colon \ul{\fC}_{n+1^{\rig}} \to \ul{\fC}_{n, \fC}$
of marked curves over $\ul{\fC}_{n}$ which removes $p^{\rig}_{n+1}$
from the set of markings, and contracts rational components containing
$p^{\rig}$ such that the image of
$p^{\rig}_{n+1} \to \ul{\fC}_{n+1^{\rig}} \to \ul{\fC}_{n, \fC}$ hits
a special point of $\ul{\fC}_{n, \fC} \to \ul{\fC}_n$.
Similarly as in \cite[Lemma~7]{Be97} and \cite[Section 8.1]{AGV08},
the tautological morphism $ \ul{\fC}_{n} \to \ul{\fMtw}_{n+1} $
induced by the family $\ul{\fC}_{n+1^{\rig}} \to \ul{\fC}_n$ with
$(n+1)$ markings is \'etale.

\begin{lemma}\label{lem:universal-log-curve-add-marking}
There is a canonical strict and \'etale morphism  $\fC_{n} \to \fMtw_{n+1}$ lifting the tautological morphism  $
\ul{\fC}_{n} \to \ul{\fMtw}_{n+1}$.
\end{lemma}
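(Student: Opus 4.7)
The plan is to invoke Olsson's theorem, which identifies $\fMtw_{n+1}$ equipped with its canonical log structure as representing the fibered category of $(n+1)$-marked log curves in the sense of \S\ref{s:curves}. Under this identification, constructing a strict morphism $\fC_n \to \fMtw_{n+1}$ that lifts the given underlying arrow $\ul\fC_n \to \ul\fMtw_{n+1}$ is equivalent to constructing a log curve $\cC_{n+1^{\rig}} \to \fC_n$ with $(n+1)$ markings whose underlying family is $\ul\fC_{n+1^{\rig}} \to \ul\fC_n$, and whose induced canonical log structure agrees with the log structure $\cM_{\fC_n}$ inherited from being the universal log curve over $\fMtw_n$.

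First I would construct the log curve $\cC_{n+1^{\rig}} \to \fC_n$ explicitly, by an \'etale-local description over $\fC_n$ based on where a geometric point $x \in \ul\fC_n$ lies in its fiber $\ul C_s$ over $s \in \ul\fMtw_n$. Over the open locus where $x$ is smooth and unmarked in $\ul C_s$, one takes $\cC_{n+1^{\rig}} = \cC_n \times_{\fMtw_n} \fC_n$ (in the log category) with the new section given by the diagonal, carrying the trivial puncturing structure as an untwisted marking. Over the locus where $x$ is a node or an existing twisted marking of $\ul C_s$, the morphism $\ul{\mathrm{St}}$ blows up a rational bubble carrying the new marking, so one locally enhances the log structure by adjoining the standard $\NN$-generator for the smoothing parameter of the new node, attached to the base monoid via the usual gluing; in the twisted-marking case, the rational bubble attaches at a $B\mu_{r_i}$-gerbe and the cyclotomic structure is propagated along the bubble in the standard way.

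Next I would verify strictness, i.e.\ that the log structure on $\fC_n$ induced by $\cC_{n+1^{\rig}} \to \fC_n$ via Olsson's characterization coincides with $\cM_{\fC_n}$. This is a local statement on characteristic monoids at each geometric point $x$. At a smooth unmarked $x$, both log structures are the pullback of $\ocM_{\fMtw_n, s}$, say of rank $k$ equal to the number of nodes of $\ul C_s$. At a node $x$, the canonical log structure of $\fC_n$ is the pushout $\NN^{k-1} \oplus_\NN \NN^2 \cong \NN^{k+1}$, while the curve $\ul C_{n+1^{\rig},x}$ has one additional node from the bubble, giving $\ocM_{\fMtw_{n+1}} = \NN^{k+1}$; the canonical identification sends the two smoothing parameters of the branches of the original node to the pair of smoothing parameters produced by the bubble. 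At a twisted marking, the analogous computation works with the gerbe index $r_i$ recorded identically on both sides.

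Finally, \'etaleness of the lift $\fC_n \to \fMtw_{n+1}$ follows formally from strictness and \'etaleness of $\ul\fC_n \to \ul\fMtw_{n+1}$ (cf.\ \cite[Lemma 7]{Be97}, \cite[\S 8.1]{AGV08}). The main obstacle will be the case of twisted markings, where one must check that the cyclotomic group acts compatibly on the bubble introduced by $\ul{\mathrm{St}}$ and that the resulting identification of characteristic monoids is canonical rather than just an isomorphism; this reduces to a local calculation using the standard \'etale charts for twisted curves from \cite{AbVi02}.
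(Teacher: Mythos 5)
Your route is genuinely different from the paper's. You argue via Olsson's modular interpretation: a strict lift $\fC_n \to \fMtw_{n+1}$ amounts to a log curve over $\fC_n$ with underlying family $\ul\fC_{n+1^{\rig}} \to \ul\fC_n$ whose associated canonical log structure is $\cM_{\fC_n}$, and you then check this by an explicit local construction and a fiberwise comparison of characteristic monoids. The paper instead uses the divisorial description: by Olsson, the canonical log structure of $\fMtw_{n+1}$ is the divisorial one attached to the normal crossings boundary of singular fibers, and $\cM_{\fC_n}$ is the divisorial one attached to the $n$ markings together with the pullback of $\partial\fMtw_n$; the tautological morphism is a morphism of toroidal pairs carrying $\partial\fC_n$ into $\partial\fMtw_{n+1}$, which immediately yields the log morphism, and since the (already established) \'etaleness of $\ul\fC_n \to \ul\fMtw_{n+1}$ restricts to an \'etale morphism of boundaries, the morphism is strict; \'etaleness of the lift is then just \'etaleness of the underlying map. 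Your fiberwise monoid counts (rank $k$ at smooth unmarked points, $\NN^{k-1}\oplus\NN^2 \cong \NN^{k+1}$ at nodes with the two branch coordinates matching the two new smoothing parameters, and the marking coordinate matching the new node parameter at a marking, with the $r_i$-th root parameter of the twisted node matching the local equation of the gerbe $p_i$) are all correct and consistent with this.

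The one genuine gap is the point you yourself flag: canonicity and globalization. A fiberwise identification of characteristic monoids does not by itself produce a morphism of log structures, and your \'etale-local construction of the log curve (fiber product plus ``adjoining the smoothing parameter'') needs a descent/gluing argument, since local isomorphisms chosen chart by chart need not glue. The efficient repair inside your framework is to skip the hand-made log curve: the underlying family $\ul\fC_{n+1^{\rig}} \to \ul\fC_n$ already carries a canonical log structure $\cM^{\mathrm{can}}$ on $\ul\fC_n$ with a strict classifying morphism to $\fMtw_{n+1}$, so the lemma is equivalent to a \emph{canonical} isomorphism $\cM^{\mathrm{can}} \cong \cM_{\fC_n}$; to produce that canonical comparison map (after which your monoid computation shows it is an isomorphism) the cleanest device is precisely the paper's observation that both are divisorial log structures of matching boundaries, which also disposes of the twisted-marking case without any separate bookkeeping of cyclotomic indices. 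So: correct local analysis and a workable alternative strategy, but the canonical global comparison needs to be supplied, and the divisorial/toroidal description is the natural way to do it.
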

\begin{proof}
  By \cite[\S 3.1]{Ol07}, the log structure of $\fMtw_{n+1}$ is the
  divisorial log structure given by the normal crossings boundary
  $\partial \fMtw_{n+1}$, which consists of the singular fibers.
  Similarly, the log structure of $\fC_{n}$ is the divisorial log
  structure given by the normal crossings boundary $\partial \fC_{n}$
  consisting of the $n$ markings and the pullback of
  $\partial\fMtw_{n}$.
  The tautological morphism $\ul{\fC}_{n} \to \ul{\fMtw}_{n+1}$ is a
  morphism of the toroidal pairs
  $(\fC_{n},\partial \fC_{n}) \to (\fMtw_{n+1}, \partial \fMtw_{n+1})$
  preserving boundary divisors, hence induces a log morphism
  $\fC_{n} \to \fMtw_{n+1}$.
  Furthermore, the \'etaleness of the tautological morphism restricts
  to an \'etale morphism of the boundaries
  $\partial \fC_{n} \to \partial \fMtw_{n+1}$.
  This implies the strictness in the statement.
\end{proof}

Consider the canonical log curve
\begin{equation}\label{eq:canonical-log-curve-extra-marking}
{\fC}_{n+1^{\rig}} \to {\fC}_n
\end{equation}
with the underlying $\ul{\fC}_{n+1^{\rig}} \to \ul{\fC}_n$.
Let ${\fC}_{n+1^{\rig}} \to {\fC}_{n+(1^{\rig})}$ be the log morphism
removing $p^{\rig}_{n+1}$ from the set of markings.
Since we do not change log structures away from $p_{n+1}^{\rig}$, ${\fC}_{n+(1^{\rig})} \to {\fC}_n$ is a family of log curves with the canonical log structure.

\begin{lemma}\label{lem:universal-log-curve-contraction}
There  is a canonical log morphism $\mathrm{St} \colon {\fC}_{n+(1^{\rig})} \to \fC_{n,\fC}$ over ${\fC}_n$ lifting the underlying morphism $\ul{\mathrm{St}}$. Furthermore, $\mathrm{St}$ is log \'etale with $\mathrm{St}_{*}\cM_{{\fC}_{n+1^{\rig}}} = \cM_{\fC_{n,\fC}}$ where $\mathrm{St}_{*}\cM_{{\fC}_{n+1^{\rig}}}$ is the push-forward log structure on $\ul{\fC}_{n,\fC}$ as in \cite[(1.4)]{Ka88}.
\end{lemma}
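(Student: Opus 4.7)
The plan is to construct $\mathrm{St}$ locally on $\ul\fC_{n,\fC}$, and then verify log étaleness and the pushforward property by stalk-wise computation on characteristic monoids.

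First, on the open locus where $\ul{\mathrm{St}}$ is an isomorphism, the canonical log structures on $\fC_{n+(1^{\rig})}$ and $\fC_{n,\fC}$ are tautologically identified, since both are the canonical log structures of $n$-pointed pre-stable curve families over $\fC_n$ with the same underlying curve. On this open, $\mathrm{St}$ restricts to a strict isomorphism, automatically log étale and satisfying the pushforward identity trivially.

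Near a point $q' \in \ul\fC_{n,\fC}$ at which $\ul{\mathrm{St}}$ contracts a $\PP^1$-bubble $B \ni p^{\rig}_{n+1}$, I would perform a local analysis in two cases: (i) $q'$ is a node of the fiber of $\fC_{n,\fC} \to \fC_n$, with $B$ meeting the rest of the fiber in two nodes $q_1, q_2$; (ii) $q'$ is one of the first $n$ markings, with $B$ meeting the rest in a single node $q_1$. In case (i), the strict étaleness of $\fC_n \to \fMtw_{n+1}$ (Lemma~\ref{lem:universal-log-curve-add-marking}) provides smoothing parameters $\rho_1, \rho_2 \in \ocM_{\fC_n}$ of the source nodes satisfying $\rho_1 + \rho_2 = \rho$, where $\rho$ smooths $q'$. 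The target characteristic at $q'$ is generated over $\ocM_{\fC_n}$ by $a, b$ modulo $a+b=\rho$, and the source characteristic at $q_i$ is generated by $u_{i,1}, u_{i,2}$ modulo $u_{i,1}+u_{i,2}=\rho_i$. Writing $u_{1,1}$ and $u_{2,2}$ for the generators on the branches surviving the contraction, one defines $\mathrm{St}^\flat$ by $a\mapsto u_{1,1}$, $b\mapsto u_{1,2}+\rho_2$ at $q_1$, and by $a\mapsto u_{2,1}+\rho_1$, $b\mapsto u_{2,2}$ at $q_2$; on the bubble interior (including at $p^{\rig}_{n+1}$), $a\mapsto\rho_1$ and $b\mapsto\rho_2$. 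Both local choices satisfy $a+b=\rho$. Case (ii) is analogous, the smoothing generator of the marking playing the role of one of the $\rho_i$.

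The local assignments glue to a well-defined log morphism $\mathrm{St}$, consistently across charts because the geometric contraction unambiguously determines which branch at $q_i$ corresponds to which branch at $q'$. Log étaleness is checked at each characteristic stalk: the map on $\ocM^{gp}$ is injective with appropriate rank, becoming an isomorphism after inverting $\rho_1$ (equivalently $\rho_2$), and the stalk ranks match since the extra node being resolved is counterbalanced by the extra marking $p^{\rig}_{n+1}$. For the pushforward identity $\mathrm{St}_*\cM_{\fC_{n+1^{\rig}}} = \cM_{\fC_{n,\fC}}$, I would compute the stalk of the Kato pushforward at $q'$ directly, analyzing global sections of $\cM_{\fC_{n+1^{\rig}}}$ on $\ul{\mathrm{St}}^{-1}(U)$ for small $U \ni q'$: compatibility under specialization across the connected bubble $\PP^1$-fiber forces any such section's restrictions to $q_1, q_2$, and $p^{\rig}_{n+1}$ to be determined by a pair of base-relative parameters recovering $a, b$ at $q'$.

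The main obstacle I expect is the careful verification of the pushforward identity, particularly the absorption of the extra $\NN$ generator at $p^{\rig}_{n+1}$. This generator does not obviously disappear at the sheaf level, but must be absorbed upon passing to the associated log structure of \cite[(1.4)]{Ka88}, using that its underlying local equation on the source descends to an invertible element of $\cO_{\fC_{n,\fC}}$ near $q'$. This verification, together with bookkeeping of the stacky orbifold structure at twisted nodes (handled by extending the formulas equivariantly under the local cyclic automorphism groups), constitutes the technical heart of the argument.
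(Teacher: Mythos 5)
Your route is genuinely different from the paper's, and its core is sound. The paper never opens charts: by Lemma~\ref{lem:universal-log-curve-add-marking} both log structures are divisorial, with boundary the $n$ markings plus the pull-back of $\partial\fMtw_{n+1}$ on the source and $\mathrm{pr}_1^{-1}\partial\fC_n\cup\mathrm{pr}_2^{-1}\partial\fC_n$ on the target; since $\ul{\mathrm{St}}$ carries boundary to boundary, it is a morphism of toroidal pairs, which produces the log lift and its log \'etaleness in one stroke, and the push-forward identity is then immediate from the description of a divisorial log structure as the sheaf of functions vanishing only along the boundary, together with the facts that $\mathrm{St}$ is an isomorphism off the boundary and contracts the rational bridges into boundary strata. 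Your explicit formulas at $q_1$, $q_2$ and on the bubble interior (using $\rho=\rho_1+\rho_2$ from the strict \'etaleness of $\fC_n\to\fMtw_{n+1}$) are exactly the local incarnation of this map (the standard contraction $s'=s$, $t'=tf_2$, etc.), so the construction step of your argument works, at the price of the chart-lifting and gluing bookkeeping (maps of characteristic sheaves must be lifted compatibly with the structure maps to $\cO$) that the divisorial description makes unnecessary.

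Two of your verifications, however, fail as stated. Log \'etaleness cannot be read off from stalkwise injectivity of $\ocM^{gp}$: at an interior point $z$ of the contracted bridge your own formulas send $a\mapsto\rho_1$, $b\mapsto\rho_2$, so $a-\rho_1$ lies in the kernel and the stalk map $\ocM^{gp}_{\fC_{n,\fC},q'}\to\ocM^{gp}_{\fC_{n+(1^{\rig})},z}$ is not injective (its source has rank one larger); this is the same phenomenon as for a log blow-up along its exceptional locus, which is log \'etale despite non-injective characteristic stalk maps, and no amount of inverting $\rho_1$ repairs it. The remark that the extra node is "counterbalanced by the extra marking" is also off: $p^{\rig}_{n+1}$ carries no divisor in $\cM_{\fC_{n+(1^{\rig})}}$ (it was removed from the markings), and at $q_1,q_2$ the ranks match simply because the base $\fC_n$ already records both $\rho_1$ and $\rho_2$. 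The correct fix is either the toroidal argument above, or to observe that your local formulas identify $\mathrm{St}$ with the log blow-down of the target along the log ideal generated by $a$ and $\rho_1$, so log \'etaleness is that of a log blow-up.

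Your treatment of the push-forward near $p^{\rig}_{n+1}$ is also not right. The local equation of the section does not descend near $q'$ at all: it is non-constant along the contracted $\PP^1$, hence not of the form $\ul{\mathrm{St}}^*h$, and by the definition of the direct image log structure this is precisely why it contributes nothing to the stalk at $q'$; there is no "absorption into an invertible element" to perform. Moreover, the subscript in the statement should be read as $\cM_{\fC_{n+(1^{\rig})}}$ (as in the paper's own computation): if one literally pushed forward $\cM_{\fC_{n+1^{\rig}}}$, including the divisor at $p^{\rig}_{n+1}$, the identity would fail over the locus where $\ul{\mathrm{St}}$ is an isomorphism near the section, since the image of that section (the diagonal in $\fC_n\times_{\fMtw_n}\fC_n$) is not part of $\partial\fC_{n,\fC}$. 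With that reading, your stalkwise analysis via constancy of pulled-back functions along the connected contracted fibers is essentially the local version of the paper's divisorial argument and can be completed.
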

\begin{proof}
  We use the description of log structures as in the proof of Lemma
  \ref{lem:universal-log-curve-add-marking}.
  By Lemma \ref{lem:universal-log-curve-add-marking}, the log
  structure $\cM_{{\fC}_{n+(1^{\rig})}}$ is the divisorial one given
  by the normal crossings boundary $\partial {\fC}_{n+(1^{\rig})}$
  consisting of the pull-back of $\partial \fMtw_{n+1}$ and the $n$
  markings.
  The log structure $\cM_{ \fC_{n,\fC}}$ is the divisorial one given
  by the toroidal boundary
  $\partial \fC_{n,\fC} = \mathrm{pr}^{-1}_{1}\partial \fC_{n} \cup
  \mathrm{pr}^{-1}_{2}\partial\fC_{n}$
  where
  $\mathrm{pr}_i \colon \fC_{n,\fC} = \fC_{n} \times_{\fM_{n}}\fC_{n}
  \to \fC_n$ are the projections to the first and second factors.

  Observe that
  $\ul{\mathrm{St}}(\partial {\fC}_{n+(1^{\rig})}) = \partial
  \fC_{n,\fC}$, and hence $\ul{\mathrm{St}}$ induces a morphism of
  toroidal pairs
  \begin{equation*}
    (\ul{\fC}_{n+(1^{\rig})}, \partial {\fC}_{n+(1^{\rig})}) \to (\ul{\fC}_{n,\fC}, \partial \fC_{n,\fC})
  \end{equation*}
  surjective along the boundary.
  Since the log structures involved are all divisorial, this leads to
  the log \'etale morphism $\mathrm{St}$.

  Finally, the divisorial log structure $\cM_{{\fC}_{n+(1^{\rig})}}$
  is the subsheaf of $\cO_{{\fC}_{n+(1^{\rig})}}$ consisting of
  regular functions vanishing only along
  $\partial {\fC}_{n+(1^{\rig})}$ of order $ \geq 0$.
  Since $\mathrm{St}$ induces an isomorphism outside the boundary, and
  only contracts rational bridges to some strata of the boundary
  $\partial \fC_{n,\fC}$, the push-forward
  $\mathrm{St}_{*}\cM_{{\fC}_{n+1^{\rig}}}$ consists of regular
  functions vanishing only along $\partial \fC_{n,\fC}$ of order
  $\geq 0$, which is the divisorial log structure
  $ \cM_{\fC_{n,\fC}}$.
\end{proof}

\subsection{Adding markings on universal punctured maps}\label{ss:universal-pmap-add-marking}

Consider the universal punctured map and the universal puncturing over
$\fM_{\varsigma'} := \fM_{g, \varsigma'}(\ainfty)$, respectively:
\[
\mathfrak{f}_{\varsigma'} \colon \fC^{\circ}_{\varsigma'} \to \ainfty \ \ \ \mbox{and} \ \ \ \fC^{\circ}_{\varsigma'} \to \fC_{\varsigma'}.
\]
Consider the saturation
$\mathrm{S} \colon \widetilde{\fC}_{\varsigma'} \to
\fC^{\circ}_{\varsigma'}$.
Our next goal is to construct a natural family of punctured maps over
$\widetilde{\fC}_{\varsigma'}$ with the discrete data
$\varsigma' + \bf{1}'$.

\subsubsection{Punctured maps over \texorpdfstring{$\widetilde{\fC}_{\varsigma'}$}{C_sigma}}

Pulling back the universal families along
$\widetilde{\fC}_{\varsigma'} \to \fC^{\circ}_{\varsigma'} \to
\fM_{\varsigma'}$, we obtain the punctured map
${\mathfrak{f}}_{\varsigma',\widetilde{\fC}_{\varsigma'}} \colon
{\fC}^{\circ}_{\varsigma',\widetilde{\fC}_{\varsigma'}} \to \punt$ and
the puncturing
${\fC}^{\circ}_{\varsigma',\widetilde{\fC}_{\varsigma'}} \to
{\fC}_{\varsigma',\widetilde{\fC}_{\varsigma'}}$ over
$\widetilde{\fC}_{\varsigma'}$.
Consider the tautological morphism $\fM_{\varsigma'} \to \fMtw_n$
induced by the log curve $\fC_{\varsigma'} \to \fM_{\varsigma'}$.
Pulling back $\mathrm{St}$ as in
Lemma~\ref{lem:universal-log-curve-contraction} along the composition
$\widetilde{\fC}_{\varsigma'} \to \fM_{\varsigma'} \to \fMtw_{n}$, we
obtain a log \'etale morphism of log curves over
$\widetilde{\fC}_{\varsigma'}$:
\begin{equation}\label{eq:log-contraction}
\widetilde{\mathrm{St}} \colon  \widetilde{\fC}_{\varsigma'+(1^{\rig})} \longrightarrow {\fC}_{\varsigma',\widetilde{\fC}_{\varsigma'}}
\end{equation}

\begin{lemma}\label{lem:punctured-curve-add-marking}
  There is a canonical punctured curve
  $\widetilde{\fC}^{\circ}_{\varsigma'+(1^{\rig})} \to
  \widetilde{\fC}_{\varsigma'}$ fitting into the following commutative
  diagram over $\widetilde{\fC}_{\varsigma'}$:
\[
\xymatrix{
\widetilde{\fC}^{\circ}_{\varsigma'+(1^{\rig})} \ar[rr]^{\widetilde{\mathrm{St}}^{\circ}} \ar[d] && {\fC}^{\circ}_{\varsigma',\widetilde{\fC}_{\varsigma'}} \ar[d] \\
\widetilde{\fC}_{\varsigma'+(1^{\rig})} \ar[rr]^{\widetilde{\mathrm{St}}} && {\fC}_{\varsigma',\widetilde{\fC}_{\varsigma'}}
}
\]
such that the vertical arrows are puncturings along punctured markings labeled by $\varsigma'$, and $\widetilde{\mathrm{St}}^{\circ}$ is pre-stable in the sense of \cite[Definition 2.5]{ACGS20P}.
\end{lemma}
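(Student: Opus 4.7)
The plan is to define $\widetilde{\fC}^{\circ}_{\varsigma'+(1^{\rig})}$ by pulling back the punctured log structure on ${\fC}^{\circ}_{\varsigma',\widetilde{\fC}_{\varsigma'}}$ along $\widetilde{\mathrm{St}}$, using the fact that $\widetilde{\mathrm{St}}$ is log étale and is an isomorphism in a neighborhood of every $\varsigma'$-marking. Since the added marking $p_{n+1}^{\rig}$ and the contracted rational bridges are disjoint from the $\varsigma'$-markings, the puncturing structure — which only modifies $\cM$ along the $\varsigma'$-markings — may be transported unchanged across the contraction.

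I would proceed in the following steps. First, describe the local structure of $\widetilde{\mathrm{St}}$ around each $\varsigma'$-marking $p_i$: the morphism of log curves is strict and an isomorphism on a small étale neighborhood of $p_i$ (by construction of $\widetilde{\mathrm{St}}$ and Lemma~\ref{lem:universal-log-curve-contraction}). In particular, $\widetilde{\mathrm{St}}^{\flat}$ identifies the characteristic sheaves at $p_i$. Second, for the sub-log structure $\cP \subset \cM_{{\fC}_{\varsigma',\widetilde{\fC}_{\varsigma'}}}$ associated to the $\varsigma'$-markings, denote by $\widetilde{\cP} \subset \cM_{\widetilde{\fC}_{\varsigma'+(1^{\rig})}}$ its pullback. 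The inclusion \eqref{eq:puncture-constraint} for $\cM_{{\fC}^{\circ}_{\varsigma',\widetilde{\fC}_{\varsigma'}}}$ furnishes local sections $e_i + c_i\sigma_i \in \cN \otimes_{\cO^*} \cP^{gp}$ at each $p_i$; the local isomorphism from the previous step transports these to sections of $\widetilde{\cN} \otimes_{\cO^*} \widetilde{\cP}^{gp}$ at the corresponding markings of $\widetilde{\fC}_{\varsigma'+(1^{\rig})}$. Define $\cM_{\widetilde{\fC}^{\circ}_{\varsigma'+(1^{\rig})}}$ as the sub-sheaf of fine monoids of $\widetilde{\cN} \otimes_{\cO^*} \widetilde{\cP}^{gp}$ generated by $\cM_{\widetilde{\fC}_{\varsigma'+(1^{\rig})}}$ and these transported sections. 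The structural morphism is induced by the one on $\cM_{{\fC}^{\circ}_{\varsigma',\widetilde{\fC}_{\varsigma'}}}$, noting that lifts of $e_i$ to $\cM$ necessarily map to zero in $\cO$. Third, verify that this is a puncturing: the underlying stacks agree (since we only modify $\cM^{gp}$), and away from the markings $\widetilde{\mathrm{St}}^{\circ,\flat}$ is an isomorphism of log structures. Fourth, verify pre-stability of $\widetilde{\mathrm{St}}^{\circ}$: the added generators of $\cM_{\widetilde{\fC}^{\circ}_{\varsigma'+(1^{\rig})}}$ are by construction in the image of $\widetilde{\mathrm{St}}^{\circ,\flat}$ applied to generators of $\cM_{{\fC}^{\circ}_{\varsigma',\widetilde{\fC}_{\varsigma'}}}$ coming from the puncturing, so $\cM_{\widetilde{\fC}^{\circ}_{\varsigma'+(1^{\rig})}}$ is generated by $\cM_{\widetilde{\fC}_{\varsigma'+(1^{\rig})}}$ and $(\widetilde{\mathrm{St}}^{\circ,\flat})(\widetilde{\mathrm{St}}^{\circ,*}\cM_{{\fC}^{\circ}_{\varsigma',\widetilde{\fC}_{\varsigma'}}})$.

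The main obstacle is confirming that the constraint \eqref{eq:puncture-constraint} is preserved by the pullback. Concretely, the transported local generators $e_i + c_i\sigma_i$ live a priori in $\widetilde{\cN} \otimes_{\cO^*} \widetilde{\cP}^{gp}$ only through the isomorphism at $p_i$; one must check that this identification is compatible with the global decomposition $\cM = \cN \oplus_{\cO^*} \cP$ on a neighborhood, and that the fine monoid generated stays inside $\widetilde{\cN} \otimes_{\cO^*} \widetilde{\cP}^{gp}$ after the contraction modifies $\cN$ on the bridges. This reduces to a local computation using the explicit local form of $\widetilde{\mathrm{St}}$: on the contracted bridges the log structure is altered away from the $\varsigma'$-markings, but the decomposition $\cM = \cN \oplus \cP$ at $p_i$ depends only on the étale local neighborhood of $p_i$ where $\widetilde{\mathrm{St}}$ is strict, so the constraint is preserved.

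With these points in place, the universal property of puncturings \cite[Proposition~2.7]{ACGS20P} guarantees that the local construction glues to a punctured curve $\widetilde{\fC}^{\circ}_{\varsigma'+(1^{\rig})} \to \widetilde{\fC}_{\varsigma'+(1^{\rig})}$ over $\widetilde{\fC}_{\varsigma'}$, and the commutativity of the claimed square is then immediate from the construction.
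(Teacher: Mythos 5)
There is a genuine gap, and it sits at the very first step of your plan. You assert that $\widetilde{\mathrm{St}}$ is an isomorphism in a neighborhood of every $\varsigma'$-marking and that the contracted rational bridges are disjoint from the $\varsigma'$-markings. This is false over part of the base: $\widetilde{\fC}_{\varsigma'}$ is the (saturated) universal punctured curve, and the family $\widetilde{\fC}_{\varsigma'+(1^{\rig})}\to\widetilde{\fC}_{\varsigma'}$ is built (Lemma~\ref{lem:universal-log-curve-contraction} and its pullback) so that over the locus where the added point $p^{\rig}_{n+1}$ collides with an old marking $p_i$, the fiber acquires a rational bridge $Z$ carrying both $p^{\rig}_{n+1}$ and the proper transform $p_1$ of $p_i$, and $\widetilde{\mathrm{St}}$ contracts $Z$ onto $p_i$. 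When $c_i<0$ this is precisely a punctured marking, so the $\varsigma'$-marking of the new curve lies on the contracted locus and the puncturing cannot be ``transported unchanged'': near $p_1$ the decomposition $\cM=\cN\oplus_{\cO^*}\cP$ of the new log curve is different (the bridge contributes a node parameter and a new divisorial part), and your construction only produces a punctured curve over the open complement of the collision locus, not over all of $\widetilde{\fC}_{\varsigma'}$. This collision locus is not a removable corner case; it is the only substantive case of the lemma (the paper dispatches bridges over nodes and non-punctured markings in one line), and it is exactly these loci that later produce the divisors $\Delta_j$ responsible for the correction terms in the string/divisor equations.

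What the correct argument has to do there, and what the paper does, is a genuine local computation: writing $e$ for the degeneracy of the component adjacent to the bridge and $\sigma_{p_0}$ for the section cut out by the old marking \emph{inside the base} $\widetilde{\fC}_{\varsigma'}$, one finds that the image of the generator $\delta$ is $(e-c\,\sigma_{p_0})+c\,\sigma_{q,2}+u$ near the node and $(e-c\,\sigma_{p_0})-c\,\sigma_{p_1}+u$ near the new marking (cf.\ \eqref{eq:around-node} and \eqref{eq:expand-puncture}). The key verification is that $e-c\,\sigma_{p_0}$ is a section of the base log structure, which holds only because the base is the saturation of the universal \emph{punctured} curve, whose characteristic sheaf contains $\bar e_0-c\,\bar\sigma_{p_0}$ at $p_0$; the new puncturing at $p_1$ is then \emph{defined} as the fine log structure generated by $\cM_{\widetilde{\fC}_{\varsigma'+(1^{\rig})}}$ and this element, which also makes pre-stability and the constraint \eqref{eq:puncture-constraint} visible. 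None of this is captured by a pullback of the old puncturing along $\widetilde{\mathrm{St}}$, so you need to add this case explicitly; away from the collision locus your argument is fine but also essentially content-free.
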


\begin{proof}
  Denote by $\cZ \subset \widetilde{\ul \fC}_{\varsigma'+(1^{\rig})}$
  the collection of rational bridges contracted by
  $\widetilde{\mathrm{St}}$.
  Away from $\cZ$, we define $\widetilde{\mathrm{St}}^{\circ}$ to be
  the identity.
  Let $Z \subset \cZ$ be an irreducible component over a geometric
  point of $\widetilde{\fC}_{\varsigma'}$.
  Then $Z$ is rational with precisely two special points.
  It suffices to construct $\widetilde{\mathrm{St}}^{\circ}$ around
  $Z$.

Suppose $Z$ contracts to a node or a marking with a positive contact
order.
In this case, since punctured structures are not involved, we define
the left vertical morphism to be an isomorphism along $Z$, and hence
set
$\widetilde{\mathrm{St}}^{\circ}|_{Z} = \widetilde{\mathrm{St}}|_{Z}$.

Next assume that $Z$ contracts to a marking $p_2 \in {\fC}^{\circ}_{\varsigma',\widetilde{\fC}_{\varsigma'}}$ with contact
order $c < 0$.
Denote by $p_1 \in Z$ the marking on $Z$, and
$p_0 \in \mathfrak{C}_{\varsigma'}$ the image of
$p_1$.
Let $q \in Z$ be the node joining $Z$ with another component
$Z' \subset \widetilde{\ul \fC}_{\varsigma'+(1^{\rig})}$
To proceed, we fix the following notations.

Noting that $p_0$ is a marking with contact order $c$, let
$\sigma_{p_0} \in \cM_{\fC_{\varsigma'}}$ be a local section given by
the generator of the divisorial log structure associated to the
marking $p_{0}$.
Similarly, let
$\sigma_{p_1} \in \cM_{\widetilde{\fC}_{\varsigma'+(1^{\rig})}}$ and
$\sigma_{p_2} \in
\cM_{{\fC}_{\varsigma',\widetilde{\fC}_{\varsigma'}}}$ be local
sections corresponding to the markings $p_1$ and $p_2$ respectively.
We will reserve $\delta \in \cM_{\punt}$ for a local generator of the
log structure.
We will use the same notations
$\sigma_{p_0}, \sigma_{p_1}, \sigma_{p_2}$ and $\delta$ for local
sections of the corresponding log structures pulled back to various
charts appearing in the following discussions when there is no danger
of confusion.
For simplicity, write
\[
\cM_{0} := \cM_{\widetilde{\fC}_{\varsigma'}}|_{\widetilde{\fC}_{\varsigma'+(1^{\rig})}}, \ \ \cM_1 := \cM_{\widetilde{\fC}_{\varsigma'+(1^{\rig})}}, \ \ \cM_{1}^{\circ} = \cM_{\widetilde{\fC}^{\circ}_{\varsigma'+(1^{\rig})}},
\]
\[
\cM_{2} := \widetilde{\mathrm{St}}^*\cM_{{\fC}_{\varsigma',\widetilde{\fC}_{\varsigma'}}}, \ \  \cM^{\circ}_2 := \widetilde{\mathrm{St}}^*\cM_{{\fC}^{\circ}_{\varsigma',\widetilde{\fC}_{\varsigma'}}}, \ \ \cN :=  \widetilde{\mathrm{St}}^* \circ {\mathfrak{f}}_{\varsigma',\widetilde{\fC}_{\varsigma'}}^*\cM_{\punt},
\]
and
$\mathfrak{f}^{\flat} :=
\widetilde{\mathrm{St}}^*({f}_{\varsigma',\widetilde{\fC}_{\varsigma'}}^\flat)
\colon \cN \to \cM^{\circ}_2$.
Since $\cM_{i}$ is a sublog structure of $\cM_{i}^{\circ}$, we
naturally identify elements of $\cM_{i}$ with the corresponding
elements in $\cM_{i}^{\circ}$ for $i=1,2$.

Since $\cM_{1}^{\circ}$ and $\cM_{1}$ are identical away from punctures, our next goal is to construct $\cM_{1}^{\circ}$ at the marking $p_1$ and to construct the  morphism $\widetilde{\mathrm{St}}^{\circ}$ along $Z$.  Since the underlying of $\widetilde{\mathrm{St}}^{\circ}$ is given by $\widetilde{\ul{\mathrm{St}}}$, it suffices to construct the dashed arrow of log structures making the following square commutative
\[
\xymatrix{
 \cM_{1}^{\circ} &&& \cM_{2}^{\circ} \ar@{-->}[lll]_-{(\widetilde{\mathrm{St}}^{\circ})^{\flat}} \\
\cM_{1} \ar[u] &&& \cM_{2} \ar[lll]_{\widetilde{\mathrm{St}}^{\flat}} \ar[u]
}
\]

\bigskip

\noindent
{\bf Case 1: Away from $p_1$.}

We first construct $(\widetilde{\mathrm{St}}^{\circ})^{\flat}$ around
the node $q$.
Let $U \to \ul{\widetilde{\fC}}^{\circ}_{\varsigma'+(1^{\rig})}$ be a
smooth neighborhood of $q$ containing no other special points, hence
$\cM_{1}|_{U} = \cM_{1}^{\circ}|_{U}$.
Let $\sigma_{q,1}, \sigma_{q,2} \in \cM_{1}|_{U}$ be the local
sections corresponding to the local coordinates of $Z'$ and $Z$ around
the node $q$ respectively.
Choosing coordinates appropriately, we may assume that in $\cM_{1}$
\begin{equation}\label{equ:add-marking-node1}
\sigma_{p_0} = \sigma_{q,1} + \sigma_{q,2}, \ \ \ \  \widetilde{\mathrm{St}}^{\flat}(\sigma_{p_2}) = \sigma_{q,1}
\end{equation}
where $\sigma_{p_0}$ is identified with its image in $\cM_{1}|_{U}$ via $\cM_0 \to \cM_1$.

On the other hand, we observe that
\begin{equation}\label{equ:add-marking-node2}
\mathfrak{f}^{\flat}(\delta) = e - c \sigma_{p_2} + u \in \cM_2^{\circ}|_{U}
\end{equation}
for some unit $u \in \cO^*_{U} \subset \cM_2$, and a local section $e \in \cM_{0}|_{U}$ identified with its image via $\cM_{0} \to \cM_{2} \subset \cM_{2}^{\circ}$.
Note that $\cM^{\circ}_{2}|_{U}$ is the fine log structure generated by $\cM_{2}|_{U}$ and $\mathfrak{f}^{\flat}(\delta)$. Since $(\widetilde{\mathrm{St}}^{\circ})^{\flat}|_{\cM_2} = \widetilde{\mathrm{St}}^{\flat}$, to construct $\widetilde{\mathrm{St}}^{\circ}|_{U}$ it remains to identify $(\widetilde{\mathrm{St}}^{\circ})^{\flat}\big(\mathfrak{f}^{\flat}(\delta)\big) \in \cM_{1}^{\circ}$.

Combining \eqref{equ:add-marking-node1} and \eqref{equ:add-marking-node2} with the above discussion, we calculate  in $\cM_{1}^{gp}|_{U}$  that
\begin{equation}\label{eq:around-node}
\begin{split}
(\widetilde{\mathrm{St}}^{\circ})^{\flat}\big(\mathfrak{f}^{\flat}(\delta)\big) &= e - c\cdot \widetilde{\mathrm{St}}^{\flat}(\sigma_{p_2}) + u = e - c \sigma_{q,1} + u \\
&= e - c\cdot (\sigma_{p_0} - \sigma_{q,2}) + u = (e - c \sigma_{p_0}) + c \sigma_{q,2} + u
\end{split}
\end{equation}
To show that
$(\widetilde{\mathrm{St}}^{\circ})^{\flat}\big(\mathfrak{f}^{\flat}(\delta)\big)
\in \cM_{2}|_{U}$, it suffices to show that
$(e - c \sigma_{p_0}) \in \cM_{0}|_{U}$, which is equivalent to
$(\bar{e} - c \bar{\sigma}_{p_0}) \in \ocM_{0}|_{U}$.
Here $\bar{e}$ and $\bar{\sigma}_{p_0}$ are the local sections in the
sheaf of characteristic monoids.

To continue, let $Z_0 \subset \fC^{\circ}_{\varsigma'}$ be the
component containing $p_0$.
Then locally at $p_0$ we have
$\bar{\mathfrak{f}}_{\varsigma'}(\bar{\delta}) = \bar{e}_0 - c
\bar{\sigma}_{p_0}$ where $\bar{\delta}$ is the local section in the
characteristic sheaf corresponding to $\delta$.
Hence $\bar{e}_0$ as a local section in $\ocM_{\fM_{\varsigma'}}$ is
the degeneracy of $Z_0$.
On the other hand, the local section $\bar{e}$ is the degeneracy of
$Z'$ by \eqref{equ:add-marking-node2}.
Note that away from $q$, the morphism
$\ocM_{\ainfty} \to \ocM_{1}|_{U}$ induced by
$\mathfrak{f}^{\flat}_{U}$ and the following composition
\[
U \longrightarrow {\fC}^{\circ}_{\varsigma',\widetilde{\fC}_{\varsigma'}} \longrightarrow \widetilde{\fC}_{\varsigma'} \stackrel{ \mathrm{Sat}}{\longrightarrow} \fC^{\circ}_{\varsigma'}  \stackrel{\mathfrak{f}_{\varsigma'}}{\longrightarrow} \ainfty.
\]
agree. This  implies that $\bar{e} = \bar{e}_0$ with $\bar{e}_0$ identified with its image in $\ocM_0$.
Hence we obtain $\bar{e} - c \bar{\sigma}_{p_0} = \bar{e}_0 - c  \bar{\sigma}_{p_0} \in \ocM_{0}|_{U}$ as needed.  Indeed, we may choose $\sigma_{q,2}$ be the coordinate on $Z \setminus p_1$. Hence \eqref{eq:around-node} defines the morphism $(\widetilde{\mathrm{St}}^{\circ})^{\flat}$ away from the marking $p_1$.

\bigskip

\noindent
{\bf Case 2: Around $p_1$.}
It remains to construct $\cM_{1}^{\circ}$ and
$(\widetilde{\mathrm{St}}^{\circ})^{\flat}$ around $p_1$.
Let $V \to \ul{\widetilde{\fC}}^{\circ}_{\varsigma'+(1^{\rig})}$ be a
smooth neighborhood of $p_1$ containing no other special points.
Choose a local section $\sigma_{p_1} \in \cM_{1}|_{V}$ corresponding
to the local coordinate of $Z$ around $p_1$.
By a careful choice, we may assume that
$\sigma_{p_1}|_{Z \setminus \{p_1, q\}} =(\sigma_{q,2})^{-1}|_{Z
  \setminus \{p_1, q\}}$ in $\cO^*_{Z \setminus \{p_1, q\}}$.
Thus by \eqref{eq:around-node}, we have
\begin{equation}\label{eq:expand-puncture}
(\widetilde{\mathrm{St}}^{\circ})^{\flat}\big(\mathfrak{f}^{\flat}(\delta)\big) = (e - c \sigma_{p_0}) + c \sigma_{q,2} + u = (e - c \sigma_{p_0}) - c \sigma_{p_1} + u, \ \ \ \mbox{ in $\cM_{1}^{gp}|_{V}$.}
\end{equation}
Note that $\bar{e} = \bar{e}_0$ is the degeneracy of $Z_0$, hence its image in the structure sheaf vanishing constantly along $Z$.
By the pre-stability in \cite[Definition 2.5]{ACGS20P}, we define $\cM_1^{\circ}$ around $p_1$ to be the fine log structure generated by $\cM_1$ and $(\widetilde{\mathrm{St}}^{\circ})^{\flat}\big(\mathfrak{f}^{\flat}(\delta)\big)$ in \eqref{eq:expand-puncture}.
Thus, \eqref{eq:expand-puncture} then defines the unique $(\widetilde{\mathrm{St}}^{\circ})^{\flat}|_{V}$ compatible with \eqref{eq:around-node}.

Finally, one checks that the construction does not depend on the local choices of charts. This finishes the proof.
\end{proof}

Composing with ${\mathfrak{f}}_{\varsigma',\widetilde{\fC}_{\varsigma'}} $, the above lemma defines a punctured map  over $\widetilde{\fC}_{\varsigma'}$
\begin{equation}\label{eq:universal-add-0marking}
\tilde{\mathfrak{f}}_{\varsigma' + (1^{\rig})} \colon \widetilde{\fC}^{\circ}_{\varsigma'+(1^{\rig})} \to  \ainfty
\end{equation}
with the discrete data $\varsigma'$ and an extra unmarked section
$p^{\rig}_{n+1}$ avoiding all special points.
For later use, we consider the following situation:

\begin{lemma}\label{lem:max-degeneracy-add-marking}
  Consider a morphism $T \to \fM_{\varsigma'}$ induced by a punctured
  map $f \colon \pC \to \ainfty$ over $T$.
  Let $\widetilde{\cC} \to \pC$ be the saturation, and
  $\tilde{f} \colon \widetilde{\cC}^{\circ}_{(1^{\rig})} \to \ainfty$
  be the punctured map over $\widetilde{\cC}$ obtained by pulling back
  $\tilde{\mathfrak{f}}_{\varsigma' + (1^{\rig})}$.
  Suppose all contact orders in $\varsigma'$ are negative.
  Then $f$ has uniform maximal degeneracy iff $\tilde{f}$ has uniform
  maximal degeneracy.
  In particular, the maximal degeneracies of $\tilde{f}$ and  $f$ coincide.
\end{lemma}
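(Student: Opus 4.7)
The argument will be fiberwise. I would fix a geometric point $s \to \widetilde{\cC}$ mapping to $\bar{s} \to T$. Since $\widetilde{\cC} \to T$ is \'etale-locally a saturated log curve, the characteristic sheaf map $\ocM_T|_{\bar{s}} \hookrightarrow \ocM_{\widetilde{\cC}}|_s$ is an order-preserving injection, and $\obD(f_{\bar{s}})$ embeds as an ordered subset of $\ocM_{\widetilde{\cC}}|_s$.

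The central claim to establish is that
\begin{equation*}
\obD(\tilde{f}_s) \subset \obD(f_{\bar{s}}) \cup \{e_Z\},
\end{equation*}
where $e_Z$ is the degeneracy of the rational bridge $Z$ produced by the construction of Lemma~\ref{lem:punctured-curve-add-marking} when $s$ is a special point of $\pC_{\bar{s}}$, and moreover $e_Z \poleq e_a$ in $\ocM_{\widetilde{\cC}}|_s$ for the degeneracy $e_a$ of an adjacent component. The case analysis mirrors the one in the proof of Lemma~\ref{lem:punctured-curve-add-marking}. When $s$ is a smooth unmarked point of $\pC_{\bar{s}}$, no bridge is inserted and $\obD(\tilde{f}_s) = \obD(f_{\bar{s}})$. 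When $s$ is a node with contact order $c \ge 0$, a direct characteristic-sheaf computation should yield $e_Z = e_b + c\sigma_2$ for one of the new smoothing parameters $\sigma_2$, which lies between the degeneracies $e_a$ and $e_b$ of the two adjacent components. When $s$ corresponds to a puncture $p_0$ lying on a component $Z_0$ with degeneracy $\bar{e}_0$ and contact order $c<0$, I would apply the local formula
\begin{equation*}
(\widetilde{\mathrm{St}}^\circ)^\flat\!\left(\mathfrak{f}^\flat(\delta)\right) = (e - c\sigma_{p_0}) + c\sigma_{q,2} + u
\end{equation*}
from the proof of Lemma~\ref{lem:punctured-curve-add-marking} and restrict to the generic point $\eta_Z$; since $\bar{\sigma}_{p_0}$ and $\bar{\sigma}_{q,2}$ both vanish there, this gives $e_Z = \bar{e}_0$, which already lies in $\obD(f_{\bar{s}})$.

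Granting this bound, the maxima of $\obD(\tilde{f}_s)$ and of $\obD(f_{\bar{s}})$ (the latter under the embedding) would coincide as subsets of $\ocM_{\widetilde{\cC}}|_s$, and the existence/uniqueness of a global maximum would be equivalent on both sides. Indeed, if $\obD(f_{\bar{s}})$ has a unique maximum $e_{\max}$, then $e_Z \poleq e_{\max}$ keeps $e_{\max}$ the unique maximum of $\obD(\tilde{f}_s)$; conversely any two distinct maximal elements of $\obD(f_{\bar{s}})$ would remain distinct maximal elements of $\obD(\tilde{f}_s)$. The coincidence of the maximal degeneracies of $\tilde{f}$ and $f$ would follow from the same identification applied at every $s$.

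The main obstacle I anticipate is verifying the inequality $e_Z \poleq e_a$ in the node case, which will require carefully tracking how the smoothing parameter decomposes across the log-\'etale contraction $\widetilde{\mathrm{St}}$ in $\ocM_{\widetilde{\cC}}|_s = \ocM_T|_{\bar{s}} \oplus_\NN \NN^2$. I expect this to follow from standard local computations for log-\'etale contractions of rational bridges at nodes of log curves, together with the log \'etaleness from Lemma~\ref{lem:universal-log-curve-contraction}.
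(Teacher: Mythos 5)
Your overall strategy coincides with the paper's: work fiberwise, note that $\tilde f$ agrees with the pullback of $f$ away from the contracted rational bridge $Z$, and show that the degeneracy of $Z$ is dominated by the degeneracy of an adjacent, non-contracted component, so that the collection of maximal degeneracies (and hence the uniform maximal degeneracy condition and the maximum itself) is unchanged. The reduction to this claim and the node case are fine, and are handled exactly as in the paper. The genuine problem is in the puncture case, which is the only case where the hypothesis that all contact orders in $\varsigma'$ are negative can enter, and which you dispatch with an incorrect computation.

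You claim that at the generic point $\eta_Z$ of the bridge inserted over a puncture $p_0$ both $\bar\sigma_{p_0}$ and $\bar\sigma_{q,2}$ vanish, so that $e_Z=\bar e_0$ and in particular $e_Z$ already lies in $\obD(f_{\bar s})$. This is false: $\sigma_{p_0}$ is pulled back from the new base, and after restricting to the point $s$ lying on the marking $p_0$ it becomes a \emph{nonzero} generator $\tau$ of the extra factor of $\ocM_{\widetilde{\cC}}|_s$; by \eqref{equ:add-marking-node1} it is precisely the smoothing parameter of the newly created node $q$, hence a base element that is constant along the fiber and does not vanish at $\eta_Z$. Consequently $e_Z$ differs from $\bar e_0$ by $|c|\,\tau$ and is a genuinely \emph{new} element of the degeneracy set, not an old one; the direction of the comparison with $\bar e_0$ is governed exactly by the sign of $c$. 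Since the marking on the bridge retains contact order $c$, balancing on the contracted bridge forces contact order $-c$ at $q$ from the $Z$-side, and it is only because $c<0$ that one gets $e_Z \poleq e_{Z'}=\bar e_0$ and may conclude; for $c>0$ the bridge would be \emph{more} degenerate than the adjacent component, potentially destroying uniform maximal degeneracy. Your claimed identity $e_Z=\bar e_0$ is sign-independent, so the negativity hypothesis plays no role in your argument — a clear symptom that the step fails. The needed inequality is true, but proving it requires redoing the characteristic-sheaf computation with $\bar\sigma_{p_0}=\tau$ treated as a base element (equivalently, the contact-order bookkeeping on the bridge that the paper uses). Incidentally, the step you flag as the anticipated obstacle — the inequality in the node case — is the easy part, and both you and the paper treat it identically.
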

\begin{proof}
Note that $\tilde{f}$ is given by the following composition
\[
\xymatrix{
\widetilde{\cC}^{\circ}_{(1^{\rig})} \ar[rr]_{\widetilde{\mathrm{St}}^{\circ}_T} \ar@/^1pc/[rrrr]^{\tilde{f}} && \pC_{\widetilde{\cC}} \ar[rr]_{f_{\widetilde{\cC}}} && \ainfty
}
\]
where $\widetilde{\mathrm{St}}^{\circ}_T$ is the pull-back of
$\widetilde{\mathrm{St}}^{\circ}$ from Lemma
\ref{lem:punctured-curve-add-marking} and $f_{\widetilde{\cC}}$ is the
pull-back of $f$ via $\widetilde{\cC} \to \pC \to T$.
Thus $\tilde{f}$ and $f_{\widetilde{C}}$ are isomorphic away from the
contracted loci of $\widetilde{\mathrm{St}}^{\circ}_T$.
Let $Z \subset \widetilde{\cC}^{\circ}_{(1^{\rig})}$ be the rational
bridge over a geometric point $s \in \widetilde{\cC}$ contracted by
$\widetilde{\mathrm{St}}^{\circ}_T$.
Thus, by degree considerations, the two special points on $Z$, denoted
$p$ and $q$, have contact orders $c$ and $-c$ respectively.

Suppose $p,q$ are both nodes joining $Z$ with two irreducible components $Z_1$ and $Z_2$ respectively. Denote by $e, e_1, e_2 \in \ocM_s$ the degeneracies of $Z, Z_1$ and $Z_2$ under $\tilde{f}$ respectively.  For $i=1,2$, since $Z_i$ is not contracted in $\pC_{\widetilde{\cC}}$, the degeneracies of $Z_i$ with respect to $\tilde{f}$ and $f_{\widetilde{\cC}}$ coincide.
If $c = -c = 0$, then $e = e_1  = e_2$. If $c > 0$, then $e_1 > e > e_2$.

It remains to consider the case that $p$  is a marking and $q$ is a node joining $Z$ with $Z_2$. In this case, the assumption $c<0$ implies that $q$ has contact order $-c >0$, hence $e_2 > e$.

Thus the maximal degeneracy of $\tilde{f}$ is the same as the maximal degeneracy of $f$.
\end{proof}

Consider the  morphisms of punctured curves over $\widetilde{\fC}_{\varsigma'}$
\[
\widetilde{\fC}^{\circ}_{\varsigma'+1'} \to \widetilde{\fC}^{\circ}_{\varsigma'+1^{\rig}} \to \widetilde{\fC}^{\circ}_{\varsigma'+(1^{\rig})}
\]
where the right arrow is obtained by adding an extra marking given by
$p^{\rig}_{n+1}$, and the left arrow is obtained by taking the $r$th
root stack along $p^{\rig}_{n+1}$ with the resulting marking denoted
by $p_{n+1}$.
We put the trivial puncturing along both markings $p^{\rig}_{n+1}$  and $p_{n+1}$.

Composing with $\tilde{\mathfrak{f}}_{\varsigma' + (1^{\rig})}$, we
obtain two punctured maps
$\tilde{\mathfrak{f}}_{\varsigma' + 1^{\rig}} \colon
\widetilde{\fC}^{\circ}_{\varsigma'+1^{\rig}} \to \ainfty$ and
$\tilde{\mathfrak{f}}_{\varsigma' + 1'} \colon
\widetilde{\fC}^{\circ}_{\varsigma'+1'} \to \ainfty$ over
$\widetilde{\fC}_{\varsigma'}$ with zero contact order along both
$p_{n+1}^{\rig}$ and $p_{n+1}$.
Denote by $\varsigma'+1^{\rig}$ and $\varsigma'+1'$ the corresponding
discrete data.
We have an isomorphism induced by taking the $r$th root along the
marking labeled by $1^{\rig}$:
\begin{equation}\label{eq:change-twist}
\fM_{\varsigma'+1'} := \fM_{\varsigma'+1'}(\ainfty) \cong \fM_{\varsigma'+1^{\rig}} := \fM_{\varsigma'+1^{\rig}}(\ainfty)
\end{equation}
The following proposition plays a crucial role in the proof of
Theorem~\ref{thm:remove-unit} (2) and (3):

\begin{proposition}\label{prop:universal-adding-marking}
The tautological morphism $\widetilde{\fC}_{\varsigma'} \to \fM_{\varsigma'+1^{\rig}}$ induced by $\tilde{\mathfrak{f}}_{\varsigma' + 1^{\rig}}$ is strict and \'etale.
\end{proposition}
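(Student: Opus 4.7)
The plan is to combine the strict \'etaleness of $\fC_n \to \fMtw_{n+1}$ from Lemma~\ref{lem:universal-log-curve-add-marking} with the basicness of $\fM_{\varsigma'+1^{\rig}}$ and a local verification of characteristic monoids.

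Concretely, I would first exploit the commutative diagram
\[
\xymatrix{
\widetilde{\fC}_{\varsigma'} \ar[r] \ar[d] & \fM_{\varsigma'+1^{\rig}} \ar[d] \\
\fC_n \ar[r] & \fMtw_{n+1}
}
\]
where the top arrow is the morphism in question, the right vertical arrow forgets the punctured map, the left vertical arrow is the composition of the saturation $\mathrm{S}\colon \widetilde{\fC}_{\varsigma'} \to \fC^{\circ}_{\varsigma'}$ with the structural projections to $\fC_n$, and the bottom arrow is provided by Lemma~\ref{lem:universal-log-curve-add-marking}. Commutativity follows from the construction of $\tilde{\mathfrak{f}}_{\varsigma'+1^{\rig}}$ in Lemma~\ref{lem:punctured-curve-add-marking}, since $\tilde{\mathfrak{f}}_{\varsigma'+1^{\rig}}$ is set up so that, after forgetting its punctured map structure, the resulting log curve with an extra untwisted marking at $p_{n+1}^{\rig}$ matches the universal log curve obtained from $\fC_n \to \fMtw_{n+1}$.

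For the underlying stack assertion, I would construct an explicit inverse. Given a punctured map $f \colon \pC \to \ainfty$ of type $\varsigma'+1^{\rig}$ over $S$, the marking $p_{n+1}$ has contact order zero; stabilizing via the universal version of $\widetilde{\mathrm{St}}^{\circ}$ as in Lemma~\ref{lem:punctured-curve-add-marking} yields a punctured map $f'$ of type $\varsigma'$ over $S$ together with a section $\sigma\colon S \to \pC^{\circ}_{f'}$ given by the image of $p_{n+1}$. Lifting $\sigma$ uniquely through the saturation $\mathrm{S}$ produces a morphism $S \to \widetilde{\fC}_{\varsigma'}$, and this assignment is inverse to the tautological morphism on underlying stacks by direct inspection.

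Finally, for the logarithmic assertion it suffices to check that $\tilde{\mathfrak{f}}_{\varsigma'+1^{\rig}}$ is a basic family over $\widetilde{\fC}_{\varsigma'}$, i.e.\ that the characteristic sheaf of $\widetilde{\fC}_{\varsigma'}$ coincides pointwise with the basic monoid of the restricted punctured map. The main obstacle lies at geometric points $\bar p \to \widetilde{\fC}_{\varsigma'}$ mapping to a node or a punctured marking of the universal curve, where $\tilde{\mathfrak{f}}_{\varsigma'+1^{\rig}}$ inserts a rational bridge: the basic monoid then acquires an extra generator, namely the smoothing parameter of the new node, which must be matched against the local log parameter of $\widetilde{\fC}_{\varsigma'}$ in the bridge-inserting direction. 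The case of a node or a positive-contact marking is handled directly by \eqref{equ:add-marking-node1} and \eqref{equ:add-marking-node2}, while the more delicate case of a puncture requires the analysis leading to \eqref{eq:expand-puncture}, where the new puncture structure is written explicitly in terms of local coordinates on $\widetilde{\fC}_{\varsigma'}$.
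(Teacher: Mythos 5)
Your treatment of strictness is essentially the paper's: one verifies that the family $\tilde{\mathfrak{f}}_{\varsigma'+1^{\rig}}$ over $\widetilde{\fC}_{\varsigma'}$ is basic by checking, at the inserted rational bridges, that the degeneracies and node parameters satisfy exactly the edge relations, using \eqref{equ:add-marking-node1}, \eqref{equ:add-marking-node2} and, in the puncture case, the analysis culminating in \eqref{eq:around-node} and \eqref{eq:expand-puncture}; strictness then follows from the universal property of basic monoids (\cite[Proposition~2.40]{ACGS20P}). So that half of your proposal is fine.

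The \'etaleness argument, however, has a genuine gap. The morphism $\widetilde{\fC}_{\varsigma'} \to \fM_{\varsigma'+1^{\rig}}$ is \emph{not} an isomorphism of underlying stacks, so the ``explicit inverse'' you propose cannot exist. Concretely, consider a punctured map of type $\varsigma'+1^{\rig}$ over a geometric point whose source contains a rational component $Z$ carrying exactly three special points: the marking $p^{\rig}_{n+1}$ and two nodes with contact orders $c$ and $-c$ as seen from $Z$. This object has two distinct preimages in $\widetilde{\fC}_{\varsigma'}$: the pair (map of type $\varsigma'$ with $Z$ contracted, section at the resulting node), and the pair (map of type $\varsigma'$ obtained by merely forgetting the marking, keeping the semistable bridge $Z$, section at the smooth point of $Z$ where the marking sat). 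Both sprout back to the same object, so the morphism is two-to-one over such points; this is precisely why the classical statement (Behrend's \cite[Lemma~7]{Be97}, quoted before Lemma~\ref{lem:universal-log-curve-add-marking}) only asserts \'etaleness of $\ul{\fC}_{n} \to \ul{\fMtw}_{n+1}$, never an isomorphism. Your purported inverse is also circular: the contraction $\widetilde{\mathrm{St}}^{\circ}$ of Lemma~\ref{lem:punctured-curve-add-marking} is constructed only on the family living over $\widetilde{\fC}_{\varsigma'}$, i.e.\ after pulling back along the very morphism whose \'etaleness is at stake, and there is no ``universal version'' of it applicable to an arbitrary object of $\fM_{\varsigma'+1^{\rig}}$ (nor would a choice of which semistable bridges to contract be functorial in families).

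What is actually needed, and what the paper supplies, is the formal lifting criterion: given a square-zero extension $T_0 \subset T$ and a family of type $\varsigma'+1^{\rig}$ over $T$ restricting over $T_0$ to one pulled back from $\widetilde{\fC}_{\varsigma'}$, one must produce a unique lift $T \to \widetilde{\fC}_{\varsigma'}$, which amounts to constructing the contracted punctured map over $T$. This is done by using the strict \'etale morphism $\fC_n \to \fMtw_{n+1}$ of Lemma~\ref{lem:universal-log-curve-add-marking} (the useful role of your commutative square, which matches \eqref{diag:formal-etale}) to pin down the contracted log curve over $T$, defining the contracted punctured curve via the pushforward log structure along the contraction, building the contracted underlying map by descending a torsion line bundle on the contracted bridges through the coarse curve and invoking the stable-map factorization of \cite[Proposition~9.1.1]{AbVi02}, recovering the log part of the map by adjunction, and finally checking compatibility over $T_0$. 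None of these steps is covered by ``direct inspection'' in your sketch, so as written the \'etaleness claim is unsupported.
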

\begin{proof}
We divide the proof into several steps.

\smallskip

\noindent
{\bf Step 1: Strictness.}

First, we show that the morphism in question is strict.
Since the characteristic sheaf of monoids of
$\fM_{\varsigma'+1^{\rig}}$ is basic \cite[Definition~2.38]{ACGS20P}, by \cite[Proposition~2.40]{ACGS20P} it suffices to show that the
punctured map \eqref{eq:universal-add-0marking} over
$\widetilde{\fC}_{\varsigma'}$ is basic.
In particular, we need to show that the degeneracies of irreducible
components of the source curve over each geometric fiber satisfy
precisely the edge relations of \cite[(2.16)]{ACGS20P}.
By the basicness of $\fM_{\varsigma'}$, it suffices to consider the
components contracted by \eqref{eq:log-contraction}.

Let $Z$ be a such contracted rational bridge.
Suppose $Z$ contracts to a punctured point, and consider the situation
as in {\bf Case 1} of the proof of
Lemma~\ref{lem:punctured-curve-add-marking}.
By \eqref{equ:add-marking-node2} and \eqref{eq:around-node}, the two
components $Z$ and $Z'$ have degeneracies
$\bar{e} - c\bar{\sigma}_{p_0} = \bar{e}_0 - c\bar{\sigma}_{p_0}$ and
$\bar{e} = \bar{e}_0$ respectively.
The first equation in \eqref{equ:add-marking-node1} then implies that
the degeneracies of $Z$ and $Z'$ satisfy precisely the edge relation
\cite[(2.16)]{ACGS20P} for basicness.
The other cases when $Z$ contracts to a marking or a node are similar
and simpler, hence left to the reader.
This implies the strictness as in the statement.

\smallskip

\noindent
{\bf Step 2: Unwinding the \'etaleness.}

Since the morphism in question is locally of finite type between
locally noetherian stacks, it suffices to check that it is formally
\'etale.
Suppose we have the commutative diagram of solid arrows
\begin{equation}\label{diag:formal-etale}
\xymatrix{
T_0 \ar[rr]^{[f_{T_0}]} \ar[d] &&  \widetilde{\fC}_{\varsigma'} \ar[rr] \ar[d] && \fC_{n} \ar[d] \\
T \ar[rr]_{[f_{T}]} \ar@{-->}[urr]  \ar@{-->}[urrrr] && \fM_{\varsigma'+1^{\rig}}  \ar[rr] && \fMtw_{n+1},
}
\end{equation}
where the left vertical arrow is a square-zero extension, the two right horizontal arrows are the tautological morphisms, and the two left horizontal arrows are strict. It remains to show that there exists a unique lift $T \to \widetilde{\fC}_{\varsigma'}$ making the above diagram commutative. Since the question is local on $T$, we may assume that $T$ is of finite type.

Denote by $f_T \colon \pC_T \to \ainfty$ and
$f_{T_0} \colon \pC_{T_0} \to \ainfty$ the punctured maps over $T$ and
$T_0$ obtained by pulling back the universal punctured maps over
$\fM_{\varsigma'+1^{\rig}}$ and $\widetilde{\fC}_{\varsigma'}$
respectively.
By abuse of notations, we use $f_T \colon \pC_T \to \ainfty$ for the
same punctured map over $T$ except that the $(n+1)$-st marking is
removed from the set of markings.
The commutativity of the left square is equivalent to the equality of
objects $[f_{T_0}] = [f_T]|_{T_0}$.
By Lemma~\ref{lem:punctured-curve-add-marking}, $f_{T_0}$ factors
through a contraction
$\widetilde{\mathrm{St}}^{\circ}_{T_0} \colon \pC_{T_0} \to
\overline{\pC_{T_0}}$ with the resulting punctured map over $T_0$
denoted by $\bar{f}_{T_0} \colon \overline{\pC_{T_0}} \to \ainfty$.
Equivalently, we need to exhibit the existence and uniqueness of a
punctured map $\bar{f}_{T} \colon \overline{\pC_{T}} \to \ainfty$ over
$T$ making the following diagram commutative

\[
\xymatrix{
\pC_{T} \ar@/^1.5pc/[rrrr]^{f_{T}} \ar@{-->}[rr]_{\widetilde{\mathrm{St}}^{\circ}_{T}} && \overline{\pC_{T}} \ar@{-->}[rr]_{\bar{f}_{T} \ \ \ } && \ainfty \\
\pC_{T_0} \ar[rr]_{\widetilde{\mathrm{St}}^{\circ}_{T_0}} \ar[u]  && \overline{\pC_{T_0}} \ar[rru]_{\bar{f}_{T_0}} \ar@{-->}[u] &&
}
\]

\smallskip

\noindent
{\bf Step 3: Construction of the punctured curve $\overline{\pC_{T}} \to T$.}
By the \'etaleness of Lemma \ref{lem:universal-log-curve-add-marking}, we obtain a unique lift $T \to \fC_{n}$ making \eqref{diag:formal-etale} commutative. Pulling  back \eqref{eq:canonical-log-curve-extra-marking}, we obtain the log curve $\overline{\cC}_T \to T$. Further pulling back $\mathrm{St}$ as in Lemma \ref{lem:universal-log-curve-contraction}, we obtain $\widetilde{\mathrm{St}}_{T} \colon \cC_T \to \overline{\cC}_T$ where $\cC_T \to T$ is the log curve associated to $\pC_T \to T$.  We then define
\[
\overline{\cC}^{\circ}_{T} = (\overline{\uC}_{T}, \cM_{\overline{\cC}^{\circ}_{T}} :=  \widetilde{\mathrm{St}}_{T,*}\cM_{\pC_T}).
\]
Next, we will show that $\overline{\cC}^{\circ}_{T}  \to T$ is a punctured curve making the following commutative
\[
\xymatrix{
\pC_{T} \ar@{-->}[d]_{\widetilde{\mathrm{St}}^{\circ}_{T}}  \ar[rr]^{\mathrm{P}} && \cC_T \ar[d]^{\widetilde{\mathrm{St}}_{T}}  \\
\overline{\cC}^{\circ}_{T}  \ar@{-->}[rr]^{\overline{\mathrm{P}}}  &&  \overline{\cC}_{T}
}
\]

First, the adjunction \cite[Definition 1.1.5, 3]{Og18} implies a natural morphism $\widetilde{\mathrm{St}}_{T}^*\cM_{\overline{\cC}^{\circ}_{T}} = \widetilde{\mathrm{St}}_{T}^*(\widetilde{\mathrm{St}}_{T,*}\cM_{\pC_T}) \to \cM_{\pC_T}$ defining the log map $\widetilde{\mathrm{St}}^{\circ}_{T}$.
Observe that $\widetilde{\mathrm{St}}_{T,*}\cM_{\cC_T} = \cM_{\overline{\cC}_{T}}$ by Lemma \ref{lem:universal-log-curve-contraction}.
Thus the puncturing $\mathrm{P}^{\flat} \colon \cM_{\cC_T} \hookrightarrow \cM_{\pC_T}$ pushes forward to a morphism of log structures $\overline{\mathrm{P}}^{\flat} \colon \cM_{\overline{\cC}_{T}} \hookrightarrow \cM_{\overline{\cC}^{\circ}_{T}}$ defining $\overline{\mathrm{P}} \colon \overline{\cC}^{\circ}_{T} \to \overline{\cC}_{T}$.
We will show that $\overline{\mathrm{P}} $ is a puncturing, hence making $\overline{\cC}^{\circ}_{T} \to T$  a punctured curve.
By \cite[Definition 2.1]{ACGS20P}, this amounts to show that $\overline{\mathrm{P}}^{\flat}$ defines an inclusion $\cM^{gp}_{\overline{\cC}^{\circ}_{T}} \hookrightarrow \cM^{gp}_{\overline{\cC}_{T}}$,
which can be checked on the level of characteristics $\ocM^{gp}_{\overline{\cC}^{\circ}_{T}} \hookrightarrow \ocM^{gp}_{\overline{\cC}_{T}}$.

Second, note that away from markings with negative contact orders,
$\overline{\mathrm{P}}^{\flat}$ is an isomorphism of log structures.
Now let $\bar{p} \in \overline{\uC}_{T} $ be a marking with contact
order $c < 0$.
Let $V \to \overline{\uC}_{T}$ be a neighborhood of $\bar{p}$, and
denote $U := V\times_{\overline{\uC}_{T}}\uC_{T}$.
If $\widetilde{\ul{\mathrm{St}}}_{T}|_{U}$ is an isomorphism, then
$\overline{\mathrm{P}}^{\flat}|_{V} = {\mathrm{P}}^{\flat}|_{U}$ is a
puncturing along $\bar{p}$.
We thus assume there is a rational bridge $Z \subset \uC_T$ contracted
by $\widetilde{\ul{\mathrm{St}}}_{T}$ to $\bar{p}$.
Denote by $p \in Z$ the marking corresponding to $\bar{p}$.
Since the question is on the level of characteristics, it suffices to
consider the fibers
$\ocM^{gp}_{\overline{\cC}^{\circ}_{T}}|_{\bar{p}} \hookrightarrow
\ocM^{gp}_{\overline{\cC}_{T}}|_{\bar{p}}$ at $\bar{p}$.
This can be checked by further pulling back to $p$:
\[
\widetilde{\mathrm{St}}_{T}^*(\widetilde{\mathrm{St}}_{T,*}\ocM_{\pC_T})|_{p} \subset \ocM^{gp}_{\pC_T}|_{p} = \ocM^{gp}_{\overline{\cC}_T}|_{p}
\]
This verifies that $\overline{\mathrm{P}}$ is a puncturing, as needed.

\smallskip

\noindent
{\bf Step 4: Construction of the punctured map ${\bar{f}}_T$.}
We first construct the underlying map $\ul{\bar{f}}_T  \colon \overline{\uC}_T \to \ul{\infty}_{\cA}$. Let $k$ be a positive integer divisible by the orders of inertia groups of all markings and nodes of ${\uC_T} \to \ul{T}$, the underlying of the log curve  ${\cC_T} \to T$. Take the coarse curve ${\uC_T} \to {\ul{C}_T}$. Since $\ul{\infty}_{\cA} \cong \BG_m$, let $L$ be the corresponding line bundle over ${\ul{\cC}_T}$ defining $\ul{\bar{f}}_T$. Then $L^{k}$ descends to a line bundle over ${\ul{C}_T}$, hence a morphism
$\ul{{f}}^c_T \colon {\ul{C}_T} \to \ul{\infty}_{\cA}$ making the following diagram commutative
\[
\xymatrix{
{\ul{\cC}_T} \ar[rr] \ar[d]_{\ul{{f}}_T} && {\ul{C}_T} \ar[d]^{\ul{{f}}^c_T} \\
\ul{\infty}_{\cA} \ar[rr]_{\mbox{$k$th power}} && \ul{\infty}_{\cA}
}
\]

Let $\cZ$ be a rational bridge contracted by
$\widetilde{\mathrm{St}}^{\circ}_{T}$, and $p, q \in \cZ$ be the two
special points.
Observe that $p$ and $q$ have the same inertia groups
with the contact orders $c$ and $-c$ respectively.
This implies that
$L|_{\cZ} \cong \cO_{\cZ}(cp - cq)$ is torsion,
hence $L^{k}|_{\cZ}$
descends to the the trivial line bundle over $Z$, where
$\cZ \to Z \subset {\ul{C}_T}$ is the coarse morphism. Consider the
coarse morphism $\overline{\uC}_T \to \overline{\ul{C}}_{T}$ over
$T$. Thus, $\ul{{f}}^c_T$ factors through
$\ul{\bar{f}}^c_T \colon \overline{\ul{C}}_T \to \ul{\infty}_{\cA}$.
We have arrived at the following commutative diagram of solid arrows
\[
\xymatrix{
{\ul{\cC}_T} \ar[rr] \ar[d]^{\widetilde{\ul{\mathrm{St}}}_{T}} \ar@/_1.5pc/[dd]_{\ul{{f}}_T} && \sqrt{\ul{C}_T} \ar[rr] \ar[d] && \ul{C}_T \ar[d]  \\
\overline{\uC}_T \ar@{-->}[rr] \ar@{-->}[d]^{\ul{\bar{f}}_T} && \sqrt{\overline{\ul{C}}_{T}} \ar[d] \ar[rr] && \overline{\ul{C}}_T \ar[d]^{\ul{\bar{f}}^c_T} \\
\ul{\infty}_{\cA} \ar@{=}[rr] && \ul{\infty}_{\cA} \ar[rr]_{\mbox{$k$th power}} && \ul{\infty}_{\cA}
}
\]
where the two squares on the right are both Cartesian. Thus to construct $\ul{\bar{f}}_T$, it suffices to construct the horizontal  dashed arrow in the above diagram.

By \cite[Proposition 9.1.1]{AbVi02}, the composition
${\ul{\cC}_T} \to \sqrt{\ul{C}_T} \to \sqrt{\overline{\ul{C}}_{T}}$
factors through a twisted stable map
${\ul{\cC}^{st}_T} \to \sqrt{\overline{\ul{C}}_{T}}$ over $\ul{T}$.
By construction, the morphism ${\ul{\cC}_T} \to {\ul{\cC}^{st}_T}$
contracts precisely those rational bridges that are contracted by
$\widetilde{\ul{\mathrm{St}}}_{T}$, possibly followed by further
rigidifications along some special points. More explicitly, the underlying morphism  ${\ul{\cC}_T} \to {\ul{\cC}^{st}_T}$
factors through $\widetilde{\ul{\mathrm{St}}}_{T} \colon {\ul{\cC}_T} \to \overline{\uC}_T$.
The composition $\overline{\uC}_T \to {\ul{\cC}^{st}_T} \to  \sqrt{\overline{\ul{C}}_{T}} \to \ul{\infty}_{\cA}$ defines the unique $\ul{\bar{f}}_T$ as needed.

We have arrived  at the following commutative diagram of solid arrows:
\[
\xymatrix{
\pC_{T} \ar@/^1.5pc/[rrrr]^{f_{T}} \ar[rr]_{\widetilde{\mathrm{St}}^{\circ}_{T}} \ar[d]_{\mathrm{P}} && \overline{\cC}^{\circ}_{T} \ar@{-->}[rr]_{\bar{f}_{T}} \ar[d]^{\overline{\mathrm{P}}} && \ainfty \ar[d] \\
\cC_{T} \ar[rr]^{\widetilde{\mathrm{St}}_{T}} && \overline{\cC}_{T}  \ar[rr]^{\ul{\bar{f}}_T} && \ul{\infty}_{\cA}
}
\]
By adjunction \cite[Definition 1.1.5, 3]{Og18} again, the morphism
$f^{\flat}_T \colon f^*_{T}\cM_{\ainfty} \cong
\widetilde{\mathrm{St}}_{T}^*\ul{\bar{f}}_T^*\cM_{\ainfty} \to
\cM_{\pC_T}$ defines a morphism
$\bar{f}_T^{\flat} \colon \ul{\bar{f}}_T^*\cM_{\ainfty} \to
\cM_{\overline{\cC}^{\circ}_{T}}$, hence defines the punctured map
$\bar{f}_T$.
The uniqueness of $\bar{f}_T$ follows directly by construction.

\smallskip

\noindent
{\bf Step 5: Compatibility over $T_0$.}
Finally, we observe that the above constructed punctured map
$\bar{f}_T$ restricts to $\bar{f}_{T_0}$ over $T_0$.
Indeed, by prestability, the sheaf of monoids
$\cM_{\overline{\cC}^{\circ}_{T_0}} \subset \cM^{gp}_{\cC_{T_0}}$ is
locally generated by $\cM_{\overline{\cC}_{T_0}}$ and
$\bar{f}^{\flat}_{T_0}(\delta)$ for a local generator of
$f^*_{T_0}\cM_{\ainfty}$.
Pushing forward along $\widetilde{\mathrm{St}}^{\circ}_{T}|_{T_0}$ we
observe that
$\cM_{\overline{\cC}^{\circ}_{T}}|_{\overline{\cC}_{T_0}} \subset
\cM^{gp}_{\overline{\cC}_{T_0}}$ is locally generated by
$\cM_{\overline{\cC}_{T_0}}$ and
$\bar{f}^{\flat}_{T}|_{T_0}(\delta)$.
Note that the pull-backs agree in $\cM^{gp}_{\cC_T}$:
\[
\widetilde{\mathrm{St}}_{T}^*\big(\bar{f}^{\flat}_{T}|_{T_0}(\delta) \big) = f^{\flat}_{T}(\delta)|_{T_0} =  \widetilde{\mathrm{St}}_{T}^*\big( \bar{f}^{\flat}_{T_0}(\delta) \big).
\]
Since $\bar{f}^{\flat}_{T}|_{T_0}(\delta) = \bar{f}^{\flat}_{T_0}(\delta)$ in $\cM^{gp}_{\cC_{T_0}}$, we obtain the desired compatibility $\bar{f}_T|_{T_0} = \bar{f}_{T_0}$.
\end{proof}

\subsubsection{Twisting R-maps along a marking}

Consider $\fM_{\varsigma'+{\bf 1}'} := \fM_{\varsigma'+{\bf 1}'}(\ainfty)$ with the universal punctured map $\mathfrak{f}_{\varsigma' + {\bf 1}'} \colon \fC^{\circ}_{\varsigma'+{\bf 1}'} \to \ainfty$. Note that the $(n+1)$-st marking  $p_{n+1} \subset \fC^{\circ}_{\varsigma'+{\bf 1}'}$ labeled by ${\bf 1}'$ has contact order $-d < 0$.

\begin{lemma}\label{lem:split-marking}
There is a canonical dashed arrow lifting $\mathfrak{f}_{\varsigma' + {\bf 1}'}$
\[
\xymatrix{
 && (\ainfty\times\cA)^{\circ} \ar[d]^{\ft} \\
  \fC^{\circ}_{\varsigma'+{\bf 1}'} \ar@{-->}[urr]^{\mathfrak{f}} \ar[rr]_{\mathfrak{f}_{\varsigma' + {\bf 1}'}} && \ainfty
}
\]
\end{lemma}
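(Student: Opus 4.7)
The plan exploits the isomorphism $\ft'\colon (\ainfty \times \cA)^{\circ} \to \ainfty \times \cA$ of Lemma~\ref{lemma:twisting-isomorphism}. Since $\ft = \mathrm{pr}_1 \circ \ft'$ where $\mathrm{pr}_1$ denotes the projection to the first factor, the problem of lifting $\mathfrak{f}_{\varsigma'+{\bf 1}'}$ through $\ft$ reduces to producing an auxiliary morphism $g\colon \fC^{\circ}_{\varsigma'+{\bf 1}'} \to \cA$ recording the divisorial data of the unit marking $p_{n+1}$.

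First, I would construct $g$ via the standard description $\cA = [\A^1/\GG_m]$: the pair $(\cO(p_{n+1}), s_{n+1})$, consisting of the line bundle associated to the effective Cartier divisor $p_{n+1}$ on $\ul{\fC}^{\circ}_{\varsigma'+{\bf 1}'}$ together with its tautological section vanishing along $p_{n+1}$, defines the required morphism. Equivalently, via the correspondence in \S\ref{sss:log-line-bundles}, $g$ is the strict morphism to $\cA$ associated to the global section $e_{n+1} \in \Gamma(\fC^{\circ}_{\varsigma'+{\bf 1}'}, \ocM_{\fC^{\circ}_{\varsigma'+{\bf 1}'}})$ that locally around $p_{n+1}$ equals $\sigma_{p_{n+1}}$ and vanishes elsewhere.

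Next, I would set
\[
  \mathfrak{f} := (\ft')^{-1} \circ (\mathfrak{f}_{\varsigma'+{\bf 1}'}, g)\colon \fC^{\circ}_{\varsigma'+{\bf 1}'} \to (\ainfty \times \cA)^{\circ}.
\]
Commutativity of the triangle then follows formally:
\[
  \ft \circ \mathfrak{f} = \mathrm{pr}_1 \circ \ft' \circ (\ft')^{-1} \circ (\mathfrak{f}_{\varsigma'+{\bf 1}'}, g) = \mathfrak{f}_{\varsigma'+{\bf 1}'}.
\]
The same conclusion may be checked directly on characteristics using the explicit formula $\overline{(\ft')^{-1}}^\flat(1, 0) = (1, d)$ and $\overline{(\ft')^{-1}}^\flat(0, 1) = (0, 1)$ recorded in the proof of Lemma~\ref{lemma:twisting-isomorphism}: near $p_{n+1}$ one computes $\bar{\mathfrak{f}}^\flat(1, -d) = e - d\sigma_{p_{n+1}} = \bar{\mathfrak{f}}_{\varsigma'+{\bf 1}'}^\flat(1)$, which matches the puncturing data imposed at the unit marking of contact order $-d$.

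The main subtle point, rather than a serious obstacle, is to confirm that $g$ is truly canonical (intrinsic to the universal punctured curve and independent of local choices) in the stacky setting where $p_{n+1}$ is a $\mu_r$-gerbe. This is automatic: the pair $(\cO(p_{n+1}), s_{n+1})$ is defined purely from the marked divisor $p_{n+1}$, which remains an effective Cartier divisor on the twisted curve as reviewed in \S\ref{s:curves}.
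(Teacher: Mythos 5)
Your construction is correct and is essentially the paper's own argument: the paper likewise uses the marking $p_{n+1}$ to define a log map to $\cA$, pairs it with $\mathfrak{f}_{\varsigma'+\mathbf{1}'}$ to land in $\ainfty\times\cA$, and composes with $(\ft')^{-1}$ from Lemma~\ref{lemma:twisting-isomorphism}. Your extra verification on characteristics near $p_{n+1}$ (recovering $e - d\sigma_{p_{n+1}} = \bar{\mathfrak{f}}_{\varsigma'+\mathbf{1}'}^{\flat}(1)$) is a correct, slightly more explicit version of what the paper leaves implicit.
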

\begin{proof}
  The section $p_{n+1} \subset \fC^{\circ}_{\varsigma'+{\bf 1}'}$
  defines a log map $\fC^{\circ}_{\varsigma'+{\bf 1}'} \to \cA$.
  Combining this with $\mathfrak{f}_{\varsigma' + {\bf 1}'}$ yields a
  morphism $\fC^{\circ}_{\varsigma'+{\bf 1}'} \to \ainfty \times \cA$.
  Composing this morphism with the inverse $(\ft')^{-1}$ from
  Lemma~\ref{lemma:twisting-isomorphism} yields the desired dashed arrow
  $\ft$.
\end{proof}

Since the contact order of $f_{\varsigma'+{\bf 1}'}$ along the $(n+1)$-st marking is $-d$, by the definition of $\ft$ in \eqref{eq:ft}, the composition
\begin{equation}\label{eq:remove-contact-order}
\fC^{\circ}_{\varsigma'+{\bf 1}'} \longrightarrow (\ainfty\times\cA)^{\circ} \stackrel{\fp}{\longrightarrow} \ainfty\times\cA \longrightarrow  \ainfty
\end{equation}
with the third arrow the projection, has zero contact order along $p_{n+1}$.
Thus we have a commutative diagram
\begin{equation}\label{diag:remove-contact}
\xymatrix{
&& \fC^{\circ}_{\varsigma'+{\bf 1}'} \ar[lld]_{\mathfrak{f}_{\varsigma' + {\bf 1}'}} \ar[d]_{\mathfrak{f}} \ar[rr] && \fC^{\circ}_{\varsigma'+1'} \ar[d] \ar[rrd]^{ \mathfrak{f}_{\varsigma'+{1}'}  } \ar[rr] &&  \fC^{\circ}_{\varsigma'+(1^{\rig})} \ar[d]^{\mathfrak{f}_{\varsigma'+(1^{\rig})}} \\
\ainfty && (\ainfty\times\cA)^{\circ} \ar[rr]_{\fp} \ar[ll]^{\ft}&& \ainfty\times\cA \ar[rr] && \ainfty
}
\end{equation}
where the top left horizontal arrow is the puncturing along the
$(n+1)$-marking labeled by ${\bf 1}'$, and the top right horizontal
arrow rigidifies and removes the $(n+1)$-st marking from the set of
markings.
The middle vertical arrow is the product of
$\mathfrak{f}_{\varsigma'+{1}'}$ and the morphism
$\fC^{\circ}_{\varsigma',1'} \to \cA$ defined by the divisorial log
structure associated to the $(n+1)$-st marking.
The morphism $\mathfrak{f}_{\varsigma' +1'}$ is a punctured map over
$\fM_{\varsigma'+{1}'}$ by removing puncturing structure along the
$(n+1)$-st marking.
This defines a canonical morphism
\[
F_{{\bf 1}' \mapsto 1'} \colon \fM_{\varsigma'+{\bf 1}'}  \to  \fM_{\varsigma' + 1'}.
\]

\begin{proposition}\label{prop:universal-remove-marking}
The morphism $F_{{\bf 1}' \mapsto 1'}$ is an isomorphism.
\end{proposition}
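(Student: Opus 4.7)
The plan is to construct an explicit inverse $F^{-1}\colon \fM_{\varsigma'+1'} \to \fM_{\varsigma'+\mathbf{1}'}$ by running the construction encoded in diagram~\eqref{diag:remove-contact} in reverse, exploiting the fact that $\ft'\colon (\ainfty\times\cA)^{\circ} \to \ainfty\times\cA$ of Lemma~\ref{lemma:twisting-isomorphism} is an isomorphism of log stacks. First, given a punctured map $\mathfrak{f}_{\varsigma'+1'}\colon \fC^{\circ}_{\varsigma'+1'} \to \ainfty$ over $S$ (recall that $1'$ has zero contact order and hence trivial puncturing at $p_{n+1}$), I would form the product morphism
\[
\big(\mathfrak{f}_{\varsigma'+1'},\, \mathrm{div}_{p_{n+1}}\big)\colon \fC^{\circ}_{\varsigma'+1'} \longrightarrow \ainfty\times\cA,
\]
where the second component is the canonical log map defined by the divisorial log structure of $p_{n+1}$. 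Composing with $(\ft')^{-1}$ and then with $\ft$ produces a morphism of log stacks into $\ainfty$ whose characteristic pullback at $p_{n+1}$ has the form $e_0 - d\sigma_{p_{n+1}}$, where $e_0 = \bar\mathfrak{f}_{\varsigma'+1'}^{\flat}(1)|_{p_{n+1}} \in \ocM_S$ is the degeneracy (nonzero with zero image in $\cO$ since $\ul\mathfrak{f}_{\varsigma'+1'}(p_{n+1}) \subset \ainfty$). This element lies only in $\ocM^{gp}_{\fC^{\circ}_{\varsigma'+1'}}$, so I would enlarge the characteristic sheaf at $p_{n+1}$ by adding it, as in the pre-stability construction of \cite[Definition 2.5]{ACGS20P}, producing a punctured curve $\fC^{\circ}_{\varsigma'+\mathbf{1}'}$ and a genuine punctured map with contact order $-d$ at $p_{n+1}$. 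This defines $F^{-1}$.

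The proof then splits into three checks. (i) The construction is functorial in $S$ and the sector at $p_{n+1}$ is the unit sector $\mathbf{1}'$: the underlying map is unchanged, the contact order $-d$ is forced by $\bar{\ft}^{\flat}(1) = (1,-d)$, and the sector is determined by the image of the residue gerbe along with the age constraint of \S\ref{sss:unit-sector}. (ii) That $F^{-1}\circ F_{\mathbf{1}'\mapsto 1'}=\mathrm{id}$ and $F_{\mathbf{1}'\mapsto 1'}\circ F^{-1}=\mathrm{id}$: both round-trips consist of applying $\ft'$ followed by $(\ft')^{-1}$ (or vice versa) on the target side, together with the enlargement/restriction of the characteristic sheaf at $p_{n+1}$ by $\pm d\sigma_{p_{n+1}}$, and these two operations on the source side are manifestly mutually inverse thanks to the pre-stability description of the puncturing. (iii) That the modified log structure at $p_{n+1}$ is fine and saturated in the stacky/orbifold setting; this is a local check at the $\mu_{r_{n+1}}$-gerbe structure at $p_{n+1}$, and follows because the added element $e_0 - d\sigma_{p_{n+1}}$ is equivariant under the inertia action and the extension preserves integrality.

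The main technical obstacle will be verifying the functoriality and compatibility of the characteristic sheaf extension with arbitrary base change and with the strict \'etale covers used to define $\fM_{\varsigma'+\mathbf{1}'}$. However, since both $F_{\mathbf{1}'\mapsto 1'}$ and its putative inverse $F^{-1}$ are constructed entirely from the universal target isomorphism $\ft'$ applied pointwise on curves, compatibility will follow from the universal nature of Lemma~\ref{lemma:twisting-isomorphism} and the fact that punctured maps form a fibered category in which enlargement of the characteristic sheaf by a single section (with zero image in $\cO$) is a local operation that commutes with pullback.
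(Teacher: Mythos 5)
Your overall strategy---constructing an explicit inverse by puncturing at $p_{n+1}$ and using the twist $\ft$---is the same as the paper's, but the key intermediate step fails as you state it. Since $\ft$ is \emph{defined} as $\ft'$ followed by the projection to the first factor, one has $\ft\circ(\ft')^{-1}=\mathrm{pr}_1$, so the composite $\ft\circ(\ft')^{-1}\circ(\mathfrak{f}_{\varsigma'+1'},p_{n+1})$ is nothing but $\mathfrak{f}_{\varsigma'+1'}$ itself: its characteristic pullback of $1$ at $p_{n+1}$ is $e_0$, the contact order is still $0$, and it is \emph{not} the claimed $e_0-d\,\sigma_{p_{n+1}}$. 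The isomorphism $(\ft')^{-1}$ is the wrong tool here; in Lemma~\ref{lem:split-marking} it is used in the opposite direction, to lift a map that already has contact order $-d$. What actually produces the element $e_0-d\,\sigma_{p_{n+1}}$ is the attempt to lift $(\mathfrak{f}_{\varsigma'+1'},p_{n+1})\colon \fC^{\circ}_{\varsigma'+1'}\to\ainfty\times\cA$ through the puncturing $\fp\colon(\ainfty\times\cA)^{\circ}\to\ainfty\times\cA$: such a lift must pull back the extra section $(1,-d)$ to $\bar{\mathfrak{f}}^{\flat}_{\varsigma'+1'}(1)-d\,\sigma_{p_{n+1}}$, which lies only in the groupification, and this is exactly why the source curve must first be punctured at $p_{n+1}$ before composing with $\ft$.

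The paper packages this lifting step as the fiber product, in the category of fine log stacks, of $(\mathfrak{f}_{\varsigma'+1'},p_{n+1})$ with $\fp$: this fiber product enlarges the log structure at $p_{n+1}$ by precisely the section you add by hand, is a puncturing (an isomorphism away from $p_{n+1}$), and, being a fiber product, commutes with arbitrary base change---so it also disposes of the functoriality issue you flag as your ``main technical obstacle'' without any appeal to local charts or to the universality of $\ft'$. Once you replace ``compose with $(\ft')^{-1}$ and then $\ft$'' by ``take the fine fiber product with $\fp$ and then compose with $\ft$'', your construction coincides with the paper's inverse $F_{1'\mapsto\mathbf{1}'}$, the contact order $-d$ and the unit sector come out as you say, and the mutual-inverse check is the same tracing-through as in the paper. (A minor point on your check (iii): log structures along punctured markings are only required to be fine, not saturated, so there is nothing to verify about saturation.)
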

\begin{proof}
We construct  the  inverse $F := F_{{1}' \mapsto {\bf 1}'}$ of $F_{{\bf 1}' \mapsto 1'}$  as follows.  Let $\mathfrak{f}_{\varsigma'+1'} \colon \fC^{\circ}_{\varsigma'+1'}  \to \ainfty$ be the universal punctured map over $\fM_{\varsigma' + 1'}$. Consider  the Cartesian diagram in the fine category
\[
\xymatrix{
\fC^{\circ}_{\varsigma'+1^{\circ}} \ar[rr] \ar[d] && \fC^{\circ}_{\varsigma'+1'} \ar[d]^{(\mathfrak{f}_{\varsigma'+1'}, p_{n+1})} \\
(\ainfty\times\cA)^{\circ} \ar[rr]^{\fp} && \ainfty\times\cA
}
\]
where the arrow $p_{n+1}$ is defined as in the proof of Lemma
\ref{lem:split-marking}.
Observe that $\ft|_{\ainfty\times(\cA\setminus\ainfty)}$ is an
isomorphism onto its image.
Hence the top arrow is isomorphic away from the $(n+1)$-st marking
$p_{n+1}$.
By the definition of Cartesian product in the category of fine log
structures, the top arrow is a puncturing along $p_{n+1}$.
The composition
$\fC^{\circ}_{\varsigma'+1^{\circ}} \to (\ainfty\times\cA)^{\circ}
\stackrel{\ft}{\to} \ainfty$ defines a punctured map over
$\fM_{\varsigma' + 1'}$ with contact order $-d$ along $p_{n+1}$.
This defines the morphism
$F \colon \fM_{\varsigma' + 1'} \to \fM_{\varsigma'+{\bf 1}'}$.
Tracing through the construction of $F_{{\bf 1}' \mapsto 1'}$ in the
proof of Lemma \ref{lem:split-marking} and $F_{{1}' \mapsto {\bf 1}'}$
above, it is straight forward to check that the two morphisms are
inverse to each other.
\end{proof}

Note that
\[
\fC^{\circ}_{\varsigma'+{\bf 1}'}\setminus p_{n+1} \cong \fC^{\circ}_{\varsigma'+1'} \setminus p_{n+1} \cong \fC^{\circ}_{\varsigma'+(1^{\rig})} \setminus p^{\rig}_{n+1}.
\]
By \eqref{eq:ft}, one checks that the restriction $\ft|_{(\ainfty\times\spec \bk)^{\circ}}$ is an isomorphism of $\ainfty \cong  (\ainfty\times\spec \bk)^{\circ}$. Following the proof of Lemma \ref{lem:split-marking} and Proposition \ref{prop:universal-remove-marking},  we observe that the following punctured maps in \eqref{diag:remove-contact} are only differ along the $(n+1)$-st marking:
\begin{equation}\label{eq:modify-unit-sector-pt}
\mathfrak{f}_{\varsigma' + {\bf 1}'}|_{\fC^{\circ}_{\varsigma'+{\bf 1}'}\setminus p_{n+1}} = \mathfrak{f}_{\varsigma'+{1}'}|_{\fC^{\circ}_{\varsigma'+1'} \setminus p_{n+1}} = \mathfrak{f}_{\varsigma'+(1^{\rig})}|_{\fC^{\circ}_{\varsigma'+(1^{\rig})} \setminus p^{\rig}_{n+1}}.
\end{equation}

Combining \eqref{eq:change-twist}, Proposition
\ref{prop:universal-adding-marking} and Proposition
\ref{prop:universal-remove-marking}, we obtain an analog of
\cite[Lemma~7]{Be97}:
\begin{corollary}
  The composition
  \begin{equation}\label{eq:universal-add-marking}
    F_{+} \colon \widetilde{\fC}_{\varsigma'} \longrightarrow \fM_{\varsigma' + 1^{\rig}} \stackrel{\cong}{\longrightarrow}  \fM_{\varsigma' + 1'} \stackrel{\cong}{\longrightarrow}  \fM_{\varsigma'+{\bf 1}'}
  \end{equation}
  is a strict \'etale morphism.
\end{corollary}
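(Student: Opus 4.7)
The plan is to assemble the three previously established results about the arrows comprising $F_{+}$ and observe that the composition of a strict étale morphism with two isomorphisms is strict étale. Concretely, the first arrow $\widetilde{\fC}_{\varsigma'} \to \fM_{\varsigma' + 1^{\rig}}$ is the tautological morphism induced by the punctured map $\tilde{\mathfrak{f}}_{\varsigma' + 1^{\rig}}$ constructed in \eqref{eq:universal-add-0marking}; Proposition~\ref{prop:universal-adding-marking} established that this is both strict and étale, and this is the only nontrivial input required.

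The second arrow $\fM_{\varsigma' + 1^{\rig}} \xrightarrow{\cong} \fM_{\varsigma' + 1'}$ is the isomorphism \eqref{eq:change-twist} obtained by taking the $r$-th root stack along the marking labeled by $1^{\rig}$; this is an isomorphism of log algebraic stacks, and it is automatically strict since the underlying root construction does not affect the characteristic sheaves at non-stacky loci and is performed at a marking carrying only the trivial puncturing and zero contact order. The third arrow $\fM_{\varsigma' + 1'} \xrightarrow{\cong} \fM_{\varsigma' + \mathbf{1}'}$ is the inverse of $F_{\mathbf{1}' \mapsto 1'}$, whose invertibility is exactly the content of Proposition~\ref{prop:universal-remove-marking}.

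Since strict étaleness is preserved under composition, and isomorphisms are in particular strict étale, the composition $F_{+}$ is strict étale. No further computation is needed; the only ``hard part'' has already been absorbed into Proposition~\ref{prop:universal-adding-marking}, whose proof handled the delicate point that adding the unit section and contracting via $\widetilde{\mathrm{St}}^{\circ}$ produces a basic punctured map and that the resulting deformation theory is formally étale. Thus the corollary is essentially a bookkeeping statement tying together the earlier propositions, and the proof can be written in just a few lines.
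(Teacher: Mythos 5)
Your proposal is correct and follows exactly the paper's route: the corollary is obtained by combining Proposition~\ref{prop:universal-adding-marking} (strict étaleness of the first arrow), the isomorphism \eqref{eq:change-twist}, and Proposition~\ref{prop:universal-remove-marking} (invertibility of $F_{\mathbf{1}'\mapsto 1'}$), with no further argument needed. The only content beyond bookkeeping is indeed absorbed into Proposition~\ref{prop:universal-adding-marking}, just as you say.
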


\subsection{Adding a unit sector to a stable punctured R-map}\label{ss:stable-PRmap-add-marking}
Denote by $f_{\varsigma} \colon \pC_{\varsigma} \to \infty$ the universal punctured R-map over $\SH_{\varsigma} := \SH_{g, \varsigma}(\infty, \beta)$, and $\mathrm{S}  \colon  \widetilde{\cC}_{\varsigma} \to \pC_{\varsigma}$ the saturation morphism. Observe that  $\widetilde{\cC}_{\varsigma} \cong \widetilde{\fC}_{\varsigma'} \times_{\fM_{\varsigma'}}\SH_{\varsigma}$. We write $ \SH_{\varsigma+ {\bf 1}} := \SH_{g, \varsigma + \bf{1}}(\infty, \beta)$ for simplicity.

\begin{proposition}\label{prop:stable-prmap-remove-unit}
There is a canonical isomorphism
$
\widetilde{\cC}_{\varsigma} \to \SH_{\varsigma + \bf{1}}(\infty, \beta)
$.
\end{proposition}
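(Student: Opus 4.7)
The plan is to exhibit mutually inverse morphisms of fibered categories between $\widetilde{\cC}_{\varsigma}$ and $\SH_{\varsigma+\mathbf{1}}$ by combining the universal strict \'etale morphism $F_+ \colon \widetilde{\fC}_{\varsigma'} \to \fM_{\varsigma'+\mathbf{1}'}$ of \eqref{eq:universal-add-marking} (which handles the logarithmic structure) with the universal target twist of Proposition~\ref{prop:target-add-marking} (which handles the $R$-structure). The isomorphism \eqref{eq:modify-unit-sector-pt} then guarantees that the two functors are mutually inverse, since all intermediate constructions modify data only at the new marking $p_{n+1}$.

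For the forward direction, a $T$-point of $\widetilde{\cC}_{\varsigma} \cong \widetilde{\fC}_{\varsigma'} \times_{\fM_{\varsigma'}} \SH_{\varsigma}$ consists of a stable punctured $R$-map $f \colon \pC_T \to \infty$ and a compatible $T$-point $s$ of $\widetilde{\fC}_{\varsigma'}$. Applying $F_+$ to $s$ produces a punctured map $\overline{\mathfrak{f}}_{\mathbf{1}'} \colon \widehat{\pC}^\circ_T \to \ainfty$ of discrete type $\varsigma'+\mathbf{1}'$, together with a canonical log contraction $\pC_T \to \widehat{\pC}^\circ_T$ built from Lemma~\ref{lem:punctured-curve-add-marking}, the $r$-th root stack along $p_{n+1}^{\rig}$, and the modification of Proposition~\ref{prop:universal-remove-marking}. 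To lift $\overline{\mathfrak{f}}_{\mathbf{1}'}$ to an $R$-map to $\infty$, I would combine it with the divisorial log structure of $p_{n+1}$ via Lemma~\ref{lem:split-marking} into a morphism $\widehat{\pC}^\circ_T \to (\ainfty\times\cA)^\circ$, then apply the Cartesian square \eqref{eq:twist-target} of Proposition~\ref{prop:target-add-marking} to reduce the lift to producing an $R$-structure on $\widehat{\pC}^\circ_T$ satisfying the twist formulae \eqref{eq:twist-R}. The original $R$-structure of $f$ descends along the contraction to supply this data, because the contracted rational bridges carry trivial $\omega^{\log}$ and the $\cL_\cA$-twists in \eqref{eq:twist-R} absorb exactly the discrepancy introduced by $p_{n+1}$. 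Stability of the resulting $R$-map is immediate, since the coarse moduli of $\pC_T$ and $\widehat{\pC}^\circ_T$ agree over $T$, so the induced stable map to $\ul{\punt_\bk}'$ is unchanged.

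For the inverse direction, starting from a stable punctured $R$-map in $\SH_{\varsigma+\mathbf{1}}(T)$, its composition with $\infty \to \ainfty$ defines a $T$-point of $\fM_{\varsigma'+\mathbf{1}'}$, and the specific marking $p_{n+1}$ together with the strict \'etale property of $F_+$ determines a unique $T$-point $s$ of $\widetilde{\fC}_{\varsigma'}$. Running the target twist diagram \eqref{eq:twist-target} in the opposite direction recovers an $R$-structure on the coarsened curve $\pC_T$ obtained by removing the stacky structure at $p_{n+1}$ and erasing its puncturing, thus producing an object of $\SH_\varsigma(T)$ compatible with $s$.

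The main obstacle is to verify that the $R$-structure constructed on $\widehat{\pC}^\circ_T$ is canonical and realizes precisely the \emph{unit} sector data at $p_{n+1}$, namely a $\mu_r$-gerbe of contact order $-d$ whose evaluation factors through the distinguished component $\xinfty \subset \ocI\infty_\bk$ of \S\ref{sss:unit-sector}. The gerbe index $r$ is built into the $r$-th root stack hidden in $F_+$, the contact order $-d$ is forced by the identity $\bar{\ft}^\flat(1) = (1,-d)$ of \eqref{eq:ft}, and the sector condition is imposed by the compatibilities $\mathrm{T}^*\uspin = \uspin\otimes\cL_\cA$ and $\mathrm{T}^*\uomega = \uomega\otimes\cL_\cA^r$ of \eqref{eq:twist-R}, which numerically match the age condition defining the unit sector. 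This verification, together with the trivial-to-check naturality in $T$, then completes the proof.
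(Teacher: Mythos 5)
Your forward direction follows the paper's Step~1 (combine $F_+$, Lemma~\ref{lem:split-marking}, the Cartesian square of Proposition~\ref{prop:target-add-marking} and the twist formulae \eqref{eq:twist-R}), but even there the orientation is off: the canonical map of Lemma~\ref{lem:punctured-curve-add-marking} is a contraction from the \emph{new} curve with the extra marking to the pulled-back old curve, i.e.\ $\widehat{\pC}^{\circ}_T \to \pC_{\varsigma,T}$, and the $R$-structure is \emph{pulled back} along it and then twisted by $\cO(r\,p_{n+1})$ (cf.\ \eqref{diag:add-R-marking} and Corollary~\ref{cor:target-log-R}); it is not ``descended along a contraction $\pC_T \to \widehat{\pC}^{\circ}_T$.'' Moreover the coarse curves of $\pC_T$ and $\widehat{\pC}^{\circ}_T$ do \emph{not} agree when the added point sits over a node or a marking, since a rational bridge sprouts; stability of the new map holds because that bridge carries three special points (so $\omega^{\log}$ is positive there), not for the reason you give.

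The genuine gap is the inverse direction. You claim that composing with $\infty \to \ainfty$ and invoking ``the strict \'etale property of $F_+$'' determines a unique $T$-point of $\widetilde{\fC}_{\varsigma'}$, and that erasing the stacky structure and puncturing at $p_{n+1}$ yields an object of $\SH_{\varsigma}(T)$. Neither step is justified: a strict \'etale morphism does not produce canonical or unique lifts of points, and simply forgetting $p_{n+1}$ produces an $R$-map that is in general \emph{unstable} -- precisely along the rational bridges containing $p_{n+1}$. The substantive content of the paper's inverse (Steps~2--3 of the proof of Proposition~\ref{prop:stable-prmap-remove-unit}) is exactly what your proposal omits: first untwist via $\mathrm{T}$ and Lemma~\ref{lem:split-marking} to get an $R$-map $f_{\varsigma+(1^{\rig})}$ with zero contact order at the rigidified section; then \emph{stabilize}, i.e.\ contract the unstable bridges. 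That contraction requires (a) showing the underlying map factors through the contraction, which the paper does by passing to coarse curves, noting the restriction of the relevant line bundle to a bridge is $\cO_{\cZ}(cp-cq)$ hence torsion, and invoking \cite[Proposition~9.1.1]{AbVi02}; and (b) showing the punctured log structure descends, which the paper does via the push-forward log structure $\mathrm{St}_*\cM$ and the verification that it defines a punctured curve (as in Step~3 of the proof of Proposition~\ref{prop:universal-adding-marking}). Without this stabilization argument you have not constructed the inverse functor, so the proposal does not establish the isomorphism.
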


\begin{proof}

The proof of the proposition is divided into several steps.

\smallskip

\noindent
{\bf  Step 1: Construction of $\widetilde{\cC}_{\varsigma} \to \SH_{\varsigma + \bf{1}}$.}
We will construct a stable punctured R-map
\begin{equation}\label{equ:stable-pr-map-with-additional-unit}
\tilde{f}_{\varsigma+{\bf 1}} \colon \widetilde{\cC}^{\circ}_{\varsigma + {\bf 1}} \to \punt
\end{equation}
over $\widetilde{\cC}_{\varsigma}$ inducing $\widetilde{\cC}_{\varsigma} \to \SH_{\varsigma + \bf{1}}$ as follows.

Consider the punctured map
$\tilde{\mathfrak{f}}_{\varsigma'} \colon
\widetilde{\fC}^{\circ}_{\varsigma'+{\bf 1}'} \to \ainfty$ over
$ \widetilde{\fC}_{\varsigma'}$ obtained by pulling back the universal
punctured map over $ \fM_{\varsigma'+{\bf 1}'}$ along
\eqref{eq:universal-add-marking}.
Denote by
$\tilde{\mathfrak{f}} \colon \widetilde{\fC}^{\circ}_{\varsigma'+{\bf
    1}'} \to (\ainfty\times\cA)^{\circ}$ the lifting of
$\tilde{\mathfrak{f}}_{\varsigma'}$ given by Lemma
\ref{lem:split-marking}, and let
$\tilde{f} \colon \widetilde{\cC}^{\circ}_{\varsigma + {\bf 1}} \to
(\ainfty\times\cA)^{\circ}$ be the pull-back of $\tilde{\mathfrak{f}}$
via the projection
$\widetilde{\cC}_{\varsigma} \cong \widetilde{\fC}_{\varsigma'}
\times_{\fM_{\varsigma'}}\SH_{\varsigma} \to
\widetilde{\fC}_{\varsigma'}$.
Denote by
$p_{n+1} \subset \widetilde{\cC}^{\circ}_{\varsigma + {\bf 1}}$ the
$(n+1)$-st marking.
There is a commutative diagram
\begin{equation}\label{diag:add-R-marking}
\xymatrix{
\widetilde{\cC}^{\circ}_{\varsigma + {\bf 1}} \ar[r]  \ar[d]_{\omega^{\log}\otimes \cO(r\cdot p_{n+1})}& \widetilde{\cC}^{\circ}_{\varsigma + 1} \ar[r] & \widetilde{\cC}^{\circ}_{\varsigma + (1^{\rig})} \ar[r] & \pC_{\varsigma, \widetilde{\cC}_{\varsigma}} \ar[r]^-{f_{\varsigma}|_{\widetilde{\cC}_{\varsigma}}}  & \punt \ar[d] \\
\BG_m \ar[rrrr]^{\cong} &&&& \BC
}
\end{equation}
where on the top, the left arrow is the puncturing along the
$(n+1)$-st marking, the second arrow rigidifies and removes $p_{n+1}$
from the set of markings, and the third arrow is the pull-back of
$\widetilde{\mathrm{St}}^{\circ}$ in Lemma
\ref{lem:punctured-curve-add-marking}.
The arrows on the top of \eqref{diag:add-R-marking} define a morphism
$\widetilde{\cC}^{\circ}_{\varsigma + {\bf 1}} \to \punt\times\cA$
with the map to $\cA$ defined by the divisorial log structure
associated to the $(n+1)$-st marking.
This arrow fits in a commutative diagram
\[
\xymatrix{
\widetilde{\cC}^{\circ}_{\varsigma + {\bf 1}} \ar[rr] \ar[d]_{\tilde{f}} && \punt\times\cA  \ar[d] \\
(\ainfty\times\cA)^{\circ} \ar[rr]^-{\fp} && \ainfty\times\cA.
}
\]

By the Cartesian diagram \eqref{eq:target-add-marking}, we obtain
$ \widetilde{\cC}^{\circ}_{\varsigma + {\bf 1}} \to
(\punt\times\cA)^{\circ}.
$ Further composing with $\mathrm{T}$ in Proposition
\ref{prop:target-add-marking}, we obtain a punctured map
$\tilde{f}_{\varsigma+{\bf 1}} \colon
\widetilde{\cC}^{\circ}_{\varsigma + {\bf 1}} \to \punt$ over
$\widetilde{\cC}_{\varsigma}$.
The fact that $\tilde{f}_{\varsigma+{\bf 1}}$ is an R-map follows from
\eqref{diag:add-R-marking} and Corollary \ref{cor:target-log-R}.
The $(n+1)$-st marking of $\tilde{f}_{\varsigma+{\bf 1}}$ is a unit
sector by \S \ref{sss:unit-sector}.
This defines the morphism
$\widetilde{\cC}_{\varsigma} \to \SH_{g, \varsigma + \bf{1}}$.

\smallskip
\noindent
{\bf Step 2: Removing the $(n+1)$-st marking.}
We begin to construct the inverse of $\widetilde{\cC}_{\varsigma} \to \SH_{g, \varsigma + \bf{1}}$.
Denote by $f_{\varsigma+{\bf 1}} \colon \pC_{\varsigma + {\bf 1}} \to \punt$ the universal map over $\SH_{\varsigma + \bf{1}}$.  We next remove $\bf{1}$ from the set of markings as follows.

Note that we  have  a commutative diagram of solid arrows
\[
\xymatrix{
&&& \pC_{\varsigma + {\bf 1}} \ar@/_1.5pc/[llld]_-{f_{\varsigma+{\bf 1}}} \ar@/^1.5pc/[ldd]  \ar@{-->}[ld]|-{\  f \ } \\
\punt \ar[d] && (\punt\times \cA)^{\circ} \ar[ll]_{\mathrm{T}} \ar[d]  & \\
\ainfty && (\ainfty\times\cA)^{\circ}  \ar[ll]_{\ft} &
}
\]
where the square is Cartesian by Proposition \ref{prop:target-add-marking}, and the curved arrow on the right is produced by Lemma \ref{lem:split-marking}. Hence we obtain the dashed arrow $f$. Since the composition
\[
\pC_{\varsigma + {\bf 1}} \stackrel{f}{\longrightarrow} (\infty\times\cA)^{\circ}  \stackrel{\mathrm{P}}{\longrightarrow}  \punt\times\cA \longrightarrow \punt
\]
has zero contact order along the $(n+1)$-st marking, there is a commutative diagram similar to \eqref{diag:remove-contact}
\begin{equation}\label{diag:R-map-remove-marking}
\xymatrix{
&& \pC_{\varsigma + {\bf 1}} \ar[rr] \ar[d]_{f} \ar[lld]_-{f_{\varsigma+{\bf 1}}} &&  \pC_{\varsigma + {1}} \ar[rr] \ar[d]^{} && \pC_{\varsigma + (1^{\rig})} \ar[d]^{f_{\varsigma+(1^{\rig})}}\\
\punt && (\punt\times\cA)^{\circ} \ar[rr]^{\mathrm{P}} \ar[ll]_{\mathrm{T}} && \punt\times\cA \ar[rr] && \punt
}
\end{equation}
where on the top, the left horizontal arrow is the puncturing along the $(n+1)$-st marking, and the right horizontal arrow is rigidifying and removing this section from the set of markings.  Denote by $p_{n+1} \subset   \pC_{\varsigma + {1}}$ the $(n+1)$-st marking, and $p^{\rig}_{n+1} \subset \pC_{\varsigma + (1)}$ the resulting non-marked section. Note that the log map $\pC_{\varsigma + {1}} \to \cA$ given by the middle vertical arrow is induced by the  log structure associated to $p_{n+1}$.

We check that $f_{\varsigma+(1^{\rig})}$ is an R-map. Indeed, by Corollary \ref{cor:target-log-R}, we have
\[
\big(f_{\varsigma+(1^{\rig})}^*\uomega\big)|_{ \cC_{\varsigma + {\bf 1}}} \otimes \cO_{ \cC_{\varsigma + {\bf 1}}}(r\cdot p_{n+1}) \cong f_{\varsigma+{\bf 1}}^*\uomega \cong \omega^{\log}_{ \cC_{\varsigma + {\bf 1}}/\SH_{\varsigma+{\bf 1}}}.
\]
Thus the R-structure $f_{\varsigma+(1^{\rig})}^*\uomega \cong \omega^{\log}_{ \cC_{\varsigma + (1)^{\rig}}/\SH_{\varsigma+{\bf 1}}}$ follows from
\[
\big(f_{\varsigma+(1^{\rig})}^*\uomega\big)|_{ \cC_{\varsigma + {\bf 1}}} = \omega^{\log}_{ \cC_{\varsigma + {\bf 1}}/\SH_{\varsigma+{\bf 1}}} \otimes \cO_{ \cC_{\varsigma + {\bf 1}}}(r\cdot p_{n+1}) \cong \omega^{\log}_{ \cC_{\varsigma + (1)^{\rig}}/\SH_{\varsigma+{\bf 1}}}|_{\cC_{\varsigma + {\bf 1}}}.
\]

\smallskip
\noindent
{\bf Step 3: Stabilization of $f_{\varsigma+(1^{\rig})}$.}
By \eqref{diag:R-map-remove-marking}, observe that
\begin{equation}\label{eq:identify-away-from-extra-marking}
\pC_{\varsigma + {\bf 1}}\setminus p_{n+1} \cong \pC_{\varsigma + (1^{\rig})}\setminus p^{\rig}_{n+1} \ \  \mbox{and} \ \
f_{\varsigma + {\bf 1}}|_{\pC_{\varsigma + {\bf 1}}\setminus p_{n+1}} = f_{\varsigma + (1^{\rig})}|_{\pC_{\varsigma + (1^{\rig})}\setminus p^{\rig}_{n+1}}.
\end{equation}
Since $f_{\varsigma + {\bf 1}}$ is stable, the unstable components of
$\pC_{\varsigma + (1^{\rig})}$ with respect to
$f_{\varsigma + (1^{\rig})}$ are rational bridges containing the
section $p^{\rig}_{n+1}$.
Denote by $\ul{C}_{\varsigma+(1^{\rig})}$ the underlying coarse curve
of $\pC_{\varsigma + (1^{\rig})}$.
Let
$\ul{C}_{\varsigma+(1^{\rig})} \to \overline{\ul{C}}_{\varsigma}$ be
the contraction of unstable components of
$f_{\varsigma + (1^{\rig})}$.
We then have a commutative diagram
\[
\xymatrix{
\ul{\cC}_{\varsigma + (1^{\rig})} \ar@/^1pc/[rrrrrd]^{\ul{f}_{\varsigma + (1^{\rig})}} \ar[rd] \ar@/_1.5pc/[rdd] &  &&  && \\
&  \ul{\punt}_{\ul{C}_{\varsigma+(1^{\rig})}} \ar[rr] \ar[d] && \ul{\punt}_{\overline{\ul{C}}_{\varsigma}} \ar[rr] \ar[d] && \ul{\punt} \ar[d] \\
&  \ul{C}_{\varsigma+(1^{\rig})} \ar[rr] \ar@/_1pc/[rrrr]_{\omega^{\log}}&& \overline{\ul{C}}_{\varsigma} \ar[rr]^{\omega^{\log}} && \BC
}
\]
with both squares Cartesian.
The composition
$\ul{\cC}_{\varsigma + (1^{\rig})} \to
\ul{\punt}_{\ul{C}_{\varsigma+(1^{\rig})}} \to
\ul{\punt}_{\overline{\ul{C}}_{\varsigma}}$ contracts precisely the
unstable rational bridges.
Hence there is a contraction of twisted curves
$\mathrm{St} \colon \ul{\cC}_{\varsigma + (1^{\rig})} \to
\overline{\ul{\cC}}_{\varsigma}$ such that the above composition
factors through a stable map
$\overline{\ul{\cC}}_{\varsigma} \to
\ul{\punt}_{\overline{\ul{C}}_{\varsigma}}$.
The composition
$\overline{\ul{\cC}}_{\varsigma} \to
\ul{\punt}_{\overline{\ul{C}}_{\varsigma}} \to \ul{\punt}$ is a stable
underlying R-map, denoted by $\overline{\ul f}_{\varsigma}$.

Consider
$\overline{{\cC}}^{\circ}_{\varsigma} =
(\overline{\ul{\cC}}_{\varsigma}, \mathrm{St}_{*} \cM_{\pC_{\varsigma
    + (1^{\rig})}})$.
By {\bf Step 3} in the proof of Proposition
\ref{prop:universal-adding-marking}, the projection
$\overline{{\cC}}^{\circ}_{\varsigma} \to \SH_{\varsigma + \bf{1}}$
defines a family of punctured curves together with a log map
$\overline{f}_{\varsigma} \colon \overline{{\cC}}^{\circ}_{\varsigma}
\to \punt$ with the underlying $\overline{\ul f}_{\varsigma}$.
Thus, we obtain $\overline{f}_{\varsigma}$ as the stabilization of
$f_{\varsigma+(1^{\rig})}$.
This defines the morphism
$\SH_{g, \varsigma + \bf{1}} \to \widetilde{\cC}_{\varsigma}$.
Tracing through the construction, we observe that this provides the
inverse to the morphism constructed in {\bf Step 1}, as needed.
\end{proof}

\subsection{Adding a unit sector with uniform maximal degeneracy}\label{ss:stable-PRmap-add-marking-max}
Recall the notations
$\SH^{\curlywedge}_{\varsigma}, \SH^{\curlywedge}_{\varsigma+ \mathbf{1}}, \fM^{\curlywedge}_{\varsigma'},$ and $\fM^{\curlywedge}_{\varsigma'+\mathbf{1}'}$
for the stacks of punctured maps with uniform maximal degeneracy in
$\SH_{\varsigma}, \SH_{\varsigma+ \mathbf{1}}, \fM_{\varsigma'},$ and  $\fM_{\varsigma'+\mathbf{1}'}$
respectively. The universal punctured  maps over $\SH^{\curlywedge}_{\varsigma}$ and $\fM^{\curlywedge}_{\varsigma'}$ are denoted by
$f^{\curlywedge}_{\varsigma} \colon \cC^{\circ,\curlywedge}_{\varsigma} \to \punt$ and $\mathfrak{f}^{\curlywedge}_{\varsigma'} \colon \fC^{\circ,\curlywedge}_{\varsigma'} \to \ainfty$ respectively. They are pull-backs of the corresponding universal maps over $\SH_{\varsigma}$ and $\fM_{\varsigma'}$.

The saturation morphisms $\widetilde{\cC}^{\curlywedge}_{\varsigma} \to \cC^{\circ,\curlywedge}_{\varsigma}$ and $\widetilde{\fC}^{\curlywedge}_{\varsigma'} \to \fC^{\circ,\curlywedge}_{\varsigma'}$ are again denoted by $\mathrm{S}$.  By the functoriality of saturation, the compositions
\[
\widetilde{\cC}^{\curlywedge}_{\varsigma} \to \cC^{\circ,\curlywedge}_{\varsigma} \to \cC^{\circ}_{\varsigma} \ \  \mbox{and} \ \  \widetilde{\fC}^{\curlywedge}_{\varsigma'} \to \fC^{\circ,\curlywedge}_{\varsigma'} \to \fC^{\circ}_{\varsigma'}
\]
factor through the corresponding saturations
\[
\widetilde{\cC}_{\varsigma} \to \cC^{\circ}_{\varsigma} \ \  \mbox{and} \  \  \widetilde{\fC}_{\varsigma'} \to \fC^{\circ}_{\varsigma'}.
\]
Thus, we may pull back the universal  maps over $\SH_{\varsigma +  \mathbf{1}}$ and $\fM_{\varsigma' + \mathbf{1}'}$ along
\[
\widetilde{\cC}^{\curlywedge}_{\varsigma} \to  \widetilde{\cC}_{\varsigma} \cong \SH_{\varsigma +  \mathbf{1}} \ \ \ \mbox{and} \ \ \  \widetilde{\fC}^{\curlywedge}_{\varsigma'} \to  \widetilde{\fC}_{\varsigma'} \to \fM_{\varsigma' + \mathbf{1}'},
\]
and obtain punctured maps
\[
\tilde{f}^{\curlywedge}_{\varsigma+{\bf 1}} \colon \widetilde{\cC}^{\circ,\curlywedge}_{\varsigma + {\bf 1}} \to \punt
 \  \ \ \mbox{and} \ \ \   \tilde{\mathfrak{f}}^{\curlywedge}_{\varsigma' + \mathbf{1}} \colon \widetilde{\fC}^{\circ,\curlywedge}_{\varsigma'+ \mathbf{1}} \to  \ainfty.
\]
over $\widetilde{\cC}^{\curlywedge}_{\varsigma}$ and $\widetilde{\fC}^{\curlywedge}_{\varsigma'}$ respectively. By Lemma \ref{lem:max-degeneracy-add-marking}, the punctured maps $\tilde{f}^{\curlywedge}_{\varsigma+{\bf 1}}$ and $\tilde{\mathfrak{f}}^{\curlywedge}_{\varsigma' + \mathbf{1}}$ have uniform maximal degeneracy, hence inducing tautological morphisms
\begin{equation}\label{equ:forgetful-with-umd}
\widetilde{\cC}^{\curlywedge}_{\varsigma} \to  \SH^{\curlywedge}_{\varsigma+ \mathbf{1}} \ \ \ \mbox{and} \ \ \  F^{\curlywedge}_+ \colon \widetilde{\fC}^{\curlywedge}_{\varsigma'} \to \fM^{\curlywedge}_{\varsigma'+\mathbf{1}'}
\end{equation}
fitting  in the Cartesian diagram with strict vertical arrows
\begin{equation}\label{diag:forgetful-with-umd}
\xymatrix{
\widetilde{\cC}^{\curlywedge}_{\varsigma} \ar[rr] \ar[d] &&  \widetilde{\fC}^{\curlywedge}_{\varsigma'} \ar[d]^{F^{\curlywedge}_{+}} \\
  \SH^{\curlywedge}_{\varsigma+ \mathbf{1}} \ar[rr] && \fM^{\curlywedge}_{\varsigma'+\mathbf{1}'}.
}
\end{equation}

\begin{proposition}\label{prop:add-unit-sector-umd}
Suppose all contact orders in $\varsigma$ are negative. Then

\begin{enumerate}
\item the following square is Cartesian with strict and \'etale vertical arrows
 \[
 \xymatrix{
 \widetilde{\fC}^{\curlywedge}_{\varsigma'} \ar[rr] \ar[d]_-{F^{\curlywedge}_{+}} &&  \widetilde{\fC}^{}_{\varsigma'} \ar[d]^-{F^{}_{+}}  \\
 \fM^{\curlywedge}_{\varsigma' + \mathbf{1}'}  \ar[rr] &&   \fM^{}_{\varsigma' + \mathbf{1}'}
 }
 \]
 \item The left vertical arrow in \eqref{diag:forgetful-with-umd} is an isomorphism.
\end{enumerate}
 \end{proposition}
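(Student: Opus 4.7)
The plan is to first establish (1), from which (2) will follow by a fiber product manipulation. For (1), since the right vertical arrow $F_+$ is strict and \'etale by the corollary following Proposition \ref{prop:universal-remove-marking}, once I verify that the square is Cartesian, the left vertical arrow $F^{\curlywedge}_{+}$ will be strict and \'etale as its pullback. To check Cartesianness, I will work with the universal properties of the corners. A strict test morphism $T$ into $\widetilde{\fC}_{\varsigma'} \times_{\fM_{\varsigma'+\mathbf{1}'}} \fM^{\curlywedge}_{\varsigma'+\mathbf{1}'}$ consists of a punctured map $f_T$ over $T$ with discrete data $\varsigma'$ together with a section of the saturation, such that the extended punctured map $\tilde{f}_T$ with discrete data $\varsigma'+\mathbf{1}'$ obtained via \eqref{eq:universal-add-marking} has uniform maximal degeneracy.

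The key observation is that $\tilde{f}_T$ and its zero-contact-order counterpart $\tilde{\mathfrak{f}}_{\varsigma' + (1^{\rig})}|_T$ coincide away from the $(n+1)$-st section by \eqref{eq:modify-unit-sector-pt}, so all irreducible components retain their original degeneracies. Since all contact orders in $\varsigma'$ are assumed negative, Lemma \ref{lem:max-degeneracy-add-marking} applies and shows that uniform maximal degeneracy of $\tilde{\mathfrak{f}}_{\varsigma' + (1^{\rig})}|_T$ is equivalent to that of $f_T$. Combining these, $\tilde{f}_T$ has uniform maximal degeneracy if and only if $f_T$ does, which is exactly the condition that $T \to \widetilde{\fC}_{\varsigma'}$ factors through $\widetilde{\fC}^{\curlywedge}_{\varsigma'}$. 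This proves the square is Cartesian.

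For part (2), I will combine (1) with Proposition \ref{prop:stable-prmap-remove-unit}. From the strict Cartesian diagram \eqref{diag:UM-R-map}, the isomorphism $\widetilde{\cC}_{\varsigma} \cong \SH_{\varsigma+\mathbf{1}}$ given by Proposition \ref{prop:stable-prmap-remove-unit}, and the Cartesian decomposition $\widetilde{\cC}_{\varsigma} = \widetilde{\fC}_{\varsigma'} \times_{\fM_{\varsigma'}} \SH_{\varsigma}$, I obtain
\[
\SH^{\curlywedge}_{\varsigma+\mathbf{1}} \cong \big( \widetilde{\fC}_{\varsigma'} \times_{\fM_{\varsigma'}} \SH_{\varsigma} \big) \times_{\fM_{\varsigma'+\mathbf{1}'}} \fM^{\curlywedge}_{\varsigma'+\mathbf{1}'}.
\]
Rearranging fiber products and invoking (1), this identifies with $\widetilde{\fC}^{\curlywedge}_{\varsigma'} \times_{\fM_{\varsigma'}} \SH_{\varsigma}$. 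Using that $\widetilde{\fC}^{\curlywedge}_{\varsigma'}$ lives naturally over $\fM^{\curlywedge}_{\varsigma'}$, together with the base-change $\SH^{\curlywedge}_{\varsigma} = \SH_{\varsigma} \times_{\fM_{\varsigma'}} \fM^{\curlywedge}_{\varsigma'}$, one further rearrangement gives $\widetilde{\fC}^{\curlywedge}_{\varsigma'} \times_{\fM^{\curlywedge}_{\varsigma'}} \SH^{\curlywedge}_{\varsigma}$, which is precisely $\widetilde{\cC}^{\curlywedge}_{\varsigma}$ by the analog of the Cartesian decomposition for $\widetilde{\cC}_{\varsigma}$ combined with compatibility of saturation under strict base change.

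The main obstacle will be verifying the technical claim that the modification from $\tilde{\mathfrak{f}}_{\varsigma' + (1^{\rig})}$ to $\tilde{f}_{\varsigma + \mathbf{1}}$ preserves the degeneracies of all irreducible components, which is what permits me to invoke Lemma \ref{lem:max-degeneracy-add-marking} in the unit-sector setting. This amounts to carefully unwinding \eqref{diag:remove-contact} together with \eqref{eq:modify-unit-sector-pt}; the underlying reason is that the intermediate constructions alter the log/punctured structure only along the added $(n+1)$-st marking, leaving untouched the degeneracies at the generic points of all components.
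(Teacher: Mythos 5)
Your proposal is correct and follows essentially the same route as the paper: part (1) reduces, via the identification $\widetilde{\fC}^{\curlywedge}_{\varsigma'} \cong \fM^{\curlywedge}_{\varsigma'}\times_{\fM_{\varsigma'}}\widetilde{\fC}_{\varsigma'}$, to the if-and-only-if statement of Lemma \ref{lem:max-degeneracy-add-marking} (your explicit check via \eqref{eq:modify-unit-sector-pt} that degeneracies are unchanged along the passage from the $(1^{\rig})$-version to the unit-sector map is exactly what the paper leaves implicit), and part (2) is the same chain of fiber-product identifications combined with Proposition \ref{prop:stable-prmap-remove-unit}, just run in the opposite direction. The only cosmetic caveat is that the Cartesian property should be tested against arbitrary fs log schemes (using that $\fM^{\curlywedge}$ represents the subcategory of maps with uniform maximal degeneracy), not only strict test morphisms, but this does not affect the substance of the argument.
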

\begin{proof}
By construction we have an isomorphism
\[
 \widetilde{\fC}^{\curlywedge}_{\varsigma'} \cong \fM^{\curlywedge}_{\varsigma'}\times_{\fM^{}_{\varsigma'}}\widetilde{\fC}^{}_{\varsigma'}
\]
where the fiber product is taken in the fs category. Then (1) translates to the fact that the natural morphism of fiber products in the fs category
\[
\fM^{\curlywedge}_{\varsigma'}\times_{\fM^{}_{\varsigma'}}\widetilde{\fC}^{}_{\varsigma'} \to  \fM^{\curlywedge}_{\varsigma' + \mathbf{1}'}\times_{\fM^{}_{\varsigma' + \mathbf{1}'}}\widetilde{\fC}^{}_{\varsigma'}
\]
is an isomorphism. This follows from the if and only if statement of Lemma \ref{lem:max-degeneracy-add-marking}. Note that the above two isomorphisms induce isomorphisms on the level of stacks of R-maps
\[
\widetilde{\cC}^{\curlywedge}_{\varsigma}  \cong \SH^{\curlywedge}_{\varsigma}\times_{\SH^{}_{\varsigma}}\widetilde{\cC}_{\varsigma} \cong  \SH^{\curlywedge}_{\varsigma + \mathbf{1}}\times_{\SH_{\varsigma + \mathbf{1}}}\widetilde{\cC}_{\varsigma}.
\]
By Proposition \ref{prop:stable-prmap-remove-unit}, the right hand side is $\SH^{\curlywedge}_{\varsigma + \mathbf{1}}$. This proves (2).
\end{proof}

\subsection{The fundamental class axiom of the canonical theory}
\label{ss:fundamental-class-axiom-canonical}

By Proposition \ref{prop:stable-prmap-remove-unit}, we have arrived at the following commutative diagram with both squares Cartesian
\begin{equation}\label{diag:add-unit-marking}
\xymatrix{
\SH_{\varsigma+ {\bf 1}} \ar[rr]^{\mathrm{S}} \ar[d] \ar@/_2pc/[dd] &&   \pC_{\varsigma} \ar[rr]^{\pi_{\varsigma}}  \ar[d] && \SH_{\varsigma} \ar[d] \\
  \widetilde{\fC}_{\varsigma'} \ar[rr]^{\mathrm{S}} \ar[d]^{F_{+}} && \fC^{\circ}_{\varsigma'} \ar[rr]^{\pi_{\varsigma'}} && \fM_{\varsigma'} \\
\fM_{\varsigma'+{\bf 1}'}
}
\end{equation}

\begin{proposition}\label{prop:canonical-fundamental-class-axiom}
$\mathrm{S}_*[\SH_{\varsigma+ {\bf 1}}]^{\vir} = \pi_{\varsigma}^*[\SH_{\varsigma}]^{\vir}$
\end{proposition}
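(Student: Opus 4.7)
The plan is to exploit the strict étale morphism $F_+$ together with the Cartesian structure of \eqref{diag:add-unit-marking} in order to identify, up to an appropriate compatibility datum, two perfect obstruction theories on $\SH_{\varsigma+\mathbf{1}}$ relative to $\widetilde{\fC}_{\varsigma'}$.

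First, I would use the strict étaleness of $F_+\colon \widetilde{\fC}_{\varsigma'} \to \fM_{\varsigma'+\mathbf{1}'}$ established after \eqref{eq:universal-add-marking}, which implies that the canonical perfect obstruction theory $\varphi_{\SH_{\varsigma+\mathbf{1}}/\fM_{\varsigma'+\mathbf{1}'}}$ from \eqref{eq:tan-POT} pulls back, via the strict morphism $\SH_{\varsigma+\mathbf{1}} \to \widetilde{\fC}_{\varsigma'}$, to a perfect obstruction theory $\varphi_B$ of $\SH_{\varsigma+\mathbf{1}} \to \widetilde{\fC}_{\varsigma'}$ with the same associated virtual cycle $[\SH_{\varsigma+\mathbf{1}}]^{\vir}$. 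On the other hand, the big Cartesian rectangle obtained from \eqref{diag:add-unit-marking} yields $\SH_{\varsigma+\mathbf{1}} \cong \SH_\varsigma \times_{\fM_{\varsigma'}} \widetilde{\fC}_{\varsigma'}$, so the canonical perfect obstruction theory $\varphi_{\SH_\varsigma/\fM_{\varsigma'}}$ pulls back along this strict base change to a second perfect obstruction theory $\varphi_A$ of $\SH_{\varsigma+\mathbf{1}} \to \widetilde{\fC}_{\varsigma'}$.

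The heart of the argument — and the main obstacle — is to identify $\varphi_A$ with $\varphi_B$. Both are, by construction, of the form $R\pi_*(f^*\Omega^\vee_{\punt/\BC})$ for an appropriate universal punctured R-map: $\varphi_B$ uses $\tilde{f}_{\varsigma+\mathbf{1}}\colon \widetilde{\cC}^\circ_{\varsigma+\mathbf{1}} \to \punt$ from Proposition~\ref{prop:stable-prmap-remove-unit}, while $\varphi_A$ uses the base change of $f_\varsigma\colon \pC_\varsigma \to \punt$ along $\bF_\mathbf{1}$. By \eqref{eq:identify-away-from-extra-marking}–\eqref{eq:modify-unit-sector-pt}, these two universal maps coincide away from the $(n+1)$-st marking $p_{n+1}$; near $p_{n+1}$ they differ by the twist $\mathrm{T}$ of Proposition~\ref{prop:target-add-marking}, which modifies $\uspin$ and $\uomega$ by twists of $\cL_\cA$ supported along $p_{n+1}$. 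Because $p_{n+1}$ is a unit marking (in the sense of \S\ref{sss:unit-sector}), and since the exact sequence of Lemma~\ref{lem:punt-tangent} implies that $\Omega^\vee_{\punt/\BC}$ only involves the geometry of $\xinfty$ together with the trivial line bundle coming from $\BC$, the twist contributes only a direct summand supported at the section $p_{n+1}$ with vanishing derived pushforward. A local calculation at $p_{n+1}$ — using the projection formula and the explicit expression $\omega^{\log}\otimes \cO(r\cdot p_{n+1})$ arising in \eqref{diag:add-R-marking} — then produces the canonical isomorphism $\varphi_A \cong \varphi_B$.

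Once this identification is established, I would conclude by functoriality of virtual pullback. Since $\varphi_A$ is the pullback of $\varphi_{\SH_\varsigma/\fM_{\varsigma'}}$ along the Cartesian square
\[
\xymatrix{
\SH_{\varsigma+\mathbf{1}} \ar[r] \ar[d] & \SH_\varsigma \ar[d] \\
\widetilde{\fC}_{\varsigma'} \ar[r]^-{\pi_{\varsigma'}\circ \mathrm{S}} & \fM_{\varsigma'}
},
\]
compatibility of virtual pullback with the strict morphism $\pi_{\varsigma'}\circ \mathrm{S}$ gives $[\SH_{\varsigma+\mathbf{1}}]^{\vir} = \varphi_{\SH_\varsigma/\fM_{\varsigma'}}^{!}[\widetilde{\fC}_{\varsigma'}]$. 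Since $\pi_{\varsigma'}$ is flat and the right square of \eqref{diag:add-unit-marking} is Cartesian, $\pi_\varsigma^*[\SH_\varsigma]^{\vir} = \varphi_{\pC_\varsigma/\fC^\circ_{\varsigma'}}^{!}[\fC^\circ_{\varsigma'}]$ with the obvious POT pulled back from $\varphi_{\SH_\varsigma/\fM_{\varsigma'}}$. Finally, the saturation $\mathrm{S}\colon \widetilde{\fC}_{\varsigma'} \to \fC^\circ_{\varsigma'}$ is finite and a nilpotent closed immersion on underlying stacks, so $\mathrm{S}_*[\widetilde{\fC}_{\varsigma'}] = [\fC^\circ_{\varsigma'}]$; the projection formula together with compatibility of $\mathrm{S}_*$ with virtual pullback then yields $\mathrm{S}_*[\SH_{\varsigma+\mathbf{1}}]^{\vir} = \pi_\varsigma^*[\SH_\varsigma]^{\vir}$ as desired.
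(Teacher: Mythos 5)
Your scaffolding matches the paper's: use the strict \'etaleness of $F_+$ to regard the canonical obstruction theory of $\SH_{\varsigma+\mathbf{1}}$ as a theory relative to $\widetilde{\fC}_{\varsigma'}$, identify it with the pullback of $\EE_{\SH_\varsigma/\fM_{\varsigma'}}$, and then conclude by Costello--Manolache virtual push-forward along the saturation. But the step you yourself single out as the heart of the argument is where there is a genuine gap. The two universal maps you compare do not merely ``coincide away from $p_{n+1}$'': the universal curve $\pC_{\varsigma+\mathbf{1}}$ maps to $\pC_{\varsigma,\widetilde{\cC}_\varsigma}$ by a contraction of rational bridges, so the comparison also has to deal with contracted components (the paper handles this by pushing forward and invoking base change, implicitly using that the derived pushforward of a pullback along such a contraction returns the sheaf). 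More seriously, your proposed mechanism for the identification --- that the twist $\mathrm{T}$ ``contributes only a direct summand supported at the section $p_{n+1}$ with vanishing derived pushforward,'' to be fixed by a local computation with $\omega^{\log}\otimes\cO(r\cdot p_{n+1})$ --- is not a correct picture: both pullbacks of $\Omega^\vee_{\punt/\BC}$ are vector bundles of the same rank everywhere, and they do not differ by a summand supported along a divisor; nor does Lemma~\ref{lem:punt-tangent} by itself let you transport the extension class across two different maps to $\punt$. As written, the isomorphism $\varphi_A\cong\varphi_B$ is asserted rather than proved.

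The paper's proof of exactly this point is global and requires no local analysis at $p_{n+1}$: by Proposition~\ref{prop:target-add-marking} and Corollary~\ref{cor:target-log-R}, the twist $\mathrm{T}\colon(\punt\times\cA)^{\circ}\to\punt$ sits in Cartesian squares with \emph{strict} vertical arrows over $\ainfty\times\BC\leftarrow(\ainfty\times\cA)^{\circ}\times\BC$, so $\mathrm{T}^*\Omega^{\vee}_{\punt/\BC}\cong\Omega^{\vee}_{(\punt\times\cA)^{\circ}/(\ainfty\times\cA)^{\circ}\times\BC}$ on the nose. Feeding the diagram \eqref{diag:relating-R-maps} through this gives the canonical identification \eqref{eq:compare-pull-back-log-tangent} of the pulled-back relative log tangent bundles on all of $\pC_{\varsigma+\mathbf{1}}$ (including at $p_{n+1}$ and on the contracted bridges), after which pushing forward and base change yield \eqref{equ:add-unit-canonical-POT}. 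To repair your argument you should replace the ``direct summand plus local calculation'' step by this strict-base-change identification (or an equivalent global argument); the remainder of your proposal, including the final push-forward step, is essentially the paper's.
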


\begin{proof}
Let $\EE_{\SH_{\varsigma} / \fM_{\varsigma'}}$ be the canonical perfect  obstruction theory of $\SH_{\varsigma} \to \fM_{\varsigma'}$ as in \eqref{eq:tan-POT}. It pulls back to a perfect obstruction theory of  $\pC_{\varsigma} \to \fC^{\circ}_{\varsigma'}$, inducing a virtual cycle $[\pC_{\varsigma}]^{\vir}$. Then we have $\pi_{\varsigma}^*[\SH_{\varsigma}]^{\vir} = [\pC_{\varsigma}]^{\vir}$. It remains  to verify that $\mathrm{S}_*[\SH_{\varsigma+ {\bf 1}}]^{\vir} = [\pC_{\varsigma}]^{\vir}$.

On the other hand, since $F_{+}$ is strict and \'etale, see \eqref{eq:universal-add-marking}, the canonical perfect obstruction theory $\EE_{\SH_{\varsigma+ {\bf 1}} / \fM_{\varsigma'+{\bf 1}'}}$ of $\SH_{\varsigma+ {\bf 1}} \to \fM_{\varsigma'+{\bf 1}'}$ defines a perfect obstruction theory of $\SH_{\varsigma+ {\bf 1}} \to \widetilde{\fC}_{\varsigma'}$, inducing the same virtual cycle $[\SH_{\varsigma+ {\bf 1}}]^{\vir}$. In what follows, we will show
\begin{equation}\label{equ:add-unit-canonical-POT}
\EE_{\SH_{\varsigma+ {\bf 1}} / \fM_{\varsigma'+{\bf 1}'}} \cong \EE_{\SH_{\varsigma} / \fM_{\varsigma'}}|_{\SH_{\varsigma+ {\bf 1}}}.
\end{equation}
Hence $\mathrm{S}_*[\SH_{\varsigma+ {\bf 1}}]^{\vir} = [\pC_{\varsigma}]^{\vir}$ follows from the virtual push-forward of \cite{Co06, Ma12}.

Note that  the isomorphism $\SH_{\varsigma+{\bf 1}} \cong \widetilde{\cC}_{\varsigma}$ in Proposition \ref{prop:stable-prmap-remove-unit} identifies the universal stable punctured R-map $f_{\varsigma + \mathbf{1}}  \colon \pC_{\varsigma + \bf{1}} \to  \punt$ over $\SH_{\varsigma+{\bf 1}}$ with the stable punctured R-map $\tilde{f}_{\varsigma+{\bf 1}} \colon \widetilde{\cC}^{\circ}_{\varsigma + {\bf 1}} \to \punt$ over $\widetilde{\cC}_{\varsigma}$. We thus obtain the following commutative  diagram
\begin{equation}\label{diag:relating-R-maps}
\xymatrix{
&& \pC_{\varsigma + \bf{1}}  \ar[lld]_{f_{\varsigma + \mathbf{1}}} \ar[d]^{{f}} \ar[rr] && \pC_{\varsigma, \widetilde{\cC}_{\varsigma}} \ar[d]^{f_{\varsigma,\widetilde{\cC}_{\varsigma}}} \\
\punt \ar[d] && (\punt\times\cA)^{\circ} \ar[d] \ar[ll]^{\mathrm{T}}  \ar[rr] && \punt \ar[d] \\
\ainfty\times\BC && (\ainfty \times \cA)^{\circ}\times \BC \ar[ll]  \ar[rr] && \ainfty\times\BC
}
\end{equation}
where the two bottom squares  are Cartesian with strict vertical arrows by Corollary \ref{cor:target-log-R}, and the middle and the top of the diagram  is given by \eqref{diag:R-map-remove-marking}. Thus, we have
\begin{equation}\label{eq:compare-pull-back-log-tangent}
f_{\varsigma+\mathbf{1}}^*\Omega^{\vee}_{\punt/\ainfty\times\BC} \cong f^*\Omega^{\vee}_{(\punt\times\cA)^{\circ}/(\ainfty\times\cA)^{\circ}\times\BC} \cong \left(f_{\varsigma,\widetilde{\cC}_{\varsigma}}^*\Omega^{\vee}_{\punt/\ainfty\times\BC}\right)|_{\pC_{\varsigma + \bf{1}}}.
\end{equation}
Pushing forward \eqref{eq:compare-pull-back-log-tangent} and applying the base change, we obtain
\[
\pi_{\varsigma + \mathbf{1},*} f_{\varsigma+\mathbf{1}}^*\Omega^{\vee}_{\punt/\ainfty\times\BC} \cong \left(\pi_{\varsigma,*}f_{\varsigma,\widetilde{\cC}_{\varsigma}}^*\Omega^{\vee}_{\punt/\ainfty\times\BC}\right)|_{\SH_{\varsigma + \bf{1}}}
\]
which is \eqref{equ:add-unit-canonical-POT} by \eqref{eq:tan-POT}. This completes the proof.
\end{proof}

\subsection{The fundamental class axiom of the reduced theory}
\label{ss:fundamental-class-axiom-reduced}

We now assume the assumptions needed for the reduced theory in Theorem
\ref{thm:red-POT}.
In particular, all contact orders in $\varsigma$ are negative.
By \eqref{diag:forgetful-with-umd} and Proposition
\ref{prop:add-unit-sector-umd}, Diagram \eqref{diag:add-unit-marking}
induces the following commutative diagram with both squares Cartesian
\begin{equation}\label{diag:add-unit-marking-max-deg}
\xymatrix{
\SH^{\curlywedge}_{\varsigma+ {\bf 1}} \ar[rr]^{\mathrm{S}} \ar[d] \ar@/_2pc/[dd] &&   \cC^{\circ,\curlywedge}_{\varsigma} \ar[rr]^{\pi_{\varsigma}}  \ar[d] && \SH^{\curlywedge}_{\varsigma} \ar[d] \\
  \widetilde{\fC}^{\curlywedge}_{\varsigma'} \ar[rr]^{\mathrm{S}} \ar[d]^{F^{\curlywedge}_{+}} && \fC^{\circ,\curlywedge}_{\varsigma'} \ar[rr]^{\pi_{\varsigma'}} && \fM^{\curlywedge}_{\varsigma'} \\
\fM^{\curlywedge}_{\varsigma'+{\bf 1}'},
}
\end{equation}
where we follow the notations in \S \ref{ss:stable-PRmap-add-marking-max}.

\begin{proposition}\label{prop:red-fundamental-class-axiom}
$\mathrm{S}_{*}[\SH^{\curlywedge}_{\varsigma+ {\bf 1}}]^{\red} = \pi_{\varsigma}^*[\SH^{\curlywedge}_{\varsigma}]^{\red}$.
\end{proposition}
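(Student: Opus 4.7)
The plan is to mirror the proof of Proposition \ref{prop:canonical-fundamental-class-axiom}, adapting it for the reduced theory. Recall that the reduced POT is constructed in \eqref{eq:red-POT} as the fiber of a morphism from the canonical POT to a boundary complex via a cosection; so it is determined by the triple (canonical POT, boundary complex, cosection).

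First, by Proposition \ref{prop:add-unit-sector-umd}(1), the morphism $F^{\curlywedge}_+$ is strict and \'etale. Hence the reduced POT $\varphi^{\red}_{\SH^{\curlywedge}_{\varsigma+\mathbf{1}}/\fM^{\curlywedge}_{\varsigma'+\mathbf{1}'}}$ pulls back along $F^{\curlywedge}_+$ to define a reduced POT on $\SH^{\curlywedge}_{\varsigma+\mathbf{1}} \to \widetilde{\fC}^{\curlywedge}_{\varsigma'}$ inducing the same virtual cycle $[\SH^{\curlywedge}_{\varsigma+\mathbf{1}}]^{\red}$. On the other hand, pulling back the reduced POT of $\SH^{\curlywedge}_\varsigma \to \fM^{\curlywedge}_{\varsigma'}$ along the strict smooth projection $\pi_\varsigma$ yields a reduced POT on $\cC^{\circ,\curlywedge}_\varsigma \to \fC^{\circ,\curlywedge}_{\varsigma'}$ with associated virtual cycle $\pi_\varsigma^*[\SH^{\curlywedge}_\varsigma]^{\red}$. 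The proposition then follows from the virtual push-forward of Costello--Manolache applied to $\mathrm{S}$, provided we establish the isomorphism of reduced POTs
\begin{equation*}
\EE^{\red}_{\SH^{\curlywedge}_{\varsigma+\mathbf{1}}/\fM^{\curlywedge}_{\varsigma'+\mathbf{1}'}} \cong \EE^{\red}_{\SH^{\curlywedge}_\varsigma/\fM^{\curlywedge}_{\varsigma'}}\big|_{\SH^{\curlywedge}_{\varsigma+\mathbf{1}}}.
\end{equation*}

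By the triangle \eqref{eq:red-POT}, this isomorphism amounts to compatible identifications of the three constituent pieces. The canonical POT compatibility is exactly \eqref{equ:add-unit-canonical-POT}, which was established in the proof of Proposition \ref{prop:canonical-fundamental-class-axiom} via \eqref{eq:compare-pull-back-log-tangent}. The boundary complex is $\FF = \cO(\ttwist \Delta_{\max})[-1]$; by Lemma \ref{lem:max-degeneracy-add-marking}, the maximal degeneracy is preserved when a unit marking is added, and since the universal line bundle $\cO(\Delta_{\max})$ depends only on the section $e_{\max}$ of the characteristic sheaf, the two boundary complexes are canonically identified.

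The heart of the proof, and the main obstacle, is verifying the compatibility of the cosections. Both cosections arise from $R^1\pi_*$ applied to the pullback $f^*_{e_{\max}}\diff\tW$ of the differential of the twisted superpotential along the lift $f_{e_{\max}}$ to the expansion $\punt_{e_{\max},\circ}$ of Corollary \ref{cor:exp-R-map}. The key observation is that the twist morphism $\mathrm{T}$ of Proposition \ref{prop:target-add-marking} is a strict smooth modification pulled back purely from the log structure side, and the log structure is unchanged on the maximally degenerate components when a unit marking is added. Tracing through diagram \eqref{diag:relating-R-maps} combined with the lift provided by Lemma \ref{lem:exp-map} (which depends only on the degeneracy data, preserved by Lemma \ref{lem:max-degeneracy-add-marking}), one sees that $\tilde f_{\varsigma+\mathbf{1}, e_{\max}}$ factors as the composition of the contraction $\widetilde{\mathrm{St}}^{\circ}$ with $f_{\varsigma, e_{\max}}$ pulled back to $\widetilde{\cC}^{\curlywedge}_\varsigma$. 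Now the contracted rational bridges (including the one carrying the added unit marking) are mapped into the zero section of $\cO(\punt)^{\vee} \boxtimes \cO(\Delta_{\max})$ by Corollary \ref{cor:exp-R-map}(1) and (3), where $\diff\tW$ vanishes on the $\Omega^\vee_{\punt/\BC}$-factor relevant to pushforward along $\pi$. Combined with the vanishing of $R^\bullet\pi_*\omega_{\cZ}$ along the contracted rational components (by base change and the projection formula), the two cosections are identified under the isomorphism of canonical POTs, completing the compatibility \eqref{eq:compare-pull-back-log-tangent} at the reduced level.
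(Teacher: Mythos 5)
Your overall architecture matches the paper's: reduce along the strict \'etale $F^{\curlywedge}_{+}$ and the projection $\pi_{\varsigma}$, apply virtual push-forward to $\mathrm{S}$, and reduce the identity to an isomorphism of reduced obstruction theories, which by \eqref{eq:red-POT} amounts to compatibility of the canonical theories (this is \eqref{equ:add-unit-canonical-POT}), of the boundary complexes (via Lemma \ref{lem:max-degeneracy-add-marking}), and of the cosections. The first two compatibilities are handled exactly as in the paper. However, your treatment of the cosection compatibility has a genuine gap. You assert that $\tilde f_{\varsigma+\mathbf{1},e_{\max}}$ factors globally as $f_{\varsigma,e_{\max}}$ composed with the contraction $\widetilde{\mathrm{St}}^{\circ}$; this is false along the new marking $p_{n+1}$. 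Adding a unit sector changes the R-structure there: by \eqref{eq:twist-R} and \eqref{eq:identify-away-from-extra-marking}, the two punctured R-maps only agree on the complement of $p_{n+1}$, since their maps to $\BC$ are $\omega^{\log}_{\cC_{\varsigma+\mathbf{1}}}$ versus its twist down by $\cO(r\,p_{n+1})$. The twist morphism $\mathrm{T}$ is not a modification ``pulled back purely from the log structure side''; it genuinely modifies $\uomega$ and $\uspin$ along the divisor $p_{n+1}$, and this divisor lies everywhere in the universal curve, not only over the locus where $\widetilde{\mathrm{St}}^{\circ}$ contracts a rational bridge. Consequently your attempt to close the argument via contracted rational bridges and a vanishing of $R^{\bullet}\pi_*$ on contracted components is aimed at the wrong locus: the a priori discrepancy between the two pulled-back differentials $\diff\tW$ is supported along all of $p_{n+1}$ (they take values in line bundles differing by $\cO(r\,p_{n+1})$), not merely over the collision locus, and it must be killed before (or at least compatibly with) the pushforward defining the cosection.

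The missing step is precisely what the paper supplies in diagram \eqref{diag:compare-dW-forget-marking}: the two candidate morphisms agree on the open dense complement of $p_{n+1}$ by the factorization \eqref{eq:identify-away-from-extra-marking}, and both vanish along $p_{n+1}$ itself --- the cosection $f^{\curlywedge,*}_{\varsigma+\mathbf{1},e}\diff\tW$ because by Lemma \ref{lem:factor-through-omega} it factors through $\omega$ rather than $\omega^{\log}$, hence vanishes at every marking including the unit one, and the composite built from the $\varsigma$-theory because it factors through the subsheaf $\omega^{\log}(-r\,p_{n+1})\hookrightarrow\omega^{\log}$ of \eqref{eq:compare-omega-log-add-marking}. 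Without an argument of this kind controlling the behavior along $p_{n+1}$, the identification of cosections, and hence of the reduced obstruction theories, is not established; with it, your proof would coincide with the paper's.
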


\begin{proof}
Let $\EE^{\red}_{\SH^{\curlywedge}_{\varsigma} / \fM^{\curlywedge}_{\varsigma'}}$ (resp. $\EE^{\red}_{\SH^{\curlywedge}_{\varsigma + \mathbf{1}} / \fM^{\curlywedge}_{\varsigma' + \mathbf{1}'}}$) be the reduced perfect  obstruction theory of $\SH^{\curlywedge}_{\varsigma} \to \fM^{\curlywedge}_{\varsigma'}$ (resp. $\SH^{\curlywedge}_{\varsigma + \mathbf{1}} \to \fM^{\curlywedge}_{\varsigma' + \mathbf{1}'}$) as in Theorem \ref{thm:red-POT}. It pulls back to a perfect obstruction theory of  $\cC^{\circ,\curlywedge}_{\varsigma} \to \fC^{\circ,\curlywedge}_{\varsigma'}$, inducing a virtual cycle $[\cC^{\circ,\curlywedge}_{\varsigma}]^{\red}$ such that $\pi_{\varsigma}^*[\SH^{\curlywedge}_{\varsigma}]^{\red} = [\cC^{\circ,\curlywedge}_{\varsigma}]^{\red}$. It remains to verify that $\mathrm{S}_*[\SH^{\curlywedge}_{\varsigma+ {\bf 1}}]^{\red} = [\cC^{\circ,\curlywedge}_{\varsigma}]^{\red}$.

Since $F^{\curlywedge}_{+}$ is strict and \'etale, similar to Proposition \ref{prop:canonical-fundamental-class-axiom}, it suffices to show
\begin{equation}\label{equ:add-unit-red-POT}
\EE^{\red}_{\SH^{\curlywedge}_{\varsigma + \mathbf{1}} / \fC^{\circ,\curlywedge}_{\varsigma'}} := \EE^{\red}_{\SH^{\curlywedge}_{\varsigma + \mathbf{1}} / \fM^{\curlywedge}_{\varsigma' + \mathbf{1}'}} \cong \EE^{\red}_{\SH^{\curlywedge}_{\varsigma} / \fM^{\curlywedge}_{\varsigma'}}|_{\SH^{\curlywedge}_{\varsigma + \mathbf{1}}}.
\end{equation}
By \eqref{eq:red-POT}, to prove \eqref{equ:add-unit-red-POT} it suffices to verify
\begin{equation}\label{equ:add-unit-POT-boundary}
  \left(\EE_{\SH^{\curlywedge}_{\varsigma} / \fM^{\curlywedge}_{\varsigma'}} \to \FF_{\SH^{\curlywedge}_{\varsigma} / \fM^{\curlywedge}_{\varsigma'}} \right)|_{\SH^{\curlywedge}_{\varsigma + \mathbf{1}}}
  \cong \left(\EE_{\SH^{\curlywedge}_{\varsigma + \mathbf{1}} / \fM^{\curlywedge}_{\varsigma'+ \mathbf{1}'}} \to \FF_{\SH^{\curlywedge}_{\varsigma+ \mathbf{1}} / \fM^{\curlywedge}_{\varsigma'+ \mathbf{1}'}}\right).
\end{equation}
By \eqref{equ:add-unit-canonical-POT}, we have an isomorphism of the canonical perfect obstruction theories
\[
\EE_{\SH^{\curlywedge}_{\varsigma} / \fM^{\curlywedge}_{\varsigma'}}|_{\SH^{\curlywedge}_{\varsigma + \mathbf{1}}} \cong \EE_{\SH^{\curlywedge}_{\varsigma + \mathbf{1}} / \fM^{\curlywedge}_{\varsigma'+ \mathbf{1}'}}.
\]
By \eqref{eq:boundary-complex}, the boundary complexes $\FF_{\SH^{\curlywedge}_{\varsigma} / \fM^{\curlywedge}_{\varsigma'}}$ and $\FF_{\SH^{\curlywedge}_{\varsigma+ \mathbf{1}} / \fM^{\curlywedge}_{\varsigma'+ \mathbf{1}'}}$ are defined by the corresponding maximal degeneracies, which are coincide by Lemma \ref{lem:max-degeneracy-add-marking}.  Hence we have
\[
\FF_{\SH^{\curlywedge}_{\varsigma} / \fM^{\curlywedge}_{\varsigma'}}|_{\SH^{\curlywedge}_{\varsigma + \mathbf{1}}} \cong \FF_{\SH^{\curlywedge}_{\varsigma+ \mathbf{1}} / \fM^{\curlywedge}_{\varsigma'+ \mathbf{1}'}}.
\]
It remains to identify morphisms on both sides of  \eqref{equ:add-unit-POT-boundary}.  Consider the two R-maps as in \eqref{eq:bullet-universal-maps}, over $\UH_{\varsigma+ {\bf 1}}$ and $\UH_{\varsigma}$ respectively:
\[
{f}^{\curlywedge}_{\varsigma+{\bf 1},e} \colon {\cC}^{\circ,\curlywedge}_{\varsigma + {\bf 1}} \to \infty_{e,\circ} \ \ \ \mbox{and} \  \  \  {f}^{\curlywedge}_{\varsigma, e} \colon  \cC^{\circ,\curlywedge}_{\varsigma} \to \infty_{e,\circ}.
\]
where $e$ denotes the maximal degeneracy of both punctured maps by Lemma \ref{lem:max-degeneracy-add-marking}.
 Denote by ${f}^{\curlywedge}_{\varsigma, e, \widetilde{\cC}_{\varsigma}} \colon  \cC^{\circ,\curlywedge}_{\varsigma,\widetilde{\cC}_{\varsigma}} \to \infty_{e,\circ}$ the  pull-back of  ${f}^{\curlywedge}_{\varsigma, e}$ over $\widetilde{\cC}^{\curlywedge}_{\varsigma} \cong \UH_{\varsigma+ {\bf 1}}$. Consider the following diagram with horizontal  arrows given by  \eqref{eq:dW-over-curve}:
\begin{equation}\label{diag:compare-dW-forget-marking}
\xymatrix{
{f}^{\curlywedge,*}_{\varsigma+{\bf 1},e}\Omega^{\vee}_{\punt/\BC} \ar[rrr]^-{{f}^{\curlywedge,*}_{\varsigma+{\bf 1},e}\diff \tW_{\bullet}} \ar@{=}[d] &&&  \omega^{\log}_{{\cC}^{\circ,\curlywedge}_{\varsigma + {\bf 1}}/\SH^{\curlywedge}_{\varsigma + {\bf 1}}}\otimes {f}^{\curlywedge,*}_{\varsigma+{\bf 1},e}\cO(\ttwist\Delta_{\max,\bullet}) \\
\left( {f}^{\curlywedge,*}_{\varsigma,e}\Omega^{\vee}_{\punt/\BC}\right)|_{\widetilde{\cC}^{\circ,\curlywedge}_{\varsigma + {\bf 1}}} \ar[rrr]^-{\left({f}^{\curlywedge,*}_{\varsigma,e}\diff \tW_{\bullet}\right)|_{{\cC}^{\circ,\curlywedge}_{\varsigma + {\bf 1}}}} &&& \left(\omega^{\log}_{\cC^{\circ,\curlywedge}_{\varsigma,\widetilde{\cC}_{\varsigma}}/\widetilde{\cC}^{\curlywedge}_{\varsigma}}\otimes {f}^{\curlywedge,*}_{\varsigma,e}\cO(\ttwist\Delta_{\max,\bullet}) \right)|_{{\cC}^{\circ,\curlywedge}_{\varsigma + {\bf 1}}} \ar@{^{(}->}[u]
}
\end{equation}
where the left vertical ``$=$'' is \eqref{eq:compare-pull-back-log-tangent}, and the right vertical arrow is given by tensoring
\[
{f}^{\curlywedge,*}_{\varsigma+{\bf 1},e}\cO(\ttwist\Delta_{\max,\bullet}) \cong \left({f}^{\curlywedge,*}_{\varsigma,e}\cO(\ttwist\Delta_{\max,\bullet}) \right)|_{{\cC}^{\circ,\curlywedge}_{\varsigma + {\bf 1}}}
\]
from Lemma \ref{lem:max-degeneracy-add-marking} with the inclusion
\begin{equation}\label{eq:compare-omega-log-add-marking}
\omega^{\log}_{\cC^{\circ,\curlywedge}_{\varsigma,\widetilde{\cC}_{\varsigma}}/\widetilde{\cC}^{\curlywedge}_{\varsigma}}|_{{\cC}^{\circ,\curlywedge}_{\varsigma + {\bf 1}}} \cong \omega^{\log}_{{\cC}^{\circ,\curlywedge}_{\varsigma + {\bf 1}}/\SH^{\curlywedge}_{\varsigma + {\bf 1}}}(-r p_{n+1}) \hookrightarrow \omega^{\log}_{{\cC}^{\circ,\curlywedge}_{\varsigma + {\bf 1}}/\SH^{\curlywedge}_{\varsigma + {\bf 1}}}
\end{equation}
Since both sides of  \eqref{equ:add-unit-POT-boundary} is given  by \eqref{eq:relative-cosection}, it suffices to show  the commutativity of \eqref{diag:compare-dW-forget-marking}.

Indeed, the top arrow and the composition of the other three sides in \eqref{diag:compare-dW-forget-marking} yield two morphisms in
\[
\Hom_{{\cC}^{\circ,\curlywedge}_{\varsigma + {\bf 1}}}\left( {f}^{\curlywedge,*}_{\varsigma+{\bf 1},e}\Omega^{\vee}_{\punt/\BC}, \omega^{\log}_{{\cC}^{\circ,\curlywedge}_{\varsigma + {\bf 1}}/\SH^{\curlywedge}_{\varsigma + {\bf 1}}}\otimes {f}^{\curlywedge,*}_{\varsigma+{\bf 1},e}\cO(\ttwist\Delta_{\max,\bullet}) \right).
\]
It amounts to show that these two morphisms are identical. By \eqref{eq:identify-away-from-extra-marking} and
\eqref{diag:relating-R-maps}, away from the $(n+1)$-st marking, we
have a factorization on the left that induces a factorization on the
right by the construction of \eqref{eq:bullet-universal-maps}:
\[
\xymatrix{
{\cC}^{\circ,\curlywedge}_{\varsigma + {\bf 1}} \setminus p_{n+1} \ar[rr]^-{{f}^{\curlywedge}_{\varsigma+{\bf 1}}} \ar[d] && \punt && {\cC}^{\circ,\curlywedge}_{\varsigma + {\bf 1}} \setminus p_{n+1} \ar[rr]^-{{f}^{\curlywedge}_{\varsigma+{\bf 1},e}} \ar[d] && \infty_{e,\circ} \\
\cC^{\circ,\curlywedge}_{\varsigma,\widetilde{\cC}_{\varsigma}} \ar[rru]_{{f}^{\curlywedge}_{\varsigma}} &&&&  \cC^{\circ,\curlywedge}_{\varsigma,\widetilde{\cC}_{\varsigma}} \ar[rru]_{{f}^{\curlywedge}_{\varsigma,e}} &&
}
\]
Thus these two morphisms are identical over the open dense
$\cC^{\circ,\curlywedge}_{\varsigma,\widetilde{\cC}_{\varsigma}}
\setminus p_{n+1}$, hence are identical.
Indeed, note that along $p_{n+1}$ the morphism
${f}^{\curlywedge,*}_{\varsigma+{\bf 1},e}\diff \tW_{\bullet}$
vanishes along all markings including $p_{n+1}$, by Lemma
\ref{lem:factor-through-omega}.
Furthermore, the inclusion \eqref{eq:compare-omega-log-add-marking},
hence the composition of the left, lower and right arrows of
\eqref{diag:compare-dW-forget-marking} also vanishes along $p_{n+1}$.
This completes the proof.
\end{proof}

\subsection{Proof of the unit axiom}\label{ss:unit-no-psi-min}

We next prove Theorem~\ref{thm:remove-unit}
(3) for reduced virtual cycles.
The case of canonical virtual cycles is similar and is left to the
reader.
We will assume $2g - 2 + n > 0$.

First, consider the following commutative diagram:
\[
\xymatrix{
\SH^{\curlywedge}_{\varsigma+ {\bf 1}} \ar[d]_{\mathrm{S}}  \ar[rr] &&   \widetilde{\fC}^{\curlywedge}_{\varsigma'} \ar[d]^{\mathrm{S}} && \\
\cC^{\circ,\curlywedge}_{\varsigma} \ar[rr] \ar[d]_{\stab} && \fC^{\circ,\curlywedge}_{\varsigma'} \ar[d]^{\stab} && \\
\cZ \ar[rr] \ar[d]_{\pi_{\cZ}} \ar@{-->}@/_1pc/[rrrr]_{\ \ \ \ \ \mathrm{p}_{\cZ}} && \mathfrak{Z} \ar[rr] \ar[d] && \oM_{g,n+1} \ar[d]^{\pi} \\
\SH^{\curlywedge}_{\varsigma} \ar[rr] \ar@{-->}@/_1pc/[rrrr]_{\ \ \ \ \mathrm{p}_{\ddata}}&& \fM^{\curlywedge}_{\varsigma'} \ar[rr] &&  \oM_{g,n}
}
\]
where the two bottom squares are Cartesian squares both on the
logarithmic and underlying level, and the two dashed arrows are the
obvious compositions.
Thus the two arrows labeled by $\stab$ are obtained by stabilization
to the usual stable curves.
Furthermore, the four horizontal solid arrows on the left are all
strict.

Denote by
$\EE^{\red}_{\cC^{\circ,\curlywedge}_{\varsigma}/\fC^{\circ,\curlywedge}_{\varsigma'}
}$ and $\EE^{\red}_{\cZ/\mathfrak{Z}}$ the perfect obstruction
theories obtained by pulling back
$\EE^{\red}_{\SH^{\curlywedge}_{\varsigma} /
  \fM^{\curlywedge}_{\varsigma'}}$, leading to the corresponding
virtual cycles $[\cC^{\circ,\curlywedge}_{\varsigma}]^{\red}$ and
$[\cZ]^{\red}$.
By \eqref{equ:add-unit-red-POT}, they further pull back to the perfect
obstruction theory
$\EE^{\red}_{\SH^{\curlywedge}_{\varsigma + \mathbf{1}} /
  \fC^{\circ,\curlywedge}_{\varsigma'}}$ defining
$[\SH^{\curlywedge}_{\varsigma + \mathbf{1}}]^{\red}$.
Since the two arrows
\[
\xymatrix{
\widetilde{\fC}^{\curlywedge}_{\varsigma'} \ar[rr]^{\mathrm{S}} && \fC^{\circ,\curlywedge}_{\varsigma'} \ar[rr]^{\stab} && \mathfrak{Z}
}
\]
in the above diagram are proper and birational, the virtual
push-forward of \cite[Theorem 1.1]{HeWi21} and \cite[Theorem
5.0.1]{Co06} yields
\[
\stab_* \mathrm{S}_*  [\SH^{\curlywedge}_{\varsigma + \mathbf{1}}]^{\red} = [\cZ]^{\red} = \pi_{\cZ}^*[\SH^{\curlywedge}_{\varsigma}]^{\red}.
\]
Finally, we compute
\[
 \mathrm{p}_{\ddata+\mathbf{1},*} [\SH^{\curlywedge}_{\ddata+\mathbf{1}}]^{\red}
= \mathrm{p}_{\cZ,*}\stab_* \mathrm{S}_* [\SH^{\curlywedge}_{\varsigma+ {\bf 1}}]^{\red}
= \mathrm{p}_{\cZ,*}  \pi_{\cZ}^*[\SH^{\curlywedge}_{\varsigma}]^{\red}
= \pi^{*}\mathrm{p}_{\ddata,*}  [\SH^{\curlywedge}_{\ddata}]^{\red},
\]
as stated in Theorem \ref{thm:remove-unit} (3).


\section{Axioms with \texorpdfstring{$\psi_{\min}$}{psi-min}-classes}
\label{sec:psimin-axioms}

In this section, we prove analogs of the axioms of punctured R-maps
from Section~\ref{sec:remove-unit} that also incorporate the
$\psi_{\min}$-classes from Section~\ref{ss:psi-min}.
Due to its relevance to the applications and to simplify notations, we
will restrict ourselves to the more subtle reduced theory, and will
leave the case of canonical theory to a future work.

\subsection{The statement}

\subsubsection{Setup of notations}\label{sss:nota-string-divisor}

Using the notation and assumptions from \S
\ref{ss:statement-remove-unit}, recall that $\RMm_{\bullet}$ and
$\SH^{\curlywedge}_{\bullet}$ are the stacks of stable log R-maps in
$\SH_{\bullet}$ of uniform extremal degeneracy and uniform maximal
degeneracy, respectively, where $\bullet$ denotes either $\varsigma$ or
$\varsigma + \mathbf{1}$.
We recall and introduce some classes involved in the equations in this
section below.

Consider the minimal
degeneracy $e_{\min,\bullet} \in \Gamma(\RMm_{\bullet}, \ocM_{\RMm_{\bullet}})$ and its associated line bundle $\cO(e_{\min,\bullet})$, leading to the tautological $\psi_{\min}$-classes as in \eqref{eq:psi-min-class}:
\[
\psi_{\min,\bullet} := c_{1}(\cO(-e_{\min,\bullet})).
\]

On the other hand, consider the universal $\mu_\rho$-gerbe
\[
\xymatrix{
\mathcal{I}_{\mu_\rho}\punt_{\bk} \ar[rr]^{\ev_{\cI}} \ar[d]_{\pi_{\cI}} && \punt_{\bk} \\
\overline{\mathcal{I}}_{\mu_\rho}\punt_{\bk}
}
\]
We define the {\em tautological DF-class}
\begin{equation}\label{eq:tautological-log-class}
\psi_{\DF,\rho} := \rho \cdot \pi_{\cI,*}\ev_{\cI}^*c_1\big( \cO(-\punt) \big),
\end{equation}
where $\cO(\punt)$ is the line bundle of the target log structure as
in \eqref{eq:l=1-log}.
By \cite[\S 2.1]{AGV08}, $\psi_{\DF,\rho}$ is a bivariant Chow
class. For convenience, we write
\[
\psi_{\DF} := \sum_\rho \psi_{\DF,\rho}
\]
as a bivariant Chow class over
$\ocI \punt := \sqcup_\rho \overline{\mathcal{I}}_{\mu_\rho}\punt_{\bk}$.

Note that the Chow (resp. cohomology) ring $CH^*(\punt_{\bk}) \cong CH^*(\xinfty)$
(resp. $H^*(\punt_{\bk}) \cong H^*(\xinfty)$) can be identified via
pull-back along the \'etale morphism $\punt_{\bk} \to \xinfty$.
Thus, for a class $\alpha \in CH^*(\xinfty)$ (resp.
$\alpha \in  H^*(\xinfty)$) we define the bivariant Chow
(resp. cohomology) classes
\begin{equation}\label{eq:evaluation-class}
  \alpha_{\overline{\mathcal{I}}_{\mu_\rho}} := \rho \cdot \pi_{\cI,*}\ev_{\cI}^*\alpha \ \ \ \mbox{and} \ \ \ \alpha_{\ocI} := \sum_\rho  \alpha_{\overline{\mathcal{I}}_{\mu_\rho}}.
\end{equation}

For the $j$th marking, consider the underlying evaluation morphism
\[
\ev_j \colon \SH_{\bullet} \to \ocI\punt_{\bk}.
\]
Abusing notation, we will use the same notation from the evaluation
maps from $\UH_\bullet$ and $\RMm_\bullet$.
By \S~\ref{sss:unit-sector}, as a unit sector, the $(n+1)$-st
evaluation $\ev_{n+1}$ factors through the component
$\xinfty \subset \ocI \punt_{\bk}$ parameterizing
$\mu_r$-gerbes. Furthermore, recall the tautological morphisms
\[
F^{\curlywedge} \colon \RMm_{\bullet} \to \SH^{\curlywedge}_{\bullet} \ \ \ \mbox{and} \ \ \  F_{\mathbf{1}} \colon \SH^{\curlywedge}_{\varsigma + \mathbf{1}} \cong \widetilde{\cC}^{\curlywedge}_{\varsigma} \to \SH^{\curlywedge}_{\varsigma}.
\]
from \eqref{diag:main-player} and Proposition \ref{prop:moduli-remove-unit}, and the the virtual push-forward from Proposition \ref{prop:birational-red-vcycle}
\[
F^{\curlywedge}_* [\RMm_{\bullet}]^{\red} = [\SH^{\curlywedge}_{\bullet}]^{\red}.
\]

\subsubsection{The string and divisor type equations with \texorpdfstring{$\psi_{\min}$}{psi-min}}

\begin{theorem}\label{thm:string-divisor}
  For any integer $k \geq 0$ and $D \in CH^1(\xinfty)$ we have
\begin{equation}\label{eq:string}
F_{\mathbf{1},*} \circ F^{\curlywedge}_* \left( \psi_{\min,\varsigma+\mathbf{1}}^{k} \cap [\RMm_{\varsigma+\mathbf{1}}]^{\red} \right) =  \sum_{j=1}^n \frac{|c_j|}{r_j}  \sum_{k' = 0}^{k-1} (\ev_j^*\psi_{\DF}^{k'}) \cap F^{\curlywedge}_{*} \left( \psi_{\min,\varsigma}^{k-1-k'} \cap [\RMm_{\varsigma}]^{\red}\right) ,
\end{equation}

\begin{multline}\label{eq:divisor}
F_{\mathbf{1},*} \circ F^{\curlywedge}_* \left( \psi_{\min,\varsigma+\mathbf{1}}^{k} \cdot \ev_{n+1}^*\big(D\big) \cap [\RMm_{\varsigma+\mathbf{1}}]^{\red} \right) =   \big(\int_{\beta}D \big) \cdot F^{\curlywedge}_* \left( \psi_{\min,\varsigma}^k \cap [\RMm_{\varsigma}]^{\red} \right) \\
 + \sum_{j=1}^n \frac{|c_j|}{r_j} \sum_{k' = 0}^{k-1}\ev_j^*\big(D_{\ocI} \cdot \psi_{\DF}^{k'} \big) \cap F^{\curlywedge}_* \left( \psi_{\min,\varsigma}^{k-1-k'} \cap [\RMm_{\varsigma}]^{\red} \right).
\end{multline}
Note that when $k=0$, the $k'$-sum in each of the above equations is
empty, hence zero.
\end{theorem}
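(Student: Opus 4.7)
The strategy combines the fundamental class axiom (Proposition \ref{prop:red-fundamental-class-axiom}) with an analysis of how the $\psi_{\min}$-class transforms under the forgetful morphism $F_{\mathbf{1}}$. Under the identification $\SH^{\curlywedge}_{\varsigma+\mathbf{1}} \cong \widetilde{\cC}^{\curlywedge}_{\varsigma}$ of Proposition \ref{prop:moduli-remove-unit}, $F_{\mathbf{1}}$ factors as the saturation $\mathrm{S}$ followed by the universal curve projection $\pi_{\varsigma} \colon \widetilde{\cC}^{\curlywedge}_{\varsigma} \to \SH^{\curlywedge}_{\varsigma}$; together with Proposition \ref{prop:birational-red-vcycle}, this yields $\mathrm{S}_{*} F^{\curlywedge}_{*}[\RMm_{\varsigma+\mathbf{1}}]^{\red} = \pi_{\varsigma}^{*}[\SH^{\curlywedge}_{\varsigma}]^{\red}$. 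Writing $\xi_{k} := F^{\curlywedge}_{*}(\psi_{\min,\varsigma}^{k} \cap [\RMm_{\varsigma}]^{\red})$, the task reduces to comparing $\psi_{\min,\varsigma+\mathbf{1}}^{k} \cap [\RMm_{\varsigma+\mathbf{1}}]^{\red}$ with $F_{\mathbf{1}}^{*}\xi_{k-1-k'}$-type terms before pushing forward along $F_{\mathbf{1}}$.

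The central ingredient is a divisor class identity on $\SH^{\curlywedge}_{\varsigma+\mathbf{1}}$ of the form
\[
F^{\curlywedge}_{*}\bigl(\psi_{\min,\varsigma+\mathbf{1}} \cap [\RMm_{\varsigma+\mathbf{1}}]^{\red}\bigr) = F_{\mathbf{1}}^{*}\xi_{1} + \sum_{j=1}^{n} \frac{|c_{j}|}{r_{j}}\,[\cZ_{j}],
\]
where $\cZ_{j} \subset \SH^{\curlywedge}_{\varsigma+\mathbf{1}}$ is the boundary stratum parameterizing stable punctured R-maps for which the unit marking lies on a rational bridge attached at the $j$-th original marking. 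Outside $\bigcup_{j}\cZ_{j}$, saturation is an isomorphism on underlying curves and the set of components with their degeneracies is unchanged by adding the unit marking, so $\psi_{\min,\varsigma+\mathbf{1}}$ agrees with the pullback of $\psi_{\min,\varsigma}$. Each correction term is produced by a local log-geometric analysis at a punctured marking $p_{j}$: saturation creates a rational bridge whose degeneracy is prescribed by the edge equation together with the contact orders $c_{j}$ and $-d$, and $\cZ_{j}$ carries a natural $\mu_{r_{j}}$-gerbe structure over a modification of $\SH^{\curlywedge}_{\varsigma}$, from which the multiplicity $|c_{j}|/r_{j}$ arises by combining the gerbe index $r_{j}$ with the contact-order weight $|c_{j}|$, in the spirit of the balancing condition of Lemma \ref{lem:balancing}.

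Iterating this identity and using that the divisors $\cZ_{j}$ are pairwise disjoint, only the diagonal powers $(F_{\mathbf{1}}^{*}\xi_{1})^{a} \cdot [\cZ_{j}]^{k-a}$ survive in the expansion. A normal bundle computation identifies the self-intersection of $\cZ_{j}$ with $-\ev_{j}^{*}\psi_{\DF}$ via the natural morphism $\cZ_{j} \to \RMm_{\varsigma}$, so $[\cZ_{j}]^{k'+1}$ restricts to $(-\ev_{j}^{*}\psi_{\DF})^{k'} \cap [\cZ_{j}]$, and the projection formula along $F_{\mathbf{1}}$ then yields equation \eqref{eq:string}. For equation \eqref{eq:divisor}, the insertion $\ev_{n+1}^{*}D$ contributes in two ways: on the complement of $\bigcup_{j}\cZ_{j}$, the evaluation $\ev_{n+1}$ factors through the universal coarse stable map over $\SH^{\curlywedge}_{\varsigma}$, so pushforward along $\pi_{\varsigma}$ produces the scalar $\int_{\beta}D$ by the projection formula; on each $\cZ_{j}$, $\ev_{n+1}$ coincides with $\ev_{j}$ up to the inertia stack morphism, producing the class $\ev_{j}^{*}D_{\ocI}$. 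The main technical obstacle will be the precise identification of each boundary stratum $\cZ_{j}$ together with its gerbe structure and normal bundle in terms of $\psi_{\DF}$, and the verification of the exact multiplicity $|c_{j}|/r_{j}$, requiring a careful local log-geometric analysis at punctured markings combined with the edge equations for degeneracies at the newly created nodes.
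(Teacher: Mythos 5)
Your overall strategy---compare $\psi_{\min,\varsigma+\mathbf{1}}$ with the class coming from $\varsigma$ up to boundary corrections supported where the unit marking sits on a bridge over an original marking, then push forward---is the same spirit as the paper's argument, but as written it has two genuine gaps. First, there is no space in your setup on which both $\psi_{\min}$-classes exist: the forgetful morphism does not exist for the $\diamondsuit$-configuration moduli (adding or removing the unit marking destroys uniform minimal degeneracy), so ``the pullback of $\psi_{\min,\varsigma}$'' is undefined on $\RMm_{\varsigma+\mathbf{1}}$, and your central identity is stated only \emph{after} pushing forward along $F^{\curlywedge}$ to $\SH^{\curlywedge}_{\varsigma+\mathbf{1}}$, where $\psi_{\min,\varsigma+\mathbf{1}}$ no longer lives. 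Consequently the step ``iterating this identity'' is not well-formed: pushforward does not commute with multiplication by $\psi_{\min,\varsigma+\mathbf{1}}$, so knowledge of $F^{\curlywedge}_*(\psi_{\min,\varsigma+\mathbf{1}}\cap[\RMm_{\varsigma+\mathbf{1}}]^{\red})$ does not determine $F^{\curlywedge}_*(\psi_{\min,\varsigma+\mathbf{1}}^{k}\cap[\RMm_{\varsigma+\mathbf{1}}]^{\red})$. The paper resolves exactly this by introducing the auxiliary stack $\RMm_{\varsigma,\varsigma+\mathbf{1}}$ (punctured R-maps with a unit marking whose image under the forgetful map still has uniform extremal degeneracies), on which both minimal degeneracies are sections of the characteristic sheaf, proving $\Phi_*[\RMm_{\varsigma,\varsigma+\mathbf{1}}]^{\red}=[\RMm_{\varsigma+\mathbf{1}}]^{\red}$, and carrying out the divisor comparison $\psi_{\min,\varsigma+\mathbf{1}}=\psi_{\min,\varsigma}+\sum_j\Delta_j$ there, with the locus where the two classes differ pinned down by the condition that the contracted bridge is the \emph{unique} minimal component (Lemma~\ref{lem:non-zero-min-difference}). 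Relatedly, your boundary terms are written as fundamental classes $[\cZ_j]$; the correct objects are Gysin pullbacks $\iota_{\Delta_j}^![\RMm_{\varsigma,\varsigma+\mathbf{1}}]^{\red}$ handled via functoriality and virtual pushforward, together with the pushforward $\Delta_j\mapsto |c_j|\cdot p^{\diamondsuit}_{\varsigma',j}$ (Lemma~\ref{lem:push-forward-Deltaj}), which is where the multiplicity $|c_j|$ and the gerbe factor $1/r_j$ actually enter---not as coefficients placed in the divisor decomposition itself.

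Second, your normal-bundle claim is incorrect and would not reproduce the stated formula. The correct restriction relation is $\psi_{\min,\varsigma+\mathbf{1}}\cdot\Delta_j=\ev_j^*\psi_{\DF}\cdot\Delta_j$ (coming from $\bar{\mathfrak f}(\delta)=e_{\min,\varsigma+\mathbf{1}}+c_j\sigma_j$ along the $j$-th marking and triviality of $\cO(p_j)$ on the three-pointed bridge, as in \eqref{eq:min-over-delta}); combined with \eqref{eq:compare-Lmin} this gives
$\Delta_j\big|_{\Delta_j}=\ev_j^*\psi_{\DF}-\psi_{\min,\varsigma}\big|_{\Delta_j}$,
not $-\ev_j^*\psi_{\DF}$. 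With your claimed self-intersection, the expansion of $\psi_{\min,\varsigma+\mathbf{1}}^k$ would produce alternating-sign powers of $\psi_{\DF}$ alone, whereas the correct iteration, as in \eqref{eq:powers-of-min}, yields
$\psi_{\min,\varsigma+\mathbf{1}}^{k}=\psi_{\min,\varsigma}^{k}+\sum_{j}\sum_{k'=0}^{k-1}\ev_j^*\psi_{\DF}^{k'}\cdot\psi_{\min,\varsigma}^{k-1-k'}\cdot\Delta_j$,
which is exactly the mixed, all-positive structure appearing in \eqref{eq:string} and \eqref{eq:divisor}; also ``only diagonal powers survive'' is not what happens---all cross terms between the pullback class and $\Delta_j$ contribute (only distinct $\Delta_j,\Delta_{j'}$ are disjoint). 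Your treatment of the $\ev_{n+1}^*D$ insertion (the $\int_\beta D$ term away from the strata and $\ev_j^*D_{\ocI}$ on them) is the right idea, but it sits on top of the flawed expansion, so the argument as a whole does not close.
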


Comparing to Gromov--Witten theory, Equation \eqref{eq:string} may be
viewed as an analog of the string equation and \eqref{eq:divisor} may
be viewed as an analog of the divisor equation.
These two equations may be conveniently organized as formal power
series in $t^{-1}$ for a formal parameter $t$ as follows, which will
be useful in the localization calculation of \cite{CJR23P}:

\begin{corollary}\label{cor:string-divisor}
Notations as in Theorem \ref{thm:string-divisor}, we have
  \begin{equation}\label{eq:string-series}
    F_{\mathbf{1},*} \circ F^{\curlywedge}_* \left( \frac{[\RMm_{\varsigma+\mathbf{1}}]^{\red}}{t - \psi_{\min,\varsigma+\mathbf{1}}} \right) =
    \sum_{j=1}^n \frac{|c_j|}{r_j} F^{\curlywedge}_{*} \left( \frac{[\RMm_{\varsigma}]^{\red}}{(t - \ev_j^*\psi_{\DF})(t - \psi_{\min,\varsigma})} \right) ,
  \end{equation}
  \begin{multline}\label{eq:divisor-series}
    F_{\mathbf{1},*} \circ F^{\curlywedge}_* \left( \ev_{n+1}^*\big(D\big) \cap \frac{[\RMm_{\varsigma+\mathbf{1}}]^{\red}}{t - \psi_{\min,\varsigma+\mathbf{1}}} \right) = \\
    \big(\int_{\beta}D \big) F^{\curlywedge}_{*} \left( \frac{[\RMm_{\varsigma}]^{\red}}{t - \psi_{\min,\varsigma}} \right)
    + \sum_{j=1}^n \frac{|c_j|}{r_j} F^{\curlywedge}_{*} \left( \ev_j^*\big(D_{\ocI}\big) \cap \frac{[\RMm_{\varsigma}]^{\red}}{(t - \ev_j^*\psi_{\DF})(t - \psi_{\min,\varsigma})} \right) ,
  \end{multline}
\end{corollary}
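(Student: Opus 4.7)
The plan is to derive the series identities \eqref{eq:string-series} and \eqref{eq:divisor-series} as formal consequences of the cycle-level identities \eqref{eq:string} and \eqref{eq:divisor} in Theorem \ref{thm:string-divisor}. The basic input is the geometric series expansion
\[
\frac{1}{t - \psi} \;=\; \sum_{k \geq 0} t^{-k-1}\, \psi^{k},
\]
valid when interpreted as a formal Laurent series in $t^{-1}$ with coefficients in the appropriate Chow group; note that both $\psi_{\min,\bullet}$ and $\ev_j^{*}\psi_{\DF}$ are nilpotent bivariant classes, so after capping with a cycle only finitely many terms contribute in each fixed degree.

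First, I would expand the left hand side of \eqref{eq:string-series} as
\[
\sum_{k \geq 0} t^{-k-1}\, F_{\mathbf{1},*}\circ F^{\curlywedge}_{*}\Bigl(\psi_{\min,\varsigma+\mathbf{1}}^{k}\cap [\RMm_{\varsigma+\mathbf{1}}]^{\red}\Bigr),
\]
substitute \eqref{eq:string} for each $k$, and re-index the resulting triple sum by setting $k'' = k-1-k'$, so that $k = k' + k'' + 1$ with $k', k'' \geq 0$. This converts the expression into the product of two geometric series:
\[
\sum_{j=1}^{n} \frac{|c_j|}{r_j}
\Bigl(\sum_{k' \geq 0} t^{-k'-1}\, \ev_j^{*}\psi_{\DF}^{k'}\Bigr)
\cdot \Bigl(\sum_{k'' \geq 0} t^{-k''-1}\, F^{\curlywedge}_{*}\bigl(\psi_{\min,\varsigma}^{k''}\cap[\RMm_{\varsigma}]^{\red}\bigr)\Bigr).
\]
Since $\ev_j$ on $\RMm_{\varsigma}$ factors through $F^{\curlywedge}$, the projection formula allows me to pull the first geometric series factor inside $F^{\curlywedge}_{*}$, recognizing the product as $\frac{[\RMm_\varsigma]^\red}{(t-\ev_j^{*}\psi_{\DF})(t-\psi_{\min,\varsigma})}$. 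This yields \eqref{eq:string-series}.

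The argument for \eqref{eq:divisor-series} is identical in structure: expand both sides as series in $t^{-1}$, apply \eqref{eq:divisor} termwise, and use the same re-indexing. The only new point is that the leading term $(\int_\beta D)\cdot F^{\curlywedge}_{*}(\psi_{\min,\varsigma}^{k}\cap[\RMm_\varsigma]^\red)$ from \eqref{eq:divisor}, summed against $t^{-k-1}$, directly assembles into $(\int_\beta D)\cdot F^{\curlywedge}_{*}\bigl(\tfrac{[\RMm_\varsigma]^\red}{t-\psi_{\min,\varsigma}}\bigr)$, while the $\ev_j^{*}(D_{\ocI}\cdot \psi_{\DF}^{k'})$ contributions repackage exactly as in the string case after one more application of the projection formula.

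The computation is entirely formal, so no substantive obstacle remains once Theorem \ref{thm:string-divisor} is in hand; the only point requiring mild care is the justification of the formal-series manipulation, which follows from nilpotency of the classes $\psi_{\min,\bullet}$, $\ev_j^{*}\psi_{\DF}$ and the boundedness of the Chow groups in each cycle degree, together with the observation that evaluation and $\psi_{\min}$-classes descend compatibly along $F^{\curlywedge}$ so that the projection formula applies uniformly.
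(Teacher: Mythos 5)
Your proposal is correct and follows essentially the same route as the paper: both arguments expand everything as formal Laurent series in $t^{-1}$, apply the cycle-level string and divisor equations of Theorem \ref{thm:string-divisor} termwise, and recombine the double sum $\sum_{k'=0}^{k-1}\ev_j^*\psi_{\DF}^{k'}\cdot\psi_{\min,\varsigma}^{k-1-k'}$ into the product of two geometric series, with the projection formula moving $\ev_j^*\psi_{\DF}$ across $F^{\curlywedge}_*$. The paper simply phrases this as comparing coefficients of $t^{-k-1}$ on both sides rather than re-indexing from the left-hand side, which is a cosmetic difference.
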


\begin{proof}
Observe that
\begin{equation}\label{eq:psi-min-log-series}
\begin{split}
& \frac{1}{(t - \psi_{\min,\varsigma})(t - \ev_j^*\psi_{\DF})} = \frac{1}{t^2} \cdot \frac{1}{(1 - \ev_j^*\psi_{\DF}/t) (1 - \psi_{\min,\varsigma}/t)} \\
=&  \frac{1}{t^2} \cdot \left( \sum_{k_1 = 0}^{\infty} \big(\ev_j^*\psi_{\DF}/t  \big)^{k_1} \right) \left(\sum_{k_2 = 0}^{\infty} \big(  \psi_{\min,\varsigma}/t \big)^{k_2} \right) \\
=& \frac{1}{t} \cdot  \sum_{k=1}^{\infty} \frac{1}{t^k} \sum_{k' = 0}^{k-1} \ev_j^*\psi_{\DF}^{k'} \cdot \psi_{\min,\varsigma}^{k-1-k'}.
\end{split}
\end{equation}
On the other hand, we compute
\[
 \frac{[\RMm_{\varsigma+\mathbf{1}}]^{\red}}{t - \psi_{\min,\varsigma+\mathbf{1}}} = \frac{1}{t} \cdot \frac{[\RMm_{\varsigma+\mathbf{1}}]^{\red}}{1 - \psi_{\min,\varsigma+\mathbf{1}}/t}
= \sum_{k=0}^{\infty} \frac{1}{t^{k+1}} \psi_{\min,\varsigma+\mathbf{1}}^{k} \cap [\RMm_{\varsigma+\mathbf{1}}]^{\red}.
\]
Thus \eqref{eq:string-series} follows from \eqref{eq:string} and \eqref{eq:psi-min-log-series}  by comparing the coefficients of $\frac{1}{t^{k+1}}$.

Equation \eqref{eq:divisor-series} is proved similarly using \eqref{eq:divisor}, \eqref{eq:psi-min-log-series} and the following
\[
 \ev_{n+1}^*\big(D\big) \cap\frac{[\RMm_{\varsigma+\mathbf{1}}]^{\red}}{t - \psi_{\min,\varsigma+\mathbf{1}}}
= \sum_{k=0}^{\infty} \frac{1}{t^{k+1}} \psi_{\min,\varsigma+\mathbf{1}}^{k} \cdot \ev_{n+1}^*\big(D\big) \cap [\RMm_{\varsigma+\mathbf{1}}]^{\red}.
\]
\end{proof}

We finally observe a particularly simple combination of the string and
divisor type equations, which looks surprisingly similar to the
dilaton equation in Gromov--Witten theory:
\begin{corollary}
  We have the identity
  \begin{equation*}
    F_{\mathbf{1},*} \circ F^{\curlywedge}_* \left( \frac{(t - \ev_{n + 1}^* \psi_{\DF}) \cap [\RMm_{\varsigma+\mathbf{1}}]^{\red}}{t - \psi_{\min,\varsigma+\mathbf{1}}} \right) = \frac dr (2g - 2 + n) F^{\curlywedge}_{*} \left( \frac{[\RMm_{\varsigma}]^{\red}}{t - \psi_{\min,\varsigma}} \right)
  \end{equation*}
\end{corollary}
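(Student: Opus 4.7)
The plan is to recognize the left-hand side as a specific linear combination of the string equation \eqref{eq:string-series} (scaled by $t$) and the divisor equation \eqref{eq:divisor-series}, applied to a carefully chosen rational divisor class on $\xinfty$. The key preliminary step will be to identify $\psi_{\DF}$ uniformly across sectors with the pullback of a single element of $CH^1(\xinfty)_{\QQ}$. Since $\uomega|_{\punt_\bk}$ is trivial (being pulled back from $\BC$ via $\spec \bk \to \BC$), the $r$-spin relation specializes to $\uspin^r|_{\punt_\bk} \cong \lbspin|_{\punt_\bk}$. Consequently, on each sector $\ogamma_j \subset \ocI\punt_\bk$ with universal $\mu_{r_j}$-gerbe $\pi_{\cI} \colon \gamma_j \to \ogamma_j$, working in rational Chow yields $c_1(\uspin|_{\gamma_j}) = \tfrac{1}{r}\pi_{\cI}^* c_1(\lbspin|_{\ogamma_j})$. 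Setting $D := \tfrac{d}{r} c_1(\lbspin) - c_1(\lblog) \in CH^1(\xinfty)_{\QQ}$ and using $\cO(\punt) = \lblog \otimes \uspin^{-d}$, a direct computation with the definitions \eqref{eq:tautological-log-class}, \eqref{eq:evaluation-class} will show that $\psi_{\DF}|_{\ogamma_j} = D|_{\ogamma_j} = D_{\ocI}|_{\ogamma_j}$ for every sector; in particular $\ev_{n+1}^*\psi_{\DF} = \ev_{n+1}^* D$ and $\ev_j^*\psi_{\DF} = \ev_j^* D_{\ocI}$ for $j = 1, \ldots, n$.

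Granted these identifications, the left-hand side of the corollary equals $t$ times the left-hand side of \eqref{eq:string-series} minus the left-hand side of \eqref{eq:divisor-series} applied to $D$. By linearity (clearing denominators by $r$), the divisor equation applies to $D \in CH^1(\xinfty)_{\QQ}$. Applying the right-hand sides and substituting $\ev_j^* D_{\ocI} = \ev_j^*\psi_{\DF}$, the two sums over $j$ combine into $\sum_j \frac{|c_j|}{r_j} F^{\curlywedge}_*\bigl( \frac{(t - \ev_j^*\psi_{\DF}) \cap [\RMm_\varsigma]^{\red}}{(t - \ev_j^*\psi_{\DF})(t - \psi_{\min,\varsigma})} \bigr)$, in which the factor $t - \ev_j^*\psi_{\DF}$ cancels between numerator and denominator. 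The result is a scalar multiple of $F^{\curlywedge}_*\bigl( \frac{[\RMm_\varsigma]^{\red}}{t - \psi_{\min,\varsigma}} \bigr)$ with coefficient $\sum_{j=1}^n \frac{|c_j|}{r_j} - \int_\beta D$.

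The final step will be to evaluate this scalar via the balancing condition. Lemma~\ref{lem:balancing} applied to the composition $\pC \to \punt \to \ainfty$ gives $\sum_j c_j/r_j = \deg f^*\cO(\punt)$; combining with $c_1(\cO(\punt)) = c_1(\lblog) - d\, c_1(\uspin)$ and the R-structure identity $r\deg f^*\uspin = \int_\beta c_1(\lbspin) + (2g-2+n)$ yields $\deg f^*\cO(\punt) = -\int_\beta D - \tfrac{d}{r}(2g-2+n)$. Since all $c_j < 0$, one has $\sum_j |c_j|/r_j = -\sum_j c_j/r_j$, so $\sum_{j=1}^n \tfrac{|c_j|}{r_j} - \int_\beta D = \tfrac{d}{r}(2g-2+n)$, as required. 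The main obstacle will be the initial identification of $\psi_{\DF}$ with the pullback of $D$: this relies crucially on the triviality of $\uomega|_{\punt_\bk}$ and requires careful tracking of $\mu_{r_j}$-weights under the rigidification maps $\pi_{\cI}$; any integrality subtleties are bypassed by working rationally and invoking linearity.
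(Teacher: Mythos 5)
Your proposal is correct and takes essentially the same route as the paper: the paper likewise recognizes the left-hand side as $t$ times the string equation \eqref{eq:string-series} minus the divisor equation \eqref{eq:divisor-series} applied to $D = \psi_{\DF}|_{\xinfty}$ (which, via the triviality of $\uomega|_{\punt_\bk}$ and $\cO(\punt)=\lblog\otimes\uspin^{-d}$, is your class $\tfrac{d}{r}c_1(\lbspin)-c_1(\lblog)$), uses $\ev_j^*(\psi_{\DF}|_{\xinfty})_{\ocI}=\ev_j^*\psi_{\DF}$ to cancel the factor $(t-\ev_j^*\psi_{\DF})$, and evaluates the resulting scalar $\sum_j \tfrac{|c_j|}{r_j}-\int_\beta \psi_{\DF}|_{\xinfty}=\tfrac{d}{r}(2g-2+n)$ by Lemma~\ref{lem:balancing} together with the spin relation of the target set-up. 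Your only deviation is making the divisor $D$ and the sector-by-sector identification of $\psi_{\DF}$ explicit at the outset, which the paper leaves implicit in its final step.
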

\begin{proof}
  We compute using \eqref{eq:divisor-series}:
  \begin{align*}
    &F_{\mathbf{1},*} \circ F^{\curlywedge}_* \left( \frac{\ev_{n + 1}^* \psi_{\DF} \cap [\RMm_{\varsigma+\mathbf{1}}]^{\red}}{t - \psi_{\min,\varsigma+\mathbf{1}}} \right) \\
    =&  F_{\mathbf{1},*} \circ F^{\curlywedge}_* \left( \frac{\ev_{n + 1}^* \psi_{\DF}|_{\xinfty} \cap [\RMm_{\varsigma+\mathbf{1}}]^{\red}}{t - \psi_{\min,\varsigma+\mathbf{1}}} \right) \\
    =& \big(\int_{\beta}\psi_{\DF}|_{\xinfty} \big) F^{\curlywedge}_{*} \left( \frac{[\RMm_{\varsigma}]^{\red}}{t - \psi_{\min,\varsigma}} \right)
       + \sum_{j=1}^n \frac{|c_j|}{r_j} F^{\curlywedge}_{*} \left(\frac{\ev_j^* (\psi_{\DF}|_{\xinfty})_{\ocI} \cap [\RMm_{\varsigma}]^{\red}}{(t - \ev_j^*\psi_{\DF})(t - \psi_{\min,\varsigma})} \right) \\
    =& \big(\int_{\beta}\psi_{\DF}|_{\xinfty} \big) F^{\curlywedge}_{*} \left( \frac{[\RMm_{\varsigma}]^{\red}}{t - \psi_{\min,\varsigma}} \right)
    + \sum_{j=1}^n \frac{|c_j|}{r_j} F^{\curlywedge}_{*} \left(\frac{\ev_j^* \psi_{\DF} \cap [\RMm_{\varsigma}]^{\red}}{(t - \ev_j^*\psi_{\DF})(t - \psi_{\min,\varsigma})} \right)
  \end{align*}
  Then, we have
  \begin{align*}
    &F_{\mathbf{1},*} \circ F^{\curlywedge}_* \left( \frac{(t - \ev_{n + 1}^* \psi_{\DF}) \cap [\RMm_{\varsigma+\mathbf{1}}]^{\red}}{t - \psi_{\min,\varsigma+\mathbf{1}}} \right) \\
    = &\big(-\int_{\beta}\psi_{\DF}|_{\xinfty} \big) F^{\curlywedge}_{*} \left( \frac{[\RMm_{\varsigma}]^{\red}}{t - \psi_{\min,\varsigma}} \right)
    + \sum_{j=1}^n \frac{|c_j|}{r_j} F^{\curlywedge}_{*} \left(\frac{(t - \ev_j^* \psi_{\DF}) \cap [\RMm_{\varsigma}]^{\red}}{(t - \ev_j^*\psi_{\DF})(t - \psi_{\min,\varsigma})} \right) \\
    = &\left(-\int_{\beta}\psi_{\DF}|_{\xinfty} + \sum_{j = 1}^n \frac{|c_j|}{r_j} \right) F^{\curlywedge}_{*} \left( \frac{[\RMm_{\varsigma}]^{\red}}{t - \psi_{\min,\varsigma}} \right) \\
    =& \frac dr (2g - 2 + n) F^{\curlywedge}_{*} \left( \frac{[\RMm_{\varsigma}]^{\red}}{t - \psi_{\min,\varsigma}} \right),
  \end{align*}
  where we in addition apply \eqref{eq:string-series} in the first step, and Lemma~\ref{lem:balancing} and our target set-up in the last step.
\end{proof}

\subsubsection{The unit axiom with \texorpdfstring{$\psi_{\min}$}{psi-min}}

When $2g - 2 + n > 0$, the two horizontal arrows in
\eqref{diag:forget-unit-stable-curve} induce
\[
\mathrm{p}_{\ddata+\mathbf{1}} \colon \RMm_{\ddata + {\bf 1}} \stackrel{}{\longrightarrow} \oM_{g,n+1} \ \ \ \mbox{and} \ \ \  \mathrm{p}_{\ddata} \colon \RMm_{\ddata} \stackrel{}{\longrightarrow} \oM_{g,n}.
\]
Let $\delta_{j,n+1} \subset \oM_{g,n+1}$ be the boundary divisor where the $j$th and $(n+1)$-st marking collide.

\begin{theorem}\label{thm:unit-axiom-with-min}
  For any integer $k \geq 0$ and $D \in CH^1(\xinfty)$ we have
\begin{multline}\label{eq:unit-string}
\mathrm{p}_{\ddata+\mathbf{1},*} \left( \psi_{\min,\varsigma+\mathbf{1}}^{k} \cap [\RMm_{\varsigma+\mathbf{1}}]^{\red} \right) =   \pi^* \mathrm{p}_{\ddata,*} \left(\psi_{\min,\varsigma}^k \cap [\RMm_{\ddata}]^{\red} \right)  \\
 +  \sum_{j=1}^n \frac{|c_j|}{r_j} \delta_{j,n+1} \cap \pi^* \mathrm{p}_{\ddata,*} \left( \sum_{k' = 0}^{k-1}  \ev_j^*\big( \psi_{\DF}^{k'} \big) \cdot   \psi_{\min,\varsigma}^{k-1-k'} \cap [\RMm_{\ddata}]^{\red} \right)
\end{multline}
When $k=0$, the $k'$-sum in each of the above equations is
empty, hence zero.
\end{theorem}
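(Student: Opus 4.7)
The plan is to prove Theorem~\ref{thm:unit-axiom-with-min} as a refinement of the $k=0$ case (Theorem~\ref{thm:remove-unit}(3)) that tracks powers of $\psi_{\min,\varsigma+\mathbf{1}}$. As in \S\ref{ss:unit-no-psi-min}, the starting point will be the commutative diagram \eqref{diag:forget-unit-stable-curve} together with the universal-curve descriptions from Proposition~\ref{prop:moduli-remove-unit} and Proposition~\ref{prop:add-unit-sector-umd}, and the virtual-pullback comparison from Proposition~\ref{prop:red-fundamental-class-axiom}. The key point is that $\RMm_{\varsigma+\mathbf{1}}$ is obtained from $\UH_{\varsigma+\mathbf{1}} \cong \widetilde{\cC}^{\curlywedge}_\varsigma$ by the log blow-up imposing uniform minimal degeneracy, and the combinatorics of this blow-up is controlled by the behavior of degeneracies under adding a unit marking.

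First I would establish a decomposition, on $\RMm_{\varsigma+\mathbf{1}}$, of the class $\psi_{\min,\varsigma+\mathbf{1}}^k\cap[\RMm_{\varsigma+\mathbf{1}}]^\red$ into a main part, pulled back from $\RMm_\varsigma\times_{\oM_{g,n}}\oM_{g,n+1}$, and boundary corrections supported on the preimages $\mathrm{p}_{\varsigma+\mathbf{1}}^{-1}(\delta_{j,n+1})$. Away from the boundary, the unit marking does not create a contracted rational bridge, so by an argument parallel to Lemma~\ref{lem:max-degeneracy-add-marking} applied to minimal degeneracies, $\psi_{\min,\varsigma+\mathbf{1}}$ agrees with the pullback of $\psi_{\min,\varsigma}$. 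Pushing forward along $\mathrm{p}_{\varsigma+\mathbf{1}}$ then yields the main term $\pi^*\mathrm{p}_{\varsigma,*}(\psi_{\min,\varsigma}^k\cap[\RMm_\varsigma]^\red)$ by base change along $\pi$ on the generic locus.

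The heart of the proof is the boundary analysis over each $\delta_{j,n+1}$. Its generic point parameterizes a curve with a rational bridge carrying the $j$-th and $(n+1)$-st markings; by the balance condition (Lemma~\ref{lem:balancing}) the bridge has contact orders $c_j$ and $-c_j$ at its two special points, with the unit marking of contact order $-d$, constraining its degeneracy and its gerbe structure via the index $r_j$. The factor $|c_j|/r_j$ arises from the orbifold degree of the bridge map and matches the coefficient in Theorem~\ref{thm:string-divisor}; the $\psi_{\DF}$-classes appear through the pushforward to the rigidified inertia stack as in \eqref{eq:tautological-log-class}; and the sum $\sum_{k'=0}^{k-1}\ev_j^*\psi_{\DF}^{k'}\cdot\psi_{\min,\varsigma}^{k-1-k'}$ arises from the Segre-class contribution of the log blow-up that imposes uniform minimal degeneracy along the bridge locus, in exact analogy with the expansion \eqref{eq:psi-min-log-series} used to derive \eqref{eq:string-series} from \eqref{eq:string}.

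The chief technical obstacle I expect is the explicit identification of the boundary correction. A useful consistency check is that pushing \eqref{eq:unit-string} forward via $\pi_*$ should recover \eqref{eq:string}: the main term is killed since $\pi$ has relative dimension one, while each boundary term contributes $\pi_*\delta_{j,n+1}=[\oM_{g,n}]$ times exactly the corresponding summand in \eqref{eq:string}. This consistency, together with the local description of the log blow-up as a projective bundle of a direct sum of line bundles (in the spirit of Lemma~\ref{lem:min-stack-composition} and Proposition~\ref{prop:push-forward-min}), should pin down the boundary coefficient uniquely and complete the proof.
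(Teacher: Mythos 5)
There is a genuine gap at the core of your strategy: there is no morphism $\RMm_{\varsigma+\mathbf{1}}\to\RMm_{\varsigma}$, because forgetting the unit marking does not preserve uniform \emph{minimal} degeneracy, so the decomposition you propose directly on $\RMm_{\varsigma+\mathbf{1}}$ --- a ``main part pulled back from $\RMm_\varsigma\times_{\oM_{g,n}}\oM_{g,n+1}$'' plus boundary corrections --- is not defined as stated, and the claim that ``$\psi_{\min,\varsigma+\mathbf{1}}$ agrees with the pullback of $\psi_{\min,\varsigma}$ away from the boundary'' has no meaning without a map along which to pull back. The paper resolves exactly this by introducing the auxiliary correspondence $\RMm_{\varsigma,\varsigma+\mathbf{1}}$ of \eqref{diag:compare-auxiliary}, which carries $\Phi$ to $\RMm_{\varsigma+\mathbf{1}}$ and $F^{\diamondsuit}_{\mathbf{1}}$ to $\RMm_{\varsigma}$ and satisfies the virtual push-forward \eqref{eq:Mm-reduced-circle-push-forward}; only on this correspondence do both minimal degeneracies exist simultaneously, and the comparison becomes the exact divisor identity $\psi_{\min,\varsigma+\mathbf{1}}=\psi_{\min,\varsigma}+\sum_j\Delta_j$ of \eqref{eq:compare-Lmin}, with $\Delta_j$ the locus where the contracted bridge carrying $p_j$ and $p_{n+1}$ is the unique minimal component. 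Your proposal contains no substitute for this correspondence, and without it the ``main term plus boundary term'' shape cannot even be formulated, let alone pushed forward.

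Even granting such a decomposition, the mechanisms you invoke would not produce the boundary term. In the paper the $k'$-sum comes from iterating \eqref{eq:compare-Lmin} together with the identity \eqref{eq:min-over-delta}, which says that on $\Delta_j$ the class $\psi_{\min,\varsigma+\mathbf{1}}$ restricts to (the universal analogue of) $\ev_j^*\psi_{\DF}$ because $\cO(p_j)$ is trivial on the three-pointed bridge; this yields \eqref{eq:powers-of-min}. It is \emph{not} a ``Segre-class contribution of the log blow-up'': the projective-bundle/Segre computation (Proposition \ref{prop:push-forward-min}) belongs to the disconnected-domain setting of \S\ref{sec:reduction-to-connected-inv}, and \eqref{eq:psi-min-log-series} is a formal series repackaging of the already-proven \eqref{eq:string}, not its source. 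Likewise the coefficient $|c_j|$ is the multiplicity of the Cartier divisor $\Delta_j$ over the $j$-th marking (Lemma \ref{lem:push-forward-Deltaj}, where $\delta_{\min}=-c_j\,\sigma_j$ along $\Delta_j$), not an ``orbifold degree of the bridge map''; the $1/r_j$ only enters when the $\mu_{r_j}$-gerbe $p^{\diamondsuit}_{\varsigma',j}$ is pushed to $\delta_{j,n+1}$, and transporting the boundary contribution to $\oM_{g,n+1}$ requires the functoriality and virtual push-forward arguments of \eqref{eq:push-forward-Mm-red-cycle-to-Z}. Finally, your consistency check under $\pi_*$ cannot ``pin down the boundary coefficient uniquely'': \eqref{eq:unit-string} is an identity in $\oM_{g,n+1}$, while the check only constrains the image under $\pi_*$ in $\oM_{g,n}$, so any discrepancy in the kernel of $\pi_*$ would pass it.
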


Theorem~\ref{thm:remove-unit} can be viewed as the special case of
Theorem~\ref{thm:string-divisor} and \ref{thm:unit-axiom-with-min} for
$k = 0$.
The rest of this section is devoted to proving
Theorem~\ref{thm:string-divisor} and \ref{thm:unit-axiom-with-min}.

\subsection{The auxiliary stack \texorpdfstring{$\RMm_{\varsigma, \varsigma + \mathbf{1}}$}{}}

Compared to \S \ref{sec:remove-unit}, a main difficulty in the proof of Theorem \ref{thm:string-divisor} is
that there is no morphism from $\RMm_{\varsigma + \mathbf{1}}$ to
$\RMm_{\varsigma}$ as $F_{\mathbf{1}}$ does not preserve the uniform
minimal degeneracy.
To overcome this issue, we introduce an auxiliary stack
$\RMm_{\varsigma, \varsigma + \mathbf{1}}$ given by the following diagram of Cartesian squares in the fs category
\begin{equation}\label{diag:forget-unit+min}
\xymatrix{
\RMm_{\varsigma, \varsigma + \mathbf{1}} \ar[rr] \ar[dd]_{F^{\diamondsuit}_{\bf 1}} && \widetilde{\fC}^{\diamondsuit}_{\varsigma',\varsigma'+\mathbf{1}'} \ar[rr] \ar[d]_{\widetilde{\Psi}} && \widetilde{\fC}^{\curlywedge,\diamondsuit}_{\varsigma',\varsigma'+\mathbf{1}'} \ar[d] \ar[rr] && \UMm_{\varsigma' + \mathbf{1}'} \ar[d] \\
&&  \widetilde{\fC}^{\diamondsuit}_{\varsigma'} \ar[d] \ar[rr] && \widetilde{\fC}^{\curlywedge}_{\varsigma'} \ar[rr]^{F^{\curlywedge}_{+}} \ar[d]  && \fM^{\curlywedge}_{\varsigma' + \mathbf{1}'} \\
\RMm_{\varsigma} \ar[rr] &&  \UMm_{\varsigma'} \ar[rr] && \fM^{\curlywedge}_{\varsigma'} &&
}
\end{equation}
where $\widetilde{\fC}^{\diamondsuit}_{\varsigma'}$ is the saturation
of the universal punctured curve
${\fC}^{\circ,\diamondsuit}_{\varsigma'} \to \UMm_{\varsigma'}$.
Thus, viewed as a subcategory,
$\widetilde{\fC}^{\diamondsuit}_{\varsigma',\varsigma'+\mathbf{1}'}$
parameterizes punctured maps in $\UMm_{\varsigma' + \mathbf{1}'}$
whose images under $F_{\mathbf{1}}$ are in $\UMm_{\varsigma'}$.

On the other hand, we have a factorization
\begin{equation}\label{eq:auxiliary-factorization}
\xymatrix{
\RMm_{\varsigma+\mathbf{1}} \ar[rr] \ar@/_1pc/[rrrr] && \widetilde{\fC}^{\curlywedge,\diamondsuit}_{\varsigma',\varsigma'+\mathbf{1}'} \ar[rr] &&  \UMm_{\varsigma' + \mathbf{1}'}
}
\end{equation}
induced by the top right Cartesian square  of \eqref{diag:forget-unit+min} and
\[
\RMm_{\varsigma+\mathbf{1}} \to \SH^{\curlywedge}_{\varsigma+\mathbf{1}} \cong \widetilde{\cC}^{\curlywedge}_{\varsigma} \to \widetilde{\fC}^{\curlywedge}_{\varsigma'}.
\]
By Proposition \ref{prop:add-unit-sector-umd} and \eqref{diag:forget-unit+min}, the right horizontal arrow in \eqref{eq:auxiliary-factorization} is strict and \'etale, hence all  three arrows in \eqref{eq:auxiliary-factorization} are strict.
Furthermore, the  reduced perfect obstruction theory of $\RMm_{\varsigma+\mathbf{1}}$ defines a reduced perfect obstruction theory of the left arrow in \eqref{eq:auxiliary-factorization}, inducing the same reduced virtual cycle $[\RMm_{\varsigma+\mathbf{1}}]^{\red}$.

Combining \eqref{diag:forget-unit+min} and \eqref{eq:auxiliary-factorization}, we obtain a commutative diagram
\begin{equation}\label{diag:cartesian-auxiliary}
\xymatrix{
\RMm_{\varsigma, \varsigma + \mathbf{1}} \ar[rr]^{\Phi} \ar[d]  && \RMm_{\varsigma+\mathbf{1}} \ar[d] \ar[rr]  && \SH^{\curlywedge}_{\varsigma + \mathbf{1}} \ar[rr]^{F_{\mathbf{1}}} \ar[d] && \SH^{\curlywedge}_{\varsigma} \ar[d] \\
\widetilde{\fC}^{\diamondsuit}_{\varsigma',\varsigma'+\mathbf{1}'} \ar[rr] && \widetilde{\fC}^{\curlywedge,\diamondsuit}_{\varsigma',\varsigma'+\mathbf{1}'} \ar[d] \ar[rr] && \widetilde{\fC}^{\curlywedge}_{\varsigma'}  \ar[rr]  \ar[d] && \fM^{\curlywedge}_{\varsigma'} \\
&& \UMm_{\varsigma' + \mathbf{1}'}  \ar[rr] &&  \fM^{\curlywedge}_{\varsigma' + \mathbf{1}'}  &&
}
\end{equation}
where all squares are Cartesian  and  all vertical arrows are strict.
To summarize, we obtain
\begin{equation}\label{diag:compare-auxiliary}
\xymatrix{
\RMm_{\varsigma} \ar[d] && \RMm_{\varsigma, \varsigma + \mathbf{1}} \ar[ll]_{F^{\diamondsuit}_{\mathbf{1}}} \ar[rr]^{\Phi} \ar[d]&& \RMm_{\varsigma+\mathbf{1}} \ar[d] \\
\UMm_{\varsigma'} && \widetilde{\fC}^{\diamondsuit}_{\varsigma',\varsigma'+\mathbf{1}'} \ar[ll]_{{F}^{\diamondsuit}_{\mathbf{1}'}} \ar[rr]^{\tilde{\Phi}} && \widetilde{\fC}^{\curlywedge,\diamondsuit}_{\varsigma',\varsigma'+\mathbf{1}'}
}
\end{equation}
with both squares Cartesian and  all vertical arrows strict.
Thus viewed as a subcategory, $\RMm_{\varsigma, \varsigma + \mathbf{1}}$ parameterizes punctured R-maps in $\RMm_{\varsigma+\mathbf{1}}$, whose images under $F^{\diamondsuit}_{\mathbf{1}}$ are in $\RMm_{\varsigma}$. We obtain a commutative diagram with tautological arrows
\begin{equation}\label{eq:joint-push-forward}
\xymatrix{
\RMm_{\varsigma, \varsigma + \mathbf{1}} \ar[rr]^{\Phi} \ar[d]_{F^{\diamondsuit}_{\mathbf{1}}} && \RMm_{\varsigma+\mathbf{1}}   \ar[rr]^{F^{\curlywedge}} &&  \SH^{\curlywedge}_{\varsigma+\mathbf{1}} \ar[d]^{F_{\mathbf{1}}}\\
\RMm_{\varsigma} \ar[rrrr]^{F^{\curlywedge}} && && \SH^{\curlywedge}_{\varsigma}
}
\end{equation}

By \eqref{equ:add-unit-red-POT} and \eqref{diag:cartesian-auxiliary},
both the left and right vertical arrows of
\eqref{diag:compare-auxiliary} have the reduced perfect obstruction
theories pulled back from that of
$\SH^{\curlywedge}_{\varsigma} \to \fM^{\curlywedge}_{\varsigma'}$.
Hence they further pull back to the same reduced perfect obstruction
theory of the middle vertical arrow, and define the reduced virtual
cycle $[\RMm_{\varsigma, \varsigma + \mathbf{1}}]^{\red}$.
By \eqref{diag:forget-unit+min}, the arrow
$\widetilde{\fC}^{\diamondsuit}_{\varsigma',\varsigma'+\mathbf{1}'}
\to
\widetilde{\fC}^{\curlywedge,\diamondsuit}_{\varsigma',\varsigma'+\mathbf{1}'}$
is proper and birational, hence the virtual push-forward
\begin{equation}\label{eq:Mm-reduced-circle-push-forward}
\Phi_{*}[\RMm_{\varsigma, \varsigma + \mathbf{1}}]^{\red} = [\RMm_{\varsigma+\mathbf{1}}]^{\red}.
\end{equation}

Consider the two universal punctured maps over
$ \widetilde{\fC}^{\diamondsuit}_{\varsigma',\varsigma'+\mathbf{1}'}$
\[
\mathfrak{f}^a_{\varsigma'} \colon \fC^{\circ,a}_{\varsigma'} \to \ainfty \ \ \mbox{and} \ \  \mathfrak{f}^a_{\varsigma'+\mathbf{1}'} \colon \fC^{\circ,a}_{\varsigma'+\mathbf{1}'} \to \ainfty
\]
obtained by pulling back from $\UMm_{\varsigma'}$ and $\UMm_{\varsigma' + \mathbf{1}'}$ via the bottom arrows in \eqref{diag:compare-auxiliary}.  Similarly,  pulling back via $\Phi$ and $F^{\diamondsuit}_{\mathbf{1}}$,  we obtain two punctured R-maps over $\RMm_{\varsigma, \varsigma + \mathbf{1}}$ respectively
\[
f^a_{\varsigma + \mathbf{1}} \colon \cC^{\circ,a}_{\varsigma + \mathbf{1}} \to \infty \  \ \ \mbox{and} \ \ \ f^a_{\varsigma} \colon \cC^{\circ,a}_{\varsigma} \to \infty.
\]

Denote by $f^a_{\varsigma + \mathbf{1}} \colon \cC^{\circ,a}_{\varsigma + \mathbf{1}} \to \infty$ and $f^a_{\varsigma} \colon \cC^{\circ,a}_{\varsigma} \to \infty$ the universal punctured R-maps over $\RMm_{\varsigma, \varsigma + \mathbf{1}}$ obtained by pulling back via $\Phi$ and $F^{\diamondsuit}_{\mathbf{1}}$ respectively. The minimal degeneracies
\[
e_{\min,\varsigma+\mathbf{1}} \in \Gamma\left(\fM_{\varsigma' + \mathbf{1}}, \ocM_{\fM_{\varsigma' + \mathbf{1}'}}\right) \ \ \mbox{and} \ \ \ e_{\min,\varsigma} \in \Gamma\left(\fM_{\varsigma'}, \ocM_{\fM_{\varsigma'}}\right)
\]
pull back to the minimal degeneracies of $\mathfrak{f}^a_{\varsigma'+\mathbf{1}'}$ and $\mathfrak{f}^a_{\varsigma'}$
\[
e^a_{\min,\varsigma+\mathbf{1}} := \overline{\tilde{\Phi}}^{\flat}(e_{\min,\varsigma+\mathbf{1}}) \ \ \ \mbox{and} \ \ \ e^a_{\min,\varsigma} := \overline{\tilde{F}}^{\diamondsuit, \flat}_{\mathbf{1}}(e_{\min,\varsigma})
\]
respectively, with the corresponding line bundles
\[
\cO(e^a_{\min,\varsigma+\mathbf{1}}) = \tilde{\Phi}^*\cO(e_{\min,\varsigma+\mathbf{1}}) \ \ \ \mbox{and} \ \  \ \cO(e^a_{\min,\varsigma}) = \tilde{F}^{\diamondsuit,*}_{\mathbf{1}} \cO(e_{\min,\varsigma})
\]
As usual, by abuse of notations, we will use
\[
e^a_{\min,\varsigma+\mathbf{1}}, \ \ e^a_{\min,\varsigma}, \ \ \cO(e^a_{\min,\varsigma+\mathbf{1}}), \ \ \cO(e^a_{\min,\varsigma})
\]
to denote the corresponding pull-backs over $\RMm_{\varsigma,\varsigma+\mathbf{1}}$. By abuse of notations, denote by
\[
\psi_{\min, \varsigma} = c_1\left(\cO(-e^a_{\min,\varsigma}) \right) \ \ \ \mbox{and} \ \ \ \psi_{\min, \varsigma + \mathbf{1}} = c_1\left( \cO(-e^a_{\min,\varsigma+\mathbf{1}})\right)
\]

Applying \eqref{eq:Mm-reduced-circle-push-forward} and the projection
formula, the left hand sides of \eqref{eq:string} and
\eqref{eq:divisor} are push-forwards from
$\RMm_{\varsigma, \varsigma + \mathbf{1}}$:
\begin{equation}\label{eq:auxiliary-string}
\Phi_{*}\left( \psi_{\min,\varsigma+\mathbf{1}}^{k} \cap [\RMm_{\varsigma, \varsigma + \mathbf{1}}]^{\red} \right) = \psi_{\min,\varsigma+\mathbf{1}}^{k} \cap [\RMm_{\varsigma+\mathbf{1}}]^{\red}
\end{equation}
\begin{equation}\label{eq:auxiliary-divisor}
\Phi_{*}\left( \psi_{\min,\varsigma+\mathbf{1}}^{k}  \cdot \ev_{n+1}^*\big(D\big) \cap [\RMm_{\varsigma, \varsigma + \mathbf{1}}]^{\red} \right)  = \psi_{\min,\varsigma+\mathbf{1}}^{k} \cdot \ev_{n+1}^*\big(D\big) \cap [\RMm_{\varsigma+\mathbf{1}}]^{\red}.
\end{equation}

Thus, we can approach the computationt of \eqref{eq:string} and
\eqref{eq:divisor} by studying the push-forward along
$F^{\curlywedge}\circ F^{\diamondsuit}_{\mathbf{1}}$ as in
\eqref{eq:joint-push-forward}.

\subsection{Comparing minimal degeneracies\texorpdfstring{ over $\widetilde{\fC}^{\diamondsuit}_{\varsigma',\varsigma'+\mathbf{1}'}$}{}}
Recall the morphism
\[
\widetilde{\mathrm{St}}^{a} \colon \fC^{\circ,a}_{\varsigma'+\mathbf{1}'} \to \fC^{\circ,a}_{\varsigma'}
\]
given by Lemma \ref{lem:punctured-curve-add-marking} that removes the
$(n+1)$-st marking, and contracts the corresponding rational
components.
Away from the $(n+1)$-st marking the punctured map
$\mathfrak{f}^a_{\varsigma'+\mathbf{1}}$ is given by the composition
$\mathfrak{f}^a_{\varsigma'} \circ \widetilde{\mathrm{St}}^{a}$.
Thus, $e^a_{\min, \varsigma}$ is a (not necessarily minimal)
degeneracy of $\mathfrak{f}^a_{\varsigma'+\mathbf{1}'}$.
We then have
$e^a_{\min,\varsigma+\mathbf{1}} \poleq e^a_{\min, \varsigma}$, hence the global section
\begin{equation}\label{eq:min-difference}
{\delta}_{\min} := e^a_{\min, \varsigma} - e^a_{\min,\varsigma+\mathbf{1}} \in \Gamma\left(\widetilde{\fC}^{\diamondsuit}_{\varsigma',\varsigma'+\mathbf{1}'}, \ocM_{\widetilde{\fC}^{\diamondsuit}_{\varsigma',\varsigma'+\mathbf{1}'}} \right).
\end{equation}

\begin{lemma}\label{lem:non-zero-min-difference}
The fiber $\delta_{\min}|_{s}$ over a closed point  $s  \to \widetilde{\fC}^{\diamondsuit}_{\varsigma',\varsigma'+\mathbf{1}'}$ is nonzero if and only if the following conditions hold
\begin{enumerate}
 \item $\widetilde{\mathrm{St}}^{a}$ contracts a component $Z$ over $s$ with $p_{j,s}, p_{n+1,s} \in Z$ for some $j$th marking $p_j$ with $j \neq n+1$.

 \item $Z$ is the unique minimal component of $\mathfrak{f}^a_{\varsigma'+\mathbf{1}'}$ over $s$.
\end{enumerate}
\end{lemma}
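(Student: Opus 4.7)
The plan is to restrict to the geometric fiber over $s$ and compare the degeneracies of the irreducible components of $\mathfrak{f}^a_{\varsigma'+\mathbf{1}'}$ with those of $\mathfrak{f}^a_{\varsigma'}$. Outside the rational bridges contracted by $\widetilde{\mathrm{St}}^a$, the map $\mathfrak{f}^a_{\varsigma'+\mathbf{1}'}$ agrees with $\mathfrak{f}^a_{\varsigma'}\circ\widetilde{\mathrm{St}}^a$, so non-contracted components correspond bijectively to components of $\fC^{\circ,a}_{\varsigma'}$ and inherit the same degeneracies. Consequently $\delta_{\min}|_s\neq 0$ if and only if some contracted bridge $Z$ has $e_Z$ strictly smaller, in the partial order on $\ocM_s$, than $e^a_{\min,\varsigma}|_s$.

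Next I would classify the contracted bridges. In Case A, $Z$ contains only $p_{n+1,s}$ and is joined by two nodes $q_1,q_2$ to components $Z'_1,Z'_2$; matching with $\mathfrak{f}^a_{\varsigma'}$ away from the bridge forces the contact orders on the $Z$-sides of $q_1,q_2$ to be $-v$ and $v$, where $v$ is the contact order on one side of the corresponding node of $\fC^{\circ,a}_{\varsigma'}$. The nodal relations then yield
\[
e_Z - e_{Z'_1} = v\,\rho_{q_1}, \qquad e_Z - e_{Z'_2} = -v\,\rho_{q_2},
\]
placing $e_Z$ between $e_{Z'_1}$ and $e_{Z'_2}$, with equality when $v=0$. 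In particular $e_Z\pogeq \min(e_{Z'_1},e_{Z'_2})\pogeq e^a_{\min,\varsigma}|_s$, so Case A bridges cannot strictly lower the minimum. In Case B, which is precisely condition (1), the bridge $Z$ also carries a marking $p_{j,s}$ with $j\neq n+1$ and is joined by a single node $q$ to a component $Z'$; the matching argument now forces the contact order at $q$ on the $Z'$-side to equal $c_j$, so by node duality $v_Z=-c_j$ and $e_Z - e_{Z'} = c_j\,\rho_q$. Since $c_j<0$ by Assumption~\ref{as:reduced-punctured-cycle}, $e_Z$ is strictly smaller than $e_{Z'}$.

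Putting these together, $\delta_{\min}|_s\neq 0$ forces the existence of a contracted bridge with $e_Z$ strictly below every other degeneracy; the Case A analysis rules out bridges of that type, so a Case B bridge must exist (condition (1)), and its degeneracy must lie strictly below every other degeneracy of $\mathfrak{f}^a_{\varsigma'+\mathbf{1}'}$, which is precisely (2). The converse follows immediately from the Case B calculation. The main technical subtlety will be justifying the claimed contact-order matching at the newly created nodes, for which I would appeal to the explicit construction of $\mathfrak{f}^a_{\varsigma'+\mathbf{1}'}$ from $\mathfrak{f}^a_{\varsigma'}$ in Lemma~\ref{lem:punctured-curve-add-marking}; granted that, the remainder is a direct application of the nodal relations.
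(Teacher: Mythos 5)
Your proposal is correct and takes essentially the same route as the paper's proof: the degeneracies of non-contracted components are unchanged under $\widetilde{\mathrm{St}}^{a}$, so $\delta_{\min}|_s \neq 0$ forces the minimum to be achieved only at the contracted bridge, and a bridge lying over a node is excluded because its degeneracy sits between (hence is not strictly below) those of its neighbors, while a bridge over a marking $p_j$ satisfies $e_Z = e_{Z'} + c_j\rho_q \prec e_{Z'}$ since $c_j<0$. You merely make explicit the case analysis and the converse, which the paper leaves implicit by referring back to the degeneracy computations of Lemma~\ref{lem:punctured-curve-add-marking} and Lemma~\ref{lem:max-degeneracy-add-marking}.
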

\begin{proof}
  If $\widetilde{\mathrm{St}}^{a}$ contracts no components over $s$,
  then the above discussion implies that $\delta_{\min} = 0$.
  Suppose $Z$ is the contracted component over $s$.
  Assume that the minimal degeneracy of
  $\mathfrak{f}^a_{\varsigma'+\mathbf{1}'}$ over $s$ is achieved at
  another component $Z'$.
  Since $\widetilde{\mathrm{St}}^{a}$ is isomorphic on the generic
  point of $Z'$, the degeneracy of $Z'$ with respect to
  $\mathfrak{f}^a_{\varsigma'+\mathbf{1}'}$ is equal to the degeneracy
  of the image $\widetilde{\mathrm{St}}^{a}(Z')$ with respect to
  $\mathfrak{f}^a_{\varsigma'}$.
  This implies
  $e^a_{\min, \varsigma}|_s = e^a_{\min,\varsigma+\mathbf{1}}|_s$.
  Thus, only the $Z$ described in the statement are possible.
\end{proof}

Let $\Div({\delta}_{\min}) \subset \widetilde{\fC}^{\diamondsuit}_{\varsigma',\varsigma'+\mathbf{1}'}$ be the Cartier divisor associated to ${\delta}_{\min}$ as in \eqref{eq:global-log-divisor}.
By \eqref{eq:min-difference}, its class satisfies
\begin{equation}\label{eq:div-min-difference}
\Div({\delta}_{\min}) = - \psi_{\min,\varsigma} + \psi_{\min,\varsigma + \mathbf{1}}.
\end{equation}

Let $\Delta_j \subset \Div({\delta}_{\min})$ be the component whose
closed points satisfy the two conditions in
Lemma~\ref{lem:non-zero-min-difference} for a fixed $j$.
Observe that $\Delta_j \cap \Delta_{j'} = \emptyset$ for $j \neq j'$.
Extending $\delta_{\min}|_{\Delta_j}$ by zero, we obtain a global
section
$\delta_{j} \in
\Gamma\left(\widetilde{\fC}^{\diamondsuit}_{\varsigma',\varsigma'+\mathbf{1}'},
  \ocM_{\widetilde{\fC}^{\diamondsuit}_{\varsigma',\varsigma'+\mathbf{1}'}}
\right)$.
We can similarly define $\Div(\delta_j)$, and noting that
$\Delta_j = \Div(\delta_j)$.
This gives decompositions
\[
\delta_{\min} = \sum_{j} \delta_j \  \  \  \mbox{and} \ \ \ \Div(\delta_{\min}) = \sum_{j} \Delta_j.
\]
Combining this with \eqref{eq:div-min-difference}, we obtain
\begin{equation}\label{eq:compare-Lmin}
 \psi_{\min,\varsigma + \mathbf{1}} = \psi_{\min,\varsigma} + \sum_{j} \Delta_j.
\end{equation}

Let
$p_{j, \Delta_j} \subset
\fC^{\circ,a}_{\varsigma'+\mathbf{1}'}\times_{\widetilde{\fC}^{\diamondsuit}_{\varsigma',\varsigma'+\mathbf{1}'}}\Delta_j$
be the $j$th marking of the source curve over $\Delta_j$.
On the characteristic level, along $p_{j,\Delta_j}$ we have
\[
\bar{\mathfrak{f}}^a_{\varsigma'+\mathbf{1}'}(\delta) = e^a_{\min,\varsigma+\mathbf{1}} + c_j  \sigma_j.
\]
By \eqref{eq:compare-torsor}, we have
\[
(\bar{\mathfrak{f}}^a_{\varsigma'+\mathbf{1}'})|_{p_{j, \Delta_j}}^*\big(\cO(-\ainfty) \big)  = \cO_{p_{j, \Delta_j}}(-e^a_{\min,\varsigma+\mathbf{1}}|_{p_{j,\Delta_j}} - c_j p_{j,\Delta_j}).
\]
Observe that $p_{j,\Delta_j}$ is contained in a rational component
with $3$ special points so that $\cO_{p_{j,\Delta_j}}(p_{j,\Delta_j})$ becomes
  trivial.
Pushing forward along
$\pi_j \colon p_j \to
\widetilde{\fC}^{\diamondsuit}_{\varsigma',\varsigma'+\mathbf{1}'}$,
we obtain
\begin{equation}\label{eq:min-over-delta}
\mathfrak{\psi}_{\DF,r_j} \cdot \Delta_j = \psi_{\min,\varsigma+\mathbf{1}} \cdot \Delta_j
\end{equation}
where we introduce a universal analogue of \eqref{eq:tautological-log-class}:
\[
\mathfrak{\psi}_{\DF,r_j} := r_j\cdot \big(\pi_{j, *}(\bar{\mathfrak{f}}^a_{\varsigma'+\mathbf{1}'})|_{p_{j}}^*c_1(\cO(-\ainfty))\big).
\]

Repeatedly applying \eqref{eq:compare-Lmin} and \eqref{eq:min-over-delta}, we compute over $\widetilde{\fC}^{\diamondsuit}_{\varsigma',\varsigma'+\mathbf{1}'}$ that
\begin{equation}\label{eq:powers-of-min}
\begin{split}
\psi_{\min,\varsigma+\mathbf{1}}^{k} &=  \psi_{\min,\varsigma+\mathbf{1}}^{k-1} \cdot \Big( \psi_{\min,\varsigma} + \sum_{j} \Delta_j\Big) \\
&= \psi_{\min,\varsigma} \cdot \psi_{\min,\varsigma+\mathbf{1}}^{k-1} + \sum_{j} \psi_{\min,\varsigma+\mathbf{1}}^{k-1}\cdot\Delta_j  \\
&= \psi_{\min,\varsigma} \cdot \psi_{\min,\varsigma+\mathbf{1}}^{k-1} + \sum_{j} \mathfrak{\psi}_{\DF,r_j}^{k-1}\cdot\Delta_j \\
& \cdots \\
&= \psi_{\min,\varsigma}^k + \sum_{j=1}^n \sum_{k' = 0}^{k-1} \mathfrak{\psi}_{\DF,r_j}^{k'}\cdot   \psi_{\min,\varsigma}^{k-1-k'}\cdot \Delta_j.
\end{split}
\end{equation}

To compute the push-forward of $\Delta_j$ along $F^{\diamondsuit}_{\mathbf{1}'}$, recall from \eqref{diag:forget-unit+min} and \eqref{diag:compare-auxiliary} that  $F^{\diamondsuit}_{\mathbf{1}'}$ is given by the composition
\begin{equation}\label{eq:decompose-F1'}
 \widetilde{\fC}^{\diamondsuit}_{\varsigma',\varsigma'+\mathbf{1}'} \stackrel{\tilde{\Psi}}{\longrightarrow}  \widetilde{\fC}^{\diamondsuit}_{\varsigma'} \stackrel{\mathrm{S}}{\longrightarrow} \fC^{\circ,\diamondsuit}_{\varsigma'} \longrightarrow \UMm_{\varsigma'}.
\end{equation}
Denote by
$p^{\diamondsuit}_{\varsigma',j} \subset
\fC^{\circ,\diamondsuit}_{\varsigma'}$ the $j$th marking.
We compute the cycle-theoretic push-forward:

\begin{lemma}\label{lem:push-forward-Deltaj}
$\mathrm{S}_{*} \tilde{\Psi}_* \Delta_j = |c_j|\cdot p^{\diamondsuit}_{\varsigma',j}$.
\end{lemma}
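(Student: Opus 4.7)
The plan is to compute $\mathrm{S}_{*}\tilde{\Psi}_{*}\Delta_j$ by (i) identifying the set-theoretic image of $\Delta_j$ under $\mathrm{S}\circ\tilde{\Psi}$ with the section $p^{\diamondsuit}_{\varsigma',j}$, and (ii) computing the generic degree of the restricted morphism, which should equal $|c_j|$.

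For (i), the generic point of $\Delta_j$ parametrizes, by Lemma \ref{lem:non-zero-min-difference}, a punctured map in $\UMm_{\varsigma'+\mathbf{1}'}$ whose source curve admits a unique rational bubble $Z$ carrying both $p_j$ and $p_{n+1}$, attached to the rest by a single node $q$, and with $Z$ the unique minimal component. The morphism $\tilde{\Psi}$ forgets the auxiliary point (equivalently, the location of the $(n+1)$-st marking inserted via $F^{\curlywedge}_{+}$), and post-composing with the saturation $\mathrm{S}$ contracts $Z$. Since both $p_j$ and $p_{n+1}$ lie on $Z$, the contraction sends the bubble to a single point of $\fC^{\circ,\diamondsuit}_{\varsigma'}$, which must be the $j$-th marking; thus the image of $\Delta_j$ is exactly $p^{\diamondsuit}_{\varsigma',j}$.

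For (ii), I will work \'etale-locally over a generic geometric point $\bar{s}\in p^{\diamondsuit}_{\varsigma',j}$. The contact order at the node $q$ from the $Z$-side is forced to be $-c_j$, since after contracting $Z$ the resulting marking must recover the original $j$-th marking of contact order $c_j$; correspondingly, the node's index equals $r_j$. The balancing condition of Lemma \ref{lem:balancing} applied to $Z$ then reads
\begin{equation*}
\deg f|_{Z}^{*}\cO(\ainfty) = \frac{c_j}{r_j} + \frac{-d}{r} + \frac{-c_j}{r_j} = -\frac{d}{r},
\end{equation*}
consistent with $Z$ being the unique minimal component. The smoothing of a twisted node of contact order $|c_j|$ is locally governed by an equation $uv=t^{|c_j|}$, where $t$ pulls back to a defining equation of $p^{\diamondsuit}_{\varsigma',j}$. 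Consequently, the local equation of $\Delta_j$ in $\widetilde{\fC}^{\diamondsuit}_{\varsigma',\varsigma'+\mathbf{1}'}$ satisfies a $|c_j|$-to-$1$ branched covering relation over the image, and $\mathrm{S}\circ\tilde{\Psi}|_{\Delta_j}$ is generically a $|c_j|$-fold cover of $p^{\diamondsuit}_{\varsigma',j}$, yielding the stated identity.

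The main technical obstacle is the precise local analysis at the twisted node $q$, matching the indices $r$, $r_j$ and the contact orders across $q$ in a way compatible with the gerbe structures at all three special points of $Z$. This requires carefully combining the local log relations from the proof of Lemma \ref{lem:punctured-curve-add-marking} (such as $\sigma_{p_0}=\sigma_{q,1}+\sigma_{q,2}$ and $\widetilde{\mathrm{St}}^{\flat}(\sigma_{p_2})=\sigma_{q,1}$) with the balancing identity and the unit sector definition of \S\ref{sss:unit-sector}, to extract the precise Cartier multiplicity $|c_j|$ of the pushforward along $p^{\diamondsuit}_{\varsigma',j}$.
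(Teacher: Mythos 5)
Your two-step plan (identify the image, then extract a multiplicity) superficially parallels the paper's proof, and your picture of the generic point of $\Delta_j$ is correct, but the multiplicity mechanism in your part (ii) is wrong. The morphism $\mathrm{S}\circ\tilde{\Psi}$ is proper and birational, and over the generic point $s$ of $p^{\diamondsuit}_{\varsigma',j}$ it is generically one-to-one on the support of $\Delta_j$: at $s$, viewed via the strict \'etale $F^{\curlywedge}_{+}$ as a point of $\fM^{\curlywedge}_{\varsigma'+\mathbf{1}'}$, the two degeneracies $e_j|_s$ and $e_j|_s + c_j\sigma_j|_s$ are comparable with unique minimum $e_j|_s + c_j\sigma_j|_s$, so the log-ideal of minimal degeneracies being blown up is already principal there, $\tilde\Psi$ is an isomorphism near the fiber, and $\mathrm{S}$ only changes nilpotents; hence there is a single generic point $\tilde s$ of $\Delta_j$ over $s$, not $|c_j|$ of them. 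The factor $|c_j|$ is therefore not a covering degree but the multiplicity of the Cartier divisor $\Delta_j = \Div(\delta_j)$ along its support: one computes $\delta_{\min}|_{\tilde s} = e_{\min,\varsigma}|_{\tilde s} - e_{\min,\varsigma+\mathbf{1}}|_{\tilde s} = -c_j\,\sigma_j|_{\tilde s}$, so by the local description \eqref{eq:global-log-divisor} the local equation of $\Delta_j$ is (a unit times) the $|c_j|$-th power of a defining equation of the marking, i.e.\ $\Delta_j$ equals $|c_j|$ times the pullback of $p^{\diamondsuit}_{\varsigma',j}$ near $\tilde s$, and pushing forward along the birational map gives $|c_j|\cdot p^{\diamondsuit}_{\varsigma',j}$. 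Note that $\Delta_j$ in Lemma~\ref{lem:push-forward-Deltaj} is this Cartier divisor with its multiplicity (this is how it enters \eqref{eq:compare-Lmin} and the Gysin maps); if your claimed $|c_j|$-fold cover of supports were correct, combining it with that multiplicity would produce $|c_j|^2\, p^{\diamondsuit}_{\varsigma',j}$. The local equation $uv = t^{|c_j|}$ you invoke describes the total space of the universal curve near the node, not the morphism $\Delta_j \to p^{\diamondsuit}_{\varsigma',j}$ between moduli-type spaces, and it does not yield any degree statement for that morphism.

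There is also a smaller gap in your part (i): you assert that the image of $\Delta_j$ is exactly $p^{\diamondsuit}_{\varsigma',j}$, but your argument only gives containment. Dominance is essential (otherwise the pushforward would vanish for dimension reasons) and is not automatic: one must check that over a general point $s$ of $p^{\diamondsuit}_{\varsigma',j}$ the contracted bubble really is the unique minimal component, i.e.\ that condition (2) of Lemma~\ref{lem:non-zero-min-difference} holds there. The paper proves this by the degeneracy computation $e_{Z_s} = e_j|_s + c_j\sigma_j|_s \poleq e_{\min,\varsigma}|_s$ with $c_j < 0$; your balancing computation is in the right spirit but is only used as a consistency check, not to establish this surjectivity.
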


\begin{proof}
Note that both $\tilde{\Psi}$ and $\mathrm{S}$ are projective and birational, hence the push-forwards are  well-defined.

We first show that the set-theoretical image
$\mathrm{S}\circ\tilde{\Psi}(\Delta_j)$ is precisely
$p^{\diamondsuit}_{\varsigma',j}$.
By Lemma~\ref{lem:non-zero-min-difference}, the set-theoretical image
$\mathrm{S}\circ\tilde{\Psi}(\Delta_j)$ is contained in
$p^{\diamondsuit}_{\varsigma',j}$.
On the other hand, let
$e_j \in \Gamma(\UMm_{\varsigma'}, \ocM_{\UMm_{\varsigma'}})$ be the
degeneracy of the component containing
$p^{\diamondsuit}_{\varsigma',j}$.
Let
$\sigma_j \in \Gamma(\fC^{\circ,\diamondsuit}_{\varsigma'},
\ocM_{\fC^{\circ,\diamondsuit}_{\varsigma'}})$ be the generator of the
characteristic sheaf of the log structure given by the marking
$p^{\diamondsuit}_{\varsigma',j}$.
Note that for a general point $s \in p^{\diamondsuit}_{\varsigma',j}$,
we have $e_j|_{s} = e_{\min,\varsigma'}|_{s}$.
Let $Z_s \subset \fC^{\circ,a}_{\varsigma'+\mathbf{1}'}$ be the
contracted component over $s$, then it has degeneracy
$e_{Z_s} = e_j|_{s} + c_j \cdot \sigma_j|_{s} \poleq
e_{\min,\varsigma}|_{s}$ since $c_j < 0$.
Thus $Z_s$ is the unique minimal component.
Therefore, $\Delta_j$ dominates, hence surjects onto
$p^{\diamondsuit}_{\varsigma',j}$.
Furthermore, denote by $\tilde{s} \in \Delta_j$ the generic point over
$s$.
The above discussion also implies that
\[
  \Delta_j|_{\tilde s} = \delta_{\min}|_{\tilde{s}} = e_{\min,\varsigma}|_{\tilde{s}} - e_{\min,\varsigma+\mathbf{1}}|_{\tilde{s}} = - c_{j}\cdot \sigma_j|_{\tilde{s}}.
\]
The statement then follows from $p^{\diamondsuit}_{\varsigma',j} = \Div(\sigma_j)$.
\end{proof}

\subsection{Proof of Theorem \ref{thm:string-divisor}}
Consider  the Cartesian diagram with strict vertical arrows
\[
\xymatrix{
\RMm_{\varsigma, \varsigma+\mathbf{1}} \ar[rr]^{\Psi} \ar[d]_{\widetilde\varphi} &&  \widetilde{\cC}^{\diamondsuit}_{\varsigma}  \ar[rr]^{\mathrm{S}} \ar[d] &&  \cC^{\circ,\diamondsuit}_{\varsigma} \ar[rr]^{\pi} \ar[d]_{\varphi} && \RMm_{\varsigma} \ar[d] \\
 \widetilde{\fC}^{\diamondsuit}_{\varsigma',\varsigma'+\mathbf{1}'} \ar[rr]^{\tilde{\Psi}} &&  \widetilde{\fC}^{\diamondsuit}_{\varsigma'} \ar[rr]^{\mathrm{S}} && \fC^{\circ,\diamondsuit}_{\varsigma'} \ar[rr] && \UMm_{\varsigma'}.
}
\]
where the bottom is \eqref{eq:decompose-F1'}, hence $F^{\diamondsuit}_{\mathbf{1}} =  \pi\circ \mathrm{S} \circ \Psi$.
We equip all the vertical arrows with the reduced perfect obstruction theories pulled back from $\RMm_{\varsigma} \to \UMm_{\varsigma'}$, hence the reduced virtual cycles $[\RMm_{\varsigma, \varsigma+\mathbf{1}}]^{\red}$, $[\widetilde{\cC}^{\diamondsuit}_{\varsigma} ]^{\red}$ and $[\cC^{\circ,\diamondsuit}_{\varsigma} ]^{\red}$, satisfying the virtual push-forward
\begin{equation}\label{eq:push-forward-Mm-red-cycle}
\mathrm{S}_* \Psi_* [\RMm_{\varsigma, \varsigma+\mathbf{1}}]^{\red}  = [\cC^{\circ,\diamondsuit}_{\varsigma} ]^{\red} = \pi^*[\RMm_{\varsigma}]^{\red}.
\end{equation}
In particular, we have
\begin{equation}\label{eq:push-red=0}
F^{\diamondsuit}_{\mathbf{1},*}[\RMm_{\varsigma, \varsigma+\mathbf{1}}]^{\red}  = 0.
\end{equation}

Further consider the inclusion of Cartier divisors
\[
\iota_{\Delta_j} \colon \Delta_j \hookrightarrow \widetilde{\fC}^{\diamondsuit}_{\varsigma',\varsigma'+\mathbf{1}'} \ \ \ \mbox{and} \ \ \ \iota_{p^{\diamondsuit}_{\varsigma',j}} \colon p^{\diamondsuit}_{\varsigma',j} \hookrightarrow \fC^{\circ,\diamondsuit}_{\varsigma'}.
\]
By the functoriality of virtual circles \cite[Proposition 7.5]{BeFa97}, we have
\[
\iota_{\Delta_j}^! [\RMm_{\varsigma, \varsigma + \mathbf{1}}]^{\red} = \widetilde{\varphi}^{!}_{\bE^{\red}}[\Delta_j] \ \ \mbox{and} \ \ \ \iota_{p^{\diamondsuit}_{\varsigma',j}}^! [\cC^{\circ,\diamondsuit}_{\varsigma} ]^{\red} = \varphi^!_{\bE^{\red}} [p^{\diamondsuit}_{\varsigma',j}]
\]
where $\widetilde{\varphi}^{!}_{\bE^{\red}}$ and
$\varphi^!_{\bE^{\red}}$ are the virtual pull-backs as in \cite{Ma12}.
By virtual push-forward \cite[Proposition 5.29]{Ma12},
Lemma~\ref{lem:push-forward-Deltaj} implies that
\begin{equation}\label{eq:push-forward-Mm-bd-red-cycle}
\mathrm{S}_* \Psi_* \left(\iota_{\Delta_j}^! [\RMm_{\varsigma, \varsigma + \mathbf{1}}]^{\red}\right)  = |c_j| \cdot \iota_{p^{\diamondsuit}_{\varsigma',j}}^! [\cC^{\circ,\diamondsuit}_{\varsigma} ]^{\red}.
\end{equation}

We now compute
\begin{equation}\label{eq:first-push-string}
\begin{split}
& F^{\diamondsuit}_{\mathbf{1},*}\Big(\psi_{\min,\varsigma+\mathbf{1}}^{k} \cap [\RMm_{\varsigma, \varsigma + \mathbf{1}}]^{\red}\Big) \\
&=  F^{\diamondsuit}_{\mathbf{1},*}\left(\psi_{\min,\varsigma}^k\cap [\RMm_{\varsigma, \varsigma + \mathbf{1}}]^{\red}\right) +  F^{\diamondsuit}_{\mathbf{1},*}\left(\sum_{j=1}^n \sum_{k' = 0}^{k-1} \ev_j^*\psi_{\DF}^{k'}\cdot \psi_{\min,\varsigma}^{k-1-k'}\cap  \iota_{\Delta_j}^! [\RMm_{\varsigma, \varsigma + \mathbf{1}}]^{\red}\right) \\
&=  F^{\diamondsuit}_{\mathbf{1},*}\left(\sum_{j=1}^n \sum_{k' = 0}^{k-1} \ev_j^*\psi_{\DF}^{k'}\cdot \psi_{\min,\varsigma}^{k-1-k'}\cap  \iota_{\Delta_j}^! [\RMm_{\varsigma, \varsigma + \mathbf{1}}]^{\red}\right) \\
&=  \sum_{j=1}^n \sum_{k' = 0}^{k-1} \ev_j^*\psi_{\DF}^{k'}\cdot \psi_{\min,\varsigma}^{k-1-k'}\cdot  F^{\diamondsuit}_{\mathbf{1},*} \Big(\iota_{\Delta_j}^! [\RMm_{\varsigma, \varsigma + \mathbf{1}}]^{\red}\Big) \\
&=  \sum_{j=1}^n \sum_{k' = 0}^{k-1} \ev_j^*\psi_{\DF}^{k'}\cdot \psi_{\min,\varsigma}^{k-1-k'}\cdot  \pi_*\Big(|c_j| \cdot  \iota_{p^{\diamondsuit}_{\varsigma',j}}^! [\cC^{\circ,\diamondsuit}_{\varsigma} ]^{\red}  \Big) \\
&=  \sum_{j=1}^n \frac{|c_j|}{r_j} \sum_{k' = 0}^{k-1} \ev_j^*\psi_{\DF}^{k'} \cdot \psi_{\min,\varsigma}^{k-1-k'}   \cap [\RMm_{\varsigma}]^{\red}
\end{split}
\end{equation}
where the first equality follows from \eqref{eq:powers-of-min}, the
second equality follows from \eqref{eq:push-red=0}, and the last two
follow from \eqref{eq:push-forward-Mm-bd-red-cycle}, and the fact that
$p^{\diamondsuit}_{\varsigma',j}$ is a $\mu_{r_j}$-gerbe.

By \eqref{eq:auxiliary-string} and the commutativity of \eqref{eq:joint-push-forward}, Equation \eqref{eq:string} follows from further pushing forward \eqref{eq:first-push-string} along $F^{\curlywedge}_*$ and applying the projection formula.

\bigskip

Now we turn to prove  Equation \eqref{eq:divisor}. By the definition of  unit sectors in \S \ref{sss:unit-sector}, the $(n+1)$-st evaluation $\ev_{n+1}$ factors through the component $\xinfty \subset \ocI \punt_{\bk}$ parameterizing $\mu_r$-gerbes. Indeed, we have the factorization
\[
\xymatrix{
\SH_{\varsigma+\mathbf{1}} \ar[r]^{\cong} \ar@/_2pc/[rrrrr]^{\ev_{n+1}} & \widetilde{\cC}_{\varsigma} \ar[rr]^{\mathrm{S}} && \pC_{\varsigma} \ar[rr]^{f_{\varsigma}} && \xinfty
}
\]
By the projection formula and \eqref{eq:push-forward-Mm-red-cycle}, we have
\begin{equation}\label{eq:push-red-cap-divisor}
F^{\diamondsuit}_{\mathbf{1},*}\left( \ev_{n+1}^*D \cap [\RMm_{\varsigma, \varsigma+\mathbf{1}}]^{\red} \right) = \left(\int_{\beta}D \right) \cdot [\RMm_{\varsigma}]^{\red}.
\end{equation}
By \cite[\S 2.1]{AGV08}, we observe
\begin{equation}\label{eq:pullback-D-change-marking}
\ev_{n+1}|_{\Delta_j \cap \RMm_{\varsigma, \varsigma + \mathbf{1}}}^*D = \ev_j|_{\Delta_j \cap \RMm_{\varsigma, \varsigma + \mathbf{1}}}^* D_{\ocI}.
\end{equation}
Similarly to \eqref{eq:first-push-string}, using \eqref{eq:push-red-cap-divisor} and \eqref{eq:pullback-D-change-marking} we compute
\begin{multline}\label{eq:first-push-divisor}
 F^{\diamondsuit}_{\mathbf{1},*}\Big(\psi_{\min,\varsigma+\mathbf{1}}^{k} \cdot \ev_{n+1}^*\big(D\big)  \cap [\RMm_{\varsigma, \varsigma + \mathbf{1}}]^{\red}\Big)
= \left(\int_{\beta}D \right) \cdot \psi_{\min,\varsigma}^k \cap [\RMm_{\varsigma}]^{\red} \\
 + \sum_{j=1}^n \frac{|c_j|}{r_j}  \sum_{k' = 0}^{k-1}  \ev_j^*\big(D_{\ocI} \cdot \psi_{\DF}^{k'} \big) \cdot  \psi_{\min,\varsigma}^{k-1-k'} \cap [\RMm_{\varsigma}]^{\red}
\end{multline}
By \eqref{eq:auxiliary-divisor} and the commutativity of \eqref{eq:joint-push-forward},  the divisor equation \eqref{eq:divisor}  follows from further pushing forward along $F^{\curlywedge}_*$ and applying the projection formula.

\subsection{Proof of Theorem \ref{thm:unit-axiom-with-min}}
Consider the following commutative diagram
\begin{equation}\label{diag:proof-unit-axiom}
\xymatrix{
\RMm_{\varsigma, \varsigma + \mathbf{1}} \ar@/^1.5pc/@{-->}[rrrr]^{\Phi_{\ddata}} \ar[r]_{\tilde{\Psi}} \ar[d]_{\widetilde\varphi} & \widetilde{\cC}^{\diamondsuit}_{\ddata} \ar[r]_{\mathrm{S}} \ar[d] & \cC^{\circ,\diamondsuit}_{\ddata} \ar[r]_{\stab} \ar[d]_{\varphi} & \cZ \ar@/^1pc/@{-->}[dd]^{\mathrm{p}_{\mathcal{Z}}}  \ar[d]_{\varphi_{\cZ}} \ar[r] & \RMm_{\ddata} \ar[d] \ar@/^1pc/@{-->}[dd]^{\mathrm{p}_{\ddata}}\\
\widetilde{\fC}^{\diamondsuit}_{\varsigma',\varsigma'+\mathbf{1}'} \ar[r]_{{\Psi}} & \widetilde{\fC}^{\diamondsuit}_{\varsigma'} \ar[r]_{\mathrm{S}} & \mathfrak{C}^{\circ,\diamondsuit}_{\ddata'} \ar[r]_{\stab} & \mathfrak{Z} \ar[r] \ar[d]_{\mathrm{p}_{\mathfrak{Z}}} & \UMm_{\ddata'} \ar[d] \\
&&& \oM_{g,n+1} \ar[r]_{\pi} &  \oM_{g,n}
}
\end{equation}
where all squares are Cartesian, the vertical arrows on the top are all strict,  the dashed arrows denote the corresponding compositions, and $\stab$ is the stabilization of the source curves as usual.

Denote by
$\EE^{\red}_{\cC^{\circ,\diamondsuit}_{\varsigma}/\fC^{\circ,\diamondsuit}_{\varsigma'}
}$ and $\EE^{\red}_{\cZ/\mathfrak{Z}}$ the perfect obstruction
theories obtained by pulling back
$\EE^{\red}_{\RMm_{\varsigma} /
  \UMm_{\varsigma'}}$, leading to the corresponding
virtual cycles $[\cC^{\circ,\diamondsuit}_{\varsigma}]^{\red}$ and
$[\cZ]^{\red}$. Since the arrows $\tilde{\Psi}$, $\mathrm{S}$ and $\stab$ in the middle row
of the above diagram are proper and birational, the virtual
push-forward of \cite[Theorem 1.1]{HeWi21} and \cite[Theorem~5.0.1]{Co06} yields
\begin{equation}\label{eq:unit-axiom-push-vc}
 \stab_* \mathrm{S}_*\widetilde\Psi_*  [\RMm_{\varsigma, \varsigma + \mathbf{1}}]^{\red} = [\cZ]^{\red} = \pi^*[\RMm_{\varsigma}]^{\red}.
\end{equation}

Let $\delta_{j,n+1} \subset \oM_{g,n+1}$ be the boundary divisor where the $j$th and $(n+1)$-st marking collide, which pulls back to a divisor $\iota_{\delta_{j,n+1, \mathfrak{Z}}} \colon \delta_{j,n+1, \mathfrak{Z}} := \mathrm{p}_{\mathfrak{Z}}^{-1}\delta_{j,n+1} \hookrightarrow \mathfrak{Z}$. Note  that
\[
\stab_*[p^{\diamondsuit}_{\varsigma',j}] = \frac{1}{r_j} \cdot [ \delta_{j,n+1, \mathfrak{Z}}].
\]
Further applying virtual push-forward to \eqref{eq:push-forward-Mm-bd-red-cycle} along $\stab$, we have
\begin{equation}\label{eq:push-forward-Mm-red-cycle-to-Z}
\begin{split}
\stab_*\Psi_* \mathrm{S}_*\left(\iota_{\Delta_j}^! [\RMm_{\varsigma, \varsigma + \mathbf{1}}]^{\red}\right) &= \stab_* \left(|c_j| \cdot \iota_{p^{\diamondsuit}_{\varsigma',j}}^! [\cC^{\circ,\diamondsuit}_{\varsigma} ]^{\red}. \right) \\
&= |c_j| \cdot  \stab_* \left( \varphi^!_{\bE^{\red}} [p^{\diamondsuit}_{\varsigma',j}] \right) \\
&=\frac{|c_j|}{r_j} \cdot \varphi_{\cZ,\bE^{\red}}^![\delta_{j,n+1, \mathfrak{Z}}] \\
&= \frac{|c_j|}{r_j} \cdot \iota_{\delta_{j,n+1, \mathfrak{Z}}}^![\cZ]^{\red}.
\end{split}
\end{equation}

Consider the following commutative diagram
\[
\xymatrix{
\RMm_{\varsigma, \varsigma + \mathbf{1}} \ar[rr]^{\Phi} \ar[d]_{F^{\diamondsuit}_{\mathbf{1}}}  && \RMm_{\varsigma+\mathbf{1}} \ar[rr]^{\mathrm{p}_{\ddata+\mathbf{1}}}   && \oM_{g,n+1} \ar[d]^{\pi} \\
\RMm_{\varsigma} \ar[rrrr]_{\mathrm{p}_{\ddata}}  &&&& \oM_{g,n}.
}
\]
such that $\mathrm{p}_{\ddata + \mathbf{1}}\circ \Phi = \mathrm{p}_{\cZ}\circ\stab\circ\mathrm{S}\circ\Psi$.
Thus \eqref{eq:unit-string} follows from \eqref{eq:auxiliary-string} and the following calculation similar to \eqref{eq:first-push-string}:

\[
\begin{split}
& \mathrm{p}_{\mathcal{Z},*}\stab_*\mathrm{S}_*\Psi_*\Big(\psi_{\min,\varsigma+\mathbf{1}}^{k}  \cap [\RMm_{\varsigma, \varsigma + \mathbf{1}}]^{\red}\Big) \\
=& \mathrm{p}_{\mathcal{Z},*} \stab_*\left(\psi_{\min,\varsigma}^k \cap [\cC^{\circ,\diamondsuit}_{\ddata}]^{\red}
 + \sum_{j=1}^n  \sum_{k' = 0}^{k-1}  \ev_j^*\big( \psi_{\DF}^{k'} \big) \cdot   \psi_{\min,\varsigma}^{k-1-k'} \cap |c_j| \cdot  \varphi^!_{\bE^{\red}} [p^{\diamondsuit}_{\varsigma',j}] \right)\\
=& \mathrm{p}_{\mathcal{Z},*} \left( \psi_{\min,\varsigma}^k \cap [\cZ]^{\red}\right) + \mathrm{p}_{\mathcal{Z},*} \left(\sum_{j=1}^n \frac{|c_j|}{r_j}  \sum_{k' = 0}^{k-1}  \ev_j^*\big( \psi_{\DF}^{k'} \big) \cdot   \psi_{\min,\varsigma}^{k-1-k'} \cap  \iota_{\delta_{j,n+1, \mathfrak{Z}}}^![\cZ]^{\red}  \right) \\
=& \pi^* \mathrm{p}_{\ddata,*} \left(\psi_{\min,\varsigma}^k \cap [\RMm_{\ddata}]^{\red} \right)
 +  \sum_{j=1}^n \frac{|c_j|}{r_j} \delta_{j,n+1} \cap \pi^* \mathrm{p}_{\ddata,*} \left( \sum_{k' = 0}^{k-1}  \ev_j^*\big( \psi_{\DF}^{k'} \big) \cdot   \psi_{\min,\varsigma}^{k-1-k'} \cap [\RMm_{\ddata}]^{\red} \right)
 \end{split}
\]
where \eqref{eq:unit-axiom-push-vc} and \eqref{eq:push-forward-Mm-red-cycle-to-Z} is used to compute the first and second equalities respectively.

This finishes the proof of Theorem \ref{thm:unit-axiom-with-min}.


\section{Effective invariants and cycles}
\label{sec:effective}

In this section, we will study the invariants of the reduced theory,
and introduce \emph{effective invariants}, which are an important part
of the various applications of log GLSM.

Let $\punt$ be a target as in \S \ref{ss:target}, where we assume in
addition that $\infty_\cX$ is smooth.
We fix the discrete data $(g, \beta, \ddata = \{c_i\}_{i=1}^n)$ as in
\eqref{eq:P-data} for punctured R-maps to $\punt$, and write as usual
that $\SH_{\varsigma} := \SH_{g, \varsigma}(\infty, \beta)$.
Since we only consider the reduced theory, in this section, we will
assume $c_i < 0$ for all $i$.
Recall the moduli $\RMm_{\varsigma}$ and its reduced virtual cycle $[\RMm_{\varsigma}]^{\red}$ from \S\ref{sss:diamond-PR-maps}
together with the class $\psi_{\min}$ from \S\ref{ss:psi-min}.

Cycles of the form
\begin{equation}\label{eq:interesting-cycle}
  \psi_{\min}^k \cdot \prod_{i = 1}^n \ev_i^*\alpha_i \cap [\RMm_{\varsigma}]^{\red},
\end{equation}
where $\alpha_i \in H^*(\infty_\cX)$ are cohomology or Chow cohomology classes, play an
important role in the localization formula for log GLSM \cite{CJR23P}.
In case \eqref{eq:interesting-cycle} is of degree zero, we obtain the
corresponding \emph{reduced punctured log GLSM invariant} or
\emph{effective invariant}
\begin{equation}\label{eq:interesting-inv}
\int_{[\RMm_{\varsigma}]^{\red}} \psi_{\min}^k \cdot \prod \ev_i^*\alpha_i := \deg \left(\psi_{\min}^k \cdot \prod \ev_i^*\alpha_i \cap [\RMm_{\varsigma}]^{\red}\right).
\end{equation}
More generally, we may consider the \emph{effective cycles (in
  cohomology)}
\begin{equation} \label{eq:effective-cycle}
  p_{\varsigma, *} \left( \psi_{\min}^k \cdot \prod \ev_i^*\alpha_i \cap [\RMm_{\varsigma}]^{\red} \right)\in H^*(\oM_{g, n}),
\end{equation}
where $p_{\varsigma, *}\colon \RMm_\varsigma \to \oM_{g, n}$ denotes
the forgetful morphism.
Alternatively, if instead $\alpha_i \in A^*(\xinfty)$ are Chow classes, we may also study the
analogous \emph{effective cycles in Chow}:
\begin{equation} \label{eq:effective-cycle-Chow}
  p_{\varsigma, *} \left( \psi_{\min}^k \cdot \prod \ev_i^*\alpha_i \cap [\RMm_{\varsigma}]^{\red} \right)\in A^*(\oM_{g, n})
\end{equation}
By multilinearity, we will assume that the $\alpha_i$ are homogeneous
for the rest of this section.

We will first further specialize our setup, and focus our attention on
the effective invariants and cycles for the Gromov--Witten theory of a
complete intersection in a smooth projective variety.
In particular, we will show that in the case of a quintic threefold,
the effective invariants are determined by
$1 + \lfloor (2g - 2)/5\rfloor$ ``basic effective invariants''.
Later, in Section~\ref{ss:setup-higher-spin}, we return to the general
setup, discuss applications to the LG/CY-correspondence and to the
study of Witten's $r$-spin class.
In Section~\ref{ss:tautological}, we conclude with a relation to the
tautological ring.

\begin{remark}
  Effective invariants and cycles concern integrals over the reduced
  virtual cycle.
  We defer the discussion of integrals over the canonical virtual
  cycle to a future research.
  They are expected to be related to (twisted) double ramification
  cycles (see \cite{BHPSS20P}), and to be easier to compute.
\end{remark}

\subsection{The setup for complete intersections}
\label{ss:effective-setup}

We now consider the set-up in \S \ref{sss:input} for a hybrid log GLSM
target $\fP$ where we further assume that
\begin{enumerate}
\item $\cX$ is a smooth projective variety.
\item $\bE = \bE_1$
\item $r =d = \ell = 1$
\item $\lbspin \cong \cO$ is the trivial line bundle.
\item A section $s_\cX \in H^0(\bE)$ defines a
  smooth complete intersection $\cZ = (s_\cX = 0) \subset \cX$ of
  codimension $\rk \bE$.
\end{enumerate}

By Proposition~\ref{prop:target-comparison}, the above set-up leads to
\[
\infty_\cX \cong \PP(\bE^\vee) \ \ \ \mbox{and} \ \ \ \lblog = \cO_{\xinfty}(1).
\]
Furthermore, the line bundle $\lblog$ on $\xinfty$ is ample
(resp. nef) if and only if $\bE$ is ample (resp. nef).
By \S \ref{ss:GLSM-superpotential-boundary}, the superpotential $W$ is just a section of $\lblog$.
Explicitly, it is given by pairing the section
$s_\cX|_{\infty_\cX} \in H^0(\bE)$ with the tautological section
$p \in H^0(\bE^\vee \otimes \lblog)$.
Finally, let $\beta_{\cX}$ be the push-forward of the curve class
$\beta \in H_2(\xinfty)$ along $\xinfty \to \cX$.

Note that the assumptions imply that the source curves in the relevant
moduli spaces of stable log R-maps to $\fP$ and punctured R-maps to
$\infty$ have no orbifold structure.

\begin{remark}
As explained in \cite[Section~4.1]{CJR21}, log GLSM for this set-up
(with a superpotential determined from $s_\cX$) recovers the
Gromov--Witten invariants of $\cZ$ with insertions from $\cX$.
It will be shown in the subsequent paper \cite{CJR23P} that these Gromov--Witten invariants
of the complete intersection $\cZ$ are determined by the (twisted)
Gromov--Witten invariants of $\cX$ as in \cite{CoGi07}, as well as the
effective invariants \eqref{eq:interesting-inv} in the above setting with the
superpotential induced by $s_{\cX}$ as in \S \ref{ss:GLSM-potential}.%
\footnote{This determination can be further lifted to the level of Gromov--Witten
classes of $\cZ$.}

In applications of log GLSM, (twisted) Gromov--Witten invariants of
the ambient space $\cX$ are usually well-understood.
Thus, in such a situation, the computation of the effective invariants
is a main obstacle to computing the Gromov--Witten invariants of the
complete intersection, and any method of computing them will have important
applications.
\end{remark}

In the following sections, we content ourselves by using the string
\eqref{eq:string} and divisor \eqref{eq:divisor} equations, as well as
dimensional constraints (see \S \ref{ss:dimension}) and a balancing
condition (see \S \ref{ss:balancing}) to reduce the computation of the
effective invariants to a smaller set of invariants.
For instance, for the application to the Gromov--Witten theory of
quintic threefolds, in each genus $g$, there are
$1 + \lfloor (2g - 2)/5\rfloor$ such \emph{basic effective
  invariants}, see Example~\ref{ex:quintic-effective-inv}.
This reduction of the number of invariants is sufficient for
computations in low genus \cite{GJR18P} and to prove structural
results, such as in \cite{GJR18P}.

In another direction, for applications to Gromov--Witten classes,
cohomological field theories and the tautological ring, we will also
study effective cycles \eqref{eq:effective-cycle},
\eqref{eq:effective-cycle-Chow}.
We will also reduce the number of cycles that need to be considered in
this cycle-valued situation.

\subsection{Numerical constraints}

\subsubsection{The balancing condition}
\label{ss:balancing}
Suppose $\SH_{\varsigma} \neq \emptyset$. Then we have a punctured
R-map $f\colon \pC \to \punt$ in $\SH_{\varsigma}$.
By Lemma~\ref{lem:balancing} and \eqref{eq:l=1-log}, we have
\[
\sum_{i=1}^n c_i = \deg f^*\cO(\punt) = \int_{\beta} c_1(\lblog) - (2g-2 + n).
\]
Thus $\SH_{\varsigma}$ is empty unless  the \emph{balancing condition} holds:
\begin{equation}\label{eq:balancing-condition}
0 \geq \sum_{i=1}^n (c_i + 1) = \int_{\beta}c_1(\lblog) - (2g-2).
\end{equation}
where equality holds iff $c_i = -1$ for all $i$.

\subsubsection{The reduced virtual dimension}
\label{ss:dimension}
Note that $\punt = \xinfty \times \BC$ by the setup in \S \ref{ss:effective-setup}.
By \eqref{eq:base-dimension} and Lemma \ref{lem:punt-tangent}, we compute the reduced virtual dimension as follows
\begin{equation}\label{eq:red-vir-dim}
\begin{split}
\red \dim \SH_{\varsigma} &= \chi(f^*\Omega^{\vee}_{\punt/\BC}) + 1  +  \dim \fM_{g,\ddata'}(\infty_{\cA}) \\
&= \int_{\beta} c_1(T_{\xinfty}) + (\dim \xinfty + 1) (1-g) + 1 + 3g-4 + n \\
&= (2 - \dim \xinfty) (g-1) - \int_{\beta} c_1(K_{\xinfty}) + n,
\end{split}
\end{equation}
where $K_{\bullet}$ stands for the canonical bundle.
The Euler sequence of the projective bundle $\xinfty \to \cX$ implies that
\[
K_{\xinfty} = \cO_{\xinfty}(-\rk \bE) \otimes K_{\cX} \otimes \det \bE.
\]
So, we compute
\begin{equation}\label{eq:deg-K-xinfty}
\begin{split}
\int_{\beta} c_1(K_{\xinfty}) &= \int_{\beta} c_1\Big(\cO_{\xinfty}(-\rk \bE) \otimes K_{\cX} \otimes \det \bE \Big) \\
&= -\rk \bE  \cdot \int_{\beta}c_1\Big(\cO_{\xinfty}(1)\Big) + \int_{\beta_{\cX}} c_1\Big(K_{\cX} \otimes \det \bE \Big) \\
&= - \rk \bE \cdot \Big(2g-2 + \sum_i (c_i + 1) \Big) + \int_{\beta_{\cX}} c_1\Big(K_{\cX} \otimes \det \bE \Big)
\end{split}
\end{equation}
Combining \eqref{eq:red-vir-dim} and \eqref{eq:deg-K-xinfty}, we have
\begin{equation}\label{eq:red-vir-dim-original}
\red \dim \SH_{\varsigma}  = (3- \dim \cX + \rk \bE)(g-1) - \int_{\beta_{\cX}} c_1(K_{\cX}\otimes \det \bE) + n + \rk \bE \cdot \sum_i (c_i + 1).
\end{equation}
Note that if we assume that there is a curve class $\beta_\cZ$ on
$\cZ$ pushing forward to $\beta_\cX$, then the moduli of stable maps
to $\cZ$ has virtual dimension
\[
\vir\dim \oM_{g,n}\Big(\cZ, \beta_\cZ\Big) = (3- \dim \cX + \rk \bE)(g-1) - \int_{\beta_{\cX}} c_1(K_{\cX}\otimes \det \bE) + n.
\]
The above calculation thus yields the more memorable formula
\begin{equation}\label{eq:red-vir-dim-X}
\red \dim \SH_{\varsigma}  = \vir\dim \oM_{g,n}\Big(\cZ, \beta_\cX\Big) + \rk \bE \cdot \sum_i (c_i + 1).
\end{equation}
In particular, since $c_i \le -1$ for all $i$, we observe that
\begin{corollary}
  \label{cor:general-type-vanishing}
  If $\vir\dim \oM_{g,n}\Big(\cZ, \beta_\cX\Big) < 0$, then
  $[\RMm_{\varsigma}]^{\red}  = 0$.
\end{corollary}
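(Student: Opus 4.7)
The plan is to combine the reduced virtual dimension formula in equation \eqref{eq:red-vir-dim-X} with the integrality of the contact orders. Since we are working in the reduced theory, Assumption~\ref{as:reduced-punctured-cycle} forces $c_i < 0$ for all $i$, and since contact orders are integers, this means $c_i \le -1$, hence $\sum_i (c_i + 1) \le 0$. Feeding this into \eqref{eq:red-vir-dim-X} immediately gives
\[
\red \dim \SH_{\varsigma} \le \vir\dim \oM_{g,n}\big(\cZ, \beta_\cX\big) < 0
\]
under the hypothesis of the corollary.

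Next, I will transfer this negative-dimension conclusion from $\SH_\varsigma$ to $\RMm_\varsigma$. The reduced perfect obstruction theory on $\RMm_\varsigma$ is obtained by pulling back the reduced perfect obstruction theory of $\UH_\varsigma \to \UM_{g,\ddata'}(\ainfty)$ along the strict morphism in \eqref{diag:main-player}, and $\UH_\varsigma$ in turn relates to $\SH_\varsigma$ via a strict morphism; these identifications preserve the virtual dimension, so $\red\dim \RMm_\varsigma = \red\dim \SH_\varsigma < 0$.

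Finally, the reduced virtual cycle $[\RMm_\varsigma]^{\red}$ lies in the Chow group $A_k(\RMm_\varsigma)$ with $k = \red\dim \RMm_\varsigma$. Since $\RMm_\varsigma$ is a finite type Deligne--Mumford stack (being proper over $\SH_\varsigma$, which is proper by Theorem~\ref{thm:PR-moduli}), its Chow groups vanish in negative degrees, so $[\RMm_\varsigma]^{\red} = 0$ as claimed.

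There is no serious obstacle here: the entire content of the corollary is already packaged into formula \eqref{eq:red-vir-dim-X}, together with the elementary observation that the correction term $\rk \bE \cdot \sum_i(c_i+1)$ is non-positive. The only minor point to verify is the compatibility of virtual dimensions among $\SH_\varsigma$, $\UH_\varsigma$, and $\RMm_\varsigma$, which is immediate from the strictness of the vertical arrows in \eqref{diag:main-player}.
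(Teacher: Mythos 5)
Your proposal is correct and follows the paper's own route: the paper derives the corollary directly from the dimension formula \eqref{eq:red-vir-dim-X} together with the standing assumption $c_i \le -1$, so that the correction term $\rk \bE \cdot \sum_i(c_i+1)$ is non-positive and the reduced virtual dimension is negative. Your extra remarks on transferring the dimension count from $\SH_\varsigma$ to $\RMm_\varsigma$ are fine but not needed beyond what the paper already implicitly uses.
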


\subsection{Determining \texorpdfstring{$g=0,1$}{g=0,1} cases}\label{ss:determine-g=0-1}
The following is an immediate consequence of the balancing condition \eqref{eq:balancing-condition}:
\begin{lemma}\label{lem:g=1-effectiveness}
  In the above situation, we have:
  \begin{enumerate}
  \item If $g=0$ and $\lblog$ is nef, then $\SH_{\varsigma} = \emptyset$.
  \item If $g=1$ and $\lblog$ is ample, then $\SH_{\varsigma} = \emptyset$ unless $\beta = 0$ and $c_i = -1$ for all $i$.
  \end{enumerate}
\end{lemma}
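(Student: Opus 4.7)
The plan is to derive both parts directly from the balancing condition \eqref{eq:balancing-condition}, using the standing assumption in this section that $c_i < 0$ (equivalently $c_i + 1 \le 0$) for all $i$. So the balancing equality
\begin{equation*}
\sum_{i=1}^n (c_i + 1) = \int_{\beta} c_1(\lblog) - (2g-2)
\end{equation*}
must hold with a non-positive left-hand side whenever $\SH_{\varsigma}$ is non-empty.

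For part (1), I would substitute $g = 0$ to rewrite the right-hand side as $\int_\beta c_1(\lblog) + 2$. Since the curve class $\beta$ arises as the image class of an actual punctured map to $\xinfty$, it is effective, so $\lblog$ nef forces $\int_\beta c_1(\lblog) \ge 0$, making the right-hand side $\ge 2$. This contradicts non-positivity of the left-hand side, yielding emptiness.

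For part (2), substituting $g = 1$ gives $\sum_i (c_i + 1) = \int_\beta c_1(\lblog)$. Ampleness of $\lblog$ on the projective variety $\xinfty$ gives $\int_\beta c_1(\lblog) \ge 0$, with equality iff $\beta = 0$. Combined with $\sum_i(c_i+1) \le 0$, both sides must vanish, so $\beta = 0$ and, since each summand $c_i + 1 \le 0$ vanishes individually in a sum of non-positive terms, $c_i = -1$ for all $i$.

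No step is really an obstacle here; the only subtle point is justifying that the intersection numbers with nef/ample bundles are non-negative/positive on effective curve classes coming from the punctured map, which is standard since $\xinfty$ is a smooth projective variety in our setup and the pushforward of the fundamental class of the source curve is an effective $1$-cycle.
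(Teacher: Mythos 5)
Your proof is correct and is exactly the paper's argument: the paper derives the lemma as "an immediate consequence of the balancing condition" \eqref{eq:balancing-condition}, which already packages the non-positivity of $\sum_i(c_i+1)$ and the equality case $c_i=-1$ for all $i$, and you fill in the same nef/ample positivity of $\int_\beta c_1(\lblog)$ on the effective class $\beta$. Nothing further is needed.
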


\begin{remark}
  If $\lblog$ is nef but not ample, while
  $\SH_{\varsigma} = \emptyset$ when $g = 0$, even for $g = 1$, there
  may be infinitely many $\beta$ such that
  $\SH_\varsigma \neq \emptyset$.
  If $\lblog$ is not nef, even for $g = 0$, there may be infinitely
  many $\beta$ such that $\SH_\varsigma \neq \emptyset$.
\end{remark}

In the case of the above lemma, there are thus no nonzero effective
invariants/cycles in genus zero.

Next, we consider the genus one case assuming that $\lblog$ is ample
and $\SH_\varsigma \neq \emptyset$.
In this case, the reduced virtual dimension
\eqref{eq:red-vir-dim-original} drastically simplifies to
\begin{equation*}
  \red \dim \SH_{\varsigma}  = n.
\end{equation*}
Since $\beta = 0$ and $\infty_\cX$ is a variety, the evaluation maps are equal to each
other, and hence we have an equality of effective cycles in either homology or in Chow:
\begin{equation}\label{eq:genus-one-effective-cycle}
  \psi_{\min}^k \cdot \prod \ev_i^*\alpha_i \cap [\RMm_{\varsigma}]^{\red}
  = \psi_{\min}^k \cdot \ev_1^*(\prod \alpha_i) \cap [\RMm_{\varsigma}]^{\red}
\end{equation}
Repeatedly applying the string equation \eqref{eq:string}, the
genus-one effective invariants are determined explicitly by the
$g=n=1$ case.
Furthermore, for effective Chow cycles \eqref{eq:effective-cycle-Chow}
in the genus one case, we use the string equation \eqref{eq:string}
and the unit axiom Theorem~\ref{thm:remove-unit} (3) to reduce to the
case $g=n = 1$.
Next, we find in this special case an explicit computation of the
classes $[\RMm_{\varsigma}]^{\red}$ in
Proposition~\ref{prop:g1-reduced-virtual-cycle} and $\psi_{\min}$ in
\eqref{eq:g1-psimin} below, hence completely determine the genus-one
theory.

\subsection{Calculations in the \texorpdfstring{$g=n=1$, $\beta = 0$}{g=n=1, beta=0} case}\label{ss:g=1-calculation}

In this subsection, we consider the important special case $g = n = 1$
and $\beta = 0$.
Note that in the set-up of Section~\ref{ss:effective-setup}, we have
$\ul{\punt} = \xinfty\times\BC$, and the source curves under
consideration have no orbifold structures.
Thus, the unique marking is a trivial gerbe with contact order $-1$.
The stack of genus one stable punctured R-maps under consideration is
then denoted by $\SH_{1,1}(\punt,0)$.

\subsubsection{The construction of \texorpdfstring{$\SH_{1,1}(\punt,0)$}{R_1,1}}\label{sss:genus-one-moduli}
Let $\overline{\scrM}_{1,1}$ be the stack of genus one, one-pointed
stable curves, and $\overline\scrM_{1,1}(\xinfty,0)$
be the corresponding stack of stable maps to $\xinfty$.
There is a tautological morphism that takes the corresponding stable
maps to $\xinfty$:
\begin{equation}\label{eq:P-to-stable-map}
\SH_{1,1}(\punt,0) \to \overline\scrM_{1,1}(\xinfty,0) \cong \overline{\scrM}_{1,1}\times \xinfty.
\end{equation}
By the stability condition, this morphism is defined by composing stable punctured $R$-maps with the projection $\punt \to \xinfty$, and removing all log structures. In particular, source curves of stable punctured maps in $\SH_{1,\ddata}(\punt,0)$ are automatically stable, hence irreducible.

Let $\hodge$ be the Hodge bundle over $\overline{\scrM}_{1,1}$.
The line bundle $\hodge^\vee\boxtimes \lblog$ and its zero section defines
a Deligne--Faltings log structure of rank one on
$\overline\scrM_{1,1}(\xinfty,0)$ as in \eqref{eq:generic-rank1-DF}, denoted by
$\cM^d$. Denote by $\cM_{\overline{\scrM}_{1,1}}$ the canonical log
structure of stable curves over $\overline{\scrM}_{1,1}$.

\begin{proposition}\label{prop:genus-1-stack}
The tautological morphism \eqref{eq:P-to-stable-map} induces an isomorphism of the underlying stacks. Furthermore, on  the level of log structures we have
\begin{equation}\label{eq:genus-1-moduli-log}
\cM_{\SH_{1,1}(\punt,0)} = \cM_{\overline{\scrM}_{1,1}}|_{\SH_{1,1}(\punt,0)} \oplus_{\cO^*} \cM^d,
\end{equation}
where $\ocM^d = \NN_{\SH_{1,1}(\punt,0)}$ is identified with the
constant sheaf generated by the degeneracy of the unique irreducible
component of the source curve.
In particular, the following configurations coincide
\begin{equation}\label{eq:genus-1-moduli-configuration}
\SH_{1,1}(\punt,0) = \UH_{1,1}(\punt,0) =  \RMm_{1,1}(\punt,0).
\end{equation}
\end{proposition}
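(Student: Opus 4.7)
My plan is to prove Proposition~\ref{prop:genus-1-stack} in three phases, corresponding to (i) the isomorphism on underlying stacks, (ii) the log structure decomposition~\eqref{eq:genus-1-moduli-log}, and (iii) the equality of configurations~\eqref{eq:genus-1-moduli-configuration}.

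\textbf{Phase 1 (reduction via the fiber product description).} I will begin with the factorization from Section~\ref{ss:PR-representability}, which gives
\begin{equation*}
\SH_{1,1}(\punt,0) \;\cong\; \fM_{1,(-1)}(\ainfty) \times_{\fM_{1,\{1\}}} \SH_{1,\{\ogamma\}}(\ul\punt,0),
\end{equation*}
with strict vertical arrows. Since $r=1$, there is only the trivial sector, and since $\beta = 0$, the second factor collapses to $\overline\scrM_{1,1}(\xinfty,0) \cong \overline\scrM_{1,1} \times \xinfty$, carrying the canonical log structure $\cM_{\overline\scrM_{1,1}}$ from its source curves. The problem therefore reduces to analyzing the strict morphism $\fM_{1,(-1)}(\ainfty) \to \fM_{1,\{1\}}$ on underlying stacks over the stable locus, together with its pulled-back log structure.

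\textbf{Phase 2 (basic monoid at an irreducible stable curve).} The key technical input is a computation of the basic monoid. For each irreducible stable curve $\uC$ of arithmetic genus one with a single marking $p_1$ of contact order $c_1 = -1$, the dual graph has a single vertex $v$, carrying a degeneracy $e$, and at most one loop edge (in the nodal case). In the smooth case, the basic monoid is simply $\NN\langle e\rangle$. In the nodal case, the edge relation from \cite[(2.16)]{ACGS20P} reads $e_v - e_v = c_{\mathrm{node}}\,\rho$, where $\rho$ is the node smoothing parameter and $c_{\mathrm{node}} \geq 0$ is the contact order at the node; torsion-freeness of fs monoids forces $c_{\mathrm{node}} = 0$, so $\rho$ is a free generator independent of $e$. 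Consequently, the basic monoid takes the form
\begin{equation*}
Q \;=\; \NN\langle e\rangle \;\oplus\; (\text{node-smoothing summand of } \ocM_{\overline\scrM_{1,1}}|_{\uC}),
\end{equation*}
and this splitting is canonical. By the universal property of basic punctured maps \cite[Proposition~2.26]{ACGS20P}, the fiberwise splitting globalizes over families; pulling back through the fiber product of Phase~1 then yields the direct sum decomposition~\eqref{eq:genus-1-moduli-log}, with $\ocM^d = \NN$ generated globally by the degeneracy $e$.

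\textbf{Phase 3 (underlying isomorphism and configurations).} Given the canonical basic structure from Phase~2, I will establish the underlying isomorphism as follows. The basic structure is uniquely determined by the underlying stable curve and the fixed contact order $-1$, with no further moduli; together with strict idealized log \'etaleness \eqref{eq:universal-stack-ideal-etale} and the dimension count of Lemma~\ref{lem:universal-punctured-map-moduli}~(2), this forces $\fM_{1,(-1)}(\ainfty) \to \fM_{1,\{1\}}$ to be an isomorphism on underlying stacks over the stable locus. Base change then gives the underlying isomorphism in the proposition. For the final assertion~\eqref{eq:genus-1-moduli-configuration}, irreducibility of each source curve makes the sheaf of degeneracies $\obD(f)$ globally constant, equal to the singleton $\{e\}$; hence uniform maximal and uniform minimal degeneracy are both automatic, yielding $\SH_{1,1}(\punt,0) = \UH_{1,1}(\punt,0) = \RMm_{1,1}(\punt,0)$.

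\textbf{Main difficulty.} The main conceptual point is the edge-relation analysis at the node of the irreducible nodal genus-one curve in Phase~2: one must verify both that $c_{\mathrm{node}} = 0$ is forced, and that the basic monoid splits as a genuine direct sum rather than as a non-trivial extension mixing $e$ with $\rho$. A secondary technical point is checking that the splitting behaves well in families; I expect the canonicity of basic objects \cite[Proposition~2.26]{ACGS20P} together with strictness over $\fM_{1,\{1\}}$ to be enough to globalize the fiberwise splitting without extra work.
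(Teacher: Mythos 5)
There is a genuine gap, and it sits exactly at the heart of the proposition. Your argument never identifies the log structure $\cM^d$ with the Deligne--Faltings log structure of the line bundle $\hodge^\vee\boxtimes\lblog$, which is what \eqref{eq:genus-1-moduli-log} actually asserts (recall $\cM^d$ is \emph{defined} in \S\ref{sss:genus-one-moduli} via the zero section of $\hodge^\vee\boxtimes\lblog$). Your Phase 2 only produces a splitting of \emph{characteristic} monoids $\ocM = \ocM_{\overline{\scrM}_{1,1}}\oplus\NN$; that is the easy part (the paper gets it in one line from irreducibility and the basic-monoid definition, and your loop-edge argument for zero contact order at the node is fine, as is the deduction of \eqref{eq:genus-1-moduli-configuration}). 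But the degeneracy direction of the log structure is an actual $\cO^*$-torsor, i.e.\ an extra line bundle $\cT = \cO(-e)$ on the base, and nothing in your proposal pins it down. The paper's key computation is that a lift of an underlying R-map to a punctured R-map is equivalent to an isomorphism $f^*(\uomega^\vee\boxtimes\lblog)\cong\cT|_{C^\circ}(-p)$, hence, using the genus-one identity $\omega_C\cong\pi_T^*\pi_{T,*}\omega_C$, to $\cT\cong(\hodge^\vee\boxtimes\lblog)|_T$. This single identification simultaneously proves existence and uniqueness of the lift (hence the underlying isomorphism with $\overline{\scrM}_{1,1}\times\xinfty$) and identifies $\cM^d$. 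Your proposal contains no analogue of this step: the Hodge bundle, $\lblog$, and the R-map constraint $\uomega|_{\pC}=\omega^{\log}$ never enter, so \eqref{eq:genus-1-moduli-log} as stated is not reached.

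The underlying-stack claim in your Phase 3 is moreover false as written. The morphism $\fM_{1,(-1)}(\ainfty)\to\fM_{1,\{1\}}$ is not an isomorphism of underlying stacks: by the very dimension count you invoke, $\dim\fM_{1,(-1)}(\ainfty)=3g-4+n=0$ while the (stable locus of the) curve stack is $1$-dimensional; concretely, a point of $\fM_{1,(-1)}(\ainfty)$ carries, besides $(C,p)$, the torsor/line bundle $\cO(-e)$ on the base, so over the smooth stable locus this stack is a $\BG_m$-gerbe-like object over a codimension-one condition in $(C,p,L)$, not the curve stack itself. The "no further moduli" assertion ignores exactly this extra datum. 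Relatedly, the fiber product in your Phase 1 must be taken over $\fM_{1,\{1\}}(\ul\ainfty)$ (maps to $\ul\ainfty=\BG_m$, cf.\ \S\ref{ss:PR-representability}), not over the stack of curves: the compatibility over that base forces $\cO(e)|_C\otimes\cO(-p)\cong f^*\cO(\punt)=f^*\lblog\otimes(\omega^{\log}_C)^{-1}$, and it is precisely this constraint, combined with $\omega_C\cong\pi_T^*\hodge$, that rigidifies the extra $\BG_m$ and yields both the underlying isomorphism and the identification of $\cM^d$. By replacing the base of the fiber product with the curve stack you have discarded the one condition that carries all the content of the proposition; restoring it and carrying out the line-bundle identification would essentially reproduce the paper's proof.
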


\begin{proof}
  First, since all source curves in question are
  irreducible, by the definition of basic monoids \cite[\S
  2.3.1]{ACGS20P}, we obtain a splitting on the level of characteristics
  \begin{equation}\label{eq:genus-1-characteristics-splitting}
  \ocM_{\SH_{1,1}(\punt,0)} =
  \ocM_{\overline{\scrM}_{1,1}}|_{\SH_{1,1}(\punt,0)} \oplus
  \NN_{\SH_{1,1}(\punt,0)}.
  \end{equation}
  In particular, nodes of the source curves have zero contact
  orders. This proves \eqref{eq:genus-1-moduli-configuration}.

  Since $\ul{\punt} = \xinfty\times \BC$, by the stability condition
  $\scrM := \overline\scrM_{1,1}(\xinfty,0)$ is the stack
  parameterizing underlying stable R-maps.
  Indeed, let $\ul{f}_{\scrM}\colon \ul{C}_{\scrM} \to \xinfty$ the
  universal stable map over $\scrM$. Then the unversal underlying
  R-map over $\scrM$ is
  \[
  \xymatrix{
  \ul{f}_{\SH} \colon \ul{C}_{\scrM} \ar[rrr]^-{(\ul{f}_{\scrM}, \ \omega^{\log}_{\ul{C}_{\scrM}/\scrM})} &&& \ul{\punt} = \xinfty \times \BC
  }
  \]
  We next show that any underlying stable R-map
  $\ul{f} \colon C \to \ul{\punt}$ given by $\ul{T} \to \scrM$ admits
  a unique lift $f \colon C^{\circ} \to \punt$ to a stable punctured
  R-map over $T$.

  Suppose $f$ is a lift of $\ul{f}$.
  Then \eqref{eq:genus-1-characteristics-splitting} implies a splitting
  $\cM_{T} = \cM^{\sharp}_{T}\oplus_{\cO^*}\cM^d_T$ with
  $\cM^{\sharp}_{T}$ given by the pull-back of
  $\cM_{\overline{\scrM}_{1,1}}$ and $\cM^d_T$ corresponds to $\NN_{\SH_{1,1}(\punt,0)}$.
  Denote by $\cT$ the line bundle over $T$ corresponding to
  $\cM^d_T$ as in \eqref{eq:generic-rank1-DF}.
  Let $p \subset \ul{C}$ be the unique marking.
  By \eqref{eq:l=1-log}, the morphism
  $f^{\flat} \colon f^*\cM_{\punt} \to \cM_{C^{\circ}}$ of log
  structures is equivalent to an isomorphism of line bundles
\begin{equation}\label{eq:g=1-map-lift}
f^*\left(\uomega^\vee\boxtimes\lblog\right) \stackrel{\cong}{\longrightarrow} \cT|_{C^{\circ}}(-p),
\end{equation}
which is equivalent to
\begin{equation}\label{eq:g=1-lb-iso}
\cT \stackrel{\cong}{\longrightarrow} \omega_C^\vee \otimes f^* \lblog \stackrel{\cong}\longrightarrow (\pi_T^*\pi_{T,*}\omega_C)^\vee \otimes f^* \lblog =   \left(\hodge^\vee\boxtimes\lblog\right)|_{T},
\end{equation}
where the second isomorphism follows from the canonical isomorphism $\omega_C \cong \pi_T^*\pi_{T,*}\omega_C$ since $C$ is of genus $1$.
Note that this isomorphism is uniquely determined by the underlying structure, leading to the uniqueness of lifting. This implies that $\cM^d_T$ is indeed the pull-back of $\cM^d$, hence proves the splitting \eqref{eq:genus-1-moduli-log} on   the log structure level.

Finally, to show the existence of a lifting of a given $\ul{f}$, we first construct the log structure $\cM^d_{T}$ corresponding to $\cT$ using \eqref{eq:g=1-lb-iso}. Since the source curves are irreducible with at most one node of contact  order zero, and a marking of contact order $-1$, the punctured curve $C^{\circ} \to T$ is uniquely determined by its underlying and  $\cM^d_{T}$ by \cite[Remark 2.2]{ACGS20P}. The morphism of log structures $f^{\flat}$ is then uniquely determined by \eqref{eq:g=1-map-lift} hence \eqref{eq:g=1-lb-iso}. This finishes the proof.
\end{proof}

The proposition also determines the divisor classes $\psi_{\min}$ and
$\Delta_{\max}$ over $\SH_{1,1}(\punt,0)$:
\begin{equation}
  \label{eq:g1-psimin}
 \psi_{\min} = -\Delta_{\max} = -c_1(\hodge^\vee\boxtimes \lblog).
\end{equation}

\subsubsection{The canonical virtual cycle}

We first consider the canonical obstruction theory and virtual cycle,
before turning to the reduced theory in the following section.
By Proposition \ref{prop:genus-1-stack}, the underlying stack of
$\SH:= \SH_{1,1}(\punt,0)$ is smooth.
Next we compute the obstruction sheaf for the canonical obstruction
theory.

\begin{lemma}\label{lem:g1-obstruction}
  There is a canonical exact sequence
\[
0 \to \hodge^{\vee}|_{\SH} \to H^1(\EE_{\SH/\fM}) \to \big(\hodge^{\vee}\boxtimes\Omega^{\vee}_{\xinfty}\big)|_{\SH} \to 0
\]
where $\EE_{\SH/\fM}$ is the canonical perfect obstruction theory
defined in \eqref{eq:tan-POT}.
\end{lemma}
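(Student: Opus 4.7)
The plan is to derive the desired short exact sequence from the long exact sequence obtained by applying $R\pi_*$ to the pullback of the distinguished triangle of Lemma \ref{lem:punt-tangent}. First I would pull back the exact sequence
\[
0 \to \cO_{\punt} \to \Omega^{\vee}_{\punt/\BC} \to \Omega^{\vee}_{\xinfty}|_{\punt} \to 0
\]
via $f\colon \pC \to \punt$ to obtain
\[
0 \to \cO_{\pC} \to f^*\Omega^{\vee}_{\punt/\BC} \to f^*\Omega^{\vee}_{\xinfty} \to 0.
\]
Applying $R\pi_*$ gives a six-term sequence. Since $\beta = 0$, the underlying $\ul{f}$ factors through the evaluation point, so $f^*\Omega^{\vee}_{\xinfty} \cong \pi^*\ev^*\Omega^{\vee}_{\xinfty}$, and the projection formula gives $R^i\pi_* f^*\Omega^{\vee}_{\xinfty} \cong R^i\pi_*\cO_{\pC} \otimes \ev^*\Omega^{\vee}_{\xinfty}$. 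Combined with $\pi_*\cO_{\pC} = \cO_{\SH}$ and $R^1\pi_*\cO_{\pC} \cong \hodge^{\vee}|_{\SH}$ (Serre duality, genus one), the long exact sequence becomes
\[
0 \to \cO \to \pi_*f^*\Omega^{\vee}_{\punt/\BC} \to \ev^*\Omega^{\vee}_{\xinfty} \xrightarrow{\delta} \hodge^{\vee}|_{\SH} \to H^1(\EE_{\SH/\fM}) \to \big(\hodge^{\vee}\boxtimes \Omega^{\vee}_{\xinfty}\big)|_{\SH} \to 0.
\]

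The key remaining step, which I expect to be the main obstacle, is to show that the connecting map $\delta \colon \ev^*\Omega^{\vee}_{\xinfty} \to \hodge^{\vee}|_{\SH}$ vanishes. My approach is to use the evaluation morphism $\ev\colon \SH \to \xinfty$ together with Proposition \ref{prop:genus-1-stack}, which identifies $\ul{\SH}$ with $\ul{\overline{\scrM}}_{1,1} \times \xinfty$ and shows $\fM$ is zero-dimensional in this case. Consequently the composition $\ul{\SH} \to \ul{\fM} \times \xinfty$ is smooth, so the induced morphism of relative tangent sheaves $\TT_{\SH/\fM} \to \ev^*T_{\xinfty}$ is surjective. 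Under the canonical identifications $\TT_{\SH/\fM} = H^0(\EE_{\SH/\fM}) = \pi_*f^*\Omega^{\vee}_{\punt/\BC}$ and $\ev^*T_{\xinfty} = \pi_*f^*\Omega^{\vee}_{\xinfty}$, this surjectivity is precisely surjectivity of $\pi_*f^*\Omega^{\vee}_{\punt/\BC} \to \ev^*\Omega^{\vee}_{\xinfty}$ in the sequence above, equivalently the vanishing of $\delta$. Naturality of the identifications follows from applying $\pi_*$ to the pullback via $f$ of the tangent short exact sequence in Lemma \ref{lem:punt-tangent}, together with the commutative square relating $f$, $\pi$, $\ev$ and $\punt \to \xinfty$.

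Once $\delta = 0$ is established, the six-term sequence truncates to
\[
0 \to \hodge^{\vee}|_{\SH} \to H^1(\EE_{\SH/\fM}) \to \big(\hodge^{\vee}\boxtimes \Omega^{\vee}_{\xinfty}\big)|_{\SH} \to 0,
\]
which is the claimed canonical exact sequence. Ranks agree with the dimension count: $\dim \ul{\SH} = 1 + \dim \xinfty$ and $\dim \ul{\fM} = 0$ give $\mathrm{rk}\, H^0(\EE_{\SH/\fM}) = 1 + \dim \xinfty$, and canonical virtual dimension zero forces $\mathrm{rk}\, H^1(\EE_{\SH/\fM}) = 1 + \dim \xinfty$, matching $\mathrm{rk}(\hodge^{\vee}) + \mathrm{rk}(\hodge^{\vee}\boxtimes \Omega^{\vee}_{\xinfty}) = 1 + \dim \xinfty$.
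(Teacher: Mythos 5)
Your proof has the same skeleton as the paper's: both push the Euler-type sequence of Lemma~\ref{lem:punt-tangent}, pulled back to the universal curve, forward along $\pi$, identify the outer terms via $\pi_*\cO_{\pC}=\cO_{\SH}$ and $R^1\pi_*\cO_{\pC}\cong\hodge^{\vee}|_{\SH}$, and reduce the lemma to the surjectivity of the degree-zero map $H^0(\EE_{\SH/\fM})\to\Omega^{\vee}_{\xinfty}|_{\SH}$, i.e.\ to the vanishing of your connecting map $\delta$. The only divergence is how that surjectivity is argued: the paper factors $\ul{\SH}\to\ul{\fM}\to\overline{\scrM}_{1,1}$ and uses that $\ul{\fM}\to\overline{\scrM}_{1,1}$ is smooth of relative dimension $-1$, so $H^0(\LL^{\vee}_{\ul{\fM}/\overline{\scrM}_{1,1}})=0$ and the transitivity triangle gives $H^0(\LL^{\vee}_{\ul{\SH}/\ul{\fM}})\twoheadrightarrow H^0(\LL^{\vee}_{\ul{\SH}/\overline{\scrM}_{1,1}})\cong\Omega^{\vee}_{\xinfty}|_{\SH}$, whereas you want smoothness of the combined map $\ul{\SH}\to\ul{\fM}\times\xinfty$.

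That is where your justification is too quick. The sentence ``consequently the composition $\ul{\SH}\to\ul{\fM}\times\xinfty$ is smooth'' does not follow from the two facts you invoke ($\ul{\SH}\cong\overline{\scrM}_{1,1}\times\xinfty$ and $\dim\ul{\fM}=0$): smoothness of the source and zero-dimensionality of the target factor say nothing about the differential of the map (the diagonal $\AA^1\to\AA^1\times\AA^1$ is not smooth even though both coordinate maps are), and in fact the zero-dimensionality is not part of Proposition~\ref{prop:genus-1-stack} but comes from \eqref{eq:base-dimension}. To make your route work you need actual input on $\ul{\fM}$ and on what $\ul{\SH}\to\ul{\fM}$ records; e.g.\ one can check, using the description in the proof of Proposition~\ref{prop:genus-1-stack} (the lift is classified by the degeneracy line bundle, identified with $\hodge^{\vee}\boxtimes\lblog$), that $\ul{\fM}$ is smooth with a $\BG_m$-gerbe direction over $\overline{\scrM}_{1,1}$ and that the map from the smooth stack $\ul{\SH}$, paired with $\ev$, is indeed smooth of relative dimension one. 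Alternatively, and more economically, replace this step by the paper's argument: only smoothness of $\ul{\fM}\to\overline{\scrM}_{1,1}$ of relative dimension $-1$ is needed, and the required surjectivity (hence $\delta=0$) follows from the long exact sequence of tangent complexes for $\ul{\SH}\to\ul{\fM}\to\overline{\scrM}_{1,1}$. With that repair, your remaining identifications (compatibility of $H^0(\EE_{\SH/\fM})\to\ev^*\Omega^{\vee}_{\xinfty}$ with the relative differential of $\ev$) are the same implicit naturality the paper also uses, and the rest of your argument, including the Serre-duality identification and the final truncation, is correct.
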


\begin{proof}
Consider the universal punctured R-map $f \colon \pC \to \punt$ over $\SH$ with the projection $\pi\colon \pC \to \SH$. By Lemma \ref{lem:punt-tangent} and \eqref{eq:tan-POT}, we obtain a distinguished triangle
\begin{equation}\label{eq:g1-POT}
\rd\pi_*\cO_{\pC} \longrightarrow \EE_{\SH/\fM} \longrightarrow \rd\pi_*\cO_{\pC}\boxtimes\Omega^{\vee}_{\xinfty} \stackrel{[1]}{\longrightarrow}
\end{equation}
Taking part of the long exact sequence, we obtain
\[
0 \longrightarrow \cO_{\SH} \longrightarrow H^0(\EE_{\SH/\fM}) \stackrel{\phi}{\longrightarrow} \Omega^{\vee}_{\xinfty}|_{\SH} \longrightarrow \cdots
\]
We will show that $\phi$ is surjective so that \eqref{eq:g1-POT} leads
to an exact sequence
\begin{equation}\label{eq:g=1-obstruction}
0 \longrightarrow R^1\pi_*\cO_{\pC} \longrightarrow H^1(\EE_{\SH/\fM}) \longrightarrow  R^1\pi_*\cO_{\pC} \boxtimes \Omega^{\vee}_{\xinfty} \longrightarrow 0.
\end{equation}

Indeed, the isomorphism $\ul{\SH} \cong \overline{\scrM}_{1,1}\times\xinfty$ implies that $\Omega^{\vee}_{\ul{\SH} / \overline{\scrM}_{1,1}} \cong \Omega^{\vee}_{\xinfty}$.
On the other hand, the sequence
$\ul{\SH} \to \ul{\fM} \to \overline{\scrM}_{1,1}$ yields a triangle
\[
\LL^{\vee}_{\ul{\SH}/\ul{\fM}} \to \LL^{\vee}_{\ul{\SH}/\overline{\scrM}_{1,1}} \to \LL^{\vee}_{\ul{\fM}/\overline{\scrM}_{1,1}} \stackrel{[1]}{\to}
\]
We may take $\fM \subset \fM_{1, (-1)}(\ainfty)$ to be the open
substack with stable source curves, hence containing the image of
$\SH \to \fM_{1, (-1)}(\ainfty)$.
The morphism $\ul{\fM} \to \overline{\scrM}_{1,1}$ is smooth of
dimension $-1$, which implies
$H^0(\LL^{\vee}_{\ul{\fM}/\overline{\scrM}_{1,1}}) = 0$, and hence the
surjection
$H^0(\LL^{\vee}_{\ul{\SH}/\ul{\fM}}) \twoheadrightarrow
H^0(\LL^{\vee}_{\ul{\SH}/\overline{\scrM}_{1,1}})$.
Since both $\ul{\SH} \to \ul{\fM}$ and
$\ul{\SH} \to \overline{\scrM}_{1,1}$ are smooth, the morphism $\phi$
is given by the following composition
\[
H^0(\EE_{\SH/\fM}) \stackrel{\cong}{\longrightarrow} H^0(\LL^{\vee}_{\ul{\SH}/\ul{\fM}}) \twoheadrightarrow  H^0(\LL^{\vee}_{\ul{\SH}/\overline{\scrM}_{1,1}}) \cong \Omega^{\vee}_{\ul{\SH} / \overline{\scrM}_{1,1}} \cong \Omega^{\vee}_{\xinfty}
\]
hence is surjective.

Finally, the statement follows from \eqref{eq:g=1-obstruction} and $R^1\pi_*\cO_{\pC} =  \hodge^{\vee}|_{\SH}$ by Serre duality.
\end{proof}

\begin{proposition}\label{prop:g1-virtual-cycle}
The canonical virtual cycle $[\SH]^\vir$ is given by
  \begin{equation}
    \label{eq:g1-virtual-cycle}
    [\SH]^\vir = c_1(\hodge^\vee|_\SH) \cdot e(\hodge^\vee\boxtimes\Omega_{\infty_\cX}^\vee) \in A_0(\SH).
  \end{equation}
\end{proposition}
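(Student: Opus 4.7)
\emph{Proof proposal.} The plan is to show that $\SH \to \fM$ is virtually smooth in the strongest sense, so that $[\SH]^\vir$ reduces to the Euler class of the obstruction bundle capped with the ordinary fundamental class of $\SH$. The heart of the argument is therefore the identification of the structure of the underlying morphism together with the explicit exact sequence from Lemma \ref{lem:g1-obstruction}.

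First, I would check that the underlying morphism $\ul\varphi\colon \ul\SH \to \ul\fM$ is smooth. By Proposition \ref{prop:genus-1-stack}, $\ul\SH \cong \overline\scrM_{1,1}\times \xinfty$ is smooth of dimension $1 + \dim \xinfty$; by Lemma \ref{lem:universal-punctured-map-moduli}(1) with $g=n=1$, $\ul\fM$ is reduced and purely zero-dimensional, hence \'etale-locally a disjoint union of copies of $\spec \bk$. Consequently $\cO_{\ul\fM}$ is a product of copies of the base field, so any morphism into $\ul\fM$ is flat; its fibers are open substacks of the smooth $\ul\SH$, hence smooth, so $\ul\varphi$ is smooth of relative dimension $1 + \dim \xinfty$, and $\ul\varphi^*[\ul\fM] = [\ul\SH]$ as cycles on $\ul\SH$.

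Since $\SH \to \fM$ is strict, $\LL_{\SH/\fM}$ coincides with the ordinary cotangent complex $\LL_{\ul\SH/\ul\fM}$ by \cite{LogCot}, and by smoothness of $\ul\varphi$ this complex is just the locally free sheaf $\Omega_{\ul\SH/\ul\fM}$ concentrated in degree $0$. The POT axiom together with Lemma \ref{lem:g1-obstruction} thus yields that both $H^0(\EE_{\SH/\fM}) \cong T_{\ul\SH/\ul\fM}$ and $H^1(\EE_{\SH/\fM})$ are locally free. Hence the intrinsic normal cone of $\SH/\fM$ equals the zero section of $H^1(\EE_{\SH/\fM})$, and virtual pullback reduces to the ordinary Euler class:
\[
[\SH]^\vir = e\bigl(H^1(\EE_{\SH/\fM})\bigr) \cap \ul\varphi^*[\ul\fM] = e\bigl(H^1(\EE_{\SH/\fM})\bigr) \cap [\ul\SH].
\]

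Finally, applying multiplicativity of the Euler class to the short exact sequence of Lemma \ref{lem:g1-obstruction}, and using that $\hodge^\vee$ is a line bundle so that $e(\hodge^\vee|_\SH) = c_1(\hodge^\vee|_\SH)$, gives
\[
e\bigl(H^1(\EE_{\SH/\fM})\bigr) = c_1(\hodge^\vee|_\SH) \cdot e(\hodge^\vee \boxtimes \Omega^\vee_{\xinfty}),
\]
which is precisely the claimed formula. The main subtlety I anticipate is the careful identification of $H^0(\EE_{\SH/\fM})$ with the actual relative tangent bundle of $\ul\varphi$ (rather than just a sheaf having the right rank); this should follow from strictness, the smoothness of $\ul\varphi$ established above, and the POT axioms for the $[0,1]$-perfect complex $\EE_{\SH/\fM}$.
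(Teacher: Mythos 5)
Your overall strategy — reduce $[\SH]^\vir$ to the Euler class of a locally free obstruction bundle and then apply Whitney's formula to the exact sequence of Lemma \ref{lem:g1-obstruction} — is exactly the route the paper takes, and the final two steps (the identification $e(H^1(\EE_{\SH/\fM})) = c_1(\hodge^\vee|_\SH)\cdot e(\hodge^\vee\boxtimes\Omega^\vee_{\xinfty})$ and the rank/dimension bookkeeping) are fine. The problem is your justification of the key ``virtual smoothness'' input. You argue that since $\ul\fM$ is reduced and purely zero-dimensional (Lemma \ref{lem:universal-punctured-map-moduli}), it is \'etale-locally a disjoint union of copies of $\spec\bk$, hence $\cO_{\ul\fM}$ is a product of fields and every morphism into $\ul\fM$ is flat, with smooth fibers. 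This inference is false for Artin stacks: $\fM = \fM_{1,\ddata'}(\ainfty)$ parameterizes punctured maps to $\ainfty$, whose underlying stack is $\BG_m$-like, so every object carries at least a $\GG_m$ of automorphisms; a reduced zero-dimensional Artin stack with positive-dimensional stabilizers (think of $[\A^1/\GG_m]$) is nowhere \'etale-locally a finite set of reduced points. Indeed, in the proof of Lemma \ref{lem:g1-obstruction} the paper records that $\ul\fM \to \overline{\scrM}_{1,1}$ is smooth of relative dimension $-1$, i.e.\ the fibers are gerbe-like, which is incompatible with your local description. Consequently neither the flatness of $\ul\varphi$ nor the identity $\ul\varphi^*[\ul\fM]=[\ul\SH]$ follows from what you wrote, and this is precisely the step on which the reduction of the virtual class to an Euler class rests.

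The gap is repairable, but it needs a different argument: what one actually uses (and what the paper uses, asserting it in the proof of Lemma \ref{lem:g1-obstruction}) is that both $\ul\SH \cong \overline{\scrM}_{1,1}\times\xinfty$ and $\ul\fM$ are smooth and that $\ul\SH\to\ul\fM$ is smooth. One way to get the latter honestly in this case is to use that $\ul\fM\to\overline{\scrM}_{1,1}$ is a smooth morphism of relative dimension $-1$ (gerbe-like fibers), so smoothness of the composite $\ul\SH\cong\overline{\scrM}_{1,1}\times\xinfty\to\overline{\scrM}_{1,1}$ forces smoothness of $\ul\SH\to\ul\fM$; alternatively one can argue with the explicit description of $\ul\fM$ coming from Proposition \ref{prop:genus-1-stack}. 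Once smoothness of $\ul\SH\to\ul\fM$ (or of both stacks) is in place, your remaining steps — $H^0(\EE_{\SH/\fM})\cong T_{\ul\SH/\ul\fM}$ via the POT axiom, local freeness of $H^1(\EE_{\SH/\fM})$ from Lemma \ref{lem:g1-obstruction}, and multiplicativity of the Euler class — go through and reproduce the paper's proof.
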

\begin{proof}
  The formula \eqref{eq:g1-virtual-cycle} follows from Lemma~\ref{lem:g1-obstruction} using
  the fact that the underlying stacks of $\fM$ and $\SH$ are smooth, and that $\hodge^\vee$ is
  rank-one locally free.

  By Proposition \ref{prop:genus-1-stack}, we have
  $\dim \SH = \dim (\overline{\scrM}_{1, 1} \times \infty_\cX) = \dim \infty_\cX + 1$. Thus
  \eqref{eq:g1-virtual-cycle} implies that $[\SH]^\vir$ is of
  dimension zero.
\end{proof}

\subsubsection{The reduced virtual cycle}

Recall the notation $\SH:= \SH_{1,1}(\punt,0)$, and consider the
reduced virtual cycle $[\SH]^{\red}$ defined due to
\eqref{eq:genus-1-moduli-configuration}.

\begin{proposition}\label{prop:g1-reduced-virtual-cycle}
The reduced virtual cycle $[\SH]^\red$ is of dimension one,
  and is given by
  \begin{equation}
    \label{eq:g1-reduced-virtual-cycle}
    [\SH]^\red = c_{\dim \infty_\cX}\left(\hodge^\vee \boxtimes (\Omega_{\infty_\cX}^\vee + \cO_{\xinfty} - \lblog)\right) \in A_1(\SH),
  \end{equation}
  where the right hand side is the Chern class of a class in K-theory,
  and can be computed by taking the dimension-one part of the mixed
  degree class
  \begin{equation*}
    \frac{c(\hodge^\vee\boxtimes\Omega_{\infty_\cX}^\vee) \cdot c(\hodge^\vee|_{\SH})}{c(\hodge^\vee \boxtimes \lblog)} \in A_*(\SH).
  \end{equation*}
\end{proposition}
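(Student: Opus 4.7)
The plan is to exploit the smoothness of $\ul\SH \cong \overline{\scrM}_{1,1} \times \xinfty$ established in Proposition~\ref{prop:genus-1-stack} to reduce both virtual cycles to Chern-class computations on a smooth stack. Since $\ul\SH$ has dimension $\dim \xinfty + 1$, any virtual cycle coming from a perfect obstruction theory with locally free $H^0$ and $H^1$ is obtained by capping the top Chern class of the obstruction sheaf with $[\ul\SH]$.

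The key step is to identify the reduced obstruction sheaf. Taking the long cohomology sequence of the defining triangle \eqref{eq:red-POT} and using that the cosection is surjective by Proposition~\ref{prop:cosection-surjective}, one obtains a short exact sequence
\[
0 \to H^1(\EE^\red_{\SH/\fM}) \to H^1(\EE_{\SH/\fM}) \to \cO(\ttwist \Delta_{\max})|_\SH \to 0.
\]
The hypothesis of Proposition~\ref{prop:cosection-surjective} is met via Proposition~\ref{prop:proper-critical-locus} and the transversality of $W$ built into the setup of \S\ref{ss:effective-setup}. Under the present numerics $r = d = \ell = 1$, Lemma~\ref{lem:super-potential-weight} gives $\ttwist = 1$, and \eqref{eq:g1-psimin} identifies $\cO(\Delta_{\max})|_\SH \cong \hodge^\vee \boxtimes \lblog$.

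Combining this short exact sequence with the one from Lemma~\ref{lem:g1-obstruction}, one gets the K-theoretic identity
\[
[H^1(\EE^\red_{\SH/\fM})] = \hodge^\vee|_\SH + \hodge^\vee \boxtimes \Omega^\vee_\xinfty - \hodge^\vee \boxtimes \lblog = \hodge^\vee \boxtimes (\Omega^\vee_\xinfty + \cO_\xinfty - \lblog),
\]
of rank $\dim \xinfty$. Consequently $[\SH]^\red$ has dimension $\dim \xinfty + 1 - \dim \xinfty = 1$, and the formula \eqref{eq:g1-reduced-virtual-cycle} follows as the top Chern class of this K-theory class. The alternative expression as the dimension-one part of the mixed-degree class $c(\hodge^\vee \boxtimes \Omega^\vee_\xinfty) \cdot c(\hodge^\vee|_\SH) / c(\hodge^\vee \boxtimes \lblog)$ is then just the Whitney product formula applied to the two short exact sequences above.

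No step presents a serious obstacle; the argument is essentially a bookkeeping computation once Proposition~\ref{prop:genus-1-stack}, Lemma~\ref{lem:g1-obstruction}, and \eqref{eq:g1-psimin} are in hand. The only point requiring care is confirming that the cosection is surjective on this one-component stack, which as indicated reduces to the transversality of $W$ already assumed.
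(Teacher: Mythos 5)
Your proposal is correct and follows essentially the same route as the paper: the paper's proof invokes the short exact sequence $0 \to H^1(\EE^{\red}_{\SH/\fM}) \to H^1(\EE_{\SH/\fM}) \to \FF_{\SH/\fM}[1] \to 0$ from \eqref{diag:lift-red-obstruction} (which encodes exactly the cosection surjectivity you cite), identifies $\FF_{\SH/\fM}[1] = \cO(\Delta_{\max}) \cong \hodge^\vee \boxtimes \lblog$ via Proposition~\ref{prop:genus-1-stack}, and combines this with Lemma~\ref{lem:g1-obstruction} and the smoothness of $\ul\SH$, just as you do. Your write-up merely makes explicit a few points ($\ttwist = 1$, the rank/dimension count, the Whitney product step) that the paper leaves implicit.
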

\begin{proof}
  This follows from the bottom exact sequence of
  \eqref{diag:lift-red-obstruction}, noting that
\[\FF_{\SH/\fM}[1] = \cO(\Delta_{\max}) \cong \hodge^\vee \boxtimes \lblog\]
by Proposition \ref{prop:genus-1-stack}.
\end{proof}

\subsection{Vanishing in higher genus}
\label{ss:vanishing}

In this subsection, we study several situations in which all $g \ge 2$
effective invariants and cycles vanish.
We will restrict ourselves mostly to the situation
$\dim \cZ = \dim \cX - \rk \bE > 3$, since otherwise the balancing
condition and virtual dimension constraint are automatically satisfied
when $\beta = 0$.
We begin by stating a general vanishing criterion.

\begin{lemma}\label{lem:general-non-vanishing}
  Suppose that the following holds
  \begin{equation}
    \label{eq:general-bound-0}
    (3- \dim \cX + \rk \bE)(g-1) - \int_{\beta_{\cX}} c_1(K_{\cX}\otimes \det \bE) < 0.
  \end{equation}
  Then the cycle \eqref{eq:interesting-cycle} vanishes if for each
  $i$, one of the following holds:
  \begin{enumerate}
  \item $c_i \leq -2$,
  \item $c_i = -1$ and $\alpha_i \in H^{\geq 2}(\xinfty)$.
  \end{enumerate}
  The effective invariant and cycles \eqref{eq:interesting-inv},
  \eqref{eq:effective-cycle} and \eqref{eq:effective-cycle-Chow}
  vanish unless there is an $i$ such that $c_i = -1$ and
  $\alpha_i \in H^1(\xinfty)$.
\end{lemma}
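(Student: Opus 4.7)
The proof splits into parts (a) and (b).

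Part (a) follows from a direct dimension count. Writing $A$ for the left-hand side of \eqref{eq:general-bound-0}, so that $A<0$ by hypothesis, formula \eqref{eq:red-vir-dim-original} gives the Chow dimension of the cycle \eqref{eq:interesting-cycle} on $\RMm_\varsigma$ as
\begin{equation*}
\red\dim\RMm_\varsigma - k - \sum_{i=1}^n\deg_\CC\alpha_i = A + n + \sum_{i=1}^n\bigl(\rk\bE\,(c_i+1)-\deg_\CC\alpha_i\bigr) - k.
\end{equation*}
Under the hypothesis of (a), each summand $\rk\bE(c_i+1)-\deg_\CC\alpha_i$ is at most $-1$: if $c_i\le-2$ then $\rk\bE(c_i+1)\le-\rk\bE\le-1$, and if $c_i=-1$ with $\deg_\CC\alpha_i\ge 2$ the bound is immediate. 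Summing over $i$ yields dimension at most $A-k<0$, so the cycle, the invariant \eqref{eq:interesting-inv}, and the pushforwards \eqref{eq:effective-cycle}, \eqref{eq:effective-cycle-Chow} all vanish.

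For part (b), the plan is a joint induction on the complexity $c(\varsigma,k):=2(U+V)+k$, where $U$ and $V$ count respectively the number of \emph{unit markings} ($c_i=-1$, $\alpha_i\in H^0(\xinfty)$) and \emph{divisor markings} ($c_i=-1$, $\alpha_i\in H^1(\xinfty)$). The inductive statement allows arbitrary intermediate states, so that divisor markings introduced by correction terms can be absorbed by a subsequent application of the divisor equation; the hypothesis of (b) only constrains the starting state to have $V=0$. The base case $U=V=0$ forces every marking to be of type (1) or (2), and the vanishing follows from part (a). For the inductive step, if $V\ge 1$ apply the divisor equation \eqref{eq:divisor} to a divisor marking; if $V=0$ and $U\ge 1$ apply the string equation \eqref{eq:string} to a unit marking. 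For the cycle-level statements \eqref{eq:effective-cycle} and \eqref{eq:effective-cycle-Chow} on $\oM_{g,n}$, Theorem~\ref{thm:unit-axiom-with-min} provides the unit reduction directly, and an analogous derivation along the lines of Section~\ref{sec:psimin-axioms} provides the divisor counterpart. In either equation, each correction term replaces the class $\alpha_j$ at some remaining marking $j$ by a product with $\psi_\DF^{k'}$ (respectively $D_\ocI\cdot\psi_\DF^{k'}$) while decreasing $k$ by $k'+1$; the divisor equation additionally has a main term in which $V$ strictly decreases by one.

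The main technical obstacle is verifying that $c$ strictly decreases in every correction term, which requires case-by-case analysis depending on whether $\alpha_j$ is originally of type unit, divisor, (1), or (2), and on the value of $k'$. The most subtle case is $\alpha_j$ unit with $k'=1$ in the string equation: then $j$ becomes a divisor marking, so $V$ increases by one, but $U$ decreases by two (both the original unit and $j$ no longer being units) and $k$ decreases by two, yielding a net decrease of $4$ in $c$. All other cases decrease $c$ by at least $3$, so the induction is well-founded and delivers the desired vanishing for arbitrary starting data $(\varsigma, k, \alpha)$ with no divisor marking.
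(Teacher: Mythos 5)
Your part (a) is correct and is essentially the paper's dimension count. The skeleton of your part (b) — repeatedly removing genuine unit markings ($c_i=-1$, $\alpha_i\in H^0(\xinfty)$) via the string equation \eqref{eq:string} and the unit axiom \eqref{eq:unit-string} until part (a) applies — is also the paper's argument. But your inductive scheme contains a concrete error. You claim that a correction term of the string equation with $k'=1$ turns a unit marking $j$ into a ``divisor marking'', i.e.\ a marking with $c_j=-1$ and insertion in $H^1(\xinfty)$, and you then propose to remove such markings with the divisor equation \eqref{eq:divisor}. Both halves of this fail. The class $\psi_{\DF}$ is (up to the factor $r_j$) the pushforward along the inertia stack of $\ev_{\cI}^*c_1(\cO(-\punt))$, hence an algebraic, even-degree class; multiplying $\alpha_j$ by $\ev_j^*\psi_{\DF}^{k'}$ preserves parity, so a unit marking becomes a type-(2) marking (insertion in $H^{\geq 2}(\xinfty)$) when $k'\geq 1$ and stays a unit marking when $k'=0$ — an $H^1$-insertion is never created, and since the hypothesis of the second statement excludes such markings at the start, your $V$ is $0$ throughout. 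Moreover, the divisor equation \eqref{eq:divisor} removes a unit-sector marking whose insertion is $\ev_{n+1}^*D$ with $D\in CH^1(\xinfty)$, a degree-two divisor class; it says nothing about markings carrying a class in $H^1(\xinfty)$, and no such removal statement can be expected: those markings are exactly the exception of the lemma, where the vanishing genuinely fails. Finally, the ``divisor counterpart'' of Theorem~\ref{thm:unit-axiom-with-min} at the level of cycles on $\oM_{g,n}$ that you invoke is not proved in the paper, so even the $V\geq 1$ branch for \eqref{eq:effective-cycle} and \eqref{eq:effective-cycle-Chow} rests on an unproven statement.

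The repair is simple: drop the $V\geq 1$ branch entirely. Since the correction terms of \eqref{eq:string} and \eqref{eq:unit-string} never create new unit markings (insertion degrees only go up) nor $H^1$-insertions, the number of unit markings strictly decreases with each application, the induction terminates with every marking of type (1) or (2), and part (a) finishes the proof — which is precisely the paper's shorter argument using only the string equation and the unit axiom, with no divisor equation needed.
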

\begin{proof}
  Applying \eqref{eq:general-bound-0} to \eqref{eq:red-vir-dim-original}, we have
  \[
    \red \dim \SH_{\varsigma} < \sum_{i=1}^n \left( 1 + \rk \bE \cdot (c_i + 1)\right) \leq n.
  \]
  Thus, by a dimension count, in order for
  \eqref{eq:interesting-cycle} to be non-vanishing, there must exist
  some $i$ such that $\alpha_i \in H^0(\xinfty)$ and $c_i = -1$.
  Applying the string equation \eqref{eq:string} or the unit axiom
  \eqref{eq:unit-string} repeatedly, we may assume that either
  $\alpha_i \in H^{\geq 2}(\xinfty)$ for any $c_i = -1$, or
  $c_i \leq -2$ for all $i$.
  In both cases, the vanishing of \eqref{eq:interesting-inv},
  \eqref{eq:effective-cycle} and \eqref{eq:effective-cycle-Chow}
  follow from the vanishing of \eqref{eq:interesting-cycle}.
\end{proof}

\begin{proposition}\label{prop:vanishing-no-markings}
  Assume that $g \geq 2$ and $\dim \cZ = \dim \cX - \rk \bE > 3$.

  \begin{enumerate}
  \item The inequality \eqref{eq:general-bound-0} holds if
    \begin{equation}\label{eq:general-bound}
      (3- \dim \cX + \rk \bE)\int_\beta c_1(\lblog) - 2\int_{\beta_{\cX}} c_1(K_{\cX}\otimes \det \bE) < 0.
    \end{equation}
  \item The inequality \eqref{eq:general-bound-0} holds for all
    effective $\beta$ (including $\beta = 0$) if
    \eqref{eq:general-bound} holds for a set of generators of
    $\overline{NE}(\cX)$, the closure of the cone of effective curves
    (also known as the pseudo-effective cone).
  \end{enumerate}
\end{proposition}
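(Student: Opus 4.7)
The plan is to treat both parts as elementary numerical manipulations: part (1) is an immediate application of the balancing condition \eqref{eq:balancing-condition} together with the sign of $3 - \dim \cX + \rk \bE$, while part (2) combines part (1) with a linearity argument over the effective cone, with the base case $\beta = 0$ handled separately.

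For part (1), I would first observe that the non-trivial case is $\SH_\varsigma \neq \emptyset$ (otherwise the effective cycle vanishes trivially and the statement is vacuous in its intended applications); only then does the balancing condition apply. From \eqref{eq:balancing-condition} we obtain $\int_\beta c_1(\lblog) \leq 2(g - 1)$. The hypothesis $\dim \cZ > 3$ says that $3 - \dim \cX + \rk \bE = 3 - \dim \cZ$ is a strictly negative integer, so multiplying the balancing inequality by it reverses the direction:
\[
(3 - \dim \cX + \rk \bE) \int_\beta c_1(\lblog) \ \geq \ 2 (3 - \dim \cX + \rk \bE)(g - 1).
\]
Subtracting $2 \int_{\beta_{\cX}} c_1(K_\cX \otimes \det \bE)$ from both sides shows that the left-hand side of \eqref{eq:general-bound} dominates twice the left-hand side of \eqref{eq:general-bound-0}, so if \eqref{eq:general-bound} holds strictly, then so does \eqref{eq:general-bound-0}.

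For part (2), I would split according to whether $\beta$ is zero. When $\beta = 0$, both integrals in \eqref{eq:general-bound-0} vanish and the inequality reduces to $(3 - \dim \cX + \rk \bE)(g - 1) < 0$, which follows immediately from $\dim \cZ > 3$ and $g \geq 2$. For non-zero effective $\beta$, part (1) reduces the task to verifying \eqref{eq:general-bound}. Since $\xinfty = \PP(\bE^{\vee}) \to \cX$ is a projective bundle, its Mori cone $\overline{NE}(\xinfty)$ is generated by the class of a line in a fiber together with lifts of generators of $\overline{NE}(\cX)$. For the fiber class, $\beta_{\cX} = 0$ while $\int_\beta c_1(\lblog) > 0$, so \eqref{eq:general-bound} reduces to $3 - \dim \cX + \rk \bE < 0$, the standing hypothesis. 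For lifts of generators of $\overline{NE}(\cX)$, \eqref{eq:general-bound} is the assumed hypothesis; different lifts of the same class in $\cX$ differ by fiber classes and hence only strengthen the inequality since $3 - \dim \cX + \rk \bE < 0$. Both sides of \eqref{eq:general-bound} are linear in $\beta$ and $\beta_{\cX}$, so writing any non-zero effective $\beta$ as a non-negative combination of such generators with at least one positive coefficient and summing the corresponding strict inequalities gives \eqref{eq:general-bound} for $\beta$.

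The main obstacle I anticipate is simply the bookkeeping in part (2) around the structure of the Mori cone of the projective bundle $\xinfty \to \cX$---in particular, ensuring that the decomposition into lifts of generators of $\overline{NE}(\cX)$ plus fiber classes is both consistent and compatible with the linearity argument. As sketched above, different choices of lift differ by fiber classes (which only help), so the final conclusion is independent of these choices.
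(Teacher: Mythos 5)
Your proof of (1) is correct and is essentially the paper's argument: the balancing condition \eqref{eq:balancing-condition} gives $\int_\beta c_1(\lblog)\le 2(g-1)$, multiplying by the negative integer $3-\dim\cX+\rk\bE$ reverses the inequality, and the left-hand side of \eqref{eq:general-bound} then dominates twice the left-hand side of \eqref{eq:general-bound-0}. Your treatment of $\beta=0$ in (2) (direct check that \eqref{eq:general-bound-0} holds since $\dim\cZ>3$ and $g\ge 2$) is also exactly the paper's, and your remark that the balancing condition presupposes $\SH_{\varsigma}\neq\emptyset$ is consistent with how the paper uses it.

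The gap is in (2) for $\beta\neq 0$. The paper simply regards the left-hand side of \eqref{eq:general-bound} as linear in the class and concludes by convexity of $\overline{NE}(\cX)$; in the only situations where (2) is actually invoked, namely the hypersurface case $\rk\bE=1$, one has $\xinfty\cong\cX$, $\lblog=\bE$ and $\beta=\beta_\cX$, so this is immediate, while for $\rk\bE\ge 2$ the paper proves a separate criterion (Proposition \ref{prop:vanishing-reduced-cycle}, via the Barton invariant). You instead argue through the Mori cone of the projective bundle $\xinfty=\PP(\bE^\vee)$, and two steps there do not hold. First, $\overline{NE}(\xinfty)$ is not in general generated by the fiber line class together with one chosen lift of each generator of $\overline{NE}(\cX)$: the extremal multisection classes depend on how negative $\bE$ is and are not controlled by the hypothesis. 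Second, and more seriously, your monotonicity claim about lifts goes the wrong way. Two effective classes with the same pushforward differ by an integer multiple of the fiber class $f$, but that multiple can be negative (e.g.\ the negative section of $\PP(\cO\oplus\cO(-n))$ over $\PP^1$ versus a general section), and since the coefficient $3-\dim\cX+\rk\bE$ of $\int_\beta c_1(\lblog)$ is negative, passing to a smaller effective lift, i.e.\ subtracting fiber classes, \emph{increases} the left-hand side of \eqref{eq:general-bound} and hence weakens the inequality rather than strengthening it. So verifying \eqref{eq:general-bound} on chosen lifts of the generators does not yield it for all nonzero effective $\beta$ once $\rk\bE\ge 2$; as written, your argument for (2) is only valid in the rank-one case, where it collapses to the paper's convexity argument.

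A minor shared point: deducing a strict inequality on all nonzero effective classes from strictness on generators of the closed cone $\overline{NE}(\cX)$ tacitly uses that effective classes are genuine nonnegative combinations of the chosen generators (true in the paper's polyhedral examples, but not a formal consequence of convexity alone); the paper is equally terse here, so this is not a defect specific to your write-up.
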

\begin{remark}
  Note that (2) surprisingly claims that the strict bound
  \eqref{eq:general-bound-0} holds for $\beta = 0$ despite the fact
  that \eqref{eq:general-bound} never holds for $\beta = 0$.
\end{remark}
\begin{proof}
  Applying the balancing condition \eqref{eq:balancing-condition}, we obtain
  \[
    (3- \dim \cX + \rk \bE)(g-1) - 2\int_{\beta_{\cX}} c_1(K_{\cX}\otimes \det \bE) \leq 1/2 \cdot (3- \dim \cX + \rk \bE)\int_\beta c_1(\lblog) - \int_{\beta_{\cX}} c_1(K_{\cX}\otimes \det \bE).
  \]
  This implies (1).

  Statement (2) for $\beta \neq 0$ follows from the convexity of
  $\overline{NE}(\cX)$.
  While the inequality \eqref{eq:general-bound} does not hold for
  $\beta = 0$, the inequality \eqref{eq:general-bound-0} is
  automatically satisfied when $\dim \cZ > 3$ and $\beta = 0$.
\end{proof}

\subsubsection{Hypersurfaces}

We now consider the case of a hypersurface $\cZ \subset \cX$ in
more detail -- this is the case of $\rk \bE = 1$ in the setup of \S
\ref{ss:effective-setup}.
In particular, we have
\[
\xinfty \cong \cX, \ \  \lblog = \bE, \ \  \beta = \beta_\cX.
\]

\begin{example}
  \label{ex:projective-hypersurface}
  Take $\cX = \PP^N$ and $\bE = \cO_{\PP^N}(d)$ for $d > 0$.
  We will assume in the following that $N \ge 5$, so that $\cZ$ is of
  dimension at least $4$.

  Then, all effective invariants and cycles vanish for $g \geq 2$ if
  the following is satisfied
  \begin{equation}\label{eq:P^N-hypersurface-threshold-d}
    d > \frac{2N + 2}{N - 2},
  \end{equation}
  or equivalently
  \begin{equation}\label{eq:P^N-hypersurface-threshold-N}
    N > \frac{2d + 2}{d - 2} \ \ \mbox{and} \ \ d > 2.
  \end{equation}
  This is a consequence of
  Proposition~\ref{prop:vanishing-no-markings}, since
  $\overline{NE}(\PP^N)$ is spanned by the class of a line,
  \eqref{eq:general-bound} holds if
  \begin{equation}\label{eq:P^N-hypersurface-threshold}
    (4 - N) d - 2 (d - N - 1) < 0,
  \end{equation}
  which is equivalent to \eqref{eq:P^N-hypersurface-threshold-d} since
  $N \ge 5$.
  This implies the vanishing in Lemma~\ref{lem:general-non-vanishing}
  since $H^1(\PP^N) = 0$.

  The inequality \eqref{eq:P^N-hypersurface-threshold-d} is
  automatically satisfied in the Calabi--Yau and general type cases
  $d \ge N + 1$, which is compatible with Corollary~\ref{cor:general-type-vanishing}.
  In the Fano case $d \leq N$, the following is a complete
  list of $(d, N)$ satisfying \eqref{eq:P^N-hypersurface-threshold-N}:
  \begin{equation}\label{eq:explicit-hyper-surface-in-Pn}
    (d = 3, N \geq 9), \ (d = 4, N \geq 6), \ (d \ge 5, N \geq d)
  \end{equation}
\end{example}

\begin{example}\label{ex:projective-spaces-product}
  Example \ref{ex:projective-hypersurface} immediately generalizes to the case
\[
  \cX = \prod_{i = 1}^k \PP^{N_i} \ \ \ \mbox{and} \ \ \ \bE = \cO_{\cX}(d_1, d_2, \cdots, d_k)
\]
  with $\sum_i N_i \geq 5$. In this case all effective invariants and cycles vanish for $g \geq 2$ assuming
  \begin{equation}\label{eq:projective-space-product}
    d_j > \frac{2N_j + 2}{\sum_i N_i - 2}, \ \ \ \ \mbox{for all } j.
  \end{equation}

For example, assuming $k \geq 2$, $N_1 = \cdots = N_k = N \geq 1$ and $kN \geq 5$, then \eqref{eq:projective-space-product} holds iff one of the following is satisfied
  \begin{enumerate}
   \item $d_i \geq 1$ for all $i$ if $(k, N)$ belongs to the list
   \[
   (k = 3, N \geq 5), (k=4, N \geq 3), (k = 5, 6, N \geq 2), (k \geq7, N \geq 1).
   \]
   \item $d_i \geq 2$ for all $i$ if $(K, N)$ belongs to the list
   \[
   (k = 2, N \geq 3), (k = 3, 4, N \geq 2),  (k \geq 5, N \geq 1).
   \]
  \end{enumerate}
\end{example}

\begin{example}\label{ex:Grass-hypersurface}
  We may also consider a degree $d$ hypersurface $\cZ$ in a
  Pl\"ucker-embedded Grassmannian
  $\cX = \mathrm{Gr}(K, N) \subset \PP^{{N \choose K} - 1}$.
  In this case, we have $K_\cX \otimes \bE \cong \cO(d - N)$.
  We will assume that $K(N - K) \ge 5$ so that $\cZ$ is of dimension
  at least $4$.
  By Proposition \ref{prop:vanishing-no-markings} and a similar
  discussion as in Example~\ref{ex:projective-hypersurface}, all
  cycles \eqref{eq:interesting-cycle} vanish for $g \geq 2$ if the
  following is satisfied:
  \begin{equation}
    \label{eq:Grassmann-threshold-d}
    d > \frac{2N}{K(N - K) - 2}
  \end{equation}
  In this case, the Fano condition is $d < N$.

  For example, when $K = 2$, the pair $(d, N)$ satisfies the Fano
  condition and the inequality \eqref{eq:Grassmann-threshold-d} iff
  \[
    (d = 2, N \ge 7), \ (d = 3, N \ge 5), \ (d \ge 4, N \ge d + 1).
  \]
\end{example}

\subsubsection{Fano complete intersections}

We now discuss a criterion for the vanishing of effective invariants and
cycles which is especially useful in the case of $\rk\bE \geq 2$.

Assume $\bE$ is ample. We fix an ample line bundle $A$ over $\cX$, and consider the following
\[
m_A := \sup\{m \in \QQ \ | \ \bE \otimes A^{-m} \ \mbox{is nef} \} \ \ \mbox{and} \ \  m'_A := \inf\{m \in \QQ \ | \ K_{\cX}\otimes \det \bE \otimes A^{m} \ \mbox{is nef} \}.
\]
Note that $m_A > 0$ is called the {\em Barton invariant of $\bE$}
\cite[Example 6.2.14]{LazII}.
Furthermore $m'_A > 0$ iff $-K_{\cX}\otimes \det \bE$ is ample, or
equivalently $\cZ$ is Fano.

\begin{proposition}\label{prop:vanishing-reduced-cycle}
  Assume that $\bE$ is ample, that $\dim \cX - \rk \bE \ge 4$, and
  that $g\ge 2$.
  Furthermore, suppose that there is an ample line bundle $A$ such that
  $m'_A \ge 0$ and
  \[
    3- \dim \cX + \rk \bE + \frac{2m'_A}{m_A} < 0.
  \]
  Then, the effective invariant \eqref{eq:interesting-inv} and cycles \eqref{eq:effective-cycle} and
  \eqref{eq:effective-cycle-Chow} vanish assuming
  $\alpha_i \not\in H^1(\xinfty)$ for any $c_i = -1$.
\end{proposition}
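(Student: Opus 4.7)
The plan is to verify the dimension bound~\eqref{eq:general-bound-0} in Lemma~\ref{lem:general-non-vanishing}, after which the desired vanishing of \eqref{eq:interesting-inv}, \eqref{eq:effective-cycle}, and \eqref{eq:effective-cycle-Chow} follows at once, since our hypothesis rules out any marking with $c_i=-1$ and $\alpha_i\in H^1(\xinfty)$, i.e.\ exactly the ``unless'' clause of that lemma.

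The main step is a projective-bundle translation of the Barton invariant from $\bE$ to $\lblog=\cO_\xinfty(1)$ on $\xinfty=\PP(\bE^\vee)$. Using the standard identification $\cO_{\PP((\bE\otimes A^{-m})^\vee)}(1)\cong \lblog\otimes\pi^*A^{-m}$, nefness of $\bE\otimes A^{-m_A}$ is equivalent to nefness of $\lblog\otimes\pi^*A^{-m_A}$ on $\xinfty$. Thus for every effective curve class $\beta$ on $\xinfty$ with pushforward $\beta_\cX$ on $\cX$,
\[
\int_\beta c_1(\lblog)\;\geq\; m_A\int_{\beta_\cX}c_1(A).
\]
Combining this with the balancing condition~\eqref{eq:balancing-condition}, which gives $\int_\beta c_1(\lblog)\leq 2g-2$ because $c_i\leq -1$ for all $i$, yields $\int_{\beta_\cX}c_1(A)\leq (2g-2)/m_A$.

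Next, since $m'_A\geq 0$ the line bundle $K_\cX\otimes\det\bE\otimes A^{m'_A}$ is nef, so $-\int_{\beta_\cX}c_1(K_\cX\otimes\det\bE)\leq m'_A\int_{\beta_\cX}c_1(A)$ (trivially including $\beta_\cX=0$). Combining the two estimates, the LHS of~\eqref{eq:general-bound-0} is bounded above by
\[
\Bigl[(3-\dim\cX+\rk\bE)+\tfrac{2m'_A}{m_A}\Bigr](g-1),
\]
which is strictly negative by the hypothesis of the proposition and $g\geq 2$. Lemma~\ref{lem:general-non-vanishing} then delivers the vanishing. No real obstacle is expected; the only ingredient beyond dimensional bookkeeping and the balancing identity is the projective-bundle identification that transfers the Barton invariant from $\bE$ to $\lblog$, which is standard.
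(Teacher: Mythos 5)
Your proof is correct and follows essentially the same route as the paper: the degree bound $\int_\beta c_1(\lblog)\ge m_A\int_{\beta_{\cX}}c_1(A)$ coming from the projective-bundle identification, the nefness of $K_{\cX}\otimes\det\bE\otimes A^{m'_A}$, and the balancing condition \eqref{eq:balancing-condition} are exactly the ingredients the paper uses before invoking Lemma~\ref{lem:general-non-vanishing}. The only cosmetic difference is that you verify \eqref{eq:general-bound-0} directly (which treats $\beta=0$ and fiber classes with $\beta_{\cX}=0$ uniformly), whereas the paper first establishes \eqref{eq:general-bound} and then passes through Proposition~\ref{prop:vanishing-no-markings}.
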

\begin{proof}
  We establish this proposition by establishing the conditions in
  Lemma~\ref{lem:general-non-vanishing}.

   By the following lemma, \eqref{eq:general-bound} holds if
  \begin{equation*}
    (3 - \dim \cX + \rk \bE) \cdot m_A \int_{\beta_\cX} c_1(A) + 2m'_A \int_{\beta_\cX} c_1(A) < 0.
  \end{equation*}
  Assuming that $\beta \neq 0$, we have
  $ \int_{\beta_\cX} c_1(A) \neq 0$. Then the above inequality is equivalent
  to
  \[
    3- \dim \cX + \rk \bE + \frac{2m'_A}{m_A} < 0.
  \]
  The conditions in Lemma~\ref{lem:general-non-vanishing} (including
  the case $\beta = 0$) then follow from
  Proposition~\ref{prop:vanishing-no-markings}.
\end{proof}
\begin{lemma}
Suppose $\bE$ is ample. Let $A$ be an ample line bundle over $\cX$ such that $\bE\otimes A^{\vee}$ is nef. Then we have
\[
\int_{\beta} c_1(\cO_{\xinfty}(1)) \geq \int_{\beta_{\cX}} c_1(A).
\]
\end{lemma}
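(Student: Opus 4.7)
My plan is to reduce the stated inequality to the nonnegativity of a single nef line bundle on $\xinfty$ evaluated against an effective representative of $\beta$.

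First, I would unpack the conventions. In the setup of \S\ref{ss:effective-setup} we have $\xinfty = \PP(\bE^{\vee})$ with $\lblog = \cO_{\xinfty}(1)$, and the tautological section $p \in H^{0}(\xinfty, \bE^{\vee}_{\xinfty}\otimes\lblog)$ exhibits $\lblog$ as the universal quotient of $\pi^{*}\bE$; thus $\xinfty$ is the Grothendieck projectivization parametrizing quotient line bundles of $\bE$. This is consistent with the fact used throughout this section that $\bE$ is nef (resp.\ ample) iff $\lblog$ is nef (resp.\ ample). I would then invoke the standard identification for twists of projective bundles: tensoring the tautological surjection $\pi^{*}\bE \twoheadrightarrow \lblog$ by $\pi^{*}A^{\vee}$ yields the universal quotient on the projectivization of $\bE\otimes A^{\vee}$, giving a canonical isomorphism of pairs
\[
\bigl(\PP((\bE\otimes A^{\vee})^{\vee}),\, \cO(1)\bigr) \;\cong\; \bigl(\xinfty,\, \lblog\otimes \pi^{*}A^{\vee}\bigr).
\]
Consequently, the assumption that $\bE\otimes A^{\vee}$ is nef translates to the nefness of $\lblog\otimes \pi^{*}A^{\vee}$ on $\xinfty$.

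Second, I would note that in context the class $\beta$ is always represented by an effective $1$-cycle, since it is realized as the image of the source curve of a stable punctured R-map (cf.\ Definition \ref{def:P-curve-class} and the application in Proposition \ref{prop:vanishing-reduced-cycle}). Since nef line bundles have nonnegative degree against effective curve classes, and the projection formula together with $\pi_{*}\beta = \beta_{\cX}$ identifies $\int_{\beta}\pi^{*}c_{1}(A) = \int_{\beta_{\cX}}c_{1}(A)$, one obtains
\[
0 \;\leq\; \int_{\beta} c_{1}\bigl(\lblog\otimes \pi^{*}A^{\vee}\bigr) \;=\; \int_{\beta} c_{1}(\cO_{\xinfty}(1)) \;-\; \int_{\beta_{\cX}} c_{1}(A),
\]
which is the desired inequality.

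The only nontrivial point is the twist identification $\cO_{\PP(\bE\otimes A^{\vee})}(1) \cong \lblog\otimes\pi^{*}A^{\vee}$, which is entirely standard; once this is in hand, the proof is an immediate application of nefness and the projection formula.
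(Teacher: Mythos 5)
Your proof is correct and follows essentially the same route as the paper: both rest on the twist identification $\bigl(\PP(\bE^{\vee}\otimes A),\cO(1)\bigr)\cong\bigl(\xinfty,\lblog\otimes\pi^{*}A^{\vee}\bigr)$, the resulting nefness of $\lblog\otimes\pi^{*}A^{\vee}$ from the hypothesis on $\bE\otimes A^{\vee}$, and the projection formula to compare $\int_{\beta}$ with $\int_{\beta_{\cX}}$. Your explicit remark that $\beta$ is represented by an effective curve (so nefness gives nonnegativity) is a point the paper leaves implicit, but it is the same argument.
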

\begin{proof}
  Note the isomorphisms $\xinfty \cong \PP(\bE^{\vee}\otimes A)$ and
  $\cO_{\xinfty}(1) \cong \cO_{\PP(\bE^{\vee}\otimes A)}(1)\otimes A$,
  and that $\cO_{\PP(\bE^{\vee}\otimes A)}(1)$ is nef.
  We compute
\[
\int_{\beta} c_1(\cO_{\xinfty}(1)) = \int_{\beta} c_1(\cO_{\PP(\bE^{\vee}\otimes A)}(1)\otimes A) \geq   \int_{\beta_{\cX}} c_1(A).
\]
\end{proof}

\begin{example}
  \label{ex:projective-complete-intersection}
  Consider the case where $\cX = \PP^{N}$ and
  $\bE = \cO_{\PP^N}(d_1)\oplus \cdots \cO_{\PP^N}(d_R)$ with
  $2 \leq d_1 \leq d_2 \leq \cdots \leq d_R$.
  In this case, we have
  $K_{\cX}\otimes \det \bE = \cO(-N-1 + \sum d_j)$.
  Choose $A = \cO_{\PP^N}(1)$. Then we have $m'_A = N+1 - \sum d_j$
  and $m_A = d_1$.
  In order to apply Proposition~\ref{prop:vanishing-reduced-cycle}, we
  assume that $N - R \ge 4$, and the Fano case $\sum d_j \leq N$,
  which is equivalent to the condition $m'_A > 0$.

  Since $H^1(\xinfty) = 0$, the condition in
  Proposition~\ref{prop:vanishing-reduced-cycle} that
  $\alpha_i \not\in H^1(\xinfty)$ is vacuous.
  Applying Proposition~\ref{prop:vanishing-reduced-cycle}, we
  observe that all cycles  \eqref{eq:interesting-cycle}, \eqref{eq:effective-cycle} and \eqref{eq:effective-cycle-Chow} and the effective invariants \eqref{eq:interesting-inv}
  vanish for any $g \geq 2$ and any $\beta \in H_2(\xinfty)$ assuming
\begin{equation}\label{eq:Pn-vanishing-threshold}
3 - N + R + \frac{2}{d_1} \cdot (N + 1 - \sum d_j) < 0 .
\end{equation}
This threshold \eqref{eq:Pn-vanishing-threshold} in the Fano range $\sum d_j \leq N$ turns out to be  very efficient:

\smallskip

\noindent
{\bf The $R = 1$ case.}
In this case
\eqref{eq:Pn-vanishing-threshold} is equivalent to
\eqref{eq:P^N-hypersurface-threshold}, see \eqref{eq:explicit-hyper-surface-in-Pn} for an explicit  list.

\smallskip
\noindent
{\bf The $R \geq 2, d_1 \geq 3$ case.} We observe that in the Fano range, \eqref{eq:Pn-vanishing-threshold} is automatically satisfied:
\begin{align*}
 3 - N + R + \frac{2}{d_1} \cdot (N + 1 - \sum d_j)
 =&  (3 + \frac{2}{d_1}) - (1-\frac{2}{d_1})N - \sum\left(\frac{2d_j}{d_1} - 1 \right) \\
 \leq & (3 + \frac{2}{3}) - (1-\frac{2}{d_1})R d_1 - \sum (2-1) \\
 =& 3 + \frac{2}{3} - R (d_1 -1) \\
 <& 3 + \frac{2}{3} - 2 (3-1) = - \frac{1}{3}.
\end{align*}
where the first inequality uses $d_1 \leq d_j$ and $R d_1 \leq \sum d_j \leq N$, and the last inequality uses $2 \leq R$ and $3 \leq d_1$. Thus the genus $g\geq 2$ reduced theory vanishes completely for all Fano complete intersections in this case!

\smallskip
\noindent
{\bf The $R \geq 2, d_1 = 2$ case.}
In this situation, the threshold \eqref{eq:Pn-vanishing-threshold} simplifies to
\[
  \sum_i (d_i - 1) > 4.
\]

Therefore, when $R \ge 2$, the threshold
\eqref{eq:Pn-vanishing-threshold} holds for Fano complete
intersections $\cZ$, unless $(d_1, \cdots, d_R)$ is one of the
following
\[
(2,2), \ (2,3), \ (2,4), \ (2,2,2), \ (2,2,3), \ (2,2,2,2).
\]
\end{example}

\subsection{Basic effective invariants}
\label{ss:basic-effective}

Next, we investigate situations in which the effective
invariants, while in general non-vanishing, are determined by a small
set of \emph{basic effective invariants}.
The most interesting examples are Calabi--Yau threefold complete
intersections, including quintic threefolds.

\begin{definition}
  \label{def:basic-effective}
  Assume that $g \ge 2$ and $\beta$ satisfy
  \begin{equation}
    \label{eq:vdim-zero}
    (3 - \dim \cX + \rk \bE)(g - 1) - \int_{\beta_\cX} c_1(K_\cX \otimes \det \bE) = 0,
  \end{equation}
  as well as the balancing condition
  $\int_\beta c_1(\lblog) \le 2g - 2$ of
  \eqref{eq:balancing-condition} with \emph{equality} when
  $\rk \bE \ge 2$.
  Then, the \emph{basic effective invariant} of genus $g$ and degree
  $\beta$ is the effective invariant
  \begin{equation*}
    \deg [\RMm_{\varsigma}]^{\red},
  \end{equation*}
  for the special discrete data
  $\varsigma = (g, \beta, \{-2\}_{i = 1}^n)$ where
  $n = 2g - 2 - \int_\beta c_1(\lblog)$.
\end{definition}
\begin{remark}
  The assumptions in the definition ensure that the balancing
  condition holds and $ \red \dim \SH_{\varsigma} = 0$ by \eqref{eq:red-vir-dim-original}.
\end{remark}
\begin{proposition}
  \label{prop:reduce-to-basic-effective}
  Under the assumptions of Definition~\ref{def:basic-effective}, and
  further assuming that $H^1(\xinfty) = 0$, any genus $g$, degree
  $\beta$ effective invariant \eqref{eq:interesting-inv} is determined
  by the corresponding basic effective invariants.
\end{proposition}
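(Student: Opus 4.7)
The plan is to induct on the number $n$ of markings of $\varsigma = (g, \beta, \{c_i\}_{i=1}^n)$, using the string and divisor equations of Corollary~\ref{cor:string-divisor} to remove one unit marking at each step. Let $n_1$ denote the number of markings with $c_i = -1$, and $n_2 = n - n_1$ the number with $c_i \le -2$. Setting $n_b := 2g - 2 - \int_\beta c_1(\lblog)$, the balancing condition \eqref{eq:balancing-condition} reads $\sum_i (c_i + 1) = -n_b$, which combined with $(c_i + 1) \le -1$ at non-unit markings forces $n_2 \le n_b$.

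The terminal case is $n_1 = 0$: every marking is non-unit, so $n = n_2 \le n_b$, and from \eqref{eq:red-vir-dim-original} together with \eqref{eq:vdim-zero} we compute $\red\dim \SH_\varsigma = n - \rk \bE \cdot n_b$. If $n = n_b$, each summand $(c_i + 1)$ must equal $-1$, forcing $c_i = -2$ for all $i$, so that $I$ matches Definition~\ref{def:basic-effective} exactly and is a basic effective invariant. If instead $n < n_b$, then $\red\dim < 0$ and $[\RMm_\varsigma]^{\red} = 0$.

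For the inductive step $n_1 \ge 1$, using $n_2 \le n_b$ and $\rk \bE \ge 1$ in the dimension formula yields
\[
\red\dim \SH_\varsigma \;=\; n - \rk \bE \cdot n_b \;\le\; n_1.
\]
For $I$ to be non-vanishing, dimension matching forces $k + \sum_i \deg \alpha_i = \red\dim \le n_1$, so $\sum_{i : c_i = -1} \deg \alpha_i \le n_1$, and by pigeonhole some unit marking $p_j$ carries an insertion $\alpha_j$ of degree at most $1$. The assumption $H^1(\xinfty) = 0$ ensures such $\alpha_j$ is either the fundamental class or a divisor class. Relabeling $p_j$ as the distinguished marking, apply the string equation \eqref{eq:string-series} when $\alpha_j = 1$ and the divisor equation \eqref{eq:divisor-series} when $\alpha_j$ is a divisor. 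Pairing the resulting cycle-level identity with $\prod_{i \neq j} \ev_i^* \alpha_i$ and integrating over $[\SH^\curlywedge_\varsigma]^{\red}$ expresses $I$ as a $\QQ$-linear combination of effective invariants of the same $(g, \beta)$ with $n - 1$ markings, lower powers of $\psi_{\min}$, and possibly additional insertions of the form $\psi_{\DF}^{k'} = (-c_1(\lblog))^{k'}$ or $D_{\ocI} \cdot \psi_{\DF}^{k'}$ at other markings. Because $\xinfty$ is a variety, $\ocI \punt_{\bk} = \xinfty$ and these new insertions remain ambient classes on $\xinfty$, so the inductive hypothesis applies.

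The main obstacle is the combinatorial bookkeeping of how the $\psi_{\DF}^{k'}$-insertions accumulate across iterated reductions. The induction is nonetheless well-founded: each application of the string or divisor equation strictly decreases both $n$ and $n_1$ by one while preserving the property that every insertion is an ambient class on $\xinfty$ together with a power of $\psi_{\min}$. The procedure therefore terminates at a basic invariant (or at the zero cycle) after finitely many steps.
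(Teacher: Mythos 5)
Your proposal is correct and takes essentially the same route as the paper's own proof: the string and divisor equations together with $H^1(\xinfty)=0$ are used to strip unit markings carrying insertions of complex degree at most one, and the reduced dimension formula \eqref{eq:red-vir-dim-original} combined with \eqref{eq:vdim-zero} forces everything remaining to vanish or reduce to a basic effective invariant (the paper phrases the reduction as "we may assume $\alpha_i\in H^{\ge 3}(\xinfty)$ at unit markings" and then does a single dimension count, while you make the induction and pigeonhole explicit). The only cosmetic imprecision is at your terminal case $n=n_b$ with all $c_i=-2$: there $\red\dim\SH_\varsigma=0$, so the invariant vanishes unless $k=0$ and every $\alpha_i\in H^0(\xinfty)$, in which case it is a scalar multiple of, rather than literally equal to, the basic effective invariant — which does not affect the conclusion.
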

\begin{proof}
  By \eqref{eq:string}, \eqref{eq:divisor} and $H^1(\xinfty) = 0$, we
  may assume that in \eqref{eq:interesting-inv}, if $c_i = -1$, then
  $\alpha_i \in H^{\ge 3}(\xinfty)$.
  By \eqref{eq:red-vir-dim-original} and \eqref{eq:vdim-zero}, we have
  the reduced virtual dimension
  \begin{equation*}
    \red \dim \SH_{\varsigma} = \sum_{i = 1}^n (1 + \rk\bE (c_i + 1)).
  \end{equation*}
  Hence, when $\rk\bE \ge 2$, the integral vanishes for dimension
  reasons unless $k = 0$ and $n = 0$ in \eqref{eq:interesting-inv}.
  When $\rk\bE = 1$, we instead conclude that $k = 0$, $c_i = -2$ and
  $\alpha_1 \in H^0(\xinfty)$ for all $i$.
\end{proof}
\begin{assumption}
  \label{ass:all-basic-effective}
  Let $\xinfty$ be such that $H^1(\xinfty) = 0$.
  Assume that for any $(g, \beta)$ with $g \ge 2$ satisfying the balancing condition
  \eqref{eq:balancing-condition}, we have
  \begin{equation}
    \label{eq:non-strict-dim-threshold}
    (3 - \dim \cX + \rk \bE)(g - 1) - \int_{\beta_\cX} c_1(K_\cX \otimes \det \bE) \le 0.
  \end{equation}
\end{assumption}
\begin{example}
  \label{ex:effective-CY3}
  Assumption~\ref{ass:all-basic-effective} is satisfied when
  $H^1(\xinfty) = 0$, and $\cZ$ is a Calabi--Yau threefold,
  since then
  \begin{equation*}
    3- \dim \cX + \rk \bE = 0 \ \ \mbox{and} \ \  c_1(K_{\cX}\otimes \det \bE) = 0.
  \end{equation*}
  Note that we do allow $H^1(\cZ) \neq 0$.
  This is the most interesting situation for applications.
\end{example}
\begin{corollary}
  Under Assumption~\ref{ass:all-basic-effective}, any $g \ge 2$
  effective invariant \eqref{eq:interesting-inv} of either vanishes or
  is determined by the corresponding basic effective invariant.

  The set of genus $g$ basic effective invariants is indexed by the
  set of curve classes
  \begin{equation}\label{eq:CY3-rk1-curve-class}
    \left\{\beta \ \Big|  \ \int_{\beta} c_1(\lblog) \leq  2g-2 \mbox{ and (\ref{eq:vdim-zero}) holds }\right\}
  \end{equation}
  in the case $\rk \bE = 1$, and
  \begin{equation}\label{eq:CY3-higher-rk-curve-class}
    \left\{\beta \ \Big|  \ \int_{\beta} c_1(\lblog) =  2g-2 \mbox{ and (\ref{eq:vdim-zero}) holds }\right\}
  \end{equation}
  when $\rk \bE \ge 2$.
  Furthermore, when $\bE$ is ample, these sets are finite for any $g$.
\end{corollary}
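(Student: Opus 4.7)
The plan is to assemble the stated results in the order of the three assertions, each reducing to a direct application of previous results in this section.

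For the first assertion, consider a genus $g \ge 2$ effective invariant \eqref{eq:interesting-inv} with discrete data $\varsigma = (g, \beta, \{c_i\})$. If the balancing condition \eqref{eq:balancing-condition} fails for $(g,\beta)$, then $\SH_\varsigma = \emptyset$ by the discussion in \S\ref{ss:balancing}, and the invariant vanishes trivially. Otherwise, Assumption \ref{ass:all-basic-effective} supplies the inequality \eqref{eq:non-strict-dim-threshold}, and I would split into the strict and equality cases. In the strict case, hypothesis \eqref{eq:general-bound-0} of Lemma \ref{lem:general-non-vanishing} holds; combined with $H^1(\xinfty) = 0$ from Assumption \ref{ass:all-basic-effective}, the lemma forces the invariant to vanish. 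In the equality case, the bound reduces precisely to \eqref{eq:vdim-zero}, and Proposition \ref{prop:reduce-to-basic-effective} applies, expressing the invariant in terms of the basic effective invariant of the corresponding $(g,\beta)$ from Definition \ref{def:basic-effective}.

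For the second assertion, the indexing set is read off directly from Definition \ref{def:basic-effective}. A basic effective invariant of genus $g$ and degree $\beta$ is defined precisely when \eqref{eq:vdim-zero} holds together with the balancing bound $\int_\beta c_1(\lblog) \le 2g - 2$, with equality imposed when $\rk\bE \ge 2$. These conditions are exactly those displayed in \eqref{eq:CY3-rk1-curve-class} and \eqref{eq:CY3-higher-rk-curve-class}.

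For the finiteness statement, I would use that $\bE$ ample implies $\lblog = \cO_{\xinfty}(1)$ is ample on $\xinfty = \PP(\bE^\vee)$, by the standard characterization of ample vector bundles. A basic effective invariant is non-zero only when $\SH_\varsigma$ is non-empty, which forces $\beta$ to be represented by a stable map and in particular to be an effective curve class in $\xinfty$. The bound $\int_\beta c_1(\lblog) \le 2g - 2$ tested against the ample class $c_1(\lblog)$ then gives a uniform degree bound on such $\beta$, and standard boundedness of effective curve classes of bounded degree with respect to an ample class yields the claimed finiteness. The argument contains no substantial obstacle; it is essentially a formal assembly of the preceding structural results, with the only mild point being the translation of ampleness of $\bE$ into ampleness of $\lblog$ on the total space of $\xinfty$.
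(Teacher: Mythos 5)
Your proposal is correct and follows essentially the same route as the paper, which cites exactly Lemma~\ref{lem:general-non-vanishing} (for the strict case of \eqref{eq:non-strict-dim-threshold}, combined with $H^1(\xinfty)=0$) and Proposition~\ref{prop:reduce-to-basic-effective} (for the equality case \eqref{eq:vdim-zero}), with the indexing set read off from Definition~\ref{def:basic-effective} and finiteness from ampleness of $\lblog$ together with the degree bound $\int_\beta c_1(\lblog)\le 2g-2$. You simply spell out details the paper leaves implicit.
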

\begin{proof}
  This follows directly from Lemma~\ref{lem:general-non-vanishing} and
  Proposition~\ref{prop:reduce-to-basic-effective}.
\end{proof}
\begin{remark}
  Thus, combining with the explicit formulas for effective invariants
  in genus zero and one, in the situation of
  Assumption~\ref{ass:all-basic-effective} and if $\bE$ is ample, the
  localization formula of \cite{CJR21} leads to a computation of the
  Gromov--Witten invariants of the complete intersection $\cZ$ up to
  a finite number of basic effective invariants.
\end{remark}
\begin{remark}
  If $H^1(\cX) = 0$, the basic effective invariants also determine all
  effective cycles in cohomology.
  Indeed, since $\deg\colon H_0(\overline\scrM_{g, n}) \to \QQ$ is an
  isomorphism, all cycles in the situation $\red\dim\SH_\varsigma = 0$
  are determined from the basic effective invariants.
  From this, we can use \eqref{eq:unit-string} inductively to
  determine the remaining effective cycles in cohomology.

  It would be very interesting to study what can be said about
  effective cycles in Chow in this situation.
  If Question~\ref{q:effective-tautological} below has an affirmative
  answer in this case, it would imply that the effective cycles in
  Chow are also determined from the basic effective invariants.
\end{remark}

\subsubsection{Calabi--Yau threefold examples}

The most interesting examples for basic effective invariants are in
the Calabi--Yau case (see Example~\ref{ex:effective-CY3}).
We discuss a few exmaples, and point out connections to physics.
\begin{example}\label{ex:quintic-effective-inv}
  Suppose $\cX = \PP^4$ and $\bE = \lblog = \cO_{\PP^4}(5)$.
  Then $X_5 := \cZ = (s_\cX = 0)$ is a smooth quintic threefold.
  For each $g \geq 1$, $\beta$ in \eqref{eq:CY3-rk1-curve-class}
  satisfies $\beta \leq \lfloor \frac{2g-2}{5}\rfloor$.
  Thus there are $\lfloor \frac{2g-2}{5}\rfloor + 1$ many basic
  effective invariants.

  The number of basic effective invariants agrees with the number
  \cite[(3.80)]{HKQ09} of unknown coefficients at genus $g$ that
  physicists find after using the holomorphic anomaly equations, the
  conifold gap and the orbifold regularity.%
  \footnote{Using the formula for the degree zero Gromov--Witten
    invariants of $X_5$, they further reduce the number of
    coefficients to $\lfloor \frac{2g-2}{5}\rfloor$.
    In the same way, the localization formula of \cite{CJR21} can be
    used to determine the degree zero effective invariant in terms of
    the degree zero Gromov--Witten invariant.}
  This suggests that these constraints on the Gromov--Witten theory of
  $\cX$ can be approached via the localization formula of
  \cite{CJR23P} even without knowing the values of the basic effective
  invariants.
  This is investigated in \cite{GJR18P}.
\end{example}
\begin{example}\label{ex:CY3-complete-intersection}
  Let $\cZ$ be a complete intersection Calabi--Yau threefold in
  projective space.
  Beside quintic threefolds in Example \ref{ex:quintic-effective-inv},
  there are four other examples of such $\cZ$:
\[
X_{3,3} \subset \mathbb{P}^5, \ \ X_{2,2,2,2} \subset \mathbb{P}^7, \ \ X_{2,4} \subset \mathbb{P}^5, \ \ X_{2,2,3} \subset \mathbb{P}^6,
\]
where $X_{d_1, \cdots, d_k}$ means a complete intersection of $k$
hypersurfaces of degree $d_1, \cdots, d_k$ respectively.
We consider the number of basic effective invariants in these
examples, and compare them with the unknown coefficients appearing in the
physics work \cite{HKQ09}.

\smallskip
\noindent
{\bf The $X_{3,3}$ case.}
  In this case, we have
  \[
  \cX = \PP^5, \ \ \bE = \cO_{\PP^5}(3)^{\oplus 2}, \ \ \infty_\cX \cong \PP(\cO_{\PP^5}(-3)^{\oplus 2}) \cong \PP^5 \times
  \PP^1, \ \ \lblog \cong \cO_{\PP^5}(3) \boxtimes \cO_{\PP^1}(1).
  \]
  Letting $\beta_1 = \int_\beta c_1\left(\cO_{\PP^5}(1) \right)$ and
  $\beta_2 = \int_\beta c_1\left(\cO_{\PP^1}(1)\right)$, and using
  $\int_{\beta} c_1(\lblog) = 2g-2$, we observe
  \[
  \beta_1 \le \frac{2g - 2}3, \ \ \beta_2 = 2g - 2 - 3\beta_1.
  \]
  Thus there are $\lfloor \frac{2g-2}{3}\rfloor + 1$ many basic
  effective invariants.
  As in Example~\ref{ex:quintic-effective-inv}, this agrees with the
  count of unknown coefficients in physics (up to a ``$+1$'')
  \cite[(4.92)]{HKQ09}.

\smallskip
\noindent
{\bf The $X_{2,2,2,2} $ case.}    In this case, we have
  \[
  \cX = \PP^7, \ \ \bE = \cO_{\PP^7}(2)^{\oplus 4}, \ \ \infty_\cX  \cong \PP^7 \times
  \PP^3, \ \ \lblog \cong \cO_{\PP^7}(2) \boxtimes \cO_{\PP^1}(1).
  \]
    Letting $\beta_1 = \int_\beta c_1\left(\cO_{\PP^7}(1) \right)$ and
  $\beta_2 = \int_\beta c_1\left(\cO_{\PP^3}(1)\right)$, and using
  $\int_{\beta} c_1(\lblog) = 2g-2$, we observe that
  \[
  \beta_1 \le g-1, \ \ \beta_2 = 2g - 2 - 2\beta_1.
  \]
  Thus there are $g$ many basic effective invariants, again agreeing
  with \cite[(4.92)]{HKQ09}.

\smallskip
\noindent
{\bf The $X_{2,4} $ case.}   In this case, we have
  \[
  \cX = \PP^5, \ \ \bE = \cO_{\PP^5}(2)\oplus  \cO_{\PP^5}(4), \ \ \infty_\cX \cong \PP(\cO_{\PP^5}(-2)\oplus\cO_{\PP^5}(-4)),
  \]
  and $\lblog$ is the dual of the tautological sub-bundle.  Letting $\beta_1 = \int_\beta c_1\left(\cO_{\PP^5}(1) \right)$, we have
  \[
   0 \leq \beta_1 \leq \frac{1}{2} \cdot \int_\beta c_1(\lblog) = g-1.
  \]
  Thus there are $g$ basic effective invariants in this case.
  This \emph{only} agrees with the number of unknown coefficients in
  \cite{HKQ09} if only the holomorphic anomaly equations and the
  conifold gap condition are used, but \emph{not} the orbifold gap
  condition.
  This suggests that there may be further constraints on the basic
  effective invariants for $\frac{g-1}2 < \beta_1 < g - 1$.
  Geometrically, a stable map $C \to \infty_\cX$ with curve class in
  this range necessarily has component(s) dominating fiber(s) of the
  $\PP^1$-bundle $\xinfty \to \cX$.

\smallskip
\noindent
{\bf The $X_{2,2,3} $ case.} In this case, we have
  \[
  \cX = \PP^6, \ \ \bE = \cO_{\PP^6}(2)\oplus \cO_{\PP^6}(2)\oplus  \cO_{\PP^6}(3), \ \ \infty_\cX \cong \PP(\cO_{\PP^6}(-2)\oplus \cO_{\PP^6}(-2)\oplus  \cO_{\PP^6}(-3)),
  \]
    and $\lblog$ is the dual of the tautological sub-bundle.  Repeating the same analysis as above,
    there are  $g$ basic effective invariants in this case, agreeing with the number in \cite{HKQ09}.
\end{example}

\begin{example}
  As an example of a Calabi--Yau threefold complete intersection in a
  Grassmannian, consider a complete intersection of seven planes with
  a Pl\"ucker-embedded $\mathrm{Gr}(2, 7) \subset \PP^{20}$.
  This appears on one side of the \emph{Pfaffian--Grassmannian}
  correspondence (see for example \cite{Ro00, Ku06P, ADS15} and
  \cite[Section~7.4.1]{FJR18}).

  In this case, we have
  \begin{align*}
    \cX &= \mathrm{Gr}(2, 7),& \bE &= \cO_{\PP^{20}}(1)^{\oplus 7}|_\cX, \\
    \infty_\cX &= \PP(\cO_{\PP^20}(-1)^{\oplus 7}|_\cX) \cong \mathrm{Gr}(2, 7) \times \PP^6,&
    \lblog &\cong \cO_{\PP^{20}}(1)|_\cX \boxtimes \cO_{\PP^6}(1)
  \end{align*}
  Letting $\beta_1 = \int_\beta \cO_{\PP^{20}}(1)|_\cX$ and
  $\beta_2 = \int_\beta \cO_{\PP^6}(1)$, we get the condition
  \begin{equation*}
    \beta_1 + \beta_2 = 2g - 2.
  \end{equation*}
  Hence, there are $(2g - 2) + 1$ basic effective invariants.

  We do not see any direct relation to the work \cite{HoKo09} in
  physics, but it would be very interesting to further investigate the
  computation of higher genus Gromov--Witten invariants in this
  setting.
\end{example}

\subsubsection{Other examples}

We point out some other situations when
Assumption~\ref{ass:all-basic-effective} holds.
\begin{example}
  Assume that
  \begin{equation*}
    3 - \dim \cX + \rk \bE = 0 \ \ \mbox{and} \ \  c_1(K_{\cX}\otimes \det \bE) < 0.
  \end{equation*}
  Thus $\cZ$ is a threefold of general type.
  In this example, $\beta = 0$ is the only possible curve class with
  non-negative reduced virtual dimension, and thus there is exactly
  one basic effective invariant in each genus.
  On the other hand, threefolds of general type have exactly one
  vanishing Gromov--Witten invariant in genus $\ge 2$, and so the
  localization formula of \cite{CJR21} will not lead to new
  information in this example.
\end{example}
\begin{example}
  In the context of Example~\ref{ex:projective-hypersurface}, the
  cases $(d = 3, N = 8)$ and $(d = 4, N = 5)$ just outside of the list
  \eqref{eq:explicit-hyper-surface-in-Pn} fall under
  Assumption~\ref{ass:all-basic-effective}.

  For instance, for $d = 3$, $N = 8$,
  \eqref{eq:non-strict-dim-threshold} becomes $4(1 - g) + 6d \le 0$,
  which is a consequence of the balancing condition $3d \le 2g - 2$.
  In that case, there is exactly one basic effective invariant for
  $\beta = \frac{2g - 2}3$ only defined when $g \equiv 1 \pmod{3}$.

  In the case $d = 4$, $N = 5$, there is one basic effective invariant
  for $\beta = \frac{g - 1}2$ only defined when $g \equiv 1 \pmod{2}$.
\end{example}
\begin{example}
  For a hypersurface of bidegree $(3, 1)$ in
  $\cX = \PP^5 \times \PP^1$, note that $\xinfty$ agrees with the
  $X_{3, 3}$ case in the Calabi--Yau threefold
  Example~\ref{ex:CY3-complete-intersection}.
  In particular, both of these examples satisfy
  Assumption~\ref{ass:all-basic-effective}, and they have the same
  $\lfloor\frac{2g - 2}3\rfloor$ basic effective invariants in genus
  $g$.

  There are also other examples of hypersurfaces in products of
  projective spaces satisfying
  Assumption~\ref{ass:all-basic-effective}.
  For instance, for a hypersurface of bidegree $(2, 2)$ in
  $\cX = \PP^3 \times \PP^3$, there are $g$ basic effective
  invariants.
\end{example}

\subsection{Changing \texorpdfstring{$\ell$}{l} for effective invariants and cycles}
\label{ss:setup-higher-spin}

We consider here effective invariants under the change of $\ell$ of
Section~\ref{sec:log-root}.
Let $\infty$ be the target from \S \ref{ss:effective-setup}, and let
$\infty^{1/\ell}$ be the $\ell$-th root as in \eqref{eq:ell-root-target}.
We will observe that there is a direct correspondence between effective
invariants/cycles of $\infty$ and of $\infty^{1/\ell}$.

First, \eqref{eq:change-ell-discrete-data} and
Lemma~\ref{lem:map-taking-ell-root} establish a bijection between
discrete data $(g, \beta, \bar\varsigma)$ for punctured R-maps to
$\infty$ and discrete data $(g, \beta, \varsigma)$ for punctured
R-maps to $\infty^{1/\ell}$. It then follows from
Corollary~\ref{cor:reduced-change-ell-with-min}:

\begin{corollary}
  \label{cor:effective-change-ell}
  We have the identity   of effective invariants
  \begin{equation*}
    \ell^{1 + k} \int_{[\RMm_{g, \varsigma}(\infty^{1/\ell}, \beta)]^{\red}} \psi_{\min}^k \cdot \prod \ev_i^*\alpha_i
    = \int_{[\RMm_{g, \bar\varsigma}(\infty, \beta)]^{\red}} \psi_{\min}^k \cdot \prod \ev_i^*\alpha_i ,
  \end{equation*}
  and the identity   of effective cycles
  \begin{equation*}
    \ell^{1 + k} p_{\varsigma, *} \left( \psi_{\min}^k \cdot \prod \ev_i^*\alpha_i \cap [\RMm_{g, \varsigma}(\infty^{1/\ell}, \beta)]^{\red} \right)
    = p_{\bar\varsigma, *} \left( \psi_{\min}^k \cdot \prod \ev_i^*\alpha_i \cap [\RMm_{g, \bar\varsigma}(\infty, \beta)]^{\red} \right).
  \end{equation*}
\end{corollary}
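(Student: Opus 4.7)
The plan is to derive both identities as a single cycle-level statement obtained from three ingredients combined via the projection formula. First, Corollary~\ref{cor:change-ell-degeneracy} gives $\nu_\ell^* e_{\min,\bar\varsigma} = \ell\cdot e_{\min,\varsigma}$, so $\nu_\ell^*\psi_{\min,\bar\varsigma} = \ell\cdot\psi_{\min,\varsigma}$ and hence $\psi_{\min,\varsigma}^k = \ell^{-k}\,\nu_\ell^*(\psi_{\min,\bar\varsigma}^k)$. Second, the $k=0$ case of Corollary~\ref{cor:reduced-change-ell-with-min} (equivalently, Proposition~\ref{prop:reduced-change-ell} combined with the virtual pushforward from $\RMm$ to $\UH$) yields $\nu_{\ell,*}[\RMm_{g,\varsigma}(\infty^{1/\ell},\beta)]^\red = \ell^{-1}\,[\RMm_{g,\bar\varsigma}(\infty,\beta)]^\red$. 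Third, I would use that the evaluation classes $\ev_i^*\alpha_i$ for $\alpha_i\in H^*(\xinfty)$ (or $A^*(\xinfty)$) are compatible under $\nu_\ell$.

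The main obstacle, minor but worth addressing, is the third point: the evaluation maps on the two moduli spaces target different cyclotomic inertia stacks $\overline{\mathcal{I}}\infty^{1/\ell}_\bk$ and $\overline{\mathcal{I}}\infty_\bk$, and one must verify that they are identified via $\nu_\ell$ in a way that is compatible with pullback of classes from $\xinfty$. By Lemma~\ref{lem:change-target} and Remark~\ref{rem:marking-data-change-ell}, $\nu_\ell$ sends the $i$th marking sector $\overline\gamma_i$ to the corresponding sector $\overline\gamma_i'$ via a natural gerbe morphism, and both sectors carry compatible projections to $\xinfty$ inherited from $\infty^{1/\ell}_\bk\to\xinfty$ and $\infty_\bk\to\xinfty$. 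For $\alpha_i$ pulled back from $\xinfty$, this yields $\ev_i^*\alpha_i = \nu_\ell^*\ev_i^*\alpha_i$ (with $\ev_i$ understood in the appropriate target based on context).

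Combining the three ingredients through the projection formula yields
\begin{equation*}
\nu_{\ell,*}\Bigl(\psi_{\min,\varsigma}^k\cdot\prod_i\ev_i^*\alpha_i\cap[\RMm_{g,\varsigma}(\infty^{1/\ell},\beta)]^\red\Bigr) = \ell^{-k-1}\,\psi_{\min,\bar\varsigma}^k\cdot\prod_i\ev_i^*\alpha_i\cap[\RMm_{g,\bar\varsigma}(\infty,\beta)]^\red.
\end{equation*}
Multiplying through by $\ell^{k+1}$ and applying $p_{\bar\varsigma,*}$, noting that $p_\varsigma = p_{\bar\varsigma}\circ\nu_\ell$ since $\nu_\ell$ induces an isomorphism of coarse source curves by Lemma~\ref{lem:change-target} so the stabilizations agree, would yield the effective cycle identity; pushing forward further to a point then gives the effective invariant identity.
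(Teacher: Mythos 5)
Your proposal is correct and follows essentially the same route as the paper: the paper deduces the corollary from Corollary~\ref{cor:reduced-change-ell-with-min}, whose proof is exactly your combination of $\nu_\ell^*\psi_{\min,\bar\varsigma}=\ell\,\psi_{\min,\varsigma}$ (Corollary~\ref{cor:change-ell-degeneracy}), the pushforward $\nu_{\ell,*}[\RMm_{g,\varsigma}(\infty^{1/\ell},\beta)]^{\red}=\ell^{-1}[\RMm_{g,\bar\varsigma}(\infty,\beta)]^{\red}$ (Proposition~\ref{prop:reduced-change-ell}), and the projection formula. Your extra verifications — compatibility of the evaluation insertions pulled back from $\xinfty$ and the factorization $p_\varsigma=p_{\bar\varsigma}\circ\nu_\ell$ via the coarse-curve identification of Lemma~\ref{lem:change-target} — are points the paper leaves implicit, and you handle them correctly.
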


\subsubsection{LG/CY correspondence for effective invariants}
\label{sss:LGCY}

Corollary~\ref{cor:effective-change-ell} has a major application to
the forthcoming work \cite{GJR23P} on the all genus LG/CY
correspondence for quintic threefolds, which is an explicit
relationship between the Gromov--Witten (CY) theory of quintic
threefolds and the FJRW (LG) theory of quintic threefold
singularities.
In the log GLSM approach to these theories, the effective invariants
of each theory form the most difficult part.
What enables the forthcoming work \cite{GJR23P}, is a close
relationship between the $\infty$-divisors of the log GLSM targets of
both theories.

More explicitly, the log GLSM target for Gromov--Witten theory is
isomorphic to
\[\ul\fP^{\mathrm{GW}} = \PP_{\BC \times \PP^4}((\uomega \boxtimes
\cO(-5)) \oplus \cO)\]
over $\BC$.
The log GLSM target for FJRW theory is isomorphic to
\[
\ul\fP^{\mathrm{FJRW}} = \PP_{\fX}(\cL_\fX^{\oplus 5} \oplus \cO),
\]
where $\fX \cong \BG_m \to \BC$ is induced by taking the $5$th power,
and where $\cL_\fX$ is the universal $5$th root line bundle.
The corresponding $\infty$-divisors are
$\ul\infty^{\mathrm{GW}} \cong \BC \times \PP^4$ with logarithmic
structure defined from the line bundle
$\cO(\infty^{\mathrm{GW}}) = \uomega^\vee \boxtimes \cO(5)$, and
$\ul\infty^{\mathrm{FJRW}} = \PP_{\fX}(\cL_\fX^{\oplus 5}) \cong \fX
\times \PP^4$ with logarithmic structure defined from the line bundle
$\cO(\infty^{\mathrm{FJRW}}) = \cL_\fX^\vee \boxtimes \cO(1)$.

\begin{lemma}
  \label{lem:LGCY}
  There is an isomorphism
  $\infty^{\mathrm{GW}, 1/5} \cong \infty^{\mathrm{FJRW}}$ over $\BC$,
  where $\infty^{\mathrm{GW}, 1/5}$ is the root construction of
  Section~\ref{sec:log-root} for $\ell = 5$ applied to
  $\infty^{\mathrm{GW}}$.
\end{lemma}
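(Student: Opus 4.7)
The plan is first to match $\infty^{\mathrm{GW}}$ to the root-construction framework of \S\ref{intro:target} with input data $\cX = \PP^4$, $\bE = \cO_{\PP^4}(5)$, $r = d = \ell = 1$ and $\lbspin = \cO$. With these choices $\xinfty = \PP(\bE^\vee) \cong \PP^4$, $\lblog = \cO_{\PP^4}(5)$, and the universal spin line bundle $\uspin$ is forced to be $\uomega$ (as $r = 1$). A direct computation using \eqref{eq:l=1-log} then gives $\cO(\infty^{\mathrm{GW}}) = \lblog \otimes \uspin^{-1} \cong \uomega^\vee \boxtimes \cO_{\PP^4}(5)$, matching the description in the excerpt.

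The key observation is that, over $\ul\infty^{\mathrm{FJRW}} = \fX \times \PP^4$, the line bundle $\cO(\infty^{\mathrm{FJRW}}) = \cL_\fX^\vee \boxtimes \cO_{\PP^4}(1)$ satisfies
\[
\cO(\infty^{\mathrm{FJRW}})^{\otimes 5} \cong \cL_\fX^{\otimes (-5)} \boxtimes \cO_{\PP^4}(5) \cong \uomega^\vee \boxtimes \cO_{\PP^4}(5),
\]
using the tautological identification $\cL_\fX^{\otimes 5} \cong \uomega|_\fX$. In other words, $\cO(\infty^{\mathrm{FJRW}})$ is a tautological $5$-th root of the pull-back of $\cO(\infty^{\mathrm{GW}})$ along the projection $\fX \times \PP^4 \to \BC \times \PP^4$. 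By the universal property of the root construction \eqref{eq:g1-t-root}, this datum induces a canonical morphism $\ul\infty^{\mathrm{FJRW}} \to \ul\infty^{\mathrm{GW}, 1/5}$ over $\BC \times \PP^4$ along which the tautological line bundle $\cO(\infty^{\mathrm{GW}, 1/5})$ pulls back to $\cO(\infty^{\mathrm{FJRW}})$. Since both $\infty^{\mathrm{FJRW}}$ and $\infty^{\mathrm{GW}, 1/5}$ carry the rank-one Deligne--Faltings log structure associated to their respective line bundles as in \S\ref{ss:universal-DF}, this morphism automatically promotes to a morphism of log stacks over $\BC$.

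To verify that the morphism is an isomorphism, I would exhibit its inverse directly. A $T$-valued point of $\ul\infty^{\mathrm{GW}, 1/5}$ is a map $T \to \BC \times \PP^4$ together with a line bundle $M$ on $T$ and an isomorphism $M^{\otimes 5} \cong \uomega^\vee|_T \otimes \cO_{\PP^4}(5)|_T$; then $N := M^\vee \otimes \cO_{\PP^4}(1)|_T$ satisfies $N^{\otimes 5} \cong \uomega|_T$ and hence defines a lift $T \to \fX$, which combined with the map $T \to \PP^4$ gives $T \to \fX \times \PP^4 = \ul\infty^{\mathrm{FJRW}}$. These two constructions are mutually inverse by inspection, and the resulting underlying isomorphism is strict for the tautological log structures, yielding the isomorphism of log stacks. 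The main conceptual step is the bookkeeping to align the two target formalisms; once that is set up, no serious obstacle arises, and the isomorphism is essentially a consequence of the universal property of the root stack.
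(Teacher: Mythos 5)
Your proof is correct and takes essentially the same route as the paper's: the isomorphism is induced by the natural map $\fX \times \PP^4 \to \BC \times \PP^4$ together with the $5$th root $\cL_\fX^\vee \boxtimes \cO(1)$ of $\cO(\infty^{\mathrm{GW}})$, and the identification of $\cO(\infty^{\mathrm{FJRW}})$ with $\cO(\infty^{\mathrm{GW},1/5})$ promotes it to a strict isomorphism of log stacks over $\BC$. The only difference is that you spell out the inverse on $T$-points (which the paper dismisses with a ``clearly''), and you make explicit the identification of $\infty^{\mathrm{GW}}$ with the root-construction framework of \S\ref{intro:target}, both of which are fine.
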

\begin{proof}
  The isomorphism
  $\ul\infty^{\mathrm{FJRW}} \to \ul\infty^{\mathrm{GW}, 1/5}$ is
  induced by the natural morphism
  $\fX \times \PP^4 \to \BC \times \PP^4$, as well as the $5$th root
  $\cL_\fX^\vee \boxtimes \cO(1)$ of $\cO(\infty^{\mathrm{GW}})$.
  Under this correspondence, clearly, $\cO(\infty^{\mathrm{FJRW}})$ is
  identified with $\cO(\infty^{\mathrm{GW}, 1/5})$, and so the
  isomorphism extends to the level of logarithmic stacks over $\BC$.
\end{proof}

Hence, (up to powers of $5$) the Gromov--Witten and FJRW theory of
quintics have the same effective invariants.
We call this the \emph{LG/CY-correspondence for effective invariants}.
This together with the localization formula of \cite{CJR23P} applied to
the log R-map moduli space for both the Gromov--Witten and FJRW theory
of quintics are the geometric input for the work \cite{GJR23P}.

\ChFelix{
\begin{remark}
  There is an LG/CY-correspodence for effective invariants not just
  for the Gromov--Witten and FJRW theory of quintic threefolds, but
  more generally for degree $d$ hypersurfaces in $\PP^N$, which are
  not necessarily Calabi--Yau.
  In this case, the relevant targets of punctured R-maps are related
  via a logarithmic root construction for $\ell = d$.
\end{remark}
}

\subsubsection{Witten's \texorpdfstring{$r$}{r}-spin class, effective spin structures and holomorphic differentials}
\label{sss:r-spin}

One of the initial motivations toward the development \cite{CJRS18P}
of the technique of log GLSM was to solve the conjecture
\cite[Conjecture~A.1]{PPZ19}, which provides a formula for the class
of a stratum of holomorphic differentials in terms of Witten's
$r$-spin class.
In future work \cite{CJPPRSSZ22P}, we will address the conjecture (and
generalizations) using the techniques developed in \cite{CJR23P},
\cite{CJRS18P} and this paper.
Understanding the dependence of Witten's $r$-spin class under changing
$r$ plays a crucial role for this.
We briefly outline this connection here.

The log GLSM target corresponding to Witten's $r$-spin class is
$\ul\fP_r = \PP_{\fX_r}(\cL_{\fX_r} \oplus \cO) \to \BC$, where
$\fX_r \cong \BG_m \to \BC$ is defined via the $r$th power map, and
where $\cL_{\fX_r}$ is the universal $r$th root.
Clearly, the corresponding $\infty$-divisor is given by
$\ul\infty_r = \fX_r$ with $\cO(\infty_r) = \cL_{\fX_r}^\vee$, and the
target $\infty_r$ can be obtained from $\infty_1$ via an $r$th
logarithmic root construction.

Combining \cite[Theorem~1.6]{CJRS18P} and the localization formula of
\cite{CJR23P}, we can express Witten's $r$-spin class using a
combination of effective cycles, and Chiodo's class \cite{Ch08}, which
is an analog of twisted Gromov--Witten theory.
The dependence on $r$ of Chiodo's class on the moduli of curves is
well-understood (see e.g. \cite{JPPZ17, ClJa18}), and
Corollary~\ref{cor:effective-change-ell} tells us the dependence of
the effective cycles on $r$.

It will be proven in \cite{CJPPRSSZ22P} that any effective cycle
\eqref{eq:effective-cycle} for $k = 0$ agrees with the class of the
closure of the corresponding locus of curves with effective $r$-spin
structures of \cite{Po06}.
For $r = 1$, we obtain the classes of strata of holomorphic
differentials.

\subsection{Tautological effective cycles}
\label{ss:tautological}

For every log GLSM target $\punt$ as in Section~\ref{sec:effective},
the effective cycles of \eqref{eq:effective-cycle} and
\eqref{eq:effective-cycle-Chow} are cycles in the cohomology or Chow
ring of $\oM_{g, n}$.
It is natural to ask whether these cycles lie in the tautological ring
of \cite{FaPa05}:
\begin{question}
  \label{q:effective-tautological}
  Under what conditions (on the target $\punt$, the discrete
  invariants, and the insertions) does an effective cycle
  \eqref{eq:effective-cycle} lie in the tautological ring
  $RH^*(\oM_{g, n}) \subset H^*(\oM_{g, n})$?
  What can be said about the analogous question for the effective
  cycles lie in the tautological Chow ring
  $R^*(\oM_{g, n}) \subset A^*(\oM_{g, n})$?
\end{question}
This question is an analog of the corresponding question for
Gromov--Witten classes discussed in \cite[Section~0.7]{ABPZ21P} and
\cite{Ts21P}, and which we state below in
Question~\ref{q:gw-tautological}.
In \cite{CJR23P}, we will discuss some connections to these
questions.

Before continuing, let us state a couple of immediate results towards
Question~\ref{q:effective-tautological}.
The vanishing results in Section~\ref{ss:vanishing} imply that many
$g \ge 2$ effective cycles are tautological since they vanish.
The Chow version of Question~\ref{q:effective-tautological} implies
its cohomology version, in the case that all insertions lie in the
image of the cycle class map.
The future work \cite{CJPPRSSZ22P} discussed in
Section~\ref{sss:r-spin} will imply that
Question~\ref{q:effective-tautological} has an affirmative answer in
cohomology for $k = 0$ for the $r$-spin target $\fX_r \to \BC$, and we
plan to also study the case of general $k$.
The discussion in Section~\ref{ss:setup-higher-spin} implies that
studying Question~\ref{q:effective-tautological} for a target $\infty$
is equivalent to studying the same question for its $r$th logarithmic
root $\infty^{1/r}$.

We now recall the definition of Gromov--Witten classes.
Let $\cX$ be a smooth Deligne--Mumford stack $\cX$ with projective
coarse moduli, a genus $g$, a curve class $\beta \in H_2(\cX; \ZZ)$,
and let $\ul\alpha_1, \dotsc, \ul\alpha_n \in H^*(\ocI\cX)$ be
(Chen--Ruan) cohomology classes.
Then, we may define the Gromov--Witten class in cohomology
\begin{equation*}
  p_* \left( \prod \ev_i^*\alpha_i \cap [\oM_{g, n}(\cX, \beta)]^{\vir} \right)\in H^*(\oM_{g, n}),
\end{equation*}
where $p\colon \oM_{g, n}(\cX, \beta) \to \oM_{g, n}$ denotes the
forgetful map from the moduli of stable maps.
If $\ul\alpha_1, \dotsc, \ul\alpha_n \in A^*(\ocI\cX)$ are Chow
classes, we may also define the corresponding Gromov--Witten class in
Chow:
\begin{equation*}
  p_* \left( \prod \ev_i^*\alpha_i \cap [\oM_{g, n}(\cX, \beta)]^{\vir} \right)\in A^*(\oM_{g, n}),
\end{equation*}

\begin{question}
  \label{q:gw-tautological}
  Under what conditions do Gromov--Witten classes in cohomology lie in
  the tautological ring?
  What about the analogous question for Gromov--Witten classes in Chow?
\end{question}
While all currently known examples of Gromov--Witten classes in
cohomology are tautological, the same is not true in Chow.
Hence, we should expect that Question~\ref{q:effective-tautological}
does not always have an affirmative answer in Chow.
We may still expect the following strong connection between
Question~\ref{q:gw-tautological} and
Question~\ref{q:effective-tautological}.
\begin{conjecture}
  \label{conj:tautological}
  Suppose $\infty_\cX$ has tautological
  Gromov--Witten classes in cohomology (resp. Chow).
  Then effective classes of $\punt$ are tautological in
  cohomology (resp. Chow).
\end{conjecture}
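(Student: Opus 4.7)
My plan is to express every effective cycle of $\punt$ as a tautological class on $\oM_{g,n}$ multiplied with or built from a Gromov--Witten class of $\xinfty$, using the axioms developed in this paper; since the Gromov--Witten factor is tautological by hypothesis and the tautological factor is tautological by \cite{BHPSS20P} and its anticipated generalizations, this completes the proof.

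First, I would reduce to base cases via repeated application of the axioms. Using Corollary~\ref{cor:effective-change-ell}, I may assume $\ell = 1$. The $\psi_{\min}$-unit axiom (Theorem~\ref{thm:unit-axiom-with-min}) expresses the pushforward to $\oM_{g,n+1}$ of an effective cycle with a unit marking added as the $\pi$-pullback of the analogous cycle on $\oM_{g,n}$ plus boundary corrections involving $\psi_{\DF}$-insertions. The $\psi_{\min}$-string and $\psi_{\min}$-divisor equations (Corollary~\ref{cor:string-divisor}) provide further relations allowing one to trade unit markings with $H^{\le 1}(\xinfty)$ insertions for combinations of cycles with fewer markings but with $\psi_{\DF}$-insertions and lower powers of $\psi_{\min}$. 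By induction on the number of such markings, it suffices to treat base cases in which every marking is non-unit (either $c_i \le -2$, or $c_i = -1$ with a twisted sector, or $c_i = -1$ with insertion in $H^{\ge 2}(\xinfty)$).

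Second, for these base cases, the core technical step is a virtual pushforward formula. Consider the morphism $F \colon \RMm_\varsigma \to \oM_{g, n}(\xinfty, \beta)$ induced by composing the universal R-map with $\ul{\punt} \to \xinfty$ and coarsifying source curves, and let $q \colon \oM_{g, n}(\xinfty, \beta) \to \oM_{g, n}$ denote the forgetful map to stable curves. I expect a pushforward identity of the form
\begin{equation*}
F_*\bigl(\psi_{\min}^k \cdot \prod_i \ev_i^*\alpha_i \cap [\RMm_\varsigma]^{\red} \bigr) = q^* D_\varsigma^{(k)} \cdot \bigl(\prod_i \ev_i^*\alpha_i \cap [\oM_{g, n}(\xinfty, \beta)]^{\vir} \bigr),
\end{equation*}
where $D_\varsigma^{(k)} \in A^*(\oM_{g, n})$ is a tautological Pixton-type class, analogous to the orbifold twisted double ramification cycle anticipated in the remark following Theorem~\ref{thm:intro-canonical-theory}. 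Pushing forward to $\oM_{g, n}$ along $q$ and applying the projection formula then rewrites the right-hand side as $D_\varsigma^{(k)}$ times a Gromov--Witten class of $\xinfty$, with both factors tautological.

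The main obstacle is establishing this pushforward formula, particularly for the reduced virtual cycle. For the canonical virtual cycle $[\RMm_\varsigma]^{\vir}$ and $k = 0$, such a formula is already expected in the paper (cf.\ the remark after Theorem~\ref{thm:intro-canonical-theory}) and should be accessible by adapting the degeneration and localization arguments of \cite{HoSc21, BHPSS20P} to the orbifold-target setting via the obstruction-theoretic framework of Section~\ref{s:P-map}. Passing from canonical to reduced is subtle because Corollary~\ref{cor:red-canonical-vir} only gives $[\RMm_\varsigma]^{\vir} = \ttwist\, c_1(\cO(e_{\max})) \cap [\RMm_\varsigma]^{\red}$, which does not invert directly. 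A promising strategy is to establish the reduced formula directly via a cosection-localized virtual pushforward compatible with the triangle \eqref{eq:red-POT} of Theorem~\ref{thm:red-POT}, exploiting that the cosection is built from the superpotential $W$ pulled back from $\xinfty$; alternatively, one may lift the problem to the twisted-alignment resolutions of \S\ref{sss:twist-choice-pun}, where the log structure becomes locally free and cleaner tropical arguments become available. Extending the formula to $k \ge 1$ further demands a Pixton-type formula capturing the minimal-degeneracy line bundle on the log/tropical side, which is expected but currently beyond the scope of this paper.
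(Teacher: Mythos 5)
The statement you are trying to prove is stated in the paper as a \emph{conjecture} (Conjecture~\ref{conj:tautological}); the paper does not prove it. The only thing the paper establishes is the genus-one case in the situation of \S\ref{ss:g=1-calculation}, where the reduced cycle is computed explicitly in \eqref{eq:g1-reduced-virtual-cycle} and combined with the Grothendieck--Riemann--Roch computation of \cite{Ts10}. Your write-up is a proof \emph{strategy}, not a proof, and its pivotal second step is exactly the open part: the claimed pushforward identity
\begin{equation*}
F_*\bigl(\psi_{\min}^k \cdot \prod_i \ev_i^*\alpha_i \cap [\RMm_\varsigma]^{\red} \bigr) = q^* D_\varsigma^{(k)} \cdot \bigl(\prod_i \ev_i^*\alpha_i \cap [\oM_{g, n}(\xinfty, \beta)]^{\vir} \bigr)
\end{equation*}
is nowhere established, and you concede yourself that even the canonical-cycle analogue is only ``expected'' (cf.\ the remark after Theorem~\ref{thm:intro-canonical-theory}) and that the passage to the reduced cycle is ``beyond the scope of this paper.'' Since the relation of Corollary~\ref{cor:red-canonical-vir} does not allow one to recover $[\RMm_\varsigma]^{\red}$ from $[\RMm_\varsigma]^{\vir}$, granting a Pixton-type formula for the canonical theory would still leave the reduced case open. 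That is a genuine gap, not a technicality: it is precisely the content the conjecture is asking for.

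Two further points would need repair even if such a formula were available. First, the proposed shape is too strong: a Pixton-type correction for a cycle on a space of maps should be a tautological-type class \emph{on} $\oM_{g,n}(\xinfty,\beta)$, built from $\psi$-classes, evaluation classes and boundary pushforwards depending on $\beta$ and the contact data, not a pullback $q^*D_\varsigma^{(k)}$ from $\oM_{g,n}$; with the corrected shape, pushing forward along $q$ and invoking the hypothesis requires knowing that Gromov--Witten classes of $\xinfty$ decorated by $\psi$-classes and restricted to boundary strata are tautological, which is more than the bare hypothesis of the conjecture (it is essentially automatic in cohomology via the splitting axiom, but not in Chow, where the conjecture is also asserted). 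Second, your reduction step only trades away \emph{unit} markings: Theorems~\ref{thm:remove-unit}, \ref{thm:string-divisor} and \ref{thm:unit-axiom-with-min} apply to the unit sector, so markings with $c_i = -1$ and insertions in $H^1(\xinfty)$ (the exceptional case already visible in Lemma~\ref{lem:general-non-vanishing}) are not removed by the axioms and must be treated inside your base case.
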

The explicit formulas in the genus one case \eqref{eq:g1-reduced-virtual-cycle}
 together with the
Grothendieck--Riemann--Roch calculation of \cite{Ts10}, yields the
following small evidence for the conjecture:
\begin{proposition}
  Conjecture~\ref{conj:tautological} is true in the genus one
  situation of \S~\ref{sss:genus-one-moduli}.
  In this special case, it is enough to assume that Gromov--Witten
  classes of $\infty_\cX$ are tautological in genus zero and one.
\end{proposition}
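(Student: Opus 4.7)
The plan is to proceed by direct computation based on the explicit description of $\SH := \SH_{1,1}(\infty, 0)$ in \S\ref{ss:g=1-calculation}. By Proposition~\ref{prop:genus-1-stack}, the underlying stack $\ul\SH$ is isomorphic to $\oM_{1,1} \times \infty_\cX$, with the forgetful map $p_\varsigma$ and the evaluation $\ev_1$ identified with the two projections. Both the class $\psi_{\min}$, via \eqref{eq:g1-psimin}, and the reduced virtual cycle $[\SH]^{\red}$, via \eqref{eq:g1-reduced-virtual-cycle}, are thus K\"unneth-decomposable: they are universal polynomial expressions in $\lambda_1 := c_1(\hodge)$ pulled back from $\oM_{1,1}$, together with Chern classes of $\Omega^\vee_{\infty_\cX}$ and $\lblog$ pulled back from $\infty_\cX$.

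First I would substitute the formulas and expand the integrand $\psi_{\min}^k \cdot \ev_1^*\alpha \cap [\SH]^{\red}$ into a finite sum of pure tensors of the form $\lambda_1^{a_i} \cdot \ev_1^*(\alpha \cdot \gamma_i)$, where each $\gamma_i \in H^*(\infty_\cX)$ (resp. $A^*(\infty_\cX)$) is a specific polynomial in Chern classes of $\Omega^\vee_{\infty_\cX}$ and $\lblog$. Applying the projection formula to $p_\varsigma$ then yields
\[
  p_{\varsigma,*}\bigl(\psi_{\min}^k \cdot \ev_1^*\alpha \cap [\SH]^{\red}\bigr)
  = \sum_i \lambda_1^{a_i} \cdot p_{\varsigma,*}\bigl(\ev_1^*(\alpha \cdot \gamma_i) \cap [\oM_{1,1} \times \infty_\cX]\bigr).
\]

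Under the identification $\oM_{1,1}(\infty_\cX, 0) \cong \oM_{1,1} \times \infty_\cX$, each pushforward $p_{\varsigma,*}(\ev_1^*(\alpha \cdot \gamma_i))$ is, up to an explicit Hodge-bundle twist computable by Tseng's Grothendieck--Riemann--Roch calculation \cite{Ts10}, a genus one degree zero Gromov--Witten class of $\infty_\cX$ with insertion $\alpha \cdot \gamma_i$. By the hypothesis that Gromov--Witten classes of $\infty_\cX$ are tautological in genera $0$ and $1$, each such class is tautological on $\oM_{1,1}$. Combined with the factor $\lambda_1^{a_i}$, which is itself tautological, every summand is tautological, and hence so is the effective cycle. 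The same argument carries over to the Chow version by replacing cohomology with the Chow tautological ring throughout.

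The hard part here is the bookkeeping in the second step: the K-theoretic correction $\cO_{\infty_\cX} - \lblog$ in \eqref{eq:g1-reduced-virtual-cycle}, which reflects the contribution of the superpotential and distinguishes $[\SH]^{\red}$ from the ordinary virtual class of degree zero stable maps, must be matched with appropriate Hodge-bundle twists so that the resulting expression is manifestly of the shape handled by Tseng's universal formula. That only genera $0$ and $1$ Gromov--Witten classes of $\infty_\cX$ enter reflects the fact that for genus one source curves and degree zero maps the relevant cohomologies $R^{\bullet}\pi_*\ev^*(-)$ involve only $\hodge$ and $\hodge^\vee$ on $\oM_{1,1}$, so no higher genus Gromov--Witten input is needed.
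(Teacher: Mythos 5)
Your proposal is correct and follows essentially the same route as the paper, whose own proof is precisely the one-line argument you flesh out: combine the explicit genus-one formulas \eqref{eq:g1-psimin} and \eqref{eq:g1-reduced-virtual-cycle} (via Proposition~\ref{prop:genus-1-stack} and Proposition~\ref{prop:g1-reduced-virtual-cycle}) with the Grothendieck--Riemann--Roch computation of \cite{Ts10} and the hypothesis on genus zero and one Gromov--Witten classes of $\xinfty$. Your added K\"unneth/projection-formula bookkeeping is just a more explicit rendering of the same argument, so there is nothing substantive to add.
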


\bibliographystyle{abbrv}
\bibliography{hybrid}

\end{document}